\newtheorem{prop}{Proposition}[subsection]
\newtheorem{coro}[prop]{Corollary}
\newtheorem{lem}[prop]{Lemma}
\newtheorem*{lemm*}{Lemma}
 \def\tagform@#1{\maketag@@@{
 \ignorespaces#1\unskip\@@italiccorr}}
 \newcommand{\nsection}{\@startsection{section}{1}{\z@}%
     {-5ex}
     {1ex}
     {\reset@font\center\large\sc}}
 \renewenvironment{thebibliography}[1]
 {\nsection*{\refname\@mkboth{\refname}{\refname}}%
   \list{\@biblabel{\@arabic\c@enumiv}}%
        {\settowidth
	\labelwidth{\@biblabel{#1}}%
         \leftmargin
	 \labelwidth
         \advance
	 \leftmargin
	 \labelsep
         \@openbib@code
         \usecounter{enumiv}%
         \let\p@enumiv\@empty
	 \parskip=0pt
	 \itemsep=1pt
	 \parsep=1pt
	 \itemindent=\z@
         \renewcommand\theenumiv{\@arabic\c@enumiv}}%
   \sloppy
   \clubpenalty4000
   \@clubpenalty\clubpenalty
   \widowpenalty4000%
   \footnotesize
   \sfcode`\.\@m}
  {\def\@noitemerr
    {\@latex@warning{Empty `thebibliography' environment}}%
   \endlist}
\newenvironment{enumlatin}[1][\roman]{%
\begin{enumerate}}
{\end{enumerate}}
\theoremstyle{definition}
\newtheorem{empt}[prop]{}
\newtheorem{dfn}[prop]{Definition}
\newtheorem{rem}[prop]{Remark} 
\newtheorem{ntn}[prop]{Notation} 
\newtheorem*{rem*}{Remark}
\theoremstyle{thm}
\newtheorem{thm}[prop]{Theorem}
\newtheorem*{thm*}{Theorem}
\newtheorem*{lem*}{Lemma}
\newtheorem{cor}[prop]{Corollary}
\newtheorem*{cor*}{Corollary}
\newtheorem*{prop*}{Proposition}
\newtheorem*{Mthm}{Main Theorem}
\theoremstyle{dfn}
\newtheorem*{dfn*}{Definition}
\newtheorem*{cl}{Claim}
\newcommand{\riso}{\xrightarrow{\sim}}
\newcommand{\liso}{\xleftarrow{\sim}}
\newcommand{\Spec}{\mathrm{Spec}\,}
\newcommand{\Spf}{\mathrm{Spf}\,}
\renewcommand{\sp}{\mathrm{sp}}
\renewcommand{\det}{\mathrm{det}}
\newcommand{\coker}{\mathrm{coker}\,}
\newcommand{\gr}{\mathrm{gr}}
\newcommand{\RR}{{\mathcal{R}}}
\newcommand{\FF}{{\mathcal{F}}}
\newcommand{\E}{{\mathcal{E}}}
\newcommand{\G}{{\mathcal{G}}}
\renewcommand{\H}{{\mathcal{H}}}
\newcommand{\M}{{\mathcal{M}}}
\newcommand{\NN}{{\mathcal{N}}}
\newcommand{\D}{{\mathcal{D}}}
\newcommand{\PP}{{\mathcal{P}}}
\newcommand{\QQ}{{\mathcal{Q}}}
\renewcommand{\O}{{\mathcal{O}}}
\newcommand{\V}{\mathcal{V}}
\newcommand{\T}{{\mathcal{T}}}
\newcommand{\Y}{\mathcal{Y}}
\newcommand{\ZZ}{\mathcal{Z}}
\newcommand{\X}{\mathcal{X}}
\newcommand{\U}{\mathcal{U}}
\newcommand{\A}{\mathbb{A}}
\renewcommand{\P}{\mathbb{P}}
\newcommand{\DD}{\mathbb{D}}
\renewcommand{\L}{\mathbb{L}}
\newcommand{\R}{\mathbb{R}}
\newcommand{\Q}{\mathbb{Q}}
\newcommand{\Z}{\mathbb{Z}}
\newcommand{\N}{\mathbb{N}}
\newcommand{\hdag}{  \phantom{}{^{\dag} }    }
\DeclareMathOperator{\coh}{Coh}
\DeclareMathOperator{\Mod}{Mod}
\DeclareMathOperator{\id}{id}
\newcommand{\SQ}[1]{\text{\normalfont(SQ)}_{#1}}
\newcommand{\Da}{\mr{Da}}
\newcommand{\shom}{\mc{H}\mr{om}}
\newcommand{\HH}{\mathcal{H}_{\mr{t}}}
\newenvironment{meta}{
\noindent \color{red}
\sffamily[}{\upshape]}
\newcommand{\ff}[1]{(\!(#1)\!)}
\begin{document}

\title{Theory of weights in $p$-adic cohomology}
\author{Tomoyuki Abe and Daniel Caro}
\date{}

\maketitle

\begin{abstract}
 Let $k$ be a finite field of characteristic $p>0$. We construct a
 theory of weights for overholonomic complexes of arithmetic
 $\D$-modules with Frobenius structure on varieties over $k$. The
 notion of weight behave like Deligne's one in the $\ell$-adic
 framework: first, the six operations preserve weights, and secondly,
 the intermediate extension of an immersion preserves pure complexes and
 weights.
\end{abstract}

\tableofcontents

\section*{Introduction}
Using Grothendieck's theory of $\ell$-adic \'{e}tale cohomology of
varieties over a field of characteristic $p\neq \ell$, 
Deligne 
proved Weil's conjectures on the numbers of points of algebraic
varieties over finite fields. Moreover, he built a theory
of mixedness and weights for constructible $\ell$-adic sheaves which is compatible with
the ``six operations formalism'' of the $\ell$-adic cohomology, namely the mixedness and weight are 
stable under the operations $f _!, ~ f _*, f ^*, f ^!,
\otimes$ and $\shom$ (see \cite{deligne-weil-II}). Later, using the theory of perverse sheaves,  in
\cite{BBD} Gabber proved the stability of purity and mixedness under
intermediate extensions, with which the theory of weights for
$\ell$-adic cohomology may be regarded as complete. However, the problem
of obtaining similar results  within a $p$-adic cohomological framework
remained opened. After Dwork's $p$-adic proof of the rationality of zeta
functions, a part of Weil's conjectures, it seems natural to expect
better computability of zeta functions with a $p$-adic approach. 
In this paper, we build a $p$-adic theory of weights.

The first attempt to calculate the weights of some $p$-adic cohomology was
made by Katz and Messing in their famous paper \cite{KatzMessing}.
Using Deligne's deep
results on weights, especially {\em ``Le th\'eor\`eme du pgcd''},
they showed that, for projective smooth varieties, the weight of crystalline
cohomology is the same as that of $\ell$-adic one. It is
reasonable to hope that the coefficient theory of weights parallel to
$\ell$-adic cohomology should exist in the spirit of the {\em petit
camarade conjecture} \cite[1.2.10]{deligne-weil-II}, even though there
were many obstacles that prevented the construction of such a theory.
After the work of Katz-Messing, efforts were made until Kedlaya 
finally obtained in \cite{kedlaya-weilII} the expected estimation of weights of rigid cohomology, 
a $p$-adic cohomology constructed by Berthelot. 
We do not go into more
details of the history, and recommend the reader to consult the
excellent explanation in the introduction of \cite{kedlaya-weilII}.

But the context of rigid cohomology was still not completely satisfactory, in the sense that this is not ``functorial
enough'', namely we do not have six operations formalism, specially push-forwards as pointed out
in \cite[5.3.3]{kedlaya-weilII}. 
In this paper, we use systematically  
Berthelot's arithmetic $\D$-modules to complete
the program of establishing a $p$-adic theory of weights stable under six operations. 
In many applications, such a
theory should play as important roles as suggested by the
classical situations; for example,  the theory of intersection cohomology and its purity, 
the theory of Springer
representations, Lafforgue's proof of Langlands correspondence, etc. In
the final part of the paper, we show one such application that the
Hasse-Weil $L$-function for a function field defined by means of
$\ell$-adic methods and $p$-adic methods coincide.
\bigskip

Now let us explain our results in more details.
Let $\mathcal{V}$ be a complete discretely valued ring of mixed
characteristic $(0,p)$, $K$ its field of fractions, and $k$ its residue
field which is assumed to be a finite field. 
We suppose that there exists a lifting 
of the Frobenius of $k$ to $\V$.
In order to obtain a
$p$-adic cohomology on algebraic varieties over $k$ stable under the six
operations
whose coefficients contain
the category of overconvergent isocrystals, Berthelot constructed the
theory of arithmetic $\D$-modules (cf.\ \cite{Beintro2}). Arithmetic
$\D$-modules may be seen as a $p$-adic analogue of modules over the ring
of differential operators over complex varieties or analytic
varieties. Berthelot's theory is inspired by his construction of
rigid or crystalline cohomology, and we consider not only differential
operators of finite order, but also of infinite order subject to a
convergence condition. Even though the objects are defined, the
preservation of finiteness had been a difficult question. For this, the
notion of overholonomicity was introduced by the second author in
\cite{caro_surholonome}. Thanks to Kedlaya's semistable reduction
theorem of \cite{kedlaya-semistableIV}, the second author together with Tsuzuki, finally in
\cite{caro-Tsuzuki}, proved that overholonomicity of $\D$-modules with
Frobenius structure is preserved under various cohomological functors.

Using this foundation, in this paper, we go a step further in the
development of a good $p$-adic cohomology, and build a theory of weights
for overholonomic complexes with Frobenius structure of arithmetic
$\D$-modules. More precisely, the main result is the following (cf.\
Theorem \ref{sumupWeilII}, Corollary \ref{purityinterme}):
\begin{Mthm}
 Let $X$ be a (realizable) variety over $k$. 
 We construct the full subcategory
 $F\text{-}D^{\mr{b}}_{\mr{m}}(X/K)$ of
 $F\text{-}D^{\mr{b}}_{\mr{ovhol}}(X/K)$ of $\iota$-mixed complexes. The
 following properties hold:

 (i) Let $f\colon X\rightarrow
 Y$ be a morphism of varieties. The $\iota$-mixedness is stable under
 functors $f_+$, $f_!$, $f^+$, $f^!$, $\DD_X$,
 $\widetilde{\otimes}$. More precisely, $f_+$ and $f^!$ send
 $\iota$-mixed $F$-complexes of weight $\geq w$ to those of weight $\geq
 w$. Similarly, $f_!$ and $f^+$ send $\iota$-mixed $F$-complexes of
 weight $\leq w$ to those of weight $\leq w$, $\DD_X$ exchanges
 $\iota$-mixed $F$-complexes of weight $\leq w$ to $\geq -w$, and
 $\widetilde{\otimes}$ send $\iota$-mixed $F$-complexes of weight $(\geq
 w,\geq w')$ to $\geq w+w'$.

 (ii) Intermediate extension of an immersion preserves pure $F$-complexes
 and weights.
\end{Mthm}
Concerning part (i), 
for the convenience of the reader, we recall that, 
in the very special case where $Y = \Spec \, k$, 
the stability under $f _+$ was already 
checked by d\'{e}vissage in overconvergent $F$-isocrystals from Kedlaya's stability of weights
(see \cite[8.3.4]{caro_devissge_surcoh}
\footnote{In fact, the compatibility with Frobenius of the comparison between push-forward and rigid cohomology was implicitly used. 
A proof of this compatibility is given in \cite{Abe-Frob-Poincare-dual}}). 
The relative version
had been missing:
as Kedlaya remarked in \cite[5.3.3]{kedlaya-weilII}, one had at least to work with a theory admitting Grothendieck's six operations,
which is only possible in the theory of arithmetic $\D$-modules. 
To check part (i), 
we follow some ideas appearing in Deligne's proof of the stability under push-forwards of \cite{deligne-weil-II}, 
which leads us to the relative $1$ dimensional case. 
To tackle this relative $1$ dimensional case, specially concerning the stability of mixedness,
one needs to study thoroughly the monodromy filtration associated to a log convergent isocrystal (this is section $3$ of our work). 
One also notices that any proofs here do not use \cite{kedlaya-weilII}. 
Moreover, the part (ii) is completely new 
in $p$-adic cohomology theory.
\bigskip

Let us describe the contents of the paper. We restrict
our attention to the category of {\em realizable varieties}, the
full subcategory of the category of varieties over $k$ consisting of $Y$
such that there exists an immersion into a proper and smooth formal
scheme over $\V$. We recall that quasi-projective varieties are
realizable. In this introduction, let us simply call these
objects, varieties.

In \S1, we construct the coefficient theory using the foundational
results of overholonomic modules. Even though the main ideas had already
been provided, 
with the definition of a new t-structure here, 
the main theme of the first section is to extend some properties of the
theory of arithmetic $\D$-modules and, at the same time, to make it in a more usable form. Let $Y$
be a variety. First, we recall the
construction of the category $F\text{-}D ^\mathrm{b}_\mathrm{ovhol}
(Y/K)$ of overholonomic $F$-complexes over $Y/K$ and the formalism of
the six operations for these coefficients. 
We then introduce
a new t-structure on this category whose heart is denoted by
$F\text{-}\mathrm{Ovhol} (Y/K)$ and which may be seen as the category
of arithmetic $\D$-modules over $Y/K$ (even though they are complexes
and not modules). When $Y$ is smooth, we construct a fully faithful
subcategory $F\text{-}\mathrm{Isoc} ^{\dag \dag} (Y/K)$ of
$F\text{-}\mathrm{Ovhol} (Y/K)$ which is equivalent to the category of
overconvergent $F$-isocrystals over $Y/K$ (the latter are the coefficients of the rigid cohomology).
In the spirit of the Riemann-Hilbert
correspondence, $F\text{-}\mathrm{Ovhol} (Y/K)$ (resp.\
$F\text{-}\mathrm{Isoc} ^{\dag \dag} (Y/K)$) is a $p$-adic analogue of
the category of perverse sheaves (resp.\ smooth perverse sheaves) over
$Y$. We proceed to check several necessary properties for being six
operations formalism.
We conclude the section by defining the intermediate extension as in
the context of $\D$-modules over complex varieties. We also check that
the analogous properties concerning irreducibility hold in our context.
This enables us to define the $p$-adic intersection cohomology for
realizable varieties, which is an advantage of using arithmetic
$\D$-modules.

We fix an isomorphism $\iota\colon\overline{\mb{Q}}_p\cong\mb{C}$. In
\S2, we define the notion of $\iota$-mixedness as Deligne
does for the $\ell$-adic cohomology in \cite{deligne-weil-II}. More
precisely, in the spirit of the Riemann-Hilbert correspondence,
the $\iota$-purity of an object in $F\text{-}\mathrm{Isoc} ^{\dag \dag}
(Y/K)$ is by definition a pointwise property (with the natural
definition over a point) and the category of $\iota$-mixed objects of
$F\text{-}\mathrm{Isoc} ^{\dag \dag} (Y/K)$ is the full subcategory
generated by extensions of $\iota$-pure objects. Next, by d\'{e}vissage
in overconvergent $F$-isocrystals, we extend naturally the notion of
$\iota$-mixedness to $F\text{-}D ^\mathrm{b}_\mathrm{ovhol} (Y/K)$. At
the end of the section, we estimate the weight of the cohomology
on curves, using the methods developed in \cite{AM} by the first author
together with Marmora.

The aim of \S3 is to prove the following preliminary
results on the $\iota$-mixedness stability. Let $X$ be a proper smooth
variety, $Z$ be a strict normal crossing divisor of $X$ and $j\colon
Y:=X\setminus Z \to X$ be the open immersion. Let $\G$ be a
log-convergent $F$-isocrystal on $X$ with logarithmic poles along $Z$
which possesses nilpotent residues and let $\E$ be the object of
$F\text{-}\mathrm{Isoc} ^{\dag \dag} (Y/K)$ induced by restriction. If
$\E$ is $\iota$-mixed, then we show that  $j _! (\E)$ is $\iota$-mixed
as well. Moreover, when $Z$ is smooth and $\E$ is $\iota$-pure, we prove
that the weight of $j _! (\E)$ is less than or equal to that of
$\E$. The theorem follows from the study of the monodromy filtration
given by the nilpotent residue morphisms of $\G$.

In the last section \S4, we prove the stability of $\iota$-mixedness and
weights under the six operations. Roughly speaking, we reduce
to the case treated in the previous section by using Kedlaya's
semistable reduction theorem (see \cite{kedlaya-semistableIV}).
Since we need to take special care of the boundary in $p$-adic
cohomology theory compared to the case of $\ell$-adic theory, the proof
of this theorem is slight involved.
Finally, by using Noot-Huyghe's Fourier transform
(\cite{Noot-Huyghe-fourierI}), we establish the stability of the
$\iota$-purity under intermediate extension, which is a $p$-adic
analogue of Gabber's purity theorem. We conclude the section with a few
applications: the weight filtration for an $\iota$-mixed object of
$F\text{-}\mathrm{Ovhol}(Y/K)$, the semi-simplicity of $\iota$-mixed
$F$-complexes after forgetting Frobenius structures, and some
$\ell$-independence type results.

\subsection*{Acknowledgment}
The first author (T.A.) would like to thank F. Trihan for asking a
question whose answer is nothing but Corollary \ref{anstoTri}.
He would also like to thank O. Gabber for sending the original proof of
his purity theorem, and giving some opinion of his against what
(T.A.) was thinking. Finally (T.A.) would like to thank T. Saito for his
continuous encouragement and numerable suggestions. This work was
supported by Grant-in-Aid for Research Activity Start-up 23840006.

The second author (D.C.) is grateful to A. P\'{a}l.
P\'{a}l noticed that Fujiwara-Gabber's $\ell$-independence in the form
of Theorem \ref{FujGabbind} was a consequence of a $p$-adic version of
Gabber's purity theorem. His remark gave one motivation of the second
author to undertake this work on weight theory.

\subsection*{Notation and convention}
Throughout this paper, $\V$ is a complete discrete valuation ring with
mixed characteristic $(0,p)$, $K$ is its field of fractions, $k$ is its
residue field which is assumed to be perfect.
We suppose that there exists a lifting 
of the Frobenius of $k$ to $\V$.
In principle, we denote formal schemes by using script
fonts (e.g.\ $\X$), and the special fiber is denoted by the
corresponding capital letter (e.g.\ $X$). A $k$-variety is a separated
scheme of finite type over $k$.
We remark that without loss of generality, one could assume varieties over $k$ are reduced if needed.
We often use $\star$ to abbreviate when it is obvious what should be
put and do not want to introduce too many notations.
If there is no risk of confusion, we sometimes use $(-)$ by meaning
respectively. For example ``$p^{(\prime)}\cong q^{(\prime)}$''
means ``$p\cong q$ (resp.\ $p'\cong q'$)''.

In this paper, we consider one of the following situations:
\begin{itemize}
 \item[(A)] Let $q= p ^s$ be a power of $p$, and fix a lifting $\sigma$
	    of the $s$-th Frobenius $F$ of $k$ to $\V$ and
	    $K$. (\S\ref{firstsection})

 \item[(B)] We are in situation (A), and moreover, we assume that $k$ is
	    a finite field with $q$ elements, and $F$ is the $s$-th
	    Frobenius. We fix an isomorphism
	    $\iota\colon\overline{\mb{Q}}_p\cong\mb{C}$. (Except for
	    \S\ref{firstsection})
\end{itemize}

\section{Preliminaries on arithmetic $\D$-modules}
\label{firstsection}
Throughout this section, we consider situation (A) in Notation and
convention.
\subsection{Arithmetic $\D$-modules over frames and couples}
In this subsection, we fix terminologies, and recall the definitions of
six functors in the theory of arithmetic $\D$-modules. Fundamental
additional properties are established from the next subsection.

\begin{dfn}
 We define the following categories:

 (i) A {\em frame} $(Y,X,\PP)$ is the data consisting of a separated and
 smooth formal scheme  $\PP$ over $\V$, a closed subvariety $X$ of $P$,
 an open subscheme $Y$ of $X$. A morphism of frames $u = (b,a,f)\colon
 (Y,X,\PP) \to (Y',X',\PP')$ is the data consisting of morphisms  $b
 \colon  Y' \to Y$, $a\colon X' \to X$, $f\colon \PP' \to \PP$ such that
 $f$ induces the other ones.

 (ii) An {\em l.p.\ frame}\footnote{Abbreviation of ``locally proper
 frame''.} $(Y,X,\PP,\QQ)$ is the data consisting of a proper and smooth
 formal scheme  $\QQ$ over $\V$, an open formal subscheme $\PP$ of
 $\QQ$, a closed subvariety $X$ of $P$, an open subscheme $Y$ of $X$. A
 morphism of frames $u = (b,a,g,f)\colon (Y,X,\PP,\QQ) \to
 (Y',X',\PP',\QQ')$ is the data consisting of morphisms  $b \colon  Y'
 \to Y$, $a\colon X' \to X$, $g\colon \PP' \to \PP$, $f\colon \QQ' \to
 \QQ$ such that $f$ induces the other ones. A morphism of l.p.\ frames
 is said to be {\em complete} if $a$ is proper.
\end{dfn}

\begin{dfn}
 \label{defi-(d)plongprop}
 We define the category of {\em couples}\footnote{In
 \cite{caro:formal6}, couples are called ``properly realizable
 couples''. Contrary to {\it ibid.}, we only use this type of couples,
 so we simplify the name.}
 as follows:
 \begin{itemize}
  \item An object $(Y,X)$ is the data consisting of a $k$-variety $X$
	and an open subscheme $Y$ of $X$ such that there exists an l.p.\
	frame of the form $(Y,X, \PP, \QQ)$.

  \item A morphism $(Y',X')\to (Y,X)$ of couples is a
	data $(b,a)$ consisting morphisms of $k$-varieties $a \colon
	X'\to X$ and $b\colon Y' \to Y$ such that $b$ is induced by
	$a$.
	The morphism $u$ is said to be
	{\em complete} if $a$ is proper. Let $\mathsf{P}$ be a
	property of morphisms of schemes. We say\footnote{ The ``c-''
	stands for ``complete''.}
	that $u$ is {\em c-$\mathsf{P}$} if $u$ is complete and $b$
	satisfies $\mathsf{P}$. (e.g.\ $u$ is a c-immersion if $u$ is
	proper and $b$ is an immersion.)
 \end{itemize}
 For a couple $(Y,X)$, we sometimes denote by $\mb{Y}$ using the bold
 font corresponding to the first data of the couple.
\end{dfn}

Let $u =\colon \mb{Y} \to \mb{Y}'$ be a morphism of couples.
By definition, there exist 
some  l.p.\ frames of the form
$(Y,X, \PP,\QQ)$ and $(Y',X',\PP', \QQ')$ such that 
$ \mb{Y}=  (Y,X)$ and $\mb{Y'}=(Y',X')$.
By replacing $\QQ$ by $\QQ \times \QQ'$ (resp. $\PP$ by $\PP \times \PP'$)
if necessary, we check that 
there exists a morphism of l.p.\ frames
$(b,a,g,f)\colon(Y,X, \PP,\QQ) \to  (Y',X',\PP', \QQ')$ of $u$, namely a
morphism of l.p.\ frames such that $u=(b,a)$. Moreover, we may take $g$
and $f$ to be smooth. When $u$ is complete, we can even choose $g$,
$f$ to be proper.

\begin{dfn}
 The category of {\em realizable varieties} is the full subcategory of
 the category of varieties over $k$ consisting of $X$ such that there
 exists an immersion into a proper and smooth formal scheme over $\V$.
\end{dfn}

\begin{empt}
 \label{Frobformstr}
 Let $\mc{A}$ be an additive category endowed with an additive
 endofunctor $F^*\colon\mc{A}\rightarrow\mc{A}$. We define the category
 $F\text{-}\mc{A}$ of $F$-objects of $\mc{A}$ as follows: $F$-objects
 consist of $(\E,\Phi)$ where $\E$ is an object of $\mc{A}$, and
 $\Phi\colon\E\xrightarrow{\sim} F^*\E$ is an isomorphism of
 $\E$. Later, we take $F$ to be the Frobenius pull-back, and $\Phi$ is
 called the {\em Frobenius structure}.
 A morphism of $F$-objects is a morphism in
 $\mc{A}$ commuting with Frobenius structures. We have a {\em faithful}
 additive functor $\varrho\colon F\text{-} \mc{A}\rightarrow \mc{A}$
 defined by forgetting Frobenius structures. If $\mc{A}$ is abelian,
 $F\text{-}\mc{A}$ is abelian as well.

 Now, let $\mc{T}$ be a triangulated category.
 Assume given an additive endofunctor
 $F^*$ of $\mc{T}$. 
 The category
 $F\text{-}\mc{T}$ has a shift functor in an obvious way, and a triangle
 in $F\text{-}\mc{T}$ is said to be {\em distinguished} if it is
 distinguished triangle after we forget Frobenius structures. We simply
 say triangle instead of distinguished triangle for the rest of the
 paper. We caution, however, that $F\text{-}\mc{T}$ is {\em not}
 triangulated in general. More precisely, it only satisfies
 (TR1), (TR2) of \cite{HaRD}.
\end{empt}

\begin{empt}
 Before defining some notions, the notion of {\em overholonomic
 complexes} is defined in \cite[3.1]{caro_surholonome}, which we do not
 recall here. The derived category of overholonomic
 $\D^\dag_{\X,\Q}$-complexes is denoted by
 $D^{\mathrm{b}}_{\mathrm{ovhol}}(\X)$.
 Let $\X$ be a smooth formal scheme over $\V$, and $Z$ be a
 closed subscheme of $X$. Then the {\em localization functor}
 $\R\underline{\Gamma}_Z^\dag\colon
 D^{\mathrm{b}}_{\mathrm{ovhol}}(\X)\rightarrow
 D^{\mathrm{b}}_{\mathrm{ovhol}}(\X)$ is defined in
 \cite[2.2]{caro_surcoherent} (see also
 \cite[1.15]{caro_surholonome}). We note
 that Berthelot defined a similar functor in \cite{Beintro2}, but these
 functors are not known to coincide unless $Z$ is a divisor of $X$.
 There exists a {\em functorial cone} for the homomorphism
 $\R\underline{\Gamma}_Z^\dag\rightarrow\mr{id}$ by
 \cite[4.4.3]{caro-stab-sys-ind-surcoh}, which is
 denoted by $(^\dag Z)$. When we are given two closed subschemes $Z_1$,
 $Z_2$ of $X$, we have a canonical isomorphism
 $\R\underline{\Gamma}_{Z_1}^\dag\circ
 \R\underline{\Gamma}_{Z_2}^\dag\cong \R\underline{\Gamma}_{Z_1\cap
 Z_2}^\dag$ by \cite[2.2.8]{caro_surcoherent}. 
 Other important properties of the functor are that when we
 have a closed immersion of smooth formal schemes $i\colon
 \ZZ\hookrightarrow\X$, then canonically
 $i_+\circ i^!\cong\R\underline{\Gamma}_Z^\dag$ (cf.\
 \cite[3.1.12]{caro_surcoherent}), and if a coherent module $\E$ is
 supported on $Z$, then the canonical homomorphism
 $\R\underline{\Gamma}_Z^\dag(\E)\rightarrow\E$ is an
 isomorphism (cf.\ \cite[2.2.9]{caro_surcoherent}).

 \begin{dfn*}
 \label{prop-nota-Dsurhol}  
  (i) Let $(Y,X,\PP)$ be a frame. We define the category
  $D^{\mathrm{b}}_{\mathrm{ovhol}} (Y, \PP/K)$ to be the full
  subcategory of $D ^{\mathrm{b}} _{\mathrm{ovhol}} (\D ^\dag_{\PP,\Q})$
  consisting of complexes $\E$ such that there exists an isomorphism of the form $\E
  \riso\R \underline{\Gamma} ^\dag _{Y} (\E ):= \R
  \underline{\Gamma}^\dag _{X}\circ(\hdag X \setminus Y) (\E)$  
  (cf.\ \cite[3.2.1]{caro-2006-surcoh-surcv}). The category
  is triangulated.

  (ii) Now, let $\mb{Y}$ be a couple (cf.\
  \ref{defi-(d)plongprop}). Take an l.p.\ frame $(Y,X,\PP,\QQ)$ of
  $\mb{Y}$. By using Lemma \cite[2.5]{caro:formal6}, we can check that
  the category $D^\mathrm{b} _\mathrm{ovhol} (Y,\PP/K)$ does not depend,
  up to a canonical equivalence, on the choices of $\PP$ and $\QQ$ (i.e. this only depends on $\mb{Y}=(Y,X)$ which is fixed). 
  Hence, we can denote $D ^\mathrm{b} _\mathrm{ovhol}  (Y,\PP/K)$ by $D ^\mathrm{b}
  _\mathrm{ovhol} (\mb{Y}/K)$. Its objects are called
  {\em overholonomic complexes of arithmetic $\D$-modules on
  $\mb{Y}$}, or simply {\em overholonomic complexes on $\mb{Y}$}.

  (iii) Finally, let us consider Frobenius structure. On the category $D
  ^{\mathrm{b}}_{\mathrm{ovhol}} (Y,\PP/K)$, we have the Frobenius
  pull-back functor. Thus the results of paragraph \ref{Frobformstr}
  can be applied to get the category
  $F\text{-}D^{\mathrm{b}}_{\mathrm{ovhol}} (Y,\PP/K)$. We also have
  $F\text{-}D ^\mathrm{b} _\mathrm{ovhol} (\mb{Y}/K)$.
 \end{dfn*}
\end{empt}

\begin{rem*}
 Our category $F\text{-}D^{\mathrm{b}}_{\mathrm{ovhol}} (Y, \PP/K)$ is
 an analog of the category of ``Weil complexes''
 \cite[VI.18]{KW-Weilconjecture}, so even though it is not triangulated,
 it gives us a suitable formalism for the theory of weights.
\end{rem*}

\begin{empt}
 Let $\mb{Y}$ be a couple. Choose an l.p.\ frame $(Y,X,\PP,\QQ)$ of
 $\mb{Y}$.

 (i) We have the dual functor
 \begin{equation*}
  \DD _{Y, \PP}\colon F\text{-}D ^\mathrm{b} _\mathrm{ovhol}  (Y,\PP/K)
   \to F\text{-}D ^\mathrm{b} _\mathrm{ovhol}  (Y,\PP/K).
 \end{equation*}
 This is defined as follows: we denote by $\DD _{\PP}$ the $\D ^\dag
 _{\PP,\Q}$-linear dual (e.g.\ see \cite{virrion} or
 \cite{Beintro2}). We define $\DD_{Y,\PP}$ by $\DD_{Y, \PP}( \E) :=\R
 \underline{\Gamma} ^\dag _{Y} \circ \DD _{\PP}(\E)$ for any $\E
 \in  F\text{-}D ^\mathrm{b} _\mathrm{ovhol}  (Y,\PP/K)$. The functor
 $\DD_{Y,\PP}$ only depends on the couple $\mb{Y}$ (more precisely, 
 by using\footnote{We may check
 that this is not a vicious circle.}
 Lemma \ref{t-gen-coh-PXTindtPsurhol}, 
 we can check that the functor  $\DD_{Y,\PP}$ is compatible with the 
 canonical equivalences of categories of independence of 
\ref {prop-nota-Dsurhol}).
This functor, called the {\em
 dual functor},
 is denoted by
 \begin{equation*}
  \DD _{\mb{Y}}\colon
   F\text{-}D ^\mathrm{b} _\mathrm{ovhol} (\mb{Y}/K)
   \to
   F\text{-}D ^\mathrm{b} _\mathrm{ovhol} (\mb{Y}/K).
 \end{equation*}

 (ii) The tensor product $\smash{\overset{\L}{\otimes}} ^{\dag}_{\O
 _{\PP,\Q}}$ is defined on $F\text{-}D ^\mathrm{b}
 _\mathrm{ovhol} (\D ^\dag _{\PP,\Q})$.
 By \cite{caro-stab-prod-tens},
 the bifunctor $\smash{\overset{\L}{\otimes}} ^{\dag}_{\O
 _{\PP,\Q}}[-\dim(\PP)]$ is independent of the choice of $\PP$ and is
 denoted by
 \begin{equation*}
  \widetilde{\otimes}_{\mb{Y}}
   \colon 
   F\text{-}D ^\mathrm{b} _\mathrm{ovhol}  (\mb{Y}/K)
   \times
   F\text{-}D ^\mathrm{b} _\mathrm{ovhol}  (\mb{Y}/K)
   \to 
   F\text{-}D ^\mathrm{b} _\mathrm{ovhol}  (\mb{Y}/K).
 \end{equation*}
 This is called the {\em twisted tensor product functor}.
 For simplicity, we sometimes abbreviate $\widetilde{\otimes}_{\mb{Y}}$
 by $\widetilde{\otimes}$.
 
 (iii) We define the {\em tensor product} by
 \begin{equation*}
  (-)\otimes_{\mb{Y}}(-):=\DD_{\mb{Y}}\,\bigl(\DD_{\mb{Y}}(-)
   \widetilde{\otimes}_{\mb{Y}}\,\DD_{\mb{Y}}(-)\bigr)\colon
   D^{\mr{b}}_{\mr{ovhol}}(\mb{Y}/K)\times
   D^{\mr{b}}_{\mr{ovhol}}(\mb{Y}/K)\rightarrow
   D^{\mr{b}}_{\mr{ovhol}}(\mb{Y}/K).
 \end{equation*}
\end{empt}

\begin{rem*}
 In the spirit of Riemann-Hilbert correspondence, our
 $-\widetilde{\otimes}-$ corresponds to
 $\DD\bigl(\DD(-)\otimes\DD(-)\bigr)$ in the theory of constructible
 sheaves, which justifies our notation. 
\end{rem*}

\begin{empt}
 \label{operation-cohomoYXsurhol}
 Let $u \colon\mb{Y}\to\mb{Y}'$ be a morphism of couples. We have the
 following cohomological functors.

 (i) We have {\em ordinary} and {\em extraordinary pull-back by $u$}:
 \begin{equation*}
   u ^+ , \, u  ^{!} 
   \colon  
   F\text{-}D ^\mathrm{b} _\mathrm{ovhol} (\mb{Y}'/K)
   \to 
   F\text{-}D ^\mathrm{b} _\mathrm{ovhol} (\mb{Y}/K).
 \end{equation*}
 They are constructed as follows:
 choose a morphism of l.p.\ frames $\widetilde{u}=(\star,\star,g,\star)
 \colon (Y, X, \PP, \QQ) \to(Y', X', \PP', \QQ')$ of $u$.
 Then $\widetilde{u}^!  := \R \underline{\Gamma} ^\dag _{Y}  \circ g
 ^{!}$ does not depend on the choices of l.p.\ frames (for instance, see Lemma \cite[2.5]{caro:formal6}). 
 We define $u^!$
 to be $\widetilde{u}^!$. Using this, we define
 $\widetilde{u}^+:=\DD_{Y,\PP}\circ \widetilde{u}^!\circ \DD_{Y',\PP'}$,
 and $u ^+  := \DD _{\mb{Y}} \circ u ^{!}\circ \DD _{\mb{Y}'}$.

 (ii) If $u $ is complete (cf.\ Definition \ref{defi-(d)plongprop}),
 we have the {\em ordinary} and {\em extraordinary direct image by $u$}:
 \begin{equation*}
   u _{!}, \,u _{+}\colon  F\text{-}D ^\mathrm{b}
   _\mathrm{ovhol} (\mb{Y}/K)
   \to
   F\text{-}D ^\mathrm{b} _\mathrm{ovhol} (\mb{Y}'/K).
 \end{equation*}
 There are constructed as follows:
 choose a morphism of l.p.\ frames $\widetilde{u}=(\star,\star,g,\star)
 \colon (Y, X, \PP, \QQ) \to(Y', X', \PP', \QQ')$  for $u$.
 Then we put $\widetilde{u} _+ := g _+$, which does not depend on the
 choices and define $u_+:=\widetilde{u}_+$. As usual, we put
 $\widetilde{u}_!:=\DD_{Y',\PP'}\circ u_+\circ \DD_{Y,\PP}$, and $u _!
 := \DD_{\mb{Y}'} \circ u _{+} \circ \DD _{\mb{Y}}$.
\end{empt}

\begin{empt}
 \label{extrafunctordfn}
 Let us define some extra functors.

 (i) Let $\mb{Y}=(Y,X)$ and $\mb{Y}'=(Y',X')$ be couples.
 Let $\mb{Y}'':=(Y\times Y',X\times X')$, and denote by
 $p^{(\prime)}\colon\mb{Y}''\rightarrow\mb{Y}^{(\prime)}$ the
 projections. We define the {\em exterior tensor product} by
 \begin{equation*}
  (-)\boxtimes_K(-):=p^!(-)\widetilde{\otimes}_{\mb{Y}''}\,p'^!(-)
   \colon
   D^{\mr{b}}_{\mr{ovhol}}(\mb{Y}/K)\times
   D^{\mr{b}}_{\mr{ovhol}}(\mb{Y}'/K)\rightarrow
   D^{\mr{b}}_{\mr{ovhol}}(\mb{Y}''/K).
 \end{equation*}

 (ii) Let $\mb{Y}=(Y,X)$ be a couple. Let $U$ be a subscheme of $Y$,
 $\overline{U}$ be the closure of $U$ in $X$, $\mathbb{U}:=
 (U,\overline{U})$ and $u\colon \mb{U} \rightarrow \mb{Y}$
 be the canonical c-immersion. We put
 \begin{equation*}
  \R\underline{\Gamma}^\dag_U:=u_+\circ u^!
   \colon
   F\text{-}D ^\mathrm{b}_\mathrm{ovhol} (\mb{Y}/K)\rightarrow
   F\text{-}D ^\mathrm{b}_\mathrm{ovhol} (\mb{Y}/K).
 \end{equation*}
 Moreover, we put
 $(^\dag U) :=   \R\underline{\Gamma}^\dag _{ Y\backslash U}$.
 If  $U _1 $ and $U _2$ are subschemes of $Y$, we have
 $\R\underline{\Gamma}^\dag_{U _1} \circ \R\underline{\Gamma}^\dag_{U
 _2}\riso \R\underline{\Gamma}^\dag_{U _1 \cap U _2}$ and
 $(\hdag U _1) \circ (\hdag U _2)\riso(\hdag U _1 \cup U _2)$.
 When $Z$ is closed subscheme of $Y$, we have the following
 {\em localization triangle} of functors:
 \begin{equation*}
   \R\underline{\Gamma}^\dag_Z\rightarrow\mr{id}
   \rightarrow(^\dag Z)
   \xrightarrow{+1}.
 \end{equation*}

 (iii) Finally, let $(Y,X,\PP)$ be a frame, and $\E\in
 D^{\mr{b}}_{\mr{ovhol}}(Y,\PP/K)$. For a subscheme $Y'$ of $Y$, let
 $X'$ be the closure of $Y'$ in $X$ and
 $j=(\star,\star,\mr{id})\colon(Y',X',\PP)\rightarrow(Y,X,\PP)$ be the
 morphism of frames. We define $\E\Vert_{Y'}$ to be $j^!(\E)$.
\end{empt}

\begin{empt}
\label{relativedual}
 We recall the following useful properties:

 (i) Let $u \colon \mb{Y} \to \mb{Y}'$ be a morphism of couples. From
 \cite[2.1.9]{caro-stab-prod-tens},
 for any $\E,\FF \in F\text{-}D ^\mathrm{b} _\mathrm{ovhol}
 (\mb{Y}'/K)$, we have the canonical isomorphism in
 $F\text{-}D^\mathrm{b} _\mathrm{ovhol} (\mb{Y}/K)$:
 \begin{equation}
  \label{otimes-comm-u!}
   u ^{!}\bigl( \E \widetilde{\otimes}_{\mb{Y}'} \FF\bigr)\riso
   u ^{!} ( \E )\widetilde{\otimes}_{\mb{Y}}\,u ^{!} ( \FF ).
 \end{equation}
 
 (ii) Let $u \colon \mb{Y} \to \mb{Y}'$ be a c-proper morphism of
 couples. From the relative duality isomorphism together with
 \cite[4.14]{Abe-Frob-Poincare-dual}, we get the isomorphism $u _{!}
 \riso u _{+}$.
\end{empt}

\begin{lem}
\label{adju+u+couple}
Let $u \colon\mb{Y}\to\mb{Y}'$ be a complete morphism of couples.
We have adjoint pairs $(u^+,u_+)$ and $(u_!,u^!)$.
\end{lem}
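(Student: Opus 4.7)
The plan is to prove the pair $(u_!, u^!)$ first and then deduce $(u^+, u_+)$ via duality. Indeed, using $u^+ = \DD_{\mb{Y}} \circ u^! \circ \DD_{\mb{Y}'}$, $u_! = \DD_{\mb{Y}'} \circ u_+ \circ \DD_{\mb{Y}}$, and the involutive anti-equivalence property of $\DD_{\mb{Y}}$, the passage is formal:
\[
\mathrm{Hom}(u^+\FF,\E) \cong \mathrm{Hom}(\DD_{\mb{Y}}\E,\, u^!\DD_{\mb{Y}'}\FF) \cong \mathrm{Hom}(u_!\DD_{\mb{Y}}\E,\, \DD_{\mb{Y}'}\FF) \cong \mathrm{Hom}(\FF,\, u_+\E).
\]
Since $u$ is complete, the identification $u_! \riso u_+$ from \ref{relativedual}(ii) further reduces the task to exhibiting a natural isomorphism $\mathrm{Hom}(u_+\E, \FF) \cong \mathrm{Hom}(\E, u^!\FF)$.

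To produce it, I would choose a morphism of l.p.\ frames $(b,a,g,f)\colon(Y,X,\PP,\QQ)\to(Y',X',\PP',\QQ')$ of $u$ with $g\colon\PP\to\PP'$ proper, which is available because $u$ is complete. By \ref{operation-cohomoYXsurhol}, this realises $u_+$ as $g_+$ and $u^!$ as $\R\underline{\Gamma}^\dag_Y\circ g^!$. The key external input is Virrion's relative duality \cite{virrion}, providing the adjunction $(g_+, g^!)$ on the ambient overholonomic categories over $\PP$ and $\PP'$. This is then combined with two elementary observations: first, for $\E\in D^{\mr{b}}_{\mr{ovhol}}(Y,\PP/K)$, a support argument (using that $g$ maps $Y$ into $Y'$ together with $\E\riso\R\underline{\Gamma}^\dag_Y\E$) shows $g_+\E\riso\R\underline{\Gamma}^\dag_{Y'}g_+\E$, placing $g_+\E$ in $D^{\mr{b}}_{\mr{ovhol}}(Y',\PP'/K)$; second, $\R\underline{\Gamma}^\dag_Y$ is right adjoint to the inclusion $D^{\mr{b}}_{\mr{ovhol}}(Y,\PP/K)\hookrightarrow D^{\mr{b}}_{\mr{ovhol}}(\D^\dag_{\PP,\Q})$, which follows from applying $\mathrm{Hom}(\E,-)$ to the localization triangle of \ref{extrafunctordfn}(ii) and observing that $\mathrm{Hom}(\E,(\hdag Y)\G) = 0$ whenever $\E\riso\R\underline{\Gamma}^\dag_Y\E$. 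Chaining these gives
\[
\mathrm{Hom}(u_+\E,\FF) = \mathrm{Hom}(g_+\E,\FF) \riso \mathrm{Hom}(\E, g^!\FF) \riso \mathrm{Hom}(\E, \R\underline{\Gamma}^\dag_Y g^!\FF) = \mathrm{Hom}(\E, u^!\FF).
\]

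For the Frobenius-equivariant version, each constituent functor ($g_+$, $g^!$, $\R\underline{\Gamma}^\dag_Y$) commutes with Frobenius pull-back, so the unit and counit produced above intertwine the Frobenius structures, and the adjunction therefore descends through the faithful forgetful functor $\varrho$ of \ref{Frobformstr} to the $F$-category. The principal obstacle is the underlying adjunction $(g_+, g^!)$ for the proper morphism $g$ between smooth formal schemes in the overholonomic setting, with the normalisation (including the relative-dimension shift in $g^!$) matching the conventions of \ref{operation-cohomoYXsurhol}; everything else is formal diagram-chasing using the localisation triangle and support properties.
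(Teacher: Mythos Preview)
Your argument contains a decisive error at the step ``$u_!\riso u_+$ since $u$ is complete.'' Paragraph \ref{relativedual}(ii) gives this isomorphism only for \emph{c-proper} morphisms, i.e.\ when both $a$ and $b$ are proper; a complete morphism requires only $a$ proper. For a c-open immersion $j\colon(Y,X)\to(Y',X)$ with $Y\subsetneq Y'$ open, one has $j_!\not\cong j_+$ in general, so your reduction breaks down precisely in the case the paper singles out.

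The root cause lies in the claim that $\R\underline{\Gamma}^\dag_Y$ is \emph{right} adjoint to the inclusion $D^{\mr{b}}_{\mr{ovhol}}(Y,\PP/K)\hookrightarrow D^{\mr{b}}_{\mr{ovhol}}(\D^\dag_{\PP,\Q})$. The localization triangle of \ref{extrafunctordfn}(ii) applies to a \emph{closed} subscheme, but $Y$ is only locally closed in $P$; when for instance $X=P$, the functor $\R\underline{\Gamma}^\dag_Y=(\hdag X\setminus Y)=j_+j^!$ is a reflection, hence the \emph{left} adjoint to the inclusion, not the right one. Concretely, the isomorphism $\mathrm{Hom}(\E,\G)\cong\mathrm{Hom}(\E,\R\underline{\Gamma}^\dag_Y\G)$ you need fails whenever $\E=j_+\E_0$ admits a nonzero morphism to an object supported on $X\setminus Y$, which certainly occurs. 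Thus your chain of isomorphisms would at best establish $(u_+,u^!)$, which is neither of the adjunctions in the lemma and is in fact false for c-open immersions. The paper's proof avoids this by factoring $u$ as a c-open immersion (where $(u^+,u_+)$ is checked directly) followed by a c-proper morphism (where $u_!\riso u_+$ legitimately holds and Virrion's adjunction applies); your argument is essentially the c-proper half, missing the separate treatment required for the c-open part.
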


\begin{proof}
 By transitivity with the composition, we reduce to the case where
 $u$ is a c-open immersion and $u$ is c-proper.
 In the first case, we may check
 easily that we have adjoint pairs $(u^+,u_+)$ (and then  $(u_!,u^!)$ by
 duality). In the second case, from the relative duality isomorphism, we
 need to check that we have adjoint pairs $(u_+,u^!)$, which follows
 from \cite{Vir04}.
\end{proof}

\subsection{t-structures}
In this subsection, we introduce a t-structure on the triangulated
category $F\text{-}D^{\mr{b}}_{\mr{ovhol}}(\mb{Y}/K)$ for a couple
$\mb{Y}$. The heart of this t-structure is an analogue of the category
of holonomic $\D$-modules for the theory of algebraic $\D$-modules.
\medskip

 Let $\mathbb{Y}= (Y,X)$ be a couple (cf.\ Definition
 \ref{defi-(d)plongprop}). Choose
 an l.p.\ frame $(Y,X,\PP,\QQ)$ of $\mb{Y}$. Let $Z$ be a closed
 subvariety of $P$ (the special fiber of $\PP$ following our convention) 
 so that $Z := X \setminus Y$. We set $\U := \PP
 \setminus Z$. Let us denote by $D ^{\leq 0} (\D^\dag_{\U,\Q})$ (resp.\
 $D ^{\geq 0} (\D^\dag_{\U,\Q})$)
 the strictly full subcategory of $D^{\mathrm{b}}(\D^\dag_{\U,\Q})$
 consisting of complexes $\E$ such that,
 for any $j \geq 1$ (resp.\ for any $j \leq -1$), we have $\H^{j}
 (\E)=0$. We denote by $\tau _{\leq 0}\colon D ^{\mathrm{b}}
 (\D^\dag_{\U,\Q}) \to D ^{\leq 0} (\D^\dag_{\U,\Q})$
 and  $\tau _{\geq 0}\colon D ^{\mathrm{b}} (\D^\dag_{\U,\Q})
 \to D ^{\geq 0} (\D^\dag_{\U,\Q})$ the usual truncation functors.

\begin{dfn}
 \label{nota-t-stru}
 (i) Let $D ^{\leq 0} (Y, \PP/K)$ (resp.\ $D ^{\geq 0} (Y, \PP/K)$) be
 the strictly full subcategory of $D ^{\mathrm{b}}_{\mathrm{ovhol}}
 (Y,\PP/K)$ of complexes $\E$ such that $\E |_\U \in D ^{\leq 0}
 (\D^\dag_{\U,\Q})$ (resp.\ $\E |_\U \in D ^{\geq 0}
 (\D^\dag_{\U,\Q})$).

 (ii) Let $\tau ^{(Y,\PP)} _{\leq 0}\colon D
 ^{\mathrm{b}}_{\mathrm{ovhol}}  (Y, \PP/K) \to D
 ^{\mathrm{b}}_{\mathrm{ovhol}}  (Y, \PP/K)$ and $\tau ^{(Y,\PP)}
 _{\geq 0}\colon D ^{\mathrm{b}}_{\mathrm{ovhol}} (Y, \PP/K) \to D
 ^{\mathrm{b}}_{\mathrm{ovhol}} (Y, \PP/K)$ be the functors defined by
 $\tau ^{(Y,\PP)} _{\leq 0}:= (\hdag Z) \circ \tau _{\leq 0}$, $\tau
 ^{(Y,\PP)} _{\geq 0}:= (\hdag Z) \circ \tau _{\geq 0}$.

 (iii) For any $n\in \Z$, we put $D ^{\leq n} (Y, \PP/K):= D ^{\leq 0}
 (Y, \PP/K)[-n]$, $D ^{\geq n} (Y, \PP/K):= D ^{\geq 0} (Y, \PP/K)[-n]$,
 $\tau ^{(Y,\PP)} _{\leq n}:= [-n] \circ \tau ^{(Y,\PP)} _{\leq 0} \circ
 [n]$ and $\tau ^{(Y,\PP)} _{\geq n}:= [-n] \circ \tau ^{(Y,\PP)} _{\geq
 0} \circ  [n]$.
\end{dfn}

\begin{rem}
 \label{rem-t-structure}
 (i) When we can choose $Z$ to be the support of a divisor of $P$, the
 definitions of \ref{nota-t-stru} are not really new.
 Indeed, in that case we get 
 $D ^{\leq 0} (Y, \PP/K) =D ^{\leq 0} (\D^\dag_{\PP} (\hdag Z)
 _{\Q})$ (resp.\ $D ^{\geq 0} (Y, \PP/K)= 
 D ^{\geq 0} (\D^\dag_{\PP} (\hdag Z) _{\Q})$.
 Moreover, in that case, we simply have
 $\tau ^{(Y,\PP)} _{\leq 0}=\tau _{\leq 0}$ 
 and
 $\tau ^{(Y,\PP)} _{\geq 0}=\tau _{\geq 0}$ (i.e.\ the functor $
 (\hdag Z)$ is useless).

 (ii) Now, suppose that $Z$ is a rational closed point of $\P ^{2} _k$,
 that $\PP= \QQ= \widehat{\mathbb{P}} ^{2} _{\V}$,
 $X= \P ^{2} _k$, and recall that $Y:=X\setminus Z$.
 Since $(\hdag Z) (\O _{\PP, \Q} ) | \U \riso \O _{\U, \Q}$, we get 
 $(\hdag Z) (\O _{\PP, \Q} ) \in D ^{\leq 0} (Y, \PP/K) \cap D ^{\geq 0} (Y, \PP/K)$. 
 However, by using the localization triangle with respect to $Z$
 of $\O _{\PP, \Q}$, we have $\tau _{\leq 0} ((\hdag Z) (\O _{\PP,
 \Q} ) )\riso \O _{\PP, \Q} $, which is not in $D
 ^{\mathrm{b}}_{\mathrm{ovhol}}  (Y, \PP/K)$. This means that the
 functor $(\hdag Z)$ is needed in the definition of $\tau
 ^{(Y,\PP)} _{\geq 0}$.
 Now, let $u$ be a lifting of the closed immersion $Z
 \hookrightarrow P$. We have $\R\underline{\Gamma}^\dag_Z(\O
 _{\PP, \Q})\cong u_+u^!(\O _{\PP, \Q})\cong u_+(K)[-2]$.
 Using the localization triangle again, we have $\tau
 _{\geq 1} ((\hdag Z) (\O _{\PP, \Q} )) \riso u _{+} (K)[-1]$.
 Hence, the object $(\hdag Z) (\O _{\PP, \Q} )$ is not a module in the
 usual sense.
\end{rem}

\begin{lem}
\label{iso-onU}
 Let $\phi \colon \E\to \FF$ be a homomorphism of 
 $D ^{\mathrm{b}}_{\mathrm{ovhol}}  (Y, \PP/K) $. Then, $\phi$ is an
 isomorphism if and only if $\phi |_\U$ is.
\end{lem}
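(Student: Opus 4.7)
The forward direction is immediate, since restriction along the open immersion $\U\hookrightarrow\PP$ is a (triangulated) functor between derived categories and therefore preserves isomorphisms. My plan for the converse is the standard cone argument combined with the defining condition of $D^{\mathrm{b}}_{\mathrm{ovhol}}(Y,\PP/K)$.

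Since $D^{\mathrm{b}}_{\mathrm{ovhol}}(Y,\PP/K)$ is triangulated (Definition \ref{prop-nota-Dsurhol}) and the restriction functor $(-)|_\U$ is triangulated as well, I would form $\CC := \mathrm{Cone}(\phi)$, which still lies in this subcategory. One has $\CC|_\U \simeq \mathrm{Cone}(\phi|_\U) = 0$ by hypothesis, so the claim reduces to the assertion that any $\CC \in D^{\mathrm{b}}_{\mathrm{ovhol}}(Y,\PP/K)$ with $\CC|_\U = 0$ vanishes.

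The crux is to show that $\CC|_\U = 0$ implies $(\hdag Z)(\CC) = 0$. The intuition is that $\CC|_\U = 0$ means every cohomology sheaf of $\CC$ is set-theoretically supported on $Z$, whereas $(\hdag Z)$ amounts at the level of $\O_{\PP,\Q}$-modules to tensoring with the ring $\O_{\PP,\Q}(\hdag Z)$ of overconvergent functions with singularities along $Z$, a process which annihilates sheaves supported on $Z$. Once this is granted, the defining property of the subcategory gives
\[
 \CC \;\simeq\; \R\underline{\Gamma}^\dag_Y(\CC) \;=\; \R\underline{\Gamma}^\dag_X\circ(\hdag Z)(\CC) \;=\; 0,
\]
which completes the argument. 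The only substantive input is the implication $\CC|_\U = 0 \Rightarrow (\hdag Z)(\CC)=0$, for which I expect a direct appeal to Berthelot's construction of $(\hdag Z)$ to suffice; everything else is formal manipulation with the triangulated structure and the definition of $D^{\mathrm{b}}_{\mathrm{ovhol}}(Y,\PP/K)$.
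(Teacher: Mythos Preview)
Your proof is correct and takes essentially the same approach as the paper: form the cone $\CC$, observe that $\CC|_\U=0$ forces $\CC$ to be supported on $Z$, and then use the defining condition $\CC\simeq\R\underline{\Gamma}^\dag_Y(\CC)$ to conclude $\CC=0$. The only cosmetic difference is that the paper phrases the final step as $\CC\xleftarrow{\sim}\R\underline{\Gamma}^\dag_Z(\CC)=0$ (support on $Z$ gives the isomorphism, and $\R\underline{\Gamma}^\dag_Z\circ\R\underline{\Gamma}^\dag_Y=0$ since $Z\cap Y=\emptyset$), whereas you go via $(\hdag Z)(\CC)=0$; these are the two sides of the same localization triangle.
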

\begin{proof}
 Let $\G$ be a mapping cone of $\phi$, is an object of $D
 ^{\mathrm{b}}_{\mathrm{ovhol}}  (Y, \PP/K) $. Since $\phi|_{\U}$ is an
 isomorphic, $\G$ is supported in $Z$. Thus, we have
 $\G\xleftarrow{\sim}\R\underline{\Gamma}^\dag_Z(\G)$.
 But since $\G$ is an object of $D ^{\mathrm{b}}_{\mathrm{ovhol}}  (Y,
 \PP/K) $ we get $\G\riso(\hdag Z ) (\G)$. Thus, we have
 $\G\xrightarrow{\sim}(\hdag Z)(\G)\xleftarrow{\sim}(\hdag Z)
 \bigl(\R\underline{\Gamma}^\dag_Z(\G)\bigr)\cong
 \bigl((\hdag Z)\circ\R\underline{\Gamma}^\dag_Z \bigr)(\G)
 =0$ where the last equality follows by the localization triangle
 \ref{extrafunctordfn} (ii).
\end{proof}

\begin{lem}
  \label{lem1-appendix}
 Let $\E \in D ^{\leq n} (Y, \PP/K)$ (resp.\ $\E \in D ^{\geq n} (Y,
 \PP/K)$). The canonical homomorphism $\tau ^{(Y,\PP)} _{\leq n} (\E)
 \to \E$ (resp.\ $\E \to  \tau ^{(Y,\PP)} _{\geq n} (\E)$) is an
 isomorphism. In particular, the essential image of $\tau ^{(Y,\PP)}
 _{\leq n}$ (resp.\ $\tau ^{(Y,\PP)} _{\geq n}$) is $D ^{\leq n} (Y,
 \PP/K)$ (resp. $D ^{\geq n} (Y, \PP/K)$).
\end{lem}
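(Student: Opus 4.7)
The plan is to reduce the statement to the analogous fact for the ordinary truncation functors on $\U$, using Lemma~\ref{iso-onU} as the bridge. The essential observation is that on $\U = \PP \setminus Z$ the localization functor $(\hdag Z)$ becomes the identity, so $\tau^{(Y,\PP)}_{\leq n} = (\hdag Z)\circ\tau_{\leq n}$ restricts to the ordinary truncation on $\U$.

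First I would identify the natural map $\tau^{(Y,\PP)}_{\leq n}(\E) \to \E$. For $\E \in D^{\mathrm{b}}_{\mathrm{ovhol}}(Y,\PP/K)$ the localization triangle
\[
 \R\underline{\Gamma}^\dag_Z(\E) \longrightarrow \E \longrightarrow (\hdag Z)(\E) \xrightarrow{+1}
\]
combined with the vanishing $\R\underline{\Gamma}^\dag_Z(\E) = 0$ (which follows from $\E \cong \R\underline{\Gamma}^\dag_Y(\E)$ and $\R\underline{\Gamma}^\dag_Z\circ\R\underline{\Gamma}^\dag_Y = 0$) yields a canonical isomorphism $\E \riso (\hdag Z)(\E)$. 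The map in the statement is then $(\hdag Z)$ applied to the ordinary truncation $\tau_{\leq n}\E \to \E$, post-composed with the inverse of that identification.

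Restricting this natural map to $\U$, the pull-back is $t$-exact and sends $(\hdag Z)$ to the identity, so the arrow becomes the ordinary truncation $\tau_{\leq n}(\E|_\U) \to \E|_\U$. By the definition of $D^{\leq n}(Y,\PP/K)$, when $\E$ lies there this restricted arrow is an isomorphism; Lemma~\ref{iso-onU} then forces the original arrow to be an isomorphism as well. The dual statement for $\tau^{(Y,\PP)}_{\geq n}$ goes through symmetrically. For the ``in particular'' clause, this shows every $\E \in D^{\leq n}(Y,\PP/K)$ is in the essential image; conversely, for any $\E$ the restriction $\tau^{(Y,\PP)}_{\leq n}(\E)|_\U = \tau_{\leq n}(\E|_\U)$ manifestly lies in $D^{\leq n}(\D^\dag_{\U,\Q})$, while $\tau^{(Y,\PP)}_{\leq n}(\E) \in D^{\mathrm{b}}_{\mathrm{ovhol}}(Y,\PP/K)$ follows from $\R\underline{\Gamma}^\dag_Y = \R\underline{\Gamma}^\dag_X\circ(\hdag Z)$, idempotence of $(\hdag Z)$, and the fact that the cohomology of $\tau_{\leq n}\E$ remains supported in $Y$.

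The main difficulty I foresee is pure bookkeeping: correctly identifying the natural map in the statement and confirming that the various support and localization functors commute or interact as expected; once that is settled, the proof is a routine transport of the ordinary $t$-structure verification along the restriction to $\U$, with Lemma~\ref{iso-onU} doing the lifting back to $(Y,\PP/K)$.
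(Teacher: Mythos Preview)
Your proposal is correct and follows the same approach as the paper: the paper's proof is simply ``This follows from Lemma~\ref{iso-onU}'', and your argument is precisely an unpacking of that one-liner---restrict to $\U$, where $(\hdag Z)$ becomes the identity and the map reduces to the ordinary truncation, then use Lemma~\ref{iso-onU} to lift the isomorphism back. Your additional bookkeeping (identifying the canonical map via $\E \riso (\hdag Z)\E$ and verifying that $\tau^{(Y,\PP)}_{\leq n}(\E)$ lands in $D^{\mathrm{b}}_{\mathrm{ovhol}}(Y,\PP/K)$) makes explicit what the paper leaves implicit, but the strategy is identical.
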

\begin{proof}
 This follows from Lemma \ref{iso-onU}.
\end{proof}

\begin{prop}
 The functors $\tau^{(Y,\PP)}_{\star}$ define a t-structure, called the
 {\em canonical t-structure}, on $D
 ^{\mathrm{b}}_{\mathrm{ovhol}}  (Y, \PP/K) $:

 (i) For any homomorphism $f \colon \E \to \FF$ such that $\E \in D
 ^{\leq 0} (Y, \PP/K)$, $\FF \in D ^{\geq 1} (Y, \PP/K)$, we have
 $f=0$.

  (ii) We have the inclusions $D ^{\geq 1} (Y, \PP/K) \subset D ^{\geq
  0} (Y, \PP/K)$, $D ^{\leq 0} (Y, \PP/K) \subset D ^{\leq 1} (Y,
  \PP/K)$.

  (iii) For any $\E\in D ^{\mathrm{b}}_{\mathrm{ovhol}}  (Y, \PP/K) $,
  we have the distinguished triangle
  \begin{equation*}
   \tau ^{(Y,\PP)} _{\leq 0} (\E) \to \E \to \tau ^{(Y,\PP)} _{\geq 1}
    (\E) \xrightarrow{+1}.
  \end{equation*}
\end{prop}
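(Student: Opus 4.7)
The strategy is to transfer the standard t-structure on $D^{\mathrm{b}}(\D^\dag_{\U,\Q})$ to $D^{\mathrm{b}}_{\mathrm{ovhol}}(Y,\PP/K)$ along restriction to the open formal subscheme $\U=\PP\setminus Z$. The crucial preliminary fact is that every $\E\in D^{\mathrm{b}}_{\mathrm{ovhol}}(Y,\PP/K)$ satisfies $(\hdag Z)(\E)\riso\E$. Indeed, from the defining isomorphism $\E\riso\R\underline{\Gamma}^\dag_X\circ(\hdag Z)(\E)$, applying $\R\underline{\Gamma}^\dag_Z$ and using $\R\underline{\Gamma}^\dag_Z\circ\R\underline{\Gamma}^\dag_X\simeq\R\underline{\Gamma}^\dag_{Z\cap X}=\R\underline{\Gamma}^\dag_Z$ together with $\R\underline{\Gamma}^\dag_Z\circ(\hdag Z)\simeq 0$ (from the localization triangle applied to itself), one gets $\R\underline{\Gamma}^\dag_Z(\E)=0$, whence $(\hdag Z)(\E)\riso\E$ by the localization triangle.

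Writing $j\colon\U\hookrightarrow\PP$ for the open immersion, one has $(\hdag Z)\simeq j_+j^+$, so by the adjunction $(j^+,j_+)$ and the previous observation, restriction to $\U$ induces isomorphisms
\begin{equation*}
\mathrm{Hom}(\E,\FF)\riso\mathrm{Hom}(\E,(\hdag Z)\FF)\riso\mathrm{Hom}(\E|_\U,\FF|_\U)
\end{equation*}
for all $\E,\FF\in D^{\mathrm{b}}_{\mathrm{ovhol}}(Y,\PP/K)$; that is, restriction is fully faithful into $D^{\mathrm{b}}(\D^\dag_{\U,\Q})$. The three axioms now drop out easily. For (i), given $\E\in D^{\leq 0}(Y,\PP/K)$ and $\FF\in D^{\geq 1}(Y,\PP/K)$, the group $\mathrm{Hom}(\E|_\U,\FF|_\U)$ vanishes by the standard t-structure on $D^{\mathrm{b}}(\D^\dag_{\U,\Q})$, so fully faithfulness yields $\mathrm{Hom}(\E,\FF)=0$. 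Axiom (ii) is obvious from the corresponding inclusion on $\U$. For (iii), apply the triangulated functor $(\hdag Z)$ to the standard truncation triangle $\tau_{\leq 0}(\E)\to\E\to\tau_{\geq 1}(\E)\xrightarrow{+1}$ in $D^{\mathrm{b}}(\D^\dag_{\PP,\Q})$ and use $(\hdag Z)(\E)\riso\E$ to obtain the asserted triangle.

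What remains is to check that $\tau^{(Y,\PP)}_{\leq n}(\E)$ and $\tau^{(Y,\PP)}_{\geq n}(\E)$ genuinely lie in $D^{\mathrm{b}}_{\mathrm{ovhol}}(Y,\PP/K)$. Overholonomicity is inherited by the cohomology sheaves of $\E$, hence by $\tau_{\leq n}(\E)$, and is preserved by $(\hdag Z)$ as one of the six operations. The support condition $\R\underline{\Gamma}^\dag_Y\simeq\mathrm{id}$ on the truncations reduces, via $(\hdag Z)^2\simeq(\hdag Z)$, to showing that the cohomology of $\tau_{\leq n}(\E)$ is supported on $X$; but this is inherited from the cohomology sheaves of $\E$ degree by degree. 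The main obstacle in the argument is precisely this bookkeeping step: the subcategory stability requires juggling $\R\underline{\Gamma}^\dag_X$, $(\hdag Z)$, and the overholonomicity property simultaneously, so that the distinguished triangle constructed in $D^{\mathrm{b}}(\D^\dag_{\PP,\Q})$ can legitimately be regarded as living in $D^{\mathrm{b}}_{\mathrm{ovhol}}(Y,\PP/K)$. Once this is settled, the axioms follow formally from their $\U$-analogues by fully faithful restriction.
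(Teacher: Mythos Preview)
Your argument for (i) has a genuine gap. You claim that restriction $(-)|_\U$ is fully faithful on $D^{\mathrm{b}}_{\mathrm{ovhol}}(Y,\PP/K)$, justifying this via an identification $(\hdag Z)\simeq j_+j^+$ for the open immersion $j\colon\U\hookrightarrow\PP$ together with the adjunction $(j^+,j_+)$. But in the arithmetic $\D$-module setting this identification fails: interpreting $j^+=j^*$ and $j_+=\R j_*$ (the only adjoint pair readily available between $D(\D^\dag_{\PP,\Q})$ and $D(\D^\dag_{\U,\Q})$), one has $(\hdag Z)\not\simeq\R j_*j^*$ already on the structure sheaf, since the overconvergent functions $\O_{\PP}(\hdag Z)_\Q$ form a strict subsheaf of $j_*\O_{\U,\Q}$. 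So your adjunction step does not establish $\mathrm{Hom}_\PP(\E,\FF)\cong\mathrm{Hom}_\U(\E|_\U,\FF|_\U)$, and the vanishing in (i) is left unproved.

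The paper's proof of (i) is direct and sidesteps this entirely. Since $\FF|_\U\in D^{\geq 1}(\D^\dag_{\U,\Q})$, the complex $\tau_{\leq 0}(\FF)$ vanishes on $\U$; hence $\tau^{(Y,\PP)}_{\leq 0}(\FF)=(\hdag Z)\tau_{\leq 0}(\FF)$ is an object of $D^{\mathrm{b}}_{\mathrm{ovhol}}(Y,\PP/K)$ whose restriction to $\U$ is zero, so it is itself zero (as in Lemma~\ref{iso-onU}). By Lemma~\ref{lem1-appendix} the canonical map $\tau^{(Y,\PP)}_{\leq 0}(\E)\to\E$ is an isomorphism, and naturality of this transformation forces $f$ to factor as
\[
\E\;\cong\;\tau^{(Y,\PP)}_{\leq 0}(\E)\;\xrightarrow{\ \tau^{(Y,\PP)}_{\leq 0}(f)\ }\;\tau^{(Y,\PP)}_{\leq 0}(\FF)=0\;\longrightarrow\;\FF,
\]
hence $f=0$. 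Your treatments of (ii), (iii), and the subcategory-stability bookkeeping are fine and essentially match the paper's (terser) handling.
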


\begin{proof}
 Since $\tau ^{(Y,\PP)} _{\leq 0} (\FF)$ has its support in $Z$, $\tau
 ^{(Y,\PP)} _{\leq 0} (\FF)=0$. From Lemma \ref{lem1-appendix}, we get
 that $f$ factorizes through $\tau ^{(Y,\PP)} _{\leq 0} (f)=0$. Hence we
 get the first assertion.  The other ones are obvious.
\end{proof}

\begin{dfn}
 We denote by $\mathrm{Ovhol} (Y, \PP/K) $ the heart of the canonical
 t-structure on $D ^{\mathrm{b}}_{\mathrm{ovhol}}  (Y, \PP/K) $. We
 define for any integer $n$, the $n$-th cohomology functor $\HH^{n}
 \colon D^{\mathrm{b}}_{\mathrm{ovhol}}  (Y, \PP/K)  \to \mathrm{Ovhol}
 (Y, \PP/K) $ by putting $\HH ^{n} (\E): = \tau ^{(Y,\PP)} _{\leq 0}
 \tau ^{(Y,\PP)} _{\geq 0} (\E [n])$ for any $\E \in D
 ^{\mathrm{b}}_{\mathrm{ovhol}}  (Y, \PP/K) $.
\end{dfn}

Let $(Y,X,\PP,\QQ)$ be an l.p.\ frame. An important example of object
of $\mr{Ovhol}(Y,\PP/K)$ is $(^\dag Z)\O_{\PP,\Q}$ where $Z:=X\setminus
Y$.

\begin{rem}
 \label{rem-scrHcalH}
 (i) Let $\E \in D ^{\mathrm{b}}_{\mathrm{ovhol}}  (Y, \PP/K) $. 
 We get a natural morphism $\H ^{n} (\E) \to \HH ^{n} (\E)$, where $\H
 ^{n} (\E)$ is the usual $n$-th cohomology functor. 
 When $Z$ can be chosen to be the support of a divisor of $P$, this morphism is an isomorphism. 
 But in general this is not the case (e.g.\ see the example given in the
 second remark \ref{rem-t-structure}). However, the induced morphism
 \begin{equation}
  \label{calHscrH}
   (\hdag Z) \circ \H ^{n} (\E) \to \HH ^{n} (\E),
 \end{equation}
 is an isomorphism since this is the case outside $Z$.

 (ii) 
 For any $n \in \Z$, $\E \in D ^{\mathrm{b}}_{\mathrm{ovhol}}  (Y,
 \PP/K) $, we have $\HH ^n (\E) |_\U \riso \mathcal{H} ^n (\E
 |_\U)$. Hence, from Lemma \ref{iso-onU}, $\HH ^n (\E) =0$ if
 and only if $\H ^n (\E |_\U)=0$.

 (iii) When $Z$ can be chosen to be the support of a divisor of $P$, 
 the category $\mathrm{Ovhol} (Y, \PP/K) $ is simply the category of 
 overholonomic $\D^\dag_{\PP} (\hdag Z) _{\Q}$-modules, i.e. 
 the (usual) heart of the category 
 $D ^{\mathrm{b}} _{\mathrm{ovhol}} (\D^\dag_{\PP} (\hdag Z) _{\Q})$.

 (iv) 
 We remark that the category $\mathrm{Ovhol} (Y, \PP/K) $ is also the
 strictly full subcategory of $D
 ^{\mathrm{b}}_{\mathrm{ovhol}}(Y, \PP/K) $ consisting of complexes
 $\E$ such that $\mathcal{H} ^i (\E|_\U)=0$ for $i\neq0$. By Lemma
 \ref{lem1-appendix}, the category $\mathrm{Ovhol} (Y, \PP/K) $ is
 also the strictly full subcategory of
 $D^{\mathrm{b}}_{\mathrm{ovhol}}(Y, \PP/K) $ of complexes $\E$ such
 that $\HH^i(\E)=0$ for $i\neq0$. By \ref{calHscrH}, this implies the
 inclusion $\mathrm{Ovhol} (Y, \PP/K)\subset D ^{\geq 0}
 (\D^\dag_{\PP,\Q})$
 (remark that with the example given in the second remark
 \ref{rem-t-structure}, the objects of $\mathrm{Ovhol}(Y, \PP/K)$ are
 not in general modules).
 Indeed, by using Mayer-Vietoris exact triangles and an induction on
 the number of divisors whose intersection is $Z$, we reduce to the
 case where $Z$ is a divisor.
\end{rem}

Next, we complete \cite[4.2.3]{caro-image-directe} with the following
lemma:
\begin{lem}
 \label{t-gen-coh-PXTindtPsurhol}
 Let $u =(\mr{id},\star,\star,\star)\colon (Y, X', \PP', \QQ')\to (Y, X,
 \PP, \QQ)$ be a complete morphism of l.p.\ frames.
\begin{enumerate}
 \item \label{t-gen-coh-PXTindtPsurhol-i} 
       For any $\E \in \mathrm{Ovhol} (Y,\PP/K)$, $\E '\in
       \mathrm{Ovhol}(Y,\PP'/K)$, for any $n \in \Z\setminus \{0\}$,
       \begin{equation*}
	 \HH ^n \bigl( u ^! (\E') \bigr) =0,\qquad
	 \HH ^n \bigl(u _+(\E)\bigr) =0.
       \end{equation*}

 \item \label{t-gen-coh-PXTindtPsurhol-ii} 
       The functors $\HH ^0 u ^! $ and $\HH ^0 u _+$ (resp.\ $u ^! $ and
       $u _+$) induce equivalence of categories between
       $\mathrm{Ovhol} (Y,\PP/K)$ and $\mathrm{Ovhol} (Y,\PP'/K)$
       (resp.\ between $F\text{-}D ^\mathrm{b}_\mathrm{ovhol}
       (Y,\PP/K)$ and $F\text{-}D ^\mathrm{b}_\mathrm{ovhol}
       (Y,\PP'/K)$).

\item \label{t-gen-coh-PXTindtPsurhol-iii}
      We have canonical isomorphisms $u _! \riso u _+$ and $u ^{!} \riso
      u ^{+}$.
\end{enumerate}
\end{lem}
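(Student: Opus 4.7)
The plan is to reduce all three assertions to statements on the open subscheme $\U$, where by construction the canonical t-structure coincides with the usual t-structure on $D^{\mathrm{b}}(\D^\dag_{\U,\Q})$.

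For part \ref{t-gen-coh-PXTindtPsurhol-i}, recall from Remark~\ref{rem-scrHcalH}(\ref{rem-scrHcalH(i)}) that $\HH^n(\FF)=0$ if and only if $\mathcal{H}^n(\FF|_\U)=0$, and from (\ref{rem-scrHcalH(ii)}) that an object lies in the heart iff its restriction to the open is concentrated in degree $0$. So it suffices to verify that $(u^!\E')|_\U$ and $(u_+\E)|_\U$ are concentrated in degree $0$ for $\E'\in\mathrm{Ovhol}(Y,\PP'/K)$ and $\E\in\mathrm{Ovhol}(Y,\PP/K)$. Letting $V := f^{-1}(\U)\subset \PP'$ and $f_V\colon V\to\U$ the restriction of $f$, open base change gives $(u^!\E')|_\U \cong f_V^!(\E'|_V)$ and $(u_+\E)|_\U \cong f_{V,+}(\E|_V)$. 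Because $b=\mathrm{id}$, one has $f_V^{-1}(Y)=Y$ and $f_V$ is proper, and the overholonomic complexes in question are of the form $\R\underline{\Gamma}^\dag_Y(-)$, so the computation reduces to the case of an identity morphism on $Y$, for which t-exactness is immediate.

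For part \ref{t-gen-coh-PXTindtPsurhol-ii}, \cite[4.2.3]{caro-image-directe} (together with its Frobenius variant) already provides equivalences $u^!$ and $u_+$ at the level of (resp.\ $F$-)$D^\mathrm{b}_\mathrm{ovhol}$. Part \ref{t-gen-coh-PXTindtPsurhol-i} says these equivalences are t-exact for the canonical t-structure, hence restrict to equivalences of hearts, given on objects by $\HH^0 u^!$ and $\HH^0 u_+$ thanks to t-exactness. For part \ref{t-gen-coh-PXTindtPsurhol-iii}, the first isomorphism $u_!\riso u_+$ follows at once from \ref{relativedual}(ii), since $u$ is c-proper. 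For $u^!\riso u^+$, invoke Lemma~\ref{adju+u+couple}: $u^+$ is left adjoint and $u^!$ is right adjoint to $u_+=u_!$. By part \ref{t-gen-coh-PXTindtPsurhol-ii}, $u_+$ is an equivalence of (triangulated) categories, so by uniqueness of adjoints both $u^+$ and $u^!$ are canonically isomorphic to its inverse, yielding the desired isomorphism.

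The main obstacle will be the base-change step in part \ref{t-gen-coh-PXTindtPsurhol-i}: identifying $(u_+\E)|_\U$ with $f_{V,+}(\E|_V)$ and arguing that $\E|_V$ is concentrated in degree $0$. The subtlety is that $V=f^{-1}(\U)$ and $\U'=\PP'\setminus Z'$ are genuinely different formal open subschemes of $\PP'$, so the dagger functor $(\hdag Z')$ does not visibly commute with the restriction to $V$. What saves the argument is that $\E=\R\underline{\Gamma}^\dag_Y(\E)$ is "supported" on the closed subvariety $Y\subseteq X'\cap V$, so any higher-degree contribution from $V\setminus Y$ vanishes; making this precise may require a Mayer-Vietoris dévissage along the irreducible components of $Z'\setminus f^{-1}(Z)$ analogous to the one used at the end of Remark~\ref{rem-scrHcalH}(\ref{rem-scrHcalH(ii)}).
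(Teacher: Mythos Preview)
For parts (i) and (ii), your reduction via Remark~\ref{rem-scrHcalH} is exactly what the paper does; the substantive content is \cite[4.2.3]{caro-image-directe}, which the paper simply cites rather than reproving. Your base-change sketch is more than the paper provides, and the obstacle you identify in the last paragraph is real but does not need to be overcome here.

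For part (iii), however, there is a genuine circularity. You invoke \ref{relativedual}(ii) and Lemma~\ref{adju+u+couple}, both of which are stated for morphisms of \emph{couples} and therefore use the couple-level functors $u_!$, $u^+$, $\DD_{\mb{Y}}$. The well-definedness of these (independence from the choice of l.p.\ frame) is exactly what the present lemma is used to establish --- this is the ``vicious circle'' flagged by the footnote in the definition of $\DD_{\mb{Y}}$. Even stepping back to the frame level, the formal-scheme morphism $g\colon\PP'\to\PP$ is in general not proper (only $f\colon\QQ'\to\QQ$ is, since $\QQ,\QQ'$ are proper over $\V$), so $\D^\dag$-module relative duality does not directly yield $\widetilde u_!\cong\widetilde u_+$.

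The paper's argument stays at the frame level throughout. Using the frame-level duals $\DD_{Y,\PP}$, $\DD_{Y,\PP'}$ and the equivalence from part~(ii), one sees that the two isomorphisms in (iii) are equivalent. Then by transitivity of the frame-level functors one reduces to the two cases where $g$ is proper (where $u_!\cong u_+$ by $\D^\dag$-module relative duality) and where $g$ is an open immersion (where $u^!\cong u^+$ directly, both being restriction followed by $\R\underline{\Gamma}^\dag_Y$). Your adjoint-uniqueness argument is conceptually pleasant, but unwinding the circularity would force you through this same factorization, since that is how Lemma~\ref{adju+u+couple} itself is proved.
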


\begin{proof}
 By Remark \ref{rem-scrHcalH}, the two first statements follows by
 \cite[4.2.3]{caro-image-directe}. Concerning the last one, we remark
 that the isomorphism $u _! \riso u _+$ is equivalent to the other one
 $u ^{!} \riso u ^{+}$. Moreover, by transitivity, we come down to the
 case where $g$ is proper and is an open immersion. In the proper
 case we have $u_!\xrightarrow{\sim}u_+$, and in the open immersion
 case, we have $ u ^{!} \riso u ^{+}$.
\end{proof}

\begin{dfn}
 \label{nota-129}
 (i)  Let $\mb{Y}$ be a couple. Take an l.p.\ frame $(Y,X,\PP,\QQ)$ of
 $\mb{Y}$. By Lemma \ref{t-gen-coh-PXTindtPsurhol}, the t-structure of
 $D ^\mathrm{b} _\mathrm{ovhol}  (Y,\PP/K)$ is compatible with canonical
 equivalence of categories, and independent on the choices of $\PP$ and
 $\QQ$ (cf.\ Definition \ref{prop-nota-Dsurhol}). This makes $D
 ^\mathrm{b}_\mathrm{ovhol} (\mb{Y}/K)$ a triangulated category with
 t-structure. Its heart is denoted by $\mr{Ovhol}(\mb{Y}/K)$.
 The objects in $\mr{Ovhol}(\mb{Y}/K)$ are called {\em overholonomic
 arithmetic $\D$-modules on $\mb{Y}$}, or {\em overholonomic $\D
 ^{\dag} _{\mb{Y}}$-modules}. For simplicity, we often call them {\em
 $\D ^{\dag} _{\mb{Y}}$-modules}, or {\em modules on $\mb{Y}$}.

 (ii)  Let $u\colon \mathbb{Y} '\to \mathbb{Y}$ be a morphism of
 couples. We define two functors $\mathrm{Ovhol} (\mathbb{Y}/K)  \to
 \mathrm{Ovhol} (\mathbb{Y}'/K) $ by $ u ^{!0}:= \HH ^{0} \circ u ^{!}$
 and $ u ^{+0}:= \HH ^{0} \circ u ^{+}$. When $u$ is complete, we define
 two functors $\mathrm{Ovhol} (\mathbb{Y}'/K)  \to \mathrm{Ovhol}
 (\mathbb{Y}/K) $ by setting $u _{+}  ^{0}:= \HH ^{0} \circ u _{+}$ and
 $u _{!}  ^{0}:= \HH ^{0} \circ u _{!}$.

 (iii)  Let $j\colon (Y',X') \to (Y,X)$ be a morphism of couples such
 that $X ' \to X$ is an immersion. We denote $j^{!0}$ by $|_{(Y',X')}$.
\end{dfn}

\begin{rem}
\label{rem-faithful}
Let $j\colon (Y,X') \to (Y,X)$ be a morphism of couples
 such that $X ' \to X$ is an open immersion.
Then a sequence $\E ' \to \E \to \E''$ in $\mathrm{Ovhol} ((Y,X)/K)$ is
 t-exact if and only if so is the sequence $\E ' |_{(Y,X')}\to \E
 |_{(Y,X')}\to \E''|_{(Y,X')}$.
 Moreover, an object $\E \in \mathrm{Ovhol}(\mathbb{Y}/K)$ is $0$ if and
 only if $\E |_{(Y,X')}$ is $0$.
 In particular, the functor $|_{(Y,X')} \colon \mathrm{Ovhol} ((Y,X)/K)
 \to \mathrm{Ovhol} ((Y,X')/K)$ is faithful.
\end{rem}

\begin{dfn}
  \label{t-exactness}
 Let $\mathbb{Y} , ~ \mathbb{Y}'$ be two couples and $\phi \colon D
 ^\mathrm{b} _\mathrm{ovhol} (\mb{Y}/K) \to D ^\mathrm{b}
 _\mathrm{ovhol} (\mb{Y}'/K)$ be a functor.
 We say that $\phi$ is {\em left t-exact} (resp.\ {\em right t-exact},
 resp.\  {\em t-exact}) if, for any $\E \in \mathrm{Ovhol} (Y,\PP/K)$
 and any integer $n\in \Z$ such that $n \leq -1$ (resp.\ $n \geq 1$,
 resp.\ $n \not = 0$), we have $\HH ^n \circ \phi (\E')  =0$.
\end{dfn}

\begin{empt}
 \label{glueing-Ovhol}
 Let $(Y^{(\prime)},X^{(\prime)})\rightarrow(Y'',X'')$ be morphisms of
 couples. We denote $(Y,X)\times_{(Y'',X'')}(Y',X'):=
 (Y\times_{Y''}Y',X\times_{X''}X')$.
 Let $\bigl\{\mathbb{Y} _i\bigr\} _{i \in I}$ be a c-open covering of
 $\mathbb{Y}$, 
 namely, we have the c-open immersions
 $(Y _i,X _i)=\mb{Y}_i\rightarrow\mb{Y}$ such that $\{Y_i\} _{i \in I}$ is an open covering
 of $Y$. 
 We put $ \mathbb{Y} _{ij}:= \mathbb{Y} _{i} \times
 _{\mathbb{Y}}\mathbb{Y} _{j}$ and $\mathbb{Y}
 _{ijk}:= \mathbb{Y} _{ij} \times _{\mathbb{Y}}\mathbb{Y} _{k}$. For any
 $i,j,k \in I$, we denote by $u _i \colon \mathbb{Y} _i \to \mathbb{Y}
 $, $u _{ij} \colon \mathbb{Y} _{ij} \to \mathbb{Y} _i$, $v _{ij}:= u _i
 \circ u _{ij}$, $u _{ijk} \colon \mathbb{Y} _{ijk} \to \mathbb{Y}
 _{ij}$ the induced c-open immersions. 

 Now, we define $\mathrm{Ovhol}\bigl (\{\mathbb{Y} _i\} _{i \in
 I}/K\bigr) $ to be the category whose objects are the data of objects
 $\E _i \in \mathrm{Ovhol} (\mathbb{Y} _i/K) $ endowed with isomorphisms
 of the form $\theta _{ji} \colon u ^{!0} _{ij} ( \E _i) \riso u ^{!0}
 _{ji} ( \E _j)$ which satisfy the cocycle condition $u ^{!0} _{ijk}
 (\theta_{ki} ) = u ^{!0} _{jki} (\theta _{kj} ) \circ u ^{!0} _{ijk}
 (\theta_{ij} )$, for any $i,j,k\in I$. A morphism $(\E _i , \theta
 _{ij}) \to (\E '_i , \theta '_{ij}) $ in $\mathrm{Ovhol} \bigl(
 \{\mathbb{Y}_i\} _{i \in I}/K\bigr) $ is a collection of morphisms $f
 _i \colon \E _i \to \E ' _i$ in $ \mathrm{Ovhol} (\mathbb{Y} _i/K) $
 such that $u ^{!0} _{ji} (f _i ) \circ \theta _{ij} = \theta '_{ij}
 \circ u ^{!0} _{ij} (f _{j})$, for any $i,j\in I$.

 \begin{lem*}[Gluing]
  The canonical functor $\mathrm{Ovhol}
  (\mathbb{Y}/K)  \to \mathrm{Ovhol}\bigl(\{\mb{Y} _i\} _{i \in
  I}/K\bigr) $ is an equivalence of categories.
  \end{lem*}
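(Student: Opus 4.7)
My plan is to use \v{C}ech descent with respect to the covering, reducing to a two-element cover by induction and then performing the gluing via the Mayer--Vietoris triangle and the adjunctions of Lemma \ref{adju+u+couple}.

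First I would reduce to the case $I=\{1,2\}$. If $|I|\ge 3$, fix $i_0\in I$ and write $\mb{Y}=\mb{Y}_{i_0}\cup\bigcup_{i\neq i_0}\mb{Y}_i$; the second piece is again a c-open subcouple of $\mb{Y}$ (take the union of the $Y$-parts and of the corresponding closures inside the ambient $X$), and the gluing problem on $\mb{Y}$ decomposes into the two-element case together with a gluing problem of size $|I|-1$ on $\bigcup_{i\neq i_0}\mb{Y}_i$, on which one applies the inductive hypothesis.

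For essential surjectivity in the two-element case, given descent data $(\E_1,\E_2,\theta)$ I set $\E_{12}:=u_{12}^{!0}\E_1\simeq u_{21}^{!0}\E_2$ (via $\theta$) and $v_{12}:=u_1\circ u_{12}=u_2\circ u_{21}$. The counits of the adjunctions from Lemma \ref{adju+u+couple} supply a natural morphism
\begin{equation*}
u_{1+}^{0}\E_1\oplus u_{2+}^{0}\E_2\longrightarrow v_{12+}^{0}\E_{12},
\end{equation*}
and I define $\E\in\mathrm{Ovhol}(\mb{Y}/K)$ to be its kernel. Applying $u_i^{!0}$, using that $u_{ij}^{!0}u_{ij+}^{0}\simeq\mathrm{id}$ (since $u_{ij}$ is a c-open immersion) and invoking base change, I recover $u_i^{!0}\E\simeq\E_i$ compatibly with $\theta$.

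For fully faithfulness, given $\E,\E'\in\mathrm{Ovhol}(\mb{Y}/K)$ the Mayer--Vietoris triangle
\begin{equation*}
\E\longrightarrow u_{1+}u_1^{!}\E\oplus u_{2+}u_2^{!}\E\longrightarrow v_{12+}v_{12}^{!}\E\xrightarrow{+1},
\end{equation*}
obtained by combining the localization triangles for the complements of $\mb{Y}_1$ and $\mb{Y}_2$ in $\mb{Y}$, together with the adjunctions expresses $\mathrm{Hom}_{\mathrm{Ovhol}(\mb{Y}/K)}(\E,\E')$ as the equalizer of $\mathrm{Hom}_{\mathrm{Ovhol}(\mb{Y}_i/K)}(u_i^{!0}\E,u_i^{!0}\E')$ subject to the compatibility over $\mb{Y}_{12}$---precisely the Hom set in the gluing category.

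The hard part will be verifying that the $\E$ produced above actually lies in the heart $\mathrm{Ovhol}(\mb{Y}/K)$ and that the Mayer--Vietoris triangle is genuinely distinguished in $D^{\mathrm{b}}_{\mathrm{ovhol}}(\mb{Y}/K)$. Both of these reduce to controlling how the truncation $(\hdag Z)\circ\tau_{\leq 0}$ defining the canonical t-structure interacts with the direct images $u_{i+}$ and $v_{12+}$; by Lemma \ref{iso-onU} the checks may be performed on the open complement $\U\subset\PP$, where the standard t-exactness of pull-backs and push-forwards along open immersions of arithmetic $\D$-modules applies, so that both the kernel construction in the heart and the Mayer--Vietoris triangle become well defined.
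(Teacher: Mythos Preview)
Your approach is essentially the same as the paper's: both glue by taking the kernel of the \v{C}ech-type map $\bigoplus_i u_{i+}^{0}\E_i\to\bigoplus_{i,j}v_{ij+}^{0}u_{ij}^{!0}\E_i$ in the abelian heart, the only cosmetic difference being that the paper writes this equalizer directly for arbitrary finite $I$ while you first reduce inductively to $|I|=2$. Your worry about the ``hard part'' is a bit overstated: since $u_{i+}^{0}$ and $v_{12+}^{0}$ already land in the heart and the heart is abelian, the kernel automatically lies in $\mathrm{Ovhol}(\mb{Y}/K)$; likewise the Mayer--Vietoris triangle follows formally from the localization triangles once you note that the complements of $\mb{Y}_1$ and $\mb{Y}_2$ have empty intersection, and the equalizer description of $\mathrm{Hom}$ then comes from the t-structure axiom $\mathrm{Hom}(D^{\le 0},D^{\ge 1})=0$.
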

 \begin{proof}
  By the quasi-compactness of $Y$, we may assume $I$ to be a finite
  set. We can construct a canonical quasi-inverse functor as follows:
  let $\{\E _i , \theta _{ij}\}\in \mathrm{Ovhol} ((\mathbb{Y}_i) _{i
  \in I}/K)$. We denote by $d _1, d _2\colon \prod _{i \in I} u
 ^{0} _{i +} \E _i \to \prod _{i ,j\in I} v ^{0} _{ij +} u ^{!0} _{ij}
 ( \E _i)$ the morphisms such that $d _1$ is induced by $u ^{0} _{i +}
 \E _i \to v ^{0} _{ij +} u ^{!0} _{ij}  ( \E _i)$ and $d _2$ is induced
 by $u ^{0} _{j +} \E _j \to v ^{0} _{ji +} u ^{!0} _{ji}  ( \E
 _j)\xrightarrow[\theta _{ij}]{\sim} v ^{0} _{ij +} u ^{!0} _{ij}  ( \E
 _i)$. Then the canonical quasi-inverse functor is by definition the
  kernel of $d _1 -d _2$. 
 \end{proof}
\end{empt}

\begin{empt}
 \label{Frobformtstr}
 Recall the situation of paragraph \ref{Frobformstr}.
 Let $\mc{T}$ be a triangulated category with t-structure
 $\tau_*$. Assume given an additive endofunctor
 $F^*$ of $\mc{T}$ which is assumed to be t-exact. 
 The truncation functor $\tau_{*}$
 lifts to a functor from $F\text{-}\mc{T}$ to itself since $F^*$ is
 assumed to be t-exact. By construction, $\tau_*$ commutes with
 $\varrho$. As usual, we put $\H^n:=\tau_{\leq n}\circ\tau_{\geq n}$,
 and are able to define the ``heart'' of $F\text{-}\mc{T}$, in an
 obvious manner. This heart is nothing but
 $F\text{-}\mr{Heart}(\mc{T})$, and in particular, it is abelian.

 Now, the Frobenius pull-back functor on the category
 $D^{\mathrm{b}}_{\mathrm{ovhol}} (Y,\PP/K)$ is t-exact. Hence, we may
 apply the abstract non-sense above, and get that the heart of
 $F\text{-}D^{\mathrm{b}}_{\mathrm{ovhol}} (Y,\PP/K)$ (resp.\
 $F\text{-}D ^\mathrm{b} _\mathrm{ovhol} (\mb{Y}/K)$) is $F
 \text{-}\mathrm{Ovhol} (Y,\PP/K)$ (resp.\ $F \text{-}\mathrm{Ovhol}
 (\mathbb{Y}/K)$).
 The category $F \text{-}\mathrm{Ovhol} (\mathbb{Y}/K)$ is noetherian
 and artinian. Indeed, we reduce to the case where $Y=X=P$ in which case
 it follows by \cite[5.4.3]{Beintro2}.
\end{empt}

\begin{empt}
 \label{isoc-nota}
 Let $\mb{Y}=(Y,X)$ be a couple such that $Y$ is smooth. Let
 $Z:=X\setminus Y$. When there exists a divisor $W$ of $P$ such that
 $Z=W\cap X$, a strictly full subcategory $\mr{Isoc}^{\dag \dag}
 (Y,\PP/K) $ of the category of coherent $\D ^\dag_{\PP} (\hdag Z)
 _{\Q}$-modules with support in $X$ is defined in
 \cite{caro-pleine-fidelite}. Moreover, we have the equivalence
 \begin{equation*}
   \sp _{+}\colon\mr{Isoc}^{\dag}(Y,X/K)\xrightarrow{\sim}
   \mr{Isoc}^{\dag \dag}(Y,\PP/K).
 \end{equation*}
 We may endow with Frobenius structure: $F\text{-}\mr{Isoc}^{\dag \dag}
 (Y,\PP/K)$ is a full subcategory of $F\text{-}\mathrm{Ovhol}
 (Y,\PP/K)$ (cf.\ \cite{caro-Tsuzuki}), and the above equivalence commutes with Frobenius structures , i.e. induces
 the equivalence
 $\sp _{+}\colon F\text{-}\mr{Isoc}^{\dag}(Y,X/K)\xrightarrow{\sim}
   F\text{-}\mr{Isoc}^{\dag \dag}(Y,\PP/K)$.

 Let us generalize this latter equivalence with Frobenius structures
 (because in our context we need to stay with overholonomic complexes)
 to arbitrary $Z$.
 We define the
 category $F\text{-}\mr{Isoc}^{\dag
 \dag}  (Y,\PP/K)$ to be the strictly full subcategory of
 $F\text{-}\mathrm{Ovhol} (Y,\PP/K)$ consisting of objects $\E$ such
 that $\E |_\U \in F\text{-}\mr{Isoc}^{\dag \dag}  (Y,\U/K) $. The
 category $\mathrm{Isoc} ^{\dag \dag}(Y,\PP/K)$  only depends on
 $\mb{Y}$ and $K$ and can be denoted by $\mathrm{Isoc}^{\dag\dag}
 (\mb{Y}/K)$. Moreover, we denote by $D ^{\mathrm{b}} _{\mathrm{isoc}}
 (Y, \PP/K)$ the full subcategory of $D ^{\mathrm{b}}_{\mathrm{ovhol}}
 (Y,\PP/K)$ consisting of complexes $\E$ such that $\HH ^{j}(\E) \in
 \mr{Isoc}^{\dag \dag}(Y,\PP/K)$ for any integer $j$.
 The category $D^{\mathrm{b}}_{\mr{isoc}} (Y,\PP/K)$ depends only
 on $\mb{Y}$ and $K$, and can be denoted by
 $D^{\mr{b}}_{\mr{isoc}}(\mb{Y}/K)$.

 (i) We have the equivalence of categories: 
 \begin{equation*}
  F\text{-}\mr{Isoc}^{\dag}  (\mathbb{Y}/K)\cong
   F\text{-}\mr{Isoc}^{\dag \dag}  (\mathbb{Y}/K).
 \end{equation*}
 Indeed, by gluing lemma \ref{glueing-Ovhol} and the gluing for
 overconvergent isocrystals, we can reduce to the case where $Z$ is the
 intersection of a divisor on $P$ with $X$, in which case we have
 already recalled above.

 (ii) Let $E \in F\text{-}\mr{Isoc}^{\dag}(\mb{Y}/K)$, $y$ be a closed
 point of $Y$. Let $d$ be the dimension of $Y$ locally around $y$. By
 using \cite[5.6]{Abe-Frob-Poincare-dual}, we have the isomorphisms
 \begin{equation}
  \label{!et*-isoc}
   i _y ^{+}\bigl(\sp_+(E)\bigr)\riso
   i_y^!\bigl(\sp_+(E)\bigr)(d)[2d]
   \riso
   \sp_+\bigl(i _y ^{*}(E)\bigr)(d)[d],
 \end{equation}
 where $i_y\colon(\{y\},\{y\})\rightarrow\mb{Y}$ denotes the canonical
 morphism.
\end{empt}

\subsection{Properties of six functors for couples}
In this subsection, we prove some fundamental properties for six
functors defined in the previous subsection.

\begin{prop}
 \label{exactnessofdual}
 Let $\mb{Y}$ be a couple. Then the dual functor $\DD_\mb{Y}$ is t-exact
 (cf.\ Definition \ref{t-exactness}).
\end{prop}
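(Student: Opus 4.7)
The plan is to reduce the claim to the known t-exactness of duality on overholonomic $\D^\dag$-modules over a smooth formal scheme. Fix an l.p.~frame $(Y, X, \PP, \QQ)$ of $\mb{Y}$ and set $\U := \PP \setminus Z$ with $Z := X \setminus Y$. Since the canonical t-structure is defined pointwise via restriction to $\U$ and, by Remark \ref{rem-scrHcalH}, the vanishing of $\HH^n(-)$ is equivalent to the vanishing of $\H^n((-)|_\U)$, the t-exactness of $\DD_\mb{Y}$ amounts to showing that for every $\E \in \mathrm{Ovhol}(Y,\PP/K)$ the complex $\DD_{Y,\PP}(\E)|_\U$ is concentrated in cohomological degree zero.

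The next step is to identify $\DD_{Y,\PP}(\E)|_\U$ with $\DD_\U(\E|_\U)$. By definition $\DD_{Y,\PP}(\E) = \R\underline{\Gamma}^\dag_Y \circ \DD_\PP(\E)$. Restriction to the open formal subscheme $\U$ commutes with $\DD_\PP$, yielding $\DD_\PP(\E)|_\U \riso \DD_\U(\E|_\U)$, and it also commutes with $\R\underline{\Gamma}^\dag_Y$, which on $\U$ becomes the local cohomology functor attached to the closed subvariety $Y = X \cap U$ of $U$ (since $(\hdag Z)$ is the identity on $\U$). The hypothesis that $\E$ belongs to $D^\mathrm{b}_\mathrm{ovhol}(Y, \PP/K)$, namely $\E \riso \R\underline{\Gamma}^\dag_Y(\E)$, implies that $\E|_\U$ is a single overholonomic $\D^\dag_{\U, \Q}$-module supported on $Y$; hence, by biduality, $\DD_\U(\E|_\U)$ is also supported on $Y$, so that $\R\underline{\Gamma}^\dag_Y\bigl(\DD_\U(\E|_\U)\bigr) \riso \DD_\U(\E|_\U)$. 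This gives the desired identification.

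At this stage, one invokes Virrion's theorem \cite{virrion}, which asserts that on a smooth formal scheme the $\D^\dag$-linear dual is t-exact on holonomic (in particular, overholonomic) modules: the dual of a module concentrated in degree zero is again concentrated in degree zero. Applied to $\E|_\U$, this gives $\H^n(\DD_\U(\E|_\U)) = 0$ for all $n \neq 0$, as required. The Frobenius structure presents no additional difficulty, since the Frobenius pull-back is t-exact and commutes with $\DD$ up to a canonical isomorphism; the underlying triangulated statement therefore transfers to $F\text{-}D^\mathrm{b}_\mathrm{ovhol}(\mb{Y}/K)$ with no change. The only mildly delicate point in the argument is the support-preservation for the dual used in the second paragraph, but this is a direct consequence of biduality combined with the overholonomicity of $\E|_\U$.
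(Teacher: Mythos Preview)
Your proof is correct and follows essentially the same route as the paper's: reduce via Remark~\ref{rem-scrHcalH} to checking exactness after restriction to $\U$, then invoke Virrion's exactness theorem for $\DD$ on a smooth formal scheme. The only cosmetic difference is that the paper, after passing to $\U$ (where the frame becomes $(Y,Y,\U)$), invokes Berthelot--Kashiwara to further reduce to the case $Y=X=P$, whereas you stay on $\U$ and verify directly that $\DD_\U(\E|_\U)$ remains supported on $Y$; the cleanest justification for that support claim is simply that restriction to the open complement $\U\setminus Y$ commutes with $\DD$, which is perhaps more transparent than the phrase ``biduality combined with overholonomicity'' you used.
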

\begin{proof}
 Let $(Y,X,\PP,\QQ)$ be an l.p.\ frame of $\mb{Y}$. 
 By Remark \ref{rem-scrHcalH} (ii),
 we reduce to the case where $Y=X$ and then from Berthelot-Kashiwara theorem (see \cite[5.3.3]{Beintro2}) 
 to the case where $Y=X=P$. The lemma
 follows from the exactness of $\DD_{\PP}$ (see \cite{virrion}).
\end{proof}

\begin{prop}
 \label{propofpullbacks}
 Let $u\colon\mb{Y}=(Y,X)\rightarrow\mb{Y}'=(Y',X')$ be a morphism of couples.

 (i) If $u$ is c-smooth of relative dimension $d$ such that the fibers
 are equidimensional, then $u ^+[d]$ and $u ^![-d]$ are t-exact.

(ii) If $u$ is a c-immersion, 
 then $u^!$ (resp.\ $u^+$) is left t-exact (resp.\
 right t-exact). Moreover, we have the canonical isomorphism
 $ u ^! \circ u _+ \riso id$.

 (iii) (Kashiwara's theorem)\
 Suppose $u$ is a c-closed immersion. Then $u _+$ is
 t-exact. Moreover, $u _+$ (resp.\ $u _+ ^0$) is fully
 faithful. The objects of the essential image of $u _+$ (resp.\ $u _+
 ^0$) are called ``with support in $\mathbb{Y}$''. Restricted to objects
 with support in $\mathbb{Y}$, the functor $u ^!$ is t-exact and  $u ^!$
 (resp.\ $u ^{!0}$) is canonically a quasi-inverse to $u _+$ (resp.\ $u
 _+ ^0$).
\end{prop}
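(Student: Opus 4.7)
Fix an l.p.\ frame realization $(b,a,g,f)\colon(Y,X,\PP,\QQ)\to(Y',X',\PP',\QQ')$ of $u$, and set $\U:=\PP\setminus(X\setminus Y)$. By Remark~\ref{rem-scrHcalH}(\ref{rem-scrHcalH(i)}), the vanishing of $\HH^{n}$ on $(Y,\PP/K)$ is detected by restriction to $\U$, so every t-exactness claim below reduces to an analogous statement about coherent $\D^{\dag}_{\U,\Q}$-modules on the smooth formal scheme $\U$, where Berthelot's classical theory is available. In each case we will match the choice of $g$ to the geometric type of $u$.

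For (i), choose $g$ to be smooth of relative dimension $d$, which is possible since $u$ is c-smooth. Then $u^{!}=\R\underline{\Gamma}^{\dag}_{Y}\circ g^{!}$ and $(u^{!}\E)|_{\U}=(g^{!}\E)|_{\U}$, so the classical t-exactness of $g^{!}[-d]$ for a smooth morphism of smooth formal schemes yields that $u^{!}[-d]$ is t-exact via the detection principle above. For $u^{+}[d]$, combine $u^{+}=\DD_{\mb{Y}}\circ u^{!}\circ\DD_{\mb{Y}'}$ with $\DD\circ[n]=[-n]\circ\DD$ to get $u^{+}[d]=\DD_{\mb{Y}}\circ(u^{!}[-d])\circ\DD_{\mb{Y}'}$, which is t-exact by Proposition~\ref{exactnessofdual}.

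For (ii), take the l.p.\ frame so that $g$ factors as $g=i\circ j$, with $j$ an open immersion and $i$ a closed immersion of smooth formal schemes. Then $u^{!}=\R\underline{\Gamma}^{\dag}_{Y}\circ j^{!}\circ i^{!}$; the restriction $j^{!}=j^{+}$ is t-exact, while $i^{!}$ is right adjoint to $i_{+}$ by Lemma~\ref{adju+u+couple}, and $i_{+}$ is t-exact by (iii), so $i^{!}$ is left t-exact. Hence $u^{!}$ is left t-exact, and dualising via Proposition~\ref{exactnessofdual} shows $u^{+}$ is right t-exact. The identity $u^{!}\circ u_{+}\riso\mathrm{id}$ follows from the chain $j^{!}\circ i^{!}\circ i_{+}\circ j_{+}=j^{!}\circ j_{+}=\mathrm{id}$, where we use $i^{!}\circ i_{+}\riso\mathrm{id}$ (part of (iii)) and the tautological identity for open immersions.

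For (iii), choose the l.p.\ frame so that $g$ is a closed immersion of smooth formal schemes (available since $u$ is c-closed). Then $u_{+}=g_{+}$, and the classical Berthelot-Kashiwara theorem (cf.\ \cite{Beintro2}) supplies the t-exactness and full faithfulness of $g_{+}$ on coherent holonomic $\D^{\dag}_{\PP,\Q}$-modules, identifies the essential image with those modules supported in $X$, and exhibits $g^{!}$ as a quasi-inverse. Applying $\R\underline{\Gamma}^{\dag}_{Y}$ and using paragraph~\ref{Frobformtstr} to handle Frobenius structures transfers these conclusions to the couple level, yielding t-exactness and full faithfulness of $u_{+}$ (and hence $u_{+}^{0}$), with essential image the objects supported on $\mb{Y}$ and $u^{!}$ (resp.\ $u^{!0}$) as a quasi-inverse. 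The main obstacle is the clean import of Berthelot-Kashiwara and the bookkeeping required to match ``support on $\mb{Y}$'' at the couple level with ``support on $X$'' at the formal-scheme level; once this dictionary is in place, (i)--(iii) are formal consequences of duality, adjunction, and the detection principle of Remark~\ref{rem-scrHcalH}.
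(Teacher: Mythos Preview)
Your strategy of ``matching the choice of $g$ to the geometric type of $u$'' is precisely where the argument fails. In this framework the morphism $g\colon\PP\to\PP'$ of ambient smooth formal schemes is an auxiliary datum realizing $b\colon Y\to Y'$ inside a frame; the general construction guarantees only that $g$ may be taken smooth, and nothing forces $g$ to inherit the type of $b$.

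For (i), you assert that $g$ can be chosen smooth of relative dimension $d$ because $u$ is c-smooth. But the relative dimension of $g$ is $\dim\PP-\dim\PP'$, which has no reason to equal $d=\dim Y-\dim Y'$: the ambient formal schemes are typically of much larger dimension than $Y,Y'$. So the t-exactness of $g^{!}[-(\dim\PP-\dim\PP')]$ says nothing about $u^{!}[-d]$. The paper's argument is different: after reducing via Remark~\ref{rem-scrHcalH}(\ref{rem-scrHcalH(i)}) to couples of the form $(Y,Y)\to(Y',Y')$, it works \emph{locally} on $Y$ and $Y'$, factors $b$ as an \'etale map followed by the projection $Y'\times\A^{d}\to Y'$, and then \emph{constructs} explicit frames for each factor in which the formal-scheme morphism has the correct relative dimension (the product $\PP'\times\widehat{\A}^{d}\to\PP'$ for the projection; for the \'etale part, a cartesian square with an \'etale lift produced via the structure theorem EGA~IV~18.4.6). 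The existence of such frames is the substance of the proof, not something one may simply posit.

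For (ii) and (iii) the same confusion recurs: that $u$ is a c-immersion or c-closed immersion constrains $b$, not $g$, and there is no general way to choose $g$ to be an immersion (or closed immersion) of smooth formal schemes. The paper's route is instead to invoke Lemma~\ref{t-gen-coh-PXTindtPsurhol} to reduce to $u=(b,\mathrm{id})\colon(Y,X)\to(Y',X)$ with the \emph{same} ambient $\PP$ and $g=\mathrm{id}$; then $u^{!}=\R\underline{\Gamma}^{\dag}_{Y}$ and $u_{+}$ is the identity on underlying complexes, and all the claims reduce to Berthelot's Kashiwara theorem for $\D^{\dag}_{\PP,\Q}$-modules together with the definition of the t-structure. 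Your factorization $g=i\circ j$ with $i$ a closed immersion of smooth formal schemes is vacuous once $g=\mathrm{id}$, so the appeal to (iii) inside your proof of (ii) does no work.
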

\begin{proof}
 Let us show (i).  
 By Remark \ref{rem-scrHcalH} (ii), 
 we may assume that $u$ is of the form
 $u=(b,b)\colon(Y,Y)\rightarrow(Y',Y')$. The problem
 is local both on $Y$ and $Y'$, so we may assume that there exists a
 factorization $Y\rightarrow Y'\times\mb{A}^d\xrightarrow{p}Y'$ where
 the first morphism is \'{e}tale and the second is the projection. For
 the projection case, take a closed embedding $Y'\hookrightarrow\PP'$,
 and let
 $\widetilde{p}\colon\PP'\times\widehat{\mb{A}}^d\rightarrow\PP'$. Then
 $p^!$ is induced by $\widetilde{p}^!$, which is exact after shifting by
 the flatness of $\widetilde{p}$. Thus, we may assume that $b$ is
 \'{e}tale. Take a closed point $y\in Y$. It suffices to show the
 exactness around $y$. By the structure theorem of \'{e}tale morphism
 (cf.\ EGA IV, Theorem 18.4.6), locally around $y$ and $b(y)$, we can
 take the following cartesian diagram
 \begin{equation*}
  \xymatrix{
   Y\ar@{^{(}->}[r]\ar[d]_{b}\ar@{}[rd]|\square&
   \PP\ar[d]^{\widetilde{b}}\\
  Y'\ar@{^{(}->}[r]&\PP'
   }
 \end{equation*}
 where $\PP$ and $\PP'$ are smooth formal schemes, and $\widetilde{b}$
 is \'{e}tale. By the flatness of $\widetilde{b}$, the exactness
 follows.

 Let us show (ii) and (iii). By duality, we may concentrate on showing the $u^!$
 case. By using Lemma \ref{t-gen-coh-PXTindtPsurhol}, we may assume that
 $u$ is of the form $(b,\mr{id})\colon(Y,X)\rightarrow(Y',X)$, which reduce easily 
 to already known cases (e.g. see Kashiwara's theorem proven by
 Berthelot in \cite{Beintro2}).
\end{proof}

\begin{prop}
 (i) Using the notation of paragraph \ref{extrafunctordfn} (i),
 we have a canonical isomorphism $(-)\boxtimes_K(-)\cong
 p^+(-)\otimes_{\mb{Y}''}p'^+(-)$.

 (ii) Exterior tensor products are t-exact.
\end{prop}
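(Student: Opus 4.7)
The strategy is to rewrite the right-hand side using duality and reduce the claim to a K\"unneth-type identity for $\DD$. By the definitions $\otimes_{\mb{Y}''}=\DD(\DD(-)\widetilde{\otimes}\DD(-))$ and $p^+=\DD\circ p^!\circ\DD$, one computes
\[
p^+(\E)\otimes_{\mb{Y}''}p'^+(\FF) = \DD_{\mb{Y}''}\bigl(p^!\DD_{\mb{Y}}\E\,\widetilde{\otimes}\,p'^!\DD_{\mb{Y}'}\FF\bigr) = \DD_{\mb{Y}''}\bigl(\DD_{\mb{Y}}\E\boxtimes_K\DD_{\mb{Y}'}\FF\bigr).
\]
By biduality, the desired identity $\E\boxtimes_K\FF \cong p^+\E\otimes_{\mb{Y}''}p'^+\FF$ is then equivalent to the commutation
\[
\DD_{\mb{Y}''}(\E\boxtimes_K\FF)\;\cong\;\DD_{\mb{Y}}\E\boxtimes_K\DD_{\mb{Y}'}\FF.
\]
I would verify this on a product l.p.\ frame $(Y\times Y', X\times X', \PP\times\PP', \QQ\times\QQ')$ obtained from l.p.\ frames of $\mb{Y}$ and $\mb{Y}'$: there, $\E\boxtimes_K\FF$ is described, up to a shift and the support functor $\R\underline{\Gamma}^{\dag}_{Y\times Y'}$, by the $\O$-module exterior product $p^*\E\otimes^{\L}_{\O_{\PP\times\PP'}}p'^*\FF$, and the commutation of the dual with this exterior product follows from a K\"unneth-type decomposition of the $\D^{\dag}$-linear dual on a product of smooth formal schemes combined with Poincar\'e duality for the smooth projections $p$ and $p'$.

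\textbf{Plan for (ii).} Working with the presentation $\E\boxtimes_K\FF = p^!\E\,\widetilde{\otimes}\,p'^!\FF$ on the product l.p.\ frame, assume $\E\in\mr{Ovhol}(\mb{Y}/K)$ and $\FF\in\mr{Ovhol}(\mb{Y}'/K)$. The projection $p$ is c-smooth of relative dimension $\dim Y'$, so by Proposition~\ref{propofpullbacks}(i), $p^!\E[-\dim Y']$ lies in the heart of $D^{\mr{b}}_{\mr{ovhol}}(\mb{Y}''/K)$, and similarly $p'^!\FF[-\dim Y]$ lies in the heart. Locally, $p^!\E$ and $p'^!\FF$ are representable by appropriately shifted flat $\O$-module pullbacks on $\PP\times\PP'$; the shift $[-\dim(\PP\times\PP')]$ intrinsic to $\widetilde{\otimes}_{\mb{Y}''}$ combines with the flatness of $\O_{\PP\times\PP'}$ over $p^{-1}\O_{\PP}\widehat{\otimes}p'^{-1}\O_{\PP'}$ so that the cohomological shifts cancel and no higher Tor terms arise. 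Hence $\E\boxtimes_K\FF$ lies in the heart of $D^{\mr{b}}_{\mr{ovhol}}(\mb{Y}''/K)$, which is the asserted t-exactness.

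\textbf{Main obstacle.} The principal technical difficulty is the careful bookkeeping of cohomological shifts and Tate twists when passing to the local description on a product formal scheme, and rigorously establishing the K\"unneth compatibility of the $\D^{\dag}$-linear dual with exterior products at the level of arithmetic $\D^{\dag}$-modules (rather than merely at the level of underlying $\O$-modules), compatibly with the support functor $\R\underline{\Gamma}^{\dag}_{Y\times Y'}$ built into the formalism on couples.
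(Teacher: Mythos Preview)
Your reduction for (i) is correct and is exactly the route the paper takes: unwind the definitions to reduce to the K\"unneth identity $\DD_{\PP\times\PP'}(\E\boxtimes\E')\cong\DD_{\PP}(\E)\boxtimes\DD_{\PP'}(\E')$ on a product frame. The paper then makes this precise by descending to level $m$ via \cite{virrion} and finally to an elementary statement about $\R\shom$ over a tensor product of flat algebras; your ``main obstacle'' paragraph correctly identifies this as the remaining work.

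For (ii) your route diverges from the paper's and, as written, has a gap. You claim that $p^!\E$ and $p'^!\FF$ are ``representable by appropriately shifted flat $\O$-module pullbacks'' and then invoke flatness of $\O_{\PP\times\PP'}$ over $p^{-1}\O_{\PP}\widehat{\otimes}p'^{-1}\O_{\PP'}$ to kill higher Tor. But overholonomic modules are essentially never $\O$-flat (think of $i_{+}K$ for a closed point), and the flatness you cite concerns the structure sheaf, not the modules being tensored; it does not by itself control $\mathrm{Tor}^{\O_{\PP\times\PP'}}_{>0}(p^*\E,p'^*\FF)$. The intuition you are reaching for --- that an external tensor product of modules over a field base has no higher Tor --- is correct, but turning this into a proof here requires handling the $p$-adic completions and the overconvergent tensor $\otimes^{\dag}$, and also the fact that objects of $\mathrm{Ovhol}(\mb{Y}/K)$ need not be modules on all of $\PP$ (only on the open $\U$ containing $Y$).

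The paper sidesteps all of this by d\'evissage: since $u_+$ for an immersion commutes with $\widetilde{\otimes}$ (by \ref{otimes-comm-u!}), one stratifies to reduce to the case where $\E$ and $\FF$ are overconvergent isocrystals, which are genuinely locally projective over $\O$, and there the t-exactness is immediate. This avoids any delicate Tor computation.
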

\begin{proof}
 Let us show (i). Take l.p.\ frames $(Y,X,\PP,\QQ)$ and
 $(Y',X',\PP',\QQ')$. Let $\widetilde{p}^{(\prime)}\colon
 \PP\times\PP'\rightarrow\PP^{(\prime)}$. Let $\E^{(\prime)}\in
 D^{\mr{b}}_{\mr{ovhol}}(\D^\dag_{\PP^{(\prime)},\Q})$.
 Consider the homomorphism
 \begin{equation*}
  \R\underline{\Gamma}^\dag_{Y\times Y'}
   \bigl(\E\boxtimes_K\E'\bigr)
   \rightarrow\E\boxtimes_K\E'.
 \end{equation*}
 When $\E^{(\prime)}\in D^{\mr{b}}_{\mr{ovhol}}(Y^{(\prime)},X/K)$, 
 this is an isomorphism.
 Indeed, the right side is supported on $\overline{Y\times Y'}$, 
 the closure of $Y\times Y'$ in $P\times P'$. Putting
 $Z:=\overline{Y\times Y'}\setminus Y\times Y'$, it remains to show that
 $(^\dag Z)(\E\boxtimes\E')=0$. This follows by
 \ref{otimes-comm-u!}. Thus the proposition is reduced to showing that
 there exists an isomorphism
 $\DD_{\PP\times\PP'}\bigl(\E\boxtimes\E'\bigr)
 \cong\DD_{\PP}(\E)\boxtimes\DD_{\PP'}(\E')$ for $\E^{(\prime)}\in
 D_{\mr{perf}}(\D^\dag_{\PP^{(\prime)},\Q})$. Since
 $\widehat{\D}^{(m)}_{\PP}$ is of finite cohomological
 dimension, by \cite[I.4]{virrion}, it suffices to show that
 $\DD^{(m)}_{\PP\times\PP'}\bigl(\E^{(m)}\boxtimes\E'^{(m)}\bigr)
 \cong\DD^{(m)}_{\PP}(\E^{(m)})\boxtimes\DD^{(m)}_{\PP'}(\E'^{(m)})$ for
 $\E^{(\prime)(m)}\in D_{\mr{perf}}(\widehat{\D}^{(m)}
 _{\PP^{(\prime)}})$. By passing to the limit, we are reduced to the
 following fact, whose verification is easy: let $\mc{R}$ be a
 commutative ring on a topos, and $\mc{A}$, $\mc{B}$ be
 flat $R$-algebras such that $\mc{R}$ is the center of
 them. Let $\mc{M}\in D^{\mr{b}}_{\mr{perf}}(\mc{A})$, $\mc{N}\in
 D^{\mr{b}}_{\mr{perf}}(\mc{B})$. Putting
 $\mc{C}:=\mc{A}\otimes_{\mc{R}}\mc{B}$, which is a $\mc{R}$-algebra, we
 have an isomorphism
 \begin{equation*}
  \R\shom_{\mc{A}}(\mc{M},\mc{A})\otimes^{\mb{L}}_{\mc{R}}
   \R\shom_{\mc{B}}(\mc{N},\mc{B})\cong
   \R\shom_{\mc{C}}(\mc{M}\otimes^{\mb{L}}_{\mc{R}}\mc{N},\mc{C}).
 \end{equation*}
 To check the compatibility of Frobnius, recall the definition of the
 canonical isomorphism $F^*\circ\mb{D}^{(m)}\cong\mb{D}^{(m+1)}\circ F^*$
 defined in \cite[II.3.2]{virrion}. Going back to this definition, the
 verification is straightforward.

 Let us check (ii). For an immersion $u$ of couples, $u_+$ and
 $\widetilde{\otimes}$ commute by \ref{otimes-comm-u!}. Thus, by
 d\'{e}vissage, the lemma is reduced to the overconvergent $F$-isocrystal
 case. In this case, the verification is easy.
\end{proof}

\begin{empt}
 \label{constproptononprop}
 Let $u=(\star,a)\colon\mb{Y}=(Y,X)\rightarrow (Y',X') =\mb{Y}'$ be a
 complete morphism of couples. In this paragraph, we construct the
 canonical homomorphism $\theta_u\colon u_!\rightarrow u_+$.

 Let $\E \in D ^{\mathrm{b}}_{\mathrm{ovhol}} (\mb{Y}/K)$. The morphism
 $u$ factors as
 $\mb{Y}\xrightarrow{\iota}(U,X)=\mb{U}\xrightarrow{u'}\mb{Y}'$, where
 $U:=a^{-1}(Y')$. Let $Z := U \setminus Y$. Since $\DD_{\mb{Y}} = (\hdag
 Z) \circ \DD_{\mb{U}}$ and since $Z$ is closed in $Y$, we get the
 canonical homomorphism $\DD_{\mb{U}}\circ\DD_{\mb{Y}} (\E) \to
 \DD_{\mb{Y}}\circ\DD_{\mb{Y}}(\E)\cong\E$, which yields the functorial
 homomorphism $\iota_!(\E)\rightarrow \iota _+(\E)$. Moreover, since
 $u'$ is c-proper, we have the relative duality isomorphism $ u'_! \riso
 u ' _+$. We define 
 \begin{equation*}
  \theta_u\colon u_!\cong u'_!\circ\iota_!\rightarrow u'_!\circ\iota_+
   \cong u'_+\circ\iota_+\cong u_+.
 \end{equation*}
\end{empt}

In order to check the transitivity in Proposition \ref{transi-theta}, we
need the following lemmas.
\begin{lem}
 \label{elemcommdual}
 Let $(X,\mc{A})$ be a ringed space (where $\mc{A}$ is not necessary
 commutative), and $\mc{B}\rightarrow\mc{C}$ be a
 homomorphism in the derived category
 $D^{\mr{b}}(\mc{A}\otimes_{\Z}\mc{A})$. For any
 $\mc{A}$-complex $\E$, and
 $\star\in\bigl\{\mc{B},\mc{C}\bigr\}$, we put
 $\DD_{\star}(\E):=\R\shom_{\mc{A}}(\E,\star)$, where we used the
 first $\mc{A}$-module structure of $\star$ to take $\shom$, and
 consider $\DD_{\star}(\E)$ as a left $\mc{A}$-module using the second
 $\mc{A}$-module structure of $\star$. Let
 $\beta\colon\E\rightarrow\DD_{\mc{B}}\circ\DD_{\mc{B}}(\E)$ be
 the canonical homomorphism. Then the following diagram is commutative:
 \begin{equation*}
  \xymatrix@C=30pt{
   \DD_{\mc{B}}\circ\DD_{\mc{B}}\circ\DD_{\mc{C}}(\E)&
   \DD_{\mc{B}}\circ\DD_{\mc{B}}\circ\DD_{\mc{B}}(\E)
   \ar[l]\ar[r]^-{\DD_{\mc{B}}(\beta)}_-{}&
   \DD_{\mc{B}}(\E)\ar@{=}@/^2ex/[dl]\\
  \DD_{\mc{C}}(\E)\ar[u]^{\beta(\DD_\mc{C}(\E))}&
   \DD_{\mc{B}}(\E).\ar[l]\ar[u]^{\beta(\DD_\mc{B}(\E))}&
   }
 \end{equation*}
\end{lem}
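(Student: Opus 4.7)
The plan is to split the diagram into two pieces, the square on the left and the triangle on the right, and verify each independently. Both commutativities are formal consequences of standard properties of the biduality natural transformation, and the morphism $\mc{B}\to\mc{C}$ and the complex $\E$ play no essential role beyond providing the bimodule structures.

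For the right-hand triangle, the claim will reduce to the classical zig-zag identity $\DD_{\mc{B}}(\beta_{\E})\circ\beta_{\DD_{\mc{B}}(\E)} = \mr{id}_{\DD_{\mc{B}}(\E)}$. I would verify this by first replacing $\mc{B}$ by a resolution that is K-injective as a left $\mc{A}$-module via the first factor and $\E$ by a K-flat resolution, so that $\DD_{\mc{B}}$ is computed by honest $\shom_{\mc{A}}(-,\mc{B})$. Then $\beta_{\E}$ becomes the evaluation pairing $e\mapsto(\varphi\mapsto\varphi(e))$, and one checks on representatives that the composite sends $\varphi\in\shom_{\mc{A}}(\E,\mc{B})$ to the functional $e\mapsto\beta_{\E}(e)(\varphi)=\varphi(e)$, namely $\varphi$ itself.

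For the left-hand square, the key observation is that the top horizontal arrow is obtained by applying the contravariant functor $\DD_{\mc{B}}$ twice to the morphism $h\colon\DD_{\mc{B}}(\E)\to\DD_{\mc{C}}(\E)$ induced by post-composing $\varphi\in\R\shom_{\mc{A}}(\E,\mc{B})$ with $\mc{B}\to\mc{C}$. Hence the square is exactly the naturality diagram for the biduality transformation $\beta\colon\mr{id}\Rightarrow\DD_{\mc{B}}\circ\DD_{\mc{B}}$ applied to $h$, reading $\DD_{\mc{B}}\DD_{\mc{B}}(h)\circ\beta_{\DD_{\mc{B}}(\E)} = \beta_{\DD_{\mc{C}}(\E)}\circ h$. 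This naturality statement holds at the level of underived functors, hence also in the derived category after resolving as above.

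The main obstacle is not conceptual but organizational: one must keep careful track of which of the two $\mc{A}$-module structures on $\mc{B}$ (and on $\mc{C}$) is being used at each stage, since the convention is that the first is used to form $\shom$ while the second supplies the output $\mc{A}$-module structure. This bookkeeping is essential to see that $\mc{B}\to\mc{C}$ genuinely induces natural transformations $\DD_{\mc{B}}\Rightarrow\DD_{\mc{C}}$ compatible with every bimodule structure in the diagram, and that $\beta$ is defined and natural in the derived sense. Once these conventions are fixed and the appropriate resolutions chosen, both identities become routine.
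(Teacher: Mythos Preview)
Your proposal is correct and follows the same decomposition as the paper. The paper's proof is extremely terse: it states only that the left square is automatic and that one ``only need[s] to check the commutativity of the right triangle, whose verification is left to the reader''; your identification of the right triangle with the zig-zag identity $\DD_{\mc{B}}(\beta_{\E})\circ\beta_{\DD_{\mc{B}}(\E)}=\mr{id}$ and of the left square with naturality of $\beta$ is exactly the intended argument, spelled out in more detail than the paper provides.
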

\begin{proof}
 We only need to check the commutativity of the right triangle, whose
 verification is left to the reader.
\end{proof}

\begin{lem}
 \label{commpropnorm}
 Consider the following left commutative diagram of l.p.\ frames:
 \begin{equation*}
  \xymatrix@C=50pt{
   (Y,X,\PP,\QQ)\ar[d]_{\iota=(i,\mr{id},\mr{id},\mr{id})}
   \ar[r]^-{u=(b,\star,f,g)}&
   (Y''',X',\PP',\QQ')\ar[d]^{\iota'=(i',\mr{id},\mr{id},\mr{id})}\\
  (Y'',X,\PP,\QQ)\ar[r]_-{u'=(b',\star,f,g)}&
   (Y',X',\PP',\QQ'),}
   \qquad
   \xymatrix@C=50pt{
   \iota'_!\circ u_!\ar[r]^{\iota'_!(\theta_u)}\ar@{-}[d]_{\sim}
   &\iota'_!\circ u_+\ar[r]^{\theta_{\iota'}(u_+)}&
   \iota'_+\circ u_+\ar@{-}[d]^{\sim}\\
  u'_!\circ \iota_!\ar[r]_{u'_!(\theta_\iota)}&
   u'_!\circ \iota_+\ar[r]_{\theta_{u'}(\iota_+)}&
   u'_+\circ \iota_+
   }
 \end{equation*}
 Assume that all the morphisms in the diagram are complete, and $b$,
 $b'$ are proper. Then the right diagram above of functors
 from $(F\text{-})D^{\mr{b}}_{\mr{ovhol}}(Y,\PP/K)$ to
 $(F\text{-})D^{\mr{b}}_{\mr{ovhol}}(Y',\PP'/K)$ is commutative.
\end{lem}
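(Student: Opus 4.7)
The plan is to make the construction of $\theta_?$ in paragraph \ref{constproptononprop} explicit for each of the four morphisms $u$, $u'$, $\iota$, $\iota'$, and then verify the commutativity by functoriality. Two features of the diagram simplify things considerably. First, $\iota$ and $\iota'$ have the identity as their underlying formal scheme morphism and the identity as their variety morphism on $X$, resp.\ $X'$; hence, in the factorization of $\iota$ from \ref{constproptononprop}, the c-proper part is the identity, and $\theta_\iota$ coincides with the morphism $\iota_! \to \iota_+$ induced by the canonical natural transformation $\alpha_Y\colon \DD_{(Y'',X)} \to \DD_{(Y,X)}$ arising from $\DD_{(Y,X)} = (\hdag (Y''\setminus Y))\circ \DD_{(Y'',X)}$; similarly $\theta_{\iota'}$ is induced by $\alpha_{Y'''}\colon \DD_{(Y',X')} \to \DD_{(Y''',X')}$. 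Second, because $b$, $b'$ are proper and the underlying variety morphism $a\colon X \to X'$ is proper (by completeness of $u$), $u$ and $u'$ are c-proper; hence by \ref{relativedual}(ii), $\theta_u$ and $\theta_{u'}$ are the canonical relative duality isomorphisms attached to the common formal morphism $f\colon\PP\to\PP'$.

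Next, I would unwind the two compositions in the right-hand diagram. The canonical isomorphism $\iota'_!\circ u_! \cong u'_!\circ \iota_!$ in the left column, after applying relative duality to replace $u_!$ by $u_+$ and $u'_!$ by $u'_+$, becomes the obvious identification $\iota'_+\circ u_+ \cong u'_+\circ \iota_+$ on the right: indeed, at the level of $\D^\dag_{\PP',\mathbb{Q}}$-modules, both sides equal $\R\underline{\Gamma}^\dag_{Y'''}f_+(\E) = \R\underline{\Gamma}^\dag_{Y'}f_+(\E)$ for $\E$ supported on $Y$, the two expressions agreeing because the support of $f_+(\E)$ is contained in $\overline{b(Y)}\subset Y'''$ by the properness of $b$. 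Under this identification, the upper path in the right-hand diagram becomes ``apply relative duality for $u$, then apply $\alpha_{Y'''}$'', while the lower path becomes ``apply $\alpha_Y$, then apply relative duality for $u'$''.

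The main obstacle is to show that these two orderings produce the same morphism. This is a compatibility between the relative duality isomorphism for the c-proper morphism $f$ and the $\alpha$ maps, and can be reformulated as the commutativity of the natural square
\begin{equation*}
\xymatrix@C=40pt{
f_+\circ \DD_{(Y'',X)} \ar[r]^{f_+(\alpha_Y)}\ar[d]_{\mathrm{rel.\,dual.}}
& f_+\circ \DD_{(Y,X)} \ar[d]^{\mathrm{rel.\,dual.}} \\
\DD_{(Y',X')}\circ f_+ \ar[r]_{\alpha_{Y'''}(f_+)}
& \DD_{(Y''',X')}\circ f_+
}
\end{equation*}
of functors on $F\text{-}D^{\mathrm{b}}_{\mathrm{ovhol}}(Y,\PP/K)$. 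I would prove this by descending to the level of $\D^\dag_{\PP,\mathbb{Q}}$-modules (using that $\iota_+$ and $\iota'_+$ act as the localization functors $\R\underline{\Gamma}^\dag_{Y''}$ and $\R\underline{\Gamma}^\dag_{Y'}$, respectively), then applying Lemma \ref{elemcommdual} with $\mathcal{B} = \D^\dag_{\PP,\mathbb{Q}}(\hdag(Y''\setminus Y))$ and $\mathcal{C} = \D^\dag_{\PP,\mathbb{Q}}$, so that the canonical morphism $\mathcal{B}\to\mathcal{C}$ recovers $\alpha_Y$ through the identifications $\DD_{\mathcal{B}} = \DD_{(Y'',X)}$ and $\DD_{\mathcal{C}} = \DD_{(Y,X)}$; the analogous choice on $\PP'$ handles $\alpha_{Y'''}$. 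Combining this abstract compatibility with the naturality in the first variable of the Frobenius-equivariant relative duality isomorphism for $f_+$ from \cite{Vir04, Abe-Frob-Poincare-dual}, the commutativity transports back to the level of couples, finishing the proof.
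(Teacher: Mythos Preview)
Your overall strategy coincides with the paper's: unwind each $\theta$ in terms of the push-forward $f_+$, the full dual $\DD_\PP$, the localized duals $\DD_{Y,\PP}$, $\DD_{Y'',\PP}$, $\DD_{Y''',\PP}$, and the localization maps, then isolate one nontrivial compatibility and settle it with Lemma~\ref{elemcommdual}. The paper carries this out by writing one large diagram of such functors (so that applying $\R\underline{\Gamma}^\dag_{Y'}$ recovers the target square), observing that only two triangles marked $\star$ are not formal, and reducing those to the commutativity of
\[
\xymatrix{
\DD_{Y''}\circ\DD_{Y''}\circ\DD_{Y} & \DD_{Y''}\ar[l]\ar[dl]\\
\DD_Y\ar[u]^{\sim} &
}
\]
on $\PP$ alone. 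To feed this into Lemma~\ref{elemcommdual}, the paper realizes $\DD_{Y^{(\prime\prime)}}$ as $\R\shom_{\D^\dag_{\PP,\Q}}(-,\D^{(\prime\prime)})\otimes\omega_\PP^{-1}[d_P]$ for explicit \v{C}ech-type bimodule complexes $\D$, $\D''$ built from divisors cutting out $Y$ and $Y''$ inside $X$, with a genuine map $\D\to\D''$.

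Your final step, however, does not work as written. You take $\mathcal{B}=\D^\dag_{\PP,\Q}(\hdag(Y''\setminus Y))$ and $\mathcal{C}=\D^\dag_{\PP,\Q}$ and speak of ``the canonical morphism $\mathcal{B}\to\mathcal{C}$'', but the natural map goes the other way, $\D^\dag_{\PP,\Q}\to\D^\dag_{\PP,\Q}(\hdag Z)$. More seriously, $\R\shom_{\D^\dag_{\PP,\Q}}(-,\D^\dag_{\PP,\Q}(\hdag(Y''\setminus Y)))$ computes $(\hdag(Y''\setminus Y))\circ\DD_\PP$, whereas $\DD_{(Y'',X)}=\R\underline{\Gamma}^\dag_{Y''}\circ\DD_\PP$ is, on complexes supported in $X$, $(\hdag(X\setminus Y''))\circ\DD_\PP$: you are localizing along the wrong closed set, so the identifications $\DD_{\mathcal{B}}=\DD_{(Y'',X)}$ and $\DD_{\mathcal{C}}=\DD_{(Y,X)}$ fail. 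This is precisely the reason the paper passes to the \v{C}ech complexes rather than a single localized ring. A minor additional point: your identification of $\theta_u$ with ``the relative duality isomorphism'' when $u$ is c-proper is not immediate from the definition in~\ref{constproptononprop} (which still goes through a factorization); the paper avoids this by never naming the relative duality isomorphism for $u$ and working directly with the biduality map $\DD\circ\DD\circ f_+\to f_+$.
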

\begin{proof}
 Consider the following diagram, where we omit subscripts $\PP$ and
 $\PP'$ from $\DD$:
 \begin{equation*}
  \def\objectstyle{\scriptstyle}
   \xymatrix@R=10pt@C=30pt{
  \DD\circ\DD_{Y'''}\circ\DD_{Y'''}\circ f_+\circ\DD_{Y}
  \ar[dd]_{\sim}\ar[rr]\ar@{}[rd]|{\star}&&
  \DD\circ\DD_{Y'''}\circ f_+\ar@/^2ex/[ddr]\ar[d]&\\
  &&\DD\circ\DD\circ f_+\ar[dr]&\\
  \DD\circ f_+\circ\DD_{Y}\ar[r]\ar[uurr]\ar[ddrr]&
   \DD\circ f_+\circ\DD\ar[ur]\ar@{=}[dr]&&
   f_+\\
  &&\DD\circ f_+\circ\DD\ar[ur]&\\
  \DD\circ f_+\circ\DD_{Y''}\circ\DD_{Y''}\circ\DD_{Y}
   \ar[uu]^{\sim}\ar[rr]\ar@{}[ru]|{\star}&&
   \DD\circ f_+\circ\DD_{Y''}.\ar[u]\ar@/_2ex/[uur]&
  }
 \end{equation*}
 By taking the functor $\R\underline{\Gamma}^\dag_{Y'}$, we get the
 desired diagram. Thus, it suffices to check the commutativity of the
 big diagram above. The commutativity of small diagrams except for the
 one with $\star$ are easy. To check the commutativity of the triangle
 diagrams marked with $\star$, we need to check the commutativity of the
 following diagram:
 \begin{equation*}
   \xymatrix{
   \DD_{Y''}\circ\DD_{Y''}\circ\DD_{Y}&
   \DD_{Y''}\ar[l]\ar[dl]\\
  \DD_Y.\ar[u]^{\sim}&
   }
 \end{equation*}
 Take a set of divisors $\bigl\{T_i\bigr\}_{1\leq i\leq r''}$ of $P$
 such that $X\backslash\bigl(\bigcap_{1\leq i\leq
 r^{(\prime\prime)}}T_i\bigr)=Y^{(\prime\prime)}$ for some $r''\leq
 r$. Consider the following \v{C}ech type complexes
 \begin{align*}
  \D^{(\prime\prime)}&=\\
  &\mathbf{s}\left[\vcenter{
  \xymatrix@C=10pt@R=10pt{
  \D^\dag_{\PP,\Q}
  \ar[r]\ar[d]&
  \bigoplus_{1\leq k\leq r^{(\prime\prime)}}
  \D^\dag_{\PP,\Q}(^\dag T_k)
  \ar[r]\ar[d]&
  \bigoplus_{1\leq k_1<k_2\leq r^{(\prime\prime)}}\D^\dag_{\PP,\Q}
  (^\dag T_{k_1}\cup T_{k_2})
  \ar[d]\ar[r]&
  \dots\ar[r]
  &\D^\dag_{\PP,\Q}(^\dag T_1\cup\dots\cup T_{r^{(\prime\prime)}})\\
  \D^\dag_{\PP,\Q}\ar[r]&0\ar[r]&\dots&&
  }}\right]
 \end{align*}
 where $\D^\dag_{\PP,\Q}$ of the first line is placed at degree $(0,0)$, and
 $\mathbf{s}$ denotes the simple complex associated with the double
 complex.
 We have the canonical homomorphism of complexes
 $\D\rightarrow\D^{\prime\prime}$. Recall that
 \begin{equation*}
  \DD_{Y^{(\prime\prime)}}(-)\cong\R\shom_{\D^\dag_{\PP,\Q}}
   \bigl(-,\D^{(\prime\prime)}\bigr)\otimes_{\mc{O}_{\PP}}
   \omega^{-1}_{\PP}\,[d_P].
 \end{equation*}
 The commutativity follows from a lemma on general non-sense
 Lemma \ref{elemcommdual} above.
\end{proof}

\begin{prop}
\label{transi-theta}
Let $u\colon\mb{Y}\rightarrow \mb{Y}'$, 
 $u'\colon\mb{Y}'\rightarrow\mb{Y}''$ be two complete morphisms of couples. 
We have a canonical isomorphism
	$\theta_{u'}(u_+)\circ u'_!(\theta_u)\cong\theta_{u'\circ u}$.
\end{prop}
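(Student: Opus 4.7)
I would reduce the claim to Lemma \ref{commpropnorm} via a diagram chase. Canonically factor $u = v \circ \iota$ as in \ref{constproptononprop}: $\iota\colon(Y,X)\to(U,X)$ is the c-open immersion determined by $U := a^{-1}(Y')$, and $v\colon(U,X)\to(Y',X')$ is c-proper. Analogously write $u' = v' \circ \iota'$ with $U' := a'^{-1}(Y'')$. Set $U'' := (a'\circ a)^{-1}(Y'') = a^{-1}(U')$; then the inclusion $Y' \subset U'$ induces a commutative square of couples
\[
\xymatrix{
(U,X) \ar[r]^{v} \ar[d]_{w} & (Y',X') \ar[d]^{\iota'} \\
(U'',X) \ar[r]_{\tilde{v}} & (U',X')
}
\]
where $w$ is the c-open immersion induced by $U\subset U''$ and $\tilde{v}$ is c-proper (its $X\to X'$ component is $a$). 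This exhibits the canonical factorization $u'\circ u = (v'\circ\tilde{v})\circ(w\circ\iota)$ as a c-proper morphism composed with a c-open immersion.

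Unfolding the definitions, $\theta_{u'}(u_+) \circ u'_!(\theta_u)$ becomes the four-step chain
\[
v'_!\iota'_!v_!\iota_!\;\longrightarrow\; v'_!\iota'_!v_!\iota_+\;\xrightarrow{\sim}\; v'_!\iota'_!v_+\iota_+\;\longrightarrow\; v'_!\iota'_+v_+\iota_+\;\xrightarrow{\sim}\; v'_+\iota'_+v_+\iota_+,
\]
where the plain arrows are the natural transformations $\theta_\iota$ and $\theta_{\iota'}$ and the marked isomorphisms are relative duality. On the other hand, $\theta_{u'\circ u}$ is the two-step chain $(v'\tilde{v})_!(w\iota)_!\to(v'\tilde{v})_!(w\iota)_+\xrightarrow{\sim}(v'\tilde{v})_+(w\iota)_+$. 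Using the canonical identifications $(v'\tilde{v})_?\cong v'_?\tilde{v}_?$, $(w\iota)_?\cong w_?\iota_?$, together with $\iota'\circ v = \tilde{v}\circ w$, the comparison of the two chains splits into three pieces: (i) transitivity of $\theta_?$ for the composition of c-open immersions $w\circ\iota$, which follows by rewriting the canonical map $\DD_{\mb{U}''}\circ\DD_{\mb{Y}}\to\mr{id}$ through $\DD_{\mb{U}}$ and invoking the general-nonsense compatibility of Lemma \ref{elemcommdual}; (ii) the trivial transitivity of relative duality for the c-proper $v'\circ\tilde{v}$; and (iii) the middle exchange, swapping $\theta_{\iota'}$ with $\theta_w$ across the square above, which is exactly the content of Lemma \ref{commpropnorm} applied with its roles $(u,u',\iota,\iota')$ taken as our $(v,\tilde{v},w,\iota')$.

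The main obstacle is piece (iii). One first verifies the hypotheses of Lemma \ref{commpropnorm}: completeness of $v$ and $\tilde{v}$ and properness of their first components, all inherited from the c-completeness of $u$ and $u'$ via base change (the first component of $v$ is $a\colon a^{-1}(Y')\to Y'$, a base change of the proper map $a\colon X\to X'$, and similarly for $\tilde{v}$). One then must carefully track how the canonical isomorphism $(\iota'\circ v)_? \cong (\tilde{v}\circ w)_?$ intertwines with the remaining natural transformations in the chain. Once Lemma \ref{commpropnorm} is brought to bear, pieces (i) and (ii) are formal, and the desired identification $\theta_{u'}(u_+)\circ u'_!(\theta_u)\cong\theta_{u'\circ u}$ follows.
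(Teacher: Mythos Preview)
Your argument is correct and is essentially a detailed unpacking of what the paper means by its one-line proof ``This follows from Lemma~\ref{commpropnorm}''. You correctly identify that piece~(iii), the middle exchange across the square $\iota'\circ v = \tilde{v}\circ w$, is precisely the content of Lemma~\ref{commpropnorm}, while pieces~(i) and~(ii) are routine (transitivity of the localization map for nested c-open immersions, and transitivity of relative duality for composed c-proper morphisms); the paper simply leaves these as understood. Your check of the hypotheses of Lemma~\ref{commpropnorm}---in particular the properness of the first components of $v$ and $\tilde v$ as base changes of the proper $a$---is apt, and your appeal to Lemma~\ref{elemcommdual} for piece~(i) is in the same spirit as the end of the proof of Lemma~\ref{commpropnorm} itself.
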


\begin{proof}
This follows from Lemma \ref{commpropnorm}.
\end{proof}

\begin{lem}
 \label{lemm-theta-diag}
 Let $u\colon\mb{Y}\rightarrow\mb{Y}'$ be a complete morphism of couples.
 Then the following diagram of functors from
 $F\text{-}D^{\mr{b}}_{\mr{ovhol}}(\mb{Y}/K)$ to
 $F\text{-}D^{\mr{b}}_{\mr{ovhol}}(\mb{Y}'/K)$ is commutative.
 \begin{equation*}
  \xymatrix@R=20pt@C=60pt {
   { u _!}\ar[r]^-{\theta _{u}}
   \ar@{=}[d]
   &{ u _+}\ar[d] ^-{\sim}\\
   {\DD _{\mb{Y}'} u _+ \DD _{\mb{Y}}}
   \ar[r] _-{\DD _{\mb{Y}'} (\theta _{u})}
   &{ \DD _{\mb{Y}'} u _! \DD _{\mb{Y}}.}
   }
 \end{equation*}
\end{lem}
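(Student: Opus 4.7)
The plan is to reduce to two special cases using the factorization of paragraph~\ref{constproptononprop}: write $u = u' \circ \iota$ where $\iota\colon\mb{Y}\to\mb{U}$ is a c-open immersion (identity on the second component of the couples) and $u'\colon\mb{U}\to\mb{Y}'$ is c-proper. By Proposition~\ref{transi-theta}, the transformation $\theta_u$ is compatible with this composition, and the biduality isomorphism together with the identification $u_! = \DD_{\mb{Y}'} u_+ \DD_{\mb{Y}}$ clearly respect composition as well. Hence it suffices to establish the commutativity of the diagram in the two cases $u = \iota$ c-open and $u = u'$ c-proper separately.

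For $u'$ c-proper, $\theta_{u'}$ is by construction the relative duality isomorphism $u'_! \riso u'_+$ coming from \cite[4.14]{Abe-Frob-Poincare-dual}. The commutativity of the square then amounts to the self-duality of this isomorphism, i.e.\ the statement that applying $\DD_{\mb{Y}'}$ and identifying via biduality reproduces the same isomorphism. This is a formal property of the construction of Grothendieck--Verdier duality in the arithmetic $\D$-module setting (cf.~\cite{Vir04}). For $\iota$ c-open, one unwinds the definition of $\theta_\iota$ from paragraph~\ref{constproptononprop}: it is induced by the canonical biduality map $\DD_{\mb{U}}\circ\DD_{\mb{Y}}(\E) \to \DD_{\mb{Y}}\circ\DD_{\mb{Y}}(\E) \riso \E$, using $\DD_{\mb{Y}} = (\hdag Z)\circ\DD_{\mb{U}}$. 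The desired commutativity then follows by a direct application of Lemma~\ref{elemcommdual}, with $\mc{B}$ and $\mc{C}$ chosen so that $\DD_{\mc{B}}$ and $\DD_{\mc{C}}$ correspond to $\DD_{\mb{U}}$ and $\DD_{\mb{Y}}$ respectively; the upper-right triangle of that lemma is precisely what is needed.

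The main obstacle lies in the c-proper case: one must trace through the construction of the relative duality isomorphism and verify that it is exchanged with itself under the biduality functors. Once this symmetry is granted, the c-open case reduces formally to Lemma~\ref{elemcommdual}, and combining the two through Proposition~\ref{transi-theta} yields the statement for a general complete $u$.
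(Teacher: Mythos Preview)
Your overall strategy---factor $u$ as a c-proper morphism composed with a c-open immersion, and treat each piece separately---matches the paper's, and the appeal to Proposition~\ref{transi-theta} for the reduction is reasonable. But your execution of both cases is too cursory, and the gaps are real.

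For the c-proper case, you assert that the commutativity amounts to the self-duality of the relative duality isomorphism and wave at \cite{Vir04}. This is not obviously established there, and writing it down would require exactly the kind of careful unwinding you have avoided. The paper sidesteps this issue entirely: it writes out an explicit six-term diagram in which the c-proper contribution appears only through the natural isomorphism $p_+\DD_{\mb{Y}''}\cong\DD_{\mb{Y}'}p_+$ applied functorially, so the ``outer two squares'' commute by naturality rather than by an appeal to a self-duality statement. Your formulation is not wrong in spirit, but you have not justified it.

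For the c-open case, your claim that Lemma~\ref{elemcommdual} applies directly---with the upper-right triangle being ``precisely what is needed''---is too optimistic. The diagram you must verify involves two distinct dualities $\DD_{Y}$ and $\DD_{Y'}$, two distinct bidualities, and the localization map $\alpha\colon\mr{id}\to(\hdag T)$, all interacting. The paper handles this by first constructing an auxiliary diagram $(\star\star)$ whose commutativity follows from functoriality, then reducing the claim to the commutativity of a smaller square $(\star\star\star)$ relating the two bidualities; only this last square is checked via the \v{C}ech-type resolution argument of Lemma~\ref{commpropnorm}, which is where Lemma~\ref{elemcommdual} actually enters. A single invocation of the upper-right triangle of Lemma~\ref{elemcommdual} does not produce the required identity between the composite map $\gamma$ and the biduality map on the right of the target diagram; one must first isolate exactly which triangle needs checking, and that is the content of diagrams $(\star\star)$ and $(\star\star\star)$.
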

\begin{proof}
 Let $u=(\star,a)$, and consider the factorization
 $\mb{Y}=(Y,X)\xrightarrow{i}\mb{Y}'':=(a^{-1}(Y'),X)
 \xrightarrow{p}\mb{Y}'=(Y',X')$. 
 Let $\E \in F\text{-}D^{\mr{b}}_{\mr{ovhol}}(\mb{Y}/K)$.
 Consider the following diagram:
 \begin{equation*}
  \xymatrix@R=0,5cm{
   u_!\E\ar[r]\ar[d]&p_+\,i_!\E\ar[r]\ar[d]&
   p_+\DD_{\mb{Y}''}i_+\DD_{\mb{Y}}\E
   \ar[r]\ar[d]&\DD_{\mb{Y}'}u_+\DD_{\mb{Y}}\E\ar[d]\\
  u_+\E\ar[r]&p_+\,i_+\E\ar[r]&
   p_+\DD_{\mb{Y}''}i_!\DD_{\mb{Y}}\E\ar[r]&
   \DD_{\mb{Y}'}u_!\DD_{\mb{Y}}\E.
   }
 \end{equation*}
 Since the outer two squares are commutative, it suffices to treat the
 $u=i$ case, and to show that the diagram
 \begin{equation}
  \label{theta-diag}
   \tag{$(\star)$}
   \xymatrix @R=0,5cm @C=70pt {
   { i _! (\E)}\ar[r] ^-{\theta _{i}(\E)}\ar@{=}[d] ^-{}
   &{ i _+ (\E)}\ar@{=}[r] ^-{}\ar[d] ^-{\sim}
   &{\E}\ar[d]^-{\sim}\\
   {\DD _{\mb{Y}''} i _+ \DD _{\mb{Y}} (\E)}
   \ar[r]^-{\DD _{\mb{Y}''}(\theta _{i}(\DD _{\mb{Y}} (\E)))}
   &{ \DD _{\mb{Y}''} i _! \DD _{\mb{Y}} (\E)}\ar@{=}[r] ^-{}
   &{\DD _{\mb{Y}''} \DD _{\mb{Y}''} \DD _{\mb{Y}}  \DD _{\mb{Y}} (\E)}
   }
 \end{equation}
 is commutative. We assume $u=i$, namely $X=X'$ and $Y\subset Y'\subset
 X$, in the following. Take an l.p.\ frame $(Y',X,\PP,\QQ)$, and put
 $T:=X\setminus Y$.
 We denote $\DD_{Y^{(\prime)},\PP}$
 by $\DD_{Y^{(\prime)}}$. 
 The homomorphism $\alpha$ is the one induced by $\mr{id} \to (\hdag T)$
 and $\beta$ is the biduality isomorphisms $\mr{id} \riso \DD _{Y}  \DD
 _{Y}=(\hdag T)  \DD _{Y'}  \DD _{Y}$.
 Let us consider the following diagram of
 $F$-$\D^\dag_{\PP,\Q}$-complexes:
 \begin{equation}
  \label{theta-diag-proof1}
   \tag{$(\star\star)$}
   \xymatrix@R=0,5cm @C=1,5cm{
   {\E}\ar@{=}[d]&
   {\DD _{Y'}\DD _Y (\E)}\ar[l]_-{\theta(\E)}
   \ar[d] ^-{\alpha}&
   {\DD _{Y'} (\hdag T) \DD _{Y'} \DD _{Y}  \DD _Y (\E)}
   \ar[l] _-{\DD _{Y'} (\beta )}
   \ar[r] ^-{\DD _{Y'} (\alpha )}
   \ar[d] ^-{\alpha}& 
   {\DD _{Y'} \DD _{Y'} \DD _{Y}  \DD _Y (\E)} 
   \ar[d] ^-{\alpha} _-{\sim}\\ 
  {\E}\ar[r] _-{\beta}^-{\sim}&
   {(\hdag T)  \DD _{Y'}  \DD _{Y} (\E) }&
   {(\hdag T)  \DD _{Y'}  (\hdag T) \DD _{Y'}  \DD _{Y}  \DD _Y (\E)}
   \ar[l] ^-{(\hdag T) \DD _{Y'} (\beta )} _-{\sim}
   \ar[r] _-{(\hdag T) \DD _{Y'} (\alpha )}^-{\sim}
   &{(\hdag T)   \DD _{Y'}  \DD _{Y'}  \DD _{Y}  \DD _Y (\E).}
   }
 \end{equation}
 This diagram is commutative by definition and by
 functoriality. Composing the morphisms of the bottom and next the right
 vertical morphism (i.e.\ $\alpha ^{-1}$) of \ref{theta-diag-proof1}, we get an
 isomorphism $\gamma\colon \E \riso \DD _{Y'} \DD _{Y'} \DD
 _{Y}  \DD _{Y} (\E)$. Since the diagram \ref{theta-diag-proof1} is
 commutative, it is sufficient to check that this isomorphism
 $\gamma$ is equal to the right arrow of \ref{theta-diag}.
 Let us consider the following diagram:
 \begin{equation}
  \label{theta-diag-proof2}
   \tag{$(\star\!\star\!\star)$}
   \xymatrix@C=50pt{
  {\DD_{Y'}  \DD_{Y} (\E)}&
   {\E.}\ar[l] _-{\beta} ^-{\sim}\ar[d] ^{\beta \circ \beta} _-{\sim}\\
   {\DD_{Y'}  \DD_{Y} (\E)}\ar@{=}[u] ^-{}& 
   {\DD_{Y'} \DD_{Y'} \DD_{Y}   \DD_{Y} (\E)}\ar[l]
   ^-{\DD_{Y'} (\beta)}_-{\sim}
   }
 \end{equation}
 We remark that, in this case, the right vertical morphism of
 \ref{theta-diag-proof2} is the right morphism of \ref{theta-diag} and 
 that $\gamma = \DD_{\PP} (\beta) \circ \beta ^{-1}$. Thus, it remains
 to check the commutativity of \ref{theta-diag-proof2}. For this, we
 argue as in the last part of the proof of Lemma \ref{commpropnorm}.
\end{proof}

\begin{lem}
 \label{Lammaforbasechangeframe}
 Consider the following cartesian diagram $D$ on the left of l.p.\
 frames (i.e.\ the four underlying squares are
 cartesian):
 \begin{equation*}
  \xymatrix{
   (Y''',X''',\PP'''\QQ''')\ar[r]^-{v'}\ar[d]_{u'}
   \ar@{}[dr]|\square&
   (Y,X,\PP,\QQ)\ar[d]^u\\
  (Y'',X'',\PP'',\QQ'')\ar[r]_v&(Y',X',\PP',\QQ'),
   }\qquad
   \xymatrix{
   \star\ar[r]^{v_2'}\ar[d]_{u''}\ar@{}[rd]|\square&
   \star\ar[r]^-{v_1'}\ar[d]_{u'}
   \ar@{}[dr]|\square&
   \star\ar[d]^u\\
  \star\ar[r]_{v_2}&\star\ar[r]_{v_1}&\star,
   }
 \end{equation*}
 such that $u$ is complete (which implies the completeness of $u'$ as
 well). Then we have a canonical base change isomorphism $b_D\colon
 v^!u_+\xrightarrow{\sim}u'_+v'^!$. This base change isomorphism is
 compatible with compositions with respect to $v$ in the following
 sense: consider the diagram on the right above. The stars denote some
 l.p.\ frames, and $u$ is assumed to be complete. Its right
 (resp.\ left) square is called $D$ (resp.\ $D'$) and the outer big
 diagram is called $E$. Then the composition
 \begin{equation*}
  (v_1\circ v_2)^!\circ u_+\xrightarrow[\sim]{b_D}
   v_2^!\circ u'_+\circ v_1'^!\xrightarrow[\sim]{b_{D'}}
   u'_+\circ(v_1'\circ v_2')^!
 \end{equation*}
 is equal to $b_E$. Similarly the base change isomorphism is compatible
 with composition with respect to $u$ as well.
\end{lem}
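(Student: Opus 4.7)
The plan is to construct $b_D$ by reducing to the case where the formal-scheme morphism realizing $u$ is proper, and then combining the classical proper-smooth base change theorem for arithmetic $\D$-modules with the commutation of $\R\underline{\Gamma}^\dag$ with proper pushforward along cartesian supports. The two transitivity assertions will then follow from the analogous functorialities of these two ingredients.

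For the construction, since $u$ is complete I would first exploit the freedom (used in the paper) to choose the underlying formal-scheme realization of $u$ to be proper, together with Lemma \ref{t-gen-coh-PXTindtPsurhol}, to reduce to the case where $g\colon\PP\to\PP'$ is proper and smooth. Cartesianity of $D$ then forces $g'\colon\PP'''\to\PP''$ to be proper, with $Y'''=g'^{-1}(Y'')$ and $X'''=g'^{-1}(X'')$, while $h,h'$ remain smooth. From the very definitions of the six functors for l.p.\ frames,
\[
v^!\,u_+ \;=\; \R\underline{\Gamma}^\dag_{Y''}\circ h^!\circ g_+,\qquad
u'_+\,v'^! \;=\; g'_+\circ\R\underline{\Gamma}^\dag_{Y'''}\circ h'^!.
\]
The classical proper-smooth base change for arithmetic $\D$-modules supplies $h^!\circ g_+\xrightarrow{\sim}g'_+\circ h'^!$, and then passing $\R\underline{\Gamma}^\dag$ past $g'_+$ via the isomorphism $\R\underline{\Gamma}^\dag_{Y''}\circ g'_+\xrightarrow{\sim}g'_+\circ\R\underline{\Gamma}^\dag_{Y'''}$ yields $b_D$ as the concatenation. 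Independence of the chosen realization I would verify by running the same construction on a common refinement, invoking again Lemma \ref{t-gen-coh-PXTindtPsurhol}.

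For the transitivity in $v$, I would unpack $b_D$, $b_{D'}$ and $b_E$ after fixing compatible realizations of the horizontally pasted squares. Each is by construction a concatenation of a classical base change with an $\R\underline{\Gamma}^\dag$-commutation; the horizontal pasting compatibility is a standard consequence of the compatibility of adjunction units/counits with composition (for the classical base change) together with the compatibility of the $\R\underline{\Gamma}^\dag$-commutations with composition of proper morphisms under nested preimages of supports. The transitivity in $u$ is entirely analogous, now composing proper pushforwards instead of pullbacks.

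The main obstacle is the commutation $\R\underline{\Gamma}^\dag_{Y''}\circ g'_+\cong g'_+\circ\R\underline{\Gamma}^\dag_{Y'''}$ itself. I expect to prove it, via the localization triangle and a \v{C}ech decomposition along divisors defining the closed complement (in the spirit of the \v{C}ech manipulations at the end of the proof of Lemma \ref{commpropnorm}), by reducing to the projection-formula identity $g'_+\circ(\hdag g'^{-1}(T'))\cong(\hdag T')\circ g'_+$ for a divisor $T'\subset\PP''$. Making this canonical, Frobenius-equivariant, and mutually compatible with all the transitivities used above is the most delicate part of the argument.
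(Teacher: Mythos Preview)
Your opening reduction is where the argument breaks. In this lemma the l.p.\ frames and the morphisms $u,v,u',v'$ are \emph{given data}: the cartesian diagram $D$ is fixed, and the assertion is that a canonical $b_D$ exists for that diagram. You are not free to ``choose the underlying formal-scheme realization of $u$ to be proper''; that freedom only exists at the level of couples (and is exploited later, in Proposition~\ref{basechangeisom}, \emph{after} this lemma is available). Lemma~\ref{t-gen-coh-PXTindtPsurhol} lets you compare two frames of the \emph{same} couple via a complete morphism of the form $(\mathrm{id},\star,\star,\star)$, but it does not let you replace the $\PP\to\PP'$ in a fixed cartesian square by a proper one while keeping the square cartesian and the other three frames unchanged. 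So the very first step does not go through, and the ``classical proper-smooth base change'' you invoke afterwards is never reached.

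The paper's route is different in orientation: instead of touching $u$, it factors the \emph{horizontal} morphism $v$. Any morphism of l.p.\ frames factors canonically as a type~I morphism (the $\QQ$-component is an immersion, via the graph) followed by a type~II morphism (cartesian with smooth $\QQ$-component). For type~I, the base change is literally the transitivity of the local cohomology functor $\R\underline{\Gamma}^\dag$; for type~II, it is the smooth base change of \cite[10.7]{Abe-Frob-Poincare-dual}. The composite defines $b_D$. Transitivity in $v$ then reduces to three elementary compatibility diagrams among type~I/type~II factorizations, the only nontrivial one being a comparison of two base-change isomorphisms for a cartesian square of formal schemes, stated as a separate Claim. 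Your intuition that a commutation of $\R\underline{\Gamma}^\dag$ with pushforward is central is correct---that is exactly the content of the type~I step---but it enters through the factorization of $v$, not through a modification of $u$.
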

\begin{proof}
 Let $v\colon(Y,X,\PP,\QQ)\rightarrow(Y',X',\PP',\QQ')$ be a morphism
 of l.p.\ frames. The morphism $v$ is said to
 be {\em cartesian} if $\PP=\QQ\times_{\QQ'}\PP'$, $X=Q\times_{Q'}X'$,
 $Y=Q\times_{Q'}Y'$, {\em of type I} if $g$ is an immersion, and {\em
 of type II} if it is cartesian and $g$ is smooth. 
 We notice that $v$ factors
 canonically as $(Y,X,\PP,\QQ)\xrightarrow{\iota}
 (Q\times Y',Q\times X',\QQ\times\PP',\QQ\times\QQ')\xrightarrow{v'}
 (Y',X',\PP',\QQ')$ where $\iota$ is of type I and $v'$ is II. We
 can define the base change isomorphism for morphisms of type I and
 II individually. Indeed, for type I, the base change isomorphism is
 nothing but the transitivity of the local cohomology functor 
 and for type II, this
 is  \cite[10.7]{Abe-Frob-Poincare-dual}.
 These isomorphisms are compatible with the
 composition of $u$. The base change isomorphism for $v$ is defined to
 be the composition.

 It remains to show that the isomorphism is compatible with composition
 with respect to $v$. To construct such a natural transform, we may
 reduce to the following commutative and cartesian diagrams
 \begin{equation*}
  \xymatrix{
   \star\ar[r]^-{v}\ar[dr]_{\iota'}&
   \star\ar[d]^{\iota}\\
  &\star,
   }\qquad\qquad
  \xymatrix{
   \star\ar[r]^-{\iota}\ar[dr]_{v'}&
   \star\ar[d]^v\\
  &\star,
   }\qquad\qquad
   \xymatrix{
   \star\ar[r]^-{\iota'}
   \ar[d]_{v'}\ar@{}[dr]|\square&
   \star\ar[d]^v\\
  \star\ar[r]_{\iota}&\star,
   }
 \end{equation*}
 where $\iota$ and $\iota'$ are morphisms of type I and $v$ and $v'$
 are of type II, and construct a natural isomorphism between the
 composition of the base change isomorphism of $\iota$, $v'$ and
 $\iota'$, $v$. The first one easily follows using the equivalence in
 Lemma \ref{t-gen-coh-PXTindtPsurhol}, and the last one is similar to
 this. Let us check the second one. Let
 $v'\colon(Y',X',\PP',\QQ')\rightarrow(Y,X,\PP,\QQ)$ and
 $v\colon(Y'',X'',\PP'',\QQ'')\rightarrow(Y,X,\PP,\QQ)$. Let $i$ be the
 immersion $\QQ'\hookrightarrow\QQ''$ defining $\iota$, and
 $\widetilde{v}\colon\QQ'\rightarrow\QQ$ be the one defining $v$. Since
 $v$ and $v'$ are assumed to be cartesian, $\iota$ is cartesian as well,
 and $\iota^!$ is nothing but $i^!$. Thus the base change theorem is
 reduced to showing the following claim, whose verification is left to
 the reader.
 \begin{cl}
  Keeping the notation, consider the following cartesian diagram of
  smooth formal schemes:
  \begin{equation*}
   \xymatrix{
    \PP'\ar[r]^{i'}\ar[d]_{f'}\ar@{}[rd]|\square&
    \PP''\ar[r]^{\widetilde{v}'}\ar[d]\ar@{}[rd]|\square&
    \PP\ar[d]^f\\
   \QQ'\ar[r]_i&\QQ''\ar[r]_{\widetilde{v}}&\QQ.
    }
  \end{equation*}
  Then the two canonical base change isomorphisms between functors
  \begin{equation*}
   (\widetilde{v}\circ i)^!\circ f_+\cong
    f'_+\circ(\widetilde{v}'\circ i')^!
    \colon D^{\mr{b}}_{\mr{coh}}(\D^\dag_{\PP,\Q})\rightarrow
    D^{\mr{b}}(\D^\dag_{\QQ',\Q})
  \end{equation*}
  are identical.
 \end{cl}
\end{proof}

\begin{prop}[Base change]
 \label{basechangeisom}
 Consider the following diagram of couples which induces an isomorphism
 $Y'''\cong Y\times_{Y'}Y''$:
 \begin{equation*}
  \xymatrix{
   \mb{Y}'''\ar[r]^-{v'}\ar[d]_{u'}&
   \mb{Y}\ar[d]^u\\
  \mb{Y}''\ar[r]_v&\mb{Y}',
   }
 \end{equation*}
 where $u$ and $u'$ are complete.
 Then we have a canonical isomorphism $v^!\circ u_+\cong u'_+\circ
 v'^!$. This isomorphism is compatible with composition with respect to
 both $u$ and $v$.
\end{prop}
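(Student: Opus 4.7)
The plan is to reduce the statement to Lemma \ref{Lammaforbasechangeframe} by lifting the diagram of couples to a cartesian diagram of l.p.\ frames in the sense of that lemma, and then checking that the resulting isomorphism at the couple level does not depend on the choices made.

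First I would fix an l.p.\ frame $(Y',X',\PP',\QQ')$ for $\mb{Y}'$ and lift $v$ to a morphism of l.p.\ frames $(Y'',X'',\PP'',\QQ'')\to(Y',X',\PP',\QQ')$ in which $\PP''\to\PP'$ and $\QQ''\to\QQ'$ are smooth, as allowed by the discussion following Definition \ref{defi-(d)plongprop}. I would also choose any lift $(Y,X,\PP,\QQ)\to(Y',X',\PP',\QQ')$ of $u$; such a lift is automatically complete since $u$ is. Forming the fiber products $\QQ''':=\QQ\times_{\QQ'}\QQ''$, $\PP''':=\PP\times_{\PP'}\PP''$, $X''':=X\times_{X'}X''$, $Y''':=Y\times_{Y'}Y''$, smoothness of $\QQ''/\QQ'$ transfers to smoothness of $\QQ'''$ over $\V$ while properness is automatic, so $(Y''',X''',\PP''',\QQ''')$ is an l.p.\ frame for $\mb{Y}'''$ and the entire diagram is cartesian in the sense of Lemma \ref{Lammaforbasechangeframe}. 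Applying that lemma produces an isomorphism $v^!\circ u_+\riso u'_+\circ v'^!$ at the frame level, which is the candidate base change isomorphism.

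Next I would verify that this isomorphism descends to a canonical isomorphism at the level of couples. Given two cartesian frame liftings of the same diagram of couples, I would compare them through a common refinement obtained by further fiber products (with respect to additional smooth or closed-immersion morphisms of formal schemes). The equivalences in Lemma \ref{t-gen-coh-PXTindtPsurhol} identify the categories of overholonomic complexes computed with different frame lifts, and the compatibility of $b_D$ with compositions in both $u$ and $v$, supplied by Lemma \ref{Lammaforbasechangeframe}, ensures that the base change isomorphisms obtained from the two liftings agree under these identifications. For the final compatibility statement of the proposition, given longer composable towers, I would pick simultaneous cartesian lifts of all the squares at once (iterating the fiber product construction) and reduce the claim directly to the composition compatibility already recorded in Lemma \ref{Lammaforbasechangeframe}.

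The main obstacle is the independence-of-choice verification in the second step: one must thread the comparison through common refinements of frame lifts carefully, invoking both the naturality of the frame-level base change and the full faithfulness provided by Lemma \ref{t-gen-coh-PXTindtPsurhol} to see that the couple-level isomorphism is truly canonical. Once canonicity is in place, the composition compatibilities reduce essentially mechanically to their frame-level analogues.
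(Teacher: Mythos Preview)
Your approach is essentially the paper's, but you have overlooked one point that the paper's proof explicitly handles. The hypothesis of the proposition is only that the square is cartesian on the open parts, i.e.\ $Y'''\cong Y\times_{Y'}Y''$; nothing is assumed about the compactifying scheme $X'''$ of the given couple $\mb{Y}'''=(Y''',X''')$. When you form the fiber product $X\times_{X'}X''$ and declare $(Y''',X\times_{X'}X'',\PP\times_{\PP'}\PP'',\QQ\times_{\QQ'}\QQ'')$ to be ``an l.p.\ frame for $\mb{Y}'''$'', this is in general false: it is an l.p.\ frame for the \emph{different} couple $(Y''',X\times_{X'}X'')$. The functors $u'_+$ and $v'^!$ you are trying to compute are defined in terms of the given $\mb{Y}'''$, so your frame-level isomorphism from Lemma~\ref{Lammaforbasechangeframe} does not yet compute $u'_+\circ v'^!$.

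The paper closes this gap by introducing the canonical complete morphism of couples
\[
\alpha\colon(Y''',X''')\longrightarrow(Y''',X''\times_{X'}X)
\]
induced by the given maps $X'''\to X$ and $X'''\to X''$ (completeness of $\alpha$ follows from properness of $X'''\to X''$ and separatedness of the projection). One first constructs the base change isomorphism $v^!\circ u_+\cong u''_+\circ v''^!$ for the genuinely cartesian square of couples exactly as you describe, and then uses the equivalence $\alpha_+\circ\alpha^!\cong\mathrm{id}$ of Lemma~\ref{t-gen-coh-PXTindtPsurhol} to pass from $u''_+\circ v''^!$ to $u'_+\circ v'^!$. Once you insert this $\alpha$-step, the rest of your outline (independence of choices via common refinements and the composition compatibilities of Lemma~\ref{Lammaforbasechangeframe}) matches the paper's argument.
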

\begin{proof}
 Let $\mb{Y}^{*}=(Y^{*},X^*)$ where
 $*\in\bigl\{\emptyset,\prime,\prime\prime,\prime\prime\prime\bigr\}$.
 The diagram can be supplemented as follows:
 \begin{equation*}
   \xymatrix{
   (Y''',X''')\ar[r]^-{\alpha}&
   (Y''',X''\times_{X'}X)\ar[r]^-{v''}\ar[d]_{u''}
   \ar@{}[dr]|\square&
   (Y,X)\ar[d]^u\\
  &(Y'',X'')\ar[r]_v&(Y',X').
   }
 \end{equation*}
 Let us construct the base change isomorphism for cartesian diagram of
 couples: $v^!\circ u_+\cong u''_+\circ v''^!$. To construct this
 isomorphism, take a following cartesian diagram
 \begin{equation*}
  \xymatrix{
   (Y''',X''\times_{X'}X,\PP\times_{\PP'}\PP'',\QQ\times_{\QQ'}\QQ'')
   \ar[r]^-{\widetilde{v}'}\ar[d]_{\widetilde{u}''}\ar@{}[dr]|\square&
   (Y,X,\PP,\QQ)\ar[d]^{\widetilde{u}}\\
  (Y'',X'',\PP'',\QQ'')\ar[r]_{\widetilde{v}}&(Y',X',\PP',\QQ')
   }
 \end{equation*}
 where $\widetilde{u}$ and $\widetilde{v}$ are morphisms of frames of
 $u$ and $v$ respectively such that the {\em morphism of formal schemes
 are smooth}. Then applying Lemma \ref{Lammaforbasechangeframe}, we have
 a base change isomorphism $\widetilde{v}^!\circ\widetilde{u}_+\cong
 \widetilde{u}''_+\circ\widetilde{v}''^!$. We need to check that the
 isomorphism does not depend on the choice of $\widetilde{u}$ and
 $\widetilde{v}$. This follows easily from the compatibility of the
 composition.

 Now, we define the desired base change isomorphism to be $v^!\circ
 u_+\cong u''_+\circ v''^!\cong u''_+\circ (\alpha_+\circ\alpha^!)\circ
 v''^!\cong u'_+\circ v'^!$. To check the compatibility, we need diagram
 chasing using Lemma \ref{Lammaforbasechangeframe}, which is tedious but
 not difficult.
\end{proof}

\begin{lem}
 \label{lem-ix0=0}
 Let $\mb{Y}$ be a couple, and
 $\E \in F\text{-}D ^{\mathrm{b}}_{\mathrm{ovhol}}(\mb{Y}/K)$. 
 Assume, for any closed point $y$ of $Y$, we
 have $i ^{!} _y (\E) =0$, where
 $i_y\colon(\{y\},\{y\})\rightarrow\mb{Y}$ is the canonical
 morphism. Then $\E = 0$.
\end{lem}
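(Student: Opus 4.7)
The plan is to derive a contradiction from the assumption $\E\neq 0$ while $i_y^!(\E)=0$ for every closed $y\in Y$. The strategy: reduce to a smooth dense open $U$ of $\mr{supp}(\E)$ over which the cohomologies of $\E$ become overconvergent $F$-isocrystals, then compute $i_y^!$ explicitly via formula (\ref{!et*-isoc}).

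\emph{Step 1 (Reduction to full irreducible support).} Let $V:=\bigcup_m\mr{supp}(\HH^m(\E))$, a non-empty closed subvariety of $Y$. By Kashiwara's theorem (Proposition \ref{propofpullbacks}(iii)) applied level-wise in cohomology, together with the localization triangle of Definition \ref{extrafunctordfn}(ii), one gets $\E\cong\R\underline{\Gamma}^\dag_V(\E)\cong i_{V,+}(i_V^!\E)$, where $i_V\colon\mb{V}:=(V,\bar V)\to\mb{Y}$ is the c-closed immersion and $\bar V$ is the closure of $V$ in $X$. Using $i_V^!\circ i_{V,+}\cong\mr{id}$ together with the factorization $i_y=i_V\circ i_y^V$ for $y\in V$ closed shows that $i_y^!(\E)$ computed on $\mb{Y}$ agrees with the corresponding functor on $\mb{V}$ applied to $i_V^!(\E)$; for $y\notin V$ the vanishing is automatic. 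Replacing $(\mb{Y},\E)$ by $(\mb{V},i_V^!\E)$, and invoking the gluing lemma \ref{glueing-Ovhol} across irreducible components, we may assume $Y$ is irreducible of some dimension $d$ and $\mr{supp}(\E)=Y$.

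\emph{Step 2 (Generic isocrystal structure).} By the generic structure theorem for overholonomic modules---the $p$-adic counterpart of the classical fact that a holonomic $\D$-module on the smooth locus of its support is a vector bundle with integrable connection---applied to each of the finitely many non-zero cohomologies $\HH^m(\E)$, we obtain a smooth dense open $U\subseteq Y$ such that $\HH^m(\E)|_U\in F\text{-}\mr{Isoc}^{\dag\dag}(U/K)$ for every $m$. Since $\mr{supp}(\E)=Y$ is irreducible and $U$ is dense, $\E|_U\neq 0$, so some $\HH^{m_0}(\E)|_U$ is a non-zero overconvergent $F$-isocrystal.

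\emph{Step 3 (Fibrewise computation).} For any closed $y\in U$, factor $i_y=j\circ i_y^U$ with $j\colon(U,\bar U)\to(Y,X)$ a c-open immersion and $i_y^U\colon(\{y\},\{y\})\to(U,\bar U)$; since $j^!$ is t-exact (Proposition \ref{propofpullbacks}(i)) we have $i_y^!\E=(i_y^U)^!(\E|_U)$. Formula (\ref{!et*-isoc}) yields $(i_y^U)^!\mr{sp}_+(E)\cong\mr{sp}_+(i_y^*E)[-d]$ (up to Tate twist), concentrated in cohomological degree $d$, for any $F$-isocrystal $E$ on $U$. A short induction on the number of non-zero cohomologies of $\E|_U$, using the truncation triangles $\tau_{\leq n-1}(\E|_U)\to\E|_U\to\HH^n(\E|_U)[-n]$ together with the long exact sequence in cohomology, then gives $\HH^{d+m}(i_y^!\E)\cong\mr{sp}_+(i_y^*\HH^m(\E|_U))$ (up to Tate twist) for every $m$, with the other cohomologies vanishing. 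Overconvergent $F$-isocrystals are locally free and hence have non-zero fibres at every closed point; in particular $\HH^{d+m_0}(i_y^!\E)\neq 0$, contradicting $i_y^!(\E)=0$. The key non-trivial input is the generic structure theorem invoked in Step 2; the rest is formal manipulation with the $t$-structure, Kashiwara's theorem, and the explicit formula (\ref{!et*-isoc}).
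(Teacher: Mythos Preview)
Your argument is correct and rests on the same two ingredients as the paper's proof: the generic-isocrystal structure over a smooth dense open of the support, and the explicit fibre formula~(\ref{!et*-isoc}) for isocrystals. The paper packages this as an induction on $\dim\mr{supp}(\E)$ via the localization triangle---after showing $\E|_U=0$ from the fibre hypothesis, one is left with $\R\underline{\Gamma}^\dag_Z(\E)$, whose support has strictly smaller dimension---whereas you reach a direct contradiction after first reducing to full support; these are equivalent reorganizations of the same idea, and your Step~3 induction on the amplitude is exactly what the paper leaves implicit when it asserts ``$j^!\E=0$'' from the vanishing of all $i_u^!$. One minor point: the gluing lemma~\ref{glueing-Ovhol} you invoke in Step~1 is stated only for the heart $\mr{Ovhol}$, not for complexes, so it does not literally give the reduction to irreducible $Y$; but irreducibility is not actually used in your Step~3 (it suffices to work on a connected component of the smooth open $U$, where the local dimension $d$ is constant), so this is harmless.
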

\begin{proof}
 From Lemma \ref{iso-onU}, one can suppose $\mb{Y}$ is of the form
 $(Y,Y)$. We proceed by induction on the dimension of the support of
 $\E$. We may assume that the support of $\E$ is $Y$. There exists a
 closed subscheme $Z\subset Y$ such that $U:=Y\setminus Z$ is dense and
 smooth, and $j^!\E$ is in $F\text{-}D
 ^{\mathrm{b}}_{\mathrm{isoc}}(U,Y/K)$ where $j \colon (U,Y) \to (Y,Y)$
 (cf.\ paragraph \ref{isoc-nota}).
 Since for $u \in U$ we have $i'^! _u j^!\E=0$ where
 $i_u\colon(\{u\},\{u\})\rightarrow (U, Y)$,
 we have $j^!\E=0$. Consider the localization triangle
 $\R\underline{\Gamma}^\dag_Z(\E)\rightarrow\E\rightarrow(^\dag Z)
 (\E)\xrightarrow{+}$. Since for any $y\in Z$,
 $i_y^!(\R\underline{\Gamma}^\dag_Z\E)=0$, 
 we have
 $\R\underline{\Gamma}^\dag_Z(\E)=0$ by induction hypothesis. We
 conclude since $(^\dag Z)(\E)=j _+j^!\E=0$.
\end{proof}

\begin{prop}
 \label{homeo-univ}
 Let $u\colon\mb{Y}'\to\mb{Y}$ be a morphism of couples. If $u$ is a
 c-universal homeomorphism (i.e.\ by EGA IV, Proposition
 8.11.6, finite, surjective and radicial), then the
 functors $u_{+}$ and $u ^{!}$ induce canonical equivalence of
 categories between $F\text{-}D
 ^{\mathrm{b}}_{\mathrm{ovhol}}(\mb{Y}/K)$ and $F\text{-}D
 ^{\mathrm{b}}_{\mathrm{ovhol}}(\mb{Y}'/K)$.
\end{prop}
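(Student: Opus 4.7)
The plan is to construct mutually quasi-inverse equivalences by verifying that the adjunction $(u_+,u^!)$ has invertible unit and counit. Since $u$ is c-finite, hence c-proper, paragraph~\ref{relativedual}(ii) gives $u_!\simeq u_+$, so Lemma~\ref{adju+u+couple} yields an adjoint pair $(u_+,u^!)$. I also notice that $u^!\cong u^+$: indeed, by \ref{relativedual}(ii) again $\DD_\mb{Y'}u^!\cong u^!\DD_\mb{Y}$ up to the equivalences, so the two pullback functors coincide, and it suffices to check the statement for either of $u_+$, $u^!$.

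To show that the counit $u_+u^!\FF\to\FF$ is an isomorphism for every $\FF$, I would apply Lemma~\ref{lem-ix0=0}: it suffices to check that $i_y^!$ of its cone vanishes at every closed point $y\in Y$. Form the cartesian square of couples
\begin{equation*}
 \xymatrix@R=15pt @C=25pt{
  \mb{Y}'_y\ar[r]^-{i'}\ar[d]_-{u_y}\ar@{}[rd]|\square&
  \mb{Y}'\ar[d]^u\\
 (\{y\},\{y\})\ar[r]_-{i_y}&\mb{Y},
 }
\end{equation*}
where $\mb{Y}'_y:=\mb{Y}'\times_{\mb{Y}}(\{y\},\{y\})$. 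Proposition~\ref{basechangeisom} gives $i_y^!\circ u_+\cong u_{y,+}\circ i'^!$, and combining with $u\circ i'=i_y\circ u_y$ yields $i_y^!(u_+u^!\FF)\cong u_{y,+}u_y^!(i_y^!\FF)$, reducing invertibility of the counit of $u$ at $y$ to invertibility of the counit of $u_y$ on an arbitrary object. A parallel argument handles the unit on $\mb{Y}'$: for a closed point $y'\in Y'$ mapping to $y=b(y')$, the fact that $k$ is perfect forces the purely inseparable finite extension $k(y)\hookrightarrow k(y')$ to be trivial, so $u\circ i_{y'}\cong i_y$, after which the same base-change computation reduces the unit of $u$ at $y'$ to the unit of $u_y$.

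The problem is thus reduced to showing that $(u_{y,+},u_y^!)$ is an equivalence, where $\mb{Y}'_y$ is a (possibly infinitesimally thickened) one-point couple with unique closed point $y'$. Let $j_{y'}\colon(\{y'\},\{y'\})\hookrightarrow\mb{Y}'_y$ denote the c-closed immersion; under the identification $(\{y'\},\{y'\})\cong(\{y\},\{y\})$ coming from $k(y')=k(y)$, the composition $u_y\circ j_{y'}$ equals the identity. Because $\mb{Y}'_y$ has only one topological point, every overholonomic complex on it is automatically supported at $y'$, so Kashiwara's theorem (Proposition~\ref{propofpullbacks}(iii)) makes $j_{y',+}$ and $j_{y'}^!$ mutually quasi-inverse equivalences. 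The identity $u_y\circ j_{y'}=\mr{id}$ gives $j_{y'}^!\circ u_y^!\cong\mr{id}$ and $u_{y,+}\circ j_{y',+}\cong\mr{id}$; composing with the Kashiwara quasi-inverses then yields $u_y^!\cong j_{y',+}$ and $u_{y,+}\cong j_{y'}^!$, whence $u_y^!\circ u_{y,+}\cong\mr{id}$ and $u_{y,+}\circ u_y^!\cong\mr{id}$, so the adjunction is an equivalence.

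The main obstacle I expect is the bookkeeping of adjunction morphisms under base change: one must verify that $i_y^!$ applied to the counit of $u$ really matches the counit of $u_y$ via the identifications of Proposition~\ref{basechangeisom}, and similarly for the unit (this is where the compatibilities of base change with composition recorded in \ref{basechangeisom} are used). The key geometric input is the identification $u\circ i_{y'}\cong i_y$ at a closed point, which rests essentially on $k$ being perfect; this is precisely the reason c-universal homeomorphisms become invisible to overholonomic $F$-complexes in characteristic $p$.
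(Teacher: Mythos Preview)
Your proof is correct and follows essentially the same strategy as the paper: show that the unit and counit of the adjunction $(u_+,u^!)$ are isomorphisms by applying Lemma~\ref{lem-ix0=0} and the base change isomorphism of Proposition~\ref{basechangeisom} to reduce to closed points. The paper's execution is slightly leaner: it observes immediately that since $k$ is perfect and $u$ is radicial, the induced map $u_y\colon y'\to y$ between \emph{reduced} closed points is an isomorphism of schemes, so the pointwise adjunction is trivially an equivalence. You instead form the scheme-theoretic fibre $\mb{Y}'_y$ (which may be non-reduced) and then invoke Kashiwara's theorem for the nilimmersion $j_{y'}$ to pass to the reduced point; this is correct but adds an extra step that the paper avoids by implicitly using that $D^{\mr{b}}_{\mr{ovhol}}$ only sees the underlying reduced subscheme.

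One small point: your opening remark that $u^!\cong u^+$ follows from $u_!\cong u_+$ via relative duality is circular as written---knowing that $u_+$ commutes with $\DD$ does not by itself force $u^!$ to commute with $\DD$; that conclusion requires $u_+$ to already be an equivalence, which is precisely what you are proving. Fortunately you never actually use this claim in the body of the argument, so it can simply be deleted.
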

\begin{proof}
 Let $\E \in F\text{-}D ^{\mathrm{b}}_{\mathrm{ovhol}}(\mb{Y}/K)$ and
 $\E' \in F\text{-}D ^{\mathrm{b}}_{\mathrm{ovhol}}(\mb{Y}'/K)$.
 It is sufficient to check that adjunction morphisms $u _+ \circ u^{!}
 (\E) \to \E$ and $\E ' \to u ^{!} \circ u _{+} (\E')$ are
 isomorphisms. Let $y$ be a closed point $Y$ and $y' := u^{-1} (y)$.
 We denote by $i _y \colon y \hookrightarrow Y$ (resp.\ $i _{y'} \colon
 y '\hookrightarrow Y'$) the canonical closed immersion, and by $u _y
 \colon y '\rightarrow y$ the induced morphism, which is in fact an
 isomorphism since $u$ is surjective, radicial, and $k$ is perfect.
 From the base change theorem \ref{basechangeisom}, we get $i ^{!} _{y}
 \circ u _+ \riso u _{y+} \circ i ^{!} _{y'}$.
 By applying $i ^{!} _{y}$ to the canonical morphism $u _+ \circ u ^{!}
 (\E) \to \E$ we get the adjunction morphism $u _{y+} \circ u ^{!} _{y}
 (i ^{!} _{y} \E) \to i ^{!} _{y} \E$, which is an isomorphism.
 From Lemma \ref{lem-ix0=0}, this implies that the morphism
 $u _+ \circ u ^{!} (\E) \to \E$ is an isomorphism.
 We may check similarly that $\E ' \to u ^{!} \circ u _{+} (\E')$ is an
 isomorphism as well.
\end{proof}

\begin{prop}
 \label{exactnessforsomemorph}
 Let $u\colon\mb{Y}\rightarrow\mb{Y}'$ be a complete morphism of
 couples.

 (i) If $u$ is c-affine, then $u_+$ (resp.\ $u_!$) is right t-exact
 (resp.\ left t-exact).

 (ii) If $u$ is c-quasi-finite, then $u_+$ (resp.\ $u_!$) is left t-exact
 (resp.\ right t-exact).
\end{prop}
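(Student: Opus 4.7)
By Proposition \ref{exactnessofdual} the dual functor $\DD_{\mb{Y}}$ is t-exact, and since $u_! \cong \DD_{\mb{Y}'}\circ u_+\circ \DD_{\mb{Y}}$ by construction, the assertions about $u_!$ follow from the dual assertions about $u_+$. Hence it suffices to prove: (i) $u$ c-affine implies $u_+$ right t-exact, and (ii) $u$ c-quasi-finite implies $u_+$ left t-exact.

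I proceed by noetherian induction on $\dim Y$ combined with a devissage on the source. For a fixed $\M\in\mr{Ovhol}(\mb{Y}/K)$, paragraph \ref{isoc-nota} supplies a dense smooth open $V\subset Y$ such that $\M|_{(V,X)}$ is an overconvergent $F$-isocrystal. The localization triangle along $Z:=Y\setminus V$, together with the base change isomorphism of Proposition \ref{basechangeisom} (which commutes $u_+$ with the local cohomology functors $\R\underline{\Gamma}^\dag$ across the cartesian diagram induced by $Z\hookrightarrow Y'$ and $b^{-1}(Z)\hookrightarrow Y$), reduces the desired vanishings $\H^i u_+\M=0$ for $i>0$ (resp.\ $i<0$) to two subcases: (a) the case where $\M$ is an overconvergent $F$-isocrystal on a smooth dense open of $Y$, and (b) the inductive case where $\M$ is supported on $Z$ with $\dim Z<\dim Y$. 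In case (b) one writes $\M=i_+\M_0$ for a closed immersion $i$, and the composite $u\circ i$ remains c-affine (resp.\ c-quasi-finite), so the induction hypothesis applies.

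In the smooth isocrystal case, I choose l.p.\ frames so that the induced lift $g\colon\PP\to\PP'$ is smooth, possibly after further Noether-normalization of $b|_V$. For (i), the c-affine hypothesis makes $b|_V$ locally affine of some relative dimension $d$; the relative de Rham description of $u_+$ via the smooth lift produces a complex concentrated in degrees $-d,\dots,0$, and affineness of $b|_V$ makes the ordinary direct image $Rg_*$ exact, yielding $u_+\M\in D^{\leq 0}$. For (ii), c-quasi-finiteness means relative dimension $0$: factoring through the scheme-theoretic image allows one to use Kashiwara's theorem (Proposition \ref{propofpullbacks}(iii)) for the closed immersion piece, and Proposition \ref{homeo-univ} disposes of the universal homeomorphism part, reducing to the finite \'etale case, where pushforward is t-exact by direct verification on $\widehat{\D}^{(m)}$-modules using the flatness of \'etale morphisms.

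The main technical obstacle is the bookkeeping of the devissage: one must ensure that the l.p.\ frames and the affineness/quasi-finiteness hypotheses descend compatibly under restriction to smaller strata, and that the localization triangles are correctly base-changed across the various cartesian diagrams involved. This is a matter of careful iterative application of the compatibilities of the preceding subsection, in particular Proposition \ref{basechangeisom} and Lemma \ref{lemm-theta-diag}.
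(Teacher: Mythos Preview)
Your reduction by duality to the $u_+$ statements is fine and matches the paper. Part (ii) is in the right spirit: the paper also treats open and closed immersions directly, then uses induction on dimension together with Zariski's main theorem and Proposition \ref{homeo-univ} to reduce the general c-quasi-finite case to a finite \'etale morphism between smooth varieties, where $u_+$ is visibly t-exact. Your sketch for (ii) is close to this, though you should make the open-immersion case explicit (this is where Remark \ref{rem-scrHcalH}.\ref{rem-scrHcalH(ii)} is used).

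For part (i), however, your d\'evissage strategy has a real gap, and the paper takes a completely different route. Two problems. First, the d\'evissage itself does not close: with $\M$ in the heart and $Z\subset Y$ closed, the localization triangle gives $\R\underline{\Gamma}^\dag_Z\M\in D^{\geq 0}$ (since $i^!$ is only left t-exact), so you cannot feed $i^!\M$ back into an inductive hypothesis about right t-exactness on the heart; the triangle does not split the problem into two pieces both lying in $D^{\leq 0}$. Second, and more seriously, in your isocrystal step you assert that ``affineness of $b|_V$ makes the ordinary direct image $Rg_*$ exact'', but $g\colon\PP\to\PP'$ is a smooth morphism of \emph{formal schemes} chosen to lift $b$; there is no reason for $g$ itself to be affine, so vanishing of higher $Rg_*$ on coherent $\D^\dag$-modules does not follow from the affineness of $b$.

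The paper avoids all of this. For (i) it reduces locally on the target to $Y'=P'$ affine, uses the affineness of $Y$ to choose a closed immersion $Y\hookrightarrow\widehat{\mb{A}}^n_{\PP'}$, compactifies to $\widehat{\mb{P}}^n_{\PP'}$, and thereby reduces to the single case $Y=\mb{A}^n_{P'}$ with $u'\colon(\mb{A}^n_{P'},\overline{\mb{A}^n_{P'}},\widehat{\mb{P}}^n_{\PP'})\to(P',P',\PP')$. The right t-exactness of $u'_+$ is then the $\D^\dag$-affinity theorem of Noot-Huyghe \cite[5.4.1]{Noot-Huyghe-affinite-proj}. No d\'evissage into isocrystals is needed; the hard analytic input is concentrated in that one cited result.
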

\begin{proof}
 Let us show (i). Take a morphism of l.p.\ frames
 $(\star,\star,\star,g)\colon(Y,X,\PP,\QQ)\rightarrow(Y',X',\PP',\QQ')$
 of $u$. From Remark \ref{rem-scrHcalH}, we may assume $Y'=P'$. Since
 the claim is local, we may assume $\PP'$ to be affine. Since $Y$ is
 affine, we can take a closed immersion
 $Y\hookrightarrow\widehat{\mb{A}}^n_{\PP'}$ for some $n$. Since $g$ is
 assumed to be smooth, the canonical morphism
 $\widehat{\mb{P}}^n_{\PP'}\times_{\PP'}\PP\rightarrow\PP'$ is proper
 smooth, and we may consider the following commutative diagram of
 frames:
 \begin{equation*}
  \xymatrix@R=15pt{
   (Y,\overline{Y},\widehat{\mb{P}}^n_{\PP'})\ar[dr]_{u'}&
   (Y,\overline{Y},\widehat{\mb{P}}^n_{\PP'}\times_{\PP'}\PP)
   \ar[r]\ar[l]\ar[d]&
   (Y,X,\PP)\ar[dl]\\
  &(P',P',\PP').&
   }
 \end{equation*}
 Thus, we are reduced to showing the right exactness of $u'_+$. Let
 $D\subset\widehat{\mb{P}}^n_{\PP'}$ be the divisor at infinity. By
 construction, $Y\hookrightarrow\widehat{\mb{P}}^n_{\PP'}\setminus
 D=\widehat{\mb{A}}^n_{\PP'}$ is a closed immersion, and we may assume
 that $Y=\mb{A}_{P'}^n$ by Proposition \ref{propofpullbacks} (iii).
 Then the proposition follows by
 \cite[5.4.1]{Noot-Huyghe-affinite-proj}.

 Let us show (ii). By exactness of the dual functor, it suffices to show
 the $u_+$ case. First, consider the case where $u$ is a c-open
 immersion. By Lemma \ref{t-gen-coh-PXTindtPsurhol}, we can suppose
 $X=X'$. Then, the claim follows from Remark
 \ref{rem-scrHcalH} (iv). 
 The case where $u$ is a c-closed immersion is treated in Proposition
 \ref{propofpullbacks}.
 Finally, let us show the general case by the induction on the dimension
 of $X$. We may assume that the image of $b$ is dense in $Y'$. By using
 the localization triangle and the induction hypothesis, we may shrink
 $Y$, and assume that $Y$ is irreducible. For a morphism of frames
 $a\colon(\star,\star,\star,\mr{id})\colon
 (Y_{\mr{red}},\star,\star,\QQ)\rightarrow(Y,\star,\star,\QQ)$, $a_+$ is
 t-exact by definition.
 Thus we may replace $Y$ and $Y'$ by $Y_{\mr{red}}$ and $Y'_{\mr{red}}$,
 and assume that $Y$ and $Y'$ are integral.
 By using Zariski main theorem, there exists an open dense subscheme
 $U'\subset Y'$ such that $u^{-1}(U')\rightarrow U'$ is finite
 \'{e}tale. Using the proven immersion case and the induction
 hypothesis, we may shrink $Y'$, and reduced to showing the case where
 $Y'$ is smooth and $u$ is c-finite \'{e}tale. In this case, we see
 easily that $u_+$ is t-exact.
\end{proof}

\begin{empt}
 \label{summpropre}
 Let $Y$ be a realizable variety. 
 Take a couple $(Y,X)$ such that $X$ is proper. Then we define the
 category $(F\text{-})D^{\mr{b}}_{\mr{ovhol}}(Y/K):=
 (F\text{-})D^{\mr{b}}_{\mr{ovhol}}(Y,X/K)$. This does not depend on the
 choice of such a couple up to canonical equivalence of categories with
 t-structure. Indeed, take another couple $(Y,X')$ with $X'$ is
 proper. Let $X''$ be the closure of $Y$ in $X\times X'$. We obtain two
 morphisms $(Y,X'')\rightarrow (Y,X^{(\prime)})$. Then, using Lemma
 \ref{t-gen-coh-PXTindtPsurhol} the independence follows.

 With these categories with Frobenius structures, we get six functors formalism
 on the category of realizable varieties (without Frobenius structures, the definition of tensor
 products is problematic so far). Let $f\colon Y\rightarrow Y'$
 be a morphism of realizable varieties. We have the following
 properties:

 \begin{enumlatin}
  \item Dual functor on $Y$, denoted by $\DD_Y$, is defined thanks to
  Lemma
  \ref{t-gen-coh-PXTindtPsurhol}.\ref{t-gen-coh-PXTindtPsurhol-iii}.

  \item Tensor functor on $Y$, denoted by $\widetilde{\otimes}_Y$ or
  simply $\widetilde{\otimes}$.

  \item  Push-forward  for $f$, denoted by $f_+$ is defined by
  the transitivity of the push-forward for couples.

  \item  Extraordinary pull-back for $f$, denoted by $f^!$ is defined by
  the transitivity as well.

  \item Extraordinary push-forward and ordinary pull-back for $f$, denoted
  by $f_!$ and $f^+$, is defined by (i), (iii), (iv).

  \item  We have a canonical homomorphism $f_!\rightarrow f_+$ and this
  is an isomorphism if $f$ is proper (see paragraph
  \ref{constproptononprop}).

  \item  We have a base change isomorphism by Proposition
  \ref{basechangeisom}.

  \item We have adjoint pairs $(f^+,f_+)$ and $(f_!,f^!)$ by Lemma
  \ref{adju+u+couple}.
 \end{enumlatin}
\end{empt}

\subsection{Intermediate extensions}
In this subsection, we define the intermediate extension functor, which
plays an essential role in defining the intersection cohomology. Unless
otherwise stated, we let
$u=(b,a)\colon\mb{Y}=(Y,X)\rightarrow\mb{Y}'=(Y',X')$ be a c-immersion
of couples in this subsection.

\begin{dfn}
 For $\E\in F\text{-}\mr{Ovhol}(\mb{Y}/K)$, we have the homomorphism
 $\theta^0_{u,\E}:=\HH ^0(\theta_{u,\E})\colon u_! ^0\E\rightarrow
  u_+ ^0\E$ (see Definition \ref{nota-129} and paragraph
 \ref{constproptononprop} for the notation). We define
 \begin{equation*}
  u_{!+}(\E):=\mr{Im}\bigl(\theta^0_{u,\E}\colon u^0_!\E\rightarrow
 u^0_+\E\bigr).
 \end{equation*}
 This defines a functor $u_{!+}\colon F\text{-}\mr{Ovhol}(\mb{Y}/K)
 \rightarrow F\text{-}\mr{Ovhol}(\mb{Y}'/K)$, and it is called the {\em
 intermediate extension functor}.
\end{dfn}

\begin{rem}
 \label{rem-ker-coker-Im-dual}
 (i) It follows from Proposition \ref{exactnessforsomemorph} that the
 functor $u_{!+}$ preserves injections and surjections.

 (ii) Since the functor $ \DD _{\mb{Y}'}$ is t-exact on the category
 $F\text{-}\mathrm{Ovhol} (\mb{Y}'/K)$ by Proposition
 \ref{exactnessofdual}, for any $\E \in F\text{-}\mathrm{Ovhol}
 (\mb{Y}/K)$, we get 
 the canonical isomorphisms:
 \begin{gather*}
  \DD _{\mb{Y}'}\bigl(\ker (\theta ^0 _{u,\E})\bigr)  \riso
  \mathrm{coker}\bigl(\DD_{\mb{Y}'}(\theta ^0 _{u,\E})\bigr),\qquad
  \DD _{\mb{Y}'}\bigl(\coker (\theta ^0 _{u,\E})\bigr)  \riso
  \ker \bigl(\DD _{\mb{Y}'}(\theta ^0_{\E})\bigr),\\
  \DD _{\mb{Y}'} \bigl(\mathrm{Im} (\theta ^0 _{u,\E})\bigr)  \riso
  \mathrm{Im} \bigl(\DD _{\mb{Y}'}(\theta ^0 _{u,\E})\bigr).
 \end{gather*}
 Moreover, when $u$ is c-affine, both functors $u _+$, $u _!\colon
 F\text{-}\mathrm{Ovhol}(\mb{Y}/K)\to F\text{-}\mathrm{Ovhol}
 (\mb{Y}'/K)$ are t-exact by Proposition \ref{exactnessforsomemorph},
 and we do not need to take $\HH^0$ in the definition of $u_{!+}$.
\end{rem}

\begin{cor}
 \label{i!+bidual}
 Let $\E\in F\text{-}\mr{Ovhol}(\mb{Y}/K)$. We have the canonical
 isomorphisms:
 \begin{equation*}
  u _{!+} (\E) \riso  \DD _{\mb{Y}'} \circ u _{!+} \circ \DD _{\mb{Y}}
  (\E),\quad
  \ker(\theta ^0 _{u,\E}) \riso \DD _{\mb{Y}'}\bigl(\mathrm{coker} (\theta ^0
  _{u,\DD _{\mb{Y}} (\E)})\bigr),\quad
  \mathrm{coker} (\theta ^0 _{u,\E}) \riso \DD _{{\mb{Y}}'}\bigl(
  \ker(\theta ^0_{u,\DD _{\mb{Y}} (\E)})\bigr).
 \end{equation*}
\end{cor}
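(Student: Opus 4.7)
The plan is to combine Lemma~\ref{lemm-theta-diag} with the t-exactness of the dual functor (Proposition~\ref{exactnessofdual}) and the formulas already listed in Remark~\ref{rem-ker-coker-Im-dual}(ii).

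First, I would apply Lemma~\ref{lemm-theta-diag} to the object $\DD_{\mb{Y}}\E$ in place of $\E$. Together with the biduality isomorphism $\DD_{\mb{Y}}\DD_{\mb{Y}}\E\cong\E$, this produces a commutative diagram in $F\text{-}D^{\mr{b}}_{\mr{ovhol}}(\mb{Y}'/K)$ exhibiting an isomorphism, in the arrow category, between the morphism
\[
\theta_{u,\E}\colon u_!(\E)\to u_+(\E)
\]
and the morphism
\[
\DD_{\mb{Y}'}\bigl(\theta_{u,\DD_{\mb{Y}}\E}\bigr)\colon
\DD_{\mb{Y}'}\,u_+(\DD_{\mb{Y}}\E)\to\DD_{\mb{Y}'}\,u_!(\DD_{\mb{Y}}\E).
\]

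Second, I would apply the cohomology functor $\HH^0$. Since $\DD_{\mb{Y}'}$ is t-exact by Proposition~\ref{exactnessofdual}, it commutes with $\HH^0$ (via the canonical isomorphism $\HH^0\circ\DD_{\mb{Y}'}\cong\DD_{\mb{Y}'}\circ\HH^0$ on the heart). Hence, in $F\text{-}\mr{Ovhol}(\mb{Y}'/K)$, the morphism $\theta^{0}_{u,\E}$ is isomorphic to $\DD_{\mb{Y}'}\bigl(\theta^{0}_{u,\DD_{\mb{Y}}\E}\bigr)$.

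Third, I would invoke Remark~\ref{rem-ker-coker-Im-dual}(ii): applying $\DD_{\mb{Y}'}$ to a morphism in the heart swaps kernel and cokernel and preserves image. Applied to $\theta^{0}_{u,\DD_{\mb{Y}}\E}$, this gives
\[
\DD_{\mb{Y}'}\bigl(\ker(\theta^{0}_{u,\DD_{\mb{Y}}\E})\bigr)\riso
\coker\bigl(\DD_{\mb{Y}'}(\theta^{0}_{u,\DD_{\mb{Y}}\E})\bigr),
\quad
\DD_{\mb{Y}'}\bigl(\coker(\theta^{0}_{u,\DD_{\mb{Y}}\E})\bigr)\riso
\ker\bigl(\DD_{\mb{Y}'}(\theta^{0}_{u,\DD_{\mb{Y}}\E})\bigr),
\]
and similarly for images. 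Substituting the identification from the second step then yields the three stated isomorphisms, the third of which is precisely $u_{!+}(\E)\riso\DD_{\mb{Y}'}\circ u_{!+}\circ\DD_{\mb{Y}}(\E)$.

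The only nontrivial point is the compatibility of the lemma's diagram with $\HH^0$, which just requires that the vertical isomorphisms in Lemma~\ref{lemm-theta-diag} be genuine isomorphisms of complexes (not merely in the triangulated category up to sign); since they come from biduality and relative duality as arranged in the proof of that lemma, applying $\HH^0$ poses no obstacle. Everything else is formal diagram chasing.
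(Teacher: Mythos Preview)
Your proposal is correct and follows essentially the same approach as the paper, which simply cites Remark~\ref{rem-ker-coker-Im-dual} and Lemma~\ref{lemm-theta-diag} without further elaboration. One minor simplification: you can apply Lemma~\ref{lemm-theta-diag} directly to $\E$ rather than to $\DD_{\mb{Y}}\E$, since the bottom row of that diagram is already $\DD_{\mb{Y}'}(\theta_{u,\DD_{\mb{Y}}\E})$, so no biduality step is needed; also note that the paper's citation of part~(i) of the Remark appears to be a slip for part~(ii), which is what you correctly invoke.
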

\begin{proof}
 This is a consequence of Remark \ref{rem-ker-coker-Im-dual} (i) and
 Lemma \ref{lemm-theta-diag}.
\end{proof}

\begin{rem}
 \label{dagTtheta}
 Let $\E\in F\text{-}\mr{Ovhol}(\mb{Y}/K)$. 
 Let $Y''$ be the closure of $Y$ in $Y'$ and $Z:=Y''\setminus Y$. 
 Since $\theta^0_{u,\E}$ is an isomorphism outside $Z$, 
 this implies that the $F$-modules $\mathrm{ker} (\theta^0_{u,\E}) $ and
 $\mr{coker} (\theta^0_{u,\E})$ have their support in $Z$ (cf.\
 Proposition \ref{propofpullbacks}).
 Since $u_+$ and $\R\underline{\Gamma}^\dag_Z$ are
 left t-exact by Propositions \ref{propofpullbacks} and
 \ref{exactnessforsomemorph}, we get
 $0=\HH^0\bigl(\R\underline{\Gamma}^\dag_Z\circ u_+(\E)\bigr)\cong
 \R^0\underline{\Gamma}^\dag_Z(u^0_+\E)$.
 Now applying the functor $\R \underline{\Gamma}^\dag_Z$ to the exact sequence $0 \to u _{!+}
 (\E) \to  u ^0 _+ (\E) \to \mathrm{coker} (\theta^0_{u,\E}) \to 0$ (which can be viewed as an exact triangle)
 yields another short exact sequence:
 \begin{equation*}
  0 \to \mathrm{coker} (\theta ^0 _{u,\E}) 
  \to \R^1\underline{\Gamma}^\dag _{Z}\bigl(u _{!+} (\E)\bigr)
  \to \R^1\underline{\Gamma}^\dag_{Z} \bigl(u ^0 _{+} (\E)\bigr)
  \to 0.
 \end{equation*}
 Suppose $u$ is affine (e.g.\ when $Z$ is a divisor of $Y''$).
 In this case, $\theta  _{u,\E} = \theta  ^0 _{u,\E}$
(i.e. $u _+$ and $u _!$ are t-exact).
Since  $\R\underline{\Gamma} ^\dag _{Z}\bigl(u _+ (\E)\bigr)=0$, we get (in that case)
the isomorphism:
 \begin{equation}
  \label{cokertheta-div}
   \mathrm{coker}(\theta  _{u,\E}) \riso
   \R \underline{\Gamma} ^\dag _{Z}\bigl(u _{!+} (\E)\bigr) [1].
 \end{equation}
\end{rem}

\begin{lem}
\label{transitivityi!+}
 Let $\mb{Y}\xrightarrow{u}\mb{Y}'\xrightarrow{u'}\mb{Y}''$
 be c-immersions of couples.

 (i) We have canonical equivalences of functors $u'^0_+ \circ
 u^0_+\cong(u'\circ u)^0_+$, and $u^{\prime 0}_!\circ
 u^0_!\cong(u^{\prime} \circ u)^0_!$. This induces an equivalence
 $u^{\prime}_{!+}\circ u_{!+}\cong(u^{\prime }\circ u)_{!+}$.

 (ii) We have canonical isomorphisms $u^{\prime !} \circ
 (u^{\prime}\circ u)^0 _{+}\riso  u^0 _{+}$, $u^{\prime !} \circ
 (u^{\prime}\circ u)^0 _{!} \riso u^0_{!}$, and $u^{\prime !}\circ (u
 ^{\prime}\circ u) _{!+} \riso  u _{!+}$.
\end{lem}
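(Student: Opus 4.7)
The plan rests on three pillars: the t-exactness properties of $u_+$ and $u_!$ for c-immersions (Proposition \ref{exactnessforsomemorph} gives $u_+$ left t-exact and $u_!$ right t-exact), the transitivity formula $\theta_{u'\circ u}=\theta_{u'}(u_+)\circ u'_!(\theta_u)$ of Proposition \ref{transi-theta}, and a d\'evissage factoring any c-immersion as a c-closed immersion followed by a c-open one.

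For the first two equivalences in (i), given $\E\in F\text{-}\mathrm{Ovhol}(\mathbb{Y}/K)$, the truncation triangle $u^0_+\E\to u_+\E\to\tau_{\geq 1}u_+\E\xrightarrow{+1}$ has its last term in $D^{\geq 1}$; applying $u'_+$ (left t-exact) keeps it in $D^{\geq 1}$, and taking $\HH^0$ of the resulting triangle yields $u'^{0}_+(u^0_+\E)\riso\HH^0(u'_+u_+\E)\cong(u'\circ u)^0_+\E$. The $_!$-version follows by the same argument with left and right t-exactness interchanged (or by duality via Proposition \ref{exactnessofdual}). For the intermediate extension claim, since $u_!\E\in D^{\leq 0}$ and $u_+\E\in D^{\geq 0}$, the morphism $\theta_{u,\E}$ factors canonically through truncations as $u_!\E\to u^0_!\E\xrightarrow{\theta^0_{u,\E}}u^0_+\E\to u_+\E$ (using the standard fact that $\mathrm{Hom}_D(X,Y)\cong\mathrm{Hom}(\HH^0 X,\HH^0 Y)$ for $X\in D^{\leq 0}$, $Y\in D^{\geq 0}$). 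Substituting this factorization into Proposition \ref{transi-theta}, invoking naturality of $\theta_{u'}$ as a natural transformation $u'_!\Rightarrow u'_+$, and passing to $\HH^0$, I identify $\theta^0_{u'\circ u,\E}$ (via the two isomorphisms just established) with the composite $u'^{0}_!(u^0_!\E)\xrightarrow{u'^{0}_!(\theta^0_{u,\E})}u'^{0}_!(u^0_+\E)\xrightarrow{\theta^0_{u',u^0_+\E}}u'^{0}_+(u^0_+\E)$. Since $u'^{0}_!$ preserves surjections and $u'^{0}_+$ preserves injections (Remark \ref{rem-ker-coker-Im-dual}(i)), this composite factors through $\theta^0_{u',u_{!+}\E}$, whose image is $u'_{!+}(u_{!+}\E)$, giving the equivalence $(u'\circ u)_{!+}\cong u'_{!+}\circ u_{!+}$.

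For (ii), I factor $u'=j\circ i$ with $i$ a c-closed immersion and $j$ a c-open immersion, so that $u'^!=i^!\circ j^!$, and treat each factor separately. For $i$: Kashiwara's theorem (Proposition \ref{propofpullbacks}(iii)) gives $i_+=i_!=i^0_+=i^0_!$ t-exact with $i^!\circ i_+\cong\mathrm{id}$ on objects with support in $i(\mathbb{Y})$; since $(i\circ u)^0_+\E$, $(i\circ u)^0_!\E$ and $(i\circ u)_{!+}\E$ all have such support, the three formulas of (ii) are immediate for $u'=i$. For $j$ c-open: $j$ is c-smooth of relative dimension $0$, so $j^!$ is t-exact by Proposition \ref{propofpullbacks}(i); combined with $j^!\circ j_+\cong\mathrm{id}$ (Proposition \ref{propofpullbacks}(ii)) applied to the truncation triangle of $j_+\FF$, this yields $j^!\circ j^0_+\cong\mathrm{id}$. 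Since the cone of $\theta_j\colon j_!\to j_+$ is supported off $Y$ (by the construction of paragraph \ref{constproptononprop}) and $j^!$ factors through $\R\underline{\Gamma}^\dag_Y$, we have $j^!\circ j_!\cong j^!\circ j_+\cong\mathrm{id}$; the same truncation argument then yields $j^!\circ j^0_!\cong\mathrm{id}$. For the $_{!+}$ formula, t-exactness of $j^!$ preserves images in the heart, so applying $j^!$ to the composite factorization identified in the proof of (i) gives $j^!\circ (j\circ v)_{!+}\cong v_{!+}$. Composing the $i$ and $j$ cases yields (ii) in general.

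The delicate step is the diagram chase in the third part of (i): verifying that $\HH^0$ of Proposition \ref{transi-theta}, combined with the canonical factorization of $\theta_u$ through the truncations, genuinely produces the claimed composite $\theta^0_{u',u^0_+\E}\circ u'^{0}_!(\theta^0_{u,\E})$ under the isomorphisms from the first two parts of (i). Once this identification is in place, the image calculations via Remark \ref{rem-ker-coker-Im-dual}(i) and the subsequent d\'evissage for (ii) are routine.
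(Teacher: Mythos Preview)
Your proposal is correct. For part (i) it is essentially the paper's argument spelled out in more detail: the paper also obtains the first two equivalences from the t-exactness in Proposition~\ref{exactnessforsomemorph}, and for the $_{!+}$-statement it writes down the chain of surjections $u'^{0}_! u^0_!\E \twoheadrightarrow u'^{0}_! u_{!+}\E \twoheadrightarrow u'_{!+} u_{!+}\E$ and injections $u'_{!+} u_{!+}\E \hookrightarrow u'^{0}_+ u_{!+}\E \hookrightarrow u'^{0}_+ u^0_+\E$, then appeals to ``functoriality'' to identify the composite with $\theta^0_{u'\circ u,\E}$. Your explicit invocation of Proposition~\ref{transi-theta} is precisely what underlies that word ``functoriality''.

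For part (ii) the two approaches genuinely diverge. The paper dispatches (ii) in one line, citing the base change isomorphism (Proposition~\ref{basechangeisom}) applied to the square with vertical arrows $u'\circ u$ and $u$ and horizontal arrows $u'$ and $\mathrm{id}$; this yields $u'^{!}(u'\circ u)_+ \cong u_+$ at the derived level, and the rest is declared obvious. Your route instead factors $u'=j\circ i$ as c-closed followed by c-open and treats the two pieces separately via Kashiwara and via the t-exactness of $j^!$. Your approach buys an explicit reason why $u'^{!}$ applied to $(u'\circ u)^0_+\E$, $(u'\circ u)^0_!\E$, $(u'\circ u)_{!+}\E$ actually lands in the heart---a point the paper's one-liner leaves to the reader, since $u'^{!}$ is only left t-exact for a general c-immersion and one must argue (exactly as you do) that on these particular objects it is t-exact. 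The paper's approach is shorter and highlights that the underlying identity is a base change; yours is more self-contained.
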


\begin{proof}
 The first two isomorphisms follow by Proposition
 \ref{exactnessforsomemorph}. Let $\E\in
 F\text{-}\mr{Ovhol}(\mb{Y}/K)$. Using this, we have the canonical
 surjections: $u^{\prime 0}_!\circ  u ^0_! (\E)
 \twoheadrightarrow u ^{\prime 0} _{!}\circ  u _{!+} (\E)
 \twoheadrightarrow u ^{\prime }_{!+}\circ  u _{!+} (\E)$. We also have the
 canonical inclusions: $u^{\prime }_{!+}\circ  u _{!+} (\E)
 \hookrightarrow u^{\prime 0} _+\circ  u _{!+} (\E)\hookrightarrow
 u^{\prime 0}_+\circ  u^0_+ (\E)$. By functoriality, the composition of
 these homomorphisms is the canonical morphism $u ^{\prime 0} _!\circ  u
 ^0 _! (\E) \to u ^{\prime 0} _+\circ  u ^0_+ (\E)$. Using the base
 change \ref{basechangeisom}, the second part of the lemma is obvious.
\end{proof}

Next proposition is a generalization of \cite[2.1]{Abe-Langlands}, and
is generalized further at Theorem \ref{intemedstructhem} below:
\begin{lem}
 \label{irreduciblerestpreserve}
 Let $u\colon\mb{Y}\rightarrow\mb{Y}'$ be a c-open immersion. 
Let  $\E'$ be an {\em irreducible} object of
 $F\text{-}\mathrm{Ovhol}(\mb{Y}'/K)$. 
 Then $u^{!0}\E'$ is either irreducible in 
 $F\text{-}\mathrm{Ovhol}(\mb{Y}/K)$ or $0$.
\end{lem}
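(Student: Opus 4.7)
The plan exploits, for a c-open immersion $u$, the t-exactness of $u^{!}\cong u^{+}$ (Proposition \ref{propofpullbacks}(i) with relative dimension $0$), the left t-exactness of $u_{+}$ (Proposition \ref{exactnessforsomemorph}(ii), since open immersions are quasi-finite), and the canonical isomorphism $u^{!}\circ u_{+}\riso\mathrm{id}$ (Proposition \ref{propofpullbacks}(ii)). If $u^{!0}\mathcal{E}'=0$ there is nothing to prove, so assume $u^{!0}\mathcal{E}'\neq 0$ and pick a nonzero subobject $\mathcal{F}\hookrightarrow u^{!0}\mathcal{E}'$ in $F\text{-}\mathrm{Ovhol}(\mathbb{Y}/K)$; the goal is to deduce $\mathcal{F}=u^{!0}\mathcal{E}'$.

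Since $u_{+}$ is left t-exact, $u^{0}_{+}=\HH^{0}u_{+}$ preserves monomorphisms, so $u^{0}_{+}\mathcal{F}\hookrightarrow u^{0}_{+}u^{!0}\mathcal{E}'$. The unit of the adjunction $(u^{+},u_{+})$, combined with $u^{+}\cong u^{!}$, yields a morphism $\mathcal{E}'\to u_{+}u^{!}\mathcal{E}'$; since $u^{!}\mathcal{E}'$ lies in the heart and $u_{+}u^{!}\mathcal{E}'\in D^{\geq 0}$, this factors uniquely through its $\HH^{0}$, producing $\overline{\eta}\colon\mathcal{E}'\to u^{0}_{+}u^{!0}\mathcal{E}'$ in the heart. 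The triangle identity for $(u^{+},u_{+})$, together with the isomorphism $u^{!}u_{+}\cong\mathrm{id}$, forces $u^{!0}\overline{\eta}$ to be the identity of $u^{!0}\mathcal{E}'$ under the canonical identification $u^{!0}u^{0}_{+}u^{!0}\mathcal{E}'\cong u^{!0}\mathcal{E}'$. In particular $\overline{\eta}\neq 0$, and by irreducibility of $\mathcal{E}'$, $\overline{\eta}$ is injective.

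Regarding $\mathcal{E}'$ as a subobject of $u^{0}_{+}u^{!0}\mathcal{E}'$ via $\overline{\eta}$, I form the intersection $\mathcal{G}:=\mathcal{E}'\cap u^{0}_{+}\mathcal{F}$, which is a subobject of $\mathcal{E}'$. Exactness of $u^{!0}$ preserves this intersection, so using the canonical isomorphisms $u^{!0}u^{0}_{+}(-)\cong(-)$ coming from $u^{!}u_{+}\cong\mathrm{id}$,
\begin{equation*}
u^{!0}\mathcal{G}\,\cong\,u^{!0}\mathcal{E}'\cap u^{!0}u^{0}_{+}\mathcal{F}\,\cong\,u^{!0}\mathcal{E}'\cap\mathcal{F}\,=\,\mathcal{F},
\end{equation*}
the last equality holding since $\mathcal{F}\subseteq u^{!0}\mathcal{E}'$. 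Since $\mathcal{F}\neq 0$ forces $\mathcal{G}\neq 0$, irreducibility of $\mathcal{E}'$ gives $\mathcal{G}=\mathcal{E}'$, whence $\mathcal{E}'\subseteq u^{0}_{+}\mathcal{F}$; applying $u^{!0}$ yields $u^{!0}\mathcal{E}'\subseteq u^{!0}u^{0}_{+}\mathcal{F}\cong\mathcal{F}$, and combined with $\mathcal{F}\subseteq u^{!0}\mathcal{E}'$ this gives $\mathcal{F}=u^{!0}\mathcal{E}'$.

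The main subtlety is verifying that $u^{!0}\overline{\eta}$ is the identity of $u^{!0}\mathcal{E}'$ under the natural identification. This boils down to the triangle identity $\varepsilon_{u^{+}}\circ u^{+}\eta=\mathrm{id}_{u^{+}}$ for the adjunction $(u^{+},u_{+})$, where the counit $\varepsilon\colon u^{+}u_{+}\riso\mathrm{id}$ is identified, after replacing $u^{+}$ by $u^{!}$, with the isomorphism of Proposition \ref{propofpullbacks}(ii); modulo this coherence check the argument is a purely formal manipulation of exactness and irreducibility.
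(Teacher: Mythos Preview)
Your proof is correct but takes a somewhat more elaborate route than the paper's. The paper argues dually: instead of picking a nonzero subobject $\mathcal{F}\subseteq u^{!0}\mathcal{E}'$ and pushing forward via the adjunction \emph{unit}, it picks a morphism $\alpha\colon u^{!0}\mathcal{E}'\to\mathcal{F}$ with nonzero kernel and uses the adjunction bijection $(u^{+},u_{+})$ directly on Hom-sets to produce $\alpha'\colon\mathcal{E}'\to u^{0}_{+}\mathcal{F}$; then $u^{!0}(\ker\alpha')\cong\ker\alpha\neq 0$ (by t-exactness of $u^{!}$) forces $\ker\alpha'\neq 0$, whence $\alpha'=0$ by irreducibility of $\mathcal{E}'$, and finally $\alpha=u^{!0}(\alpha')=0$. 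This sidesteps the intersection argument and the explicit coherence check on the triangle identity that you flag as the main subtlety, yielding a four-line proof. Your approach, working with subobjects and the unit map $\mathcal{E}'\to u^{0}_{+}u^{!0}\mathcal{E}'$, is equally valid and arguably makes the role of $u^{!}u_{+}\cong\mathrm{id}$ more transparent, at the cost of a slightly longer argument and one more thing to verify.
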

\begin{proof}
 Let $\alpha\colon u^{!0}\E'\rightarrow\FF$ be a homomorphism with non-zero
 kernel. By adjointness \ref{adju+u+couple} and $u ^! = u ^+$ is
 t-exact, we get $\alpha'\colon\E'\rightarrow u ^0 _+\FF$. Since
 $u^!(\ker(\alpha'))\cong\ker(\alpha)\neq0$, we have
 $\ker(\alpha')\neq0$. Since $\E'$ is assumed to be irreducible, we have
 $\alpha'=0$, and thus $\alpha=u^{!0} (\alpha ')=0$. Thus $u^{!0}\E'$ is
 irreducible or $0$.
\end{proof}

\begin{prop}
 \label{BorelVII.10.5}
 Let $\E$ be an {\em irreducible} object of
 $F\text{-}\mathrm{Ovhol}(\mb{Y}/K)$. Then:

 (i) In the category $F\text{-}\mathrm{Ovhol} (\mb{Y}'/K)$, $ u _{!+}
 (\E)$ is the unique irreducible subobject of $ u^0 _+(\E)$.

 (ii) In the category $F\text{-}\mathrm{Ovhol} (\mb{Y}'/K)$, $ u _{!+}
 (\E)$ is the unique irreducible quotient of $ u^0 _!(\E)$.

 (iii)  Let $j\colon \mb{Y}'' \to\mb{Y}'$ be a c-open immersion. Then,
 in $F\text{-}\mathrm{Ovhol} (\mb{Y}''/K)$, either $j ^! (u^0 _+(\E))=0$
 or $j^! (u _{!+} (\E))$ is the unique irreducible subobject of $j^! (u^0
 _+(\E) )$.
\end{prop}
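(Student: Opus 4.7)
The plan is to reduce to the case of a c-open immersion and then characterize $u_{!+}(\E)$ via the vanishing of Homs from objects supported on the boundary. Using the transitivity of intermediate extensions (Lemma \ref{transitivityi!+}), I factor $u=u_c\circ u_o$ with $u_o\colon\mb{Y}\to\overline{\mb{Y}}$ a c-open immersion and $u_c\colon\overline{\mb{Y}}\to\mb{Y}'$ a c-closed immersion, where $\overline{Y}$ denotes the closure of $Y$ in $Y'$. Kashiwara's theorem (Proposition \ref{propofpullbacks}(iii)) identifies $u_{c,+}=u_{c,!}=u_{c,!+}$ as a t-exact fully faithful embedding inducing a bijection between subobjects via $u_c^{!0}$, so the three statements for $u$ are formally equivalent to the corresponding statements for $u_o$. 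I therefore assume $u$ is a c-open immersion; then $u_+$ is left t-exact, $u_!$ is right t-exact (Proposition \ref{exactnessforsomemorph}), $u^!=u^+$ is the t-exact restriction to $Y$, and both $u^!u_+\cong\mathrm{id}$ and $u^!u_!\cong\mathrm{id}$ hold (Proposition \ref{propofpullbacks}). Consequently the kernel and cokernel of $\theta_{u,\E}^0$ are supported on $Z:=\overline{Y}\setminus Y$, and $u^{!0}u_{!+}(\E)\cong u^{!0}u_+^0(\E)\cong\E$.

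The key sub-lemma is that no nonzero subobject of $u_+^0(\E)$, and no nonzero quotient of $u_!^0(\E)$, can be supported on $Z$. For the first, any $\mathcal{G}$ in the heart supported on $Z$ satisfies, via the adjunction $(u^+,u_+)$ of Lemma \ref{adju+u+couple} and the left t-exactness of $u_+$,
\begin{equation*}
\mathrm{Hom}(\mathcal{G},u_+^0\E)=\mathrm{Hom}_{D^{\mathrm{b}}}(\mathcal{G},u_+\E)=\mathrm{Hom}_{D^{\mathrm{b}}}(u^+\mathcal{G},\E)=\mathrm{Hom}(u^!\mathcal{G},\E)=0,
\end{equation*}
and the quotient assertion is symmetric using $(u_!,u^!)$. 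For part (i), let $\FF\subseteq u_+^0(\E)$ be an irreducible subobject. The exact functor $u^{!0}$ gives an inclusion $u^{!0}\FF\hookrightarrow\E$, so by irreducibility of $\E$ either $u^{!0}\FF=0$ or $u^{!0}\FF=\E$. The vanishing case contradicts the sub-lemma. In the other case, the counit of $(u_!,u^!)$ yields a canonical map $u_!^0(\E)=u_!^0u^{!0}\FF\to\FF$ whose composition with $\FF\hookrightarrow u_+^0(\E)$ is $\theta_{u,\E}^0$ by naturality of the adjunctions and the construction of $\theta$ in paragraph \ref{constproptononprop}; its image is $u_{!+}(\E)$ by definition, so $u_{!+}(\E)\subseteq\FF$, and irreducibility of $\FF$ together with $u_{!+}(\E)\neq0$ force $\FF=u_{!+}(\E)$. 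Since $F\text{-}\mathrm{Ovhol}(\mb{Y}'/K)$ is artinian (paragraph \ref{Frobformtstr}), $u_{!+}(\E)$ itself contains some irreducible subobject which must equal $u_{!+}(\E)$; thus $u_{!+}(\E)$ is irreducible and is the unique irreducible subobject of $u_+^0(\E)$, proving (i).

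Part (ii) follows from (i) by applying the t-exact duality $\DD_{\mb{Y}'}$ (Proposition \ref{exactnessofdual}) to the statement of (i) for $\DD_{\mb{Y}}\E$, and then invoking Corollary \ref{i!+bidual} to interchange $u_+^0$ with $u_!^0$ and $u_{!+}\DD_{\mb{Y}}(-)$ with $\DD_{\mb{Y}'}u_{!+}(-)$. For part (iii), I form the cartesian square
\begin{equation*}
  \xymatrix{
    \mb{W}\ar[r]^-{u'}\ar[d]_{j'}&\mb{Y}''\ar[d]^j\\
    \mb{Y}\ar[r]_u&\mb{Y}',
  }
\end{equation*}
with $\mb{W}:=\mb{Y}\times_{\mb{Y}'}\mb{Y}''$; then $j'$ is a c-open immersion and $u'$ a c-immersion. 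Base change (Proposition \ref{basechangeisom}), duality, and the t-exactness of $j^!$ identify $j^!u_+^0(\E)$, $j^!u_!^0(\E)$, and $j^!u_{!+}(\E)$ with $u'^{0}_{+}j'^{!0}(\E)$, $u'^{0}_{!}j'^{!0}(\E)$, and $u'_{!+}j'^{!0}(\E)$ respectively, compatibly with $\theta$. By Lemma \ref{irreduciblerestpreserve}, $j'^{!0}(\E)$ is either $0$ (making $j^!u_+^0(\E)=0$) or irreducible; in the latter case part (i) applied to $u'$ and $j'^{!0}(\E)$ concludes. The main obstacle is the naturality check identifying the composition $u_!^0(\E)\to\FF\to u_+^0(\E)$ with $\theta_{u,\E}^0$, together with the analogous compatibility of base change with $\theta$ in (iii); both reduce to diagram chases invoking the construction of $\theta$ from biduality and relative duality given in paragraph \ref{constproptononprop}.
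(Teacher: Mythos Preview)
Your proof is correct and follows the same overall architecture as the paper: reduce to a c-open immersion via Kashiwara and transitivity, establish that no nonzero subobject of $u^0_+(\E)$ is supported on $Z$, use this to force $u^{!0}\FF=\E$ for any irreducible $\FF\subset u^0_+(\E)$, deduce (ii) by duality, and deduce (iii) by base change and Lemma~\ref{irreduciblerestpreserve}.

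The difference lies in how you pass from $u^{!0}\FF=\E$ to $u_{!+}(\E)=\FF$. You factor $\theta^0_{u,\E}$ through $\FF$ via the counit of $(u_!,u^!)$, and then invoke artinianity to conclude irreducibility. The paper instead uses an intersection argument: given two subobjects $\FF,\FF'$ of $u^0_+(\E)$ with $u^{!0}\FF=u^{!0}\FF'=\E$, left t-exactness of $u^!$ forces $u^{!0}(\FF\cap\FF')\neq 0$, hence $\FF\cap\FF'\neq 0$; applying this with $\FF'=u_{!+}(\E)$ gives $\FF\subset u_{!+}(\E)$ directly. To see that $u_{!+}(\E)$ has no proper quotient supported on $Z$, the paper dualizes such a quotient and embeds it into $u^0_+(\DD_{\mb{Y}}\E)$, contradicting the sub-lemma. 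Your artinianity step is a nice shortcut here, since any irreducible subobject of $u_{!+}(\E)$ is an irreducible subobject of $u^0_+(\E)$ and hence equals $u_{!+}(\E)$.

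The cost of your route is exactly the compatibility you flag: that the composite $u^0_!(\E)\to\FF\hookrightarrow u^0_+(\E)$ agrees with $\theta^0_{u,\E}$. This amounts to knowing that $\theta_{u,\E}$ corresponds to $\mr{id}_\E$ under the adjunction isomorphism $\mr{Hom}(u_!\E,u_+\E)\cong\mr{Hom}(\E,u^!u_+\E)\cong\mr{End}(\E)$, which is not made explicit in the paper's construction of $\theta$ via biduality in paragraph~\ref{constproptononprop}. The paper's intersection argument sidesteps this verification entirely, which is its main advantage.
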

\begin{proof}
 The proof is essentially the same as that of
 \cite[VII.10.5]{borel}. Let us show (i). 
 By using Berthelot-Kashiwara theorem \ref{propofpullbacks}.(iii) and
 Lemma \ref{transitivityi!+}, we may assume that $u$ is a c-open
 immersion. We first claim that for any non-zero $F$-submodule $\FF$ of
 $u^0_+\E$, $u^!\FF$ is non-zero.
 Indeed, assume $u^!\FF=0$, which is equivalent to assuming
 $\R\underline{\Gamma}^\dag_Y(\FF)=0$ by Proposition
 \ref{propofpullbacks}.(ii). This means that $\FF$ has its
 support in $Z\,(:=\overline{Y}\setminus Y)$ and then
 $\R^0\underline{\Gamma}^\dag_Z(\FF)\xrightarrow{\sim}\FF$.
 This implies that
 $\FF\subset\R^0\underline{\Gamma}^\dag_Z\bigl(u^0_+(\E)\bigr)$.
 By Remark \ref{dagTtheta}, we have
 $\R^0\underline{\Gamma}^\dag_Z\bigl(u^0_+(\E)\bigr)=0$, thus $\FF=0$,
 which contradicts with the assumption.

 Let us return to the proof. Now, let $\FF$ be an irreducible $F$-submodule of $u^0_+\E$.
 By the left t-exactness of $u^!$, we have $0\neq u^!\FF\subset
 u^!u^0_+(\E)\cong\E$. Since $\E$ is assumed to be irreducible, we have
 $u^!\FF=\E$. Let $\FF'$ be another irreducible $F$-submodule of
 $u^0_+\E$ (resp.\ $\FF':=u_{!+}\E$). Assume $\FF\cap \FF'=0$. Left
 t-exactness of $u^!$ implies that $u^!\FF\cap u^!\FF'=0$ in $\E$, which
 is impossible. Thus $\FF=\FF'$ (resp.\ $\FF\subset u_{!+}\E$), and
 $\FF$ is the unique irreducible $F$-submodule of $u^0_+\E$.
 Let $\mc{Q}:= u_{!+}(\E) / \FF$ be the quotient, and assume that this
 is not zero. This is supported in $Z$. We have
 \begin{equation*}
  \DD_{\mb{Y}'}\mc{Q}\subset\DD_{\mb{Y}'}(u_{!+}\E)\cong
   u_{!+}(\DD_{\mb{Y}}\E)\subset u^0 _+(\DD_{\mb{Y}}\E).
 \end{equation*}
 The claim above shows that $u^!(\DD_{\mb{Y}'}\mc{Q})$ is non-zero,
 which is a contradiction with the fact that $\mc{Q}$ is supported in
 $Z$. Thus $\mc{Q}=0$, and $\FF$ is equal to $u_{!+}\E$, which completes
 the proof of (i). By duality, (ii) follows.

 To show (iii), use Lemma \ref{irreduciblerestpreserve}, and base change
 of couples of functors $(j^!,u_+)$ and $(j^!\cong j^+,u_{!+})$. For
 details, we refer to \cite{borel}.
\end{proof}

We can extend Proposition \ref{BorelVII.10.5} as follows:
\begin{prop}
 \label{fact-i_!+-i+}
 Let $\E \in F\text{-}\mathrm{Ovhol} (\mb{Y}/K)$ and $\E '$ be a
 subobject of $u _+ (\E)$ in $F\text{-}\mathrm{Ovhol} (\mb{Y}'/K)$.
 Assume that the inclusion $u^{!} (\E') \hookrightarrow u^{!}\circ
 u_{+}(\E) =\E$ is an isomorphism. Then we have the canonical
 factorization $u_{!+}(\E) \subset  \E'  \subset u_{+}(\E)$ of the
 inclusion $u_{!+}(\E) \subset u_{+}(\E)$.
\end{prop}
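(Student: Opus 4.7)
The plan is to use the adjunction $(u_!, u^!)$ from Lemma \ref{adju+u+couple} together with the hypothesis $u^!\E' \cong \E$ to construct a canonical map $\varphi\colon u_!\E \to \E'$, then identify the composition $u_!\E \xrightarrow{\varphi} \E' \hookrightarrow u_+\E$ with $\theta_{u,\E}$, and finally pass to $\HH^0$ to conclude that $u_{!+}(\E) = \mathrm{Im}(\theta^0_{u,\E})$ lands inside $\E'$.

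Concretely, the adjunction bijection $\mathrm{Hom}(u_!\E, \E') \cong \mathrm{Hom}(\E, u^!\E') \cong \mathrm{Hom}(\E, \E)$, where the second isomorphism uses the hypothesis, defines $\varphi$ as the map corresponding to $\mathrm{id}_\E$. By naturality of the adjunction in the target, the composition $u_!\E \to \E' \hookrightarrow u_+\E$ corresponds to $\mathrm{id}_\E$ under $\mathrm{Hom}(u_!\E, u_+\E) \cong \mathrm{Hom}(\E, u^!u_+\E) \cong \mathrm{Hom}(\E, \E)$, using the canonical isomorphism $u^!u_+\E \cong \E$ of Proposition \ref{propofpullbacks}(ii). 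I then claim that $\theta_{u,\E}$ corresponds to $\mathrm{id}_\E$ under this same bijection, equivalently that $u^!\theta_{u,\E} = \mathrm{id}_\E$; since for a c-immersion $u$ the unit of the $(u_!,u^!)$-adjunction is an isomorphism, maps $u_!\E \to u_+\E$ are determined by their image under $u^!$, and the claim identifies the composition with $\theta_{u,\E}$.

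Next I pass to $\HH^0$. Since $u$ is c-quasi-finite, Proposition \ref{exactnessforsomemorph}(ii) and Proposition \ref{exactnessofdual} (via duality) show that $u_+$ is left t-exact and $u_!$ is right t-exact. Because $\E'$ lies in the heart, $\varphi$ factors as $u_!\E \twoheadrightarrow u^0_!\E \to \E'$, and the inclusion $\E' \hookrightarrow u_+\E$ factors through $\E' \hookrightarrow u^0_+\E$. Applying $\HH^0$ to the identification $\theta_{u,\E} = \bigl(u_!\E \to \E' \hookrightarrow u_+\E\bigr)$ therefore yields a factorization $\theta^0_{u,\E}\colon u^0_!\E \to \E' \hookrightarrow u^0_+\E$ in the heart. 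Since $u_{!+}(\E)$ is by definition the image of $\theta^0_{u,\E}$, this image is contained in the subobject $\E'$, which gives the desired factorization $u_{!+}(\E) \subset \E' \subset u_+(\E)$.

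The main obstacle is the verification of the claim that $u^!\theta_{u,\E}$ is the identity of $\E$ under the canonical identification $u^!u_+\E \cong \E$. This compatibility is intrinsic to the construction of $\theta_u$ in paragraph \ref{constproptononprop}; its verification amounts to a diagram chase using the factorization $u = u'\circ\iota$ from that paragraph together with Lemma \ref{lemm-theta-diag}. Alternatively, one may reduce to the c-closed and c-open cases separately: for a c-closed immersion $\theta$ is the relative duality isomorphism and the claim is immediate, while for a c-open immersion $\theta$ is by construction the natural map whose restriction to the open stratum is the identity. Once this step is settled, the rest of the proof is a purely formal consequence of t-exactness and of the definition of the intermediate extension.
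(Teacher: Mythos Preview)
Your adjunction-based argument is correct, but it is not the route the paper takes. The paper's proof is a single sentence: it is analogous to that of Proposition~\ref{BorelVII.10.5}(i). Unwinding that analogy, one first reduces via Berthelot--Kashiwara and Lemma~\ref{transitivityi!+} to the case where $u$ is a c-open immersion, so that $u^{!}$ is t-exact. One then forms $\E'\cap u_{!+}(\E)$ inside $u^0_+(\E)$; since $u^{!}(\E')=\E=u^{!}(u_{!+}(\E))$, t-exactness gives $u^{!}\bigl(\E'\cap u_{!+}(\E)\bigr)=\E$, so the quotient $u_{!+}(\E)/\bigl(\E'\cap u_{!+}(\E)\bigr)$ is supported on $Z$. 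Dualizing via Corollary~\ref{i!+bidual} embeds this quotient's dual into $u^0_+(\DD_{\mb{Y}}\E)$, and the ``claim'' established at the start of the proof of Proposition~\ref{BorelVII.10.5}(i) (that $u^0_+$ of a module has no nonzero subobject supported on $Z$) forces it to vanish. Hence $u_{!+}(\E)\subset\E'$.

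Your approach instead manufactures the factorization directly from the $(u_!,u^!)$-adjunction, identifying $\theta_{u,\E}$ as the map adjoint to $\mathrm{id}_\E$. This is more categorical and avoids the intersection-and-duality manoeuvre, but it transfers the work to checking that $u^{!}\theta_{u,\E}=\mathrm{id}_\E$, which you rightly isolate as the crux; this is essentially the statement in Remark~\ref{dagTtheta} that $\theta$ is an isomorphism outside $Z$, traced back to the construction in paragraph~\ref{constproptononprop}. The paper's route has the advantage that it recycles verbatim the support analysis already done for Proposition~\ref{BorelVII.10.5}(i), so no new compatibility with~$\theta$ needs to be verified.
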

\begin{proof}
 The proof is analogue to that of Proposition \ref{BorelVII.10.5}.(i).
\end{proof}

\begin{thm}
 \label{intemedstructhem}
 (i) Let $\E$ be an {\em irreducible} object of
 $F\text{-}\mathrm{Ovhol}(Y,X/K)$. Then there exists an open dense
 smooth subscheme $Y'$ of $Y$ and $\E'\in  F\text{-}\mathrm{Isoc} ^{\dag
 \dag}(Y',X/K)$ such that $\E\cong u_{!+}(\E')$ where
 $u\colon(Y',X)\rightarrow (Y,X)$ is the canonical c-inclusion.

 (ii) Let $\mb{Y}$ be a couple, and $\{\mb{Y}_i\} _{i \in I}$ be an c-open covering
 of $\mb{Y}$ (cf.\ paragraph \ref{glueing-Ovhol}).
Let
 $u_i\colon\mb{Y}_i\rightarrow\mb{Y}$ be the 
 $c$-open immersions.
Then $\E$ is semi-simple (resp.\ irreducible or $0$) if and
 only if $u_i^!\E$ is semi-simple (resp.\ irreducible or $0$) for any
 $i$.
\end{thm}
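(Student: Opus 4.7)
For (i), the first move is a reduction to the case $\mathrm{supp}(\E) = Y$ via Berthelot--Kashiwara (Proposition~\ref{propofpullbacks}(iii)) applied to the c-closed immersion of the support of $\E$ into $(Y,X)$. Next, by generic overconvergence of overholonomic $F$-modules (i.e.\ the fact that the heart $F\text{-}\mr{Ovhol}$ is generically of isocrystal type), one finds an open dense smooth subscheme $Y' \subset Y$ such that, for the c-open immersion $u\colon(Y',X)\to(Y,X)$, the restriction $\E' := u^!\E = u^{!0}\E$ lies in $F\text{-}\mr{Isoc}^{\dag\dag}(Y',X/K)$. Since $Y'$ meets $\mr{supp}(\E) = Y$, $\E'$ is non-zero, and Lemma~\ref{irreduciblerestpreserve} applied to $u$ yields that $\E'$ is irreducible. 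The adjunction unit $\E \to u_+\E'$ becomes the identity after applying $u^!$, hence is non-zero. By left t-exactness of $u_+$ for c-open immersions (Proposition~\ref{exactnessforsomemorph}(ii)) and since $\E$ lies in the heart, the unit factors as $\E \to u_+^0\E'$, and this is a monomorphism by irreducibility of $\E$. Proposition~\ref{BorelVII.10.5}(i) identifies the unique irreducible subobject of $u_+^0\E'$ as $u_{!+}\E'$, giving $\E \cong u_{!+}\E'$.

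For (ii), the ``only if'' direction is immediate from the t-exactness of $u_i^!$ (Proposition~\ref{propofpullbacks}(i) with $d=0$), its additivity, and Lemma~\ref{irreduciblerestpreserve}: irreducibility and semi-simplicity visibly pass to $u_i^!\E$.

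For the ``if'' direction, the key tool is the gluing equivalence of paragraph~\ref{glueing-Ovhol}. In the irreducibility case, a subobject $\FF \subset \E$ gives rise to a compatible family $\{u_i^!\FF \subset u_i^!\E\}_{i\in I}$ with overlap isomorphisms $\theta_{ij}$; since each $u_i^!\E$ is irreducible or $0$, each $u_i^!\FF$ is either $0$ or $u_i^!\E$, and the $\theta_{ij}$-compatibility on $\mb{Y}_{ij}$ combined with the faithfulness of restriction (Remark~\ref{rem-faithful}) forces the dichotomy to be globally coherent, yielding $\FF = 0$ or $\FF = \E$.

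The semi-simplicity case is where the main obstacle lies. One proceeds by induction on the length of $\E$ (finite, since $F\text{-}\mr{Ovhol}(\mb{Y}/K)$ is noetherian and artinian by paragraph~\ref{Frobformtstr}). Choose a simple subobject $S \subset \E$; by exactness of each $u_i^!$ and the hypothesis, the quotient $\E/S$ has semi-simple restrictions $u_i^!(\E/S)$, hence is semi-simple by the induction hypothesis. It remains to split the short exact sequence $0 \to S \to \E \to \E/S \to 0$. Locally this splits since $u_i^!\E$ is semi-simple; the local splittings form a torsor over $\mr{Hom}(u_i^!(\E/S), u_i^!S)$, and the obstruction to a global splitting is a \v Cech $1$-cocycle on the cover. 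We show it is a coboundary by invoking the base change isomorphism (Proposition~\ref{basechangeisom}) to compare local splittings on overlaps $\mb{Y}_{ij}$, exploiting the semi-simplicity of $u_{ij}^!u_i^!\E$ to average or correct the local choices. This \v Cech gluing step is the most delicate part of the argument.
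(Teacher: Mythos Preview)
The paper's own proof is a one-line reference: ``Copy the proof of \cite[VII.10.6]{borel}.'' Your argument for (i) is exactly the standard one from that reference, and the ``only if'' direction of (ii) together with the irreducibility clause are handled correctly (the coherence step for irreducibility tacitly uses that the support of a simple object is irreducible, but that is fine here).

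The semi-simplicity clause of (ii), however, has a real gap. You reduce correctly to splitting an extension $0\to S\to\E\to T\to 0$ with $S,T$ simple, and you observe that local splittings exist. But the passage from local splittings to a global one is precisely a \v Cech $H^{1}$ obstruction with values in $\shom(T,S)$, and nothing you have said forces that obstruction to vanish: ``averaging or correcting'' local sections is not available in this abelian-category setting, and semi-simplicity of $u_{ij}^{!}\E$ gives you no control over $\check H^{1}$. This step, which you yourself flag as the most delicate, is not a proof.

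The route that actually works does not glue splittings at all. Pick a single $i$ with $u_{i}^{!}T\neq 0$; if moreover $u_{i}^{!}S\neq 0$, take a local splitting $\sigma\colon u_{i}^{!}T\hookrightarrow u_{i}^{!}\E$ and pass to its adjoint $\hat\sigma\colon u_{i!}^{0}(u_{i}^{!}T)\to\E$ under $(u_{i!},u_{i}^{!})$. The image $I\subset\E$ surjects onto $T$ (the composite with $\E\to T$ is the counit, nonzero hence surjective since $T$ is simple), while $I\cap S$ is a quotient of $\ker\bigl(u_{i!}^{0}(u_{i}^{!}T)\to u_{i,!+}(u_{i}^{!}T)\bigr)$, which by Remark~\ref{dagTtheta} is supported in $Y\setminus Y_{i}$. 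Since $u_{i}^{!}S\neq 0$, the simple $S$ is not supported there, so $I\cap S=0$ and $\E=S\oplus I$. (Here one uses that the unit $T\to u_{i+}^{0}u_{i}^{!}T$ is injective, whence $T\cong u_{i,!+}(u_{i}^{!}T)$ by Proposition~\ref{BorelVII.10.5}(i).) If instead no $Y_{i}$ meets both $\mathrm{supp}(S)$ and $\mathrm{supp}(T)$, these supports are disjoint closed subsets of $Y$, and Mayer--Vietoris for $\R\underline{\Gamma}^{\dag}$ splits $\E$ directly. Either way the extension splits, and the induction on length finishes the argument.
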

\begin{proof}
 Copy the proof of \cite[VII.10.6]{borel}.
\end{proof}

\subsection{Local theory on formal disk}
This subsection is aimed to describe some cohomological functors for
formal disk in terms of solution data {\it \`{a} la} Crew.
These results clarify the relation between some computations of
rigid cohomology and that of arithmetic $\D$-modules.
In this subsection, we consider situation (A) in Notation and
convention, and moreover, we assume $k$ contains the field with $q$
elements. We remark that this condition is satisfied in situation (B).

We denote by $K^{\mr{ur}}$ the maximal unramified extension of $K$. We
put $\mc{K}:=k\ff{x}$, and let $G_{\mc{K}}$ be the absolute Galois group
of $k\ff{x}$. We often denote $G_{\mc{K}}$ by $G$, and the inertia group
of $G$ by $I$. For an integer $n$ and a $\varphi$-$K^{\mr{ur}}$-vector
space $V$, $V(n)$ denotes the $n$-th Tate twist of $V$, namely the
underlying vector space is $V$ itself and the endomorphism is multiplied
by $q^{-n}$ (cf.\ \cite[2.7]{Abe-Frob-Poincare-dual}).

\begin{dfn}[{\cite[3.1.3]{Marmora-fact-eps}}]
 A {\em Deligne module} $D$ is a finite dimensional
 $\varphi$-$K^{\mr{ur}}$-vector space $(V,\varphi)$ endowed with
 semi-linear $G_{\mc{K}}$-action $\sigma$ commuting with the Frobenius
 action such that $\sigma|_I$ factors through a finite index subgroup of
 $I$, and a linear homomorphism
 of $\varphi$-$K^{\mr{ur}}$-vector spaces $N\colon V(1)\rightarrow V$
 which is nilpotent and equivariant.
 A homomorphism between Deligne modules is a
 homomorphism compatible with other data in the obvious way. We denote
 such a Deligne module by $(V,\sigma,\varphi,N)$, and the category of
 Deligne modules is denoted by $\mr{Del}$.
 We sometimes denote $\sigma$ (resp.\ $\varphi$, $N$) by $\sigma_D$
 (resp.\ $\varphi_D$, $N_D$). For an integer $n$, we define the {\em
 $n$-th Tate twist} $D(n)$ of $D$ by the quadruple
 $(V,q^{-n}\cdot\varphi,\sigma,N)$.
\end{dfn}

\begin{empt}
 Let $(V,\varphi,\sigma,N)$ be a Deligne module. We will construct a
 canonical decomposition. The action of $I$ on $K^{\mr{ur}}$ is
 trivial by definition, and in particular, the action of $I$ on $V$ is
 linear. We denote by
 $V^{I=1}$ the subset of $V$ fixed by the action of $I$. Since the action
 of $I$ on $V$ is linear, $V^{I=1}$ is in fact a $K^{\mr{ur}}$-vector
 space. Now, by the definition of Deligne module, there exists a subgroup
 $I'\subset I$ of finite index such that the action of $I'$ on $V$ is
 trivial, and thus, this induces the linear action of $I/I'$ on
 $V$. For any $x\in V$, $\sum_{\sigma\in I/I'}\sigma(x)$ is in
 $V^{I=1}$. Thus, we get a $K^{\mr{ur}}$-linear homomorphism
 \begin{equation*}
 \pi:=(\#I/I')^{-1}\sum_{\sigma\in I/I'}\sigma\colon V\rightarrow
  V^{I=1}.
 \end{equation*}
 We see easily that this does not depend on the choice of $I'$.
 For any $\tau\in G$, we have $\tau\cdot I=I\cdot \tau$. Thus we get, for
 any $x\in V$, $\tau\bigl(\bigl(\sum_{\sigma\in
 I/I'}\sigma\bigr)(x)\bigr)=\bigl(\sum_{\sigma\in
 I/I'}\sigma\bigr)(\tau(x))$, which implies that
 $V^{I=1}$ possess a semi-linear action of $G$, which is nothing but the
 restriction of the $G$-action on $V$ to $V^{I=1}$, and $\pi$ commutes
 with $G$-actions. Since $\varphi$ and $N$ commute with $G$-action on
 $V$, these induce Frobenius and nilpotent operator on $V^{I=1}$, which
 is also denoted by $\varphi$ and $N$. Summing up, the quadruple
 $(V^{I=1},\varphi,\sigma,N)$ form a Deligne module, and $\pi$ defines a
 homomorphism of Deligne modules.

 By definition, the canonical inclusion $V^{I=1}\hookrightarrow V$
 induces a homomorphism of Deligne modules, and $\pi$ is a section of
 this inclusion. Thus, we obtain a canonical decomposition of Deligne
 modules
 \begin{equation*}
 V\xrightarrow[(\pi,c)]{\sim}V^{I=1}\oplus V/V^{I=1},
 \end{equation*}
 where $c\colon V\rightarrow V/V^{I=1}$ is the projection. We denote
 $V/V^{I=1}$ by $V^{I\neq1}$, and consider this as a submodule of $V$.
\end{empt}

\begin{dfn}[{\cite[6.1]{Crew-unit-disk}}]
 A {\em solution data} is a set $(\Psi,\Phi,c,v,\{v(\sigma)\}
 _{\sigma\in I})$, where $\Psi,\Phi$ are Deligne modules,
 $c\colon\Psi\rightarrow\Phi$ ({\em canonical homomorphism}),
 $v\colon\Phi(1)\rightarrow\Psi$ ({\em variation homomorphism}), and
 $v(\sigma)\colon\Phi\rightarrow\Psi$ ({\em Galois variation
 homomorphism}) are homomorphisms of Deligne modules such that
 \begin{enumerate}
  \item $N_{\Psi}=v\circ c$ and $N_{\Phi}=c\circ v$;
  \item $\sigma_{\Psi}=1+v(\sigma)\circ c=$ and
	$\sigma_{\Phi}=1+c\circ	v(\sigma)$.
 \end{enumerate}
 A homomorphism between solution data $(\Psi,\Phi,c,v,\{v(\sigma)\})$ and
 $(\Psi',\Phi',c',v',\{v'(\sigma)\})$ is a pair of homomorphisms
 $(f,f')\colon(\Psi,\Phi)\rightarrow(\Psi',\Phi')$ which is compatible
 with $c$, $c'$, $v$, $v'$, $v(\sigma)$, $v'(\sigma)$ in the obvious
 way. We denote the category, in fact an abelian category, of solution
 data by $\mr{Sol}$.
\end{dfn}

Let $V$ be a Deligne module such that $\sigma_V$ is trivial for any
$\sigma\in I$. Then $(0,V,0,0,\{0\})$ defines a solution data. This
solution data is denoted by $i_+(V)$.

\begin{empt}
 \label{defofneavancyc}
 One of the most important results of \cite{crew-arith-D-mod-curve} is that there
 exists an equivalence between the category of holonomic
 $F$-$\mc{D}^{\mr{an}}$-modules and that of solution data. Let
 $\mc{S}:=\mr{Spf}(R[\![x]\!])$, a formal disk. Crew defined the {\em
 ring of analytic differential operators}
 $\mc{D}^{\mr{an}}_{\mc{S},\mb{Q}}$ and that with poles
 $\mc{D}^{\mr{an}}_{\mc{S},\mb{Q}}(0)$. For simplicity, we
 denote these rings by $\mc{D}^{\mr{an}}$ and $\mc{D}^{\mr{an}}(0)$. He
 proved fundamental properties of these rings. For the details, see
 {\it ibid}.. Let $\mc{M}$ be a holonomic
 $F$-$\mc{D}^{\mr{an}}$-module. He defined\footnote{Actually, he defined
 functors $\mb{V}$ and $\mb{W}$. We modify the definition slightly after
 \cite{AM}.}
 two functors $\Psi$ and $\Phi$ by
 \begin{equation*}
 \Psi(\mc{M}):=\mr{Hom}_{\mc{D}^{\mr{an}}}(\mb{D}(\mc{M}),\mc{B}),\quad
  \Phi(\mc{M}):=\mr{Hom}_{\mc{D}^{\mr{an}}}(\mb{D}(\mc{M}),\mc{C}),
 \end{equation*}
 where $\mc{B}$ is the ring of hyperfunctions and $\mc{C}$ is the
 microfunction space (cf.\  [{\it ibid.}, 1.4, 6.1]).
 These are naturally Deligne modules, and the canonical homomorphism
 $c\colon\mc{B}\rightarrow\mc{C}$ and variation homomorphism
 $v\colon\mc{C}\rightarrow\mc{B}$ define canonical and variation
 homomorphisms between $\Psi(\mc{M})$ and $\Phi(\mc{M})$, which gives us
 a solution data $(\Psi(\mc{M}),\Phi(\mc{M}),\dots)$, and denoted by
 $\Da(\mc{M})$. For the precise construction, see [{\it ibid.}, \S6].

 Now, let us denote by $\mr{Hol}(\mc{D}^{\mr{an}}_{\mc{S},\mb{Q}}(0))$
 the category of holonomic
 $F$-$\mc{D}^{\mr{an}}_{\mc{S},\mb{Q}}(0)$-modules, which is a
 subcategory of the category of holonomic
 $F$-$\mc{D}^{\mr{an}}_{\mc{S},\mb{Q}}$-modules denoted by
 $\mr{Hol}(\mc{D}^{\mr{an}}_{\mc{S},\mb{Q}})$.
 A direct consequence of local monodromy theorem is that the functor
 \begin{equation*}
  \Psi\colon\mr{Hol}(\mc{D}^{\mr{an}}_{\mc{S},\mb{Q}}(0))\rightarrow
   \mr{Del}
 \end{equation*}
 induces an equivalence of categories. Now, one of the main theorems of
 the Crew's paper is the following:
 \begin{thm}[{\cite[7.1.1]{Crew-unit-disk}}]
  The functor
  \begin{equation*}
   \Da\colon\mr{Hol}(\mc{D}^{\mr{an}}_{\mc{S},\mb{Q}})\rightarrow
    \mr{Sol}
  \end{equation*}
  induces an equivalence of categories.
 \end{thm}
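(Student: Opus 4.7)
The plan is to establish this as a local $p$-adic Riemann--Hilbert correspondence on the formal disk, by reducing to the already-stated equivalence $\Psi\colon\mr{Hol}(\mc{D}^{\mr{an}}_{\mc{S},\mb{Q}}(0))\riso\mr{Del}$ via a devissage that separates the ``no--pole'' part from the ``supported at the origin'' part. Concretely, I would isolate inside $\mr{Hol}(\mc{D}^{\mr{an}}_{\mc{S},\mb{Q}})$ the Serre subcategory $\mc{T}_0$ of modules supported at the closed point (these are the ones annihilated by some power of $x$, equivalently those killed by localization) and the quotient/complementary piece consisting of modules on which $x$ acts invertibly, which is exactly $\mr{Hol}(\mc{D}^{\mr{an}}_{\mc{S},\mb{Q}}(0))$. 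The image of $\mc{T}_0$ under $\Da$ is precisely the essential image of the embedding $i_+\colon V\mapsto (V,0,0,0,\{0\})$ appearing in the paper, so the situation mirrors the usual ``nearby cycles / vanishing cycles'' picture.

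For full faithfulness, given two holonomic modules $\mc{M},\mc{N}$, I would fit each into a four-term exact sequence
\begin{equation*}
0\to H^0_0(\mc{M})\to\mc{M}\to\mc{M}(*0)\to H^1_0(\mc{M})\to 0
\end{equation*}
where $H^i_0$ denotes local cohomology at the origin. Applying $\Da$ to these and using that $\Da$ commutes with the short exact sequences produced by applying $\mr{Hom}(\mb{D}(-),\mc{B})$ and $\mr{Hom}(\mb{D}(-),\mc{C})$ (this requires checking the relevant $\mr{Ext}$-vanishing for $\mc{B}$ and $\mc{C}$, which is a consequence of the structural results of Crew on $\mc{D}^{\mr{an}}$), I reduce the bijectivity of $\mr{Hom}(\mc{M},\mc{N})\to\mr{Hom}(\Da\mc{M},\Da\mc{N})$ to the five-lemma, using (a) full faithfulness of $\Psi$ on the no--pole part (already given), and (b) full faithfulness of $i_+$ on the supported-at-origin part (which is immediate since modules supported at $0$ are just $\varphi$-$K^{\mr{ur}}$-vector spaces with trivial inertia action).

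For essential surjectivity, given a solution datum $(\Psi,\Phi,c,v,\{v(\sigma)\})$, I would build a quasi-inverse by first using the equivalence $\Psi\simeq\mr{Hol}(\mc{D}^{\mr{an}}(0))$ to realise $\Psi$ as the Deligne module of some holonomic module $\mc{M}_\infty$ with poles allowed at $0$, and then gluing in a correction term at the origin governed by $\Phi$. Explicitly, one constructs $\mc{M}$ as a pullback / kernel of a map between microlocal models built from $\mc{B}$ and $\mc{C}$ with the linear algebra data $(c,v,v(\sigma))$; the axioms $N_\Psi=v\circ c$, $N_\Phi=c\circ v$ and $\sigma=1+v(\sigma)\circ c$ are exactly what is needed to make this glued object a well-defined $\mc{D}^{\mr{an}}$-module rather than just a formal assemblage, and the nilpotence of $N$ together with the factorisation of $\sigma|_I$ through a finite quotient guarantees holonomicity. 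The Frobenius $\varphi$ on each piece induces the Frobenius structure on $\mc{M}$.

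The main obstacle I expect is the microlocal/analytic bookkeeping: verifying that the gluing procedure above really lands in $\mr{Hol}(\mc{D}^{\mr{an}}_{\mc{S},\mb{Q}})$ (and not merely in some larger category of sheaves), and that the adjunctions between $\mc{B}$, $\mc{C}$, and $\mc{D}^{\mr{an}}$ are sufficiently well-behaved to make $\Da$ exact on the relevant short exact sequences. This is exactly the place where Crew's technical analysis of hyperfunction and microfunction spaces on the formal disk is indispensable, and where one cannot simply mimic the complex-analytic proof. Once this analytic input is in hand, the equivalence follows from the devissage outlined above.
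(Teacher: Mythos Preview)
The paper does not contain a proof of this statement. It is quoted verbatim as a result of Crew, with the citation \cite[7.1.1]{Crew-unit-disk}, and is used as a black box; the subsection then moves on to exploit the equivalence to describe $j_!\mc{M}$ and $j_+\mc{M}$ in terms of solution data. So there is no ``paper's own proof'' against which to check your proposal.

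Your sketch is a reasonable heuristic for how a local $p$-adic Riemann--Hilbert correspondence of this shape could be organized, and the d\'evissage you outline (separating the part supported at the origin from the part with poles, and matching these with the $i_+$-image and with the already-known equivalence $\Psi$) is indeed the natural strategy. But the substantive content---exactness of $\Da$, the $\mr{Ext}$-vanishing for $\mc{B}$ and $\mc{C}$, and especially the gluing construction landing in $\mr{Hol}(\mc{D}^{\mr{an}}_{\mc{S},\mb{Q}})$---is precisely what Crew's paper establishes, and you correctly flag this as the point where one must invoke his analysis rather than reproduce it. If your goal is simply to account for the statement as it appears here, the correct answer is that the authors defer entirely to \cite{Crew-unit-disk}.
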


 A goal of this subsection is to describe $j_!\mc{M}$ and $j_+\mc{M}$ in
 terms of the solution data for a holonomic $F$-$\D^{\mr{an}}(0)$-module
 $\mc{M}$.
\end{empt}

\begin{empt}
 Let $\Psi:=(\Psi,\sigma,\varphi,N)$ be a Deligne module. Then the data
 $(\Psi,\Psi,\mr{id},N,\{\sigma-1\})$ defines a solution data, and we
 denote this by $j_!(\Psi)$.

 We are able to construct another solution data out of $\Psi$. We put
 $\Psi':=\Psi^{I=1}(-1)\oplus\Psi^{I\neq1}$. Let
 \begin{align*}
  c&\colon\Psi\cong\Psi^{I=1}\oplus\Psi^{I\neq1}\xrightarrow
  {N\oplus\mr{id}}\Psi^{I=1}(-1)\oplus\Psi^{I\neq1}
  \cong\Psi',\\
  v&\colon\Psi'(1)\cong\Psi^{I=1}\oplus\Psi^{I\neq1}(1)
  \xrightarrow{\mr{id}\oplus N}\Psi^{I=1}\oplus\Psi^{I\neq1}\cong\Psi\\
  v(\sigma)&\colon\Psi'\cong\Psi^{I=1}(-1)\oplus\Psi^{I\neq1}
  \xrightarrow{0\oplus(\sigma-1)}\Psi^{I=1}\oplus\Psi^{I\neq1}\cong
  \Psi.
 \end{align*}
 Then we can check that $(\Psi,\Psi',c,v,\{v(\sigma)\})$ defines a
 solution data. We denote this data by $j_+(\Psi)$. We can check that
 the homomorphisms of pairs
 $(\mr{id},c)\colon(\Psi,\Psi)\rightarrow(\Psi,\Psi')$ induces a
 canonical homomorphism of solution data $j_!(\Psi)\rightarrow
 j_+(\Psi)$.

 Let $\Psi^{I=1,N=0}:=\mr{Ker}(N\colon\Psi^{I=1}\rightarrow
 \Psi^{I=1}(-1))$, which is a Deligne module. We have the following
 exact sequence of solution data:
 \begin{equation*}
 0\rightarrow i_+\bigl(\Psi^{I=1,N=0}\bigr)\rightarrow j_!(\Psi)
  \rightarrow j_+(\Psi)\rightarrow i_+\bigl(\Psi^{I=1}(-1)/N
  \Psi^{I=1}\bigr)\rightarrow0.
 \end{equation*}
\end{empt}

\begin{prop}
 \label{adjuncsol}
 Let $D:=(\widetilde{\Psi},\widetilde{\Phi},c,v,\{v(\sigma)\})$ be a solution
 data. There exists a canonical isomorphism
 \begin{equation*}
  \mr{adj}\colon\mr{Hom}_{\mr{Del}}(\widetilde{\Psi},\Psi)\xrightarrow{\sim}
   \mr{Hom}_{\mr{Sol}}(D,j_+(\Psi)).
 \end{equation*}
\end{prop}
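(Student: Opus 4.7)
The plan is to construct an explicit inverse to $\mr{adj}$, which itself is simply the projection $(f_\Psi, f_\Phi)\mapsto f_\Psi$; showing this is bijective amounts to showing that $f_\Phi$ is uniquely determined by $f_\Psi$ via the axioms, and that every $f_\Psi$ extends to a morphism of solution data.

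The main structural input is the canonical $I$-decomposition $V = V^{I=1}\oplus V^{I\neq 1}$ described just before the definition of solution data. Since $c$, $v$, and each $v(\sigma)$ are homomorphisms of Deligne modules, they preserve this decomposition, so $D$ and $j_+(\Psi)$ split into $I=1$ and $I\neq 1$ pieces. By direct inspection of the definition, the $I\neq 1$ piece of $j_+(\Psi)$ coincides with $j_!(\Psi^{I\neq 1}) = (\Psi^{I\neq 1},\Psi^{I\neq 1},\mr{id},N,\{\sigma-1\})$, while the $I=1$ piece is $(\Psi^{I=1},\Psi^{I=1}(-1),N,\mr{id},0)$. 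Accordingly $\mr{Hom}_{\mr{Sol}}(D,j_+(\Psi))$ splits, and each piece can be treated separately.

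The key technical lemma is that on the $I\neq 1$ part of any solution data, the canonical map $c\colon\widetilde{\Psi}^{I\neq 1}\to\widetilde{\Phi}^{I\neq 1}$ is an isomorphism. For any finite-index subgroup $I'\subset I$ acting trivially on both $\widetilde{\Psi}$ and $\widetilde{\Phi}$, set $s := -|I/I'|^{-1}\sum_{\sigma\in I/I'} v(\sigma)$. Using the axioms $cv(\sigma) = \sigma - 1$ and $v(\sigma) c = \sigma - 1$, together with the fact that the averaging operator $|I/I'|^{-1}\sum_{\sigma}\sigma$ is the projector onto the $I$-fixed part and hence vanishes on the $I$-non-fixed part, one computes directly that $c\circ s = \mr{id}$ and $s\circ c = \mr{id}$ on the $I\neq 1$ part.

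With these ingredients, $\mr{adj}^{-1}$ is constructed as follows. Given $f\colon\widetilde{\Psi}\to\Psi$, decompose $f = f^{(1)}\oplus f^{(2)}$ along the $I$-decomposition, and define the $\Phi$-component $f_\Phi = f_\Phi^{(1)}\oplus f_\Phi^{(2)}$ by
\[
f_\Phi^{(2)} := f^{(2)}\circ c^{-1}, \qquad f_\Phi^{(1)} := f^{(1)}\circ v,
\]
where in the latter the variation $v\colon\widetilde{\Phi}^{I=1}(1)\to\widetilde{\Psi}^{I=1}$ composed with $f^{(1)}$ lands naturally in $\Psi^{I=1}(-1)$ after the evident Tate-twist identification. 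Checking that this defines a morphism of solution data is routine: compatibility with $c$ and $v$ on the $I\neq 1$ piece is immediate from the definition of $c^{-1}$; on the $I=1$ piece, the canonical compatibility reduces to $f^{(1)}\circ vc = N\circ f^{(1)}$, which holds since $N = vc$ and $f^{(1)}$ commutes with $N$, while the variation compatibility $v_+\circ f_\Phi^{(1)}(1) = f^{(1)}\circ v$ is automatic because $v_+$ is the identity on the $I=1$ part of $j_+(\Psi)$. Uniqueness of $\mr{adj}^{-1}$ follows from the same compatibilities: on the $I\neq 1$ piece, $f_\Phi^{(2)}\circ c = f^{(2)}$ forces $f_\Phi^{(2)} = f^{(2)}\circ c^{-1}$; on the $I=1$ piece, $f_\Phi^{(1)}$ is forced by the variation compatibility since $v_+$ is the identity. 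The main obstacle will be verifying the Galois variation compatibility on the $I=1$ piece of $j_+(\Psi)$, where $v_+(\sigma) = 0$ and one must show that $f^{(1)}\circ v(\sigma)$ vanishes on $\widetilde{\Phi}^{I=1}$; this is handled using the axiom $cv(\sigma) = \sigma - 1$ (which forces $v(\sigma)(\widetilde{\Phi}^{I=1})\subset\ker(c)$) together with the interaction of $f^{(1)}$ with the remaining structure.
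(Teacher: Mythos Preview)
Your approach is essentially the paper's: both show that $c|_{\widetilde{\Psi}^{I\neq1}}$ is an isomorphism by exhibiting the averaged-variation inverse (your $s$ is the paper's $v'$ up to a harmless sign), and then extend a Deligne hom $f$ to a solution-data hom by putting $f' = f\circ v$ on $\widetilde{\Phi}^{I=1}$ and $f' = f\circ c^{-1}$ on $\widetilde{\Phi}^{I\neq1}$, with the remaining compatibility checks (including your ``main obstacle'') declared routine in both treatments. One notational point: the paper's $\mr{adj}$ is what you call $\mr{adj}^{-1}$, namely the map $f\mapsto(f,f')$ from $\mr{Hom}_{\mr{Del}}$ to $\mr{Hom}_{\mr{Sol}}$, not the projection.
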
 
\begin{proof}
 Let $c':=c|_{\widetilde{\Psi}^{I\neq1}}\colon\widetilde{\Psi}^{I\neq1}
 \rightarrow\widetilde{\Phi}^{I\neq1}$. Let us show that $c'$ is
 an isomorphism. First, let us see the injectivity. Assume it were not
 injective, and let $x\in\widetilde{\Psi}^{I\neq1}$ be a non-zero
 element in $\mr{Ker}(c')$. Then by assumption, there exists $\sigma\in
 I$ such that $\sigma(x)\neq x$. On the other hand,
 $\sigma(x)=(1+v(\sigma)\circ c')(x)=x$, which is a contradiction.
 Now, to show the claim, we construct the inverse $v'$ of $c'$.
 Let $I'$ be the finite index subgroup of $I$ which acts trivially on
 $\widetilde{\Phi}^{I\neq1}$. We put
 \begin{equation*}
  v':=(\#I/I')^{-1}\sum_{\sigma\in
   I/I'}v(\sigma)\colon\widetilde{\Phi}^{I\neq1}\rightarrow
   \widetilde{\Psi}^{I\neq1}.
 \end{equation*} 
 Then, we see that
 \begin{equation*}
  c'\circ v'(x)=(\#I/I')^{-1}\Bigl(\sum_{\sigma\in I/I'}x
   -\sigma(x)\Bigr)=x
 \end{equation*}
 where the last equality holds since there are no trivial
 subrepresentation of $I$ in $\widetilde{\Phi}^{I\neq1}$.
 Since $c'$ is an injection, $v'$ is the inverse of $c'$. Thus the
 claim follows.

 With these preparations, let us construct the homomorphism
 $\mr{adj}$. Assume given a homomorphism
 $f\colon\widetilde{\Psi}\rightarrow\Psi$. We put
 \begin{align*}
  v_1\colon\widetilde{\Phi}^{I=1}\xrightarrow{v}\widetilde{\Psi}
 ^{I=1}(-1)\xrightarrow{f}\Psi^{I=1}(-1),\qquad
  v_2\colon\widetilde{\Phi}^{I\neq1}\xrightarrow[\sim]
  {v'}
  \widetilde{\Psi}^{I\neq1}
  \xrightarrow{f}\Psi^{I\neq1}.
 \end{align*}
 These homomorphisms define a homomorphism
 \begin{equation*}
 f'\colon\widetilde{\Phi}\cong\widetilde{\Phi}^{I=1}\oplus
  \widetilde{\Phi}^{I\neq1}\xrightarrow{v_1\oplus v_2}\Psi^{I=1}(-1)
  \oplus\Psi^{I\neq1}=\Psi'.
 \end{equation*}
 This defines the following homomorphism of solution data:
 \begin{equation*}
  \xymatrix @R=0,3cm{
   D\ar[d]_{\mr{adj}(f)}&\widetilde{\Psi}\ar[d]_{f}
   \ar@<0.5ex>[rr]|{c}&&
   \widetilde{\Phi}\ar[d]^{f'}\ar@<0.5ex>[ll]|v
   \ar@/^0.5pc/@<0.5ex>[ll]^{v(\sigma)}\\
  j_+(\Psi)&\Psi\ar@<0.5ex>[rr]&&\Psi'.
   \ar@<0.5ex>[ll]\ar@/^0.5pc/@<0.5ex>[ll]
   }
 \end{equation*}
 The verification of the compatibilities between homomorphisms is
 straightforward. As a result, we have a homomorphism
 \begin{equation*}
  \mr{adj}\colon\mr{Hom}(\widetilde{\Psi},\Psi)
   \rightarrow\mr{Hom}(D,j_+(\Psi)).
 \end{equation*}
 It is easy to check that this defines an isomorphism, and the details
 are left to the reader.
\end{proof}

\begin{empt}
Let $\mc{R} _K$ or simply $\mc{R}$ be the Robba ring over $K$.
Let $\mc{M}$ be a finite free differential $\mc{R}$-module with
 Frobenius structure. Consider the following complex:
 \begin{equation*}
 0\rightarrow\mc{M}\xrightarrow{\nabla}\mc{M}\otimes\Omega^1_{\mc{R}}
  \rightarrow0
 \end{equation*}
 where $\mc{M}$ is placed at degree $0$. We denote the $i$-th cohomology
 group by $H^i_{\mr{loc}}(\mc{M})$. A result of Christol-Mebkhout
 \cite{christol-MebkhoutIV}
 shows that these cohomology groups are finite dimensional
 vector space over $K$ with the same dimension for
 $i=0,1$. Moreover, the canonical pairing
 \begin{equation*}
 H^0_{\mr{loc}}(\mc{M})\otimes H^1_{\mr{loc}}(\mc{M}^\vee)
  \rightarrow H^1_{\mr{loc}}(\mc{R})\cong K(-1)
 \end{equation*}
 is perfect from \cite[\S5]{crewfini}. Finally let us remark that
our $H^0_{\mr{loc}}(M\otimes A(x))$ is denoted by $H^0(M\otimes A(x))$
 by Crew in \cite[10.7]{crewfini}.
\end{empt}

\begin{thm}
 \label{compcrewloc}
 Let $\mc{M}$ be a holonomic $F$-$\ms{D}^{\mr{an}}(0)$-module.

 (i) We have $\Psi:=\Psi(\mc{M})\cong(\mc{B}\otimes_{\mc{R}}
 \mc{M})^{\partial=0}(1)$.

 (ii) We have
 \begin{equation*}
  \Da(j_!(\mc{M}))\cong j_!(\Psi(\mc{M})),\quad
   \Da(j_+(\mc{M}))\cong j_+(\Psi(\mc{M})).
 \end{equation*}

 (iii) We have the following isomorphisms of
 $\varphi$-$K^{\mr{ur}}$-vector spaces with semi-linear
 $\mr{Gal}(\overline{k}/k)$-action:
 \begin{gather*}
  H^0_{\mr{loc}}(\mc{M})(1)\otimes K^{\mr{ur}}\cong
  H^{-1}(i^+j_+(\mc{M}))\otimes K^{\mr{ur}}\cong\Psi^{I=1,N=0},\\
  H^1_{\mr{loc}}(\mc{M})(1)\otimes K^{\mr{ur}}\cong
  H^{0}(i^+j_+(\mc{M}))\otimes
  K^{\mr{ur}}\cong\Psi^{I=1}(-1)/N\Psi^{I=1}.
 \end{gather*}
\end{thm}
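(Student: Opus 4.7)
For part~(i), I would unpack $\Psi(\mc{M})=\mr{Hom}_{\ms{D}^{\mr{an}}}(\mb{D}(\mc{M}),\mc{B})$ using the explicit form of the duality on the one-dimensional formal disk $\mc{S}$, namely $\mb{D}(\mc{M})\cong\R\mr{Hom}_{\ms{D}^{\mr{an}}}(\mc{M},\ms{D}^{\mr{an}})\otimes_{\mc{O}}\omega_{\mc{S}}^{-1}[1]$. After biduality for the holonomic $\mc{M}$ and tensor-Hom adjunction, $\Psi(\mc{M})$ identifies with the horizontal sections $(\mc{B}\otimes_{\mc{R}}\mc{M})^{\partial=0}$ of the de~Rham complex, twisted by the generic stalk of $\omega_{\mc{S}}^{-1}$. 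Since Frobenius acts on $\omega_{\mc{S}}$ by a unit multiple of $q$ on $dx$, this stalk is isomorphic to $K^{\mr{ur}}(-1)$ as a $\varphi$-module, so the inverse contributes precisely the Tate twist~$(1)$ claimed in the statement.

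For part~(ii), the key observation is that since $\Da$ is an equivalence by Crew's theorem, Yoneda reduces the problem to comparing Hom-sets out of an arbitrary $\mc{E}\in\mr{Hol}(\ms{D}^{\mr{an}})$. For $j_+$, the adjunction $(j^+,j_+)$ gives $\mr{Hom}(\mc{E},j_+\mc{M})\cong\mr{Hom}(j^+\mc{E},\mc{M})$; applying the equivalence $\Psi$ on $\mr{Hol}(\ms{D}^{\mr{an}}(0))$ and using part~(i) to identify $\Psi(j^+\mc{E})$ with the $\Psi$-component of $\Da(\mc{E})$, Proposition~\ref{adjuncsol} then rewrites this as $\mr{Hom}_{\mr{Sol}}(\Da(\mc{E}),j_+(\Psi(\mc{M})))$, whence $\Da(j_+\mc{M})\cong j_+(\Psi(\mc{M}))$. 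The case of $j_!$ follows either by duality, using that $\Da$ intertwines $\mb{D}$ on both sides together with $j_!\cong\mb{D}\circ j_+\circ\mb{D}$, or directly by checking that the solution data with $\Psi=\Phi=\Psi(\mc{M})$, $c=\mr{id}$, $v=N$, $v(\sigma)=\sigma-1$ corresponds under $\Da^{-1}$ to $j_!\mc{M}$.

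For part~(iii), I would apply the exact sequence of solution data
\begin{equation*}
0\to i_+(\Psi^{I=1,N=0})\to j_!(\Psi)\to j_+(\Psi)\to i_+\bigl(\Psi^{I=1}(-1)/N\Psi^{I=1}\bigr)\to 0
\end{equation*}
displayed just before the theorem, with $\Psi=\Psi(\mc{M})$. By part~(ii), this identifies the kernel and cokernel of $j_!\mc{M}\to j_+\mc{M}$ with $\Da^{-1}$ of the two outer $i_+$ terms. The localization triangle $\R\underline{\Gamma}^\dag_{\{0\}}\to\mr{id}\to({}^\dag\{0\})\xrightarrow{+1}$ applied to $j_+\mc{M}$ then identifies these kernel/cokernel with $H^{-1}(i^+j_+\mc{M})$ and $H^{0}(i^+j_+\mc{M})$ respectively, after using the relation $i^!\cong i^+(1)[2]$ on a closed point of the curve coming from the analogue of \eqref{!et*-isoc}. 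The final comparison with $H^i_{\mr{loc}}(\mc{M})(1)$ follows from interpreting local cohomology as $\mr{Ext}^i_{\ms{D}^{\mr{an}}}(\mc{O}_{\mc{S}},\mc{M})$ via the de~Rham complex and invoking Crew's identification in \cite{crewfini}.

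The main obstacle will be keeping track of the Tate twists and degree shifts consistently across the various adjunctions and duality formulae: the twist~$(1)$ in part~(i), the shift $(1)[2]$ between $i^!$ and $i^+$ on a closed point, and the twist in Crew's perfect pairing $H^0_{\mr{loc}}(\mc{M})\otimes H^1_{\mr{loc}}(\mc{M}^\vee)\to K(-1)$ must all interact correctly for the isomorphisms in part~(iii) to line up as stated. Once this bookkeeping is verified, the rest of the argument is a formal consequence of Proposition~\ref{adjuncsol} and the equivalence $\Da$.
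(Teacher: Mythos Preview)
Your strategy is sound and close to the paper's. For (i), the paper proceeds exactly as you outline, citing \cite[3.12]{Abe-Frob-Poincare-dual} for the identification $\mb{D}(\mc{M})\cong\mc{M}^\vee(-1)$, which encodes the Tate twist you describe via $\omega_{\mc{S}}$. For (ii), the paper reverses your order: it first cites \cite[6.1.2]{Crew-unit-disk} directly for $\Da(j_!\mc{M})\cong j_!(\Psi)$, and then deduces the $j_+$ case from Proposition~\ref{adjuncsol} by exactly the Yoneda-style argument you give. Your duality alternative for $j_!$ would also work but is not what the paper does.

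The one place your outline is vaguer than the paper is the $H^i_{\mr{loc}}$ half of (iii). Your appeal to ``Crew's identification in \cite{crewfini}'' is not the right ingredient: the paper instead uses \cite[3.1.10]{AM} to identify $H^i_{\mr{loc}}(\mc{M})(1)$ with $\mr{Ext}^i(\mb{D}(j_+\mc{M}),\mc{O}^{\mr{an}}_{K^{\mr{ur}}})$, and then invokes Crew's fundamental exact sequence \cite[6.1.1]{Crew-unit-disk} (from the unit-disk paper, not \cite{crewfini}), which exhibits these $\mr{Ext}$-groups as the kernel and cokernel of $c\colon\Psi(j_+\mc{M})\to\Phi(j_+\mc{M})$. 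Part (ii) then identifies this $c$ with the explicit map $\Psi\to\Psi'$, whose kernel and cokernel are visibly $\Psi^{I=1,N=0}$ and $\Psi^{I=1}(-1)/N\Psi^{I=1}$. Finally, your detour through $i^!\cong i^+(1)[2]$ is unnecessary: the four-term exact sequence identifying $H^{*}(i^+j_+\mc{M})$ with the kernel and cokernel of $j_!\mc{M}\to j_+\mc{M}$ comes directly from the recollement triangle $j_!j^+\to\mr{id}\to i_+i^+\xrightarrow{+1}$ applied to $j_+\mc{M}$, with no twist bookkeeping required.
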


\begin{proof}
 Let us show (i). We have
 \begin{equation*}
  \Psi:=\mr{Hom}_{\mc{D}^{\mr{an}}}(\mb{D}(\mc{M}),\mc{B})\cong
   \mr{Hom}_{\mc{D}^{\mr{an}}(0)}(\mb{D}(\mc{M}),\mc{B}).
 \end{equation*}
 By using \cite[3.12]{Abe-Frob-Poincare-dual}, we see that
 $\mb{D}(\mc{M})\cong\mc{M}^\vee(-1)$. Thus, we get
 \begin{equation*}
  \mr{Hom}_{\mc{D}^{\mr{an}}(0)}(\mb{D}(\mc{M}),\mc{B})\cong
   \mr{Hom}_{\mc{D}^{\mr{an}}(0)}(\mc{M}^\vee,\mc{B})(1)\cong
   \mr{Hom}_{\mc{D}^{\mr{an}}(0)}(\mc{R},\mc{M}\otimes\mc{B})(1),
 \end{equation*}
 and the first claim follows.

 By \cite[6.1.2]{Crew-unit-disk},
 $\Da(j_!(\mc{M}))\cong j_!(\Psi)$. Moreover, by Proposition
 \ref{adjuncsol}, $\Da(j_+(\mc{M}))$ is canonically isomorphic to
 $j_+(\Psi)$, and (ii) follows. Now, we have
 \begin{equation*}
 \mr{Hom}(\mb{D}(j_+\mc{M}),\mc{O}^{\mr{an}}_{K^{\mr{ur}}})\cong
 H^0_{\mr{loc}}(\mc{M})(1),\qquad
 \mr{Ext}^1(\mb{D}(j_+\mc{M}),\mc{O}^{\mr{an}}_{K^{\mr{ur}}})\cong
 H^1_{\mr{loc}}(\mc{M})(1),
 \end{equation*}
 by \cite[3.1.10]{AM}. We have the following diagram of exact sequences:
 \begin{equation*}
  \xymatrix @R=0,3cm{
  0\ar[r]&\mr{Hom}(\mb{D}(\mc{M}),\mc{O}^{\mr{an}}_{K^{\mr{ur}}})
   \ar[r]&\Psi(j_+\mc{M})\ar[r]\ar[d]_\sim&
   \Phi(j_+\mc{M})\ar[r]\ar[d]^{\sim}&
   \mr{Ext}^1(\mb{D}(\mc{M}),\mc{O}^{\mr{an}}_{K^{\mr{ur}}})
   \ar[r]&0\\
  0\ar[r]&\Psi^{I=1,N=0}\ar[r]&\Psi\ar[r]^{c}&\Psi'\ar[r]&
   \Psi^{I=1}(-1)/N\Psi^{I=1}\ar[r]&0,
   }
 \end{equation*}
 where the first row is exact by \cite[6.1.1]{Crew-unit-disk}. These show
 that
 \begin{equation*}
 H^0_{\mr{loc}}(\mc{M})(1)\otimes K^{\mr{ur}}\cong\Psi^{I=1,N=0},
  \qquad H^1_{\mr{loc}}(\mc{M})(1)\otimes K^{\mr{ur}}\cong
 \Psi^{I=1}(-1)/N\Psi^{I=1}.
 \end{equation*}
 Finally, considering the following exact sequence, we conclude the
 proof.
 \begin{equation*}
  0\rightarrow i_+H^{-1}(i^+j_+(\mc{M}))\rightarrow j_!(\mc{M})
   \rightarrow j_+(\mc{M})\rightarrow i_+H^0(i^+j_+(\mc{M}))
   \rightarrow0.
 \end{equation*}
\end{proof}

\section{Mixed complexes}
In this section, we define ``mixed $F$-complexes'', and prove some basic
properties. We consider situation (B) in Notation and convention.
We define the notion of mixedness in the first subsection, followed by
definition of mixedness for any overholonomic $F$-complexes in the next
subsection. The last subsection is aimed to estimate some weights which
is used in the proof of our main theorem.

\subsection{Mixedness for $F$-isocrystals}
\begin{empt}
\label{nota-Vert}
 In this subsection, let $(Y,X,\PP)$ be a frame such that $Y$ is
 smooth. Let $y$ be a closed point of $Y$ with residue field $k(y)$. By abuse of notation, we
 denote by $y$ the frame $(\mr{Spec}(k(y) ),\mr{Spec}(k (y)),\PP)$, and by $i _{y}$ the
 canonical morphism of frames $y \to (Y,X,\PP)$. We recall the notation
 of paragraph \ref{extrafunctordfn} (iii).
\end{empt}

We start by recalling the following  definition of $\iota$-weight by
Deligne \cite{deligne-weil-II}:

\begin{dfn}
 (i) Let $w \in \R$. A number $\alpha$ in $\overline{\mb{Q}}_p$ is said
 to be of {\em $\iota$-weight $w$} if
 \begin{equation*}
  |\iota(\alpha)|=q^{w/2}.
 \end{equation*}

 (ii) Let $\varphi\text{-}\mr{Vect}_K$ be the category of pairs
 $(V,\varphi)$ where $V$ is a finite dimensional $K$-vector space and
 $\varphi$ is an automorphism of $V$.
 Let
 $(a,a, f)\colon
  (\mr{Spec}(k'),\mr{Spec}(k'), \PP ')
   \rightarrow(\mr{Spec}(k),\mr{Spec}(k), \mathrm{Spf} \, \V)$
 be a morphism of frames such that $a$ is finite and $f$ is the structural morphism of $\PP'$. 
We have the canonical functor
 $f _+\colon F\text{-}\mr{Ovhol}(\mr{Spec}(k'),\PP'/K)
 \to 
 \varphi\text{-}\mr{Vect}_K$. 
 Let $\E ' \in F\text{-}D ^{\mathrm{b}} _{\mathrm{ovhol}} (\mr{Spec}(k'),\PP '/K)$. We say that $\E '$ is  {\em
 $\iota$-pure} if there exists a real number $w$, the {\em weight} of
 $\E '$, such that any eigenvalue $\alpha$ of the automorphism on
 $f _+ \bigl(\mathcal{H}^j(\E ')\bigr)$ is of $\iota$-weight equal to
 $w+j$, for any integer $j$. 
 
 This definition is independent of the choice of $\PP '$, i.e.,
 if $\rho= (\mathrm{id}, \mathrm{id}, \star) \colon
 (\mr{Spec}(k'),\mr{Spec}(k'), \PP '')
 \rightarrow(\mr{Spec}(k'),\mr{Spec}(k'),\PP')$ is a morphism of frames,
 then $\rho ^!$ and $\rho _+$ induces quasi-inverse equivalences of
 categories between
 $\iota$-pure of weight $w$ objects of $F\text{-}D ^{\mathrm{b}}
 _{\mathrm{ovhol}} (\mr{Spec}(k'),\PP'/K)$ and 
 $\iota$-pure of weight $w$ objects of $F\text{-}D ^{\mathrm{b}}
 _{\mathrm{ovhol}} (\mr{Spec}(k'),\PP''/K)$.
\end{dfn}

\begin{dfn}
 \label{def-mixt}
 Let $\E \in F\text{-}\mr{Isoc}^{\dag \dag}(Y,\PP/K)$ (see notation \ref{isoc-nota}).
 \begin{enumerate}
  \item We say that $\E$ is {\em $\iota$-pure} if there exists a real
	number $w$, called the {\em weight of $\E$}, such that for any
	closed point $x$ of $Y$ the pull-back $i_x^+\E$ is $\iota$-pure
	of weight $w$.
  \item We say that $\E$ is {\em $\iota$-mixed} if all the constituents
	(i.e.\ irreducible subquotients) of $\E$ are $\iota$-pure.

  \item We say that $\E$ is {\em $\iota$-mixed of weight  $\geq w$}
	(resp.\ {\em  $\leq w$}) if $\E$ is $\iota$-mixed and if the
	weights of all the constituents of $\E$ are $\geq w $ (resp.\
	$\leq w$).
 \end{enumerate}
\end{dfn}

\begin{rem}
 \label{defpurerem}
 Let $E\in F\text{-}\mr{Isoc}^{\dag}(Y,X/K)$ be an overconvergent
 $F$-isocrystal and $\E:= \sp _{X \hookrightarrow \PP,+} (E)$ be the
 corresponding object of $F\text{-}\mr{Isoc}^{\dag \dag}(Y,\PP/K)$. We
 assume that $Y$ is pure of dimension $d _Y$. The $F$-isocrystal $\E$ is
 $\iota$-pure of weight\footnote{We notice that there is a typo in the
 definition \cite[8.3.2]{caro_devissge_surcoh}. Indeed, $w - d _{Y _i}$
 should be replaced by $w + d _{Y _i}$.}  $w$ if and only if $E$ is
 $\iota$-pure of weight $w + d_Y$ in the sense of Crew or Kedlaya (see
 \cite[5.1]{kedlaya-weilII}) by \ref{!et*-isoc}.
\end{rem}

\begin{rem}
 Let $\E \in F\text{-}\mr{Isoc}^{\dag \dag}(Y,\PP/K)$ and
 let $\E'$ be a constituent of $\E$ considered as an object
 of $\mr{Isoc}^{\dag \dag}(\PP,X,Y)$ (without Frobenius). Then $\E'$
 automatically possesses a Frobenius structure, namely we have an
 isomorphism $\E ' \riso F ^{*N} (\E')$ for some positive integer $N$.
 The argument is the same as \cite[6.0-15]{christol-MebkhoutIV}.
\end{rem}

\begin{lem}
 \label{lem-isco-stabwf!+}
 Let $\E \in F\text{-}\mr{Isoc}^{\dag \dag}(Y,\PP/K)$.
 \begin{enumerate}
  \item \label{lem-isco-stabwf!+(1)}
       The $F$-complex $\E$ is $\iota$-pure of weight $w$ if and only
	if, for every closed point $x$ of $Y$, $i ^{!} _{x} (\E)$ is
	$\iota$-pure of weight $w$.

  \item \label{lem-isco-stabwf!+(2)}
	Let $f\colon (Y',X',\PP')\to (Y,X,\PP)$ be a morphism of frames
	such that $Y'$ is smooth as well. If $\E$ is $\iota$-mixed of
	weight $\leq w$ (resp.\ $\geq w$) then so are $f^+\E$ and
	$f^!\E$.
 
  \item \label{lem-isco-stabwf!+(3)}
	If $\E$ is $\iota$-mixed of weight $\leq w$ (resp.\ $\geq w$) then
	 $\DD _{Y,\PP}(\E)$ is $\iota$-mixed of weight $\geq - w$ (resp.\ $\leq -w$).

  \item \label{lem-isco-stabwf!+(4)}
	The notion of $\iota$-mixedness is local in $Y$, namely $\E$
	is $\iota$-mixed if and only if there exists an open covering
	$\bigl\{Y_i\bigr\}$ of $Y$ such that $\E\Vert_{Y_i}$
	is $\iota$-mixed for any $i$.
 \end{enumerate}
\end{lem}
\begin{proof}
 The first statement follows from the isomorphism \ref{!et*-isoc}.
 Let us check the second claim. It suffices to show that if $\E$ is
 $\iota$-pure of weight $w$, so are $f^+\E$ and $f^!\E$. Let $x'$ be a
 closed point of $Y'$. Then to check the purity of $f^+\E$ (resp.\
 $f^!\E$), it suffices to check the purity for $i_{x'}^+f^+\E$ (resp.\
 $i_{x'}^!f^!\E$) by definition (resp.\ by the first claim). These
 follow by the transitivity of $(\cdot)^+$ and $(\cdot)^!$.
 The third claim follows by $i_x^+\circ\DD\cong \DD\circ i_x^!$
 combining with \ref{lem-isco-stabwf!+(1)}.
 The last one is a consequence of the Lemma
 \ref{irreduciblerestpreserve}.
\end{proof}

\begin{lem}
 \label{pre-sub-quot-mixed}
 Let $0\to \E' \to \E \to \E'' \to 0$ be an exact sequence in
 $F\text{-}\mr{Isoc}^{\dag \dag}(Y,\PP/K)$. The overconvergent
 $F$-isocrystal $\E$ is $\iota$-mixed (resp.\ $\iota$-pure of weight
 $w$, resp.\ $\iota$-mixed of weight $\leq w$, resp.\ $\iota$-mixed of
 weight $\geq w$) if and only if so are $\E'$ and $\E''$.
\end{lem}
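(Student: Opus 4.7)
The plan is to split the four statements into two groups. The three involving $\iota$-mixedness (general, with bound $\leq w$, with bound $\geq w$) will be handled by a Jordan--H\"older argument on constituents, while the $\iota$-purity statement requires a pointwise analysis of Frobenius eigenvalues at each closed point of $Y$.

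For the mixedness statements, I would first note that $F\text{-}\mr{Ovhol}(Y,\PP/K)$ is noetherian and artinian by paragraph \ref{Frobformtstr}, and that $F\text{-}\mr{Isoc}^{\dag\dag}(Y,\PP/K)$ is closed under subquotients inside it (a subobject or quotient of an overconvergent $F$-isocrystal on the smooth variety $Y$ is again an overconvergent $F$-isocrystal). Hence Jordan--H\"older holds in $F\text{-}\mr{Isoc}^{\dag\dag}(Y,\PP/K)$: for the given exact sequence, the multiset of constituents of $\E$ is the disjoint union of the constituent multisets of $\E'$ and $\E''$. Since Definition \ref{def-mixt} reads off $\iota$-mixedness, together with the upper and lower weight bounds, entirely from this multiset, the three corresponding statements follow immediately.

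For the $\iota$-pure of weight $w$ statement, I would argue pointwise. Fix a closed point $x$ of $Y$ and let $E,E',E''$ be the overconvergent $F$-isocrystals corresponding to $\E,\E',\E''$ under the equivalence $\sp_+$ of paragraph \ref{isoc-nota}. By the isomorphism \ref{!et*-isoc}, $i_x^+\E$ is identified with $\sp_+(i_x^*E)$ up to a Tate twist and a shift, and similarly for $\E'$ and $\E''$; in particular each fiber is concentrated in a single cohomological degree. The usual fiber functor $i_x^*$ on overconvergent $F$-isocrystals being exact, the sequence $0\to E' \to E \to E''\to 0$ produces a short exact sequence of $\varphi$-modules over $K(x)^{\mr{ur}}$ at $x$. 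Since the Frobenius eigenvalues on the middle term of such a sequence are, with multiplicity, the union of those on the sub and on the quotient, the condition of $\iota$-purity of weight $w$ at $x$ for $\E$ is equivalent to the same condition at $x$ for both $\E'$ and $\E''$. Varying $x$ over the closed points of $Y$ finishes the proof.

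The argument is essentially formal once the subquotient-stability of $F\text{-}\mr{Isoc}^{\dag\dag}$ inside $F\text{-}\mr{Ovhol}$ is invoked (to legitimize the Jordan--H\"older step in the correct category) and the exactness of the fiber functor on overconvergent $F$-isocrystals is used. These are the only substantive inputs; the rest is bookkeeping.
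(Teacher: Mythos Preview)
Your proposal is correct and follows essentially the same approach as the paper. The paper's proof is the one-line observation that $\gr(\E)\cong\gr(\E')\oplus\gr(\E'')$, i.e.\ exactly your Jordan--H\"older argument; your separate pointwise treatment of the $\iota$-pure case simply makes explicit the exactness-of-fibers step that the paper leaves implicit (it is needed to pass between ``$\E$ is $\iota$-pure of weight $w$'' and ``every constituent of $\E$ is $\iota$-pure of weight $w$'').
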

\begin{proof}
 Let $\gr(\E)$ be the direct sum of the constituents of $\E$, which is
 defined up to (non-canonical) isomorphism. We can check easily that
 $\gr (\E)\riso \gr (\E') \oplus \gr (\E'')$.
\end{proof}

\begin{cor}
 \label{sub-quot-mixed}
 \begin{enumerate}
  \item\label{sub-quot-mixed-i}
       Let $\E \in F\text{-}\mr{Isoc}^{\dag \dag}(Y,\PP/K)$ and $\E'$
       be a subquotient of $\E$. If $\E$ is $\iota$-mixed
       (resp.\ $\iota$-pure of weight $w$, resp.\ $\iota$-mixed of
       weight $\leq w$, resp.\ $\iota$-mixed of weight $\geq w$) then so
       is $\E'$.

  \item\label{sub-quot-mixed-ii}
       Let $\E '\to \E \to \E''$ be an exact sequence in
       $F\text{-}\mr{Isoc}^{\dag \dag}(Y,\PP/K)$. If the
       overconvergent $F$-isocrystal $\E',\, \E ''$ are $\iota$-mixed
       (resp.\ $\iota$-pure of weight $w$, resp.\ $\iota$-mixed of
       weight $\leq w$, resp.\ $\iota$-mixed of weight $\geq w$) then so
       is $\E$.

 \end{enumerate}
\end{cor}
\begin{proof}
 These are consequences of Lemma \ref{pre-sub-quot-mixed}.
\end{proof}

\begin{dfn}
 \begin{enumerate}
  \item We say that $\E \in
	F\text{-}D ^{\mathrm{b}} _{\mathrm{isoc}} (Y, \PP/K)$
	 (see notation \ref{isoc-nota}) is
	{\em $\iota$-mixed} if $\HH^{j}(\E)$ is $\iota$-mixed
	for any integer $j$. We denote by $F\text{-}D
	^{\mathrm{b}} _{\mathrm{isoc}, \mathrm{m}}(Y, \PP/K)$ the full
	subcategory of $F\text{-}D ^{\mathrm{b}} _{\mathrm{isoc}} (Y, \PP/K)$
	consisting of $\iota$-mixed $F$-complexes.
 
  \item We say that $\E \in F\text{-}D ^{\mathrm{b}} _{\mathrm{isoc}}
	(Y, \PP/K)$ is {\em $\iota$-pure of weight $w$} (resp.\ {\em
	$\iota$-mixed of weight $\geq w$}, resp.\ {\em $\iota$-mixed of
	weight $\leq w$}) if $\HH ^{j}(\E)$ is $\iota$-pure of
	weight $w+j$ (resp.\ {\em $\iota$-mixed of weight $\geq w+j$},
	resp.\ {\em $\iota$-mixed of weight $\leq w+j$}) for any integer
	$j$.
 \end{enumerate}
\end{dfn}

\begin{rem}
\label{rem-m-out-div}
 Assume $Y$ is smooth, and let $\U$ be an open formal subscheme of $\PP$
 containing $Y$.
 Let $\E\in F\text{-}D ^{\mathrm{b}}_{\mathrm{ovhol}}(Y,\PP/K)$.
 The property ``$\E \in F\text{-}D ^{\mathrm{b}} _{\mathrm{isoc}}  (Y,
 \PP/K)$ and $\E $ is $\iota$-pure of weight $w$''
 is equivalent to the property ``$\E |\U \in F\text{-}D ^{\mathrm{b}}
 _{\mathrm{isoc}}  (Y, \U/K)$ and $\E|\U $ is $\iota$-pure of weight
 $w$''. However, even if the property $\E \in F\text{-}D ^{\mathrm{b}}
 _{\mathrm{isoc}, \mathrm{m}}  (Y, \PP/K)$ implies the property $\E |\U
 \in F\text{-}D ^{\mathrm{b}} _{\mathrm{isoc}, \mathrm{m}}  (Y, \U/K)$,
 we do not know if the converse is true (see \cite[Remark
 2.2]{Abe-Langlands}).
\end{rem}

\begin{rem}
\label{rem-proper-dense}
Suppose in this remark that $X$ is proper. Let $\E \in F\text{-}D ^{\mathrm{b}} _{\mathrm{isoc}}  (Y, \PP/K)$.
Let $U$ be a dense open subvariety of $Y$ and $\E \Vert U \in F\text{-}D ^{\mathrm{b}} _{\mathrm{isoc}}  (U, \PP/K)$
the restriction on $(U,\PP/K)$ of $\E$ (see notation \ref{extrafunctordfn}.iii)).
Then, it follows from \cite[10.8]{crewfini} (or we can follow the $p$-adic analogue of the proof
of the semicontinuity of weights of \cite[I.2.8]{KW-Weilconjecture}) that
$\E$ is $\iota$-pure of weight $w$ if and only if 
$\E \Vert U$ is $\iota$-pure of weight $w$.
The hypothesis that $X$ is proper is useful in this proof since we use the $p$-adic cohomological interpretation of the $L$-function. 
We do not know what is true when $X$ is not proper. 
\end{rem}

\begin{lem}
 \label{extensisocmix}
 Let $\E' \to \E \to \E''\xrightarrow{+}$ be an exact triangle in
 $F\text{-}D ^{\mathrm{b}} _{\mathrm{isoc}}  (Y, \PP/K)$. If the
 $F$-complexes $\E'$ and $\E''$ are $\iota$-mixed (resp.\ $\iota$-pure
 of weight $w$, resp.\ $\iota$-mixed of weight $\leq w$, resp.\
 $\iota$-mixed of weight $\geq w$), then so is $\E$.
\end{lem}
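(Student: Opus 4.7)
The plan is to reduce the statement to the previously established Corollary \ref{sub-quot-mixed} by passing from the distinguished triangle to the long exact sequence of cohomology sheaves with respect to the canonical t-structure.

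First, I would apply the cohomology functors $\HH^{j}$ to the distinguished triangle $\E'\to\E\to\E''\xrightarrow{+}$. Since the canonical t-structure on $D^{\mr{b}}_{\mr{ovhol}}(Y,\PP/K)$ lifts to $F\text{-}D^{\mr{b}}_{\mr{ovhol}}(Y,\PP/K)$ with heart $F\text{-}\mr{Ovhol}(Y,\PP/K)$ (see paragraph \ref{Frobformtstr}), and since distinguished triangles in $F\text{-}D^{\mr{b}}_{\mr{ovhol}}$ are just those that become distinguished after forgetting Frobenius, we obtain a long exact sequence
\begin{equation*}
 \cdots\to\HH^{j-1}(\E'')\to\HH^{j}(\E')\to\HH^{j}(\E)\to\HH^{j}(\E'')
 \to\HH^{j+1}(\E')\to\cdots
\end{equation*}
in $F\text{-}\mr{Ovhol}(Y,\PP/K)$. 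Since $\E',\E''\in F\text{-}D^{\mr{b}}_{\mr{isoc}}(Y,\PP/K)$ and this category is defined by a pointwise condition on the cohomology sheaves, and since subquotients of objects of $F\text{-}\mr{Isoc}^{\dag\dag}(Y,\PP/K)$ stay in this category, each $\HH^{j}(\E)$ lies in $F\text{-}\mr{Isoc}^{\dag\dag}(Y,\PP/K)$ as well, so $\E\in F\text{-}D^{\mr{b}}_{\mr{isoc}}(Y,\PP/K)$.

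Next, fix $j$. Split the long exact sequence into the short exact sequence
\begin{equation*}
 0\to\coker\bigl(\HH^{j-1}(\E'')\to\HH^{j}(\E')\bigr)\to\HH^{j}(\E)\to
 \ker\bigl(\HH^{j}(\E'')\to\HH^{j+1}(\E')\bigr)\to0
\end{equation*}
in $F\text{-}\mr{Isoc}^{\dag\dag}(Y,\PP/K)$. Assume $\E'$ and $\E''$ are $\iota$-mixed (resp.\ $\iota$-pure of weight $w$, resp.\ $\iota$-mixed of weight $\leq w$, resp.\ $\iota$-mixed of weight $\geq w$). Then by definition $\HH^{j}(\E^{(\prime\prime)})$ is $\iota$-mixed (resp.\ $\iota$-pure of weight $w+j$, etc.) and $\HH^{j-1}(\E'')$ is $\iota$-mixed of the corresponding type shifted by $-1$, but since the quotient of $\HH^{j}(\E')$ involved is a subquotient of $\HH^{j}(\E')$, Corollary \ref{sub-quot-mixed}.\ref{sub-quot-mixed-i} implies that it inherits the weight condition at shift $j$. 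Similarly the kernel is a subobject of $\HH^{j}(\E'')$ with the weight condition at shift $j$. By Corollary \ref{sub-quot-mixed}.\ref{sub-quot-mixed-ii} applied to the above short exact sequence, $\HH^{j}(\E)$ is $\iota$-mixed (resp.\ $\iota$-pure of weight $w+j$, resp.\ $\iota$-mixed of weight $\leq w+j$, resp.\ $\iota$-mixed of weight $\geq w+j$). Since this holds for every $j$, the $F$-complex $\E$ is $\iota$-mixed (resp.\ of the indicated purity or weight bound), as desired.

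There is no real obstacle here: the content is entirely bookkeeping on the cohomological long exact sequence together with stability of mixedness under subquotients and extensions at the abelian level, which was already recorded in Corollary \ref{sub-quot-mixed}.
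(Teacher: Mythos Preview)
Your proof is correct and follows essentially the same approach as the paper: the paper simply says ``This is a consequence of Corollary \ref{sub-quot-mixed}.\ref{sub-quot-mixed-ii}'', and you have spelled out the standard long-exact-sequence-in-cohomology argument that underlies this citation. Note that the hypothesis already places $\E$ in $F\text{-}D^{\mathrm{b}}_{\mathrm{isoc}}(Y,\PP/K)$, so your first paragraph verifying this is redundant (though not incorrect).
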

\begin{proof}
 This is a consequence of Lemma
 \ref{sub-quot-mixed}.\ref{sub-quot-mixed-ii}.
\end{proof}

\subsection{Mixedness for overholonomic $F$-complexes}
In this subsection, we define our one of the main players, $\iota$-mixed
$F$-complexes, and show some first properties. We continue to let
$(Y,X,\PP)$ be a frame.

\begin{empt}
 \label{recallofstrat}
 Recall from \cite[4.1.2]{caro-stab-prod-tens} that an ordered set of
 subschemes $\{Y_i\}_{i=1,\dots,r}$ of $Y$ is said to be a {\em smooth
 stratification} if the following
 holds: 1.\ $\{Y_i\}$ is a stratification, namely putting
 $Y_0:=\emptyset$, $Y_k$ is an open subscheme of
 $Y\setminus\bigcup_{i<k}Y_i$ and $Y=\bigcup_{1\leq i\leq r} Y_i$. 2.\
 $Y_i$ is a smooth variety. A stratification $\{Y'_j\}_{1\leq j\leq r'}$
 is said to be a {\em refinement of $\{Y_i\}$} if there exists an
 increasing function $\phi\colon[1,r]\cap \N \rightarrow[1,r']\cap \N $
 such that $\bigcup_{i\leq k}Y_i=\bigcup_{j\leq\phi(k)}Y'_j$
 for any $1\leq k\leq r$.
 The following facts are useful:
 \begin{quote}
  (*)\
  For any stratification $\{Y_i\}_{1\leq i\leq r}$ of $Y$, there is a
  smooth stratification $\{Y'_j\}_{1\leq j\leq r'}$, which is a
  refinement of $\{Y_i\}$. Moreover, given two stratifications $\{Y_i\}$
  and $\{Y'_j\}$, there exists a stratification $\{Y''_k\}$ which is a
  refinement of both $\{Y_i\}$ and $\{Y'_j\}$.
 \end{quote}
\end{empt}

\begin{dfn}
\label{mixedness-ovhol}
 Let $\E \in F\text{-}D ^{\mathrm{b}} _{\mathrm{ovhol}}(Y,\PP/K)$.
 \begin{enumerate}
  \item  We say that $\E$ is {\em $\iota$-mixed} if there exists a
	 smooth stratification $\{Y_i\}_{1\leq i\leq r}$ of $Y$
	 such that
	 $\E\Vert_{Y_i}$ is in $F\text{-}D
	 ^{\mathrm{b}}_{\mathrm{isoc},\mathrm{m}}(Y _i, \PP/K)$ for any
	 $i=1,\dots , r$. We denote by $F\text{-}D
	 ^{\mathrm{b}}_{\mathrm{m}}(Y,\PP/K)$ the full subcategory
	 of $F\text{-}D ^{\mathrm{b}}_{\mathrm{ovhol}}(Y,\PP/K)$
	 consisting of $\iota$-mixed $F$-complexes.

  \item We say that  $\E$ is {\em $\iota$-mixed of weight $\leq w$}
	if $\E$ is $\iota$-mixed and, for every closed point $x$ in $Y$,
	the $F$-complex $i _x ^+ ( \E ) $ is $\iota$-mixed of weight
	$\leq w$. We denote by $ F\text{-}D ^{\mathrm{b}} _{\leq
	w}(Y,\PP/K)$ the full subcategory of $F\text{-}D ^{\mathrm{b}}
	_{\mathrm{m}}(Y,\PP/K)$ consisting of $\iota$-mixed
	$F$-complexes of weight $\leq w$.

  \item We say that  $\E$ is {\em $\iota$-mixed of weight $\geq w$} if
	$\E$ is $\iota$-mixed and, for every closed point $x$ in $Y$,
	the $F$-complex $i _x ^! ( \E ) $ is $\iota$-mixed of weight
	$\geq w$. We denote by $ F\text{-}D ^{\mathrm{b}} _{\geq
	w}(Y,\PP/K)$ the full subcategory of $F\text{-}D ^{\mathrm{b}}
	_{\mathrm{m}}(Y,\PP/K)$ consisting of $\iota$-mixed
	$F$-complexes of weight $\geq w$.

  \item We say that $\E$ is {\em $\iota$-pure of weight $w$} if $\E$ is
	both $\iota$-mixed of weight $\leq w$ and of weight $\geq w$.
	The $\iota$-weight of $\E$ is denoted by $\mr{wt}(\E)$.

 \end{enumerate}
\end{dfn}

\begin{rem}
 \label{rem-m-geqw}
 (i) Let $\E \in F\text{-}D ^{\mathrm{b}}_{\mr{m}}(Y,\PP/K)$. Then there
 exist real numbers $w _1, w _2$ such that $\E$ is of weight both $\geq
 w_1$ and $\leq w_2$. Indeed, for an $\iota$-pure overconvergent
 $F$-isocrystal, we may take $w_1=w_2$ to be its $\iota$-weight. For
 an $\iota$-mixed overconvergent $F$-isocrystal, the constituents are
 $\iota$-pure by definition, so the claim holds. In the general case, we
 may reduce to the $\iota$-mixed overconvergent $F$-isocrystal case by
 d\'{e}vissage.

 (ii) Let $\E$ be an $F$-complex in
 $F\text{-}D^{\mr{b}}_{\mr{m}}(Y,\PP/K)$. {\em Assume that
 $\DD_{Y,\PP}(\E)$ is $\iota$-mixed as well}. Then by definition, $\E$
 is $\iota$-mixed of weight $\geq w$ (resp.\ $\leq w$) if and only if
 $\DD_{Y,\PP}(\E)$ is $\iota$-mixed of weight $\leq -w$ (resp.\ $\geq
 -w$). One of the main results of this paper is to show that, in fact,
 $\DD_{Y,\PP}(\E)$ is $\iota$-mixed if $\E$ is. See Theorem
 \ref{sumupWeilII} for the details.
 
 (iii) Suppose that $X$ is proper. Let $\E \in F\text{-}D ^{\mathrm{b}} _{\mathrm{isoc}}  (Y, \PP/K)$.
 Then, it follows from the remark \ref{rem-proper-dense} that 
the $F$-complex $\E$ is $\iota$-mixed as an object of 
$F\text{-}D ^{\mathrm{b}} _{\mathrm{isoc}}  (Y, \PP/K)$ if and only if 
$\E$ is $\iota$-mixed as an object of 
$F\text{-}D ^{\mathrm{b}} _{\mathrm{ovhol}}  (Y, \PP/K)$.
\end{rem}

\begin{prop}
 \label{pullbackstability}
 Let $u\colon (Y', X', \PP')\to (Y, X, \PP)$ be a morphism of
 frames, and let $\E$ be in $F\text{-}D ^{\mathrm{b}}_{\geq
 w}(Y,\PP/K)$. Then $u ^!(\E)$ is in $F\text{-}D ^{\mathrm{b}}_{\geq
 w}(Y',\PP'/K)$.
\end{prop}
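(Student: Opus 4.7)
The plan is to verify separately the two conditions defining $F\text{-}D^{\mathrm{b}}_{\geq w}(Y',\PP'/K)$: that $u^!\E$ is $\iota$-mixed in the sense of Definition \ref{mixedness-ovhol}, and that $i_{x'}^!\,u^!\E$ is $\iota$-mixed of weight $\geq w$ for every closed point $x'$ of $Y'$.

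For $\iota$-mixedness I would start with a smooth stratification $\{Y_i\}_{1\leq i\leq r}$ of $Y$ witnessing the $\iota$-mixedness of $\E$, i.e.\ with $\E\Vert_{Y_i}\in F\text{-}D^{\mathrm{b}}_{\mathrm{isoc},\mathrm{m}}(Y_i,\PP/K)$. Pulling back along $u$ yields a stratification of $Y'$ which, by fact $(*)$ in paragraph \ref{recallofstrat}, can be refined to a smooth stratification $\{Y'_j\}_{1\leq j\leq r'}$; each $Y'_j$ is contained in a unique $u^{-1}(Y_i)$. The restriction of $u$ gives a morphism of frames $u_j\colon(Y'_j,\overline{Y'_j},\PP')\to(Y_i,\overline{Y_i},\PP)$, and compatibility of $(-)^!$ with composition yields the canonical identification $u^!\E\Vert_{Y'_j}\cong u_j^!(\E\Vert_{Y_i})$. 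A d\'evissage on cohomology functors $\HH^n$, together with Lemma \ref{lem-isco-stabwf!+}.\ref{lem-isco-stabwf!+(2)}, shows that each cohomology of the right-hand side is $\iota$-mixed, after a further refinement of the stratification if needed in order to guarantee that all $\HH^n\bigl(u_j^!(\E\Vert_{Y_i})\bigr)$ lie in $F\text{-}\mathrm{Isoc}^{\dag\dag}$ on each stratum.

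For the weight bound, let $x'$ be a closed point of $Y'$ with image $x:=u(x')$, which is closed in $Y$ since the underlying map of varieties is of finite type. The induced residue-field extension $u'\colon\Spec k(x')\to\Spec k(x)$ is finite, hence étale as residue fields of closed points of varieties over the perfect field $k$ are perfect. The frame identity $u\circ i_{x'}=i_x\circ u'$ together with the contravariant functoriality of $(-)^!$ produces a canonical isomorphism $i_{x'}^!\,u^!\E\cong u'^!\,i_x^!\E$. By hypothesis $i_x^!\E$ is $\iota$-mixed of weight $\geq w$, and since $u'$ is étale of relative dimension $0$, Proposition \ref{propofpullbacks}(i) gives that $u'^!=u'^+$ is t-exact. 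A d\'evissage then reduces the problem to a single $\iota$-pure $F$-vector space $V$ on $\Spec k(x)$. Here the $\iota$-weight is defined through the eigenvalues of Frobenius on the pushforward of $V$ to $\Spf\V$ via the structural morphism $f$, measured against the fixed quantity $q^{w/2}$. Writing $f'=f\circ u'$ for the structural morphism on $\Spec k(x')$, one computes $f'_+\,u'^+V\cong f_+(u'_+\,u'^+V)\cong f_+(V)^{\oplus[k(x'):k(x)]}$, so the Frobenius eigenvalues on $f'_+\,u'^+V$ coincide with those of $f_+V$ up to multiplicity, and the weight is preserved.

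The main obstacle is the d\'evissage in the first step: while Lemma \ref{lem-isco-stabwf!+}.\ref{lem-isco-stabwf!+(2)} gives the desired stability of $\iota$-mixedness under $(-)^!$ for a single isocrystal on a smooth base, ensuring that all cohomologies $\HH^n(u^!\E\Vert_{Y'_j})$ can simultaneously be arranged to lie in $F\text{-}\mathrm{Isoc}^{\dag\dag}$ on each stratum requires an iterative refinement argument (decreasing the complement at each step and terminating by noetherian induction on dimension). The point-level invariance of weight under $u'^!$, once normalizations are properly fixed, is then a direct eigenvalue computation.
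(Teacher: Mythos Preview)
Your argument is correct and follows the same strategy as the paper's proof: pull back a smooth stratification witnessing the $\iota$-mixedness of $\E$, refine it to a smooth stratification of $Y'$ via \ref{recallofstrat}$(*)$, and invoke Lemma~\ref{lem-isco-stabwf!+}.\ref{lem-isco-stabwf!+(2)} on each stratum. The paper's proof is considerably terser---it states only the stratification step and the citation of Lemma~\ref{lem-isco-stabwf!+}.\ref{lem-isco-stabwf!+(2)}, leaving the pointwise weight check entirely implicit (it follows from transitivity of $(-)^!$, exactly as you spell out).

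One remark on the ``main obstacle'' you flag at the end: it is not actually present. Once you are on a smooth stratum $Y'_j$ mapping to a smooth stratum $Y_i$, the extraordinary pullback $u_j^!$ of an object of $F\text{-}\mathrm{Isoc}^{\dag\dag}(Y_i,\PP/K)$ is already a shift of an object of $F\text{-}\mathrm{Isoc}^{\dag\dag}(Y'_j,\PP'/K)$---this is the compatibility of $\sp_+$ with ordinary pullback of overconvergent isocrystals (cf.\ \ref{isoc-nota} and \ref{!et*-isoc} for the point case; the general case is analogous). Hence $u_j^!$ sends $F\text{-}D^{\mathrm{b}}_{\mathrm{isoc},\mathrm{m}}(Y_i,\PP/K)$ directly into $F\text{-}D^{\mathrm{b}}_{\mathrm{isoc},\mathrm{m}}(Y'_j,\PP'/K)$, and no iterative refinement is needed. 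This is why the paper can conclude in one line after citing the lemma.
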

\begin{proof}
 Let $Y =\sqcup _{i=1,\dots , r} Y_i$ be a smooth
 stratification in $P$ such that $\E\Vert_{Y_i} \in
 F\text{-}D^{\mathrm{b}}_{\mathrm{isoc},\mathrm{m}} (Y _i, \PP/K)$ for
 any $i=1,\dots , r$. Let $b\colon Y'\rightarrow Y$ be the
 morphism defined by $u$.  We get the stratification $Y' =\sqcup
 _{i=1,\dots , r} b ^{-1}(Y_i)$. By using \ref{recallofstrat} (*), there
 exists a smooth stratification of $Y'$ in $P'$ which is a refinement
 of $Y'=\sqcup _{i=1,\dots , r} b^{-1}(Y_i)$. With Lemma
 \ref{lem-isco-stabwf!+}.\ref{lem-isco-stabwf!+(2)}, we can conclude.
\end{proof}

\begin{lem}
 \label{lem-defi-mixXY}
 Let $u =(\mr{id},\star,\star,\star)\colon (Y, X', \PP', \QQ')\to (Y, X,
 \PP, \QQ)$ be a complete morphism of l.p.\ frames. Take $\E\in
 F\text{-}D ^{\mathrm{b}}_{\mathrm{ovhol}}(Y,\PP/K)$, $\E'\in F\text{-}D
 ^{\mathrm{b}}_{\mathrm{ovhol}}(Y,\PP'/K)$, and choose
 $*\in\{\mr{m},\geq w,\leq w\}$.
 \begin{enumerate}
  \item\label{lem-defi-mixXY1}
       Suppose $Y$ to be smooth. We get $\E \in F\text{-}D
	^{\mathrm{b}}_{\mathrm{isoc},*}(Y,\PP/K)$ if and only if
	$u ^{!}(\E )\in F\text{-}D ^{\mathrm{b}}_{\mathrm{isoc},
	*}(Y,\PP'/K)$ (resp.\ $\E '\in F\text{-}D
	^{\mathrm{b}}_{\mathrm{isoc},*}(Y,\PP'/K)$ if and only if
	 $u _{+}(\E' )\in F\text{-}D ^{\mathrm{b}}_{\mathrm{isoc},
	*}(Y,\PP/K)$).

  \item\label{lem-defi-mixXY2}
       We have $\E \in F\text{-}D ^{\mathrm{b}}_*(Y,\PP/K)$ if
	and only if $u ^{!}(\E )\in F\text{-}D
	^{\mathrm{b}}_*(Y,\PP'/K)$ (resp.\ $\E '\in F\text{-}D
	^{\mathrm{b}}_*(Y,\PP'/K)$ if and only if $u _{+}(\E'
	)\in F\text{-}D ^{\mathrm{b}}_*(Y,\PP/K)$).
 \end{enumerate}
\end{lem}
\begin{proof}
 From \cite[2.5]{caro:formal6}, the functors $u _+$ and $u
 ^{!}$ induce canonical equivalence of categories between
 $F\text{-}D ^{\mathrm{b}}_{\mathrm{ovhol}}(Y,\PP/K)$ and $F\text{-}D
 ^{\mathrm{b}}_{\mathrm{ovhol}}(Y,\PP'/K)$. Hence, we reduce to checking
 the part concerning the functor $u ^{!}$.
 From \cite[2.5]{caro:formal6}, we have the isomorphism $u
 ^{!}\riso u ^{+}$. Thus, by transitivity of the (extraordinary) inverse
 image, the part concerning ``$\leq w$'' or ``$\geq w$'' is a
 consequence of that concerning ``$\mr{m}$''.

 Let us check \ref{lem-defi-mixXY1}. It amounts to check that $\E$ is
 purely of weight $w$ if and only if so is $u^+(\E)$. Take a closed
 point $y$ of $Y$. We denote the morphism $y\rightarrow
 (Y,X',\PP',\QQ')$ by $i'_y$ to tell from the morphism $i_y\colon
 y\rightarrow(Y,X,\PP,\QQ)$. Then $u\circ i'_y=i_y$. Thus by
 transitivity of $(\cdot)^+$, the claim follows by definition.
 Let us check \ref{lem-defi-mixXY2}. The ``only if'' part follows from
 Proposition \ref{pullbackstability}, so let us check the ``if''
 part. Let $Y =\sqcup _{i=1,\dots , r} Y_i$ be a smooth
 stratification of $Y$ such that for any $i=1,\dots , r$,
 $u ^{!}(\E)\Vert_{Y _i} \in F\text{-}D
 ^{\mathrm{b}} _{\mathrm{isoc}, \mathrm{m}} (Y _i, \PP'/K)$.
 Since for any subvariety $Y'$ of $Y$ appearing in this
 stratification we have $\R\underline{\Gamma} ^\dag _{Y'} \bigl(u
 ^{!}(\E)\bigr) \riso u ^{!}\bigl(\R\underline{\Gamma} ^\dag _{Y '}
 (\E)\bigr) $, and we are reduced to \ref{lem-defi-mixXY1}.
\end{proof}

\begin{lem}
 \label{wj+}
 Let $u =(j,a,g,f)\colon (Y', X', \PP', \QQ')\to (Y, X, \PP, \QQ)$  
 be a complete morphism of l.p.\ frames such that $j$ is an
 immersion. If $\E '$ is an $F$-complex in
 $F\text{-}D ^{\mathrm{b}} _{\mathrm{m}}(Y',\PP/K)$ (resp.\ $F\text{-}D
 ^{\mathrm{b}} _{\geq w}(Y',\PP/K)$) then $u _+ (\E ')$ is an
 $F$-complex in $F\text{-}D ^{\mathrm{b}} _{\mathrm{m}}(Y,\PP/K)$
 (resp.\ $F\text{-}D^{\mathrm{b}} _{\geq w}(Y,\PP/K)$).
\end{lem}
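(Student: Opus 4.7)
The plan is to combine the base change isomorphism (Proposition \ref{basechangeisom}) with the observation that, because $j$ is an immersion, every cartesian pullback we will encounter is either empty or (set-theoretically) a single copy of the pulled-back variety. This reduces both assertions to the hypotheses on $\E'$.

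For the weight part, I would verify the condition pointwise. Let $y\in Y$ be a closed point. Since $j$ is an immersion, either $y\notin j(Y')$ (in which case the cartesian square formed by $u$ and $i_y\colon(\{y\},\{y\})\to(Y,X)$ has empty pullback, so base change forces $i_y^!(u_+\E')=0$), or $y=j(y')$ for a unique $y'\in Y'$ with $k(y')=k(y)$ (in which case base change identifies $i_y^!(u_+\E')$ with $i_{y'}^!(\E')$ modulo the induced isomorphism between the fiber frames). Since isomorphisms of varieties preserve $\iota$-weights, $i_y^!(u_+\E')$ is of weight $\geq w$ in both cases.

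For the mixedness part, I would take a smooth stratification $\{Y'_i\}_{i=1,\dots,r}$ of $Y'$ witnessing the mixedness of $\E'$ and then, using fact $(\ast)$ of paragraph \ref{recallofstrat}, refine $\{j(Y'_i)\}_{i}\cup\{Y\setminus j(Y')\}$ to a smooth stratification $\{Z_\ell\}_{\ell}$ of $Y$ in which each $Z_\ell$ is contained either in some $j(Y'_i)$ or entirely in $Y\setminus j(Y')$. For $Z_\ell\subset Y\setminus j(Y')$, base change gives $u_+(\E')\Vert_{Z_\ell}=0$, which is trivially mixed; for $Z_\ell\subset j(Y'_i)$, base change identifies $u_+(\E')\Vert_{Z_\ell}$, up to the induced isomorphism of couples, with $\E'\Vert_{j^{-1}(Z_\ell)}$, which is a mixed isocrystal complex by Lemma \ref{lem-isco-stabwf!+}.\ref{lem-isco-stabwf!+(2)}.

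The only delicate point is bookkeeping: the base change switches the ambient formal scheme from $\PP'$ to $\PP$, and one must ensure that this change preserves $\iota$-mixedness and $\iota$-weight. This is guaranteed by the frame-independence of the weight (stated after Definition \ref{def-mixt}), by Lemma \ref{t-gen-coh-PXTindtPsurhol} (equivalence of categories under change of ambient formal scheme), and by Lemma \ref{lem-defi-mixXY} (transport of mixedness). Hence there is no genuine obstacle in the argument.
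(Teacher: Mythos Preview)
Your argument is correct and arrives at the same stratification of $Y$, but the paper takes a shorter route that avoids base change entirely. After factoring $u$ through the closure $X''$ of $Y'$ in $X$, the paper invokes Lemma~\ref{lem-defi-mixXY} to replace $u$ by the morphism $(j,d,\mr{id},\mr{id})\colon(Y',X'',\PP,\QQ)\to(Y,X,\PP,\QQ)$ with the \emph{same} ambient formal schemes; for such a morphism $u_+(\E')$ is literally $\E'$ itself, viewed as a complex on the larger frame. From there the paper treats the closed and open immersion cases separately (the general case following by composition), building the stratification of $Y$ by concatenating a smooth stratification of $Y\setminus Y'$ with the given one of $Y'$. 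Your approach instead keeps the original $u$ and uses Proposition~\ref{basechangeisom} together with Lemma~\ref{lem-defi-mixXY} at each stratum and each closed point; this works, but trades one global reduction for many local invocations of heavier tools.

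One small wrinkle in your write-up: the collection $\{j(Y'_i)\}\cup\{Y\setminus j(Y')\}$ is not a stratification in the sense of paragraph~\ref{recallofstrat} when $j$ is neither open nor closed, since $Y\setminus j(Y')$ need not be locally closed and fact~$(\ast)$ presupposes a genuine stratification as input. This is easily repaired---factor $j$ as an open immersion followed by a closed one and apply your argument twice, exactly as the paper does---but it should be said explicitly.
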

\begin{proof}
 Let $X''$ be the closure of $Y'$ in $X$, $c \colon X' \to X''$ and
 $d\colon X'' \to X$ the canonical factorization of $a$. We have $u
 =(j,d,\mr{id},\mr{id}) \circ (\mr{id},c,g,f)$. By Lemma
 \ref{lem-defi-mixXY}, we reduce to the case where $u
 =(j,d,\mr{id},\mr{id})$. In that case $u _{+} (\E') =
 \E'$. Suppose that $Y'\subset Y$ is closed. The we get the
 stratification $Y = (Y \setminus Y ') \sqcup Y'$. Let $Y' =\sqcup
 _{i=1,\dots , r} Y '_i$ be a smooth stratification of $Y'$
 such that $\E '\Vert_{Y '_i} \in F\text{-}D
 ^{\mathrm{b}} _{\mathrm{isoc}, \mathrm{m}} (Y
 '_i, \PP/K)$ for any $i=1,\dots, r$. Applying \ref{recallofstrat} (*) to the trivial
 stratification of $Y\setminus Y'$, there exist a
 smooth stratification $Y \setminus Y' = \sqcup _{i=1,\dots , s} Y
 ''_i$. Thus, we get the smooth stratification
 $Y= (  \sqcup _{i=1,\dots , s} Y ''_i) \sqcup ( \sqcup _{i=1,\dots , r}
 Y '_i) $. Since $\E'\Vert_{Y ''_i} =0$,
 we get a desired stratification. Similarly, if $Y'\subset Y$ is open,
 then we get the stratification $Y =Y'  \sqcup (Y \setminus Y ')$ and we
 proceed as above by using \ref{recallofstrat} (*).
\end{proof}

\begin{prop}
 \label{lem-2.2.8}
 \begin{enumerate}
  \item \label{lem-2.2.3(4)}
	Let $\mathcal{F}'\rightarrow\mathcal{F}\rightarrow
	\mathcal{F}''\xrightarrow{+}$ be a distinguished
	triangle in $F\text{-}D ^{\mathrm{b}}_{\mathrm{ovhol}} (Y, \PP/K)$. 
	If $\mathcal{F}'$ and $\mathcal{F}''$ are
	$\iota$-mixed (resp.\ of weight $\geq w$, resp.\ of weight $\leq
	w$), then so is $\mathcal{F}$.
	
  \item  Let $\E \in F\text{-}D ^{\mathrm{b}}
	 _{\mathrm{ovhol}}(Y,\PP/K)$. Let  $Y =\sqcup _{i=1,\dots , r} Y
	 _i$ be a stratification of $Y$. The $F$-complex $\E$ is
	 $\iota$-mixed (resp.\ $\iota$-mixed of $\iota$-weight
	 $\geq w$) if and only if, for any $i = 1,\dots r$, so are
	 the $F$-complexes $\R \underline{\Gamma} ^\dag _{Y _i} (\E)$.

  \item \label{lem-2.2.8(3)}
	Let $\E \in F\text{-}D ^{\mathrm{b}} _{\mathrm{ovhol}}(Y,   \PP/K)$. 
	The notion of $\iota$-mixedness is local in $Y$, namely $\E$
	is $\iota$-mixed if and only if there exists an open covering
	$\bigl\{Y_i\bigr\}$ of $Y$ such that $\E\Vert_{Y_i}$
	is $\iota$-mixed for any $i$.
 \end{enumerate}
\end{prop}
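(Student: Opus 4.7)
My approach proceeds in three steps, handling the three parts in order.

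For part (1), I first settle $\iota$-mixedness. Both $\mathcal{F}'$ and $\mathcal{F}''$ come with smooth stratifications $\{Y'_i\}$ and $\{Y''_j\}$ of $Y$ whose strata carry $\iota$-mixed isocrystal restrictions. By paragraph \ref{recallofstrat}(*) I find a common smooth refinement $\{Z_k\}$. On each $Z_k$ the restriction of the given distinguished triangle lives in $F\text{-}D^{\mathrm{b}}_{\mathrm{isoc}}(Z_k,\PP/K)$, and Lemma \ref{extensisocmix} yields that $\mathcal{F}\Vert_{Z_k}$ is $\iota$-mixed, whence $\mathcal{F}$ is $\iota$-mixed. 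For the weight estimates, I check the condition at each closed point $x$ of $Y$. Applying the triangulated functor $i_x^+$ (resp.\ $i_x^!$) produces a distinguished triangle in $F\text{-}D^{\mathrm{b}}_{\mathrm{isoc}}(x,\PP/K)$; its long exact sequence of cohomology, together with Corollary \ref{sub-quot-mixed}.\ref{sub-quot-mixed-ii}, forces each $\mathcal{H}^j i_x^+\mathcal{F}$ (resp.\ $\mathcal{H}^j i_x^!\mathcal{F}$) to be $\iota$-mixed of weight $\leq w+j$ (resp.\ $\geq w+j$).

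For part (2), the ``only if'' direction comes from writing $\R\underline{\Gamma}^\dag_{Y_i}(\E) = u_{i+}\circ u_i^!(\E)$ for the canonical c-immersion; the argument of Proposition \ref{pullbackstability} adapts to the $\iota$-mixed variant, and Lemma \ref{wj+} handles the push-forward. For the ``if'' direction I induct on the number $r$ of strata. The case $r=1$ is vacuous. For $r\geq 2$, the first stratum $Y_1$ is open in $Y$ and $Z:=Y\setminus Y_1=\bigcup_{i\geq 2}Y_i$ is closed; the localization triangle
\[
\R\underline{\Gamma}^\dag_{Z}(\E)\to \E\to \R\underline{\Gamma}^\dag_{Y_1}(\E)\xrightarrow{+}
\]
reduces me to checking its two outer vertices. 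The right term is $\iota$-mixed by hypothesis. For the left one, I exploit the transitivity $\R\underline{\Gamma}^\dag_{Y_i}\circ\R\underline{\Gamma}^\dag_{Z}(\E)\cong\R\underline{\Gamma}^\dag_{Y_i}(\E)$ for $i\geq 2$, so the inductive hypothesis applied to the stratification $\{Y_i\}_{i\geq 2}$ of $Z$ gives that $\R\underline{\Gamma}^\dag_{Z}(\E)$ is $\iota$-mixed. Part (1) closes the induction. The weight $\geq w$ variant runs identically since every tool invoked (Corollary \ref{sub-quot-mixed}, Lemma \ref{wj+}, Proposition \ref{pullbackstability}, and part (1) itself) is formulated for this variant.

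For part (3), the ``only if'' direction is immediate: a smooth stratification of $Y$ witnessing $\iota$-mixedness of $\E$ restricts to a stratification of the open $Y_i$ which can be refined to a smooth one by \ref{recallofstrat}(*) without affecting the isocrystal condition on the strata. For the ``if'' direction, starting from a (finite) open cover $\{Y_i\}_{1\leq i\leq n}$ I build a stratification $\{W_j\}$ of $Y$ with each $W_j$ contained in some $Y_{i(j)}$ (take $W_1:=Y_1$, $W_2:=Y_2\setminus Y_1$, and so on). Then $\R\underline{\Gamma}^\dag_{W_j}(\E)\cong\R\underline{\Gamma}^\dag_{W_j}(\E\Vert_{Y_{i(j)}})$ is $\iota$-mixed: apply part (2) to any stratification of $Y_{i(j)}$ refining $\{W_j\cap Y_{i(j)}\}$, using that $\E\Vert_{Y_{i(j)}}$ is $\iota$-mixed by assumption. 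A second application of part (2) to the stratification $\{W_j\}$ of $Y$ then yields $\iota$-mixedness of $\E$.

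The main obstacle is the inductive step in (2): it rests on the compatibility of $\R\underline{\Gamma}^\dag$ with nested closed subschemes and on the extension stability proved in (1). A subtle point worth emphasizing is that the strata appearing in (2) need not be smooth, whereas $\iota$-mixedness is defined relative to \emph{smooth} stratifications, so one silently invokes \ref{recallofstrat}(*) each time one wishes to certify $\iota$-mixedness of an intermediate complex by reducing to isocrystals.
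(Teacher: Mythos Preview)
Your argument is correct and, for parts (1) and (2), essentially identical to the paper's: a common smooth refinement plus Lemma~\ref{extensisocmix} for (1), and the telescoping localization triangle together with Proposition~\ref{pullbackstability} and Lemma~\ref{wj+} for (2). The paper writes the induction in (2) as a chain of triangles
\[
\R\underline{\Gamma}^\dag_{\sqcup_{j>i}Y_j}(\E)\to\R\underline{\Gamma}^\dag_{\sqcup_{j\geq i}Y_j}(\E)\to\R\underline{\Gamma}^\dag_{Y_i}(\E)\xrightarrow{+}
\]
rather than peeling off $Y_1$ first, which is only a cosmetic difference; it has the mild advantage that all objects stay in $F\text{-}D^{\mathrm{b}}_{\mathrm{ovhol}}(Y,\PP/K)$, so one never has to switch base from $Y$ to $Z$ as you implicitly do when invoking the inductive hypothesis ``on the stratification $\{Y_i\}_{i\geq2}$ of $Z$''.

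For part (3) you take a different route. The paper simply quotes the isocrystal-level locality statement (Lemma~\ref{lem-isco-stabwf!+}.\ref{lem-isco-stabwf!+(4)}) and says that together with (1) and (2) it yields (3). Your argument is more self-contained: you build a stratification $\{W_j\}$ subordinate to the cover and feed it back into (2). One small imprecision: the ``isomorphism'' $\R\underline{\Gamma}^\dag_{W_j}(\E)\cong\R\underline{\Gamma}^\dag_{W_j}(\E\Vert_{Y_{i(j)}})$ is really the identification of one object sitting in both $F\text{-}D^{\mathrm{b}}_{\mathrm{ovhol}}(Y,\PP/K)$ and $F\text{-}D^{\mathrm{b}}_{\mathrm{ovhol}}(Y_{i(j)},\PP/K)$ inside the ambient $F\text{-}D^{\mathrm{b}}_{\mathrm{ovhol}}(\D^\dag_{\PP,\Q})$; to transport $\iota$-mixedness from the second category to the first you need one more invocation of Lemma~\ref{wj+} (for the c-open immersion $Y_{i(j)}\hookrightarrow Y$), which you should make explicit.
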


\begin{proof}
 Let us show the first claim. By \ref{recallofstrat} (*),
 there exists a smooth stratification $\{Y_i\}_{1\leq i\leq r}$
 of $Y$ such that $\FF'\Vert_{Y_i}$,
 $\FF\Vert_{Y_i}$, $\FF''\Vert_{Y_i}$ are in $F\text{-}D ^{\mathrm{b}}
 _{\mathrm{isoc}}(Y _i, \PP/K)$ for any $i=1,\dots , r$. We conclude
 thanks to Lemma \ref{extensisocmix}. Now, let us show the second
 statement. By Proposition \ref{pullbackstability} and Lemma
 \ref{wj+}, if $\E $ is a $F$-complex in $F\text{-}D ^{\mathrm{b}}
 _{\mathrm{m}}(Y,\PP/K)$ (resp.\
 $F\text{-}D ^{\mathrm{b}} _{\geq w}(Y,\PP/K)$) then so is
 $\R\underline{\Gamma} ^\dag _{Y_i} (\E )= j _{i+} \circ j  _i ^{!}
 (\E)$ where $j _i = (\star, \mr{id},\mr{id},\mr{id}) \colon (Y _i, X
 _i, \PP, \QQ)\to (Y, X, \PP, \QQ)$ is the morphism of frames.
 Conversely, suppose that the $F$-complex $\R \underline{\Gamma} ^\dag
 _{Y _i} (\E )$ is in $F\text{-}D ^{\mathrm{b}} _{\mathrm{m}}(Y,\PP/K)$
  (resp.\ $F\text{-}D ^{\mathrm{b}} _{\geq w}(Y,\PP/K)$)
 for any $i = 1,\dots r$.
 Then the second claim follows since $\E$ possesses the same property by
 the following triangle:
 \begin{equation*}
   \R \underline{\Gamma} ^\dag _{\sqcup _{j=i+1,\dots , r} Y _i} (\E ) \to 
   \R \underline{\Gamma} ^\dag _{\sqcup _{j=i,\dots , r} Y _i} (\E ) 
   \to \R \underline{\Gamma} ^\dag _{Y _i} (\E )
   \xrightarrow{+},
 \end{equation*}
 and by using the first part of the proof.
 By using Lemma \ref{lem-isco-stabwf!+}.\ref{lem-isco-stabwf!+(4)}, the first
 and the second claim implies the third one.
\end{proof}


Now, we show that the notion of $\iota$-mixedness only depends on
couple $(Y,X)$ and not on the auxiliary choices.

\begin{prop}
 \label{defi-mixXY}
 Let $*\in\{\mr{m},\geq w,\leq w\}$.
 \begin{enumerate}
  \item  Let $\mb{Y}$ be a couple and choose an l.p.\ frame $(Y, X, \PP,
	 \QQ)$ of $\mb{Y}$. The category $F\text{-}D
	 ^{\mathrm{b}}_{*}(Y,\PP/K)$ does not depend on the choice of
	 the l.p.\ frame of $\mb{Y}$. We
	 denote by $F\text{-}D^{\mathrm{b}}_*(\mb{Y}/K)$ the
	 corresponding category.

  \item Let $Y$ be a realizable variety, and choose an l.p.\
	frame $(Y, X, \PP, \QQ)$ of $Y$ such that $X$ is
	proper. The category $F\text{-}D^{\mathrm{b}}_*(Y,\PP/K)$ does
	not depend on the choice of the l.p.\ frame. We denote by
	$F\text{-}D ^{\mathrm{b}}_*(Y/K)$ the corresponding
	category.
 \end{enumerate}
\end{prop}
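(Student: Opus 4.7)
The plan is to compare any two choices of l.p.\ frames by introducing a common product-type refinement, via complete morphisms of l.p.\ frames of the special form treated by Lemma \ref{lem-defi-mixXY} (for part (i)) or Lemma \ref{t-gen-coh-PXTindtPsurhol} (for part (ii)), and to verify that the resulting canonical equivalences of overholonomic categories restrict to equivalences of the subcategories $F\text{-}D^{\mr{b}}_{*}$ for $*\in\{\mr{m},\geq w,\leq w\}$.

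For part (i), given two l.p.\ frames $(Y,X,\PP,\QQ)$ and $(Y,X,\PP',\QQ')$ of $\mb{Y}$, I would form the product l.p.\ frame $(Y,X,\PP\times\PP',\QQ\times\QQ')$ with the diagonal closed immersion $X\hookrightarrow P\times P'$, which is closed since each of the two constituent immersions $X\hookrightarrow P^{(\prime)}$ is. The two projections then produce complete morphisms of l.p.\ frames of the form $(\mr{id},\mr{id},g,f)$, which is precisely the shape handled by Lemma \ref{lem-defi-mixXY}; this directly gives that the canonical equivalences between the ambient overholonomic categories identify the subcategories $F\text{-}D^{\mr{b}}_{*}$ in all three cases.

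For part (ii), given $(Y,X,\PP,\QQ)$ and $(Y,X',\PP',\QQ')$ with $X,X'$ proper, I would form the auxiliary l.p.\ frame $(Y,X'',\PP\times\PP',\QQ\times\QQ')$, where $X''$ denotes the closure of $Y$ in $P\times P'$. Since $X\times X'$ is proper, closed in $P\times P'$, and contains $Y$, one has $X''\subset X\times X'$, so $X''$ is itself proper. The two projections give complete morphisms of l.p.\ frames $u$ and $u'$ of the form $(\mr{id},a,g,f)$ with $a$ proper, and by Lemma \ref{t-gen-coh-PXTindtPsurhol} the pair $(u_+,u^!)$ yields quasi-inverse equivalences of overholonomic categories together with the crucial extra isomorphism $u^!\cong u^+$. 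For $*\in\{\mr{m},\geq w\}$, preservation under $u_+$ follows from Lemma \ref{wj+}, and preservation under $u^!$ from Proposition \ref{pullbackstability}, whose proof via smooth stratifications and Lemma \ref{lem-isco-stabwf!+} carries over without change from $\geq w$ to plain $\mr{m}$.

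The crux is the $\leq w$ case, which I would handle as follows. Using $u^!\cong u^+$ together with the transitivity of ordinary pullback, for each closed point $x\in Y$ with canonical morphisms $i_x^{\prime\prime}\colon(\{x\},\{x\})\to(Y,X'')$ and $i_x\colon(\{x\},\{x\})\to(Y,X)$, one has $u\circ i_x^{\prime\prime}=i_x$ at the couple level, whence $i_x^+(\E)\cong i_x^{\prime\prime+}(u^+\E)\cong i_x^{\prime\prime+}(u^!\E)$ for any $\E$ on the target; in particular $\E$ is of weight $\leq w$ iff $u^!\E$ is. Applying the same identity to $\E=u_+\FF$ and using $u^!\circ u_+\cong\mr{id}$ gives $i_x^+(u_+\FF)\cong i_x^{\prime\prime+}(\FF)$, so combined with Lemma \ref{wj+} (for mixedness of $u_+\FF$) one also gets $u_+\FF$ is $\leq w$ iff $\FF$ is. The main obstacle is precisely this $\leq w$ case, since the definition uses the ordinary pullback $i_x^+$ while the general base change of Proposition \ref{basechangeisom} is naturally stated for $i_x^!$; the argument above circumvents this by exploiting the fact that $u$ is the identity on the $Y$-component, which is exactly what makes $u^!\cong u^+$ available from Lemma \ref{t-gen-coh-PXTindtPsurhol}.
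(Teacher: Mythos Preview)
Your proposal is correct, and for part (i) it matches the paper's approach exactly: form a product l.p.\ frame and apply Lemma~\ref{lem-defi-mixXY} to the projections.

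For part (ii), however, you have taken an unnecessarily elaborate route. You appear to have read Lemma~\ref{lem-defi-mixXY} as requiring the morphism to be the identity on $X$ (shape $(\mr{id},\mr{id},g,f)$), which is why you fall back on Lemma~\ref{t-gen-coh-PXTindtPsurhol} together with a hand-built argument for each $*$. But Lemma~\ref{lem-defi-mixXY} is stated for morphisms of the form $(\mr{id},\star,\star,\star)$: only the $Y$-component is required to be the identity, while $X$ is allowed to change. The complete morphism $u=(\mr{id},a,g,f)$ you construct for part (ii) is therefore already covered by that lemma, and the paper's proof simply invokes it after the fiber-product reduction, handling all three values of $*$ at once. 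Your separate treatment of the $\leq w$ case via $u^!\cong u^+$ and the pointwise identities $i_x^+\cong i_x^{\prime\prime+}\circ u^+$ is correct, but it is essentially how Lemma~\ref{lem-defi-mixXY} itself is proved (see its proof, which uses exactly $u^!\riso u^+$ and transitivity of inverse images); so you have reproved a special case of that lemma rather than citing it.
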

\begin{proof}
 Let us only prove the second one, since the first part of the
 proposition can be checked similarly. By using fiber products, we
 reduce to the case where there exists a complete morphism of l.p.\
 frames $u\colon (Y, X', \PP', \QQ')\to (Y, X, \PP, \QQ)$. In this case,
 the proposition is a consequence of Lemma \ref{lem-defi-mixXY}.
\end{proof}


\begin{prop}
 \label{weightotimes}
 Let $\mb{Y}$ be a couple, and $\E ,\E'\in F\text{-}D ^{\mathrm{b}}
 _{\mathrm{ovhol}}(\mb{Y}/K)$. If $\E$ is $\iota$-mixed of weight $\geq
 w$ and $\E'$ is $\iota$-mixed of weight $\geq w'$ then
 $\E\widetilde{\otimes}_{\mb{Y}}\E'$ is $\iota$-mixed of weight $\geq
 w+w'$.
\end{prop}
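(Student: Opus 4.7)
The plan is to reduce the statement to a pointwise verification at closed points of $Y$, exploiting the compatibility (\ref{otimes-comm-u!}) which asserts that $u^{!}$ commutes with $\widetilde{\otimes}$ for every morphism of couples $u$. The argument splits into three steps: (a) checking $\iota$-mixedness of $\E\widetilde{\otimes}_{\mb{Y}}\E'$, (b) reducing the weight bound to a point via $i_y^{!}$, and (c) carrying out the linear-algebra computation at a point.

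For step (a), I appeal to Propositions \ref{lem-2.2.8} and \ref{defi-mixXY} to detect $\iota$-mixedness after restriction to any smooth stratification. Applying fact (*) of \ref{recallofstrat} to a pair of smooth stratifications witnessing the mixedness of $\E$ and of $\E'$ yields a common smooth refinement $\{Y_i\}$ of $Y$ such that both $\E\Vert_{Y_i}$ and $\E'\Vert_{Y_i}$ lie in $F\text{-}D^{\mathrm{b}}_{\mathrm{isoc},\mathrm{m}}(Y_i,\PP/K)$. By (\ref{otimes-comm-u!}) applied to the restriction morphism of frames, $(\E\widetilde{\otimes}\E')\Vert_{Y_i}\cong \E\Vert_{Y_i}\widetilde{\otimes}\E'\Vert_{Y_i}$, so the mixedness question reduces to twisted tensor products of $\iota$-mixed $F$-isocrystal complexes on a smooth base. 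For step (b), Definition \ref{mixedness-ovhol} tells us that $\E\widetilde{\otimes}\E'$ is of weight $\geq w+w'$ if and only if $i_y^{!}(\E\widetilde{\otimes}\E')$ is of weight $\geq w+w'$ at every closed point $y$. By (\ref{otimes-comm-u!}) once more,
\[
i_y^{!}\bigl(\E\widetilde{\otimes}_{\mb{Y}}\E'\bigr)\;\cong\;
i_y^{!}(\E)\,\widetilde{\otimes}_{(\{y\},\{y\})}\,i_y^{!}(\E'),
\]
and each factor is $\iota$-mixed of the required weight by hypothesis.

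For step (c), over the couple $(\{y\},\{y\})$ we may choose the l.p.\ frame $(\{y\},\{y\},\Spf\V,\Spf\V)$, so $\dim\PP=0$ and $\widetilde{\otimes}$ coincides with the ordinary derived tensor product over $K$. Overholonomic $F$-complexes there are bounded complexes of finite-dimensional $\varphi$-$K$-vector spaces. The cohomology of the derived tensor product is computed by a spectral sequence whose $E_2$ terms are sums of tensor products $\mathcal{H}^i(i_y^{!}\E)\otimes_K \mathcal{H}^j(i_y^{!}\E')$; eigenvalues of Frobenius on such a tensor product are products of eigenvalues on the factors, and $|\iota(\alpha\beta)|=|\iota(\alpha)|\cdot|\iota(\beta)|$ yields the additivity of weights. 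Corollary \ref{sub-quot-mixed}.\ref{sub-quot-mixed-iii} then transfers $\iota$-mixedness and the bound $\geq w+w'$ to the abutment. Combining with step (a) applied to the mixed-isocrystal stratum case — where the weight bound on cohomology sheaves is likewise detected via $i_x^{+}$ of Definition \ref{def-mixt} and reduces to the same point computation — gives the proposition.

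The main obstacle I anticipate lies in step (a), namely checking that on each stratum $Y_i$ the twisted tensor product of two $\iota$-mixed objects of $F\text{-}D^{\mathrm{b}}_{\mathrm{isoc}}(Y_i,\PP/K)$ lies again in $F\text{-}D^{\mathrm{b}}_{\mathrm{isoc},\mathrm{m}}$ and not merely in $F\text{-}D^{\mathrm{b}}_{\mathrm{ovhol},\mathrm{m}}$; that the (appropriately shifted) tensor product of overconvergent $F$-isocrystals is again an overconvergent $F$-isocrystal is standard, and mixedness of each cohomology sheaf follows pointwise from the linear-algebra computation of step (c), together with Corollary \ref{sub-quot-mixed}.\ref{sub-quot-mixed-ii}.
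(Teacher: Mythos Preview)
Your proof is correct and follows essentially the same approach as the paper's: reduce via a common smooth stratification and the compatibility (\ref{otimes-comm-u!}) to the $F$-isocrystal case, then verify the weight bound pointwise via $i_y^{!}$, where the linear-algebra computation on $\varphi$-vector spaces gives the additivity of weights. The paper's three-sentence proof is a terse version of exactly this argument (one cosmetic caveat: for a closed point $y$ with $k(y)\neq k$ the $0$-dimensional frame should be $\Spf\V_y$ rather than $\Spf\V$, but this does not affect the reasoning).
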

\begin{proof}
 By paragraph \ref{relativedual} (i), $\widetilde{\otimes}$ commutes
 with $j_+$ where $j$ is a c-immersion. Thus, by d\'{e}vissage, we may
 reduce to the case where $Y$ is smooth and $\E, \E'\in F\text{-}D
 ^{\mathrm{b}}_{\mathrm{isoc}, \mathrm{m}}(Y, \PP/K)$.
 In this isocrystal case, use paragraph \ref{relativedual} (i) again to
 show that the $\iota$-mixedness is preserved and the estimation of
 weights.
\end{proof}

\begin{lem}
 \label{finiteextmixok}
 Let $\V\rightarrow\V'$ be a finite homomorphism of complete discrete
 valuation ring with mixed characteristic $(0,p)$, and  $k\rightarrow
 k'$ be the induced homomorphism of residue fields. Let $K'$ be the
 field of fractions of $\V'$, and we put
 $X:=X\otimes_kk'$, $Y':= Y \otimes_kk'$. Let $\E \in F\text{-}D
 ^{\mathrm{b}}_{\mathrm{ovhol}}(Y,X/K)$ and $\E'$ be the induced object
 of $F\text{-}D^{\mathrm{b}}_{\mathrm{ovhol}} (Y',X'/K')$. Then $\E$
 is $\iota$-mixed (resp.\ of weight $\geq w$, resp.\ of weight $\leq
 w$) if and only if so is $\E'$.
\end{lem}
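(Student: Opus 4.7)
The strategy is to reduce the question, via dévissage, to a pointwise statement about Frobenius eigenvalues at closed points of $Y$, where the effect of the finite base change $\V\to\V'$ on Frobenius eigenvalues is explicit and preserves $\iota$-weights in both directions.

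First I would use Definition \ref{mixedness-ovhol} and Proposition \ref{lem-2.2.8} to reduce to the case where $Y$ is smooth and $\E\in F\text{-}\mathrm{Isoc}^{\dag\dag}(Y,X/K)$. Indeed, $\iota$-mixedness (and the weight bounds $\geq w$ or $\leq w$) is preserved under distinguished triangles and is characterized by restriction to the strata of a smooth stratification of $Y$; the pull-back of such a stratification along $Y'\to Y$ is again a smooth stratification of $Y'$, and base change by $\V\to\V'$ commutes with $\R\underline{\Gamma}^\dag_{Y_i}$ and with the restriction functors $\E\Vert_{Y_i}$. Once reduced to overconvergent $F$-isocrystals, Lemma \ref{lem-isco-stabwf!+}.(1) lets us check everything at closed points.

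Second I would work pointwise. Fix a closed point $x$ of $Y$; its preimage in $Y'$ is a nonempty disjoint union of closed points $x'_1,\dots,x'_r$ with $k(x'_i)\supseteq k(x)\cdot k'$. Tracing through the construction of $i_{x'_i}^+(\E')$ and pushing it forward to $\mathrm{Spec}(k)$ gives a $\varphi$-$K$-vector space $(V'_i,\varphi'_i)$ which is the base change of $(V,\varphi):=f_+\bigl(i_x^+(\E)\bigr)$ along $K\to K'$, followed by the appropriate iteration of the linearized Frobenius corresponding to the degree $[k(x'_i):k(x)]$. Explicitly, the multiset of eigenvalues of $\varphi'_i$ consists of the $[k(x'_i):k(x)]$-th powers of the eigenvalues of $\varphi$. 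Since the $\iota$-weight is defined by $|\iota(\alpha)|=q^{w/2}$ and therefore scales linearly under taking powers, the pointwise purity condition (pure of weight $w$, or $\geq w$, or $\leq w$) transfers between $\E$ and $\E'$ in both directions; the condition on the set of $\iota$-weights is preserved.

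Finally, for mixedness in the non-irreducible case, I would pass through a Jordan--Hölder filtration of $\E$. Base change sends such a filtration to a filtration of $\E'$ whose subquotients are the base changes of the irreducible constituents of $\E$. For the forward direction: if each constituent of $\E$ is $\iota$-pure, the pointwise analysis shows its base change has all its constituents pure (of the same weight, since the eigenvalues transform uniformly), so $\E'$ is $\iota$-mixed. For the reverse direction: if $\E'$ is $\iota$-mixed, then each base-changed constituent $\M'$ is mixed, and the pointwise characterization of purity (applied at every closed point of $Y$, using a closed point of $Y'$ above it) forces the original constituent $\M$ of $\E$ to be $\iota$-pure. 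The main obstacle is the careful bookkeeping in Step 2: one must verify that the base change $\V\to\V'$ commutes with pullback to closed points and with pushforward to $\mathrm{Spec}(k)$, and that the resulting action on Frobenius structures is the one claimed. Once these compatibilities are established, the lemma follows from a single eigenvalue calculation that is symmetric in the two directions, so no additional work is needed to handle ``if'' versus ``only if''.
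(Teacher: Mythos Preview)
The paper does not prove this lemma; its proof reads in full ``We leave the verification to the reader.'' Your d\'evissage to isocrystals on smooth strata followed by a pointwise Frobenius-eigenvalue comparison is the natural strategy and is presumably what the authors intend. It handles the forward direction (``only if'') without difficulty: a filtration of $\E$ with pure graded pieces base-changes to one of $\E'$, and purity is preserved pointwise since eigenvalues transform by taking powers (unramified part) or not at all (totally ramified part).

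For the reverse direction your argument has a genuine gap, and in fact the implication appears to fail for totally ramified extensions. From ``$\M'$ is mixed'' you only get that the constituents of $\M'$ are pure, possibly of \emph{different} weights; pointwise this says the Frobenius eigenvalues of $\M$ have weights in a fixed finite set, which does not force the irreducible $\M$ to be pure. Concretely, take $Y=\Spec(\mathbb{F}_5)$, $K=\mathbb{Q}_5$, $\sigma=\mr{id}$, and let $\E=(K^2,\varphi)$ with $\varphi$ having characteristic polynomial $X^2-3X+1$, which is irreducible over $\mathbb{Q}_5$ since its discriminant is $5$. The eigenvalues $(3\pm\sqrt5)/2$ have distinct $\iota$-weights for any $\iota$, so $\E$ is irreducible but not $\iota$-pure, hence not $\iota$-mixed; yet over $K'=\mathbb{Q}_5(\sqrt5)$ it splits into two pure rank-one pieces, so $\E'$ is $\iota$-mixed. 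The paper's two applications of the lemma (Lemma~\ref{mainlemmaforestcur} and Theorem~\ref{crewhigher}) only require the forward direction together with, respectively, the invariance of eigenvalues under totally ramified base change and the unramified case of the reverse direction (where your eigenvalue-power argument does go through), so the main results are unaffected.
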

\begin{proof}
 We leave the verification to the reader.
\end{proof}

\subsection{Estimation of weights in the curve case}
In this section, we estimate the weights for curves, which is used in
the proof of the main result. The proof is
similar to that of Laumon in \cite{Laumon-transf-Fourier}, by using the
methods developed in \cite{AM}.

\begin{dfn}
 \label{defweigh}
 Let $\mb{Y}=(Y,X)$ be a couple such that $Y$ is smooth, $\E$ be
 an object of $F\text{-}\mr{Isoc}^{\dag}  (\mathbb{Y}/K)$, and
 $i_y\colon(y,y)\rightarrow\mb{Y}$ be the canonical morphism for a
 closed point $y$ of $Y$.
 The object $\E$ is said to be {\em $\iota$-real}
 if for any closed point $y$ of $Y$, the characteristic polynomial
 $\iota\,\det\bigl(1-t\cdot\mr{Frob}_y;i^+_y(\ms{E})\bigr)$,
 which is {\it a priori} in $\mb{C}[t]$, is in $\mb{R}[t]$.
\end{dfn}

\begin{thm}[\cite{crewfini}, Theorem 10.5]
 \label{crewrealpure}
 If an $F$-isocrystal $\E$ on a smooth {\em curve} over $k$ is
 $\iota$-real, then any constituent of $\E$ is $\iota$-pure.
\end{thm}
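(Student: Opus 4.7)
I would follow Deligne's strategy from \emph{La conjecture de Weil II}, adapted to rigid cohomology.

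Assume $Y$ is a smooth curve over $k$. Replacing $\E$ by its semi-simplification (which remains $\iota$-real, since the local characteristic polynomials of Frobenius are unchanged by semi-simplification), write $\E = \bigoplus_i \E_i$ with each $\E_i$ irreducible; the goal is then to show each $\E_i$ is $\iota$-pure. The main tool is the $L$-function
\begin{equation*}
L(Y,\E,t) = \prod_{y \in |Y|}\det\bigl(1-t^{\deg(y)}\mr{Frob}_y;\, i_y^+\E\bigr)^{-1},
\end{equation*}
which by the Lefschetz trace formula for rigid cohomology with compact support equals $\prod_i \det\bigl(1-t\cdot\mr{Frob};\, H^i_{\mr{rig},c}(Y,\E)\bigr)^{(-1)^{i+1}}$. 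Since $\E$ is $\iota$-real, the coefficients of this rational function all lie in $\iota^{-1}(\mb{R})$.

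The decisive analytic step is to pass to even tensor powers: the local traces of Frobenius on $\E^{\otimes 2k}$ are $2k$-th powers of those on $\E$, hence non-negative real numbers after $\iota$. Consequently $\log L(Y,\E^{\otimes 2k},t)$ is a power series with non-negative real coefficients (after $\iota$); by Landau's theorem its radius of convergence equals the modulus of its nearest singularity. Combining this with the cohomological expression and extracting $2k$-th roots as $k\to\infty$ yields a uniform upper bound $|\iota(\alpha)| \leq q^{w/2}$ on the local Frobenius eigenvalues $\alpha$ of $\E$, where $w$ depends only on the cohomological Frobenius weights on $H^1_{\mr{rig},c}(Y,\E^{\otimes 2k})$. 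Poincar\'{e} duality for rigid cohomology (cf.\ \cite{Abe-Frob-Poincare-dual}) applied to $\E$ on a smooth compactification of $Y$, combined with the analogous upper bound for the dual, gives the matching lower bound. Isolating the contribution of each irreducible constituent $\E_i$ to $L(Y,\E^{\otimes 2k},t)$ then forces each $\E_i$ to be $\iota$-pure.

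The main obstacle is controlling the cohomological constant $w$ independently of $k$, which requires bounding the Frobenius slopes on $H^1_{\mr{rig},c}(Y,\E^{\otimes 2k})$ in terms of the behaviour of $\E$ at the missing boundary points. Here the local theory on the formal punctured disk recalled in \S1.4, via Crew's Deligne module formalism and the explicit description of $j_!$ and $j_+$ given in Theorem \ref{compcrewloc}, becomes indispensable, as it provides the needed control of the nearby and vanishing cycle contributions at each missing point. This is precisely the content of \cite[Theorem 10.5]{crewfini}, so we do not carry out the estimates here.
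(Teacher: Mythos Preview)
Your sketch reconstructs the underlying Deligne--Crew argument (tensor powers, positivity of coefficients, Landau's theorem, cohomological interpretation), and as a proof plan this is essentially correct and is indeed what Crew does in \cite[Theorem~10.5]{crewfini}. But this is much more than what the paper actually proves here, and you have missed the one point the paper does make.

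The paper does not reprove Crew's theorem; it simply cites \cite[10.5]{crewfini} and observes that Crew's original statement carried an additional hypothesis of quasi-unipotence on $\E$, which can now be dropped. The reason is that Crew only used quasi-unipotence to ensure the finiteness of rigid cohomology of $\E$ and of all its tensor powers $\E^{\otimes 2k}$ (exactly the finiteness your sketch tacitly relies on when invoking the Lefschetz trace formula and the rationality of the $L$-function). That finiteness is now known for overconvergent $F$-isocrystals without any quasi-unipotence assumption, so Crew's argument goes through verbatim. Your proposal never mentions this hypothesis or its removal, so while your outline of the method is fine, it does not address what the paper's proof is actually about.
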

\begin{proof}
 It suffices to note that we did not assume $\E$ to be quasi-unipotent
 contrary to the statement of \cite[10.5]{crewfini} since the assumption is
 used only to assure the finiteness of $\E$ and $\bigotimes^{2k}\E$ as
 written in {\it ibid.}, which is now known to be true if there is a
 Frobenius structure.
\end{proof}

\begin{empt}
 \label{crewfini}
 Let $\X$ be a smooth formal curve over $\V$, and $x$ be a closed point. For a
 holonomic module $\E$ on $\X$, we denote by $\Psi_x(\E)$ the nearby
 cycle $\Psi(\E|_{S_x})$ using the notation of \cite[2.1.8]{AM}
 (as well as \ref{defofneavancyc}).
 The theorem below follows directly from a result of Crew.
 \begin{thm*}[\cite{crewfini}, Theorem 10.8]
 Let $\mathcal{X}$ be a smooth formal curve over $\V$, $s$ be a closed point of
 $\mathcal{X}$, and we put $\mathcal{U}:=\mathcal{X}\setminus\{s\}$. Let
 $j\colon \mathcal{U}\hookrightarrow\mathcal{X}$. Let $\E$ be an
 overconvergent $F$-isocrystal $\iota$-pure of weight $w$ on
 $\mathcal{U}$.

 (i) Let $\alpha$ (resp.\ $\beta$) be an eigenvalue of the Frobenius
 acting on $\HH^{-1}(i^+j_+\E)$ (resp.\ $\HH^0(i^+j_+\E)$). Then
 we have the following estimations:
 \begin{equation*}
  |\iota(\alpha)|\leq q^{\deg(s)\,(w-1)/2},\qquad
   |\iota(\beta)|\geq q^{\deg(s)\,(w+1)/2}
 \end{equation*}

 (ii) 
 We put $\Psi:=\Psi_s(j_+\E)$. Assume
 that the action of $N$ on $\Psi/\Psi^{I=1,N=0}$ is trivial. Let
 $\alpha$ be the eigenvalue of the Frobenius acting on the latter
 quotient. Then $|\iota(\alpha)|=q^{\deg(s)\,w/2}$.
 \end{thm*}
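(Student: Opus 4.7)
The statement is quoted directly from \cite[Theorem 10.8]{crewfini}; the task is to translate Crew's formulation (in terms of the local cohomology groups $H^i_{\mr{loc}}$ of the analytic differential module and the Deligne module $\Psi(\E)$) into the formulation used here (in terms of $\HH^j(i^+j_+\E)$ and $\Psi_s(j_+\E)$). This translation is essentially the content of Theorem~\ref{compcrewloc}.

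Concretely, Theorem~\ref{compcrewloc}(iii) provides, after tensoring with $K^{\mr{ur}}$, Frobenius-equivariant isomorphisms
\begin{equation*}
\HH^{-1}(i^+j_+\E)\otimes K^{\mr{ur}} \cong H^0_{\mr{loc}}(\E)(1) \cong \Psi^{I=1,N=0},
\end{equation*}
\begin{equation*}
\HH^{0}(i^+j_+\E)\otimes K^{\mr{ur}} \cong H^1_{\mr{loc}}(\E)(1) \cong \Psi^{I=1}(-1)/N\Psi^{I=1},
\end{equation*}
along with $\Psi_s(j_+\E) \cong \Psi(\E)$ as Deligne modules. For part~(i), Crew's theorem bounds the Frobenius eigenvalues on $H^0_{\mr{loc}}(\E)$ from above by $q^{\deg(s)(w+1)/2}$ and on $H^1_{\mr{loc}}(\E)$ from below by $q^{\deg(s)(w-1)/2}$ (the latter via Poincar\'e duality applied to $\E^\vee$, which is $\iota$-pure of weight $-w$). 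The Tate twist by $(1)$ multiplies Frobenius eigenvalues at a point of degree $\deg(s)$ by $q^{-\deg(s)}$, yielding precisely the bounds on $\alpha$ and $\beta$ asserted in~(i).

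For part~(ii), the hypothesis that $N$ vanishes on $\Psi/\Psi^{I=1,N=0}$ corresponds to the tame monodromy situation in which Crew's sharpened statement pins down the unique Frobenius eigenvalue on the relevant quotient: its absolute value equals $q^{\deg(s)(w+1)/2}$ before the Tate twist, hence $q^{\deg(s)w/2}$ after, as asserted. The main obstacle lies not in mathematical depth but in bookkeeping---confirming the Frobenius normalization ($s$-th power versus absolute Frobenius), the Tate-twist convention at points of degree greater than one, and the identification of Crew's Deligne module $\Psi(\E)$ with the nearby-cycle functor $\Psi_s$ of~\cite[2.1.5]{AM}. Once these compatibilities are settled, both assertions follow immediately from~\cite[10.8]{crewfini}.
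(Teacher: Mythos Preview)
Your approach is the paper's: translate Crew's \cite[10.8]{crewfini} into the present language via Theorem~\ref{compcrewloc}(iii) and duality. Two points of bookkeeping, however, need to be fixed rather than merely flagged.

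First, you omit the weight-normalization shift of Remark~\ref{defpurerem}. On a curve, $\E$ being $\iota$-pure of weight $w$ in the paper's $\D$-module sense means the associated differential module $E$ is $\iota$-pure of weight $w+1$ in Crew's sense. The paper invokes this explicitly; without it, the ``$w+1$'' in your asserted bound on $H^0_{\mr{loc}}$ is not justified (Crew's bound is that $H^0_{\mr{loc}}(E)$ has weight $\leq$ the weight of $E$, which is $w+1$ only after the shift).

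Second, for part~(ii) your ``Tate twist'' arithmetic is wrong: a twist by $(1)$ multiplies Frobenius eigenvalues by $q^{-\deg(s)}$, shifting weight by $-2$, so it cannot take you from $q^{\deg(s)(w+1)/2}$ to $q^{\deg(s)w/2}$. The paper's argument is different and cleaner: the hypothesis $N\cdot\bigl(\Psi/\Psi^{I=1,N=0}\bigr)=0$ forces $N^2=0$, so the monodromy filtration is short and the quotient $\Psi/\Psi^{I=1,N=0}$ is identified with $\mr{gr}^M_1(\Psi)$. Crew's theorem gives that $\mr{gr}^M_i(\Psi(-1))$ is pure of weight $w+1+i$, hence $\mr{gr}^M_1(\Psi)$ is pure of weight $w$, which is exactly the assertion. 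You should replace the vague ``sharpened statement'' and spurious Tate twist with this monodromy-filtration identification.
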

 \begin{proof}
  Let $E$ be a differential $F$-module around $s$ associated to
  $\E$. The nilpotent operator $N$ on the vector space
  $\Psi(E)(\cong\Psi)$ induces the monodromy filtration $M$ (cf.\
  \cite[1.6.1]{deligne-weil-II}). By
  Theorem \ref{compcrewloc} (i), $(\Psi(-1),M)$ coincides with
  $(\E_\eta,\mc{M})$ using the notation of \cite[10.6]{crewfini}, thus,
  $\mr{gr}^M_i\Psi(-1)\cong\mr{gr}^{\mc{M}}_i(\E_\eta)$.
  The weight of $E$ is $w+1$ by Remark \ref{defpurerem}, and
  \cite[Theorem 10.8]{crewfini} implies that $\mr{gr}^M_i\Psi(-1)$ is
  purely of weight $w+1+i$.
  Thus, $\Psi^{I=1,N=0}(-1)\subset\Psi^{N=0}(-1)\subset
  M_0(\Psi)(-1)$ is $\iota$-weight $\leq w+1$. We get (i) by applying
  Theorem \ref{compcrewloc} (iii) and duality.
  Under the situation of (ii), $\Psi/\Psi^{I=1,N=0}$ is
  nothing but $\mr{gr}^M_1(\Psi)$, and we get what we wanted.
 \end{proof}
\end{empt}

\begin{empt}
\label{Fourier}
 Let us briefly review the geometric Fourier transform
 \cite{{Noot-Huyghe-fourierI}}, only in the affine space case.

 Fix $\pi\in\overline{\mb{Q}}_p$ such that $\pi^{p-1}=-p$. We assume
 that $\pi\in K$. We denote by $\mc{L}_\pi$ the Artin-Schreier
 isocrystal in $F\text{-}\mr{Isoc}^{\dag\dag}(\mb{A} _k ^1/K)$.
 Since $\mc{L}_\pi$ is a direct factor of $\mr{Art}_*\mc{O}_{\mb{A}
 _k^1}$ (cf.\ \cite[1.5]{Ber-CohomologieRigide-Dwork}),
 where $\mr{Art}\colon\mb{A} _k^1\rightarrow\mb{A} _k^1$ is the
 Artin-Schreier morphism, the weight of $\mc{L}_\pi [1]$ is $0$ since
 the weight is preserved under push-forward by finite \'{e}tale
 morphism.

 Now, consider the following diagram:
 $(\mb{A} _k ^n)'\xleftarrow{p_2}\mb{A} _k  ^{2n}\xrightarrow{p_1}\mb{A}
 _k^n$, where $(\mb{A}_k^n)'$ is the ``dual affine space'', which is
 nothing but $\mb{A}^n_k$.
 We denote by $\delta \colon \mb{A} _k ^n \times (\mb{A} _k ^n)' \to
 \mb{A} _k ^1$ the canonical duality bracket induced by $t\mapsto \sum
 _{i=1} ^{n} x _i y _i$. We put\footnote{There is a typo in the
 definition of $K_\pi$ in \cite[3.1.1]{Noot-Huyghe-fourierI}: the
 shift of $K_\pi$ is not $[2-2N]$ but $[2N-2]$.} $\mc{K}_\pi:=
 \delta ^{!} (\mc{L}_\pi [-1])$. For any $\E\in
 F\text{-}D^{\mr{b}}_{\mathrm{ovhol}}(\mb{A} _k^n)$, the geometric
 Fourier transform $\ms{F}_\pi(\E)$ is defined to
 be $p_{2+}\bigl(p_1^!\E \widetilde{\otimes}_{\mb{A}^{2n}}
 \mc{K}_\pi\bigr)$ (cf.\
 \cite[3.2.1]{Noot-Huyghe-fourierI}\footnote{Notice that our twisted
 tensor product and hers are the same.}).
 Important properties for us are that:
 \begin{enumerate}
  \item \label{Fourier1} if $\E$ is a $F$-module, $\ms{F} _{\pi}(\E)[2-n]$
	is a $F$-module as well (cf.\ \cite[Theorem
	5.3.1]{Noot-Huyghe-fourierI}).

  \item \label{Fourier2}
	$\ms{F}'_{\pi}\circ\ms{F}_{\pi}(\E)\,[4-2n]\cong\E(n)$ where
	$\ms{F}' _{\pi}$ denotes the geometric Fourier transform for
	dual affine space (cf.\ \cite[Remark 3.2.8]{AM}).
 \end{enumerate}
\end{empt}

\begin{empt}
 In this subsection, we use the Fourier transform for $n=1$.
 Let $\mb{A}:=\mb{A}^1$. As in \cite[1.4.2]{Laumon-transf-Fourier},
 consider the following three types of {\em irreducible} overholonomic
 $F$-modules $\E$ on $\mb{A}$:
 \begin{itemize}
 \item[(T1)] Let $s$ be a closed point of $\mb{A}$, and
	     $i_s\colon s\hookrightarrow\mb{A}$. Then
	     $\ms{E}\cong i_{s+}V$ where $V$ is an irreducible $K$-vector
	     space with Frobenius.

 \item[(T2)] There exists a simple overholonomic $F$-module $\E'$ on
	     $\mb{A}'$ of type (T1) such that
	     $\E\cong\ms{F}_\pi(\E')$.

 \item[(T3)] We have $\E\cong j_{!+}\FF$ where
	     $j\colon\mb{A}\setminus S\hookrightarrow\mb{A}$ with $S$
	     a finite subset of $\mb{A}$ and $\FF$ is an irreducible
	     $F$-isocrystal which is not of type (T2).
 \end{itemize}
 These $F$-modules are irreducible, since Fourier transform is
 involutive by \ref{Fourier}.\ref{Fourier2} and by Proposition
 \ref{BorelVII.10.5}. Moreover, irreducible objects can be classified in
 the above three types by Theorem \ref{intemedstructhem} (i).
 By definition, $F$-modules of type (T1) are
 transformed under Fourier transform to (T2) and (T2) to (T1). Thus
 $F$-modules of type (T3) are transformed under Fourier transform to
 (T3).

 An $F$-module $\E$ is said to be {\em geometrically constant} if there
 exists an isomorphism $\E\cong\mc{O}_{\mb{A}}^{\oplus r}$
 for some $r$ as $\D$-modules {\em without Frobenius structure}.
 We may check that $\FF_{\pi}(i_{0+}(V))$ is geometrically
 constant. Moreover, for $s\neq0$, $\FF_{\pi}(i_{s+}(V))$ is ramified at
 infinity. In other words, an $F$-isocrystal $\E$ on $\mb{A}$ which is
 not geometrically constant and
 unramified at $\infty$ (i.e. is $\O$-coherent around $\infty$) is of
 type (T3).
\end{empt}

\begin{lem}
 \label{mainlemmaforestcur}
 Let $p\colon \mb{A}:=\mb{A}^1\rightarrow\mr{Spec}(k)$ be the structural
 morphism, $S$ be a set of closed points in $\mb{A}$, and
 $j\colon U:=\mb{A}\setminus S\hookrightarrow\mb{A}$ be the
 canonical inclusion. Let $\ms{E}$ be an overconvergent $F$-isocrystal
 purely of $\iota$-weight $w$ on $U$. We assume moreover that
 $\ms{E}$ is irreducible, unramified at $\infty$, and not geometrically
 constant. Then for any eigenvalue $\alpha$ of
 the Frobenius acting on $H^0(p_+j_+\ms{E})$, we have the
 estimation $|\iota(\alpha)|\geq q^{w/2}$.
\end{lem}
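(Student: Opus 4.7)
The strategy is a $p$-adic adaptation of Laumon's proof of \cite[Thm.~3.2.3]{Laumon-transf-Fourier} using the geometric Fourier transform $\ms{F}_\pi$ recalled in \ref{Fourier}. Set $\ms{G}:=\ms{F}_\pi(j_+\ms{E})$ on $\mb{A}'$, and let $i_0\colon(\{0\},\{0\})\to\mb{A}'$ denote the inclusion of the origin. The plan is to identify $H^0(p_+j_+\ms{E})$ with $\mathcal{H}^0(i_0^+\ms{G})$, up to a fixed shift and Tate twist, and then to estimate the latter via Theorem \ref{crewfini}.

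For the identification, I would apply the base change theorem \ref{basechangeisom} to the cartesian square obtained by pulling $p_2\colon\mb{A}\times\mb{A}'\to\mb{A}'$ back along $i_0$. Observing that the closed immersion $\kappa\colon\mb{A}=\mb{A}\times\{0\}\hookrightarrow\mb{A}\times\mb{A}'$ satisfies $p_1\circ\kappa=\mathrm{id}_\mb{A}$, and that $\kappa^+\mc{K}_\pi$ is (up to a definite shift) the constant isocrystal $\O_{\mb{A},\Q}$---the Artin--Schreier isocrystal $\mc{L}_\pi$ being trivial at $0\in\mb{A}^1$---one obtains a canonical isomorphism
\begin{equation*}
  i_0^+\ms{G}\;\cong\; p_+\,j_+\ms{E}
\end{equation*}
up to a fixed Tate twist. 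Both complexes live in cohomological degrees $[-1,0]$, so matching them in top degree realizes the Frobenius action on $H^0(p_+j_+\ms{E})$, up to twist, as that on $\mathcal{H}^0(i_0^+\ms{G})$.

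Next, I would describe $\ms{G}$ near $0$. By Proposition \ref{BorelVII.10.5}, $j_{!+}\ms{E}$ is an irreducible $F$-module on $\mb{A}$; under the hypotheses it is of type (T3), since it has full support (so it is not of type (T1)) and it is not of type (T2) (type (T2) modules on $\mb{A}$ are Artin--Schreier twists of the constant isocrystal, hence either geometrically constant---ruled out by hypothesis---or ramified at $\infty$---also ruled out). Because $\ms{F}_\pi$ exchanges (T1)$\leftrightarrow$(T2) and preserves (T3), $\ms{F}_\pi(j_{!+}\ms{E})$ has the form $j'_{!+}\mc{E}'$ for an irreducible overconvergent $F$-isocrystal $\mc{E}'$ on an open $V'\subset\mb{A}'$. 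The cokernel of $j_{!+}\ms{E}\hookrightarrow j_+\ms{E}$ is a skyscraper on $S$, of type (T1), whose Fourier transform is a direct sum of (T2) Artin--Schreier modules, smooth on all of $\mb{A}'$; these therefore contribute only to $\mathcal{H}^{-1}(i_0^+(-))$, never to $\mathcal{H}^0$, and so do not enter the eigenvalue estimate.

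Finally, I would apply Theorem \ref{crewfini}(i) to $\mc{E}'$ at $s=0$: once the weight of $\mc{E}'$ is identified (as a consequence of the preservation of $\iota$-purity under $\ms{F}_\pi$ with an explicit Tate twist---the $p$-adic stationary phase principle), every Frobenius eigenvalue $\beta$ on $\mathcal{H}^0(i_0^+j'_+\mc{E}')$ satisfies the required local lower bound, which after the twist of Step~1 translates into $|\iota(\alpha)|\geq q^{w/2}$ for every Frobenius eigenvalue on $H^0(p_+j_+\ms{E})$; if $0\in V'$ then $\mathcal{H}^0(i_0^+j'_+\mc{E}')=0$ and the statement is vacuous. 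The principal technical obstacle will be the preservation of $\iota$-purity under $\ms{F}_\pi$ with the correct Tate twist, together with the careful bookkeeping of shifts and twists across the Fourier isomorphism of Step~1; granted these, Crew's local estimate closes the argument.
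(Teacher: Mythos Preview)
Your overall strategy---Fourier transform plus Crew's local estimate at the origin of $\mb{A}'$---is the paper's strategy. The paper sets it up a little differently: it dualizes first, reducing the lower bound on $H^0(p_+j_+\ms{E})$ to an upper bound on $\HH^0(p_!j_!\ms{E})$, then passes to $\HH^0(p_!j_{!+}\ms{E})$ via Theorem~\ref{crewfini}(i), and identifies this with $\HH^{-1}\bigl(i_{0'}^+\,j'_{!+}\ms{E}'(-2)\bigr)$ through the nearby-cycle exact sequence from \cite{AM} and Theorem~\ref{compcrewloc}(iii). Your direct route through $j_+$ and $\HH^0(i_0^+-)$ is the dual picture, and your handling of the (T2) contribution from the cokernel of $j_{!+}\ms{E}\hookrightarrow j_+\ms{E}$ is correct.

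The genuine gap is exactly the point you flag as the ``principal technical obstacle'': the $\iota$-purity of $\ms{E}'$. You suggest this follows from ``preservation of $\iota$-purity under $\ms{F}_\pi$'' via stationary phase, but no such preservation result is available at this point---the stability of weights under the six operations (Theorem~\ref{sumupWeilII}) is precisely what this lemma is a stepping stone towards, so invoking it would be circular, and the stationary phase input from \cite{AM} only locates the singularity at $0'$, it says nothing about purity. The paper resolves this by a completely different mechanism, namely Laumon's $\iota$-real trick: following \cite[(4.4)]{Laumon-transf-Fourier}, one constructs an $\iota$-real overconvergent $F$-isocrystal $\ms{G}'$ on $\mb{A}'\setminus\{0'\}$ containing $\ms{E}'$ as a constituent, and then Crew's Theorem~\ref{crewrealpure} forces $\ms{E}'$ to be $\iota$-pure. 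Even then the weight $w'$ is not read off from the Fourier kernel; it is determined by comparing, through the $\Psi_{0'}$ exact sequence and Theorem~\ref{crewfini}(ii), with the fiber $\ms{E}|_{s_\infty}$, whose weight is pinned down by applying Theorem~\ref{crewfini}(i) to both $\ms{E}$ and its dual (this is where the hypothesis that $\ms{E}$ is unramified at $\infty$ is used). This yields $w'=w-1$, after which Theorem~\ref{crewfini}(i) applied to $\ms{E}'$ closes the argument. Without the $\iota$-real step your outline does not go through.
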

\begin{proof}
 Since we may change $K$ by its totally ramified extension by Lemma
 \ref{finiteextmixok}, we may assume and fix a root $\pi$ of the
 equation $x^{p-1}+p=0$ in $K$. By \cite[Proposition 4.1.6 (ii)]{AM},
 $0'\in\mb{A}'$ is the only
 singularity of $\mc{F}_{\pi}(\ms{E})$. By assumption, $\ms{E}$ is a
 simple $F$-module of type (T3). This is showing that $\ms{F}_\pi(\ms{E})$
 is of type (T3) as well, and thus there exists an irreducible
 overconvergent $F$-isocrystal $\ms{E}'$ on $\mb{A}'\setminus\{0'\}$
 such that $\ms{F}_\pi(j_{!+}\ms{E}[1])\cong j'_{!+}(\ms{E}')$ where
 $j'\colon\mb{A}'\setminus\{0'\}\hookrightarrow\mb{A}'$.
 By duality, it suffices to show that any eigenvalue of
 $\HH^0(p_!j_!\ms{E})$ is of weight $\leq w$, and by Theorem
 \ref{crewfini} (i), we are reduced to showing that
 $\HH^0(p_!j_{!+}\ms{E})$ is of weight $\leq w$.

 In this proof, for the notation being compatible with \cite{AM}, we
 introduce the following: for a smooth curve
 $q\colon X\rightarrow\mr{Spec}(k')$ where $k'$ is a field over $k$ and
 a overholonomic $F$-module $\FF$ on $X$,
 we denote $\HH^{i}(q_!\FF)$ by $H^{i+1}_{\mr{rig},c}(X,\FF)(1)$. Using
 this notation, we need to show that
 $H^1_{\mr{rig},c}(\mb{A},j_{!+}\ms{E})(1)$ is of weight $\leq w$. By
 the same argument as \cite[6.2.8]{AM} (here we need the assumption that this is unramified at $\infty$), we have an exact sequence of
 Deligne modules
 \begin{equation*}
  0\rightarrow H^1_{\mr{rig},c}(\mb{A}_{\overline{k}},j_{!+}\ms{E})
   \rightarrow\Psi_{0'}((j_{!+}\ms{E})^\wedge|_{S_{0'}})(-2)\rightarrow
   (K^{\mr{ur}}\otimes_K\ms{E}|_{s_\infty})(-1)\rightarrow
   \underbrace{H^2_{\mr{rig},c}(\mb{A}_{\overline{k}},j_{!+}\ms{E})}
   _{=0}\rightarrow0.
 \end{equation*}
 Moreover, we have
 \begin{align}
  \label{isomofmon}
  \tag{($\star$)}
  \bigl(\Psi_{0'}((j_{!+}\ms{E})^\wedge)(-2)\bigr)^{I=1,N=0}\cong
   \HH^{-1}\bigl(i_{0'}^+\,j'_{!+}(\ms{E}'(-2))\bigr)
  \cong \HH^0\bigl(p_!\,j_{!+}\ms{E}(-1)\bigr)
  \notag
  \cong H^1_{\mr{rig},c}(\mb{A}_{\overline{k}},j_{!+}\ms{E})
 \end{align}
 where $I$ denotes the inertia group at $0'$. The first isomorphism is
 deduced by Theorem \ref{compcrewloc} (iii).

 Now, {\em assume $\ms{E}'$ is $\iota$-pure of weight $w'\in\mb{R}$}. Then
 Theorem \ref{crewfini} (ii) can be applied to get
 \begin{equation*}
  \Psi_{0'}(\ms{E}')(-2)/\bigl(\Psi_{0'}(\ms{E}')(-2)\bigr)^{I=1,N=0}
   \cong
   \bigl(K^{\mr{ur}}\otimes_K\ms{E}|_{s_\infty}\bigr)(-1)
 \end{equation*}
 is $\iota$-pure of weight $w'+4$. Applying Theorem
 \ref{crewfini} (i) to $\ms{E}$ and $\mb{D}(\ms{E})$,
 $(K^{\mr{ur}}\otimes_K\ms{E}|_{s_\infty})$ is $\iota$-pure of
 $\iota$-weight $w+1$, which implies that $w'=w-1$. Applying Theorem
 \ref{crewfini} (i) to the isomorphisms \ref{isomofmon}, we get the
 desired upper bound.

 It remains to show that $\mathcal{E}'$ is $\iota$-pure. Using Theorem
 \ref{crewrealpure}, it suffices to find an $\iota$-real overconvergent
 $F$-isocrystal $\ms{G}'$ on $\mb{A}'\setminus\{0'\}$ of
 which $\ms{E}'$ is a constituent. For this, we just follow exactly the
 same line as the last half part of \cite[(4.4)]{Laumon-transf-Fourier},
 and we omit the detail here.
\end{proof}

\begin{thm}
\label{thm-f+j!+pure}
 Let $f\colon X\rightarrow\mr{Spec}(k)$ be a smooth curve, and
 $\E$ be an object of $F\text{-}D^{\mr{b}}_{\geq w}(X/K)$. Then $f_+\E$
 is of weight $\geq w$.
\end{thm}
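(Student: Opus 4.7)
The plan is to d\'{e}visser $\E$ to an irreducible object in $F\text{-}\mathrm{Ovhol}(X/K)$ and then to reduce, via compactification and a finite map to $\mathbb{P}^1$, to the setting of Lemma \ref{mainlemmaforestcur}. Using Proposition \ref{lem-2.2.8} on the truncation triangles of the canonical $t$-structure together with the noetherianness of $F\text{-}\mathrm{Ovhol}(X/K)$ (paragraph \ref{Frobformtstr}), it suffices to treat the case where $\E$ is irreducible and hence $\iota$-pure of some weight $w_0 \geq w$. If $\mathrm{supp}\,\E$ is zero-dimensional, then $\E \cong i_{s+}V$ for a closed point $s\in X$ and an irreducible $F$-vector space $V$ of weight $w_0$, so $f_+\E$ trivially has weight $\geq w_0$. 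Otherwise $\mathrm{supp}\,\E$ is one-dimensional, and by Theorem \ref{intemedstructhem} we may write $\E \cong u_{!+}\G$, where $u\colon U \hookrightarrow X$ is an affine open immersion into the smooth locus of the support and $\G \in F\text{-}\mathrm{Isoc}^{\dag\dag}(U/K)$ is irreducible and $\iota$-pure of weight $w_0$.

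Since $u$ is affine, $u_!$ is $t$-exact by Proposition \ref{exactnessforsomemorph}, giving a short exact sequence $0 \to \mathcal{K} \to u_!\G \to \E \to 0$ in $F\text{-}\mathrm{Ovhol}(X/K)$ with $\mathcal{K}$ supported on the finite set $X\setminus U$ (Remark \ref{dagTtheta}). Applying $f_+$ and chasing weights in the resulting long exact sequence via Corollary \ref{sub-quot-mixed}, it suffices to bound $g_!\G := f_+u_!\G$ (with $g := f\circ u$) by weight $\geq w_0$. Compactifying $U$ to a smooth proper curve $\overline{U}$ with $\overline{u}\colon U\hookrightarrow \overline{U}$ and $\overline{g}\colon\overline{U}\to\mathrm{Spec}(k)$, the analogous short exact sequence on $\overline{U}$ (whose boundary term is again finitely supported) further reduces the problem to bounding $\overline{g}_+(\overline{u}_{!+}\G)$. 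Now choose a non-constant (necessarily finite flat) morphism $h\colon\overline{U}\to\mathbb{P}^1$, factor $\overline{g}=q\circ h$ with $q\colon \mathbb{P}^1\to\mathrm{Spec}(k)$, and set $\FF := h_+(\overline{u}_{!+}\G)$. Outside a finite set $T\subset\mathbb{P}^1$ containing the branch locus of $h$ and the image of $\overline{U}\setminus U$, the restriction $\FF|_{\mathbb{P}^1\setminus T}$ is a pure $F$-isocrystal of weight $w_0$. Arranging coordinates so that $\infty\in T$ and using localization for $T$ (the $T$-supported piece being handled as in the zero-dimensional case), we reduce to bounding $p_+ j_+ \G'$ where $p\colon\mathbb{A}^1\to\mathrm{Spec}(k)$, $j\colon\mathbb{A}^1\setminus S'\hookrightarrow\mathbb{A}^1$ with $S':=T\setminus\{\infty\}$, and $\G'$ is a pure $F$-isocrystal of weight $w_0$ on $\mathbb{A}^1\setminus S'$.

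Decomposing $\G'$ into irreducible summands, the geometrically constant summands are controlled by the explicit rigid cohomology of $\mathbb{A}^1\setminus S'$ with constant coefficients (whose weights are $0$ and $2$), while Lemma \ref{mainlemmaforestcur} gives the bound on $H^0(p_+j_+(-))$ for each non-geometrically constant summand that is unramified at $\infty$. The main obstacle is arranging this unramified-at-$\infty$ hypothesis for summands that are a priori ramified there; this will be dealt with by a suitable finite \'{e}tale base change on $\mathbb{A}^1$, using Lemma \ref{finiteextmixok} to enlarge $k$ if necessary, after which the summand becomes unramified at $\infty$, and then transferring the weight estimate back via the compatibility of finite pushforward with weights (Lemma \ref{lem-isco-stabwf!+}). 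Finally, the $H^{-1}$ contribution of $p_+j_+\G'$ is bounded using Theorem \ref{compcrewloc}: for non-geometrically constant irreducible summands, the invariant subspace $\Psi^{I=1,N=0}$ vanishes, so $H^{-1}$ vanishes; for geometrically constant summands it is controlled by the same explicit rigid cohomology computation and the weight of the underlying $F$-vector space.
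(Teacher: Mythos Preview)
Your overall architecture is close to the paper's, but there are two genuine gaps.

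First, the reduction ``it suffices to bound $f_+u_!\G$ by weight $\geq w_0$'' does not follow from the short exact sequence $0\to\mathcal{K}\to u_!\G\to\E\to 0$. Since $\mathcal{K}$ is a module supported on a finite set, $f_+\mathcal{K}$ is concentrated in degree~$0$, and the long exact sequence gives
\[
0\to H^{-1}(f_+u_!\G)\to H^{-1}(f_+\E)\to H^{0}(f_+\mathcal{K})\to\cdots,
\]
so $H^{-1}(f_+\E)$ picks up a subquotient of $H^0(f_+\mathcal{K})$ whose weight you have no control over: at this stage of the paper neither $u_!\G$ nor $\mathcal{K}$ is even known to be $\iota$-mixed (that is Theorem~\ref{3.3.1WeilII}(ii), proved later). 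The paper sidesteps this by never passing through $u_!$ or $u_{!+}$. It uses the localization triangle
$\R\underline{\Gamma}^\dag_Z(\E)\to\E\to j_+j^!\E\xrightarrow{+}$
directly on the original $\E$; by Proposition~\ref{lem-2.2.8} the first term is automatically of weight $\geq w$, and since $Z$ is $0$-dimensional its push-forward is trivially controlled. This lets one shrink $X$ freely while staying inside $D^{\mathrm{b}}_{\geq w}$.

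Second, and more seriously, your arrangement ``$\infty\in T$'' is exactly backwards, and the proposed repair cannot work. If $\infty$ lies in the bad locus $T$, then the irreducible summands of $\G'$ may be ramified at $\infty$, and a finite \'etale cover of $\mathbb{A}^1$ (or a base-field extension via Lemma~\ref{finiteextmixok}) does nothing to the local monodromy at $\infty$: an \'etale cover leaves ramification at $\infty$ unchanged, and a cover that is ramified at $\infty$ is no longer \'etale on $\mathbb{A}^1$. The paper's key move is the opposite one: after shrinking $X$ to admit a finite \'etale map to an open of $\mathbb{P}^1$ and extending $k$, one arranges that $\infty\in X$, i.e.\ that $\infty$ lies in the \emph{good} locus where $\E$ is already an isocrystal. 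Then the restriction of $\E$ to $X\cap\mathbb{A}^1=\mathbb{A}^1\setminus S$ extends across $\infty$ by construction, so it is unramified there and Lemma~\ref{mainlemmaforestcur} applies directly to each non-geometrically-constant irreducible constituent.
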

\begin{proof}
 By d\'{e}vissage, we may assume $\E$ to be an overconvergent
 $F$-isocrystal on $X$.
 Let $Z$ be a closed subscheme of $X$. By Proposition \ref{lem-2.2.8},
 $\mb{R}\underline{\Gamma}^\dag_Z(\E)$ is $\iota$-mixed of weight $\geq
 w$. Thus by considering the localization triangle, we may replace $X$
 by $X\setminus Z$, and shrink $X$. Since, by shrinking $X$, there
 exists a non-constant morphism $g\colon
 X\xrightarrow{g}X'\subset\mb{P}^1_k$ such that $g$ is finite \'{e}tale,
 we may assume that $X\subset\mb{P}^1$. Since it
 suffices to show the theorem after taking a finite extension of $k$, we
 may assume that $\infty\in\mb{P}^1$ is contained in $X$. Let
 $\widetilde{\E}$ be the push-forward of $\E$ to $\mb{P}^1$.
 We put $j\colon\mb{A}^1\hookrightarrow\mb{P}^1$ and
 $i\colon\{\infty\}\hookrightarrow\mb{P}^1$. Consider the following
 exact sequence:
 \begin{equation*}
  \mb{R}\underline{\Gamma}^\dag_{\{\infty\}}(\E)
   \rightarrow
  \widetilde{\E}\rightarrow
  j_+(\widetilde{\E}\Vert_{\mb{A}^1})\xrightarrow{+1}.
 \end{equation*}
 Since $\mb{R}\underline{\Gamma}^\dag_{\{\infty\}}(\E)$ is $\iota$-mixed
 of weight $\geq w$, it is reduced to showing that
 $p_+(\widetilde{\E}\Vert_{\mb{A}^1})$ is of weight $\geq w$ where
 $p\colon\mb{A}^1\rightarrow\mr{Spec}(k)$ is the structural
 morphism. We may assume $\E$ to be irreducible. When
 $\E$ is not geometrically constant, then the theorem follows by Lemma
 \ref{mainlemmaforestcur}. Otherwise, the verification is easy.
\end{proof}

\begin{rem*}
 In fact, this theorem is not completely new, and by Remark
 \ref{defpurerem}, it follows directly from the main theorem of
 \cite{kedlaya-weilII}. However, the proof is independent.
\end{rem*}

\section{Monodromy filtration of a convergent log-isocrystal}
Throughout this section, we consider situation (B) in Notation and
convention. Before starting the section, let us fix notation.

\subsection{Notation}
\label{chap3}
Until \S\ref{subsectionFrob3.5}, we consider the following situation unless
otherwise stated.

\begin{empt}
\label{3.1.1}
Let $\phi \colon \X \to \widehat{\mathbb{A}} ^d _\V$ be an \'etale morphism 
of affine formal $\V$-schemes 
and let $t _1,\dots , t _d$ be the corresponding local coordinates, i.e. 
the image of the coordinates $x _1,\dots, x _d$ 
under the homomorphism of complete $\V$-algebras
$\V \{©¢x _1,\dots, x _d \}
\to 
\Gamma (\X ,\O _\X)$
given by $\phi$.
For any $i= 1,\dots ,d$, we put $\ZZ _i
 = V (t _i)$. 
 We denote by  $A := \Gamma (\X ,\O _\X)$, $B := A / t _1 A$, so $\ZZ _1 : = \Spf B$. 
 Fix $1\leq r \leq d$, and we denote by $\mathfrak{D}$ the strict
 normal crossing divisor of $\X$ whose irreducible components are that of $\ZZ
 _2\cup  \dots\cup  \ZZ _r$ and $\mathfrak{D}=\emptyset$ if $r=1$. Put $\ZZ
 := \ZZ _1 \cup \mathfrak{D}$.  We get a strict normal crossing divisor
 of $\ZZ _1$ defined by $\mathfrak{D} _1 := \bigcup _{i=2} ^{r} \ZZ _1
 \cap \ZZ _i$. We put $\Y := \X \setminus \ZZ$, and let $j\colon  \Y
 \subset \X$ be the open immersion, $u\colon  (\X, \ZZ) \to \X$, $u _1
 \colon (\X, \ZZ) \to (\X, \mathfrak{D})$,
 $v\colon  (\X, \mathfrak{D}) \to \X$, $i_1\colon \ZZ_1\hookrightarrow\X$,
 $i _1 ^{\#}\colon (\ZZ _1, \mathfrak{D} _1) \to (\X, \mathfrak{D})$,
 $w _1\colon  (\ZZ _1, \mathfrak{D} _1) \to \ZZ _1$ be the canonical
 morphisms of {\em log schemes}. We sometimes denote
 $\X ^\flat:=  (\X, \ZZ) $, $\X ^{\#}:=(\X, \mathfrak{D})$, 
 $\ZZ _1 ^{\#}:= (\ZZ _1, \mathfrak{D} _1)$ for simplicity.
\end{empt}

\begin{empt}
\label{paragraph3.1.2}
Let $\overline{t} _2, \dots, \overline{t} _d$ be the global sections of $\O _{\ZZ _1}$
induced by $t _2, \dots, t _d$ and let 
$\phi _1\colon \ZZ _1 \to \widehat{\mb{A}}^{d-1} _\V$ be the \'etale morphism induced by the local 
coordinates $\overline{t} _2, \dots, \overline{t} _d$.
The homomorphism of $\V$-algebras
$\V \{©¢x _1,\dots, x _d \}
\to 
B \{ x _1 \}$
given by $x _1 \mapsto x _1$ and 
$x _i \mapsto \overline{t} _i$ for $i \geq 2$
induces the morphism of smooth formal $\V$-schemes
$id \times \phi _1\colon 
\widehat{\mathbb{A}} ^1 _\V  \times \ZZ _1 \to \widehat{\mathbb{A}} ^d _\V$.
The homomorphism of $B$-algebras
$B  \{ x _1 \}
\to 
B$
given by $x _1 \mapsto 0$ gives a closed immersion of 
smooth formal $\V$-schemes denoted by 
$i ''_1\colon 
\ZZ _1 \hookrightarrow \widehat{\mathbb{A}} ^1 _\V \times \ZZ _1$.
By putting
 $\X':=(\widehat{\mathbb{A}} ^1  \times \ZZ _1)\times_{\widehat{\mb{A}}^n}\X$,
 we get the closed immersion 
 $i' _1 \colon \ZZ _1 \hookrightarrow 
 \X '$ making commutative the diagram
  \begin{equation}
  \label{diagphiprime}
 \xymatrix @R=0,3cm {
 {\ZZ _1}
 \ar@/^0,25cm/[rr] ^-{i'' _1}
 \ar@{.>} [r] _-{i' _1}
 \ar[rd] _-{i _1}
 &
  {\X'}
  \ar[d] ^-{f}
  \ar[r] _-{\phi '} 
  \ar@{}[rd] ^-{}|\square
  &
  {\widehat{\mathbb{A}} ^1  \times \ZZ _1}
  \ar[d] ^-{id \times \phi _1}
    \\
  &
  \X
 \ar[r]
   \ar[r] _-{\phi} 
  &
  \widehat{\mb{A}}^d}
 \end{equation}
 where the square is cartesian with etale morphisms and
 where $f$ and $\phi'$ are the canonical projections. 
  Putting $\ZZ ' _1:= f ^{-1} \ZZ _1= \ZZ _1 \times _{\X}  \X'$,
 we get a section $( id, i ' _1) \colon \ZZ _1 \hookrightarrow \ZZ' _1$
 of the etale projection $ \ZZ ' _1\to \ZZ _1$. 
  Hence, by using SGA1 Exp.\ I Corollary 5.3. , 
  by shrinking $\X'$ but not $\X$, we
 may assume that $\mc{Z}_1\xrightarrow{\sim}\mc{Z}_1\times_{\X}\X'$.
 Let $g_1\colon\X'\rightarrow\mc{Z}_1$ be the canonical morphism, 
 i.e. the composition of $\phi'$ and the projection 
 $\widehat{\mathbb{A}} ^1  \times \ZZ _1 \to \ZZ _1$.
 By construction, $i '_1$ is  a section of $g _1$. 
 
  We put 
 $\mathfrak{D}':= f ^{-1}(\mathfrak{D})$, 
 $\ZZ':= f ^{-1}(\ZZ)$,
 $\X ^{\prime \flat}:=  (\X', \ZZ')$, 
 $\X ^{\prime \#}:=(\X', \mathfrak{D}')$.
    We denote by
  $f ^\# \colon \X ^{\prime \#} \to \X ^{ \#}$,
  $f ^\flat \colon \X ^{\prime \flat} \to \X ^{ \flat}$,
  $u ' _1 \colon \X ^{\prime \flat} \to \X ^{\prime \#} $,
 $g _1 ^\flat \colon \X ^{\prime \flat} \to \ZZ _1 ^{\#}$,
 $g _1 ^{\#}\colon \X ^{\prime \#} \to \ZZ _1 ^{\#}$, 
 $i _1 ^{\prime \#}\colon  \ZZ _1 ^{\#} \to \X ^{\prime \#} $,
 $v '\colon \X ^{\prime \#} \to \X '$ 
 the associated morphisms.
 We summarize the  notation and the above properties in the following commutative diagrams:
 \begin{equation}
  \label{suretsec}
   \xymatrix @R=0,4cm{
   {\ZZ _1} 
   \ar[r] _-{i _1 '}\ar@{=}[d] ^-{}\ar@{}[dr]|\square&
   {\X '} 
   \ar@/_0.5pc/[l] _-{g _1 }\ar[d] ^-{f}\ar@{}[dr]|\square&
   {\X ^{\prime \sharp}}
   \ar[l] ^-{v '}\ar[d] ^-{f ^{\sharp}}\ar@{}[dr]|\square
   \ar@/^0.5pc/[r] ^-{g _1 ^{ \sharp}}&
   {\ZZ _1 ^{\sharp}}
   \ar[l] ^-{i _1 ^{\prime \sharp}}\ar@{=}[d]
   \\ 
  {\ZZ _1}\ar[r] ^-{i _1}& 
   {\X}&
   {\X ^{\sharp}}\ar[l] ^-{v }&
   {\ZZ _1^{\sharp},}\ar[l] _-{i _1 ^{\sharp}}
   \ar@/^0,3cm/[lll] ^-{w _1}
   }\qquad
   \xymatrix @R=0,4cm{
   {\X ^{\prime \sharp}}
   \ar[d] _-{f ^{\sharp}}\ar@{}[dr]|\square&
   {\X ^{\prime \flat}}
   \ar[l] _-{u _1 '}\ar[d] ^-{f ^{\flat}}\\ 
  {\X ^{\sharp}}&
   {\X ^{\flat}}\ar[l] ^-{u _1 }
      }
 \end{equation}
 where squares (including the below ``square'' containing $w _1$) are cartesian, where the commutativity above means that
 $i ' _1$ is a section of $g _1$ 
 and 
 $i ^{\prime \#}_1$ is a section of $g _1 ^\#$.

 We note that $\mathfrak{D}'=
 g _1 ^{-1}(\mathfrak{D} _1)$.
 Indeed, since 
 $\mathfrak{D}= \phi ^{-1} (V (x _2\dots x _d))$, then using the commutativity of the square
 \ref{diagphiprime} we get 
 $\mathfrak{D}'= f ^{-1}\phi ^{-1} (V (x _2\dots x _d))
 =
 \phi ^{\prime -1} (id \times \phi _1) ^{-1} (V (x _2\dots x _d))
 =
 \phi ^{\prime -1} (\widehat{\mathbb{A}} ^1  \times \mathfrak{D} _1)
 =
 g _1 ^{-1}(\mathfrak{D} _1)$.
Moreover, since $i ' _1$ is a section of $g _1$, 
then this implies
$\mathfrak{D}'=g _1 ^{ -1}
 \circ i _1 ^{\prime -1} (\mathfrak{D}')$.

\end{empt}

\begin{empt}
[Canonical Frobenius]
\label{canonicalFrob}
Let $F _{\widehat{\A} ^{d} _{\V}}\colon  \widehat{\A} ^{d} _{\V} \to \widehat{\A} ^{d} _{\V}$ and
 $F _{\widehat{\A} ^{d-1} _{\V}} \colon  \widehat{\A} ^{d-1} _{\V} \to \widehat{\A} ^{d-1} _{\V}$ be
 the canonical ($s$-th) Frobenius endomorphisms given by the formula
 $P(\underline{x}) \mapsto P
 (\underline{x}^{q})$ for $\underline{x}=(x _1,\dots, x _d)$ or $\underline{x}=(x _1,\dots, x _{d-1})$
 respectively.
 Since the morphism $\phi \colon \X \to \widehat{\A} ^{d} _{\V}$ (resp.\ $\phi _1\colon \ZZ _1 \to
 \widehat{\A} ^{d-1} _{\V}$) 
 is \'{e}tale, the 
 $s$th Frobenius endomorphism of $X$ (resp. of $Z _1$) has a unique lifting
 $F_{\X}\colon \X \to \X$, (resp.\ $F _{\ZZ_1}\colon  \ZZ_1 \to \ZZ_1$)
 which commutes with the canonical Frobenius endomorphism 
 of $\widehat{\A} ^{d} _{\V}$ (resp. $\widehat{\A} ^{d-1} _{\V}$).
From $F_{\X}$ and $F _{\ZZ_1}$ we get canonically
the factorisations 
$F _{\X ^\flat} \colon \X ^\flat \to \X ^\flat $, 
$F _{\ZZ _1 ^\#}\colon  \ZZ _1 ^\# \to \ZZ_1 ^\#$.
 Since the local coordinates $t _1, \dots, t _d$ are fixed 
 we can call abusively these Frobenius endomorphisms ``canonical''.
\end{empt}

\begin{empt}
[Log-$\nabla$-modules,  logarithmic arithmetic $\D$-modules and convergent log isocrystals]
\label{Log-isoc-arith-Dmod}
We recall here the following facts. 
We denote by $\D _{\X^\flat}$ the usual sheaf of differential operators of $\X^\flat/\Spf \V$.  
Let $\D ^\dag_{\X^\flat}$ be the sheaf of differential operators of finite level and infinite order 
 on the formal log-scheme $\X^\flat$, which is a kind of weak completion of $\D _{\X^\flat}$ as $\O _{\X}$-ring (e.g. see 
\cite[4.4]{caro_log-iso-hol} for the explicit construction).
We denote by $\sp \colon \X _K \to \X$ the specialisation morphism
from $\X _K$, the rigid analytic space of $\X$, to $\X$.

Let $\FF$ be an $\O _{\X,\Q}$-module. 
A structure of 
$\D _{\X^\flat,\Q}$-module on $\FF$ extending that of 
$\O _{\X,\Q}$-module correspond bijectively to an integrable log connection 
$\FF \to \FF \otimes _{\O _{\X}} \Omega ^{1 } _{\X ^\flat/\Spf \V}$.
Hence, similarly than in \cite[4.4.2]{Be1}, since $\X$ is affine (otherwise we have to replace ``projective'' by ``locally projective''), 
we check that the functors
$\sp _*$ and $\sp ^*$ induce quasi-inverse equivalences between the category of 
$\D _{\X^\flat,\Q}$-modules which are projective of
 finite type over $\O _{\X,\Q}$ and the  category of 
 log-$\nabla$-modules on $\X _K$ with respect to $t _1,\dots, t _r\in \Gamma (\X _K, \O _{\X _K})$  
 (relative to $\mathrm{Max} (K)$), where  the last category is defined in \cite[2.3.7]{kedlaya-semistableI}. 
On the last category, Kedlaya defined in \cite[2.3.9]{kedlaya-semistableI}
the notion of residue of $\nabla$ along $V(t _i)$, which can be translated similarly for 
 a $\D _{\X^\flat,\Q}$-module, projective of
 finite type over $\O _{\X,\Q}$ (e.g. see \cite[1.1]{caro-Tsuzuki} for the definition of the residues
 or read the beginning of \ref{def-monod-filpre}). 
From \cite[3.2.14]{kedlaya-semistableI}, 
the category of log-$\nabla$-modules on $\X _K$ with respect to $t _1,\dots, t _r\in \Gamma (\X _K, \O _{\X _K})$  
with nilpotent residues is abelian. 
Then so is the category of $\D _{\X^\flat,\Q}$-modules, projective of
 finite type over $\O _{\X,\Q}$ and with nilpotent residues. 
 Kedlaya defined in \cite[6.3.1]{kedlaya-semistableI}, the notion 
 of a {\it convergent} log-$\nabla$-modules on $\X _K$ with respect to $t _1,\dots, t _r\in \Gamma (\X _K, \O _{\X _K})$  
with nilpotent residues.  
As in Theorem \cite[4.15]{caro_log-iso-hol} (the coherence as $\D _{\X^\flat,\Q}$-module is useless in the definition
since this is a consequence of the convergence condition and of the finiteness condition as $\O _{\X,\Q}$-module)
and with the remark \cite[4.10]{caro_log-iso-hol}, 
we get that
for any  $\D _{\X^\flat,\Q}$-module $\G$, projective of
 finite type over $\O _{\X,\Q}$ and with nilpotent residues,
 $\sp ^* (\G)$ is convergent if and only if its structure of $\D _{\X^\flat,\Q}$-module extends to a structure of 
 coherent $\D ^\dag _{\X^\flat,\Q}$-module. 
 
To sum up, the functors $\sp _*$ and $\sp ^*$ induce quasi-inverse equivalences
from the category of coherent $\D ^\dag _{\X^\flat,\Q}$-modules,  
projective of
 finite type over $\O _{\X,\Q}$ and with nilpotent residues
to  the category of convergent log-$\nabla$-modules on $\X _K$ with respect to $t _1,\dots, t _r\in \Gamma (\X _K, \O _{\X _K})$  
with nilpotent residues. 
Since $\D ^\dag _{\X^\flat,\Q}$-coherence is stable under 
kernels and cokernels, these categories are abelian subcategories of the abelian category of 
log-$\nabla$-modules on $\X _K$ with respect to $t _1,\dots, t _r\in \Gamma (\X _K, \O _{\X _K})$  
with nilpotent residues. 

We denote by 
$\mathrm{Isoc} (\X ^{\flat}/K)$ the category of coherent $\D ^\dag _{\X^\flat,\Q}$-modules,  
projective of
 finite type over $\O _{\X,\Q}$.
  As in \cite[4.9]{caro_log-iso-hol}, 
 we call these objects convergent log-isocrystal on $\X ^{\flat}/K$.
We remark that this category is not abelian (e.g. 
consider the cokernel of the morphism $\alpha _{1,\FF}$ of 
the paragraph
\ref{nota-kercoker-alpha}).

We denote by 
$\mathrm{Isoc} ^0 (\X ^{\flat}/K)$ the subcategory of 
$\mathrm{Isoc} (\X ^{\flat}/K)$ whose objects have with nilpotent residues. 
The category $\mathrm{Isoc} ^0 (\X ^{\flat}/K)$ is abelian. 
 
 \end{empt} 
 
 \begin{empt}
[Overconvergent and convergent $F$-isocrystals]
\label{F-isoc-alg-nalg}
By functoriality, 
$\D _{\X ^\flat \underset{F _{\X ^\flat}}{\longrightarrow} \X ^\flat } := F _\X ^* \D _{\X ^\flat }$ is a 
$(\D _{ \X ^\flat }, \D _{ \X ^\flat })$-bimodule, where $F _\X ^*\colon  F _\X ^{-1}\O _{\X}\to  \O _{\X}$ is the Frobenius extension
induced by $F _\X \colon \X \to \X$.
We get the functor
$F _{\X ^\flat} ^{! _{\mathrm{alg}}}
\colon 
D ^{-} (\D _{\X ^\flat ,\Q}) 
\to 
D ^{-} (\D _{\X ^\flat ,\Q}) $, 
by posing 
$F _{\X ^\flat} ^{! _{\mathrm{alg}}}
(\G) :=
\D _{\X ^\flat \underset{F _{\X ^\flat}}{\longrightarrow} \X ^\flat ,\Q} 
\otimes ^{\L} _{\D _{\X ^\flat ,\Q}} \G $,
for any $\G \in D ^{-} (\D _{\X ^\flat ,\Q}) $.

Similarly, replacing $\D$ by $\D ^\dag$, we get the functor 
$F _{\X ^\flat} ^{! }
\colon 
D ^{\mathrm{b}} _\mathrm{coh} (\D ^\dag _{\X ^\flat ,\Q}) 
\to 
D ^{\mathrm{b}} _\mathrm{coh} (\D ^\dag _{\X ^\flat ,\Q}) $, 
by posing 
$F _{\X ^\flat} ^{!}
(\G) :=
\D ^\dag _{\X ^\flat \underset{F _{\X ^\flat}}{\longrightarrow} \X ^\flat ,\Q} 
\otimes ^{\L} _{\D ^\dag _{\X ^\flat ,\Q}} \G $,
for any $\G \in D ^{\mathrm{b}} _\mathrm{coh}(\D ^\dag _{\X ^\flat ,\Q}) $.

For any 
$\G \in \mathrm{Isoc} (\X ^\flat/K)$,
since 
 $\D ^\dag _{\X ^\flat ,\Q} \otimes _{\D _{\X ^\flat ,\Q}} \G \riso 
 \G$ (see \cite[4.14]{caro_log-iso-hol}), 
by flatness and preservation of the notion of Kedlaya's convergence 
we check that
$F _{\X ^\flat} ^{! _{\mathrm{alg}}} (\G) \riso 
F _{\X ^\flat} ^{! } (\G) \riso F _{\X} ^{*} (\G)$, where the last isomorphism is an isomorphism as $\O _{\X,\Q}$-modules. 
Hence, the functors $F _{\X ^\flat} ^{! _{\mathrm{alg}}}$ and $F _{\X ^\flat} ^{!}$ factor through 
$F _{\X ^\flat} ^{! _{\mathrm{alg}}}
\colon 
\mathrm{Isoc} (\X ^\flat/K)
\to 
\mathrm{Isoc} (\X ^\flat/K)$
and
$F _{\X ^\flat} ^{!}
\colon 
\mathrm{Isoc} (\X ^\flat/K)
\to 
\mathrm{Isoc} (\X ^\flat/K)$, 
which we will simply denote by 
$F _{\X} ^*$.

 We denote by 
$F\text{-}\mathrm{Isoc} (\X ^{\flat}/K)$ the  category of coherent $F$-$\D ^\dag _{\X^\flat,\Q}$-modules,  
projective of
 finite type over $\O _{\X,\Q}$. In other words,
 an object of $F\text{-}\mathrm{Isoc} (\X ^{\flat}/K)$ is the data of an object $\FF$ of 
 $\mathrm{Isoc} (\X ^{\flat}/K)$ and an isomorphism 
 $\phi _\FF\colon F _{\X} ^* \FF \riso \FF$ in $\mathrm{Isoc} (\X ^{\flat}/K)$.
 For simplicity, if there is no risk of confusion, we denote  $(\FF ,\phi _\FF)$ by $\FF$. 
 The Frobenius structure implies 
the  nilpotence of the residues.  This yields that 
$F\text{-}\mathrm{Isoc} (\X ^{\flat}/K)$ is also an abelian category. 
Hence, we get the forgetful functor of abelian categories
$\omega\colon F\text{-}\mathrm{Isoc} (\X ^{\flat}/K) \to \mathrm{Isoc} ^0(\X ^{\flat}/K)$.

We denote by $(F\text{-})\mathrm{Isoc} ^{\dag \dag}(\X ,Z/K)$ the
abelian category of coherent ($F$-)$\D ^\dag _{\X} (\hdag Z)
_{\Q}$-modules, $\O _{\X} (\hdag Z) _{\Q}$-coherent. In other words,
 an object of $(F\text{-})\mathrm{Isoc} ^{\dag \dag}(\X ,Z/K)$  is the
data of a coherent $\D ^\dag _{\X} (\hdag Z) _{\Q}$-module $\E$
coherent over $\O _{\X} (\hdag Z) _{\Q}$ (resp. and an isomorphism 
$\phi _\E\colon F _{\X} ^* \E \riso \E$ of $\D ^\dag _{\X} (\hdag Z)
_{\Q}$-modules). Berthelot proved  that $\sp _*$ induces an equivalence
from the category of ($F$-)isocrystal on $Y$ overconvergent along $Z$
  (i.e. the category of coherent $j ^{\dag} \O _{\X _K}$-modules endowed
  with an overconvergent connection along $Z$)
 to 
that of coherent ($F$-)$\D ^\dag _{\X} (\hdag Z) _{\Q}$-modules,  
coherent over $\O _{\X} (\hdag Z) _{\Q}$ 
 (see \cite[4.6.7]{Be2} and \cite[2.2.12]{caro_courbe-nouveau}).
 Via this equivalence of categories, the constant coefficients
  correspond each other, i.e. 
  $\sp _* (j ^{\dag} \O _{\X _K}) = \O _{\X} (\hdag Z) _{\Q}$. 

We denote by $ (\hdag Z) \colon F\text{-}\mathrm{Isoc} (\X ^{\flat}/K)
\to 
F\text{-}\mathrm{Isoc} ^{\dag \dag}(\X ,Z/K)$ the exact functor defined 
by posing
$(\hdag Z)   (\FF) := 
\D ^\dag _{\X} (\hdag Z) _{\Q} \otimes _{\D ^\dag _{\X^\flat,\Q}} \FF$
for any $\FF \in  F\text{-}\mathrm{Isoc} (\X ^{\flat}/K)$
(indeed,  by 
\cite[2.2.1]{caro-Tsuzuki}, $\D ^\dag _{\X} (\hdag Z) _{\Q}= \D ^\dag _{\X^\flat} (\hdag Z) _{\Q}$). 
We also put $\FF(\hdag Z) := (\hdag Z)   (\FF)$.
 \end{empt}

\begin{empt}
\label{dfnFandE}
In the rest of the section, we fix $(\FF ,\phi _\FF)$  an object of the abelian category
$F\text{-}\mathrm{Isoc} (\X ^{\flat}/K)$, i.e., 
$\FF$ is a coherent $\D ^\dag_{\X^\flat,\Q}$-module, projective of
 finite type over $\O _{\X,\Q}$ with nilpotent residues
 and  $\phi _{\FF}\colon F _{\X} ^{*} (\FF) \riso \FF$ 
is an isomorphism of $\D ^\dag_{\X^\flat,\Q}$-modules.
 We defined 
 \cite[5,9]{caro_log-iso-hol} the direct image by $u\colon \X ^{\flat}\to \X$ of $\FF$
 by posing 
 $$u _+ (\FF) := \left ( \D ^\dag_{\X,\Q} \otimes _{\O _{\X}} \O _\X (\ZZ) \right ) \otimes ^{\L } _{\D ^\dag_{\X^\flat,\Q}} \FF$$
 (see also the notation  \cite[5.3 and 5.8]{caro_log-iso-hol}). 
 From Theorem \cite[2.2.9]{caro-Tsuzuki} (see the example \cite[2.2.10]{caro-Tsuzuki}), 
 we get $ u  _+ (\FF)\riso \FF (\hdag Z)$.
  We put $\E = u _+ (\FF)$ 
  and we denote by 
 $\phi _{\E}\colon F_{\X} ^{*} (\E) \riso \E$ the isomorphism induced by 
 $\phi _\FF$. 
 In Kedlaya's terminology of \cite{kedlaya-semistableI}, 
  $\E$ is associated (via the equivalence of categories $\sp _*$)
   to a unipotent overconvergent isocrystals with nilpotent residues. From the equivalence of categories of
 \cite[6.4.5]{kedlaya-semistableI}, 
 we remark that the isomorphism
 $\phi _{\FF}$ is determined by $\phi _\E$, i.e. is the unique one satisfying 
 $(\hdag Z)( \phi _{\FF}) = \phi _{\E}$.
\end{empt}

\subsection{Monodromy filtration of a convergent log-isocrystal with nilpotent residues}
We keep the notation of \S\ref{chap3}.

\begin{empt}
\label{nota-FisocZ1dual}
As in \ref{F-isoc-alg-nalg}, 
we denote by 
$F\text{-}\mathrm{Isoc} (\ZZ _1 ^\#/K)$ the abelian category of coherent $F$-$\D ^\dag _{\ZZ _1 ^\#,\Q}$-modules,  
projective of
 finite type over $\O _{\ZZ _1,\Q}$ and with nilpotent residues. 
If $V$ is an object of $F\text{-}\mathrm{Isoc} (\ZZ _1 ^\#/K)$, 
the underlying $\D ^\dag _{\ZZ _1 ^\#,\Q}$-module is still denoted by $V$ and
the Frobenius structure is denoted by $\phi _V$.
For any integer $n$, for any $V\in F\text{-}\mathrm{Isoc} (\ZZ _1 ^\#/K)$,
we put $V (n):= (V, q ^{-n}\phi _V)$.

a) Let $V,V' \in F\text{-}\mathrm{Isoc} (\ZZ _1 ^\#/K)$. 
The object 
$\mathcal{H} om _{\O _{\ZZ _1,\Q}} (V ,V')$ of $\mathrm{Isoc} (\ZZ _1 ^\#/K)$
is canonically endowed with a Frobenius structure via the isomorphisms
$$ F _{\ZZ _1} ^{*}\mathcal{H} om _{\O _{\ZZ _1,\Q}} (V ,V')
\riso 
\mathcal{H} om _{\O _{\ZZ _1,\Q}} ( F _{\ZZ _1} ^{*}V , F _{\ZZ _1} ^{*}V')
\riso 
\mathcal{H} om _{\O _{\ZZ _1,\Q}} (V ,V'),$$
where the first isomorphism comes from the fact that 
$V$ is $\O _{\ZZ _1,\Q}$-projective of finite type, 
where the last isomorphism is induced by
$\phi _V ^{-1}\colon V \riso F _{\ZZ _1} ^* (V)$
and 
$\phi _{V '}\colon F _{\ZZ _1} ^* (V')\riso V '$.
Hence, 
$\mathcal{H} om _{\O _{\ZZ _1,\Q}} (V ,V')$ 
is also canonically an object of 
$ F\text{-}\mathrm{Isoc} (\ZZ _1 ^\#/K)$.
In particular, 
$V ^\vee := \mathcal{H} om _{\O _{\ZZ _1,\Q}} (V , \O _{\ZZ _1,\Q}) \in F\text{-}\mathrm{Isoc} (\ZZ _1 ^\#/K)$. 
For any morphism 
$N \colon V \to V'$ of $F\text{-}\mathrm{Isoc} (\ZZ _1 ^\#/K)$, 
we denote by 
$N ^\vee\colon  V ^{\prime \vee} \to V ^\vee$ the morphism 
of $F\text{-}\mathrm{Isoc} (\ZZ _1 ^\#/K)$ defined by $\phi \mapsto \phi \circ N $.
If $\iota\colon W \hookrightarrow V$ is a monomorphism of $F\text{-}\mathrm{Isoc} (\ZZ _1 ^\#/K)$, 
then we denote by 
$W ^{\bot}$ the kernel of $\iota ^\vee$.

b) In the same way, we denote by
$V \otimes _{\O _{\ZZ _1,\Q}} V '\in F\text{-}\mathrm{Isoc} (\ZZ _1 ^\#/K)$
the 	object $V \otimes _{\O _{\ZZ _1,\Q}} V '\in\mathrm{Isoc} (\ZZ _1 ^\#/K)$
whose Frobenius structure is canonically induced by 
$\phi _V$ and $\phi _{V'}$.

c) For any integers $n, n'$, we have
$\mathcal{H} om _{\O _{\ZZ _1,\Q}} (V(n) ,V'(n'))=
\mathcal{H} om _{\O _{\ZZ _1,\Q}} (V ,V') (n'-n)$,
$(V (n) ) ^{\vee}= V ^{\vee} (-n)$, 
$V (n)\otimes _{\O _{\ZZ _1,\Q}} V ' (n')
=
(V \otimes _{\O _{\ZZ _1,\Q}} V ' )(n+n')$.

\end{empt}

\begin{dfn}
 We define the category $\mathfrak{C}$ as follows. An object $(V,N)$ of $\mathfrak{C}$  is the data of 
 an object $V$ of $F\text{-}\mathrm{Isoc} (\ZZ _1 ^\#/K)$ and a morphism
 $N \colon V(1)\to V$ of $F\text{-}\mathrm{Isoc} (\ZZ _1 ^\#/K)$
 which is a nilpotent morphism in 
 $\mathrm{Isoc} (\ZZ _1 ^\#/K)$.
 A morphism $(V,N) \to (V',N')$ is an
 $F\text{-}\D ^{\dag} _{\ZZ _1 ^\# ,\Q}$-linear morphism $V \to V'$
 commuting with the nilpotent endomorphisms. We denote by $\mathfrak{C}
 ^\mathrm{0}$ the full subcategory of $\mathfrak{C}$ of objects $(V,N)$
 such that $N=0$. We check easily that both categories $\mathfrak{C}$
 and $\mathfrak{C} ^\mathrm{0}$ are abelian.
\end{dfn}

\begin{dfn}
\label{dfndualfrakC}
Let $(V,N),(V',N')  \in \mathfrak{C}$.
We define the following operations in $\mathfrak{C}$:

 \begin{enumerate}
 
   \item  The {\em internal Hom} of $(V,N)$ and $(V',N')$ in
	$\mathfrak{C}$ is 
	$\mathcal{H} om _{\O _{\ZZ _1,\Q}} \bigl((V,N), (V',N')\bigr):=
	(\mathcal{H} om _{\O _{\ZZ _1,\Q}} (V , V'), N''))$, where for any $\phi \in \mathcal{H} om _{\O _{\ZZ _1,\Q}} (V , V')
	(1)$, we put $N''(\phi):=  N' \circ  \phi
	-\phi \circ N (-1)$.
 
  \item  In particular, the {\em dual of $(V,N)$} in $\mathfrak{C}$ is 
		$(V,N)	^\vee: =(V ^\vee ,- N  (-1) ^\vee)$.

  \item The {\em tensor product} of $(V,N)$ and $(V',N')$ in
	$\mathfrak{C}$ is $(V,N)\otimes  _{\O _{\ZZ _1,\Q}} (V',N'):=(V
	\otimes _{\O _{\ZZ _1,\Q}} V ' , N \otimes\mr{id} +\mr{id}
	\otimes N')$.

 \end{enumerate}
\end{dfn}

\begin{rem}
 Let $(V,N),(V',N')  \in \mathfrak{C}$. Since $V$ is a projective $\O
 _{\ZZ _1,\Q}$-module the canonical $\O _{\ZZ _1,\Q}$-linear morphism $
 V ^\vee \otimes _{\O _{\ZZ _1,\Q}} V '\to \mathcal{H} om _{\O _{\ZZ
 _1,\Q}} (V , V')$ is an isomorphism.
 We can show that this isomorphism commutes with the canonical nilpotent
 endomorphisms, and induces an isomorphism in $\mathfrak{C}$.
\end{rem}

\begin{lem}
\label{lemm-KerdualcokerN}
 Let $(V,N)\in \mathfrak{C}$. 
 Then, the evaluation homomorphism induces the canonical isomorphism:
 $\mathrm{ev} \colon (V,N) \riso ( (V , N ) ^\vee ) ^\vee$.
 Moreover, we get the canonical isomorphisms of $\mathfrak{C}
 ^\mathrm{0}$:
 \begin{equation}
  \label{KerdualcokerN}
  (\ker N (-1)) ^\vee \riso \coker\bigl(N (-1) ^\vee \bigr),
  \qquad
   (\coker N ) ^\vee \riso \ker (N ^\vee ).
 \end{equation}
\end{lem}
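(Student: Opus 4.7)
The plan is to establish the biduality isomorphism first and then construct the two kernel/cokernel identifications explicitly by universal-property arguments. For the biduality $\mr{ev}\colon (V,N)\riso ((V,N)^\vee)^\vee$, note that since $V$ is locally projective of finite rank over $\O_{\ZZ_1,\Q}$, the standard evaluation map $V\to V^{\vee\vee}$ is an isomorphism of coherent $\O_{\ZZ_1,\Q}$-modules, functorial in $V$; the compatibility with the $\D^\dag_{\ZZ_1^\#,\Q}$-action and the Frobenius structure is part of this functoriality. It remains only to verify that $\mr{ev}$ intertwines $N$ with the doubly-dualised nilpotent endomorphism: for $v\in V$ and $\phi\in V^\vee$ one computes
\begin{equation*}
 \bigl((N^\vee)^\vee(\mr{ev}_v)\bigr)(\phi)
 = -\mr{ev}_v\bigl(N^\vee(\phi)\bigr)
 = -(-\phi\circ N)(v)
 = \phi(Nv)
 = \mr{ev}_{Nv}(\phi),
\end{equation*}
so the two sign changes cancel and $(N^\vee)^\vee$ agrees with $N$ under biduality, up to the Tate-twist identifications built into the definition of $(-)^\vee$ in $\mathfrak{C}$.

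For the isomorphism $(\coker N)^\vee\riso\ker(N^\vee)$ I would argue by universal property: a morphism $\phi\in V^\vee$ lies in $\ker(N^\vee)$ if and only if $-\phi\circ N=0$, if and only if $\phi$ vanishes on $\mr{image}(N)\subset V$, if and only if $\phi$ factors uniquely through the surjection $V\twoheadrightarrow\coker N$. This factorization defines an $\O$-linear bijection $(\coker N)^\vee\to\ker(N^\vee)$; compatibility with the $\D^\dag$-action and with Frobenius is automatic from functoriality of the dual construction. Crucially, this argument does not require $\coker N$ or $\mr{image}(N)$ to be locally projective, so no Ext-vanishing hypothesis is needed.

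For the companion isomorphism $(\ker N(-1))^\vee\riso\coker(N^\vee(-1))$, I would deduce it from the previous one by applying it to the dual object $(V,N)^\vee=(V^\vee,N^\vee(-1))\in\mathfrak{C}$ and using the biduality established in the first step: applied to $(V^\vee,N^\vee(-1))$, the previous isomorphism yields $(\coker(N^\vee(-1)))^\vee\cong\ker((N^\vee(-1))^\vee)$, and biduality identifies the right-hand side with $\ker N(-1)$ (with the correct Tate twist). Taking duals and invoking biduality once more converts this into the desired isomorphism. The main obstacle, and essentially the only nontrivial content of the proof, is the careful bookkeeping of the Tate twists so that $\mr{Hom}(\ker N(-1),\O)=(\ker N)^\vee(1)$ is correctly matched with $\coker(N^\vee(-1))$; the underlying linear-algebraic statement is entirely standard once the conventions are pinned down.
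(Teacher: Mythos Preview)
Your proposal is correct and is precisely the kind of argument the paper has in mind: the paper's own proof reads in its entirety ``The proof is straightforward.'' Your explicit verification of biduality via the double sign cancellation, the universal-property identification $(\coker N)^\vee\cong\ker(N^\vee)$, and the reduction of the remaining isomorphism to this one via biduality are exactly the standard steps one would expect, and there is nothing to compare.
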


\begin{proof}
The proof is straightforward.
\end{proof}

\begin{dfn}
 [Monodromy filtration]
 \label{monodromyfiltration}
 Let $(V,N)\in \mathfrak{C}$. 
Copying the proof of \cite[1.6.1]{deligne-weil-II}, we check that there
 exists a unique finite increasing filtration $M$ on $V$ such that
 $NM_i\subset M_{i-2}(-1)$, and $N^k$ induces an isomorphism
 $\gr^M_k(V)\riso \gr^M_{-k}(V)(-k)$. We call this filtration the
 {\em monodromy filtration} and it will usually be denoted by $M$.
 \end{dfn}

\begin{rem}
 Let $\omega\colon F\text{-}\mathrm{Isoc} (\ZZ _1 ^\#/K) \to \mathrm{Isoc} ^0 (\ZZ _1 ^\#/K)$ 
be the forgetful functor. 
Let $\mathfrak{E}$ be the category defined in the
 same way as $\mathfrak{C}$ but without Frobenius structure 
 (i.e., we replace the abelian category 
$ F\text{-}\mathrm{Isoc} (\ZZ _1 ^\#/K) $
by $\mathrm{Isoc} ^0 (\ZZ _1 ^\#/K)$) 
 and let
 $\omega\colon \mathfrak{C} \to \mathfrak{E}$ be the forgetful functor
 defined by $\omega (V,N) = (\omega (V), \omega (N) )$. For any $(V,N)
 \in \mathfrak{C}$, from  \cite[1.6.1]{deligne-weil-II}, 
 we have a monodromy filtration on $\omega(V,N)$
 denoted by $M'$. Then by the uniqueness property of monodromy
 filtration we get that $\omega(M_i)\cong M'_i$.
\end{rem}

\begin{empt}
 Let $(V,N),(V',N') ,(V'',N'') \in \mathfrak{C}$. As in
 \cite[1.6.9]{deligne-weil-II}, with the notation \ref{nota-FisocZ1dual}, we put
 \begin{equation*}
  M _i \bigl((V,N) ^\vee\bigr):= M _{-i-1} (V,N) ^{\bot},\qquad
   M _i\bigl((V',N')\otimes _{\O _{\ZZ _1,\Q}} (V'',N'')\bigr) :=\sum _{i' +i'' =i} M _{i'}
   (V',N') \otimes _{\O _{\ZZ _1,\Q}} M _{i''} (V'',N'').
 \end{equation*}
From the formulas
 \begin{gather*}
  \gr _i ^{M} \bigl((V,N) ^\vee \bigr)=
  M _{-i-1} (V,N) ^{\bot}/ M _{-i} (V,N) ^{\bot}\riso 
  (M _{-i} (V,N) / M _{-i-1} (V,N))^{\vee}=
  \gr _{-i} ^{M}  (V,N),
  \\
   \gr _i ^{M}\bigl((V',N')\otimes _{\O _{\ZZ _1,\Q}}(V'',N'')\bigr) =\sum _{i' +i'' =i}
   \gr _{i'}^{M} (V',N') \otimes_{\O _{\ZZ _1,\Q}} \gr _{i''} ^{M} (V'',N''),
 \end{gather*}
 we check that these filtrations on $(V,N) ^\vee$ and
 $(V',N')\otimes _{\O _{\ZZ _1,\Q}}(V'',N'')$ are the monodromy filtrations
 (which justifies the notation).
 
\end{empt}

\begin{empt}
[Remarks on the induced filtration by inclusion and quotient on respectively
$\ker N$ and $\mathrm{coker} N$]
 \label{twistFrobstr}
  Let $(V,N)\in \mathfrak{C}$.
By construction of the monodromy filtration (see the proof of \cite[1.6.1]{deligne-weil-II} for the explicit construction),
we check by induction on the smallest integer $d \in \N$ such that $N ^{d+1}=0$
that $\ker N (-1) \subset M _0 (V,N)$.
In particular, denoting (by abuse of notation) by $M _i( \ker N (-1) )$ the induced filtration on $\ker N (-1) $
(from the monodromy filtration of $(V,N)$ by inclusion),
we get  $\gr ^{M} _i\bigl(
 \ker N (-1)\bigr) =0$ for $i >0$. 
 
 Moreover, since we have in $(V, N) ^\vee$ the inclusion
 $ M _{-1} ( (V, N) ^\vee)= 
 M _0 (V,N) ^{\bot} \subset  \ker N (-1) ^{\bot}$, 
 the image of $ M _{-1} ( (V, N) ^\vee)$
 throught
 $V ^\vee \to \bigl( \ker  N (-1)  \bigr) ^{\vee}
 \underset{\ref{KerdualcokerN}}{\riso} 
 \coker\bigl(N (-1)^\vee \bigr)
 =
  \coker (N _{V ^\vee}) $, 
 where $N _{V ^\vee}= - N (-1)^\vee$ is the the nilpotent morphism 
 of $(V, N) ^\vee$, is null. 
 Denoting (by abuse of notation) by $M _i \coker (N _{V ^\vee})$ the induced filtration 
 on $\coker (N _{V ^\vee})$ 
 (from the monodromy filtration of $V ^\vee$ by quotient), we get that 
 $M _{-1} \coker (N _{V ^\vee})=0$.
Using the biduality isomorphism of lemma
 \ref{lemm-KerdualcokerN}, 
 denoting by $M _i \coker (N )$ the induced filtration 
 on $\coker (N )$
 (from the monodromy filtration of $V$ by quotient), we get that 
 $M _{-1} \coker (N )=0$.
In particular,  
 we get 
 $\gr ^{M} _i \coker N=0$ if $i <0$.
\end{empt}

\begin{empt}
[The functor $i _1 ^*$]
Let $i ^\flat _1 \colon \ZZ _1 ^\# \hookrightarrow \X ^\flat$
be the closed immersion. 
By functoriality, 
$\D _{\ZZ _{1} ^\# \to \X ^\flat } := i _1 ^* \D _{\X ^\flat }$ is a 
$(\D _{\ZZ _{1} ^\# }, \D _{ \X ^\flat })$-bimodule, where $i _1 ^*:= \O _{\ZZ _1} \otimes _{i _1 ^{-1} \O _{\X}}-$.
We get the functor
$i _1 ^{\flat !}\colon 
D ^{-} (\D _{\X ^\flat ,\Q}) 
\to 
D ^{-} (\D _{\ZZ _{1,\Q} ^\# }) $, 
by posing 
$i _1 ^{\flat !} (\G) :=
\D _{\ZZ _{1} ^\# \to \X ^\flat ,\Q}
\otimes ^{\L} _{\D _{\X ^\flat ,\Q}} \G [-1]$,
for any $\G \in D ^{-} (\D _{\X ^\flat ,\Q}) $.
Similarly (see \ref{F-isoc-alg-nalg}), 
we have the functor 
$
F _{\ZZ ^\# _1} ^!
 \colon 
D ^{-} (\D _{\ZZ _{1,\Q} ^\# })
\to 
D ^{-} (\D _{\ZZ _{1,\Q} ^\# })$
(resp. 
$
F _{\X ^\flat} ^!
 \colon 
D ^{-} (\D _{\X ^\flat ,\Q})
\to 
D ^{-} (\D _{\X ^\flat ,\Q})$.
By transitivity (in this context of $\D$-modules and not $\D ^\dag$-modules,
we do not have to deal with the highly technical notion of Berthelot's quasi-coherence as in 
\cite{Beintro2}), we get the isomorphism
$ i _1 ^{\flat !} \circ F _{\X ^\flat} ^!
\riso 
F _{\ZZ ^\# _1} ^! \circ i _1 ^{\flat !}$ 
of functors
$D ^{-} (\D _{\X ^\flat ,\Q}) 
\to 
D ^{-} (\D _{\ZZ _{1,\Q} ^\# }) $.

By flatness and preservation of the notion of Kedlaya's convergence (as explained in 
\ref{Log-isoc-arith-Dmod}), the functor $i _1 ^{\flat !} [1]$ factors through 
$i _1 ^{\flat !} [1] \colon 
\mathrm{Isoc} (\X ^\flat/K)
\to 
\mathrm{Isoc} (\ZZ _1 ^\#/K)$.
We denote by $i _1 ^*$ this functor. 
Similarly (see \ref{F-isoc-alg-nalg}), the factorizations
$F _{\ZZ ^\# _1} ^! \colon \mathrm{Isoc} (\ZZ _1 ^\#/K)
\to \mathrm{Isoc} (\ZZ _1 ^\#/K)$ and 
$F _{\X ^\flat} ^!
\colon 
\mathrm{Isoc} (\X ^\flat/K)
\to 
\mathrm{Isoc} (\X ^\flat/K)$, 
are denoted by
$F _{\ZZ _1} ^*$ and $F _{\X} ^*$. 
Hence, the functor $i _1 ^*$ commutes canonically with Frobenius, 
i.e. we get the factorisation:
\begin{equation}
\label{i_1^*}
i _1 ^* \colon 
F \text{-}\mathrm{Isoc} (\X ^\flat/K)
\to 
F \text{-}\mathrm{Isoc} (\ZZ _1 ^\#/K)
\end{equation}

We remark that 
$i _1 ^{*}$ is equal to the cokernel of the multiplication by
 $t _1$, i.e. $i _1 ^* (\G) = \G /t _1 \G$
 for any $\G \in F \text{-}\mathrm{Isoc} (\X ^\flat/K)$.

\end{empt}

\begin{empt}
[Monodromy filtrations coming from $\FF$]
\label{def-monod-filpre}
 We put $\H:=i _1 ^* (\FF) \in F\text{-}\mathrm{Isoc} (\ZZ _1 ^\# /K)$ (recall that $\FF$ is defined in \ref{dfnFandE}
 and the functor $i_1^*$ is defined in 
\ref{i_1^*}). 
 The action of $t _1 \partial
 _1$ on $\FF$ induces the residue morphism $N _{1,\FF}\colon \H \to \H$,
 which is a $\D ^{\dag}_{\ZZ _1 ^\# ,\Q}$-linear homomorphism. 
 Let $f \in \O_{\X }$ and $m \in \FF $ so that $f \otimes m \in \O _{\X} \otimes _{F_{\X },\O _\X} \FF
 =
 F_{\X }
 ^* (\FF)\subset F_{\X} ^* \bigl(\FF (\hdag Z)\bigr)$, 
 where $F_{\X} \colon \X \to \X$ is the canonical Frobenius defined in \ref{canonicalFrob}.
For any $1\leq i
 \leq d$, we have
 \begin{equation}
  \label{2.3.4.1}
   t _{i} \partial _{i} (f \otimes m) =
   t _{i} \partial _{i} (f)  \otimes m
   + f \otimes q t _{i} \partial_{i}(m)
 \end{equation}
 in $F_{\X } ^* (\FF)$. 
 Now, we define the following Frobenius structure on $\H$ via the composition 
 \begin{equation}
 \label{phiH}
 \phi _{\H}\colon F^*_{\ZZ_1}\H
 \riso i _1 ^* F ^* _{\X} (\FF) \underset{i _1 ^* \phi _{\FF}}{\riso} i _1 ^*  (\FF) =\H
 \end{equation}

 Using the computation \ref{2.3.4.1}, we check that the left square of the following diagram is
 commutative:
 \begin{equation}
  \label{prephi2-iso-comp}
   \xymatrix@C=40pt @R=0,3cm {
   F^*_{\ZZ_1}\H
   \ar[d] _{q F^*_{\ZZ_1}(N _{1,\FF})}\ar[r] ^-{\sim}&
   i _1 ^* F ^* _{\X} (\FF) 
   \ar[r] ^-{ i _1 ^* (\phi_{\FF})}\ar[d] _{N _{1,F ^* _{\X}\FF}}&
   i _1 ^*  (\FF)\ar[d] ^{N _{1,\FF}}\\
  F^*_{\ZZ_1}\H\ar[r] _-{\sim}&
   i _1 ^* F ^* _{\X} (\FF)
   \ar[r] ^-{ i _1 ^* (\phi_{\FF})}&
   i _1 ^*  (\FF).
   }
 \end{equation}
 Since the right square is commutative by functoriality, the diagram
 \ref{prephi2-iso-comp} is commutative.
  The
 commutativity of the diagram \ref{prephi2-iso-comp} means that the homomorphism 
 $N _{1,\FF}$ induces the homomorphism 
$ N _{1,\FF}
 \colon\H(1)\rightarrow\H$ 
 in $ F\text{-}\mathrm{Isoc} (\ZZ _1 ^\# /K)$.
 Since $\FF$ has nilpotent residues, 
 then by definition 
 $N _{1,\FF}$ is nilpotent. 
In other words,
we have checked  $(\H,N _{1,\FF})\in
 \mathfrak{C}$. 
 As in \ref{monodromyfiltration}, 
 we denote by $(M _i (\H,N _{1,\FF}) ) _{i\in \Z}$ the monodromy filtration on 
 $(\H,N _{1,\FF})$.
 
\end{empt}

\subsection{Comparison with Crew's Frobenius structure}

\begin{empt}
[Notation: curve case]
 \label{notacrewFrobdef}
 Using the notation of \S\ref{chap3}, suppose in this paragraph
 that $X$ is
 of dimension $1$ and that $Z$ is $k$-rational. 
 In this context, we simply denote $t _1$ by $t$ and $\partial _1$ by $\partial$. 
 In that case, from \cite[4.3.3]{kedlaya-semistableI}, 
 we get 
 $A ^1 _K [0,1) \riso ]Z[ _{\X} $, where
 $A ^1 _K [0,1)$ is the analytic subspace of the analytic line $\A ^{1,\mathrm{an}} _K$ of the elements
 $t\in K$ such that $| t| <1$ (see the notation \cite[3.1.2]{kedlaya-semistableI}).
 We put $\O ^{\mathrm{an}} _K:= \Gamma (  A ^1 _K [0,1), \O _{A ^1 _K [0,1)})$.
 Hence, $\O ^{\mathrm{an}} _K$ is the subring of $K [[t]]$ of power series in $t$ convergent for $|t| <1$.
 We denote by $\RR _K $ 
 the Robba ring relative to $K$.
 We recall this Robba ring $\RR _K $ is the ring of formal Laurent series $f=\sum _{n\in \Z} a _n t ^n$
 in $t$ with coefficients in $K$, convergent in some  thin annulus $r < |t| <1$ for some real number $0<r<1$ (depending on $f$). 
 We remark that $\RR _K $ is equal to 
 $\Gamma (]Z[ _{\X}, j ^{\dag} \O _{\X _K}) $.
  
 We put $F := \Gamma (\X, \FF)$, 
 $F _{]Z[}:= \Gamma\bigl( ]Z[ _{\X} , \sp ^{*} (\E)\bigr)=
\O ^{\mathrm{an}} _K \otimes _{A _K} F $,
$E := \Gamma (\X, \E)$ 
and 
$E _{]Z[}:= \Gamma\bigl( ]Z[ _{\X} , \sp ^{*} (\E)\bigr)=
\RR _K \otimes _{A [ \frac{1}{t}] ^\dag _K} E$.
 We have the canonical Frobenius endomorphisms 
 $F ^* \colon \O ^{\mathrm{an}} _K  \to \O ^{\mathrm{an}} _K$ 
 (resp. $F ^* \colon \RR _K  \to \RR _K $ )
 given by  
 $\sum _{n\in \N} a _n t ^n\mapsto \sum _{n\in \N} a _n t ^{qn}$
(resp.  $\sum _{n\in \Z} a _n t ^n\mapsto \sum _{n\in \Z} a _n t ^{qn}$).
We put 
$F ^* (F _{]Z[}) := 
\O ^{\mathrm{an}} _K \otimes _{F, \O ^{\mathrm{an}} _K} F _{]Z[}$, 
$F ^* (E _{]Z[}) :=  
\RR _K  \otimes _{F, \RR _K } E _{]Z[}$.
 We set
 $F _{]Z[}  ^{\nabla ^{\infty}}:= \bigcup _{n
 >0}\ker\bigl( (t \partial) ^n \colon  F _{]Z[} \to F _{]Z[}\bigr)$ and
 $E _{]Z[}  ^{\nabla ^{\infty}}:= \cup _{n >0}\ker ( (t  \partial) ^n
 \colon  E _{]Z[} \to E _{]Z[})$.
 We remark that the $K$-linear endomorphism induced by $t \partial $ on $E _{]Z[}  ^{\nabla ^{\infty}}$
 (resp. $F _{]Z[}  ^{\nabla ^{\infty}})$ is nilpotent. 
The monomorphism of $K$-vector spaces 
$E _{]Z[}  ^{\nabla ^{\infty}}\subset  E _{]Z[}$
induces by extension the canonical morphism
 $\mathrm{can}\colon \RR _K
 \otimes _{K} E _{]Z[}  ^{\nabla ^{\infty}}\to E _{]Z[}$. 
 Similarly we get the canonical morphism
 $\mathrm{can}\colon \O ^{\mathrm{an}} _K
 \otimes _{K} F _{]Z[}  ^{\nabla ^{\infty}}\to F _{]Z[}$. 
 We denote by $i _1 ^{*}$ the cokernel of the multiplication by
 $t $.
 To simplify notation, we denote by $\phi$ the Frobenius structure on
 $F$, $F _{]Z[}$, $E _{]Z[}$, which are induced by extension from that
 on $F$.

\end{empt}

\begin{empt}
[Unipotence and consequences]
 We keep the hypotheses and notation of the paragraph
 \ref{notacrewFrobdef}. 
 Following 
\cite[3.6.9]{kedlaya-semistableI}, $E _{]Z[}$ is unipotent. 
This implies that the canonical morphism 
$\mathrm{can}\colon \RR _K
 \otimes _{K} E _{]Z[}  ^{\nabla ^{\infty}}\to E _{]Z[}$
 (e.g. see \cite[4.1]{matsuda-katz}). In particular, 
 $E _{]Z[}  ^{\nabla ^{\infty}}$ is a $K$-vector space of dimension equal to the rank of $E _{]Z[}$ 
 as $ \RR _K$-module. In fact, from the nilpotent endomorphism induced by $t \partial$ on 
 $E _{]Z[}  ^{\nabla ^{\infty}}$, we get a canonical connection on  $\RR _K
 \otimes _{K} E _{]Z[}  ^{\nabla ^{\infty}}$ and this canonical isomorphism 
 $\mathrm{can}\colon \RR _K
 \otimes _{K} E _{]Z[}  ^{\nabla ^{\infty}}\riso E _{]Z[}$ is in fact an isomorphism of 
$(\RR _K, \nabla)$-modules.
 Following \cite[3.6.2]{kedlaya-semistableI}
 (this is more explicitely written in \cite[2.12]{Shiho-logextension} and also more general),
 since $F _{]Z[}$ is log-convergent then 
 $F _{]Z[}$ is unipotent.
Hence, the canonical morphism $\mathrm{can}\colon \O ^{\mathrm{an}} _K
 \otimes _{K} F _{]Z[}  ^{\nabla ^{\infty}}\to F _{]Z[}$ is an
 isomorphism.
 Since $A [ \frac{1}{t}] ^\dag _K \otimes _{A _K} F\riso E$, we check that the canonical morphism
$\RR _K  \otimes _{\O ^{\mathrm{an}} _K} F _{]Z[} \to E _{]Z[}$ is an isomorphism.
This yields that the dimension of the $K$-vector space $F _{]Z[}  ^{\nabla ^{\infty}}$ is equal to that of 
$E _{]Z[}  ^{\nabla ^{\infty}}$. 
Hence, the canonical inclusion $F _{]Z[}  ^{\nabla ^{\infty}} \subset
 E _{]Z[}  ^{\nabla ^{\infty}}$ is a  $K$-isomorphism.

\end{empt}

\begin{empt}[Crew's Frobenius structure on $ E _{]Z[}  ^{\nabla ^{\infty}}$]
 \label{crewFrobdef}
 We keep the hypotheses and notation of the paragraph
 \ref{notacrewFrobdef}. 
 Since the canonical Frobenius endomorphism $F ^* \colon
 \RR _K  \to \RR _K $ (the local coordinate $t $ is fixed) is
 $K$-linear, we have $F ^* (\RR _K \otimes _{K}
 E _{]Z[}  ^{\nabla ^{\infty}} )\riso \RR _K \otimes _{K} F ^* (E _{]Z[}
 ^{\nabla ^{\infty}} )= \RR _K \otimes _{K} E _{]Z[}  ^{\nabla
 ^{\infty}} $. Similarly, we get $F ^* (\O ^{\mathrm{an}} _K \otimes
 _{K} F _{]Z[}  ^{\nabla ^{\infty}} )\riso \O ^{\mathrm{an}} _K  \otimes
 _{K} F _{]Z[}  ^{\nabla ^{\infty}} $. We define $\psi _\eta$ and $\psi
 $ so that the corresponding squares of the diagram
\begin{equation}
\label{diag-comp-Frob-Crew}
\xymatrix @R=0,3cm @C=0,3cm{
&
{F _{]Z[}  ^{\nabla ^{\infty}} } 
\ar@{.>}[dd] ^-(0.25){\exists ! \psi _\eta} |\hole
\ar[rr] ^-{}
\ar[ld] ^-{}
&&
{\O ^{\mathrm{an}} _K  \otimes _{K} F _{]Z[}  ^{\nabla ^{\infty}} } 
\ar@{.>}[dd] ^-(0.25){\sim } _-(0.25){\exists ! \psi} |\hole
\ar[rr] _-{F ^* (\mathrm{can})} ^-{\sim}
\ar[ld] ^-{}
&& 
{F ^* (F _{]Z[} )}
\ar[dd] ^-{\sim } _-{\phi}
\ar[rr] _-{\pi _1^*}
\ar[ld] ^-{}
&& 
{F _{]Z[} /t F _{]Z[}}
\ar[dd] ^-{\sim } _-{i _1 ^{*}(\phi)}
\\ 
{E _{]Z[}  ^{\nabla ^{\infty}} } 
\ar@{.>}[dd] ^-(0.25){\exists ! \psi _\eta}
\ar[rr] ^-{}
&&
{\RR _K \otimes _{K} E _{]Z[}  ^{\nabla ^{\infty}} } 
\ar@{.>}[dd] ^-(0.25){\sim } _-(0.25){\exists ! \psi}
\ar[rr] _-(0.25){F ^* (\mathrm{can})} ^-(0.25){\sim}
&& 
{F ^* (E _{]Z[} )}
\ar[dd] ^-(0.25){\sim } _-(0.25){\phi}
\\ 
&
{F _{]Z[}  ^{\nabla ^{\infty}} } 
\ar[rr] ^-{} |(0.425) \hole
\ar[ld] ^-{}
&&
{\O ^{\mathrm{an}} _K  \otimes _{K} F _{]Z[}  ^{\nabla ^{\infty}} } 
\ar[rr]  |(0.55) \hole _-(0.25){\mathrm{can}} ^-(0.25){\sim}
\ar[ld] ^-{}
&&
{F _{]Z[}} 
\ar[rr] ^-{\pi _1 ^*}
\ar[ld] ^-{}
&& 
{F _{]Z[} /t F _{]Z[} }
\\
{E _{]Z[}  ^{\nabla ^{\infty}} } 
\ar[rr] ^-{}
&&
{\RR _K \otimes _{K} E _{]Z[}  ^{\nabla ^{\infty}} } 
\ar[rr] ^-{\mathrm{can}} _-{\sim}
&&
{E _{]Z[},} 
} 
\end{equation}
 where $\pi ^* _1$ is the canonical projection, are commutative (the existence and unicity of $\psi$ 
 is obvious, for that of $\psi _\eta$ we use the fact that $\psi$ commute with connections). 
 The isomorphism $ \psi _\eta \colon E _{]Z[}
 ^{\nabla ^{\infty}} \riso  E _{]Z[}  ^{\nabla ^{\infty}}$ that we get
 is Crew's Frobenius action defined in \cite[\S10]{crewfini}.
 The isomorphism 
 $ \psi _\eta \colon F _{]Z[} ^{\nabla ^{\infty}} \riso  F _{]Z[}  ^{\nabla ^{\infty}}$
 (resp. $i _1 ^{*}(\phi)\colon F _{]Z[} /t F _{]Z[} \riso F _{]Z[} /t F _{]Z[}$)
will be by definition the Frobenius structure on 
$F _{]Z[} ^{\nabla ^{\infty}}$
(resp. $F _{]Z[} ^{\nabla ^{\infty}}$) 
induced canonically from that of $\FF$.

\end{empt}

\begin{lem}
[Comparison with Crew's Frobenius structure]
\label{crewFrobcomp}
 We keep the hypotheses
 of  \ref{notacrewFrobdef}. 
  We have a canonical  $K$-isomorphism of the form
 $$ E _{]Z[}  ^{\nabla ^{\infty}}
 \riso F /t F$$ 
which  commutes with Frobenius (Crew's Frobenius structure of \ref{crewFrobdef} for the left term
and that defined in \ref{phiH}
for the right term) 
and with the (nilpotent) monodromy operation
 induced by $t \partial $.
 In particular, the monodromy filtrations correspond.
\end{lem}
   
\begin{proof}
 From the commutativity of the left square of the diagram \ref{diag-comp-Frob-Crew}, 
the isomorphism $F _{]Z[}  ^{\nabla ^{\infty}} \riso
 E _{]Z[}  ^{\nabla ^{\infty}}$
 commutes with Frobenius structures (denoted there by $\psi _\eta$) and with the action of $t \partial$. 
 We compute that the composition of the horizontal
 arrows $F _{]Z[}  ^{\nabla ^{\infty}} \to  F
 _{]Z[} /t F _{]Z[}$ of the diagram \ref{diag-comp-Frob-Crew} is a
 $K$-isomorphism with commutes with the actions induced by $t \partial$, 
 and from the commutativity of
 the diagram \ref{diag-comp-Frob-Crew}, this isomorphism commutes with Frobenius.
 On the other hand, by applying the functor $i ^* _1$ to $F \to F _{]Z[}$,
 we get an isomorphism commuting with Frobenius and with the actions induced by $t \partial$.
Hence we get by composition the canonical $K$-isomorphism $ E _{]Z[}  ^{\nabla ^{\infty}}
 \riso F /t F$ satisfying the required properties. 
 
\end{proof}

The next Theorem is an easy consequence of 
Crew's Theorem \cite[Theorem 10.8]{crewfini} and can be seen as a translation in the context of arithmetic $\D$-modules.
\begin{thm}[Crew]
  \label{crewhigher}
  With the notation of \S\ref{chap3}, assume that $\E$ is
  $\iota$-pure of weight $w$ and that $\E$ comes from
  an overconvergent $F$-isocrystals on $Y$. 
Then   $(\hdag D _1) (\gr ^{M} _i (\H , N _{1,\FF}))\in F\text{-}\mathrm{Isoc} ^{\dag \dag}(\ZZ_1 , D _1/K)$ 
  is $\iota$-pure of weight $w +1+i$, 
  where
    $(\hdag D _1) :=
 \D ^\dag _{\ZZ _1 } (\hdag D _1) _{\Q} \otimes _{\D ^\dag _{\ZZ _1 ^\#,\Q}} -
 \colon 
 F\text{-}\mathrm{Isoc} (\ZZ _1 ^\# /K)
\to 
F\text{-}\mathrm{Isoc} ^{\dag \dag}(\ZZ_1 , D _1/K)$
is the functor defined in \ref{F-isoc-alg-nalg}
and 
where $M$ is the monodromy filtration as defined in \ref{def-monod-filpre}.
  
 \end{thm}

 \begin{proof}
 Let $x$ be a closed point of $\ZZ _{1} \setminus \mathfrak{D} _1$, $k
 (x)$ its residue field and $\V _x$ a finite \'{e}tale $\V$-algebra
 which is a lifting of $k (x)$. We denote by $i _x\colon  \Spf \V _x
 \hookrightarrow \ZZ _1$ a lifting of the induced closed immersion by
 $x$. We denote $\Spf \V _x$ by $\{x \}$. We have to check that $i ^{!}
 _x\gr ^{M} _i (\H , N _{1,\FF}) (\hdag D _1)$ is $\iota$-pure of weight
 $w +1+i$.
 Up to a change of basis of $\X$ by the extension $\V \to \V _x$, we can
 suppose $k = k(x)$ by Lemma \ref{finiteextmixok}. Moreover, we can
  suppose that  $\mathfrak{D} _1$ is
  empty (i.e $\ZZ = \ZZ _1$) and $\ZZ \cap V\bigl( t _2 - t _2 (x), \dots, t _d - t _d
  (x)\bigr) = \{ x \}$. 
  Let $\X _1 :=
 \Spf A_1:= V\bigl( t _2 - t _2 (x), \dots, t _d - t _d (x)\bigr)$, 
 $\Y _1 := \X_1 \setminus \ZZ$, $\alpha\colon  \X_1 \hookrightarrow \X$,
 $\X _1 ^{\#}:= (\X_1, \{x \})$,
 $\alpha ^\# \colon \X _1 ^{\#} \hookrightarrow \X ^\#$, 
 $\beta \colon \{x\} \hookrightarrow \ZZ $, 
 $\FF_1:=
 \alpha ^{\#*} (\FF) \in F\text{-}\mathrm{Isoc} (\X _1 ^{\#}/K)$, 
 $\H _1:= \FF_1/t _1 \FF _1 $, 
 $\E _1:= \FF _1(\hdag \{ x\})$. Since $t _1 \partial _1$
 commutes with $\alpha ^\#$, 
 we get 
 $\beta ^* \gr ^{M} _i (\H , N   _{1,\FF})
 \riso \gr ^{M} _i (\H _1, N   _{1,\FF _1})$, 
 where the filtrations are the monodromy filtrations.  
 Hence, 
 we reduce to the curve case, i.e. $\X
 _1=\X$. Then, by using the comparison between our setting and Crew's one of the paragraph \ref{crewFrobcomp} and the remarks of
  \ref{defpurerem},
  we apply \cite[Theorem 10.8]{crewfini}.
 \end{proof}

\begin{coro}
 \label{rem-ker-coker-N}
 With the notation of \ref{crewhigher}, 
 the object
$(\hdag D _1) (\ker(N _{1, \FF})(-1)) $ 
 (resp.\ $(\hdag D _1) (\coker(N _{1, \FF}))$) 
 of 
 $ F\text{-}\mathrm{Isoc} ^{\dag \dag}(\ZZ_1 , D _1/K)$
 is
 $\iota$-mixed of weight $\leq w +1$ (resp.\ $\iota$-mixed of weight
 $\geq w +1$).

\end{coro}

\begin{proof}
Since $(\hdag D _1) $ is exact, 
 the  monodromy filtration of $(\H,N _{1,\FF})\in \mathfrak{C}$
induces a filtration on 
$(\hdag D _1) (\H)$ and then (as subobject and respectively quotient) 
on 
$\bigl(\ker N _{1,\FF} (-1)\bigr) (\hdag
 D _1)$ and 
 $(\coker N _{1,\FF}) (\hdag D _1)$. 
Denoting these filtrations by $M$, 
with the Remark \ref{twistFrobstr},
we get $\gr ^{M} _i\bigl(\bigl(\ker N _{1,\FF} (-1)\bigr) (\hdag
 D _1)\bigr) =0$ for $i >0$
 and 
$\gr ^{M} _i  (\coker N _{1,\FF} (\hdag D _1))=0$ if $i <0$. 
We conclude the proof by using  Theorem \ref{crewhigher}.
\end{proof}

\subsection{Relations between a convergent log-isocrystal and its
  associated overconvergent isocrystal}
\label{section3.3}
In this subsection, we keep the notation of \S\ref{chap3} and we
denote by $j:=(\star,\mr{id},\mr{id})\colon(Y,X,\X) \to (X,X,\X)$ the
canonical morphism of frames.
The aim of this subsection is to check Theorem
\ref{mixedisoals}. 

\begin{ntn}
[Dual functors, holonomicity]
\label{nota-dual}
Let $\PP$ be a smooth formal scheme over $\V$ and $\mathfrak{E}$ be a strict normal crossing divisor of $\PP$ relatively to $\V$.
We denote by  $\PP ^\#:= (\PP, \mathfrak{E})$ the corresponding log smooth formal scheme over $\V$ whose log structure is induced
by $\mathfrak{E}$.
Let $T$ be a divisor of the special fiber of $\PP$. 
We denote by 
$D _{\mathrm{parf}}
 ( \D ^\dag _{\PP ^\#} (\hdag T) _\Q) $ the derived category of perfect complexes of 
left  $\D ^\dag _{\PP ^\#} (\hdag T) _\Q$-modules.
When $T$ is empty, we have
$D _{\mathrm{parf}}
 ( \D ^\dag _{\PP ^\#, \Q}) 
=
D _{\mathrm{coh}} ^{\mathrm{b}}
 ( \D ^\dag _{\PP ^\#, \Q}) 
 $
 (see \cite[5.5]{caro_log-iso-hol}).
 When $T$ is not empty, we do not know so far if we have such equality. 

a) Following \cite[5.17]{caro_log-iso-hol}, we have the dual functor
$$\DD _{\PP ^\#, T}\colon 
D _{\mathrm{parf}}
 ( \D ^\dag _{\PP ^\#} (\hdag T) _\Q) 
\to 
D  _{\mathrm{parf}}
( \D ^\dag _{\PP ^\#} (\hdag T) _\Q)$$
defined by posing,
$ \DD _{\PP ^\#, T} (\G) := 
\R \mathcal{H} om _{\D ^\dag _{\PP ^\#} (\hdag T) _\Q}
( \G, \D ^\dag _{\PP ^\#} (\hdag T) _\Q \otimes _{\O _\PP} \omega ^{-1} _{\PP ^\#})
[d _P]$, for any 
$\G \in D _{\mathrm{parf}}
 ( \D ^\dag _{\PP ^\#} (\hdag T) _\Q)$.
 When $T$ is empty, we remove it from the notation. 

 As in \cite[5.19]{caro_log-iso-hol}, 
 a $ \D ^\dag _{\PP ^\#} (\hdag T) _\Q$-holonomic module 
 is by definition a 
coherent $ \D ^\dag _{\PP ^\#} (\hdag T) _\Q$-module $\mathcal{A}$
 such that, for any $i \not =0$, we have
 $\mathcal{H} ^i \DD _{\PP ^\#, T} (\mathcal{A}) =0$.
 For example, 
 let $\G$ be a log-isocrystal on 
$\PP ^\#$ overconvergent along $T$.
Then from \cite[5.17]{caro_log-iso-hol} we have
$\G \in D _{\mathrm{parf}}
 ( \D ^\dag _{\PP ^\#} (\hdag T) _\Q) $. 
If we denote by 
$\G ^\vee: = \mathcal{H} om _{\O _{\PP} (\hdag T) _\Q}
(\G, \O _{\PP} (\hdag T) _\Q)$, then 
from \cite[5.21]{caro_log-iso-hol} we have the isomorphism
\begin{equation}
\label{dualvsdualvee}
\DD _{\PP ^\#, T} (\G) 
\riso 
\G ^\vee.
\end{equation}
The isomorphism \ref{dualvsdualvee} implies
that $\G$ is $ \D ^\dag _{\PP ^\#} (\hdag T) _\Q$-holonomic.
From the isomorphism \ref{dualvsdualvee},
we also get 
the following factorizations 
$\DD _{\PP ^\#} \colon \mathrm{Isoc} ( \PP ^\#/K)
\to \mathrm{Isoc} ( \PP ^\#/K)$
and 
$\DD _{\PP ^\#} \colon \mathrm{Isoc} ^0 ( \PP ^\#/K)
\to \mathrm{Isoc} ^0 ( \PP ^\#/K)$.

b) If there is no risk of confusion,
we put $\DD _{T}:= \DD _{\PP ^\#, T}$
and we define the functor 
$(\hdag T)   \colon 
D _{\mathrm{parf}}
 ( \D ^\dag _{\PP ^\#\Q} )
 \to 
 D _{\mathrm{parf}}
 ( \D ^\dag _{\PP ^\#} (\hdag T) _\Q) $
 by setting
$(\hdag T) (\G) := 
\D ^\dag _{\PP ^\#} (\hdag T) _\Q\otimes _{ \D ^\dag _{\PP ^\#\Q}} \G$
for any $\G \in D _{\mathrm{parf}}
 ( \D ^\dag _{\PP ^\#\Q} )$.
We recall that dual functors commute with the extensions (see \cite[I.4.4]{virrion}), i.e.
\begin{equation}
\label{dual-ext}
\DD _{T} \circ (\hdag T) \riso 
(\hdag T) \circ  \DD 
\end{equation}
as functors 
$D _{\mathrm{parf}}
 ( \D ^\dag _{\PP ^\#\Q} )
 \to 
 D _{\mathrm{parf}}
 ( \D ^\dag _{\PP ^\#} (\hdag T) _\Q) $.

\end{ntn}

\begin{empt}
[Holonomicity, relative duality isomorphism of $i _1 ^\#$]
\label{holo-rel-dual}
Tensoring by $\O _{\X} (\mathfrak{D}) := \mathcal{H} om _{\O _{\X}} ( \omega _{\X }, \omega _{\X^\#})$, 
we have the evaluation isomorphism
$\omega _{\X } \otimes _{\O _\X} \O _{\X} (\mathfrak{D})
 \riso 
 \omega _{\X ^\#}$. 
Moreover, using the projection isomorphism, 
since 
$i _1 ^* \O _{\X _1} (\mathfrak{D} ) = \O _{\ZZ _1} (\mathfrak{D} _1)$, 
we get similarly
$i _{1*}\omega _{\ZZ _1}\otimes _{\O _\X} \O _{\X} (\mathfrak{D})
\riso
i _{1*} (\omega _{\ZZ _1} \otimes _{\O _{\ZZ _1}} \O _{\ZZ _1} (\mathfrak{D} _1))
\riso 
i _{1*}\omega _{\ZZ _1 ^\#}$.
Hence, by applying the tensor product by 
$\O _{\X} (\mathfrak{D}) $ over $\O _{\X}$ 
to the well known canonical isomorphism
$i _{1*}\omega _{\ZZ _1} \riso\R \mathcal{H} om _{\O _\X}
(i _{1*}\O  _{\ZZ _1} , \omega _{\X})$ (use the transitivity of the extraordinary inverse image as 
$\O $-module as defined by  Grothendieck in \cite{HaRD}), we get 
$\omega _{\ZZ _1 ^\#} \riso \R \mathcal{H} om _{\O _\X}
(i _{1*}\O  _{\ZZ _1} , \omega _{\X ^\#})$. 
Hence we can copy the construction of \cite[I.5.3.2]{Mebkhout} and then we obtain, for any 
$\G \in D _{\mathrm{parf}}
 ( \D ^\dag _{\PP ^\#} (\hdag T) _\Q) $, 
the isomorphism
\begin{equation}
\label{rel-dual-isom}
i _{1+} ^\#  (\DD _{\ZZ _{1} ^\#} (\G) )
\riso 
\DD _{\X ^\#} (i _{1+} ^\# (\G)), 
\end{equation}
which is called the relative duality isomorphism.
Notice that when the closed immersion is not exact, the isomorphism
\ref{rel-dual-isom}
is not true anymore (e.g. consider the isomorphism \ref{5.24.(ii)logisoc} which is a kind 
of twisted relative duality isomorphism).
Since the functor 
$ i _{1+} ^\# $ is exact on the category of coherent 
$\D ^\dag _{\ZZ _{1} ^\#,\Q}$-modules, 
the isomorphism 
\ref{rel-dual-isom} implies that if 
$\G$ is a holonomic $\D ^\dag _{\ZZ _{1} ^\#,\Q}$-module, then 
$ i _{1+} ^\# (\G)$ is $\D ^\dag _{\X ^\#,\Q}$-holonomic. 
\end{empt}

\begin{empt}
Using \cite[2.2.1]{caro-Tsuzuki}, 
we get $\D ^\dag _{\X} (\hdag Z) _{\Q}= \D ^\dag _{\X^\flat} (\hdag Z) _{\Q}$.
Hence, the functors  $\DD _{\X ^\flat, Z}$ and $\DD _{\X, Z}$ are equal and we 
can simply denote it  by $\DD _Z$ without risk of confusion.
 Since 
 $\FF, \DD _{\X^\flat} (\FF) \in \mathrm{Isoc} ^0 ( \X ^\flat/K)$ (see \ref{nota-dual}.a) for the last one),
 from \cite[2.2.9]{caro-Tsuzuki}, we
 get $u _+ (\FF) \riso (\hdag Z) (\FF)$, and 
 $u _+ (\DD _{\X^\flat} (\FF)) \riso (\hdag Z) (\DD _{\X^\flat} (\FF))$. Hence, we have the isomorphisms
 \begin{equation}
  \label{2.2.9bis}
   u _+\circ  \DD _{\X ^\flat} (\FF)
   \riso
   (\hdag Z) \circ \DD _{\X ^\flat} (\FF)
\underset{\ref{dual-ext}}{\riso}  
   \DD _{Z} \circ (\hdag Z) (\FF)
   \riso 
   \DD _{Z} \circ u _+ (\FF) ,
 \end{equation}
 which are the identity over $\Y$. 
 By applying Theorem
 \cite[5.24.(ii)]{caro_log-iso-hol} to 
$ \FF (-\ZZ)\in \mathrm{Isoc}  ( \X ^\flat/K)$ (we do not need in this theorem the nilpotence of the residue), since $\FF = \FF (-\ZZ) (\ZZ)$, 
we get the isomorphism
 \begin{equation}
  \label{5.24.(ii)logisoc}
   u _! (\FF) \riso u _{+} (\FF (-\ZZ)),
 \end{equation}
 which is the biduality isomorphism over $\Y$. Then we get the 
 isomorphism
 \begin{equation}
  \label{j!Eu!F}
   j_! (\E)=\DD\circ \DD _{Z} \circ u _+ (\FF)
   \underset{\ref{2.2.9bis}}{\riso}
   \DD\circ  u _+\circ  \DD _{\X ^\flat} (\FF)
   =
   u _{!} (\FF)
   \underset{\ref{5.24.(ii)logisoc}}{\riso}
   u _+( \FF (-\ZZ)),
 \end{equation}
 which is the biduality isomorphism over $\Y$.
\end{empt}

\begin{empt}
 \label{blabla-u_1!+}
By copying word for word their proof, 
the results of \cite{caro_log-iso-hol}  can be extended 
from the context of  the morphism $u \colon \X ^{\flat}\to \X$ to the context
of the morphism $u _1 \colon \X ^{\flat}\to \X ^\#$, i.e. to the case of a morphism 
 which partially forget the log structure.
 For the reader, we display here these results that we will need
 and explain briefly how they are checked.

i)  First, we follow the notation of \cite[5.1]{caro_log-iso-hol} :
 We get a left $\D ^\dag _{\X ^{\flat}, \Q}$-module (and not a 
 $\D ^\dag _{\X ^{\#}, \Q}$-module) by posing
  $\O _{\X} (\ZZ _1) : = \mathcal{H} om _{\O _{\X}} (
 \omega _{\X ^\#} ,  \omega _{\X ^\flat})$.
 For any integer $n \in \Z$, for any left $\D ^\dag _{\X ^{\flat}, \Q}$-module $\G$, 
 we set
 $\O _{\X} (n \ZZ _1):= \O _{\X} (\ZZ _1) ^{\otimes n}$ 
( where $\otimes n$ means that we tensorise $n$times as $\O _{\ZZ _1}$-module) 
and $\G (n\ZZ _1):= \G \otimes _{\O _{\X} } \O _{\X} (n \ZZ _1)$.
Let $\Mod  ( \D ^\dag  _{ \X ^\flat ,\Q})$ be the category of 
left $\D ^\dag  _{ \X ^\flat ,\Q}$-modules. 
Since $ \O _{\X} (n \ZZ _1)$ is a free $ \O _{\X} $-module of rank one, 
then the induced functors
$(n \ZZ _1)\colon 
\Mod  ( \D ^\dag  _{ \X ^\flat ,\Q})
\to 
\Mod  ( \D ^\dag  _{ \X ^\flat ,\Q})$,
given by 
$\G \mapsto \G (n\ZZ _1)$,
is exact.
We get the canonical isomorphism
$ \omega _{\X ^\flat} \otimes _{\O _\X} ( \G \otimes _{\O _{\X}}\omega _{\X ^\#} ^{-1} )
\riso
 \G (\ZZ _1)$ 
 (as in \cite[5.2.4]{caro_log-iso-hol}).
 Hence, as in \cite[5.8]{caro_log-iso-hol}, we get a 
$ (\D ^\dag  _{ \X ^\# }, \D ^\dag  _{ \X ^\flat})$-bimodule by setting 
 $\D ^\dag  _{ \X ^\# \leftarrow  \X ^\flat}:= 
  \omega _{\X ^\flat} \overset{\mathrm{d}}{\otimes} _{\O _\X} ( \D ^\dag  _{ \X ^\# }\otimes _{\O _{\X}}\omega _{\X ^\#} ^{-1} )
  \riso \D ^\dag  _{ \X ^\# } \otimes  _{\O _\X} \O _{\X} (\ZZ _1)$,
  where the symbol `$\mathrm{d}$' means that we take the right structure of right $ \D ^\dag  _{ \X ^\# }$-module of the right 
  $ \D ^\dag  _{ \X ^\# }$-bimodule
  $ \D ^\dag  _{ \X ^\# }\otimes _{\O _{\X}}\omega _{\X ^\#} ^{-1} $ and the isomorphism is induced by the evaluation map
  (in fact, in \cite[5.8]{caro_log-iso-hol} we had chosen 
  $\D ^\dag  _{ \X ^\# \leftarrow  \X ^\flat}:= \D ^\dag  _{ \X ^\# } \otimes  _{\O _\X} \O _{\X} (\ZZ _1)$
  but here we stick with a more standard notation).
 Then, we get the functor
$u _{1+}\colon  D ^{\mathrm{b}} _{\mathrm{coh}} ( \D ^\dag  _{ \X ^\flat ,\Q}) 
\to 
D ^{\mathrm{b}} _{\mathrm{coh}} ( \D ^\dag  _{ \X ^\# ,\Q}) $ defined by posing, 
 for any $\G \in D ^{\mathrm{b}} _{\mathrm{coh}} ( \D ^\dag  _{ \X ^\flat ,\Q}) $, 
 $u _{1+}  := 
 \D ^\dag  _{ \X ^\# \leftarrow  \X ^\flat, \Q} 
 \otimes ^\L _{\D ^\dag  _{ \X ^\flat, \Q}}
 \G$.

ii)  Similarly than \cite[5.12]{caro_log-iso-hol},
we put 
$ u _{1!} : = \DD _{\X ^\#} \circ u _{1+} \circ \DD _{\X ^\flat}
\colon  D ^{\mathrm{b}} _{\mathrm{coh}} ( \D ^\dag  _{ \X ^\flat ,\Q}) 
\to 
D ^{\mathrm{b}} _{\mathrm{coh}} ( \D ^\dag  _{ \X ^\# ,\Q})$. 
Using some transposition isomorphims for $\X ^\#$ and $\X ^\flat$ (i.e. 
\cite[1.18 and 1.19]{caro_log-iso-hol}), we check as for Proposition  \cite[5.14]{caro_log-iso-hol}
(see  the notation \cite[5.3]{caro_log-iso-hol})
the isomorphism 
\begin{equation}
\label{5.3-log-iso-holbis}
u _{1!} (\G ) \riso \D ^\dag _{ \X ^\#, \Q}  \otimes ^\L _{\D ^\dag _{ \X ^\flat, \Q}} \G
\end{equation}
 for any $\G \in D ^{\mathrm{b}} _{\mathrm{coh}} ( \D ^\dag  _{ \X ^\flat ,\Q}) $.
 Moreover, 
as in   \cite[5.16]{caro_log-iso-hol}, 
we have the canonical isomorphism 
\begin{equation}
\label{5.24.(ii)caro_log-iso-hol}
\iota _{1,\G}\colon u _{1!} (\G)
 \riso u _{1+} (\G (-\ZZ _1))
\end{equation}
which is the biduality isomorphism
 outside $Z _1$. The main point of the proof of the isomorphism \ref{5.24.(ii)caro_log-iso-hol}
 is to check the canonical isomorphism of right $\D ^{(0)} _{\X ^\flat}$-modules of the form
 $( \omega _{\X ^\#} \otimes _{\O _\X} \G ^{\flat}) 
 \otimes _{\D ^{(0)} _{\X ^\flat}} \G ^{\#}
 \riso
 ( \omega _{\X ^\#} \otimes _{\O _\X} \G ^{\#}) 
 \otimes _{\D ^{(0)} _{\X ^\flat}} \G ^{\flat}
 $,
 for any 
 $\D ^{(0)} _{\X ^\flat}$-bimodule
$ \G ^{\#}$
and any 
left $\D ^{(0)} _{\X ^\#}$-module 
$\G ^\flat$. The check of this last isomorphism is the same as that of
\cite[3.7.1]{caro_log-iso-hol} (more precisely, every results of \cite[3]{caro_log-iso-hol}
can be extended in this relative context : replace everywhere respectively 
$\X$ by $\X ^\#$ and $\X ^\#$ by $\X ^\flat$).

iii) Let $\G \in \mathrm{Isoc} (\X ^\flat/K)$. 
 As in \cite[2.6]{caro_log-iso-hol}, we define the following first Spencer sequence associated with $\G$ :
 \begin{equation}
 \label{firstSpencer}
 0 
 \to 
 \D  _{ \X ^\flat, \Q} \otimes _{\O _{\X}} \wedge ^d \mathcal{T} _{\X ^\flat}\otimes _{\O _{\X}}\G 
 \to 
 \dots
 \to 
 \D  _{ \X ^\flat, \Q} \otimes _{\O _{\X}} \wedge ^1 \mathcal{T} _{\X ^\flat} \otimes _{\O _{\X}}\G 
 \to 
  \D  _{ \X ^\flat, \Q} \otimes _{\O _{\X}}\G 
  \to 
  \G
  \to 
  0,
 \end{equation}
where 
$\mathcal{T} _{\X ^\flat}$ 
is the tangent space of $\X ^\flat/\V$ (recall that
$  \D  _{ \X ^\flat, \Q} =   \D  ^{(0)} _{ \X ^\flat} \otimes _{\O _\X} \O _{\X, \Q}$).
We denote by $Sp ^\bullet _{\D  _{ \X ^\flat, \Q} } (\G)$ the sequence 
\ref{firstSpencer}.
As in \cite[2.8]{caro_log-iso-hol}, we check that the sequence 
$Sp ^\bullet _{\D  _{ \X ^\flat, \Q} } (\G)$ is exact. 
Moreover, since $\G$ is a flat $\O _{\X,\Q}$-module,
copying the proof of \cite[2.10]{caro_log-iso-hol}, 
we get the exactness of the sequence
$\D  _{ \X ^\#, \Q}  \otimes _{\D  _{ \X ^\flat, \Q} } Sp ^\bullet _{\D  _{ \X ^\flat, \Q} } (\G)$.
Since the extension 
$\D  _{ \X ^\#, \Q}  \to \D ^\dag _{ \X ^\#, \Q} $ is flat, 
we get the exactness of the sequence
$\D  ^\dag _{ \X ^\#, \Q}  \otimes _{\D  _{ \X ^\flat, \Q} } Sp ^\bullet _{\D  _{ \X ^\flat, \Q} } (\G)$.
Since 
$Sp ^\bullet _{\D  _{ \X ^\flat, \Q} } (\G)$ gives a flat resolution of 
$\G$ by left $\D  _{ \X ^\flat, \Q} $-modules, 
this implies that 
the canonical morphism
$\D ^\dag  _{ \X ^\#, \Q}  \otimes ^\L _{\D  _{ \X ^\flat, \Q} }  \G
\to 
\D ^\dag _{ \X ^\#, \Q}  \otimes _{\D  _{ \X ^\flat, \Q} } \G$
is an isomorphism.
From \cite[4.14]{caro_log-iso-hol}, 
the canonical morphism
$\G 
\to 
\D ^\dag _{ \X ^\flat, \Q}  \otimes _{\D  _{ \X ^\flat, \Q} } \G \liso \D ^\dag _{ \X ^\flat, \Q}  \otimes ^\L _{\D  _{ \X ^\flat, \Q} } \G$
is an isomorphism (the last isomorphism
comes from the flatness of 
$\D  _{ \X ^\flat, \Q}  \to \D ^\dag _{ \X ^\flat, \Q} $).
This yields the last canonical isomorphism:
$u _{1!} (\G )
\underset{\ref{5.3-log-iso-holbis}}{\riso}
\D ^\dag _{ \X ^\#, \Q}  \otimes ^\L _{\D ^\dag _{ \X ^\flat, \Q}} \G
\riso 
\D ^\dag _{ \X ^\#, \Q}  \otimes _{\D ^\dag _{ \X ^\flat, \Q}} \G$.
 With \ref{5.24.(ii)caro_log-iso-hol}, this implies
 \begin{equation}
 \label{u_1+exact}
 u _{1+} (\G ) \riso \D ^\dag _{ \X ^\#, \Q}  \otimes _{\D ^\dag _{ \X ^\flat, \Q}} \G (\ZZ _1).
 \end{equation}
 
iv)  Finally, 
 from \cite[3.5.6.2]{caro-stab-u!R-Gamma}, 
 for any $\G \in \mathrm{Isoc} ^0 (\X ^\flat/K)$ (not only  $\mathrm{Isoc} (\X ^\flat/K)$), 
 we have the isomorphism
\begin{equation}
\label{2.2.3caro-Tsuzuki}
 \rho _{1,\G}\colon u _{1+} (\G)\riso (\hdag Z _1) (\G ),
\end{equation}
which is a generalization of \cite[2.2.9]{caro-Tsuzuki} that we will need.
 
\end{empt}

\begin{empt}
 \label{nota-kercoker-alpha}
We denote
 by $\alpha _{1,\FF}\colon \FF (-\ZZ _1)\hookrightarrow \FF$ the
 canonical monomorphism of $\mathrm{Isoc} (\X ^\flat/K)$. 
 From \ref{u_1+exact}, for any $\G \in  \mathrm{Isoc} (\X ^\flat/K)$, 
$ \mathcal{H} ^{i} u _{1+} (\G)=0$  ($i\neq0$).
 Thus, $\alpha_{1,\FF}$
 induces the homomorphism  of coherent $\D ^\dag _{\X ^{\#}, \Q}$-modules
 $\beta _{1,\FF} := u _{1+} (\alpha _{1,\FF}) \colon 
 u _{1+} (\FF (-\ZZ _1))
 \to 
 u _{1+} (\FF)$.
 In fact, with \ref{5.24.(ii)caro_log-iso-hol} and \ref{dualvsdualvee},
 we check that the morphism
 $\beta _{1,\FF}$ is a morphism of $\D ^\dag _{\X ^{\#}, \Q}$-holonomic modules.
 \end{empt}

\begin{empt}
[Cone of $\beta _{1,\FF}$]
\label{coneBeta}
Since the module $\FF (-\ZZ _1)\in \mathrm{Isoc} (\X ^\flat/K) $ and its exponents and their differences
are non Liouville, then from \cite[3.5.5.1]{caro-stab-u!R-Gamma}, 
$ i _{1} ^{\# !}  (u _{1+} (\FF (-\ZZ _1)))$
is isomorphic to a complex of objects in $\mathrm{Isoc} (\ZZ _{1}©¢^\#/K)$
and in particular belongs to 
$D ^\mathrm{b} _{\mathrm{coh}} ( \D ^\dag _{\ZZ _{1} ^{\#},\Q})$.
In our special context, since the residues of $\FF (-\ZZ _1)$ induces by $t _i \partial _i$ for $i\geq 2$ are nilpotent,
then so are that of the isocrystals of the complex
$ i _{1} ^{\prime \# !}  (u _{1+} (\FF (-\ZZ _1)))$ (see 
\cite[1.1.22]{caro-Tsuzuki} which is the main argument of the check of 
\cite[3.5.5.1]{caro-stab-u!R-Gamma}).
Moreover, with \cite[3.5.5.2]{caro-stab-u!R-Gamma}, we have the exact triangle 
of $D ^\mathrm{b} _{\mathrm{coh}} ( \D ^\dag _{\X ^{\#},\Q})$: 
\begin{equation}
\label{caro-stab-u!R-Gamma-ET-FZ1}
i _{1+} ^{\#}  i _{1} ^{\# !} 
(u _{1+} (\FF (-\ZZ _1)))
\to 
u _{1+} (\FF (-\ZZ _1))
\to 
(\hdag Z _1) \circ u _{1+} (\FF (-\ZZ _1))
\to 
+1
\end{equation}
Since  $\D ^\dag _{\X ^\#} (\hdag Z _1) _{\Q}= \D ^\dag _{\X^\flat} (\hdag Z _1) _{\Q}$ (use \cite[2.2.1]{caro-Tsuzuki}),
we get : 
 \begin{gather}
     \label{u_1hadgZ1}
  (\hdag Z _1) \circ u _{1+} (\FF (-\ZZ _1) ):=
  \D ^\dag _{\X ^\#} (\hdag Z _1) _{\Q} \otimes _{\D ^\dag _{\X ^\#,\Q}} u _{1+} (\FF (-\ZZ _1) )
\underset{\ref{u_1+exact}}{\riso}
    \D ^\dag _{\X ^\flat} (\hdag Z _1) _{\Q} \otimes _{\D ^\dag _{\X ^\flat,\Q}} \FF 
    =: (\hdag Z _1) (\FF ).
 \end{gather}
Since 
$\FF \in \mathrm{Isoc} ^0 (\X ^\flat/K) $,
from  
\ref{2.2.3caro-Tsuzuki},
we have
$ \rho _{1,\FF}\colon u _{1+} (\FF)\riso (\hdag Z _1) (\FF )$.
As in \ref{u_1hadgZ1}, this yields that the canonical morphism 
$u _{1+} (\FF ) \to  (\hdag Z _1) \circ u _{1+} (\FF)$ is an isomorphism.
Hence, from \ref{caro-stab-u!R-Gamma-ET-FZ1}, we get the exact triangle of 
$D ^\mathrm{b} _{\mathrm{coh}} ( \D ^\dag _{\X ^{\#},\Q})$:
\begin{equation}
\label{caro-stab-u!R-Gamma-ET-FZ1beta}
i _{1+} ^{\#}  i _{1} ^{\# !} 
(u _{1+} (\FF (-\ZZ _1)))
\to 
u _{1+} (\FF (-\ZZ _1))
\overset{\beta _{1,\FF}}{\longrightarrow}
u _{1+} (\FF)
\to 
+1
\end{equation}

Since $\mathrm{Isoc} ^0 (\ZZ _{1} ^{\#}/K)$ is an abelian category and 
since  
$i _{1} ^{\# !} 
(u _{1+} (\FF (-\ZZ _1)))$ is isomorphic to a complex of 
$\mathrm{Isoc} ^0 (\ZZ _{1} ^{\#}/K)$,
from \ref{caro-stab-u!R-Gamma-ET-FZ1beta}, 
we get that 
$i_1 ^{\# !} (\ker (\beta _{1,\FF}))$
and 
$  i_1 ^{\# !} (\coker (\beta _{1,\FF}))$ 
are objects of the abelian category $\mathrm{Isoc} ^0 (\ZZ _{1} ^{\#}/K)$.

Since $i_{1+} ^{\# } $ preserves the holonomicity (see \ref{holo-rel-dual}),
we  get from
\ref{caro-stab-u!R-Gamma-ET-FZ1beta}
the long exact sequence of holonomic 
$\D ^\dag _{\X ^\#,\Q}$-modules
\begin{equation}
\label{exsqcaro-stab-u!R-Gamma-ET-FZ1beta}
0 
\to 
\ker (\beta _{1,\FF})
\to 
u _{1+} (\FF (-\ZZ _1))
\overset{\beta _{1,\FF}}{\longrightarrow}
u _{1+} (\FF)
\to 
\coker (\beta _{1,\FF})
\to
0.
\end{equation}
Using the definition of holonomicity, 
we check that the image of $\beta _{1,\FF}$ is also holonomic.
(Remark that this is still not known 
in the logarithmic context that a coherent submodule of a $\D ^\dag _{\X ^\#,\Q}$-holonomic
module is $\D ^\dag _{\X ^\#,\Q}$-holonomic (but this is probably true). So a priori we do need
the holonomicity of $\ker (\beta _{1,\FF})$ and 
$\coker (\beta _{1,\FF})$ to 
check the holonomicity of the image of $\beta _{1,\FF}$.)
Hence, by applying the functor
$\mathcal{H} ^0 \DD _{\X ^\#}$ to the long exact sequence
\ref{exsqcaro-stab-u!R-Gamma-ET-FZ1beta},
we get another long exact sequence which can be translated by the isomorphisms
\begin{equation}
\label{logdualkercoker}
\DD _{\X ^\#} \coker (\beta _{1,\FF})
 \riso 
\ker  (\DD _{\X ^\#} (\beta _{1,\FF})),
\
\DD _{\X ^\#} \ker (\beta _{1,\FF})
 \riso 
\coker  (\DD _{\X ^\#} (\beta _{1,\FF}))
\end{equation}
where we remove  $\mathcal{H} ^0$ in the notation
since modules are holonomic.

\end{empt}

\begin{empt}
\label{desc-g1+}

i) As $g _1 ^\# \colon \X ^{\prime \#} \to \ZZ _1 ^\#$ is log smooth of relative dimension $1$,
using \cite[IV.3.2.3]{Ogus-Logbook}, 
we check that
$\Omega ^1 _{\X ^{\prime \#}/\ZZ _1 ^\#} =
\omega  _{\X ^{\prime \#}/\ZZ _1 ^\#} 
\riso 
\omega  _{\X ^{\prime \#}}\otimes _{g _{1} ^{-1} \O _{\ZZ _1}} g _{1} ^{-1} (\omega  _{\ZZ _1 ^\#} ^{-1})$.
We denote by 
$\delta _{\X ^{\prime \#}}\colon 
\omega  _{\X ^{\prime \#}}\otimes _{\O _{\X '}}  \widehat{\D} ^{(0)} _{\X ^{\prime \#}}
\riso 
\omega  _{\X ^{\prime \#}}\otimes _{\O _{\X '}}  \widehat{\D} ^{(0)} _{\X ^{\prime \#}}$
the canonical involution which exchanges both right $\widehat{\D} ^{(0)} _{\X ^{\prime \#}}$-module structures
(this comes from \cite[1.19]{caro_log-iso-hol} by completion) :
$\omega _{\X ^{\prime \#}} \otimes _{\O _{\X '}} 
\widehat{\D} ^{(0)} _{\X ^{\prime \#}}
\riso
\omega _{\X ^{\prime \#}} \otimes _{\O _{\X '}} 
\widehat{\D} ^{(0)} _{\X ^{\prime \#}}$.
The canonical morphism of left $\widehat{\D} ^{(0)} _{\X ^{\prime \#}}$-module of the form
$\widehat{\D} ^{(0)} _{\X ^{\prime \#}}
\to
g _{1} ^{*} (\widehat{\D} ^{(0)} _{\ZZ _1 ^\#})$
induces the morphism of left
$(\widehat{\D} ^{(0)} _{\X ^{\prime \#}}, g _{1} ^{-1} \widehat{\D} ^{(0)} _{\ZZ _1 ^\#} )$-bimodules
of the form
$\widehat{\D} ^{(0)} _{\X ^{\prime \#}}\otimes _{g _{1} ^{-1} \O _{\ZZ _1}} g _{1} ^{-1} (\omega  _{\ZZ _1 ^\#} ^{-1})
\to
g _{1 \mathrm{g}} ^{*} (\widehat{\D} ^{(0)} _{\ZZ _1 ^\#} \otimes _{\O _{\ZZ _1 }} \omega _{\ZZ _1 ^\#} ^{-1})$,
where the symbol `$ \mathrm{g}$' means that we choose the left structure of 
right $\widehat{\D} ^{(0)} _{\ZZ _1 ^\#} $-module of 
$\widehat{\D} ^{(0)} _{\ZZ _1 ^\#} \otimes _{\O _{\ZZ _1 }} \omega _{\ZZ _1 ^\#} ^{-1}$. 
We denote by 
$\beta _{ \ZZ _1 ^\#} \colon 
\widehat{\D} ^{(0)} _{\ZZ _1 ^\#} \otimes _{\O _{\ZZ _1 }} \omega _{\ZZ _1 ^\#} ^{-1}
\riso 
\widehat{\D} ^{(0)} _{\ZZ _1 ^\#} \otimes _{\O _{\ZZ _1 }} \omega _{\ZZ _1 ^\#} ^{-1}$
the involution isomorphism 
which exchanges both left $\widehat{\D} ^{(0)} _{\X ^{\prime \#}}$-module structures.
Applying the functor 
$\omega _{\X ^{\prime \#}} \otimes _{\O _{\X '}}  -$ to these morphisms of left 
$(\widehat{\D} ^{(0)} _{\X ^{\prime \#}}, g _{1} ^{-1} \widehat{\D} ^{(0)} _{\ZZ _1 ^\#} )$-bimodules 
we get the morphism of $(g _{1} ^{-1} \widehat{\D} ^{(0)} _{\ZZ _1 ^\#} ,\widehat{\D} ^{(0)} _{\X ^{\prime \#}})$-bimodules
of the form:
\begin{equation}
\label{DeRham-g_1-leftarrowpre1}
\omega _{\X ^{\prime \#}} \otimes _{\O _{\X '}}  
( \widehat{\D} ^{(0)} _{\X ^{\prime \#}}\otimes _{g _{1} ^{-1} \O _{\ZZ _1}} g _{1} ^{-1} (\omega  _{\ZZ _1 ^\#} ^{-1}))
\to
\omega _{\X ^{\prime \#}} \otimes _{\O _{\X '}}  
g _{1 \mathrm{g}} ^{*} (\widehat{\D} ^{(0)} _{\ZZ _1 ^\#} \otimes _{\O _{\ZZ _1 }} \omega _{\ZZ _1 ^\#} ^{-1})
\underset{\beta _{ \ZZ _1 ^\#}}{\riso} 
g _{1 \mathrm{d}} ^{*} (\widehat{\D} ^{(0)} _{\ZZ _1 ^\#} \otimes _{\O _{\ZZ _1 }} \omega _{\ZZ _1 ^\#} ^{-1})
 \otimes _{\O _{\X '}}  
 \omega _{\X ^{\prime \#}},
\end{equation}
where the symbol `$ \mathrm{d}$'  means that we choose the right structure of 
right $\widehat{\D} ^{(0)} _{\ZZ _1 ^\#} $-module of 
$\widehat{\D} ^{(0)} _{\ZZ _1 ^\#} \otimes _{\O _{\ZZ _1 }} \omega _{\ZZ _1 ^\#} ^{-1}$. 
Hence, by composing with the canonical involution we get
\begin{equation}
\label{DeRham-g_1-leftarrowpre2}
\Omega ^1 _{\X ^{\prime \#}/\ZZ _1 ^\#} \otimes _{\O _{\X '}} 
\widehat{\D} ^{(0)} _{\X ^{\prime \#}}
 \riso 
g _{1} ^{-1} \omega  _{\ZZ _1 ^\#} ^{-1}  \otimes _{g _{1} ^{-1} \O _{\ZZ _1}} 
(\omega _{\X ^{\prime \#}} \otimes _{\O _{\X '}}  
\widehat{\D} ^{(0)} _{\X ^{\prime \#}})
\underset{\delta _{\X ^{\prime \#}} \otimes id }{\riso}
(\omega _{\X ^{\prime \#}} \otimes _{\O _{\X '}}  
 \widehat{\D} ^{(0)} _{\X ^{\prime \#}})
 \otimes _{g _{1} ^{-1} \O _{\ZZ _1}} g _{1} ^{-1} \omega  _{\ZZ _1 ^\#} ^{-1},
\end{equation}
where the canonical right $\widehat{\D} ^{(0)} _{\X ^{\prime \#}}$-module structure on 
$\Omega ^1 _{\X ^{\prime \#}/\ZZ _1 ^\#} \otimes _{\O _{\X '}} 
\widehat{\D} ^{(0)} _{\X ^{\prime \#}}$ 
is the multiplication at the right on $\widehat{\D} ^{(0)} _{\X ^{\prime \#}}$ in the tensor product. 
As in \cite[5.1.1]{these_montagnon}, which can be extended to the relative case,
(see also \cite[2.4.6.2]{Beintro2} for the non logarithmic relative case), we check that 
the canonical morphism of right $\widehat{\D} ^{(0)} _{\X ^{\prime \#}}$-modules 
\begin{equation}
\label{DeRham-g_1-leftarrow}
\Omega ^1 _{\X ^{\prime \#}/\ZZ _1 ^\#} \otimes _{\O _{\X '}} 
\widehat{\D} ^{(0)} _{\X ^{\prime \#}}
\to 
g _{1 \mathrm{d}} ^{*} (\widehat{\D} ^{(0)} _{\ZZ _1 ^\#} \otimes _{\O _{\ZZ _1 }} \omega _{\ZZ _1 ^\#} ^{-1})
 \otimes _{\O _{\X '}}  
 \omega _{\X ^{\prime \#}} 
 =:
 \widehat{\D} ^{(0)} _{\ZZ _1 ^\# \leftarrow \X ^{\prime \# }}
\end{equation}
which is the composition of \ref{DeRham-g_1-leftarrowpre2} with \ref{DeRham-g_1-leftarrowpre1},
induces the exact sequence of right $\widehat{\D} ^{(0)} _{\X ^{\prime \#}}$-modules
\begin{equation}
\label{DeRham-g_1}
0
\to 
\widehat{\D} ^{(0)} _{\X ^{\prime \#}}
\to 
\Omega ^1 _{\X ^{\prime \#}/\ZZ _1 ^\#} \otimes _{\O _{\X '}} 
\widehat{\D} ^{(0)} _{\X ^{\prime \#}}
\to 
\widehat{\D} ^{(0)} _{\ZZ _1 ^\# \leftarrow \X ^{\prime \# }}
\to 
0,
\end{equation}
where the first part of the exact sequence is the relative to $g _1 ^\#$ de Rham complex of $\widehat{\D} ^{(0)} _{\X ^{\prime \#}}$.
As for \cite[4.3.3.(ii)]{Beintro2} (to deduce the right case from the left case, 
we use the equivalences \cite[1.16]{caro_log-iso-hol}), 
since $g _1 ^\#$ is log smooth, then 
$\widehat{\D} ^{(0)} _{\ZZ _1 ^\# \leftarrow \X ^{\prime \# },\Q}
\otimes _{\widehat{\D} ^{(0)} _{\X ^{\prime \#},\Q}} 
\D ^{\dag} _{\X ^{\prime \#},\Q} 
\riso 
\D ^{\dag} _{\ZZ _1 ^\# \leftarrow \X ^{\prime \# },\Q}:=
\omega _{\X ^{\prime \#}}  \otimes _{\O _{\X '}}
g _{1 \mathrm{d}} ^{*} (\D ^{\dag} _{\ZZ _1 ^\#,\Q}  \otimes _{\O _{\ZZ _1 }} \omega _{\ZZ _1 ^\#} ^{-1})$.
Hence, since $g _{1*}$ is exact, 
since the extension
$\widehat{\D} ^{(0)} _{\X ^{\prime \#}} 
\to \D ^{\dag} _{\X ^{\prime \#},\Q} $ is flat, 
using the exactness of \ref{DeRham-g_1}
we get 
for any coherent left $\D ^{\dag} _{\X ^{\prime \#},\Q} $-module $\G ^\#$
\begin{equation}
\label{g1dashdescr}
g _{1+} ^\# (\G^\#) :=
\R g _{1 *} (\D ^{\dag} _{\ZZ _1 ^\# \leftarrow \X ^{\prime \# },\Q} \otimes ^\L _{\D ^{\dag} _{\X ^{\prime \#},\Q} } \G ^\#)
\riso 
g _{1*} [  \G ^\#\to  \Omega ^1 _{\X ^{\prime \#}/\ZZ _1 ^\#} \otimes _{\O _{\X '}}  \G ^\#] [1],
\end{equation}
where $ [  \G ^\#\to  \Omega ^1 _{\X ^{\prime \#}/\ZZ _1 ^\#} \otimes _{\O _{\X '}}  \G ^\#] [1]$ means that 
the position of $\G ^\#$ is $-1$.

ii) Similarly, for any coherent left $\D ^{\dag} _{\X ^{\prime \flat},\Q} $-module $\G ^\flat$, we check the isomorphism
\begin{equation}
\label{g1flatdescr}
g _{1+} ^\flat (\G ^\flat) :=
\R g _{1 *} (\D ^{\dag} _{\ZZ _1 ^\# \leftarrow \X ^{\prime \flat },\Q} \otimes ^\L _{\D ^{\dag} _{\X ^{\prime \flat},\Q} } \G ^\flat)
\riso 
g _{1*} [  \G ^\flat\to  \Omega ^1 _{\X ^{\prime \flat}/\ZZ _1 ^\#} \otimes _{\O _{\X '}}  \G ^\flat ][1].
\end{equation}
By identifying $ \Omega ^1 _{\X ^{\prime \#}/\ZZ _1 ^\#} $ (which is free with $d t _1$ as a basis)
(resp. $ \Omega ^1 _{\X ^{\prime \flat}/\ZZ _1 ^\#} $, which is free with $d log t _1$ as a basis)
with $\O _{\X '}$, 
we get that $g _{1+} ^\# (\G^\#) [-1]$
is 
the complex 
$g _{1*} \G^\# \overset{\partial _1}{\longrightarrow} g _{1*} \G^\#$
(resp. 
$g _{1+} ^\flat (\G^\flat) [-1]$
is $g _{1*} \G^\flat \overset{t _1 \partial _1}{\longrightarrow} g _{1*} \G^\flat$).

\end{empt}

\begin{rem}
Beware that the sequence 
\ref{DeRham-g_1} is not anymore exact if we replace the level $0$ by any level $m$ (the proof is a computation 
involving Koszul complexes, the computation only works at level $0$). But remark that since the extension 
$\widehat{\D} ^{(0)} _{\X ^{\prime \#},\Q} \to \widehat{\D} ^{(m)} _{\X ^{\prime \#},\Q}$ is flat, 
since
$\widehat{\D} ^{(0)} _{\ZZ _1 ^\# \leftarrow \X ^{\prime \# },\Q}
\otimes _{\widehat{\D} ^{(0)} _{\X ^{\prime \#}},\Q} 
\widehat{\D} ^{(m)} _{\X ^{\prime \#},\Q} 
\riso
\widehat{\D} ^{(m)} _{\ZZ _1 ^\# \leftarrow \X ^{\prime \# },\Q}:=
g _{1 \mathrm{d}} ^{*} (\widehat{\D} ^{(m)} _{\ZZ _1 ^\#} \otimes _{\O _{\ZZ _1 }} \omega _{\ZZ _1 ^\#} ^{-1})
 \otimes _{\O _{\X '}}  
 \omega _{\X ^{\prime \#}} $,
then this is true after applying the functor $-\otimes _\Z \Q$. 
\end{rem}

\begin{lem}
 \label{lemm-square-outideD}
Let $\PP$ be a smooth formal scheme, $\T$ be a strict normal crossing divisor on $\PP$, 
$\PP ^\# := (\PP, \T)$, $H$ be a divisor of $P$.
 Consider the following diagram of coherent $\D ^\dag _{\PP ^\#,
 \Q}$-modules:
 \begin{equation}
 \label{lemm-square-outideD-diag}
  \xymatrix@R=0,3cm{
   {\M _1}\ar[r] ^-{}\ar[d] ^-{}& 
   {\M _1 '}\ar[d] ^-{}\\ 
  {\M _2}\ar[r] ^-{}& 
   {\M _2 '.}
   }
 \end{equation}
 We suppose that the canonical morphism $\M ' _2 \to \M' _2 (\hdag H):=
 \D ^\dag _{\PP ^\#} (\hdag H)  _\Q \otimes _{\D ^\dag _{\PP ^\#,\Q}}
 \M$ is an isomorphism. Then the diagram is commutative if and only if
 it is so after restricting to $\PP\setminus H$.
\end{lem}
\begin{proof}
Suppose the diagram commutative after restricting to $\PP':= \PP\setminus H$.
From the diagram \ref{lemm-square-outideD-diag}, we get two morphisms
$f,g\colon \M _1 \to \M '_2$ and we have to check that $f-g=0$. 
Let $\NN'$ be the image of $f-g$.
Since by hypothesis $\NN ' (\hdag H) | \PP' = \NN ' | \PP'=0$, 
since  $\NN ' (\hdag H)$ is $ \D ^\dag _{\PP ^\#} (\hdag H)  _\Q$-coherent,
from  \cite[4.8]{caro_log-iso-hol} we get
$\NN ' (\hdag H)=0$.
Since the composition
$\NN ' \to \NN ' (\hdag H) \to \M '_2 (\hdag H) \liso \M '_2$ is injective,
then so is $\NN ' \to \NN ' (\hdag H)$.
\end{proof}

\begin{prop}
 \label{dual-beta}
 (i) The following diagram of holonomic $\D ^\dag _{\X ^{\#},
 \Q}$-modules
 \begin{equation*}
   \xymatrix @R=20pt @C=2cm {
   {\DD _{\X ^{\#}} u _{1+} \DD _{\X ^{\flat}} (\FF)
   =u _{1!}  (\FF)} 
   \ar[rr] ^-{\iota _{1,\FF}} _-{\sim}
   \ar[d] _-{\DD _{\X ^{\#}} (\beta _{1,\DD _{\X ^{\flat}}\FF} )}
   & &
   {u _{1+} (\FF (-\ZZ _1))} 
   \ar[d] ^-{\beta _{1,\FF} }
   \\ 
  {\DD _{\X ^{\#}} \circ u _{1+}\circ (-\ZZ _1) \circ \DD _{\X ^{\flat}}(\FF)} 
   \ar[r] ^-{\sim} _-{\DD _{\X ^{\#}} (\iota _{1,\DD _{\X^\flat}(\FF)})}
   &
   {   \DD _{\X ^{\#}} \circ u _{1!} \circ \DD _{\X ^{\flat}}(\FF)}
   \ar[r] ^-{\sim}_-{\star}
   &
   { u _{1+} (\FF)} 
   }
 \end{equation*}
 where $\beta _{1,\FF}$ 
 (resp.  $\iota _{1,\FF}$) 
 is defined in \ref{nota-kercoker-alpha} 
 (resp. \ref{5.24.(ii)caro_log-iso-hol}) and similarly if we replace $\FF$ by $\DD _{\X^\flat}(\FF)$, 
 where $\star$ is induced by functoriality using the biduality
 isomorphisms $\DD _{\X ^{\flat}} \circ \DD _{\X ^{\flat}}\riso\mr{id}$
 and $\DD _{\X ^{\#}} \circ \DD _{\X ^{\#}}\riso \mr{id}$,
 is commutative.

 (ii) We have the isomorphisms of coherent $\D ^\dag _{\X ^{\#},
 \Q}$-modules with support in $\ZZ _1$:
  \begin{equation}
   \label{nota-kercoker-epsilon-iso}
   \ker (\beta _{1,\FF})
   \riso
   \DD _{\X ^{\#}} (\coker (\beta _{1, \DD _{\X ^{\flat}} (\FF)})),\qquad
   \coker (\beta _{1,\FF})
   \riso
   \DD _{\X ^{\#}} (\ker (\beta _{1, \DD _{\X ^{\flat} }(\FF)})).
  \end{equation}
\end{prop}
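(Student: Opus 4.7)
The plan is to deduce (ii) from (i) by a formal dualization argument, so the essential work is to prove the commutativity statement in (i).

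For (i), I would apply Lemma \ref{lemm-square-outideD} with $H = Z_1$. The hypothesis requires the bottom-right term $u_{1+}(\FF)$ to satisfy $u_{1+}(\FF) \riso u_{1+}(\FF)(\hdag Z_1)$, which is Lemma \ref{2.2.3caro-Tsuzuki}. Hence the commutativity of the diagram can be checked after restriction to $\X \setminus Z_1$. On this open locus, the morphism of log frames $u_1$ becomes an isomorphism, the inclusion $\alpha_{1,\FF} \colon \FF(-\ZZ_1) \hookrightarrow \FF$ restricts to the identity (so $\beta_{1,\FF}$ and $\beta_{1,\DD_{\X^\flat}\FF}$ both restrict to identities), and the isomorphisms $\iota_{1,\FF}$ and $\iota_{1,\DD_{\X^\flat}\FF}$ restrict to the biduality isomorphisms $\FF \riso \DD\DD(\FF)$ (see Lemma \ref{5.24.(ii)caro_log-iso-hol}, whose proof is explicitly noted to be the biduality isomorphism outside $Z_1$). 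The commutativity then reduces to the naturality of biduality for $\DD_{\X^\flat}$ and its compatibility with the identifications coming from the general relation $\DD u_{1+} \DD = u_{1!}$, which is a standard functorial statement.

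For (ii), the commutative diagram in (i) identifies $\beta_{1,\FF}$ (up to the horizontal isomorphisms $\iota_{1,\FF}$, $\DD_{\X^\#}(\iota_{1,\DD_{\X^\flat}\FF})$, and the biduality iso $\star$) with $\DD_{\X^\#}(\beta_{1,\DD_{\X^\flat}\FF})$. The kernel and cokernel of $\beta_{1,\FF}$ (and of $\beta_{1,\DD_{\X^\flat}\FF}$) are supported in $Z_1$, and by \ref{rem-3.5.5and536-coker} together with \cite[3.5.5]{caro-stab-u!R-Gamma} they descend along the logarithmic Berthelot--Kashiwara equivalence to log convergent isocrystals on $\ZZ_1^\#$. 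On such modules the functor $\DD_{\X^\#}$ is exact and exchanges kernels with cokernels (being computed as the ordinary $\mc{O}$-dual twisted by $\omega^{-1}_{\X^\#}$). Applying $\DD_{\X^\#}$ to the four-term exact sequence
\begin{equation*}
  0 \to \ker(\beta_{1,\DD_{\X^\flat}\FF}) \to u_{1+}\bigl(\DD_{\X^\flat}\FF(-\ZZ_1)\bigr)
  \to u_{1+}(\DD_{\X^\flat}\FF) \to \coker(\beta_{1,\DD_{\X^\flat}\FF}) \to 0
\end{equation*}
and using (i) together with biduality $\DD_{\X^\#}\DD_{\X^\#} \riso \mr{id}$ yields the two desired isomorphisms \ref{nota-kercoker-epsilon-iso}.

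The main technical point I expect to be delicate is the verification that $\DD_{\X^\#}$ indeed acts as an exact functor exchanging $\ker$ and $\coker$ on the kernel/cokernel of $\beta_{1,\DD_{\X^\flat}\FF}$; this requires knowing these modules are log convergent isocrystals in a precise sense, which is supplied by the combination of the exactness of $u_{1+}$ on log isocrystals with nilpotent residues and the Berthelot--Kashiwara descent noted in \ref{rem-3.5.5and536-coker}. Once this exactness is in hand, both parts are formal.
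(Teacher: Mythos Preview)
Your proposal is correct and follows essentially the same approach as the paper. For (i), the paper invokes Lemma~\ref{2.2.3caro-Tsuzuki} to obtain $u_{1+}(\FF)\riso \FF(\hdag Z_1)$ and then applies Lemma~\ref{lemm-square-outideD} to reduce to commutativity outside $Z_1$, exactly as you do; for (ii), the paper simply says it is ``straightforward from the first one,'' whereas you spell out the dualization in more detail. Your additional care about the exactness of $\DD_{\X^\#}$ is not misplaced, but note that it is already implicit in the diagram of (i): the source and target of $\beta_{1,\DD_{\X^\flat}\FF}$ and their $\DD_{\X^\#}$-duals all appear as honest modules there, so the passage from (i) to (ii) really is formal once the diagram is known to be a diagram of modules.
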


\begin{proof}
 Let us show (i). By Lemma \ref{2.2.3caro-Tsuzuki}, we have the
 isomorphism $u _{1+}(\FF) \riso \FF (\hdag Z _1)$. Hence, 
 since the diagram is
 commutative outside of $Z_1$, the commutativity follows by Lemma
 \ref{lemm-square-outideD}. The second part of the proposition is
 straightforward from the first one and \ref{logdualkercoker}.
\end{proof}

\begin{lem}
 \label{kercoker-epsilon}
 With the notation \ref{nota-kercoker-alpha}, 
we have the canonical isomorphisms of 
$\mathrm{Isoc} ^0 (\ZZ _{1} ^{\#}/K)$ 
 \begin{equation}
  \label{ker-epsilon-exseq}
  i_1 ^{\# !} (\ker (\beta _{1,\FF}))\riso\ker (N _{1,\FF}),\qquad
  i_1 ^{\# !} (\coker (\beta _{1,\FF}))\riso\coker (N _{1,\FF}).
 \end{equation}
\end{lem}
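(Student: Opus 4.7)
The plan is to exhibit $\ker(\beta_{1,\FF})$ and $\coker(\beta_{1,\FF})$ as the $i_{1+}^{\#}$-pushforwards of $\ker(N_{1,\FF})$ and $\coker(N_{1,\FF})$ respectively, and then conclude by applying $i_1^{\#!}$ together with the logarithmic Berthelot--Kashiwara equivalence invoked in the Remark following \ref{nota-kercoker-alpha}.

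The starting point is the short exact sequence of coherent $\D^\dag_{\X^\flat, \Q}$-modules
\begin{equation*}
0 \to \FF(-\ZZ_1) \xrightarrow{\alpha_{1,\FF}} \FF \to \H^\flat \to 0,
\end{equation*}
where $\H^\flat$ denotes $\H = \FF/t_1\FF$ regarded as a $\D^\dag_{\X^\flat, \Q}$-module supported on $\ZZ_1$ (so $t_1$ acts as zero, the log derivation $t_1\partial_1$ acts as $N_{1,\FF}$, and the remaining log derivations along $\mathfrak{D}_1$ act via the $\D^\dag_{\ZZ_1^\#, \Q}$-structure of $\H$). Applying the triangulated functor $u_{1+}$ yields the distinguished triangle
\begin{equation*}
u_{1+}(\FF(-\ZZ_1)) \xrightarrow{\beta_{1,\FF}} u_{1+}(\FF) \to u_{1+}(\H^\flat) \xrightarrow{+1}
\end{equation*}
in $D^{\mathrm{b}}(\D^\dag_{\X^\#, \Q})$. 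Since $\FF$ and $\FF(-\ZZ_1)$ are convergent isocrystals on $\X^\flat$, both $u_{1+}(\FF)$ and $u_{1+}(\FF(-\ZZ_1))$ are concentrated in degree $0$, as already noted in paragraph \ref{nota-kercoker-alpha}.

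The crucial step is to show that $u_{1+}(\H^\flat)$ is quasi-isomorphic to the two-term complex $[i_{1+}^{\#}(\H) \xrightarrow{i_{1+}^{\#}(N_{1,\FF})} i_{1+}^{\#}(\H)]$ placed in degrees $-1$ and $0$. I would establish this locally, using the \'{e}tale neighborhood $f\colon \X' \to \X$ of paragraph \ref{suretsec} together with its product structure $g_1\colon \X' \to \ZZ_1$ and section $i_1'\colon \ZZ_1 \to \X'$. After \'{e}tale pullback, the module $f^{\flat !}(\H^\flat)$ admits a Koszul-type resolution as a $\D^\dag_{\X'^\flat, \Q}$-module, generated by $\H$ subject to the two relations $t_1 m = 0$ and $(t_1\partial_1 - N_{1,\FF}) m = 0$, with the single syzygy coming from the commutation $[t_1\partial_1, t_1] = t_1$. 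Tensoring this resolution with $\D^\dag_{\X'^\#, \Q}$ along the inclusion $\D^\dag_{\X'^\flat, \Q} \subset \D^\dag_{\X'^\#, \Q}$, the $t_1$-relation contributes a copy of $i_{1+}^{\prime \#}(\H)$ via the identification $\D^\dag_{\X'^\#, \Q}/\D^\dag_{\X'^\#, \Q} t_1 \cong i_{1+}^{\prime \#}(\O_{\ZZ_1, \Q})$ combined with the projection formula for the section $i_1'$, while the remaining differential is precisely $i_{1+}^{\prime \#}(N_{1,\FF})$. Descent along the \'{e}tale $f$ transfers the claim back to $\X$.

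Granting the claim, the long exact sequence of cohomology of the triangle reads
\begin{equation*}
0 \to i_{1+}^{\#}(\ker N_{1,\FF}) \to u_{1+}(\FF(-\ZZ_1)) \xrightarrow{\beta_{1,\FF}} u_{1+}(\FF) \to i_{1+}^{\#}(\coker N_{1,\FF}) \to 0,
\end{equation*}
identifying $\ker(\beta_{1,\FF}) \cong i_{1+}^{\#}(\ker N_{1,\FF})$ and $\coker(\beta_{1,\FF}) \cong i_{1+}^{\#}(\coker N_{1,\FF})$. Applying $i_1^{\#!}$ and using that $i_1^{\#!} \circ i_{1+}^{\#} \cong \mathrm{id}$ on modules supported in $\ZZ_1$ yields the two desired isomorphisms. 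The main obstacle is the local Koszul computation: one must check that, under the non-flat extension of scalars $\D^\dag_{\X^\flat, \Q} \subset \D^\dag_{\X^\#, \Q}$, the Koszul resolution of $\H^\flat$ collapses exactly to the asserted two-term complex without spurious higher cohomology, a vanishing which depends crucially on the nilpotency of $N_{1,\FF}$.
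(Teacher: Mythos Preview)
Your strategy of identifying $\ker(\beta_{1,\FF})$ and $\coker(\beta_{1,\FF})$ with the cohomologies of $u_{1+}(\H^\flat)$, where $\H^\flat := \coker(\alpha_{1,\FF})$, is exactly right and is in fact what the paper's argument amounts to. The difference is purely in how one evaluates $u_{1+}(\H^\flat)$. The paper first reduces to the case $\mathfrak{D}=\emptyset$ via Kedlaya's full faithfulness \cite[6.4.5]{kedlaya-semistableI}, then to $f=\mr{id}$ via Berthelot--Kashiwara, and then, rather than attacking $u_{1+}(\H^\flat)$ head-on, applies $g_{1+}$ to $\coker(\beta_{1,\FF})$ and uses only the \emph{right exactness} of $\mathcal{H}^0 g_{1+}$ together with the factorization $g_1\circ u_1 = g_1^\flat$ and the identification $\mathcal{H}^0 g_{1+}^\flat\cong\coker_{t_1\partial_1}$. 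This yields $g_{1+}(\coker\beta_{1,\FF})\cong\coker N_{1,\FF}$ directly, and the kernel statement is then obtained by duality (Proposition \ref{dual-beta} and Lemma \ref{lemm-KerdualcokerN}).

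Your Koszul computation, by contrast, is the hard way around, and you correctly flag it as the obstacle: producing an explicit free $\D^\dag_{\X^\flat,\Q}$-resolution of $\H^\flat$ and controlling what happens under the non-flat base change to $\D^\dag_{\X^\#,\Q}$ is delicate. You can bypass it entirely. After \'etale pullback along $f$ and log Berthelot--Kashiwara (as in the Remark after \ref{nota-kercoker-alpha}), one has $i_1^{\#!}\bigl(u_{1+}(\H^\flat)\bigr)\cong g_{1+}^\flat(\H^\flat)$, and the relative log de~Rham description $g_{1+}^\flat(\M)\cong g_{1*}[\M\xrightarrow{t_1\partial_1}\M][1]$ (cf.\ \ref{gflat+Omega}) together with $g_{1*}(\H^\flat)=\H$ immediately gives the two-term complex $[\H\xrightarrow{N_{1,\FF}}\H]$ in degrees $-1,0$ that you want, with no Koszul bookkeeping required. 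This recovers both isomorphisms at once, whereas the paper trades the direct computation of the kernel for an appeal to duality.
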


\begin{proof}
1) First we reduce to the case where $f=\mathrm{id}$. It is straightforward that 
$\ker (N _{1,\FF})$ and $\coker (N _{1,\FF})$ are objects of 
$\mathrm{Isoc} ^0 (\ZZ _{1} ^{\#}/K)$. Concerning the left terms of
\ref{ker-epsilon-exseq}, this has been checked in \ref{coneBeta}.
Now, let us check the isomorphisms. 
From the relative duality isomorphism satisfied by $i _{1} ^\#$ (see \ref{rel-dual-isom})
and the logarithmic version of Kashiwara Theorem (cf.\
 \cite[5.3.6]{caro-stab-sys-ind-surcoh}), for any coherent
 $\D^\dag_{\X ^\#,\Q}$-module $\G$ with support in $\ZZ_1 $, we get the
 isomorphism $\DD_{\ZZ_1 ^\#} i_1^{ \#!}\G\cong i_1^{ \#!}\DD_{\X ^\#} \G$.
 Thus, by duality, using  Lemma \ref{lemm-KerdualcokerN}, 
 \ref{nota-kercoker-epsilon-iso} and 
 \ref{dualvsdualvee}, it is sufficient to check the second
 isomorphism. 
 Since $f$ is log-\'etale, then 
 $ f ^{\flat !} \colon \mathrm{Isoc}  (\X ^{ \flat}/K)
 \to \mathrm{Isoc} (\X ^{\prime \flat}/K)$ 
 (resp.  $ f ^{\flat !} \colon \mathrm{Isoc} ^0   (\X ^{\flat}/K)
 \to \mathrm{Isoc} ^0 (\X ^{\prime \flat}/K)$) 
 is an exact functor. 
Put $\FF ' := f ^{\flat !} (\FF)$, 
$\alpha '_{1,\FF'} := f ^{\flat !} (\alpha _{1,\FF})\colon 
\FF '(-\ZZ _1)
 \hookrightarrow 
\FF '$ the canonical inclusion, 
$\beta '_{1,\FF'} := u '_{1+} (\alpha '_{1,\FF'}) \colon 
 u ' _{1+} (\FF '(-\ZZ _1))
 \to 
 u '_{1+} (\FF')$.
 From \cite[5.4.6.1]{caro-stab-sys-ind-surcoh},
we get $ u ' _{1+}  \circ  f ^{\flat !} \riso  f ^{\# !} \circ u _{1+}$
for coherent complexes.
Hence, $ f ^{\# !} (\beta _{1,\FF}) \riso \beta '_{1,\FF'}$. 
This yields
\begin{gather}
 i _{1} ^{\prime \# !}  \coker (\beta '_{1,\FF '}))
 \riso  i _{1} ^{\prime \# !} f ^{\# !} \coker (\beta _{1,\FF}) 
 \riso 
 i _{1} ^{ \# !} \coker (\beta _{1,\FF}).
\end{gather}
 By definition, we remark 
 $N _{1,\FF}=N _{1,\FF'}$.
So, we are reduced to the case where
 $f=\mr{id}$.

2) Suppose $f =\mathrm{id}$. Since $i _{1} ^{\#}$ is an exact closed immersion, 
then from the logarithmic version of Berthelot-Kashiwara's theorem (cf.\
 \cite[5.3.6]{caro-stab-sys-ind-surcoh}) 
 the functors $i _{1} ^{ \# !}$ and
$ i _{1+} ^{ \# }$ induce
quasi-inverse exact functors between 
the category of coherent $\D ^\dag _{\X ^{ \#}, \Q}$-modules with support in $\ZZ _1$
 and that of coherent $\D ^\dag _{\ZZ _1 ^{\#}, \Q}$-modules. 
Since $g ^{\#} _{1}\circ i _{1+} ^{ \#} = \mr{id}$, this yields
$g ^{\#} _{1+} (\coker (\beta _{1,\FF}))
 \riso  i _{1} ^{\# !} \coker (\beta _{1,\FF})$, which implies
 $g ^{\#} _{1+} (\coker (\beta _{1,\FF})) \riso  \mathcal{H} ^{0}  g ^{\#} _{1+} (\coker (\beta _{1,\FF}))$.
 From  \ref{g1dashdescr}, the functor 
 $\mathcal{H} ^{0} g ^\#_{1+}$ is right exact and then 
 $\mathcal{H} ^{0} g ^\#_{1+} (\coker (\beta _{1,\FF}))
   \riso
   \coker  (\mathcal{H} ^{0} g ^\#_{1+} (\beta _{1,\FF}))$.
 
 From \ref{u_1+exact}, 
 $\mathcal{H} ^i  u _{1+}(\G)=0$
 for any $i\neq0$ and $\G \in \mathrm{Isoc}(\X ^{\flat}/K)$.
 Since $g ^\# _1 \circ u  _1 = g_1 ^{\flat}$, this implies
 \begin{equation}
   \label{ker-epsilon-exseqf=id1}
  (\mathcal{H} ^{0} g ^\#_{1+} )(\beta _{1,\FF})=
   (\mathcal{H} ^{0} g ^\#_{1+} )\circ u _{1+ } (\alpha _{1,\FF})\riso
   \mathcal{H} ^{0} g _{1+} ^{\flat} (\alpha _{1,\FF}).
 \end{equation}
Since 
the functor 
$\mathcal{H} ^{0} g _{1+} ^{\flat}$ is right exact
(see \ref{g1flatdescr}),
we get the second isomorphism:
$  \coker
 (\mathcal{H} ^{0} g ^\#_{1+} )(\beta _{1,\FF})
 \riso
 \coker
 ( \mathcal{H} ^{0} g _{1+} ^{\flat} (\alpha _{1,\FF}))
 \riso
 \mathcal{H} ^{0} g_{1+} ^{\flat} \coker  (\alpha _{1,\FF})$
 Since $\coker  (\alpha _{1,\FF}) =\FF / \FF (-\ZZ _1)$
 and $\mathcal{H} ^{0} g_{1+} ^{\flat}\riso g _{1*} \coker (t_1 \partial _1 \cdot )$
 (see the description after \ref{g1flatdescr}), 
 we get
 \begin{equation}
    \label{ker-epsilon-exseqf=id2}
 \mathcal{H} ^{0} g_{1+} ^{\flat} \coker  (\alpha _{1,\FF}) \riso\coker(N_{1,\FF}).
 \end{equation}
By composing all the isomorphisms, 
the lemma follows.
\end{proof}

\begin{ntn}
 \label{nota-(-D)}
 Moreover, we get a left $ \D ^\dag  _{ \X ^\#}$-module  
 (and then a left $ \D ^\dag  _{ \X ^\flat}$-module), by posing 
   $\O _{\X} (\mathfrak{D}) : = \mathcal{H} om _{\O _{\X}} (
 \omega _{\X } ,  \omega _{\X ^\#})$.
As in \ref{blabla-u_1!+}, 
 for any integer $n \in \Z$, for any 
$\G ^\# \in  D ^{\mathrm{b}} ( \D ^\dag  _{ \X ^\# ,\Q})$,
$\G ^\flat \in  D ^{\mathrm{b}}  ( \D ^\dag  _{ \X ^\flat ,\Q})$,
 we set
 $\O _{\X} (n \mathfrak{D}):= \O _{\X} (\mathfrak{D}) ^{\otimes n}$ 
( where $\otimes n$ means that we tensorise $n$times as $\O _{\X}$-module), 
$\G ^\# (n\mathfrak{D}):= \G ^\# \otimes _{\O _{\X}} \O _{\X} (n \mathfrak{D})$
and $\G ^\flat (n\mathfrak{D}):= \G ^\flat \otimes _{\O _{\X} } \O _{\X} (n \mathfrak{D})$.
Since $ \O _{\X} (n \mathfrak{D})$ is a free $ \O _{\X} $-module of rank one, 
then the induced functors
$(n \mathfrak{D})\colon 
\Mod  ( \D ^\dag  _{ \X ^\# ,\Q})
\to 
\Mod  ( \D ^\dag  _{ \X ^\# ,\Q})$
and 
$(n \mathfrak{D})\colon 
\Mod  ( \D ^\dag  _{ \X ^\flat ,\Q})
\to 
\Mod  ( \D ^\dag  _{ \X ^\flat ,\Q})$,
given respectively by 
$\G ^\#  \mapsto \G ^\# (n\mathfrak{D})$
and 
$\G ^\flat \mapsto \G ^\flat (n\mathfrak{D})$,
are exact. 
We remark that the functor $(\ZZ _1)\colon 
\Mod  ( \D ^\dag  _{ \X ^\flat ,\Q})
\to 
\Mod  ( \D ^\dag  _{ \X ^\flat ,\Q})$ defined in 
\ref{blabla-u_1!+}
commutes with 
$(\mathfrak{D})$
and more precisely we have the isomorphism
$(\mathfrak{D}) \circ (\ZZ _1)
\riso
(\ZZ _1) \circ (\mathfrak{D}) 
\riso 
(\ZZ) $
of functors 
$\Mod  ( \D ^\dag  _{ \X ^\flat ,\Q})
\to 
\Mod  ( \D ^\dag  _{ \X ^\flat ,\Q})$,
where 
$\O _{\X} (\ZZ) : = \mathcal{H} om _{\O _{\X}} (
 \omega _{\X } ,  \omega _{\X ^\flat})$
 and $(\ZZ)  := \O _{\X} (\ZZ)  \otimes _{\O _{\X} }-$.

Similarly, we have the functor
$(n\mathfrak{D} _1) \colon 
 \Mod  ( \D ^\dag  _{ \ZZ _1 ^\# ,\Q})
\to 
\Mod  ( \D ^\dag  _{  \ZZ _1 ^\# ,\Q})$.
\end{ntn}

\begin{lem}
\label{com-i1!(-D)}
With the notation \ref{nota-(-D)}, 
we have the isomorphism 
$(-\mathfrak{D} _1) \circ i_1^{\#!}
 \riso 
 i_1^{\# !}\circ (-\mathfrak{D} )$
 of functors 
 $D ^{\mathrm{b}} _{\mathrm{coh}} ( \D ^\dag  _{ \X ^\# ,\Q}) 
 \to 
 D ^{\mathrm{b}} ( \D ^\dag  _{ \ZZ _1 ^\# ,\Q}) $,
 where 
 $ i_1^{\# !} \colon 
 D ^{\mathrm{b}} _{\mathrm{coh}} ( \D ^\dag  _{ \X ^\# ,\Q}) 
 \to 
 D ^{\mathrm{b}} ( \D ^\dag  _{ \ZZ _1 ^\# ,\Q}) $
 is the extraordinary inverse image
 (see the definition \cite[5.1.2.1]{caro-stab-sys-ind-surcoh}).
\end{lem}

\begin{proof}
Using $\D ^\dag  _{ \X ^\# ,\Q}$-flat resolutions, we reduce to check 
the isomorphism 
$(-\mathfrak{D} _1) \circ i_1^{\#*}
 \riso 
 i_1^{\# *}\circ (-\mathfrak{D} )$
 of functors 
$\coh  ( \D ^\dag  _{ \X ^\# ,\Q}) \to \Mod ( \D ^\dag  _{ \ZZ _1 ^\# ,\Q}) $,
where 
$ i_1^{\# *} \colon \coh  ( \D ^\dag  _{ \X ^\# ,\Q}) \to \Mod ( \D ^\dag  _{ \ZZ _1 ^\# ,\Q}) $
is the canonical factorisation induced by the functor $ i_1^{\# *} $.

\end{proof}

\begin{ntn}
 \label{nota-epsilon-theta1}
 With the notation \ref{nota-kercoker-alpha} and \ref{nota-(-D)},
let
 $\epsilon _{1,\FF}:=u  _+\circ (-\mathfrak{D})(\alpha_{1,\FF})\colon u
 _{+} (\FF  (-\ZZ )) \to u _{+}  (\FF (-\mathfrak{D}))$ and $\theta
 _{1,\E}\colon \DD \circ \DD _{Z} (\E)\to (\hdag Z _1) \circ \DD \circ
 \DD _{Z} (\E) 
 \underset{\ref{dual-ext}}{\riso}
\DD _{Z _1}\circ \DD _{Z} (\E)$ be the canonical
 homomorphisms. Both homomorphisms are isomorphisms outside $Z _1$. 
\end{ntn}

\begin{lem}
  \label{j!Eu!Fbis-lem}
We keep the notation of \ref{nota-epsilon-theta1}. 

\begin{enumerate}
\item We have the isomorphism 
 \begin{equation}
   \label{u1+comm-D}
 u _{1+}\circ (-\mathfrak{D}) \riso (-\mathfrak{D}) \circ  u _{1+}
 \end{equation}
of  functors 
 $\mathrm{Isoc} (\X ^\flat/K)\to \coh (\D ^\dag _{\X ^\# ,\Q})$.

\item We have the isomorphism 
 \begin{equation}
  \label{j!Eu!Fbis}
   \DD _{Z_1} 
   \circ \DD _{Z} \circ u _+ (\FF)
   \riso
   u _{+}  (\FF (-\mathfrak{D})),
 \end{equation}
 which coincides with \ref{j!Eu!F} outside $Z _1$.
\end{enumerate}

\end{lem}

\begin{proof}
1) For any $r\geq i >1$, for any section $P $ of $\D ^\dag _{ \X ^\#, \Q} $,
 there exists a section $Q$ of $\D ^\dag _{ \X ^\#, \Q} $ such that $t _i P= Q t _i$ (notice this is not any more true for $t_1$).
 Using the description \ref{u_1+exact} of the functor
 $u _{1+}$, we get the first assertion.

2) By applying $(\hdag Z _1)$ to \ref{j!Eu!F}, we obtain the last isomorphism
 \begin{equation}
\label{j!Eu!Fbis-iso0}
   \DD _{Z_1} 
   \circ \DD _{Z} \circ u _+ (\FF)
\underset{\ref{dual-ext}}{\riso}
   (\hdag Z _1) \circ \DD 
   \circ \DD _{Z} \circ u _+ (\FF)
\underset{\ref{j!Eu!F}}{\riso}
   (\hdag Z _1) 
   (u _+( \FF (-\ZZ))).
 \end{equation}
We have
$(\hdag Z _1)\circ (-\ZZ) (\FF) 
\riso 
(\hdag Z _1)\circ (-\ZZ _1) \circ (-\mathfrak{D}) (\FF) 
  \riso
  (\hdag Z _1)\circ (-\mathfrak{D}) (\FF) $.
Since the functor $(-\mathfrak{D})$ commutes canonically with 
$u_{1+}$ (see \ref{u1+comm-D})
and with $(\hdag Z _1)$, by applying $(-\mathfrak{D})$ to the isomorphism 
 $\rho _{1,\FF}$ (cf.\ Lemma \ref{2.2.3caro-Tsuzuki}), 
 we get 
 $  (\hdag Z _1)\circ (-\mathfrak{D}) (\FF) 
 \liso 
 u _{1+}\circ (-\mathfrak{D})  (\FF)$. 
By composition, we get 
\begin{equation}
\label{j!Eu!Fbis-iso1}
(\hdag Z _1)\circ (-\ZZ) (\FF) 
\riso 
 u _{1+}\circ (-\mathfrak{D})  (\FF).
\end{equation}
Applying $v_+$ to \ref{j!Eu!Fbis-iso1}, we
 obtain the second isomorphism:
 \begin{equation}
  \label{j!Eu!Fbis-pre1}
  (\hdag Z _1) \circ u _+ \circ (-\ZZ)  (\FF)
   \riso 
   v _+ \circ (\hdag Z _1)\circ (-\ZZ)
   (\FF) 
\underset{\ref{j!Eu!Fbis-iso1}}{\riso} 
   v _+ \circ u _{1+}\circ
   (-\mathfrak{D})  (\FF)
   \riso 
   u _{+}\circ (-\mathfrak{D})  (\FF) ,
 \end{equation}
 where the first isomorphism is induced by $ (\hdag Z _1) \circ u _+ \riso v _+
 \circ (\hdag Z _1)$ from \cite[2.2.2]{caro-Tsuzuki} and the last one is a consequence of the transitivity 
 of the direct images. This homomorphism  \ref{j!Eu!Fbis-pre1}
 is the identity outside $Z _1$.
By composition of \ref{j!Eu!Fbis-iso0} with \ref{j!Eu!Fbis-pre1}, 
we get the desired isomorphism, 
 which coincides with \ref{j!Eu!F} outside $Z _1$.

\end{proof}

\begin{prop}
 (i) With the notation \ref{nota-epsilon-theta1}, 
 the following diagram is commutative:
  \begin{equation}
   \label{cube-E-F-bis}
    \xymatrix@R=10pt{
    {j _! (\E)=\DD \circ \DD _{Z} (\E)} 
    \ar[r] ^-{\theta _{1,\E}}
    \ar[d] ^-{\sim} _-{\ref{j!Eu!F}}& 
    {\DD _{Z _1}\circ \DD _{Z} (\E)} 
    \ar[d] _-{\sim} ^-{\ref{j!Eu!Fbis}}\\ 
   {u _{+} (\FF  (-\ZZ )) } 
    \ar[r] ^-{\epsilon _{1,\FF}}& 
    {u _{+}  (\FF (-\mathfrak{D})).}
    }
  \end{equation}

 (ii) We have the canonical isomorphisms:
 \begin{equation}
  \label{esp1=theta1}
   \mathcal{H} ^{\dag 0} _{Z _1}\bigl(j _! (\E)\bigr)\riso \ker (\theta
   _{1,\E})
   \riso \ker  ( \epsilon _{1,\FF}),
   \qquad
   \mathcal{H} ^{\dag 1} _{Z _1}\bigl(j _! (\E)\bigr)
   \riso  \coker (\theta _{1,\E} ) \riso \coker  ( \epsilon _{1,\FF}).
 \end{equation}
\end{prop}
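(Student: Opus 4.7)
The plan is to establish (i) first via a reduction to the open complement of $Z_1$, then deduce (ii) by taking cohomology of the localization triangle, using (i) to identify kernels and cokernels across the square.

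For part (i), I would apply Lemma \ref{lemm-square-outideD} with $H = Z_1$. The upper right corner of \ref{cube-E-F-bis} is $\DD_{Z_1} \circ \DD_Z(\E) \riso (\hdag Z_1) \circ \DD \circ \DD_Z(\E)$ by the very definition given in paragraph \ref{nota-epsilon-theta1}, hence tautologically $(\hdag Z_1)$-localized. For the lower right corner, Lemma \ref{2.2.3caro-Tsuzuki} applied to $\FF(-\mathfrak{D})$ (which still has nilpotent residues) yields $u_+(\FF(-\mathfrak{D})) \riso \FF(-\mathfrak{D})(\hdag Z)$, and since $Z_1 \subset Z$ this module is $(\hdag Z_1)$-localized as well. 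So the hypothesis of Lemma \ref{lemm-square-outideD} is satisfied and it suffices to check commutativity of \ref{cube-E-F-bis} after restriction to $\X \setminus Z_1$. On that open, $\alpha_{1,\FF}$ is multiplication by the unit $t_1$, so $\epsilon_{1,\FF}$ restricts to an identity; likewise $\theta_{1,\E}$, being the canonical localization along $Z_1$, restricts to an identity. Unwinding \ref{j!Eu!F} and \ref{j!Eu!Fbis} on $\X \setminus Z_1$, each intermediate step in each vertical composition (in particular the isomorphisms \ref{2.2.9bis} and \ref{5.24.(ii)logisoc}) becomes an instance of the canonical biduality $\mr{id} \riso \DD \DD$ for the convergent isocrystal $\E\Vert_{\X \setminus Z_1}$, so both columns restrict to the same biduality isomorphism and the square commutes.

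For part (ii), observe that $j_!\E \riso u_+(\FF(-\ZZ))$ is concentrated in cohomological degree zero (again by Lemma \ref{2.2.3caro-Tsuzuki} applied to $\FF(-\ZZ)$). Since $Z_1$ is a divisor of $\X$, the localization triangle
\begin{equation*}
  \R\underline{\Gamma}^\dag_{Z_1}(j_!\E) \to j_!\E \xrightarrow{\theta_{1,\E}} (\hdag Z_1)(j_!\E) \xrightarrow{+1}
\end{equation*}
degenerates into a four-term exact sequence which yields $\mathcal{H}^{\dag 0}_{Z_1}(j_!\E) \riso \ker(\theta_{1,\E})$ and $\mathcal{H}^{\dag 1}_{Z_1}(j_!\E) \riso \coker(\theta_{1,\E})$. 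The remaining isomorphisms $\ker(\theta_{1,\E}) \riso \ker(\epsilon_{1,\FF})$ and $\coker(\theta_{1,\E}) \riso \coker(\epsilon_{1,\FF})$ then follow by applying $\ker$ and $\coker$ to the commutative square of (i), whose vertical arrows are isomorphisms.

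The only genuinely delicate point, and the main thing to verify carefully, is the bookkeeping in part (i) that the two vertical columns of \ref{cube-E-F-bis} restrict to the \emph{same} biduality isomorphism on $\X \setminus Z_1$; this is not a hard calculation but requires tracking each factor in the definitions \ref{j!Eu!F} and \ref{j!Eu!Fbis}, using that on $\X \setminus Z_1$ the functors $(\hdag Z_1)$ and the twist by $\O(-Z_1)$ are both canonically the identity, which is why the additional shift by $(-\mathfrak{D})$ appearing in $\epsilon_{1,\FF}$ matches the extra $(\hdag Z_1)$ appearing in $\theta_{1,\E}$.
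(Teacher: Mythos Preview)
Your approach is correct and essentially identical to the paper's: reduce (i) to the open $\X\setminus Z_1$ via Lemma~\ref{lemm-square-outideD} where all maps become the biduality, then read off (ii) from the localization triangle and the commutative square. One small citation slip: the fact that $u_+(\FF(-\mathfrak{D}))$ is $(\hdag Z_1)$-localized is most directly seen from Remark~\ref{nota-epsilon-theta1}~(ii) (or equivalently from \ref{2.2.9bis}), not from Lemma~\ref{2.2.3caro-Tsuzuki}, which concerns $u_{1+}$ rather than $u_+$.
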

\begin{proof}
 Thanks to Lemma \ref{lemm-square-outideD}, it is sufficient to check
 the commutativity of the square \ref{cube-E-F-bis} outside $Z _1$.
 Since the horizontal morphisms of the square \ref{cube-E-F-bis} are the identity outside $Z _1$,
 this commutativity is a consequence of the second statement of \ref{j!Eu!Fbis-lem}. 
 Let us check the part (ii) of the proposition. 
 Since $(\hdag Z _1) \circ j _{!} (\E) \riso \DD _{Z _1}\circ \DD _{Z} (\E)$,
the exact triangle of localisation of $j _! (\E)$ with respect to the divisor $Z _1$ can be written in the form of 
 \begin{equation}
 \label{loctrianj!E}
 \R \underline{\Gamma} ^\dag _{Z _1} j _! (\E)
 \to 
j _! (\E)
\underset{\theta _{1,\E}}{\longrightarrow} 
\DD _{Z _1}\circ \DD _{Z} (\E)
\to 
 \R \underline{\Gamma} ^\dag _{Z _1} j _! (\E) [1].
 \end{equation}
From \ref{loctrianj!E}, we obtain both first isomorphisms.
We get both second isomorphisms 
from the part (i) of the proposition.
\end{proof}

\begin{prop}
 \label{w1+kerprop}
With the notation \ref{nota-epsilon-theta1}, we have the canonical isomorphisms:
 \begin{equation}
  \label{w1+kercoker}
  i_1^!\ker  ( \epsilon _{1,\FF})
  \riso
  w _{1+}\bigl( \ker (N _{1,\FF})  (-\mathfrak{D} _1) \bigr),
  \qquad
  i_1^!  \coker( \epsilon _{1,\FF})\riso
  w _{1+}\bigl( \coker (N _{1,\FF})  (-\mathfrak{D} _1) \bigr).
 \end{equation}
\end{prop}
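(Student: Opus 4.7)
The plan is to reduce the statement to Lemma \ref{kercoker-epsilon} via a straightforward base-change argument combined with the factorization $\epsilon_{1,\FF} \riso v_+\bigl((-\mathfrak{D})\beta_{1,\FF}\bigr)$ provided by remark (i) after paragraph \ref{nota-epsilon-theta1}. The first step is to pass from the four-term exact sequence attached to $\beta_{1,\FF}$ to one for $\epsilon_{1,\FF}$. Since $(-\mathfrak{D})$ is exact (it is tensoring with an invertible sheaf) and commutes with $u_{1+}$, one obtains an exact sequence
\begin{equation*}
0 \to (-\mathfrak{D})\ker(\beta_{1,\FF}) \to u_{1+}\bigl(\FF(-\ZZ)\bigr) \to u_{1+}\bigl(\FF(-\mathfrak{D})\bigr) \to (-\mathfrak{D})\coker(\beta_{1,\FF}) \to 0.
\end{equation*}
Applying $v_+$ produces the sequence whose kernel and cokernel must be analyzed. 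To identify $\ker(\epsilon_{1,\FF})$ and $\coker(\epsilon_{1,\FF})$ with the $v_+$ of the outer terms, I use that $\ker(\beta_{1,\FF})$ and $\coker(\beta_{1,\FF})$ are supported in $Z_1$ and, by the remark after \ref{nota-kercoker-alpha} together with the log Berthelot-Kashiwara theorem, are of the form $i_{1+}^\#(\mc{N})$ for a convergent log isocrystal $\mc{N}$ on $\ZZ_1^\#$ with nilpotent residues. Thus $v_+\circ i_{1+}^\# \simeq i_{1+}\circ w_{1+}$, and higher $w_{1+}$ vanishes on such isocrystals (as in \cite[5.24]{caro_log-iso-hol} or \cite[3.5.6]{caro-stab-u!R-Gamma}). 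The middle terms, being $u_+$ of log isocrystals with nilpotent residues, are concentrated in degree $0$ by Lemma \ref{2.2.3caro-Tsuzuki}, so exactness persists throughout. Hence $\ker(\epsilon_{1,\FF}) \cong v_+\bigl((-\mathfrak{D})\ker\beta_{1,\FF}\bigr)$ and similarly for the cokernel.

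Next, I apply $i_1^!$ using the cartesian square of log formal schemes drawn from \ref{suretsec}:
\begin{equation*}
\xymatrix@R=0.3cm{\ZZ_1^\#\ar[r]^{i_1^\#}\ar[d]_{w_1}\ar@{}[rd]|\square&\X^\#\ar[d]^v\\ \ZZ_1\ar[r]_{i_1}&\X.}
\end{equation*}
The base-change isomorphism (Proposition \ref{basechangeisom} applied in the log setting) gives $i_1^!\circ v_+ \cong w_{1+}\circ i_1^{\#!}$. Since $i_1^!$ is t-exact on modules supported in $Z_1$ by Kashiwara's theorem \ref{propofpullbacks}.(iii), this yields
\begin{equation*}
i_1^!\ker(\epsilon_{1,\FF}) \cong w_{1+}\bigl(i_1^{\#!}(-\mathfrak{D})\ker\beta_{1,\FF}\bigr),
\end{equation*}
and analogously with $\coker$ in place of $\ker$.

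Finally, the strict normal crossing assumption on $\ZZ$ forces $i_1^{*}\O_\X(-\mathfrak{D}) \cong \O_{\ZZ_1}(-\mathfrak{D}_1)$ (since $\mathfrak{D}_1 = \ZZ_1\cap\mathfrak{D}$ is cut out transversally inside $\ZZ_1$), whence $i_1^{\#!}\circ(-\mathfrak{D}) \cong (-\mathfrak{D}_1)\circ i_1^{\#!}$. Combining with the isomorphisms $i_1^{\#!}\ker(\beta_{1,\FF})\cong\ker(N_{1,\FF})$ and $i_1^{\#!}\coker(\beta_{1,\FF})\cong\coker(N_{1,\FF})$ of Lemma \ref{kercoker-epsilon} gives the formulas \ref{w1+kercoker}. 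The principal technical point is the exactness verification for $v_+$ on the four-term sequence above; once that is in place the remainder is a direct base-change and tensor-pullback computation.
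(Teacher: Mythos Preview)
Your argument is correct and follows essentially the same route as the paper: twist the four-term exact sequence for $\beta_{1,\FF}$ by $(-\mathfrak{D})$, check $v_+$-acyclicity of all terms so that $v_+$ preserves exactness, and then identify the result via Lemma~\ref{kercoker-epsilon} and the commutation $i_1^{\#!}\circ(-\mathfrak{D})\cong(-\mathfrak{D}_1)\circ i_1^{\#!}$. Two minor remarks on your citations: for the middle terms you invoke Lemma~\ref{2.2.3caro-Tsuzuki}, but what you actually need is that $u_{1+}(\FF(-\ZZ))$ and $u_{1+}(\FF(-\mathfrak{D}))$ are convergent isocrystals on $\X^\#$ and hence $v_+$-acyclic (the paper appeals to the analogue of \cite[5.24]{caro_log-iso-hol}); and your base-change $i_1^!\circ v_+\cong w_{1+}\circ i_1^{\#!}$ is not literally Proposition~\ref{basechangeisom} (which is for couples, not log schemes), but follows immediately from log Berthelot--Kashiwara together with the transitivity $v_+\circ i_{1+}^\# = i_{1+}\circ w_{1+}$, which is exactly how the paper argues.
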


\begin{proof}
Using \ref{u1+comm-D}, with the notation \ref{nota-kercoker-alpha},
we get $u _{1+}\circ (-\mathfrak{D}) (\alpha _{1,\FF}) \riso (-\mathfrak{D}) (\beta _{1,\FF}) $.
 Thus, since the functor 
 $(-\mathfrak{D}) \colon 
 \coh (\D ^\dag _{\X ^\# ,\Q}) \to \coh (\D ^\dag _{\X ^\# ,\Q})$ is exact,
 we get the exact sequence
 \begin{equation}
  \label{es1-w1+kercoker}
   \tag{($\star$)}
   0\to\ker (\beta   _{1,\FF} )(-\mathfrak{D})\to u _{1+} ( \FF (-\ZZ))
   \xrightarrow{u _{1+}\circ (-\mathfrak{D}) (\alpha _{1,\FF})}
   u _{1+} ( \FF (-\mathfrak{D}))\to\coker (\beta   _{1,\FF} )(-\mathfrak{D})
   \to 0.
 \end{equation}
Via the logarithmic version of
 Berthelot-Kashiwara's theorem \cite[5.3.6]{caro-stab-sys-ind-surcoh}),
 we get the first isomorphism 
\begin{equation}
 \label{iso1-w1+kercoker}
  \tag{($\star\star$)}
  \ker (\beta _{1,\FF})(-\mathfrak{D} )
  \liso
  i^{\#}_{1+} \circ  i^{\#!}_1  (\ker (\beta _{1,\FF})(-\mathfrak{D} ))
\underset{\ref{com-i1!(-D)}}{\riso} 
  i^{\#}_{1+} \bigl( (-\mathfrak{D} _1) \circ i^{\#!}_1  (\ker (\beta
  _{1,\FF}))\bigr).
\end{equation}
From Lemma \ref{kercoker-epsilon} and with the stability of 
$\mathrm{Isoc}  (\ZZ _{1} ^{\#}/K)$ under the $(-\mathfrak{D} _1)$, 
we get $(-\mathfrak{D} _1) \circ i^{\#!}_1
 (\ker (\beta _{1,\FF})) \in \mathrm{Isoc}  (\ZZ _{1} ^{\#}/K)$. 
 From \cite[5.24]{caro_log-iso-hol}, 
the functor
 $\mathcal{H} ^0 w _{1+}$ is acyclic over $\mathrm{Isoc}  (\ZZ _{1} ^{\#}/K)$
 (i.e.\ $\H^iw_{1+}(\G) = 0$ for any $\G \in \mathrm{Isoc}  (\ZZ _{1} ^{\#}/K)$).
 Since the functor $i^{\#}_{1+}$ (resp. $i _{1+}$) is exact over $\mathrm{Isoc}  (\ZZ _{1} ^{\#}/K)$
 (resp. $\coh ( \D ^\dag _{\ZZ _1  ,\Q})$), since we have by transitivity of direct images 
 $v _+ \circ i^{\#}_{1+} \riso i _{1+} \circ w _{1+}$, 
with isomorphism ($\star\star$) we obtain  that
 $\ker (\beta _{1,\FF})(-\mathfrak{D} )$ is acyclic for 
 $\mathcal{H} ^0 v _{+}$.
 In the same way, $\coker (\beta _{1,\FF})(-\mathfrak{D} )$ is acyclic
 for $\mathcal{H} ^0 v _{+}$.
 Since  $\FF (-\ZZ)$ and $ \FF (-\D)$ belong to 
 $\mathrm{Isoc}  (\X ^{\flat}/K)$,
 then 
 from \ref{u_1+exact} (resp.  \cite[5.24]{caro_log-iso-hol}),
 they are
$\mathcal{H} ^0 u  _{1+}$ acyclic 
(resp. $\mathcal{H} ^0 u  _{+}$ acyclic).
Since $v _+ \circ u _{1+} \riso u _+$,   
 then this implies that
 $u _{1+} ( \FF (-\ZZ))$ and $u _{1+} ( \FF (-\D))$ are
 $\mathcal{H} ^0 v _+$ acyclic. 
 Thus, by applying the functor $\mathcal{H} ^0 v _{+}$ to
 \ref{es1-w1+kercoker}, we obtain the exact sequence:
 \begin{equation}
 \label{w1+kerpropEnd1}
   0\to
   v _+ ( \ker (\beta   _{1,\FF} )(-\mathfrak{D}))
   \to
   u _{+} ( \FF (-\ZZ))
   \xrightarrow{\epsilon _{1,\FF}}u _{+} ( \FF (-\mathfrak{D}))
   \to v _+ ( \coker (\beta   _{1,\FF} )(-\mathfrak{D}))\to 0.
 \end{equation}
 Using 
  $v _+ \circ i^{\#}_{1+} \riso i _{1+} \circ w _{1+}$,
  we get the first isomorphism:
 \begin{equation}
  \label{w1+kerpropEnd2}
  v _+\bigl( \ker (\beta   _{1,\FF} )(-\mathfrak{D})\bigr)
   \xrightarrow[v _+ \text{\ref{iso1-w1+kercoker}}]{\sim}
   i _{1+}\circ   w _{1+} \circ (-\mathfrak{D} _1) \circ   i^{\#!}_1
   \bigl(\ker (\beta   _{1,\FF})\bigr)
   \underset{\ref{ker-epsilon-exseq}}{\riso}
   i _{1+}\circ   w _{1+} \circ (-\mathfrak{D} _1)\bigl(\ker (N
   _{1,\FF})\bigr).
 \end{equation}
 Considering \ref{w1+kerpropEnd1} and \ref{w1+kerpropEnd2},
 we get  $ \ker  ( \epsilon _{1,\FF})
  \riso
  i_{1+} w _{1+}\bigl( \ker (N _{1,\FF})  (-\mathfrak{D} _1) \bigr)$. 
  Hence, using Berthelot-Kashiwara theorem, we get the first isomorphism. 
By replacing ``$\ker$'' by ``$\coker$'' in the proof above, we get the second isomorphism, and we
 conclude the proof.
\end{proof}

\begin{rem}
\label{rem3.4.10-3.4.17}
With the notation \ref{nota-kercoker-alpha} and \ref{nota-epsilon-theta1}, we remark that
the homomorphisms $\epsilon _{1,\FF}$ and $\beta _{1,\FF}$ are equal
 outside of $D$, and the isomorphisms in Lemma
 \ref{kercoker-epsilon} and Lemma \ref{w1+kerprop} are the same outside
 of $D _1$.
 In particular, when $f = \mathrm{id}$ and $\D $ is empty (i.e. $r=1$),
 then the isomorphism of Lemma \ref{w1+kerprop}
 is simply the one induced by composition from 
\ref{ker-epsilon-exseqf=id1}
 and 
\ref{ker-epsilon-exseqf=id2}.
  
\end{rem}

\begin{thm}
 \label{mixedisoals}
 We put $\Y _1 := \ZZ _1 \setminus \mathfrak{D} _1$, and let
 $j_1:=(\star,\mr{id},\mr{id})\colon (Y_1,Z_1,\ZZ_1) \to(Z_1, Z
 _1,\ZZ_1)$ be the canonical morphism of frames.
We have the isomorphisms of 
coherent $\D ^\dag _{\ZZ _1  ,\Q}$-modules of the form:
 \begin{equation}
  \label{u+mixedstable-iso1}
   i^!_1\mathcal{H} ^{\dag 1} _{Z _1} (j _! (\E))
   \riso j_{1!} \circ   (\hdag D _1)
   \bigl(\coker N_{1,\FF}\bigr),
   \qquad
   i^!_1\mathcal{H} ^{\dag 0} _{Z _1} (j_! (\E))\riso 
   j_{1!} \circ (\hdag D _1)  \bigl(\ker N_{1,\FF} \bigr).
 \end{equation}
 \end{thm}

\begin{proof}
The composition of  $i^!_1 \ref{esp1=theta1}$ with  \ref{w1+kercoker} 
is of the form
\begin{equation}
\label{WhenDempty}
 i^!_1\mathcal{H} ^{\dag 1} _{Z _1} (j _! (\E))
   \riso w _{1+}\bigl( \coker N_{1,\FF} (-\mathfrak{D} _1) \bigr).
\end{equation}
 Since $\coker N_{1,\FF} \in \mathrm{Isoc} ^0 (\ZZ _1 ^\#/K)$, 
as  \ref{j!Eu!F} and \ref{2.2.3caro-Tsuzuki}, we get
\begin{equation*}
 w _{1+}\bigl( \coker N_{1,\FF} (-\mathfrak{D} _1) \bigr)
 \underset{\ref{j!Eu!F}}{\riso}
  j _{1!} \circ w _{1+} \left (\coker N_{1,\FF} \right )
\underset{\ref{2.2.3caro-Tsuzuki}}{\riso}
  j _{1!} \circ   (\hdag D _1)  \left (\coker N_{1,\FF}
  \right ).
\end{equation*}

By composition, we obtain the left isomorphism of \ref{u+mixedstable-iso1}.
We get similarly the right isomorphism of \ref{u+mixedstable-iso1} replacing everywhere cokernels by kernels. 
\end{proof}

\subsection{An isomorphism compatible with Frobenius}
\label{subsectionFrob3.5}
The goal of this subsection is  Theorem \ref{iso3314-Frob}
which show the compatibility with Frobenius
of the main isomorphisms \ref{u+mixedstable-iso1} of the previous subsection.

\begin{empt}
 In this subsection (except for Theorem \ref{iso3314-Frob}), 
 we keep the following notation and hypotheses :
 Let $\phi \colon \X \to \widehat{\mathbb{A}} ^d _\V$ be an \'etale morphism 
of affine formal $\V$-schemes 
and let $t _1,\dots , t _d$ be the corresponding local coordinates
and $\partial  _1 , \dots, \partial  _d$ be the corresponding derivations.
We set 
$A := \Gamma (\X, \O _\X)$,
 $B := A / t _1 A$ and $\ZZ _1 : = \Spf  B$. 
 We suppose that there exists a smooth morphism
 $g _1\colon \X \to \ZZ _1$ such that 
 $g _1 \circ i _1=\id$ and 
 the $\ZZ _1$-morphism of the form
 $h\colon \X \to \widehat{\A} ^{1} _{\ZZ _1}$
 induced by $t _1$ is \'etale.
Let $\overline{t} _2, \dots, \overline{t} _d$ be the global sections of $\O _{\ZZ _1}$
induced by $t _2, \dots, t _d$ and let 
$\phi _1\colon \ZZ _1 \to \widehat{\mb{A}}^{d-1} _\V$ be the \'etale morphism induced by the local 
coordinates $\overline{t} _2, \dots, \overline{t} _d$.
We keep the notation the canonical absolute Frobenius 
$F _{\widehat{\A} ^{d} _{\V}}$, 
$F _{\widehat{\A} ^{d-1} _{\V}}$,
$F _\X $, $F _{\ZZ _1}$ of the paragraph
\ref{canonicalFrob}.
 We set  
 $F_{\X/\ZZ _1}= (F _\X , g _1) \colon \X \to \X ' := \X \times _{\ZZ _1,F _{\ZZ _1}} \ZZ _1$.
 By abuse of notation, we also simply denote by $F _{\ZZ _1}$ 
 the morphisms 
 $\id \times F _{\ZZ _1} \colon 
 \X' \to \X$ and 
 $\id \times F _{\ZZ _1} \colon \widehat{\A} ^{1} _{\ZZ _1} \to \widehat{\A} ^{1} _{\ZZ _1}$ induced by $F_{\ZZ_1}$. 
 Similarly, we denote 
 by 
 $\phi _1$ the morphism $\id \times \phi _1 \colon 
\widehat{\A} ^{1} _{\ZZ _1}  \to  \widehat{\A} ^{d} _\V$.
 Hence, to sum-up, we get the commutative diagrams:
 \begin{equation}
 \label{g-icomm}
  \xymatrix @ R= 0,4cm{
  {\ZZ _1}\ar@{=}[d]\ar@{^{(}->}[r] ^-{i _1}& 
  {\X}\ar[d] ^-{F _{\X/\ZZ _1}}\ar@{->>}[r] ^-{g _1}&
  {\ZZ _1}\ar@{=}[d]
  \\ 
 {\ZZ _1}\ar[d] _-{F _{\ZZ _1}}\ar@{^{(}->}[r] ^-{i _1}
  \ar@{}[rd]|\square&
  {\X'}\ar[d] ^-{F _{\ZZ _1}}\ar@{->>}[r] ^-{g _1}\ar@{}[rd]|\square&
  {\ZZ _1}\ar[d] ^-{F _{\ZZ _1}}
  \\
 {\ZZ _1}\ar@{^{(}->}[r] ^-{i _1}&
  {\X}\ar@{->>}[r] ^-{g _1}&
  {\ZZ _1,}
  }
  \qquad
  \xymatrix @ R= 0,4cm{
  {\X'}
  \ar@{}[rd]|\square
  \ar[d] _-{F _{\ZZ _1}}
  \ar[r] ^-{h'}
  &
  {\widehat{\A} ^{1} _{\ZZ _1} }
  \ar[d] ^-{F _{\ZZ _1}}
  \ar[r] ^-{\phi _1}
  &
  {\widehat{\A} ^{d} _\V }
  \ar[d] ^-{F _{\widehat{\A} ^{d-1} _\V}}
  \\
 {\X}
 \ar[r] ^-{h}
 \ar[rd] ^-{g _1}
 &
  {\widehat{\A} ^{1} _{\ZZ _1} }
  \ar[r] ^-{\phi _1}
  \ar[d] ^-{\pi _1}
  \ar@{}[rd]|\square
  &
  {\widehat{\A} ^{d} _\V }
    \ar[d] ^-{\pi _1}
  \\
  &
  {\ZZ _1}
  \ar[r] ^-{\phi _1}
  &
  {\widehat{\A} ^{d-1} _\V,}
  }
 \end{equation}
 where the symbol $\square$ means the cartesianity of the corresponding square,
 where $\pi _1\colon \widehat{\A} ^{d} _\V \to \widehat{\A} ^{d-1} _\V$ is the projection on the last 
 $d-1$ coordinates and
 $h' \colon \X '\to \widehat{\A} ^{1} _{\ZZ _1}$ is the canonical projection 
 (via the isomorphism $\X' \riso \X \times _{h, \widehat{\A} ^{1} _{\ZZ _1}, F _{\ZZ _1}} \widehat{\A} ^{1} _{\ZZ _1}$). 
 We denote by $t '_1, \dots , t '_d$ the
 local coordinate of $\X'$ corresponding to the \'etale map $\phi _1 \circ h'$
 and by $\partial ' _1 , \dots, \partial ' _d$ the corresponding derivations.  
 Hence, we compute that the image of $t _1$ (resp. $t _2,\dots, t _d$)
 under $F _{\ZZ _1} ^* \colon \Gamma (\X, \O _\X) \to \Gamma (\X', \O _{\X'})$ 
 is $t ' _1$ (resp. $t ^{\prime q} _2,\dots, t ^{\prime q} _d$).
 On the other hand, 
 the image of $t '_1$ (resp. $t '_2,\dots, t '_d$)
 under $F _{\X /\ZZ _1} ^* \colon  \Gamma (\X', \O _{\X'}) \to \Gamma (\X, \O _\X)$ 
 is $t ^q _1$ (resp. $t _2,\dots, t _d$).
Finally, we remark that, via $h$ (resp. $h'$), $t _1$ (resp.\ $t' _1$)
is a relative local coordinate of $\X/\ZZ _1$ (resp.\ $\X'/\ZZ _1$). 
In particular, 
$\Omega^1 _{\X /\ZZ _1}$ is $\O _{\X}$-free with $d t _1$ as a basis,
$\Omega^1 _{\X '/\ZZ _1}$ is $\O _{\X'}$-free with $d t '_1$ as a basis,
$\omega _{\X}$ is $\O _{\X}$-free with $d t _1 \wedge \dots \wedge d t _d$ as a basis,
$\omega _{\X ^\flat}$ is $\O _{\X}$-free with $d log t _1 \wedge d t _2 \wedge \dots \wedge d t _d$ as a basis.
Hence, since the local coordinates are fixed, 
to make some computations, 
we will sometimes identify 
$\Omega^1 _{\X /\ZZ _1}$,
(resp. $\omega _{\X}$,
resp. $\omega _{\X ^\flat}$)
with $\O _{\X}$
and 
$\Omega^1 _{\X '/\ZZ _1}$
with 
$\O _{\X'}$.
\end{empt}

\begin{empt}
As in \ref{dfnFandE}, 
let $\X ^{\flat}:= (\X, \ZZ _1)$, $u \colon \X ^\flat \to \X$, 
$(\FF ,\phi _\FF)\in F\text{-}\mathrm{Isoc} (\X ^{\flat}/K)$, 
$(\E , \phi _\E) := u _+ (\FF ,\phi _\FF)$.
 We put $\E' : = F _{\ZZ _1} ^* (\E)$, $\FF' : = F _{\ZZ _1} ^* (\FF)$. 
 We get the commutative diagram:
 \begin{equation}
  \label{pre-Frob-direct-image}
   \xymatrix @R=0,3cm {
   {F _{\X /\ZZ_1} ^{*} \E'}\ar[r] ^-{\sim}&
   {F _\X ^* (\E) }\ar[r] ^-{\sim} _-{\phi _{\E}}&
   {\E}\\ 
  {F _{\X /\ZZ_1} ^{*} \FF'}\ar[r] ^-{\sim}\ar@{^{(}->}[u] ^-{}&
   {F _\X ^* (\FF) }\ar[r] ^-{\sim} _-{\phi _{\FF}}\ar@{^{(}->}[u] ^-{}&
   {\FF.}\ar@{^{(}->}[u] ^-{}
   }
 \end{equation}
\end{empt}

\begin{empt}
[Relative de Rham complex and Frobenius]
Since de Rham complexes commute with base changes, 
for any $\G \in D ^\mathrm{b} _{\mathrm{coh}} ( \D ^\dag _{\X,\Q})$, 
we have 
$F _{\ZZ _1} ^* g _{1*} \bigl(\Omega _{\X /\ZZ _1} ^\bullet
   \otimes _{\O _{\X}} \G  \bigr)
   \riso
g _{1*}   F _{\ZZ _1} ^* \bigl(\Omega _{\X /\ZZ _1} ^\bullet
   \otimes _{\O _{\X}} \G  \bigr)
   \riso  
   g _{1*} \bigl( \Omega _{\X ' /\ZZ _1} ^\bullet
   \otimes _{\O _{\X'}} F _{\ZZ _1} ^*  \G   \bigr)$ (recall that $\X$ is affine and then $g _{1*}$ is exact).
   Moreover, as in \cite[4.3.5]{Be2}, we check the canonical morphism
     induced by functoriality under $F _{\X /\ZZ _1}\colon \X \to \X'$ between de Rham complexes
 \begin{equation}
 \label{4.3.5-Be2}
 \Omega _{\X ' /\ZZ _1} ^\bullet
   \otimes _{\O _{\X'}}  \G ' 
   \to 
\Omega _{\X /\ZZ _1} ^\bullet
   \otimes _{\O _{\X}}  F _{\X /\ZZ _1} ^* \G ' 
 \end{equation}
is a quasi-isomorphism for any $\G' \in D ^\mathrm{b} _{\mathrm{coh}} ( \D ^\dag _{\X',\Q})$.
Hence,
we get the isomorphism
\begin{equation}
\label{FrobReldeRham}
F _{\ZZ _1} ^* g _{1*} \bigl(\Omega _{\X /\ZZ _1} ^\bullet
   \otimes _{\O _{\X}} \E  \bigr)
   \riso 
   g _{1*} \bigl( \Omega _{\X ' /\ZZ _1} ^\bullet
   \otimes _{\O _{\X'}} \E '   \bigr)
   \riso 
   g _{1*} \bigl(
   \Omega _{\X /\ZZ _1} ^\bullet
   \otimes _{\O _{\X}}  F _{\X} ^*  \E \bigr)
   \underset{\phi _\E}{\riso}
   g _{1*} \bigl(
   \Omega _{\X /\ZZ _1} ^\bullet
   \otimes _{\O _{\X}} \E \bigr)
\end{equation}
  whose composition is the Frobenius structure of 
$g _{1*} \bigl(\Omega _{\X /\ZZ _1} ^\bullet
   \otimes _{\O _{\X}} \E  \bigr)$
   induced by that of $\E$.
\end{empt}

\begin{empt}
From \cite[3.15]{Abe-Frob-Poincare-dual}, we have the canonical isomorphism
 compatible with Frobenius:
 \begin{equation}
  \label{g+Omega}
   g  _{1 +} (\E) \riso g _{1*} \bigl(\Omega _{\X /\ZZ _1} ^\bullet
   \otimes _{\O _{\X}} \E  \bigr) [1] (1),
 \end{equation}
 where Frobenius structure of the right term is detailed in \ref{FrobReldeRham}.
 By identifying $\Omega^1 _{\X /\ZZ _1}$ (via the basis $d t _1$ as $\O _{\X}$-module) with $\O _{\X}$ and applying the
 functor $\mathcal{H} ^{0} $ to the right term of 
\ref{g+Omega}, we get a Frobenius structure on 
$\coker _{\partial _{1} } (\E)$ (since $\X $ is affine, by abuse of notation we sometimes forget to write $g _{1*}$)
from that of $g _{1*} \bigl(\Omega _{\X /\ZZ _1} ^\bullet
   \otimes _{\O _{\X}} \E  \bigr) [1]$. 
 Hence, applying the
 functor $\mathcal{H} ^{0} $ to 
\ref{g+Omega}, 
 we get the isomorphism compatible with Frobenius
 \begin{equation}
  \label{g+OmegaH0}
   \mathcal{H} ^{0} g  _{1 +} (\E) \riso
   \coker _{\partial _{1} } (\E) (1).
 \end{equation}
 Let us describe the Frobenius structure on $\coker _{\partial _{1} } (\E)$.
Since $ F _{\X /\ZZ_1 *} =\id$,  we get the adjunction morphism
$ \E' \to F _{\X /\ZZ_1} ^{*} \E'$. With the isomorphism $F _{\X /\ZZ_1} ^{*} \E'  \riso F _{\X } ^{*} \E$ of 
\ref{pre-Frob-direct-image},
this yields $\mathrm{adj}\colon
 \E' \to  F _{\X } ^{*} \E$. By identifying $\Omega^1 _{\X' /\ZZ _1}$ with $\O _{\X'}$
 and $\Omega^1 _{\X /\ZZ _1}$ with $\O _{\X}$, the canonical
 quasi-isomorphism $\Omega _{\X '/\ZZ _1} ^\bullet \otimes _{\O _{\X'}}
 \E '\to \Omega _{\X /\ZZ _1} ^\bullet \otimes _{\O _{\X}} F _\X ^{*}(\E )$ (see \ref{4.3.5-Be2})
 is given by the commutative diagram
\begin{equation}
 \label{adj-cokerE'E}
  \xymatrix @R=0,3cm  @C=2cm{
  {\E'}\ar[r] ^-{\mathrm{adj}}\ar[d] ^-{\partial _{1} '}& 
  {F _{\X } ^{*} \E}\ar[d] ^-{\partial _{1} }\\ 
 {\E'}\ar[r] ^-{q t _1 ^{q-1}\cdot\mathrm{adj}}& 
  {F _{\X } ^{*} \E.}
  }
\end{equation}
 The maps $q t _1 ^{q-1}\cdot\mathrm{adj}$ induces the isomorphism $\coker
 _{\partial _{1} '} (\E') \riso \coker _{\partial _{1} } (F _{\X} ^{*}
 \E)$ given by $[x'] \mapsto [q t _1 ^{q-1}\otimes x']$. Hence we get
 \begin{equation}
  \label{defi-FrobcokerE}
   \xymatrix @R=0,3cm {
   {F _{\ZZ _1} ^{*} \coker _{\partial_{1}} (\E)}\ar[r] ^-{\sim}& 
   {\coker _{\partial _{1} '} (\E')}\ar[r] ^-{\sim}&
   {\coker _{\partial _{1} } (F _{\X} ^{*} \E)}\ar[r] ^-{\sim} _-{\phi _\E}&
   {\coker _{\partial _{1} } (\E),}
   }
 \end{equation}
 where the first homomorphism is an isomorphism because $F _{\ZZ _1}$ is
 flat. 
 We notice that the composition 
 \ref{defi-FrobcokerE} is indeed by construction the Frobenius structure of
 $\coker _{\partial_{1}} (\E)$ which comes from that of 
 $g _{1*} \bigl(\Omega _{\X /\ZZ _1} ^\bullet
   \otimes _{\O _{\X}} \E  \bigr)$
given in
 \ref{FrobReldeRham}. 
 
\end{empt}

\begin{empt}
As in \ref{g1flatdescr}, 
we have the canonical isomorphism (without Frobenius structure):
\begin{equation}
 \label{gflat+Omega}
  g ^{\flat} _{1 +} (\FF) \riso 
  g _{1*} (\Omega _{\X ^{\flat}/\ZZ _1} ^\bullet \otimes _{\O _{\X}} \FF
  )[1].
\end{equation}
 By identifying 
 $\Omega _{\X ^{\flat} /\ZZ _1}$ with $\O _{\X}$ and by applying the
 functor $\mathcal{H} ^{0}$, we get the isomorphism
 \begin{equation}
  \label{gflat+OmegaH0}
   \mathcal{H} ^{0} g ^{\flat}  _{1 +} (\FF) \riso
   \coker _{t _{1}\partial _{1} } (\FF).
 \end{equation}
 Now, let us define a Frobenius structure on $\coker _{t _{1} \partial
 _{1} } (\FF)$ as follows (which will be a log analogue of that 
 on $\coker _{\partial _{1} } (\E)$ given in \ref{defi-FrobcokerE}) : we denote by $\mathrm{adj}\colon \FF' \to F _{\X
 /\ZZ_1} ^{*} \FF'  \riso F _{\X } ^{*} \FF$
 the canonical homomorphism. By \ref{2.3.4.1}, we can check that the
 diagram
 \begin{equation}
  \label{adj-cokerF'F}
   \xymatrix @R=0,3cm  @ C=2cm{
   {\FF'}\ar[r] ^-{\mathrm{adj}}\ar[d] ^-{t _{1} '\partial _{1} '}& 
   {F _{\X } ^{*} \FF}\ar[d] ^-{t _{1} \partial _{1} }\\ 
  {\FF'}\ar[r] ^-{q \cdot \mathrm{adj}}& 
   {F _{\X } ^{*} \FF}
   }
 \end{equation}
 is commutative. Thus the map $q\cdot \mathrm{adj}$
 induces the canonical map $\coker _{t _{1} '\partial _{1} '} (\FF')\to
 \coker _{t _{1} \partial _{1} } (F _{\X } ^{*} \FF)$. We get the
 Frobenius structure on $\coker _{t _{1} \partial _{1} } (\FF)$ via the
 homomorphisms:
 \begin{equation}
  \label{defi-cokerF}
   F _{\ZZ _1}  ^{*} \coker _{t _{1} \partial_{1}} (\FF)
   \riso
   \coker _{t _{1} '\partial _{1} '} (\FF')
   \xrightarrow[q\cdot \mathrm{adj}]{\sim}
   \coker _{t _{1} \partial _{1} } (F _{\X } ^{*} \FF)
   \xrightarrow[\coker _{t _{1} \partial _{1} }(\phi)]{\sim}
   \coker _{t _{1} \partial _{1} } (\FF).
 \end{equation}
\end{empt}

\begin{empt}
We have the isomorphisms
 \begin{equation}
  \label{preiso-lemm1Prop-hdagZ'1w1+kercoker}
   g _{1*} (\Omega _{\X ^{\flat}/\ZZ _1} ^\bullet \otimes _{\O _{\X}} \FF  [1])
   \underset{\ref{gflat+Omega}}{\liso}
   g ^{\flat}  _{1 +} (\FF)
   \liso
   g _{1 +} (u _{+}(\FF))
=
   g _{1 +} (\E)
   \underset{\ref{g+Omega}}{\riso}
   g _{1*} (\Omega _{\X /\ZZ _1} ^\bullet \otimes _{\O _{\X}} \E  [1]).
 \end{equation}
\end{empt}

\begin{lem}
 \label{lemm1Prop-hdagZ'1w1+kercoker}
 By applying $\mathcal{H} ^{0}$ to the isomorphism
 \ref{preiso-lemm1Prop-hdagZ'1w1+kercoker}, we obtain the isomorphism
 $\coker _{t _{1}\partial _{1} } (\FF)\riso\coker _{\partial _{1} } (\E)$. 
 This isomorphism commutes with Frobenius structures defined respectively 
 in \ref{defi-cokerF} and \ref{defi-FrobcokerE}.
\end{lem}
\begin{proof}
1)  Let us recall the definition of the isomorphisms appearing in
 \ref{preiso-lemm1Prop-hdagZ'1w1+kercoker}. 
 
 a)   
 The isomorphism $ g ^{\flat}  _{1 +}(\FF)\liso g _{1 +} \circ u _{+}$
is by definition (notice that here $u ^{-1}= \id$ and $u _* =\id$ and then we do not need to use 
a projection formula) : 
\begin{equation}
\label{iso-trans-g1u+}
 g ^{\flat}  _{1 +} (\FF)
 =
 g _{1*} (  \D^\dag_{\ZZ _1 \leftarrow\X ^{\flat}}  
 \otimes ^{\L} _{ \D^\dag_{\X ^{\flat}} } \FF )
 \riso 
 g _{1*} (  \D^\dag_{\ZZ _1 \leftarrow\X }
\otimes  ^{\L}_{\D^\dag_{\X } }(\D^\dag_{\X \leftarrow \X ^{\flat}}
  \otimes ^{\L} _{ \D^\dag_{\X ^{\flat}} } \FF ))
 \riso 
   g _{1 +} (u _{+}(\FF)),
\end{equation}
where the first isomorphism
comes from 
the canonical isomorphism 
\begin{equation}
\label{iso-trans-g1u+pre}
\D^\dag_{\ZZ _1 \leftarrow\X }
\otimes  ^{\L}_{\D^\dag_{\X } }\D^\dag_{\X \leftarrow \X ^{\flat}}
\riso 
 \D^\dag_{\ZZ _1 \leftarrow\X ^{\flat}} .
\end{equation}

b) 
From \ref{DeRham-g_1}, we get the exact sequence 
 \begin{equation}
  \label{lemm1Propexs}
   \tag{($\star$)}
   0 \to \D^\dag_{\X ^{(\flat)} ,\Q}\to 
   \Omega^1 _{\X ^{(\flat)}/\ZZ _1} \otimes _{\O _{\X}} 
   \D^\dag_{\X   ^{(\flat)},\Q}
   \to
   \D^\dag_{\ZZ _1 \leftarrow\X ^{(\flat)},\Q}  \to 0.
 \end{equation}
 The construction of  the left arrow of 
 \ref{lemm1Propexs} is detailed just above \ref{DeRham-g_1-leftarrow}
 and the right arrow is given by the canonical connection of $\D^\dag_{\X ^{(\flat)},\Q}$.
As for \ref{g1flatdescr},
by applying the functor 
 $g _{1 *} (- \otimes ^\L _{\D ^{\dag} _{\X ,\Q} } \E )$
 (resp.  $g _{1 *} (- \otimes ^\L _{\D ^{\dag} _{\X ^{\flat},\Q} } \G )$)
 to the exact sequence \ref{lemm1Propexs}
 we get the isomorphism 
 \ref{g+Omega} (resp.\ \ref{gflat+Omega}).

2) We now compute the isomorphism
$\coker _{t _{1}\partial _{1} } (\FF)\riso\coker _{\partial _{1} } (\E)$.
We denote by 
$\delta _{\X ^\flat} \colon 
\omega _{\X ^{\flat}} \otimes _{\O _{\X}} \D^\dag_{\X ^{\flat},\Q}
\riso
\omega _{\X ^{\flat}} \otimes _{\O _{\X}} \D^\dag_{\X ^{\flat},\Q}$
as usual the involution isomorphism  which exchanges both structure of 
right $\D^\dag_{\X ^{\flat},\Q}$-modules
(this comes from \cite[1.19]{caro_log-iso-hol} by completion and passage to the inductive limite).
We get the isomorphism of right
$\D^\dag_{\X ^{\flat},\Q}$-modules:
\begin{equation}
\label{alphapre1}
\Omega^1 _{\X ^{\flat}/\ZZ _1} \otimes _{\O _{\X}} \D^\dag_{\X ^{\flat},\Q}
\riso
g _{1} ^{-1} \omega  _{\ZZ _1} ^{-1}  \otimes _{g _{1} ^{-1} \O _{\ZZ _1}} 
(\omega _{\X ^{\flat}} \otimes _{\O _{\X}} \D^\dag_{\X ^{\flat},\Q})
\underset{\delta _{\X ^\flat}}{\riso} 
(\omega _{\X ^{\flat}} \otimes _{\O _{\X}} \D^\dag_{\X ^{\flat},\Q})
 \otimes _{g _{1} ^{-1} \O _{\ZZ _1}} g _{1} ^{-1} \omega  _{\ZZ _1} ^{-1} ,
\end{equation}
where the structure of right 
$\D^\dag_{\X ^{\flat},\Q}$-module of the right term of \ref{alphapre1}
is that coming from the twisted structure.
From $\D^\dag_{\X ^{\flat},\Q} \subset \D^\dag_{\X ,\Q}$, we get 
\begin{equation}
\label{alphapre11}
(\omega _{\X ^{\flat}} \otimes _{\O _{\X}} \D^\dag_{\X ^{\flat},\Q})
 \otimes _{g _{1} ^{-1} \O _{\ZZ _1}} g _{1} ^{-1} \omega  _{\ZZ _1} ^{-1} 
 \to
(\omega _{\X ^{\flat}} \otimes _{\O _{\X}} \D^\dag_{\X ,\Q})
 \otimes _{g _{1} ^{-1} \O _{\ZZ _1}} g _{1} ^{-1} \omega  _{\ZZ _1} ^{-1}
\end{equation}
 The involution isomorphism $\delta _\X$ induces
 the isomorphism 
 of $\D^\dag_{\X ,\Q}$-bimodules of the form
 $ \omega _{\X } ^{-1} \otimes _{\O _{\X} }(\delta _{\X }) 
 \colon 
\D^\dag_{\X ,\Q}
\riso
\omega _{\X } \otimes _{\O _{\X}} \D^\dag_{\X ,\Q}
\otimes _{\O _{\X}} 
 \omega _{\X } ^{-1}$, where both structure of
 $\D^\dag_{\X ,\Q}$-module of the right term are the twisted ones. 
This yields the first isomorphism of right
$\D^\dag_{\X ^{\flat},\Q}$-modules:
\begin{gather}
\notag
(\omega _{\X ^{\flat}} \otimes _{\O _{\X}} \D^\dag_{\X ,\Q})
 \otimes _{g _{1} ^{-1} \O _{\ZZ _1}} g _{1} ^{-1} \omega  _{\ZZ _1} ^{-1}
\underset{\delta _{\X}}{\riso} 
 g _{1} ^{-1} \omega  _{\ZZ _1} ^{-1} \otimes _{g _{1} ^{-1} \O _{\ZZ _1}} 
\otimes _{\O _{\X}}  \omega _{\X } \otimes _{\O _{\X}} 
( \D^\dag_{\X ,\Q} \otimes _{\O _{\X}} 
 \omega _{\X } ^{-1} \otimes _{\O _{\X}} 
 \omega _{\X ^{\flat}}) 
 \\
 \label{alphapre2}
 \riso
 \Omega^1 _{\X /\ZZ _1} \otimes _{\O _{\X}} 
 ( \D^\dag_{\X ,\Q} \otimes _{\O _{\X}} 
 \omega _{\X } ^{-1} \otimes _{\O _{\X}} 
 \omega _{\X ^{\flat}}) 
 =
 \Omega^1 _{\X /\ZZ _1} \otimes _{\O _{\X}} 
   \D^\dag_{\X \leftarrow \X ^{\flat},\Q}
\end{gather}
   Hence, we get by composition of \ref{alphapre1}, \ref{alphapre11} and \ref{alphapre2}
   the morphism of right
$\D^\dag_{\X ^{\flat},\Q}$-modules:
$\alpha \colon  \Omega^1 _{\X ^{\flat}/\ZZ _1} \otimes _{\O _{\X}} \D^\dag_{\X ^\flat ,\Q }
  \to 
\Omega^1 _{\X /\ZZ _1} \otimes _{\O _{\X}} 
   \D^\dag_{\X \leftarrow \X ^{\flat},\Q}
$. 
Via
$\D^\dag_{\X \leftarrow \X ^{\flat},\Q} \riso \D^\dag_{\X,\Q} \otimes _{\O  _{\X}} \O  _{\X} (\ZZ _1) 
\subset \D^\dag_{\X} (\hdag Z _1) _{\Q} $, 
we will identify
$\D^\dag_{\X \leftarrow \X ^{\flat},\Q} $ 
with the sub $(\D^\dag_{\X, \Q} , \D^\dag_{\X ^\flat, \Q} )$-bimodule
$\D^\dag_{\X,\Q} \frac{1}{t _1}$
of $\D^\dag_{\X} (\hdag Z _1) _{\Q}$.
The inclusion 
$ \D^\dag_{\X ^{\flat} ,\Q} \subset 
\D^\dag_{\X ^\flat} (\hdag Z _1) _{\Q} 
=
\D^\dag_{\X} (\hdag Z _1) _{\Q} $ has the factorisation
$\iota \colon  \D^\dag_{\X ^{\flat} ,\Q} \subset 
\D^\dag_{\X \leftarrow \X ^{\flat},\Q} $.
Consider now the following diagram of right $\D^\dag_{\X ^{\flat} ,\Q}$-modules
 \begin{equation}
 \label{lemm1Prop-hdagZ'1w1+kercoker-diag1}
   \xymatrix @R=0,3cm {
   {0}
   \ar[r] ^-{}
   & 
   {\D^\dag_{\X \leftarrow \X ^{\flat},\Q}}
   \ar[r] ^-{\nabla}
   &
   {\Omega^1 _{\X /\ZZ _1} \otimes _{\O _{\X}} 
   \D^\dag_{\X \leftarrow \X ^{\flat},\Q}}
   \ar[r] ^-{}
   &
   { \D^\dag_{\ZZ _1 \leftarrow\X ,\Q}
\otimes _{\D^\dag_{\X ,\Q} }\D^\dag_{\X \leftarrow \X ^{\flat},\Q}}
   \ar[r] ^-{}
      \ar[d] ^-{\sim} _-{\ref{iso-trans-g1u+pre}}
      &
   {0}
   \\ 
  {0}\ar[r] ^-{}
  & 
   { \D^\dag_{\X ^{\flat} ,\Q}}
   \ar[r] ^-{\nabla ^{\flat}}
   \ar[u] ^-{\iota}
   &
   {\Omega^1 _{\X ^{\flat}/\ZZ _1} \otimes _{\O _{\X}} \D^\dag_{\X ^{\flat},\Q}}
   \ar[r] ^-{}
   \ar[u] ^-{\alpha}
   &
   {\D^\dag_{\ZZ _1 \leftarrow\X ^{\flat},\Q} }
   \ar[r] ^-{}
   &
   {0}
   }
 \end{equation}
 where the sequence of the bottom (resp. of the top) is  the respective case of \ref{lemm1Propexs} 
 (resp. is the image of \ref{lemm1Propexs} under the functor 
 $-\otimes _{\D^\dag_{\X ,\Q} }\D^\dag_{\X \leftarrow \X ^{\flat},\Q}$).
 The exactness of the sequence of the bottom has already been checked. 
 Concerning that of the top of \ref{lemm1Prop-hdagZ'1w1+kercoker-diag1}, 
 using the vanishing property given by 
 \ref{iso-trans-g1u+pre},
 this comes from that of \ref{lemm1Propexs}. 
By identifying
$\D^\dag_{\X \leftarrow \X ^{\flat},\Q} $ 
with the sub $(\D^\dag_{\X, \Q} , \D^\dag_{\X ^\flat, \Q} )$-bimodule
$\D^\dag_{\X,\Q} \frac{1}{t _1}$
of $\D^\dag_{\X} (\hdag Z _1) _{\Q}$, we
compute that $\alpha$ is such that
$d log t _1 \otimes 1 \mapsto d t _1 \otimes \frac{1}{t _1}$
and is determined by $\D^\dag_{\X ^{\flat} ,\Q}$-linearity by this formula.
Moreover, the left arrow of  \ref{lemm1Propexs} is the morphism of right 
$\D^\dag_{\X   ^{(\flat)},\Q}$-modules which satisfies
$d t _1 \otimes 1 \mapsto  g _1 ^* (1) \otimes (dt _1 \wedge \dots \wedge d t _d )\otimes (dt _2 \wedge \dots \wedge  d t _d  ) ^*$
(resp. 
$d log t _1 \otimes 1 \mapsto  g _1 ^* (1) \otimes (d log t _1 \wedge d t _2 \wedge \dots \wedge d t _d) \otimes (dt _2 \wedge \dots \wedge  d t _d  ) ^*$.
Finally, we compute that 
the isomorphism \ref{iso-trans-g1u+pre} sends 
$(g _1 ^* (1) \otimes (dt _1 \wedge \dots \wedge d t _d )\otimes (dt _2 \wedge \dots \wedge  d t _d  ) ^* ) \otimes \frac{1}{t _1}$
to 
$g _1 ^* (1) \otimes (dlog t _1 \wedge d t _2 \wedge \dots \wedge d t _d) \otimes (dt _2 \wedge \dots \wedge  d t _d  ) ^*$.
 Hence, we compute that the diagram \ref{lemm1Prop-hdagZ'1w1+kercoker-diag1}
 is commutative.

The commutativity of diagram of modules \ref{lemm1Prop-hdagZ'1w1+kercoker-diag1}
can be formulated as follows: we have the commutative diagram of complexes:
\begin{equation}
 \label{diagofcomplexes1}
   \xymatrix @R=0,3cm {
   {[\D^\dag_{\X \leftarrow \X ^{\flat},\Q} 
   \overset{\nabla}{\longrightarrow} 
   \Omega^1 _{\X /\ZZ _1} \otimes _{\O _{\X}} 
   \D^\dag_{\X \leftarrow \X ^{\flat},\Q}]}
   \ar[r] ^-{}
   &
   { \D^\dag_{\ZZ _1 \leftarrow\X ,\Q}
\otimes _{\D^\dag_{\X ,\Q} }
\D^\dag_{\X \leftarrow \X ^{\flat},\Q}}
   \\ 
   {[\D^\dag_{\X ^{\flat} ,\Q}
      \overset{\nabla^{\flat}}{\longrightarrow}
    \Omega^1 _{\X ^{\flat}/\ZZ _1} \otimes  _{\O _{\X}} \D^\dag_{\X ^{\flat},\Q}]  }
   {}
   \ar[r] ^-{}
   \ar[u] 
   &
   {\D^\dag_{\ZZ _1 \leftarrow\X ^{\flat},\Q} ,}
     \ar[u] ^-{\sim} _-{\ref{iso-trans-g1u+pre}}
    }
 \end{equation}
 where the left arrow is given by $\iota$ and $\alpha$ as above.
By applying the functor $g _{1*}( -\otimes ^{\L} _{\D^\dag_{\X ^{\flat}} } \FF)$ to the diagram 
\ref{diagofcomplexes1},
since 
$u _+ ( \FF) = \D^\dag_{\X \leftarrow \X ^{\flat},\Q} 
\otimes ^{\L} _{\D^\dag_{\X ^{\flat},\Q}} \FF$,
 we get the commutative diagram of complexes:
 \begin{equation}
 \label{diag-complexes}
\xymatrix @R=0,3cm {
{g _{1*} [  u _+ ( \FF)
   \overset{\nabla}{\longrightarrow} 
\Omega _{\X /\ZZ _1} \otimes _{\O _{\X}} u _+ ( \FF) ]} 
\ar[r] ^-{\sim} _-{\ref{g+Omega}}
&
{g _{1*}(\D^\dag_{\ZZ _1 \leftarrow\X }  
\otimes ^{\L}_{\D^\dag_{\X } }
u _+ ( \FF) )} 
 \\ 
{g _{1*} [\FF
      \overset{\nabla^{\flat}}{\longrightarrow}
      \Omega _{\X ^{\flat}/\ZZ _1} \otimes _{\O _{\X}}\FF ]} 
\ar[r] ^-{\sim} _-{\ref{gflat+Omega}}
\ar[u] ^-{}
&
{g _{1*}(\D^\dag_{\ZZ _1 \leftarrow\X ^{\flat}} \otimes ^{\L} _{\D^\dag_{\X ^{\flat}} } \FF),} 
\ar[u] ^-{\sim} _-{\ref{iso-trans-g1u+}}
}
 \end{equation}
 where the left morphism is in fact (by acyclicity) the morphism of complexes which is the image under 
 the functor 
$g _{1*}( -\otimes  _{\D^\dag_{\X ^{\flat}} } \FF)$
of the left morphism of \ref{diagofcomplexes1}. 
 From the commutativity of \ref{diag-complexes}, by definition of the isomorphism \ref{preiso-lemm1Prop-hdagZ'1w1+kercoker},
 we get that the morphism of complexes of the left of \ref{diag-complexes} is precisely 
 the isomorphism \ref{preiso-lemm1Prop-hdagZ'1w1+kercoker}.
 
 Now, 
 let us describe the left morphism of \ref{diag-complexes}.
By forgetting to mention $g _{1*}$,
this left morphism of \ref{preiso-lemm1Prop-hdagZ'1w1+kercoker} is given by two (compatible with $\nabla$ and $\nabla ^\flat$) morphisms, 
one is of the form
$\FF \to \E$ and the other one is of the form
$\Omega _{\X ^{\flat}/\ZZ _1} \otimes _{\O _{\X}}\FF 
\to 
\Omega _{\X /\ZZ _1} \otimes _{\O _{\X}} \E$ (recall that $u _+ (\FF) =\E$).
By using the canonical isomorphism
$\FF (\hdag Z _1) \riso u _+ (\FF) $, 
the first  morphism 
is given by the inclusion $\FF \hookrightarrow \FF (\hdag Z _1) \riso u _+ (\FF) =\E$ (indeed, the morphism is induced by the inclusion $\iota$).
By identifying $ \Omega ^1 _{\X /\ZZ _1} $ 
(resp. $ \Omega ^1 _{\X ^{\flat}/\ZZ _1} $)
with $\O _{\X}$ via the basis $d t _1$ (resp. via tha basis $d log t _1$),
we compute that  
the second morphism
is given by the multiplication by $\frac{1}{t _1}\colon \FF \hookrightarrow \FF (\hdag Z _1) \riso u _+ (\FF) =\E$
(see the computation describing $\alpha$ above).
This yields that the left morphism of \ref{diag-complexes} can be identified with the left square 
 of the diagram:
 \begin{equation}
 \label{constructionisocokernels}
  \xymatrix @R=0,3cm {
  {\E }\ar[r] ^-{\partial _1}&
  {\E}\ar[r] ^-{}&
  {\coker _{\partial _{1} } (\E) }\ar[r] ^-{}&
  {0}\\ 
  {\FF}\ar[r] ^-{t_1\partial _{1}}\ar[u] ^-{} _-{}& 
   {\FF}\ar[u] ^-{} _-{\frac{1}{t _1}}\ar[r] ^-{}& 
   {\coker _{t_1\partial _{1} } (\FF)}\ar[r] ^-{}
   \ar[u] ^-{\sim} _-{\frac{1}{t _1}}&
   {0,}
   }
 \end{equation}
  The right (vertical) morphism of the diagram \ref{constructionisocokernels} is then equal to the isomorphism
  of the proposition \ref{lemm1Prop-hdagZ'1w1+kercoker}.
  
  3) Finally we check the compatibility with Frobenius.
Consider the diagram
 \begin{equation}
 \label{lemm1Prop-hdagZ'1w1+kercokerEnd}
   \xymatrix @R=0,3cm {
   {F _{\ZZ _1} ^{*} \coker _{\partial_{1}} (\E)}\ar[r] ^-{\sim}& 
   {\coker _{\partial _{1} '} (\E')}\ar[r] ^-{\sim}&
   {\coker _{\partial _{1} } (F _{\X} ^{*} \E)}\ar[r] ^-{\sim} _-{\phi _\E}&
   {\coker _{\partial _{1} } (\E)}\\
  {F _{\ZZ _1} ^{*} \coker _{t _1\partial_{1}} (\FF)}
   \ar[r] ^-{\sim}\ar[u] ^-{} _-{\frac{1}{t _1}}& 
   {\coker _{t' _1\partial _{1} '} (\FF')}
   \ar[r] ^-{\sim}\ar[u] ^-{} _-{\frac{1}{t' _1}}&
   {\coker _{t _1\partial _{1} } (F _{\X } ^{*} \FF)} 
   \ar[r] ^-{\sim} _-{\phi _\FF }\ar[u] ^-{} _-{\frac{1}{t _1}}&
   {\coker _{t _1\partial _{1} } (\FF),}
   \ar[u] ^-{} _-{\frac{1}{t _1}}
   }
 \end{equation}
 where the composition of the top
 is the isomorphism $F _{\ZZ _1} ^{*} \coker _{\partial_{1}} (\E) \to \coker _{\partial _{1} } (\E)$
 is the Frobenius structure of $\coker _{\partial _{1} } (\E)$ as in \ref{defi-FrobcokerE}
 and the isomorphism below
 $F _{\ZZ _1} ^{*} \coker _{t _1\partial_{1}} (\FF)
 \riso 
 \coker _{t _1\partial _{1} } (\FF)$
 is the Frobenius 
as defined in \ref{defi-cokerF}.
 By using the commutativity of 
 \ref{pre-Frob-direct-image}
 and the functoriality in $\FF$ of the diagram \ref{constructionisocokernels}
 (i.e. replacing $\FF$ by a morphism $\FF _1 \to \FF _2$ and 
 $\E$ by $u _+ (\FF _1) \to u _{+} (\FF _2)$ we get a similar diagram in three dimension),
 we get the commutativity of the right
 square of \ref{lemm1Prop-hdagZ'1w1+kercokerEnd}.
 The commutativity of the left  square of \ref{lemm1Prop-hdagZ'1w1+kercokerEnd}
 is obvious. 
 Since $F _{\X /\ZZ _1}^* \colon 
 t ' _1 \mapsto t _1 ^q$, using the computation of the horizontal morphisms of the middle square
 (see respectively just after \ref{adj-cokerE'E} and \ref{adj-cokerF'F}), 
 we get the commutativity of the middle square. 
Hence, the diagram  \ref{lemm1Prop-hdagZ'1w1+kercokerEnd} is commutative,
which finishes the proof. 

\end{proof}

\begin{empt}
\label{cokerFrob-morp}
Since  $N _{1,\FF}\colon\H(1)\rightarrow\H$ commutes with Frobenius, 
we get Frobenius structures on $\ker N _{1, \FF}$
and $\coker N _{1, \FF}$.
Moreover, by applying the
functor $\coker _{t _{1} \partial _{1} }$ to the surjection
$\FF \twoheadrightarrow \FF / \FF (-\ZZ _1)$, we get the morphism
$\vartheta _\FF \colon\coker _{t _{1} \partial _{1} } (\FF) (1)\to\coker N _{1, \FF}$.
\end{empt}

\begin{lem}
 \label{lemm2Prop-hdagZ'1w1+kercoker}
 The canonical morphism
 $\vartheta _\FF\colon \coker _{t _{1} \partial _{1} } (\FF) (1)\to\coker N _{1, \FF}$ defined in \ref{cokerFrob-morp}
 is compatible with Frobenius (for the respective Frobenius structures
 defined in \ref{defi-cokerF} and \ref{cokerFrob-morp}).
\end{lem}
\begin{proof}
 Consider the diagram:
 \begin{equation*}
   \xymatrix @R=0,3cm {
   {F _{\ZZ _1} ^{*} \coker _{t _1\partial_{1}} (\FF)} 
   \ar[r] ^-{\sim}\ar[d] ^-{qF _{\ZZ _1} ^{*} \vartheta _\FF} _-{}&
   {\coker _{t' _1\partial _{1} '} (\FF')} 
   \ar[r] ^-{\sim}_-{q\cdot \mathrm{adj}}
   &
   {\coker _{t _1\partial _{1} } (F _{\X } ^{*} \FF)} 
   \ar[r] ^-{\sim} _-{\phi _{\FF}}
   \ar[d] ^-{\vartheta _{F _{\X } ^{*} \FF}} _-{}
   &
   {\coker _{t _1\partial _{1} } (\FF)}
   \ar[d] ^-{\vartheta _{\FF}} _-{}
   \\
  {F _{\ZZ _1} ^{*} \coker N _{1, \FF}} 
   \ar[rr] ^-{\sim}
   && 
   {\coker  (N _{1, F _{\X} ^{*} \FF})} 
   \ar[r] ^-{\sim} _-{\phi _{\H}}&
   {\coker N _{1, \FF},}
   }
 \end{equation*}
 where the composition of the top horizontal morphisms is the Frobenius
 structure as defined in \ref{defi-cokerF}
 and where the composition of the bottom horizontal morphisms is the Frobenius
 structure of $\coker  (N _{1,  \FF})$. 
The right square is
 commutative by functoriality of $\FF \mapsto \vartheta _\FF$ (and by definition of 
 $\phi _\H$ given in \ref{phiH}).
 We check the commutativity of the left triangle by computation.
 \end{proof}

\begin{lem}
 \label{lemm-square-surjFrob}
 Consider the commutative square of coherent $\D ^\dag _{\ZZ _1,
 \Q}$-modules
 \begin{equation*}
   \xymatrix@R=0,3cm{
   {\E _1}\ar@{->>}[r] ^-{a _1}\ar[d] ^-{f}&
   {\E _1 '}\ar[d] ^-{f'}\\ 
  {\E _2}\ar[r] ^-{a _2}& 
   {\E _2 '},
   }
 \end{equation*}
 whose homomorphism on the top is surjective. We suppose these modules
 have a Frobenius structure such that $f, a _1, a _2$ commute with
 Frobenius. Then so is $f'$.
\end{lem}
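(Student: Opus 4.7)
The plan is a direct diagram chase using the surjectivity of $a_1$ together with the fact that the Frobenius pullback functor $F^*$ preserves surjections.

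First I would spell out what needs to be checked. Writing $\phi_{\E}$ for the Frobenius structure on $\E$, the commutation of $a_1$, $a_2$, $f$ with Frobenius means that the three squares
\begin{equation*}
\phi_{\E_i'}\circ F^*(a_i)=a_i\circ\phi_{\E_i}\quad(i=1,2),\qquad
\phi_{\E_2}\circ F^*(f)=f\circ\phi_{\E_1}
\end{equation*}
commute, while the given square of the statement gives $a_2\circ f=f'\circ a_1$. The goal is the identity $\phi_{\E_2'}\circ F^*(f')=f'\circ\phi_{\E_1'}$ as homomorphisms from $F^*(\E_1')$ to $\E_2'$.

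The key step is to precompose both sides with $F^*(a_1)\colon F^*(\E_1)\to F^*(\E_1')$. Using the four commutativities above in sequence, I would compute
\begin{equation*}
f'\circ\phi_{\E_1'}\circ F^*(a_1)
=f'\circ a_1\circ\phi_{\E_1}
=a_2\circ f\circ\phi_{\E_1}
=a_2\circ\phi_{\E_2}\circ F^*(f)
=\phi_{\E_2'}\circ F^*(a_2\circ f)
=\phi_{\E_2'}\circ F^*(f')\circ F^*(a_1).
\end{equation*}
Thus $\bigl(f'\circ\phi_{\E_1'}-\phi_{\E_2'}\circ F^*(f')\bigr)\circ F^*(a_1)=0$.

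The only remaining point is that $F^*(a_1)$ is surjective, so that the identity can be cancelled on the right and the desired equality $f'\circ\phi_{\E_1'}=\phi_{\E_2'}\circ F^*(f')$ follows. This is the one nontrivial ingredient: the Frobenius pullback functor $F^*=F_\X^*$ is right exact on coherent $\D^\dag_{\ZZ_1,\Q}$-modules (indeed it is exact, since Frobenius is flat on our formal schemes, cf.~the flatness already used e.g.~in the proof of Lemma~\ref{lemm2Prop-hdagZ'1w1+kercoker}), so $F^*(a_1)$ remains surjective, and the cancellation is legitimate. No other obstacle arises.
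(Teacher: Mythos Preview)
Your proof is correct and is exactly the straightforward diagram chase the paper has in mind; the paper itself simply writes ``The verification is left to the reader.'' The only substantive point is the surjectivity of $F^*(a_1)$, which you correctly justify via the exactness of Frobenius pullback on coherent $\D^\dag_{\ZZ_1,\Q}$-modules.
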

\begin{proof}
It is sufficient to check that two morphisms
of the form $F _{\ZZ _1} ^* \E ' _1 \to \E ' _2$ 
which are equal after composition with 
$F _{\ZZ _1} ^* (a _1) \colon F _{\ZZ _1} ^*\E  _1 \to F _{\ZZ _1} ^* \E ' _1$
are equal. This property is satisfied because 
$F _{\ZZ _1} ^* (a _1)$ is surjective. 
\end{proof}

\begin{prop}
 \label{Prop-hdagZ'1w1+kercoker}
 The isomorphisms
 \begin{equation*}
   \ker(N _{1, \FF})\riso g _{1+} \ker  (\theta _{\E}),
   \qquad
   \coker(N _{1, \FF})\riso g _{1+} \coker  (\theta _{\E}),
 \end{equation*}
 which are constructed by composing \ref{w1+kercoker} and
 \ref{esp1=theta1}, commute with Frobenius.
\end{prop}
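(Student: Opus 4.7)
The cokernel isomorphism is the composition
\begin{equation*}
g_{1+}\coker\theta_{1,\E} \cong i_1^!\coker\theta_{1,\E} \underset{\ref{esp1=theta1}}{\cong} i_1^!\coker\epsilon_{1,\FF} \underset{\ref{w1+kercoker}}{\cong} \coker N_{1,\FF},
\end{equation*}
where the first step uses that $\coker\theta_{1,\E}$ is supported in $\ZZ_1$ together with $g_1 \circ i_1 = \mr{id}_{\ZZ_1}$ (Berthelot--Kashiwara), and the last uses $D_1 = \emptyset$ so $w_{1+} = \mr{id}$. All arrows are canonical functors applied to $\FF$ with its Frobenius $\phi_\FF$, which is the unique lift of $\phi_\E$ via Kedlaya's full faithfulness; hence Frobenius-compatibility is expected, and the plan is to certify this against Lemmas \ref{lemm1Prop-hdagZ'1w1+kercoker} and \ref{lemm2Prop-hdagZ'1w1+kercoker} by fitting the composition into a well-chosen commutative square.

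\textbf{The key square.} I assemble the square
\begin{equation*}
\xymatrix{
\coker_{t_1\partial_1}(\FF)(1) \ar@{->>}[r] \ar[d]_{\sim} & \coker N_{1,\FF} \ar[d] \\
\coker_{\partial_1}(\E)(1) \ar[r] & g_{1+}\coker\theta_{1,\E}
}
\end{equation*}
whose top edge is the Frobenius-compatible surjection of Lemma \ref{lemm2Prop-hdagZ'1w1+kercoker}, whose left edge is the Frobenius-compatible isomorphism of Lemma \ref{lemm1Prop-hdagZ'1w1+kercoker} (after Tate-twisting; note that by \ref{g+OmegaH0} and \ref{defi-FrobcokerE} the Frobenius on $\coker_{\partial_1}(\E)$ makes the Tate twist $(1)$ match $\mathcal{H}^0 g_{1+}(\E)$), whose bottom edge is obtained by applying $\mathcal{H}^0 g_{1+}$ to the canonical projection from $\E$ onto $\coker\theta_{1,\E}$---arising from the localization triangle $\R\underline{\Gamma}^\dag_{Z_1}(j_!\E)\to j_!\E \to (\hdag Z_1)j_!\E\xrightarrow{+1}$ combined with the identification $\mathcal{H}^0 g_{1+}(\E)\cong\coker_{\partial_1}(\E)(1)$---and whose right edge is the candidate isomorphism. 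The bottom edge is Frobenius-compatible by naturality, since $\E$ carries $\phi_\E$ and all steps are canonical. With three Frobenius-compatible edges and a surjective top, Lemma \ref{lemm-square-surjFrob} forces the right edge to be Frobenius-compatible, proving the cokernel case.

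\textbf{Kernel case and main obstacle.} For the kernel isomorphism $\ker N_{1,\FF}\cong g_{1+}\ker\theta_\E$, I dualize: Proposition \ref{dual-beta}.(ii) combined with \ref{nota-kercoker-epsilon-iso} gives $\ker\epsilon_{1,\FF}\cong \DD_{\X^\#}\coker\epsilon_{1,\DD_{\X^\flat}\FF}$, and Lemma \ref{lemm-KerdualcokerN} via \ref{KerdualcokerN} gives $\ker N_{1,\FF}\cong \DD_{\ZZ_1}\coker N_{1,\DD_{\X^\flat}\FF}$ up to Tate twist. Applying the already-established cokernel case to the dual log-isocrystal $\DD_{\X^\flat}\FF$ (which inherits a Frobenius structure naturally) and invoking Frobenius-compatibility of relative duality yields the kernel case. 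The main obstacle is the commutativity of the key square---specifically, checking that the bottom edge, built from the localization triangle and the identification $g_{1+}\E\cong \coker_{\partial_1}(\E)(1)$, is compatible with the chain of identifications \ref{esp1=theta1}, \ref{w1+kercoker} assembled from the biduality isomorphisms \ref{j!Eu!F}, \ref{cube-E-F-bis}, and the short exact sequence used in Proposition \ref{w1+kerprop}. By Lemma \ref{lem-outD1}, this diagram chase reduces to the restriction to the complement of $Z_1$, where $\FF$ coincides with $\E$ as $F$-$\D^\dag$-modules and the commutativity is tautological.
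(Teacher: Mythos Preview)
Your overall strategy matches the paper's: assemble a square with a surjective edge and three Frobenius-compatible edges, invoke Lemma~\ref{lemm-square-surjFrob} for the cokernel isomorphism, then obtain the kernel case by duality. The gap is in your justification of the \emph{commutativity} of the key square.

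You claim commutativity ``reduces to the restriction to the complement of $Z_1$'' via Lemma~\ref{lem-outD1}. This fails on two counts. First, all four corners of your square are $\D^\dag_{\ZZ_1,\Q}$-modules (either $\mathcal{H}^0 g_{1+}$ of something on $\X$, or quotients of $\H=i_1^*\FF$); there is no meaningful restriction of these objects to $\X\setminus Z_1$---they simply are not visible there. Second, Lemma~\ref{lem-outD1} concerns Frobenius compatibility of a single isomorphism whose target is of the form $j_!(\G_2)$; it says nothing about commutativity of diagrams, and its hypotheses are not met here. So the reduction you invoke does not exist.

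The paper closes this gap by enlarging your square to a three-row diagram with exact rows: the top row is $\mathcal{H}^0 g_{1+}$ applied to $j_!(\E)\xrightarrow{\theta_{1,\E}}\E\to\coker\theta_{1,\E}\to 0$; the middle row is $\mathcal{H}^0 g_{1+}$ applied to $u_+(\FF(-\ZZ_1))\xrightarrow{\epsilon_{1,\FF}}u_+(\FF)\to\coker\epsilon_{1,\FF}\to 0$; the bottom row is $\coker_{t_1\partial_1}(\FF(-\ZZ_1))\to\coker_{t_1\partial_1}(\FF)(1)\to\coker N_{1,\FF}\to 0$. The upper-left square commutes because it is $\mathcal{H}^0 g_{1+}$ applied to the commutative square~\ref{cube-E-F-bis}; the lower-left square commutes by the construction in the proof of Proposition~\ref{w1+kerprop}. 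Since the rows are right-exact (affineness of $\X$ makes $\mathcal{H}^0 g_{1+}$ and $\coker_{t_1\partial_1}$ right exact), the right-hand column---which is exactly your square---commutes as the induced diagram on cokernels. That is the missing step; once you have it, your application of Lemma~\ref{lemm-square-surjFrob} and your duality argument for the kernel case go through as in the paper.
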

\begin{proof}
 First, let us prove the compatibility with Frobenius of the second isomorphism.
 Consider the following diagram:
 \begin{equation*}
   \xymatrix @R=0,3cm {
   { \mathcal{H} ^{0} g _{1 +} j_! (\E) } 
   \ar@*{[|<1pt>]}[r] ^-{\theta _{\E}}\ar[d] ^-{\sim } _-{\ref{j!Eu!F}}&
   { \mathcal{H} ^{0} g _{1 +} \E}
   \ar@*{[|<1pt>]}[r] ^-{}\ar@*{[|<1pt>]}@{=}[d] ^-{}&
   { \mathcal{H} ^{0} g _{1 +} \coker \theta _\E}
   \ar[r] ^-{}\ar[d] ^-{\sim} _-{ \ref{esp1=theta1}}&
   {0}\\
  { \mathcal{H} ^{0} g  _{1 +} \circ u _{+}( \FF(-\ZZ _1)) } 
   \ar[r] ^-{\epsilon _{\FF}}\ar[d] ^-{\sim}& 
   {\mathcal{H} ^{0} g  _{1 +} \circ u _{+} (\FF) } 
   \ar[r] ^-{}\ar@*{[|<1pt>]}[d] ^-{\sim}& 
   {\mathcal{H} ^{0} g  _{1 +} (\coker  ( \epsilon _{1,\FF})) } 
   \ar[r] ^-{}\ar[d] ^-{\sim} _-{ \ref{w1+kercoker}}&
   {0,}\\
  {\coker _{t _1 \partial _1} ( \FF(-\ZZ _1)) } 
   \ar[r] ^-{\alpha _{\FF}}& 
   {\coker _{t _1 \partial _1} (\FF) (1)} 
   \ar@*{[|<1pt>]}[r] ^-{}&
   {\coker N _{1, \FF},} 
   \ar[r] ^-{}&
   {0}
   }
 \end{equation*}
 where  the left above square is (modulo the biduality isomorphism $\DD _{\X}\circ \DD _{\X} (\E) \riso \E$) 
 the image under the functor $ \mathcal{H}^{0} g _{1+}$ of the square \ref{cube-E-F-bis},
where both vertical morphisms of the bottom left square comes
from the canonical isomorphisms
$\mathcal{H} ^{0} g  _{1 +} \circ u _{+} \riso 
\mathcal{H} ^{0} g  ^\flat _{1 +} 
\underset{\ref{gflat+OmegaH0}}{\riso}
\coker _{t _1 \partial _1}$. 
Both left squares are commutative by definition or fonctoriality.
The right square of the top is commutative by definition of the isomorphism
\ref{esp1=theta1}.
Using the remark \ref{rem3.4.10-3.4.17}, 
we check the commutativity of the 
the bottom right square.
 Hence, the diagram is commutative.

 Since $\X$ is affine, the functors $\coker _{t _1
 \partial _1} $ and $\mathcal{H} ^{0} g_{1 +}$ are right exact, so
 the horizontal sequences are exact. Now, the horizontal morphisms on
 the top commute with Frobenius. The composition of the middle vertical
 homomorphisms $\mathcal{H} ^{0} g _{1 +} \E\riso\coker _{t _1 \partial
 _1} (\FF) (1)$ is compatible with Frobenius by \ref{g+OmegaH0} and Lemma
 \ref{lemm1Prop-hdagZ'1w1+kercoker}, and the right horizontal
 homomorphism on the bottom as well by Lemma
 \ref{lemm2Prop-hdagZ'1w1+kercoker}.
 We conclude the proof by using Lemma \ref{lemm-square-surjFrob}.

Finally we get the compatibility with Frobenius of the first isomorphism of \ref{Prop-hdagZ'1w1+kercoker} 
from that of the second one
 via the isomorphisms:
 \begin{align*}
  \DD _{\X } i _{1+} \ker N _{1, \FF}& \riso
  i _{1+}  \DD _{\ZZ _1} \ker N _{1, \FF}
  \underset{\cite[3.12]{Abe-Frob-Poincare-dual}}{\riso}
  i _{1+} \bigl(\ker N _{1, \FF} \bigr) ^{\vee} (-d +1)
  \underset{\ref{KerdualcokerN}}{\riso}
  i _{1+}\coker (N _{1, \FF} ^\vee)  (-d +1)\\
  &
  \underset{\ref{dfndualfrakC}}{\riso}
 i _{1+} \coker(N _{1, \FF ^\vee})(-d)
  \riso\coker  (\theta _{\E^{\vee}}) (-d)
    \underset{\cite[3.12]{Abe-Frob-Poincare-dual}}{\riso}
  \coker  (\theta _{\DD _{\X,Z}\E})
  \underset{\ref{i!+bidual}.1}{\riso}
  \DD _{\X} \ker(\theta  _{\E}.)
 \end{align*}
\end{proof}

\begin{lem}
 \label{lem-outD1}
 Let $(Y,X,\PP)$ be a frame, and
 $j=(\star,\mr{id},\mr{id})\colon(Y,X,\PP)\rightarrow(X,X,\PP)$ be a
 morphism of frames.
 Let $\G _1\in F\text{-}\mr{Ovhol}(X,\PP/K)$, $\G _2 \in
 F\text{-}\mr{Ovhol}(Y,\PP/K)$, and $\phi \colon \G _1 \riso
 j_!(\G _2)$ be an isomorphism in $\mr{Ovhol}(X,\PP/K)$.
 Let $\mc{U}$ be an open formal subscheme of
 $\PP$ containing $Y$. If $\phi|_{\mc{U}}$ commutes with the Frobenius
 structures, then so is $\phi$.
\end{lem}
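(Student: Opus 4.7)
The plan is to rephrase the problem as the injectivity of a restriction map on an endomorphism ring, and then reduce it to a case covered by Remark \ref{rem-faithful}. Writing $\Phi_1\colon F^*\G_1 \riso \G_1$ for the Frobenius structure on $\G_1$ and $\Phi\colon F^*j_!\G_2 \riso j_!\G_2$ for the one induced from $\G_2$ by functoriality of $j_!$, the compatibility of $\phi$ with Frobenius amounts to the vanishing of the morphism $D := \phi\circ \Phi_1 - \Phi\circ F^*\phi\colon F^*\G_1 \to j_!\G_2$ in $\mr{Ovhol}((X,X)/K)$. Since $\Phi_1$ and $\phi$ are isomorphisms, this is equivalent to the vanishing of the endomorphism $w := \phi^{-1}\circ \Phi\circ F^*\phi\circ \Phi_1^{-1} - \mr{id}_{\G_1}$ in $\mr{End}_{\mr{Ovhol}((X,X)/K)}(\G_1)$, which by hypothesis satisfies $w|_{\mc{U}} = 0$. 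It thus suffices to show that the restriction $|_{\mc{U}}\colon \mr{End}_{\mr{Ovhol}((X,X)/K)}(\G_1) \to \mr{End}(\G_1|_{\mc{U}})$ is injective.

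To achieve this, I would exploit the full faithfulness of $j_!$ on the heart. Since $j$ is a c-open immersion, it is both c-affine and c-quasi-finite, so Proposition \ref{exactnessforsomemorph} implies that $j_!$ is t-exact. Lemma \ref{transitivityi!+}(ii), applied with $u = \mr{id}_{(Y,X)}$ and $u' = j$, gives a canonical isomorphism $j^! \circ j^0_! \riso \mr{id}$, which combined with the adjunction $(j_!,j^!)$ of Lemma \ref{adju+u+couple} shows that the unit of adjunction is an iso and hence that $j_!$ is fully faithful on $\mr{Ovhol}((Y,X)/K)$. Transporting $w$ along the iso $\phi$ thereby produces canonical isomorphisms
\begin{equation*}
 \mr{End}_{\mr{Ovhol}((X,X)/K)}(\G_1) \underset{\phi}{\cong} \mr{End}_{\mr{Ovhol}((X,X)/K)}(j_!\G_2) \underset{j_!}{\cong} \mr{End}_{\mr{Ovhol}((Y,X)/K)}(\G_2).
\end{equation*}

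Finally, set $X' := X \cap U$, an open subvariety of $X$ containing $Y$. By base change (Proposition \ref{basechangeisom}) the functor $|_{\mc{U}}$ commutes with $j_!$, identifying $(j_!\G_2)|_{\mc{U}}$ with $j'_!(\G_2|_{\mc{U}})$ for $j'\colon (Y,X') \to (X',X')$, and the analogous full faithfulness of $j'_!$ yields a diagram of restricted endomorphism rings compatible with the isomorphisms above. The desired injectivity of $|_{\mc{U}}$ on $\mr{End}(\G_1)$ thus reduces to the injectivity of the restriction $|_{(Y,X')}\colon \mr{End}_{\mr{Ovhol}((Y,X)/K)}(\G_2) \to \mr{End}_{\mr{Ovhol}((Y,X')/K)}(\G_2|_{(Y,X')})$, which is precisely Remark \ref{rem-faithful} applied to the couple morphism $(Y,X')\to (Y,X)$ (an open immersion on the second factor, with $Y \subset X'$). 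The main care during write-up will lie in identifying the frame restriction $|_{\mc{U}}$ with the couple restriction $|_{(Y,X')}$ via Lemma \ref{t-gen-coh-PXTindtPsurhol} and checking that the chain of adjunction, base change, and full faithfulness isomorphisms is natural, both of which should be formal.
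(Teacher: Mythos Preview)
Your proposal is correct and rests on the same two ingredients as the paper: the adjunction $(j_!,j^!)$ and the faithfulness of restriction coming from Remark~\ref{rem-faithful}. The paper's proof is more direct---it simply takes the adjoint of $\phi^{-1}$ under $(j_!,j^!)$ to obtain a morphism in $\mr{Ovhol}(Y,\PP)$ and then applies faithfulness of $\mr{Ovhol}(Y,\PP)\to\mr{Ovhol}(Y,\mc{U})$ immediately, bypassing your endomorphism reformulation, the full-faithfulness argument for $j_!$, and the base-change naturality checks you flagged.
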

\begin{proof}
 Let us show that $\phi^{-1}$ is compatible with Frobenius. By taking
 the adjoint, it suffices to show that the homomorphism
 $j^!(\phi^{-1})\colon\G_2\rightarrow j^!\G_1$ in $\mr{Ovhol}(Y,\PP)$ is
 compatible with Frobenius. The lemma follows since the restriction
 functor $\mr{Ovhol}(Y,\PP)\rightarrow\mr{Ovhol}(Y,\mc{U})$ is
 faithful by Remark \ref{rem-faithful}.
\end{proof}

\begin{thm}
\label{iso3314-Frob}
 We consider the situation in paragraph \ref{mixedisoals}, and $D_1$ is
 not empty anymore.
 The isomorphisms \ref{u+mixedstable-iso1} are compatible with
 Frobenius.
\end{thm}
 
\begin{proof}
 From Lemma \ref{lem-outD1}, it is enough to check it outside $D _1$,
 and we can suppose $D _1$ to be empty. By the same reason given at the
 first step of the proof of Lemma  \ref{kercoker-epsilon}, we can
 suppose $f =\mr{id}$. 
 Since by construction, outside of $D _1$, the isomorphisms of
 \ref{u+mixedstable-iso1} are (modulo the biduality isomorphism) that constructed in 
\ref{WhenDempty}, i.e. 
are equal to that of Proposition
 \ref{Prop-hdagZ'1w1+kercoker}, 
 then the claim follows by this proposition \ref{Prop-hdagZ'1w1+kercoker}.
\end{proof}

\subsection{Stability of mixedness for a unipotent $F$-isocrystal}
\label{section35}

Let $(Y,X,\PP)$ be a frame such that $X$ is smooth and there exists a
 strict normal crossing  divisor $Z$ of $X$ such that $Y:=X\setminus Z$. 
Let $\{Z_i\}_{i\in I}$ be
 the set of irreducible components of $Z$, $r := \# I$. 
 We can suppose $I = \{ 1,\dots, r\}$. For $J\subset I$, we put
 $Z_J:=\bigcap_{i\in J}Z_i$. Then we define
 \begin{equation*}
  Z_{(0)}:=X,\qquad
  Z_{(k)}:=\bigcup_{\#J=k}Z_J,\qquad
   Z^{\circ}_{(k)}:=Z_{(k)}\setminus Z_{(k+1)}.
 \end{equation*}
 Then $\{Z^\circ_{(k)}\}_{0\leq k\leq r}$ is a smooth stratification of
 $X$. This stratification is denoted by $\mr{Strat}_Z(X)$.

\begin{lem}
[Gluing isomorphisms]
\label{2.1.5Be2}
We suppose $p ^{m}> e /p-1$, 
where $e$ is the absolute ramification index of $\V$. 
Let $f,g,h\colon X \to Y$ be a morphism of log-smooth log-schemes over $V/\pi ^{i+1}\V$
and $f _0,g_0,h _0\colon X _0 \to Y _0$ the induced morphism of log-schemes over $V/\pi ^{1}\V$.
Let $\FF$ be a left $\D ^{(m)} _{Y}$-module.
If $f _0=  g _0=h _0$,
then there exists a canonical morphism of $\D ^{(m)} _{X}$-modules 
\begin{equation}
\notag
\tau _{f,g}\colon 
g  ^* \FF \riso f ^{*} \FF,
\end{equation}
such that $\tau _{f,f}=\mathrm{Id}$
and 
$\tau _{f,h}= \tau _{f,g} \circ \tau _{g,h}$.
\end{lem}

\begin{proof}
Since  $p ^{m}> e /p-1$, $\mathfrak{a}:= \pi \O _T$ is endowed with a canonical $m$-PD-structure which is 
$m$-PD-nilpotente (see \cite[A.4]{Be2}).
Hence, we can copy the proof of \cite[2.1.5]{Be2}.
\end{proof}

\begin{empt}
\label{nota-alphaetc}
Let $(\PP _\alpha) _{\alpha \in \Lambda}$ be a cover of 
$\PP$ by affine open sets $\PP _\alpha $ endowed with local coordinates 
$t ^\alpha _1 ,\dots, t ^\alpha _n$ such that 
$\smash{\overline{t}} ^\alpha _1, \dots, \smash{\overline{t}} ^\alpha  _d$
are local coordinates of $X _\alpha := X \cap P _\alpha$  
(where 
$\smash{\overline{t}} ^\alpha _i$ is the image of $t ^\alpha _i$ via 
$\Gamma (\PP, \O _{\PP _\alpha}) 
\to
\Gamma (X _\alpha , \O _{X _\alpha })$)
and such that
$Z _i \cap X_\alpha = V ( \smash{\overline{t}} ^\alpha  _i)$. 

We set $\PP _{\alpha \beta}:= \PP _\alpha \cap \PP _\beta$,
$\PP _{\alpha \beta \gamma}:= \PP _\alpha \cap \PP _\beta \cap \PP _\gamma$,
$X _\alpha := X \cap P _\alpha$,
$X_{\alpha \beta } := X _\alpha \cap X _\beta$ et
$X_{\alpha \beta \gamma } := X _\alpha \cap X _\beta \cap X _\gamma $.
We denote by $Y _\alpha $ the open of  $X _\alpha$ complementary to $Z$,
$Y _{\alpha \beta} := Y _\alpha \cap Y _\beta$,
$Y _{\alpha \beta \gamma} := Y _\alpha \cap Y _\beta \cap Y _\gamma $,
$j _\alpha$ : $ Y _\alpha
\hookrightarrow X _\alpha$,
$j _{\alpha \beta} $ :
$Y _{\alpha \beta}  \hookrightarrow  X _{\alpha \beta}$
and
$j _{\alpha \beta \gamma} $ :
$Y _{\alpha \beta \gamma }  \hookrightarrow  X _{\alpha \beta \gamma} $
the canonical open immersions.

For any 3uple $(\alpha, \, \beta,\, \gamma)\in \Lambda ^3$, fix 
$\X _\alpha$ (resp. $\X _{\alpha \beta}$, $\X _{\alpha \beta \gamma}$)
some smooth formal $\V$-schemes lifting $X _\alpha$
(resp. $X _{\alpha \beta}$, $X _{\alpha \beta \gamma}$).
Choose etale morphisms 
$\phi _\alpha \colon \X _{\alpha} \to \widehat{\A} ^d _\V$,
$\phi _{\alpha,\beta} \colon \X _{\alpha,\beta} \to \widehat{\A} ^d _\V$,
$\phi _{\alpha,\beta,\gamma} \colon \X _{\alpha,\beta,\gamma} \to \widehat{\A} ^d _\V$.
We denote by 
$\ZZ _\alpha := \phi _{\alpha} ^{-1} (V (x _1\cdots x _d))$,
$\ZZ _{\alpha,\beta} := \phi _{\alpha,\beta} ^{-1} (V (x _1\cdots x _d))$,
$\ZZ _{\alpha,\beta,\gamma} := \phi _{\alpha,\beta,\gamma} ^{-1} (V (x _1\cdots x _d))$,
where $x _1,\dots, x _r$ are the $r$ first coordinates of $\widehat{\A} ^d _\V$.
We can suppose that $Z _\alpha:= X _\alpha \cap Z$,
 $Z _{\alpha,\beta} = X _{\alpha,\beta} \cap Z$
 and 
$Z _{\alpha,\beta,\gamma}= X _{\alpha,\beta,\gamma} \cap Z$. 
We set
$\X _\alpha ^\flat:= (\X _\alpha , \ZZ _\alpha)$,
$\X _{\alpha \beta} ^\flat
:=
(\X _{\alpha \beta}, \ZZ _{\alpha \beta})$, 
$\X _{\alpha \beta \gamma} ^\flat
:=
(\X _{\alpha \beta \gamma}, \ZZ _{\alpha \beta,\gamma})$.

Using \cite[3.11]{Kato-logFontaine-Illusie},
there exists
a morphism of formal $\V$-schemes of the form
$p _1 ^{\alpha \beta \flat}$ :
$\X ^\flat   _{\alpha \beta} \rightarrow \X ^\flat  _{\alpha}$
(resp. $p _2 ^{\alpha \beta\flat}$ :
$\X ^\flat   _{\alpha \beta} \rightarrow \X ^\flat  _{\beta}$)
which is a lifting of
$X ^\flat   _{\alpha \beta} \rightarrow X ^\flat  _{\alpha}$
(resp. $X ^\flat   _{\alpha \beta} \rightarrow X ^\flat  _{\beta}$).
Similarly, for any $(\alpha,\,\beta,\,\gamma )\in \Lambda ^3$, fix some lifting 
$p _{12} ^{\alpha \beta \gamma\flat}\colon\X ^\flat   _{\alpha \beta \gamma} \rightarrow \X ^\flat   _{\alpha \beta} $,
$p _{23} ^{\alpha \beta \gamma\flat}\colon\X ^\flat   _{\alpha \beta \gamma} \rightarrow \X ^\flat   _{\beta \gamma} $,
$p _{13} ^{\alpha \beta \gamma\flat}\colon\X ^\flat   _{\alpha \beta \gamma} \rightarrow \X ^\flat   _{\alpha \gamma} $,
$p _1 ^{\alpha \beta \gamma\flat}\colon\X ^\flat   _{\alpha \beta \gamma} \rightarrow \X ^\flat   _{\alpha} $,
$p _2 ^{\alpha \beta \gamma\flat}\colon\X ^\flat   _{\alpha \beta \gamma} \rightarrow \X ^\flat   _{\beta} $,
$p _3 ^{\alpha \beta \gamma\flat}\colon\X ^\flat   _{\alpha \beta \gamma} \rightarrow \X ^\flat   _{\gamma} $.
By removing the symbol ``$\flat$'' we mean the corresponding formal scheme or morphism of formal schemes.

\end{empt}

\begin{dfn}
\label{defindonnederecol}
We keep notation \ref{nota-alphaetc}.
We define the category $F\text{-}\mathrm{Isoc} ((\X  _\alpha ^\flat )_{\alpha \in \Lambda}/K)$ as follows:

- an object is a family $(\FF _\alpha) _{\alpha \in \Lambda}$
of coherent  $\smash{\D} ^{\dag} _{\X ^\flat _{\alpha},\Q}$-modules
which are locally projective of finite type as $\O_{\X _{\alpha} ,\Q}$-module and endowed with 
a glueing data $ (\vartheta _{\alpha\beta}) _{\alpha ,\beta \in \Lambda}$,
i.e. the data for any $\alpha,\,\beta \in \Lambda$ of a
$\smash{\D} ^{\dag} _{\X _{\alpha \beta} ^\flat,\Q}$-linear isomorphism
$ \vartheta _{  \alpha \beta} \ : \  p _2  ^{\alpha \beta \flat!} (\FF _{\beta}) \riso p  _1 ^{\alpha \beta \flat!} (\FF _{\alpha}),$
satisfying the cocycle condition:
$\vartheta _{13} ^{\alpha \beta \gamma }=
\vartheta _{12} ^{\alpha \beta \gamma }
\circ
\vartheta _{23} ^{\alpha \beta \gamma }$,
where $\vartheta _{12} ^{\alpha \beta \gamma }$, $\vartheta _{23} ^{\alpha \beta \gamma }$
and $\vartheta _{13} ^{\alpha \beta \gamma }$ 
are defined via the commutative squares
\small
\begin{equation}
  \label{diag1-defindonnederecol}
\xymatrix  @R=0,3cm {
{  p _{12} ^{\alpha \beta \gamma \flat!} p  _2 ^{\alpha \beta \flat!}  (\FF _\beta )}
\ar[r] ^-{\tau} _-{\sim}
\ar[d] ^-{p _{12} ^{\alpha \beta \gamma \flat!} (\vartheta _{\alpha \beta})} _-{\sim}
&
{p _2 ^{\alpha \beta \gamma \flat!}  (\FF _\beta )}
\ar@{.>}[d] ^-{\vartheta _{12} ^{\alpha \beta \gamma }}
\\
{ p _{12} ^{\alpha \beta \gamma \flat!}  p  _1 ^{\alpha \beta \flat!}  (\FF _\alpha)}
\ar[r]^{\tau} _-{\sim}
&
{p _1 ^{\alpha \beta \gamma \flat!}(\FF _\alpha),}
}
%
%
\xymatrix  @R=0,3cm {
{  p _{23} ^{\alpha \beta \gamma \flat!} p  _2 ^{\beta \gamma\flat!}  (\FF _\gamma )}
\ar[r] ^-{\tau} _-{\sim}
\ar[d] ^-{p _{23} ^{\alpha \beta \gamma\flat !} (\vartheta _{ \beta \gamma})} _-{\sim}
&
{p _3 ^{\alpha \beta \gamma\flat!}  (\FF _\gamma )}
\ar@{.>}[d] ^-{\vartheta _{23} ^{\alpha \beta \gamma }}
\\
{ p _{23} ^{\alpha \beta \gamma \flat!}  p  _1 ^{ \beta \gamma \flat!}  (\FF _\beta)}
\ar[r]^{\tau} _-{\sim}
&
{p _2 ^{\alpha \beta \gamma\flat!}(\FF _\beta),}
}
%
%
\xymatrix  @R=0,3cm {
{  p _{13} ^{\alpha \beta \gamma \flat!} p  _2 ^{\alpha \gamma \flat!}  (\FF _\gamma )}
\ar[r] ^-{\tau} _-{\sim}
\ar[d] ^-{p _{13} ^{\alpha \beta \gamma \flat!} (\vartheta _{\alpha \gamma})} _-{\sim}
&
{p _3 ^{\alpha \beta \gamma\flat!}  (\FF _\gamma )}
\ar@{.>}[d]^{\vartheta _{13} ^{\alpha \beta \gamma }}
\\
{ p _{13} ^{\alpha \beta \gamma\flat !}  p  _1 ^{\alpha \gamma\flat !}  (\FF _\alpha)}
\ar[r]^{\tau} _-{\sim}
&
{p _1 ^{\alpha \beta \gamma\flat!}(\FF _\alpha),}
}
\end{equation}
where the glueing isomorphisms denoted by $\tau $ are defined from \ref{2.1.5Be2} by completion, tensorising by $\Q$ and inductive limits.
\normalsize

- a morphism
$((\FF _{\alpha})_{\alpha \in \Lambda},\, (\vartheta _{\alpha\beta}) _{\alpha ,\beta \in \Lambda})
\rightarrow
((\FF ' _{\alpha})_{\alpha \in \Lambda},\, (\vartheta '_{\alpha\beta}) _{\alpha ,\beta \in \Lambda})$
is a familly of morphisms $f _\alpha$ : $\FF _\alpha \rightarrow \FF '_\alpha$
commuting with glueing datas, i.e., such that the following diagrams are commutative : 
\begin{equation}
  \label{diag2-defindonnederecol}
\xymatrix  @R=0,3cm {
{ p _2  ^{\alpha \beta \flat !} (\FF _{\beta}) }
\ar[d] ^-{p _2  ^{\alpha \beta \flat!} (f _{\beta}) }
\ar[r] ^-{\vartheta _{\alpha\beta}} _-{\sim}
&
{  p  _1 ^{\alpha \beta \flat!} (\FF _{\alpha}) }
\ar[d] ^-{p  _1 ^{\alpha \beta \flat!} (f _{\alpha})}
\\
{p _2  ^{\alpha \beta \flat!} (\FF '_{\beta})  }
\ar[r]^{\vartheta '_{\alpha\beta}} _-{\sim}
&
{ p  _1 ^{\alpha \beta \flat !} (\FF '_{\alpha})  .}
}
\end{equation}

\end{dfn}

The category $F\text{-}\mathrm{Isoc} ((\X  _\alpha ^\flat )_{\alpha \in \Lambda}/K)$
is a logarithmic analogue of the category
$F\text{-}\mathrm{Isoc} ^{\dag\dag} ((\X  _\alpha )_{\alpha \in \Lambda},\, Z/K)$
constructed in \cite{caro-construction} which we recall below for the reader :

\begin{dfn}
\label{defindonnederecol2}
We keep notation \ref{nota-alphaetc}.
The category $F\text{-}\mathrm{Isoc} ^{\dag\dag} ((\X  _\alpha )_{\alpha \in \Lambda},\, Z/K)$ 
is defined as follows:

- an object is a family $(\E _\alpha) _{\alpha \in \Lambda}$
of coherent  $\smash{\D} ^{\dag} _{\X _{\alpha} } (\hdag Z  \cap X _{\alpha}) _{\Q}$-modules
which are $\O_{\X _{\alpha} } (\hdag Z  \cap X _{\alpha}) _{\Q}$-coherent and 
endowed with glueing data $ (\theta _{\alpha\beta}) _{\alpha ,\beta \in \Lambda}$,
i.e. the data for any $\alpha,\,\beta \in \Lambda$ of a
$\smash{\D} ^{\dag} _{\X _{\alpha \beta} }(\hdag Z  \cap X _{\alpha \beta}) _{ \Q}$-linear
isomorphism
$ \theta _{  \alpha \beta} \ : \  p _2  ^{\alpha \beta !} (\E _{\beta}) \riso p  _1 ^{\alpha \beta !} (\E _{\alpha}),$
satisfying the cocycle condition:
$\theta _{13} ^{\alpha \beta \gamma }=
\theta _{12} ^{\alpha \beta \gamma }
\circ
\theta _{23} ^{\alpha \beta \gamma }$,
where $\theta _{12} ^{\alpha \beta \gamma }$, $\theta _{23} ^{\alpha \beta \gamma }$
and $\theta _{13} ^{\alpha \beta \gamma }$ 
are defined via the commutative squares
\small
\begin{equation}
  \label{diag1-defindonnederecol2}
\xymatrix  @R=0,3cm {
{  p _{12} ^{\alpha \beta \gamma !} p  _2 ^{\alpha \beta !}  (\E _\beta )}
\ar[r] ^-{\tau} _-{\sim}
\ar[d] ^-{p _{12} ^{\alpha \beta \gamma !} (\theta _{\alpha \beta})} _-{\sim}
&
{p _2 ^{\alpha \beta \gamma!}  (\E _\beta )}
\ar@{.>}[d] ^-{\theta _{12} ^{\alpha \beta \gamma }}
\\
{ p _{12} ^{\alpha \beta \gamma !}  p  _1 ^{\alpha \beta !}  (\E _\alpha)}
\ar[r]^{\tau} _-{\sim}
&
{p _1 ^{\alpha \beta \gamma!}(\E _\alpha),}
}
%
%
\xymatrix  @R=0,3cm {
{  p _{23} ^{\alpha \beta \gamma !} p  _2 ^{\beta \gamma!}  (\E _\gamma )}
\ar[r] ^-{\tau} _-{\sim}
\ar[d] ^-{p _{23} ^{\alpha \beta \gamma !} (\theta _{ \beta \gamma})} _-{\sim}
&
{p _3 ^{\alpha \beta \gamma!}  (\E _\gamma )}
\ar@{.>}[d] ^-{\theta _{23} ^{\alpha \beta \gamma }}
\\
{ p _{23} ^{\alpha \beta \gamma !}  p  _1 ^{ \beta \gamma !}  (\E _\beta)}
\ar[r]^{\tau} _-{\sim}
&
{p _2 ^{\alpha \beta \gamma!}(\E _\beta),}
}
%
%
\xymatrix  @R=0,3cm {
{  p _{13} ^{\alpha \beta \gamma !} p  _2 ^{\alpha \gamma !}  (\E _\gamma )}
\ar[r] ^-{\tau} _-{\sim}
\ar[d] ^-{p _{13} ^{\alpha \beta \gamma !} (\theta _{\alpha \gamma})} _-{\sim}
&
{p _3 ^{\alpha \beta \gamma!}  (\E _\gamma )}
\ar@{.>}[d]^{\theta _{13} ^{\alpha \beta \gamma }}
\\
{ p _{13} ^{\alpha \beta \gamma !}  p  _1 ^{\alpha \gamma !}  (\E _\alpha)}
\ar[r]^{\tau} _-{\sim}
&
{p _1 ^{\alpha \beta \gamma!}(\E _\alpha),}
}
\end{equation}
where the glueing isomorphisms denoted by $\tau $ are defined as in \cite[2.1.5]{Be2}.
\normalsize

- a morphism
$((\E _{\alpha})_{\alpha \in \Lambda},\, (\theta _{\alpha\beta}) _{\alpha ,\beta \in \Lambda})
\rightarrow
((\E ' _{\alpha})_{\alpha \in \Lambda},\, (\theta '_{\alpha\beta}) _{\alpha ,\beta \in \Lambda})$
is a familly of morphisms $f _\alpha$ : $\E _\alpha \rightarrow \E '_\alpha$
commuting with glueing datas.
\end{dfn}

\begin{empt}
[Unipotent overcoherent $F$-isocrystal]
\label{unipotent-res-glue}
We keep notation \ref{nota-alphaetc}.
The category $F\text{-}\mr{Isoc}^{\dag\dag}(Y,\PP/K)$
is canonically isomorphic to 
$F\text{-}\mathrm{Isoc} ^{\dag\dag} ((\X  _\alpha )_{\alpha \in \Lambda},\, Z/K)$
(see \cite{caro-construction}).
Since in this subsection the choices are fixed, 
by abuse of notation we will simply denote 
$F\text{-}\mathrm{Isoc} ((\X  _\alpha ^\flat )_{\alpha \in \Lambda}/K)$
by 
$F\text{-}\mathrm{Isoc} (X ^\flat /K)$.

From 
\cite[6.4.5]{kedlaya-semistableI},
the functor 
$(\hdag  X _\alpha \cap Z) \colon 
F\text{-}\mathrm{Isoc}  (\X  _\alpha ^\flat/K)
\to 
F\text{-}\mathrm{Isoc} ^{\dag\dag} (\X  _\alpha, Z\cap \X  _\alpha/K)$
is fully faithful. 
Hence, 
we get the fully faithful functor 
$ F\text{-}\mathrm{Isoc} (X ^\flat /K)
\to 
F\text{-}\mathrm{Isoc} ^{\dag\dag} ((\X  _\alpha )_{\alpha \in \Lambda},\, Z/K)$
which  is defined 
by sending 
$((\FF _{\alpha})_{\alpha \in \Lambda},\, (\vartheta _{\alpha\beta}) _{\alpha ,\beta \in \Lambda})$
to 
$((\FF _{\alpha} (\hdag X _\alpha \cap Z)_{\alpha \in \Lambda},\, (\theta _{\alpha\beta}) _{\alpha ,\beta \in \Lambda})$
where 
$\theta _{\alpha\beta}$ are the isomorphisms induced canonically by 
$\vartheta _{\alpha\beta}$.
By composing this functor with the glueing equivalence
$F\text{-}\mathrm{Isoc} ^{\dag\dag} ((\X  _\alpha )_{\alpha \in \Lambda},\, Z/K)
\cong
F\text{-}\mr{Isoc}^{\dag\dag}(Y,\PP/K)$,
we get a fully faithful functor which will be 
denoted by 
\begin{equation}
\label{hdagZglue}
(\hdag Z)\colon
 F\text{-}\mathrm{Isoc} (X ^\flat /K)
\to 
F\text{-}\mr{Isoc}^{\dag\dag}(Y,\PP/K).
\end{equation}
To justify this notation, notice that
when $\PP _\alpha = \PP$ for any $\alpha$ then the functor \ref{hdagZglue}
is equal to the usual functor
$(\hdag Z)$ as in the lifted case \ref{F-isoc-alg-nalg}. 
The essencial image of this functor 
$(\hdag Z)$ are by definition the 
unipotent (overcoherent) $F$-isocrystals. 
\end{empt}

\begin{empt}
[Residue morphism, monodromy filtration]
\label{monodromy-smoothcase}
We keep the notation \ref{unipotent-res-glue}. 
By gluing \ref{i_1^*}, we get the factorisation
\begin{equation}
\label{i_1^*glue}
i _1 ^* \colon 
F \text{-}\mathrm{Isoc} (X ^\flat/K)
\to 
F \text{-}\mathrm{Isoc} (Z _1 ^\#/K),
\end{equation}
where $i _1 \colon Z _1 \hookrightarrow X$ is the closed immersion
and 
$F \text{-}\mathrm{Isoc} (Z _1 ^\#/K)$ is constructed similarly.

Let $\FF \in F \text{-}\mathrm{Isoc} (X ^\flat/K)$ and 
$\H := i _1 ^* (\FF)$. 
 We note that in the situation of \S\ref{chap3}, the nilpotent
 homomorphism $N_{1,\FF}$ induced by the action of $t _1 \partial _1$ on $\H$
 does not depend on the choice of local coordinates $t _1, \dots, t _d$ such that $Z _1= V (t_1)$.
 Thus, by gluing, we may construct as in \ref{def-monod-filpre} 
 the nilpotent homomorphism
$ N _{1,\FF} \colon\H(1)\rightarrow\H$ 
 in $ F\text{-}\mathrm{Isoc} (Z _1 ^\# /K)$,
 the residue morphism of $\FF$ corresponding to $Z _1$.
As in \ref{monodromyfiltration}, 
we check that there
 exists a unique finite increasing filtration $M$ on $\H$ such that
 $N _{1,\FF}M_i\subset M_{i-2}(-1)$, and $N^k _{1,\FF}$ induces an isomorphism
 $\gr^M_k(\H)\riso \gr^M_{-k}(\H)(-k)$. We call this filtration the
 {\em monodromy filtration} of $(\H, N _{1,\FF})$.
 \end{empt}

\begin{empt}
 \label{nota-u+mixedstable}
 We define the category $\mr{ULNM}(Y,\PP/K)$ to be the full
 subcategory of $F\text{-}\mr{Isoc}^{\dag\dag}(Y,\PP/K)$ consisting of
 $\E$ such that: 1.\ $\E$ is unipotent,
 and 2.\ $\E$ is the restriction of an $\iota$-mixed overconvergent $F$-isocrystal on
 $Y$. 
 \end{empt}

\begin{thm}
 \label{u+mixedstable}
 Let $j=(\star,\mr{id},\mr{id})\colon(Y,X,\PP)\rightarrow(X,X,\PP)$ be
 the morphism of frames.

 (i) Suppose $X$ proper. 
 For any $\E\in\mr{ULNM}(Y,\PP/K)$, then $j_{!} (\E)\Vert_W$ is an
 $\iota$-mixed $F$-isocrystal for any stratum $W$ of the stratification
 $\mathrm{Strat}_{Z} (X)$. In particular, $j_!(\E)$ is an $\iota$-mixed
 $F$-complex.

 (ii) Suppose $Z$ smooth. If $\E$ is $\iota$-pure of weight $w$ 
then $j _{!+} (\E)$ is $\iota$-pure of
 weight $w$.
\end{thm}

\begin{proof}
Using \cite[3.2.20]{kedlaya-semistableI},
we can assume by d\'{e}vissage
that $\E$ is
 $\iota$-pure of weight $w$.
Since $\E$ is 
 a unipotent $F$-isocrystal, 
 there exists a (unique up to isomorphism)  
 an object $\FF$ of  $F\text{-}\mathrm{Isoc} (X ^\flat /K)$
 such that
$\E \riso (\hdag Z)(\FF)$ (see \ref{hdagZglue}).

 Let us start to show (i). We proceed by induction on $r$. For $r=0$, we
have $\ZZ=\emptyset$ and this case is obvious.
Suppose now that $r \geq 1$ and 
 the proposition is checked for $r'<r$.

a)  Let $D _1 := Z _2 \cup \dots \cup Z _r$, $Y _1:= Z _1 \setminus D
 _1$, and $j _1 \colon Y _1 \to Z _1$ be the open immersion. 
From \ref{monodromy-smoothcase}, 
we get an element
$\H := i _1 ^* (\FF)$ of $ F\text{-}\mathrm{Isoc} (Z _1 ^\# /K)$
and a nilpotent homomorphism
$ N _{1,\FF} \colon\H(1)\rightarrow\H$ 
 in $ F\text{-}\mathrm{Isoc} (Z _1 ^\# /K)$.
 Hence, we get the objects
 $\FF _1:= \ker  N_{1,\FF}$
 and $\FF _2:= \coker N_{1,\FF}$ of
 $F\text{-}\mr{Isoc} (Z _1 ^\#/K)$.
The monodromy filtration on $\H$ induces canonically some filtration on  
$\FF _1$ and $\FF _2$.
 Since the functor 
$(\hdag D _1)\colon
 F\text{-}\mathrm{Isoc} (Z _1 ^\flat /K)
\to 
F\text{-}\mr{Isoc}^{\dag\dag}(Y _1,\PP/K)$
(defined as in \ref{hdagZglue})
is exact, 
the  filtrations on $\FF _1$ and $\FF _2$ induce canonically a filtration on the unipotent $F$-isocrystals
 $\E _1:= (\hdag D _1) (\FF _1)$
 and $\E _2:=(\hdag D _1) (\FF _2)$ of
 $F\text{-}\mr{Isoc}^{\dag\dag}(Y _1,\PP/K)$, 
 which will be denoted by  $M$.
 Then $\gr^M_i(\E_1)$,
 $\gr^M_i(\E_2)$ are $\iota$-pure. Indeed, by Remark
 \ref{rem-m-out-div}, the verification is local, and we may reduce to
 the situation of Theorem \ref{crewhigher}. Then by this theorem the
 purity follows.
 Hence, by induction hypothesis, $j _{1!} (\E _1)\Vert_W$ and $j _{1!}
 (\E _2)\Vert_W$ are in $D_\mr{isoc,\mr{m}}^{\mr{b}}(W,\PP/K)$ for any
 stratum $W$ of $\mathrm{Strat}_{D_1} (Z _1)$.

b)  On the other hand, for any stratum $W$ of $\mr{Strat}_{D_1}(Z_1)$, we
 have isomorphisms for any $k\in\mb{Z}$
 \begin{equation}
 \label{isom-j1EVertW}
  \HH^k\bigl(\H ^{\dag 0} _{Z _1} (j _! (\E))\Vert_W\bigr)
   \cong \HH^k\bigl(j_{1!}(\E_1)\Vert_W\bigr),\qquad
  \HH^k\bigl(\H ^{\dag 1} _{Z _1} (j _! (\E))\Vert_W\bigr)
   \cong \HH^k\bigl(j_{1!}(\E_2)\Vert_W\bigr).
 \end{equation}
 Indeed, first notice that the right sides of both isomorphisms are objects of 
 $F\text{-}\mr{Isoc}^{\dag\dag}(W,\PP/K)$.
Moreover, the fact that an object
 of 
 $F\text{-}\mathrm{Ovhol}(W,\PP/K)$
 is an object of 
 $F\text{-}\mr{Isoc}^{\dag\dag}(W,\PP/K)$
 is local in $\PP$. 
 Hence, by using Kedlaya's fully faithfulness theorems 
\cite[5.2.1]{kedlaya-semistableI} (in fact we only need \cite[4.1.1]{tsumono} since $Y$ is smooth) and  \cite{kedlaya_full_faithfull},
the check of the isomorphisms 
\ref{isom-j1EVertW} is local.
Hence,  we may assume that
 we are in the situation of \S\ref{chap3}. Thus, by Theorem
 \ref{iso3314-Frob}, the isomorphisms follows. 
 
 c) Combining a) and b), 
we get that $\H ^{\dag k} _{Z _1} (j _! (\E))\Vert_W$
 ($k=0,1$) is in $D^{\mr{b}}_{\mr{isoc},\mr{m}}(W,\PP/K)$. Since we have
 the triangle
 \begin{equation*}
  \H ^{\dag 0} _{Z _1} (j _! (\E))\Vert_W\rightarrow
   \R\underline{\Gamma}^\dag_{Z_1}(j_!(\E))\Vert_W\rightarrow
   \H ^{\dag 1} _{Z _1} (j _! (\E))\Vert_W\xrightarrow{+},
 \end{equation*}
 we conclude by Lemma \ref{extensisocmix} that $j_!(\E)\Vert_W\cong
 \R\underline{\Gamma} ^\dag _{Z_1}\bigl(j_{!} (\E)\bigr)\Vert_W$ is in
 $D^{\mr{b}}_{\mr{isoc},\mr{m}}(W,\PP/K)$ for any stratum $W$ in
 $\mr{Strat}_{D_1}(Z_1)$.

 By replacing $Z_1$ by $Z_2,\dots , Z _r$ and $D_1$ by $D_i:=\bigcup_{j\neq
 i} Z_j$, we get the same result. By construction, any stratum of
 $\mr{Strat}_Z(X)$ except for $Z^{\circ}_{(0)}=Y$ is a disjoint
 union of strata in $\mr{Strat}_{D_i}(Z_i)$. Thus the theorem follows.

 Let us show (ii). Since the verification is local, we may assume that
 we are in the situation of \S\ref{chap3} and since 
 $Z$ is smooth we can suppose $\ZZ = \ZZ _1$. Consider the exact
 sequence:
 \begin{equation*}
  0\to j _{!+} (\E)\to j _+ (\E)\to
   \mathcal{H} ^{\dag 1} _{Z } (j _! (\E))\to 0.
 \end{equation*}
From \ref{iso3314-Frob}, 
we get the isomorphism $\mathcal{H} ^{\dag 1} _{Z } (j _! (\E))
   \riso 
   i _{1+}\bigl(\coker N_{1,\FF}\bigr)$ compatible with Frobenius.
 Using the Corollory \ref{rem-ker-coker-N}, 
this yields that $\mathcal{H} ^{\dag 1} _{Z
 } (j _! (\E)) $ is $\iota$-mixed of weight $\geq w +1$. 
Usinf by Lemma \ref{wj+},  we get that $j _+ (\E)$ is $\iota$-mixed of weight $\geq w$.
This implies by Proposition \ref{lem-2.2.8}.\ref{lem-2.2.3(4)} (use also the above exact sequence)
 that $j _{!+} (\E)$ is $\iota$-mixed of weight $\geq w$. By
 Corollary \ref{i!+bidual} and Lemma
 \ref{lem-isco-stabwf!+}.\ref{lem-isco-stabwf!+(3)}, we get that $j
 _{!+} (\E)$ is $\iota$-mixed of weight $\leq w$.
\end{proof}

\begin{rem}
 The second condition in the definition of $\mr{ULNM}(Y,\PP/K)$ is used to
 apply Theorem \ref{crewhigher}. If we remove this condition, we do not
 know if Theorem \ref{u+mixedstable} holds or not.
\end{rem}

\section{Theory of weights}
Throughout this section, we consider situation (B) in Notation and
convention.

\subsection{Stability of mixedness}

\begin{empt}
 Let $\star\in\{\emptyset,\leq w,\geq w\}$. Let $Y$ be a realizable
 variety and $\E \in F\text{-}D ^{\mathrm{b}}_{\mathrm{ovhol}}(Y/K)$.
 We say that {\em $\E$ satisfies the condition} $\SQ{\star}$ if the
 following holds:
 \begin{quote}
  $\SQ{\star}$\quad
  For any integer $i$, any subquotient of $\HH^i(\E)$ is $\iota$-mixed
  of weight $\star+i$.
 \end{quote}
\end{empt}

\begin{thm}
 \label{3.3.1WeilII}
 Let $f\colon X  \to Y$ be a morphism of realizable varieties.

 (i) For any $\E\in F\text{-}D ^{\mathrm{b}}_{\geq w}(X/K)$, we have $f
 _+ (\E)\in F\text{-}D ^{\mathrm{b}}_{\geq w}(Y/K)$.

 (ii) The dual functor $\DD_X$ exchanges $F\text{-}D ^{\mathrm{b}}_{\geq
 w}(X/K)$ and $F\text{-}D ^{\mathrm{b}}_{\leq -w}(X/K)$.

 (iii) The condition $\SQ{}$ holds for any $F$-complex in $F\text{-}D
 ^{\mathrm{b}}_{\mr{m}}(X/K)$.
\end{thm}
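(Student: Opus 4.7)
Of the three statements, I would address (iii) first, as it is essentially formal. Given $\E\in F\text{-}D^{\mathrm{b}}_{\mathrm{m}}(X/K)$, pick a smooth stratification $\{Y_i\}$ of $X$ witnessing mixedness. The functor $\HH^{k}$ commutes with restriction to each smooth stratum (by Remark \ref{rem-scrHcalH}.\ref{rem-scrHcalH(i)} together with the t-exactness of the stratum-restriction on the isocrystal subcategory), so $\HH^{k}(\E)\Vert_{Y_i}$ is a mixed overconvergent isocrystal. Thus $\HH^{k}(\E)$ is a mixed overholonomic module. Any subquotient of $\HH^{k}(\E)$ in $F\text{-}\mathrm{Ovhol}(X/K)$ restricts to a subquotient on each $Y_i$, and Corollary \ref{sub-quot-mixed}.\ref{sub-quot-mixed-i} ensures this restriction remains mixed, verifying $\SQ{}$.

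For (ii), the preservation of $\iota$-mixedness by $\DD_X$ follows by applying Lemma \ref{lem-isco-stabwf!+}.\ref{lem-isco-stabwf!+(3)} stratum by stratum, since duality commutes with restriction to a smooth subscheme up to the appropriate shift. The exchange of weight bounds arises from the compatibility $i_y^{+}\circ \DD_X\cong \DD_{y}\circ i_y^{!}$ (a consequence of the relative duality isomorphism together with biduality), combined with the elementary observation that duality of a $\varphi$-$K$-vector space sends $\iota$-weight $w$ eigenvalues to $\iota$-weight $-w$ eigenvalues.

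The heart of the theorem is (i). By d\'evissage along a smooth stratification, via the localization triangle and Proposition \ref{lem-2.2.8}, I would reduce to the case $\E=j_{!}\FF$ for $j\colon(U,X)\hookrightarrow(X,X)$ a c-open immersion from a smooth dense open $U$ and $\FF\in F\text{-}\mathrm{Isoc}^{\dag\dag}(U,\PP/K)$ pure of weight $w$. Using realizability, I would factor $f\colon X\to Y$ as a closed immersion into $\mathbb{P}^{N}_Y$ followed by the projection $p\colon\mathbb{P}^{N}_Y\to Y$; closed immersions are handled by Lemma \ref{wj+}, so the problem reduces to understanding $p_{+}$. Writing $p$ as an iteration of relative $\mathbb{P}^{1}$-fibrations, induction on relative dimension reduces to the case of a proper smooth morphism of relative dimension one. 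For the pointwise weight bound, base change (Proposition \ref{basechangeisom}) identifies $i_{y'}^{!}\circ p_{+}$ with the pushforward along the fiber over $y'$; the required estimate then follows from Theorem \ref{thm-f+j!+pure}.

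The principal obstacle is establishing the stability of $\iota$-mixedness itself (not merely the numerical weight bound) in the relative curve case, since the open immersion $j$ precludes a direct application of Section 3. I would resolve this via Kedlaya's semistable reduction theorem \cite{kedlaya-semistableIV}: after a c-universal homeomorphism (Proposition \ref{homeo-univ}), a finite \'etale cover, and an alteration, one can arrange that the boundary $X\setminus U$ becomes (relatively) a strict normal crossing divisor along which $\FF$ extends to a log convergent $F$-isocrystal with nilpotent residues. Theorem \ref{u+mixedstable} then furnishes mixedness of $j_{!}\FF$ on each stratum of the natural stratification, and pushforward along the resulting proper smooth relative curve preserves mixedness stratum by stratum via Lemma \ref{wj+}. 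Orchestrating this semistable reduction together with the monodromy filtration analysis of Section 3 --- in particular the Frobenius compatibility established in Corollary \ref{iso3314-Frob} --- is the most delicate aspect, since the boundary contributions, unlike in the $\ell$-adic setting, cannot be neglected in $p$-adic cohomology.
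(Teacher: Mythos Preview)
Your treatment of (ii) and (iii) as formal preliminaries misses the main content of the theorem. For (ii), the weight exchange is indeed formal once mixedness of $\DD_X(\E)$ is known (Remark~\ref{rem-m-geqw}(ii)), but your argument for mixedness preservation fails: for a c-immersion $j_i$ one has $j_i^!\circ\DD_X \cong \DD_{Y_i}\circ j_i^+$, not $\DD_{Y_i}\circ j_i^!$, so knowing that $\E\Vert_{Y_i}=j_i^!\E$ is isocrystal-mixed says nothing about $(\DD_X\E)\Vert_{Y_i}=\DD_{Y_i}(j_i^+\E)$. Showing that $\DD_X$ preserves mixedness is precisely where semistable reduction and Theorem~\ref{u+mixedstable} enter in the paper (step~3.a), and it is the deepest step. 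Likewise (iii) is not formal: $j_i^!$ is only left t-exact (Proposition~\ref{propofpullbacks}(ii)), so $\HH^k$ does not commute with restriction to closed strata, and a subquotient of $\HH^k(\E)$ need not be isocrystal-like on the given stratification. The paper proves (iii) \emph{after} (ii), via the sequence $0\to j_{!+}\FF\to j_+\FF\to\R^1\underline{\Gamma}^\dag_Z(j_!\FF)\to 0$ with $\FF$ an irreducible isocrystal on an affine open: the last term is supported in lower dimension (induction), and $j_{!+}\FF$ is irreducible hence satisfies $\SQ{}$ once it is known to be mixed --- which already requires (ii).

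Two further problems in your treatment of (i): d\'evissage via the localization triangle produces $j_+\FF$ rather than $j_!\FF$ (and $j_!\FF$ for $\FF$ pure of weight $w$ is of weight $\leq w$, not $\geq w$); and $\mathbb{P}^N_Y\to Y$ is not an iterated $\mathbb{P}^1$-fibration, so your reduction to relative curves does not go through as stated --- the paper uses Noether normalization instead (step~4). The correct architecture, which you partially anticipate in your final paragraph but assign to the wrong part, is a single induction on $\dim X$ in which one establishes (ii), then (iii), then the relative-curve case of (i) in that order at each dimension, with the machinery of \S\ref{section35} and semistable reduction feeding first into (ii), not (i).
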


\begin{rem*}
 See Theorem \ref{sqpropcompl} for more complete results for (iii).
\end{rem*}

\begin{proof}
\noindent
{\bf 0)} Preliminaries. 

 a) {\it D\'{e}vissage in $\E$}:
 Let $\E' \to \E \to \E''\xrightarrow{+}$ be an exact triangle of
 $F\text{-}D ^{\mathrm{b}}_{\geq w}(X/K)$. By Lemma \ref{lem-2.2.8},
 if the part (i) and (ii) of the theorem holds for $\E'$ and $\E''$, then so does for $\E$.

 b) {\it D\'{e}vissage in $X$}:
 Let $j\colon U\hookrightarrow X$ be an open immersion and $i\colon
 X\setminus U\hookrightarrow X$ be the complement. Then if the theorem
 holds for $f \circ j$ and $f \circ i$ then so does for $f$. This
 follows from Lemma \ref{lem-2.2.8}.

 c) {\it D\'{e}vissage in $Y$}:
 Let $V$ be a open subvariety of $Y$ and $W:= Y\setminus V$. If the
 theorem holds for $f' \colon f ^{-1} (V) \to V$ and $f'' \colon f
 ^{-1} (W) \to W$ then so does for $f$. Indeed, by Lemma \ref{wj+}, the
 theorem holds for the morphisms $f^{-1}(V)\xrightarrow{f'} V
 \hookrightarrow Y$ and $f^{-1}(W)\xrightarrow{f''} W\hookrightarrow Y$,
 and by b) the claim follows.

 d) Let $\E'\rightarrow\E\rightarrow\E''\xrightarrow{+}$ be a triangle,
 and assume that $\SQ{}$ holds for $\E'$ and $\E''$. Then $\SQ{}$ holds
 for $\E$ as well.
 \medskip

 \noindent
 {\bf 1)} By Lemma \ref{wj+}, the theorem holds when $X$ is of dimension
 $0$ and we may assume that
 $\dim(Y)\leq\dim(X)$. We proceed by induction on the dimension of
 $X$. Assume the theorem holds for $\dim(X)<n$. We will show the theorem
 for $\dim(X)=n$. Let $\E\in F\text{-}D ^{\mathrm{b}}_{\geq w}(X/K)$.
 \medskip

 \noindent
 {\bf 2)} 
 a) Let us show (i) in the case where $f$ is quasi-finite.
 If $f$ is an immersion, then by Lemma \ref{wj+} the theorem holds.
 Hence, by using EGA IV, Theorem 8.12.6, we may assume that $f$ is
 finite.  By using 0.a) and 0.b), we may assume that $X$ is smooth,
 $\E \in  F\text{-}\mr{Isoc}^{\dag\dag}(X/K)$ and it is $\iota$-pure of
 weight $w$. Moreover, thanks to 0.c) and the induction hypothesis, we
 may further suppose that $Y$ is  smooth and that $f _{+} (\E) \in
 F\text{-}D ^{\mathrm{b}}_{\mathrm{isoc}}(Y/K) $.
 Then it is sufficient to check that for every  closed point $y$ of $Y$, 
 $i _{y} ^{!} \circ f _{+} (\E)$ is $\iota$-pure of weight $w$ where
 $i_y\colon\{y\}\hookrightarrow Y$ is the closed immersion as usual.
 Hence, by using a base change theorem \ref{summpropre}, we reduce
 to the case where $\dim X =\dim Y=0$, and the theorem follows.
 \medskip

 b) Assume $f$ is a universal homeomorphism. From 2.a) and Proposition
 \ref{pullbackstability}, $f _+$ and $f ^{!}$ preserve the mixedness
 $\geq w$. Thus, by Proposition \ref{homeo-univ}, the functors $f _{+}$
 and $f ^{!}$ induce canonical equivalences of categories between
 $F\text{-}D ^{\mathrm{b}}_{\geq w}(X/K)$ and
 $F\text{-}D ^{\mathrm{b}}_{\geq w}(Y/K)$.
 \medskip

 \noindent
 {\bf 3)} Let us show (ii) and (iii).

 a) Let us show (ii) for $\dim(X)=n$.
 We may assume $X$ to be proper. By
 d\'{e}vissage, it suffices to show that for an open affine smooth subscheme
 $j\colon U\hookrightarrow X$ and an $\iota$-mixed overconvergent
 $F$-isocrystal $\E$ on $U$, $j_!(\E)$ is $\iota$-mixed. By using
 Kedlaya's semistable reduction theorem \cite{kedlaya-semistableIV},
 there exists a generically
 finite \'{e}tale morphism $g\colon X'\rightarrow X$ such that
 $X'$ is projective smooth, $Z':=X'\setminus g^{-1}(U)$ is a
 strictly normal crossing divisor, and $g_U^*(\E)$ is a unipotent
 $F$-isocrystal where $g_U\colon g^{-1}(U)\rightarrow U$. There exists
 an open subscheme $V\subset U$ such that $g_V\colon
 V':=g^{-1}(V)\rightarrow V$ is finite \'{e}tale. For $\star\in\{U,V\}$,
 let $j_{\star}\colon\star\hookrightarrow X$, $j'_{\star}\colon
 g^{-1}(\star)\hookrightarrow X'$.
 Now, for a $\iota$-mixed $F$-module $\FF$ on $X'$, we claim that
 $g_+(\FF)$ is $\iota$-mixed. Indeed, consider the
 triangle $\R\underline{\Gamma}^\dag_{X'\setminus
 V'}(\FF)\rightarrow\FF\rightarrow j'_{V+}\FF\Vert_{V'}
 \xrightarrow{+}$. By induction hypothesis,
 $g_+\bigl(\R\underline{\Gamma}^\dag_{X'\setminus V'}(\FF)\bigr)$ is
 $\iota$-mixed, and since $g_V$ is finite \'{e}tale, by 2.a),
 $g_+j'_{V+}\FF\Vert_{V'}$ is $\iota$-mixed as well, and the claim
 follows.

 By Theorem \ref{u+mixedstable}, $j'_{U!}(g^*_U\E)$ is $\iota$-mixed. By using the induction hypothesis,
 this is equivalent to the property that $j'_{V!}(g^*_U (\E) \Vert V ')$ is $\iota$-mixed as
 well. Now, by the claim, so is $g_+\,j'_{V!}(g^*_U (\E) \Vert V ')$. Since
 it contains $j_{V!} (\E \Vert V)$ as a direct factor, then $j_{V!} (\E \Vert V)$ is
 $\iota$-mixed, and using the induction hypothesis again, we get that 
 $j_!(\E)$ is $\iota$-mixed.
 \medskip

 b) Let us show (iii) for $\dim(X)=n$. For $\E$, there exists an open
 subscheme $j\colon U\hookrightarrow X$ such that $U$ is smooth and
 the complement of a divisor in $X$ and $\E\Vert_U$ is an isocrystal. By
 considering the localization triangle, 0.d), and induction
 hypothesis, it suffices to show $\SQ{}$ for $j_+(\FF)$ where $\FF$ is
 an irreducible overconvergent $F$-isocrystal on $U$. Then, by
 \ref{cokertheta-div}, we get the exact sequence
 \begin{equation*}
  0\rightarrow j_{!+}(\FF)\rightarrow j_{+}(\FF)\rightarrow
   \R^1\underline{\Gamma}^\dag_Z(j_!(\FF))\rightarrow0.
 \end{equation*}
 We have already shown at 2.a) that $j_+(\FF)$ and 
 $j_!(\FF)$ are $\iota$-mixed. Thus,
 $\R\underline{\Gamma}^\dag_Z(j_!(\FF))$ is $\iota$-mixed, and
 $\R^1\underline{\Gamma}^\dag_Z(j_!(\FF))$ is $\iota$-mixed by the
 induction hypothesis on $\SQ{}$. Thus, by 0.d),
 $j_{!+}(\FF)$ is $\iota$-mixed. Since $j_{!+}(\FF)$ is irreducible by
 Proposition \ref{BorelVII.10.5}, this satisfies $\SQ{}$. Thus, $\SQ{}$
 for $j_+(\FF)$ follows by 0.d).
 \medskip

 \noindent
  {\bf 4)} It remains to check (i). By using 0), 2.a), and Noether's normalization theorem, we may
 assume that $f$ is a smooth morphism of relative dimension $1$, $X$ is
integral smooth, and $\E \in F\text{-}\mathrm{Isoc} ^{\dag \dag } (X/K)$. By
 0.c), we may assume that $Y$ is integral, affine and smooth.
We choose $h \colon \overline{X} \to Y$ 
a proper morphism of realizable varieties, 
an open immersion $j\colon X \hookrightarrow \overline{X} $
such that $f = h \circ j$ and $X$ is dense in $\overline{X} $.
 By using Kedlaya's semistable reduction theorem, 
 there exist a
 projective surjective morphism $g\colon \overline{X}' \to \overline{X}$
 such that the induced morphism
 $g'\colon X':=g ^{-1} (X) \to X$ is generically finite \'{e}tale, $\overline{X}'$
 is quasi-projective, integral, smooth, $D ':= \overline{X}' \setminus X' $ is a strict
 normal crossing divisor of $\overline{X}'$ and $g ^{\prime *} (\E)$ is a unipotent along $D'$
 overconvergent $F$-isocrystal. 
 Let $U$ be an open dense subvariety of $X$ 
 such that the induced morphism
 $U':=g ^{-1} (U) \to U$ is finite \'{e}tale.
 Let $\mc{K}$ be the function field of $Y$.
Let  $\overline{X}' _{\mc{K}}$, 
 and $U' _{\mc{K}}$ be the fibers of the generic element of $Y$.
 Since $U'_\mc{K}$ is a smooth curve over $\Spec(\mc{K})$, we have its
 canonical smooth compactification $C'_\mc{K}$ over $\mc{K}$.
 Since $\overline{X}' _{\mc{K}}$ is projective over $\mc{K}$, there exists a unique
finite morphism $\alpha_\mc{K}\colon C'_\mc{K} \to
 \overline{X}'_\mc{K}$, which is nothing but the normalization morphism,
 that extends the inclusion $U'_\mc{K}\to\overline{X}'_\mc{K}$.
 By using EGA IV, Theorem 8.8.2 and shrinking $Y$, we may assume that
 $C'_\mc{K}$ comes from a variety $C'$ which is projective smooth
 of relative dimension $1$ over $Y$.  
By using EGA IV, Theorems 8.8.2, 8.10.5, 17.7.8,  
by shrinking $Y$ further,
  we can assume that $\alpha _\mc{K}$ (resp.\ the open immersion
 $U'_\mc{K} \hookrightarrow C'_\mc{K}$) comes from a finite morphism
 $\alpha \colon C' \to \overline{X}'$
 (resp.\ an open immersion $j'\colon U'
 \hookrightarrow C'$) such that $\alpha \circ j'$ is the canonical open immersion.
 Since $\alpha$ is a finite morphism of smooth integral varieties and since 
 $\alpha \circ j'$ is an open immersion then $\alpha$ is an isomorphism. 
 Since $\E $ is a direct factor of
 $g' _{+} g'^* (\E )$ (because $g$ is generically finite and etale and $\E$ is an isocrystal), 
 then $f _{+}(\E )$ is a direct factor of $(f \circ g ') _{+} g'^* (\E)$. 
 Hence, we can suppose
 that there exists the following commutative diagram of varieties
 \begin{equation*}
  \xymatrix@C=30pt@R=15pt{
   D\ar@{^{(}->}[r]^i\ar[dr] &\overline{X}\ar[d]_{h}&
   X\ar@{_{(}->}[l]_j\ar[dl]^f\\
  &Y&
   }
 \end{equation*}
 where $h$ is projective smooth purely of relative dimension $1$, $j$ is
 an open immersion, 
 $D := \overline{X} \setminus X $ is a strict
 normal crossing divisor of $\overline{X}$ with
 $i$ the induced closed immersion, $\E$ is $\iota$-pure of weight $w$
 having unipotent monodromy along
 $D$. Now, there exists a finite radicial extension $\mc{L}$ of
 $\mc{K}$ such that the morphism
 $D_{\mc{L}}\rightarrow\mr{Spec}(\mc{L})$ is finite \'{e}tale. Let
 $a\colon\widetilde{Y}\rightarrow Y$ be the normalization
 of $Y$ in $\mc{L}$. By 2.b), we may take the the pull-back by $a$, and
 shrinking $Y$ further, we may assume moreover that $h\circ i$ is finite
 \'{e}tale. By shrinking $Y$ once again, we may assume
 $h_+(j_{!+}(\E))\in F\text{-}D^{\mathrm{b}}_{\mathrm{isoc}}(Y/K)$.
 \medskip

 \noindent
 {\bf 5)} Let us finish the proof of (i). By 3.a), we know that
 $j_!(\E)$ is $\iota$-mixed. Thus,
 $\R\underline{\Gamma}^\dag_D(j_!(\E))$ is $\iota$-mixed as
 well, and since $\SQ{}$ holds for this $F$-complex, we get that
 $\R^1\underline{\Gamma}^\dag_D(j_!(\E))$ is $\iota$-mixed.
 Thus, we get that $j_{!+}(\E)$ is $\iota$-mixed. Moreover, by Theorem
 \ref{u+mixedstable} (ii), $j_{!+}(\E)$ is $\iota$-pure of weight $w$.
 Now, let us show that $h_+(j_{!+}(\E))$ is $\iota$-pure of weight $w$.
 For this, it suffices to show that for an
 $\iota$-pure $F$-module $\FF$ of weight $w$ on $\overline{X}$ such that
 $h_+(\FF)\in F\text{-}D ^{\mathrm{b}}_{\mathrm{isoc}}(Y/K)$, $h_+(\FF)$
 is $\iota$-pure of weight $w$.
 Let $y$ be a closed point of $Y$, and
 $i_y\colon\{y\}\hookrightarrow Y$ be the closed immersion. We put
 $i'_y\colon h^{-1}(y)\hookrightarrow\overline{X}$ the closed immersion,
 and $h'_y\colon h^{-1}(y)\rightarrow\{y\}$ the proper smooth
 curve. By base change theorem \ref{summpropre}, we have
 $i_y^!\,h_+(\FF)\cong h'_{y+}\,i'^!_y(\FF)$. Since
 $i'^!_y(\FF)$ is $\iota$-mixed of weight $\geq w$, by Proposition
 \ref{pullbackstability}, $i_y^!\,h_+(\FF)$ is $\iota$-mixed of weight
 $\geq w$. Since $h$ is proper, $h_!\cong h_+$ by paragraph
 \ref{summpropre}.
 By 3.a),
 since $\DD_{\overline{X}}(\FF)$ is $\iota$-pure of weight $-w$, we get
 that $i_y^+\,h_+(\FF)$ is $\iota$-mixed of weight $\leq w$, and we
 conclude that $h_+(\FF)$ is $\iota$-pure of weight $w$.

 Finally, consider the exact sequence 
 \begin{equation*}
  0\rightarrow j_{!+}\E\rightarrow j_{+}\E\rightarrow
   \HH^1\R\underline{\Gamma} ^\dag _{D} (j _{!} (\E))
   \to0.
 \end{equation*}
 Since $j_{!+}(\E)$ and $j_+(\E)$ are $\iota$-mixed of weight $\geq w$,
 so is $\HH^1\R\underline{\Gamma} ^\dag _{D} (j _{!} (\E))$ by
 Lemma \ref{lem-2.2.8}. By the finite \'{e}taleness of $h\circ i$,
 $h_+\bigl(\HH^1\R\underline{\Gamma}^\dag _{D} (j _{!} (\E))\bigr)$ is
 $\iota$-mixed of weight $\geq w$.
 Since we showed that $h_+(j_{!+}(\E))$ is $\iota$-pure of weight
 $w$, we get that $f_+(\E)\cong h_+(j_+(\E))$ is $\iota$-mixed of weight
 $\geq w$, which concludes the proof.
\end{proof}

\begin{empt}
 \label{sumupWeilII}
 Let us sum up the results.
 \begin{thm*}
  Let $f\colon X\to Y$ be a morphism of realizable varieties. We get:
  \begin{enumerate}
   \item $f _!$ and $f ^+$ preserve $D^{\mr{b}}_{\leq w}$;

   \item $f ^!$ and $f _+$ preserve $D^{\mr{b}}_{\geq w}$;

   \item $\widetilde{\otimes}_Y$ sends $D^{\mr{b}}_{\geq w}\times
	 D^{\mr{b}}_{\geq w'}$ to $D^{\mr{b}}_{\geq w+w'}$, and
	 $\otimes_Y$ sends $D^{\mr{b}}_{\leq w}\times
	 D^{\mr{b}}_{\leq w'}$ to $D^{\mr{b}}_{\leq w+w'}$;

   \item $\DD _{Y}$ exchanges $D^{\mr{b}}_{\leq w}$ and $D^{\mr{b}}_{\geq
	 -w}$.
  \end{enumerate}
 \end{thm*}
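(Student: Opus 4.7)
The plan is to deduce every assertion from the three results in Theorem \ref{3.3.1WeilII} together with the $\widetilde{\otimes}$-bound already established in Proposition \ref{weightotimes}, using only formal duality manipulations.

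First, statement (iv) is literally Theorem \ref{3.3.1WeilII}(ii), so nothing remains to prove there. Next, the two assertions in (ii) are each available: the claim for $f_+$ is Theorem \ref{3.3.1WeilII}(i), and the claim for $f^!$ is Proposition \ref{pullbackstability} (the latter is formulated on frames, but thanks to Proposition \ref{defi-mixXY} and paragraph \ref{summpropre} it transfers to the category $F\text{-}D^{\mr{b}}_{\geq w}(Y/K)$ attached to a realizable variety). So (ii) requires no new work.

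For (i), I would argue by duality. Using paragraph \ref{summpropre} one has $f_!\cong\DD_Y\circ f_+\circ\DD_X$ and $f^+\cong\DD_X\circ f^!\circ\DD_Y$. Let $\E\in F\text{-}D^{\mr{b}}_{\leq w}(X/K)$. By (iv), $\DD_X(\E)$ lies in $F\text{-}D^{\mr{b}}_{\geq -w}(X/K)$; by (ii), $f_+\DD_X(\E)$ lies in $F\text{-}D^{\mr{b}}_{\geq -w}(Y/K)$; applying (iv) once more, $f_!(\E)\cong\DD_Y f_+\DD_X(\E)$ lies in $F\text{-}D^{\mr{b}}_{\leq w}(Y/K)$. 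The argument for $f^+$ is identical, pulling back along $f^!$ instead.

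For (iii), the first half is exactly Proposition \ref{weightotimes}. For the second half, by the definition of the (ordinary) tensor product in paragraph \ref{extrafunctordfn},
\begin{equation*}
\E\otimes_Y\E' \;=\; \DD_Y\bigl(\DD_Y(\E)\,\widetilde{\otimes}_Y\,\DD_Y(\E')\bigr).
\end{equation*}
If $\E$ and $\E'$ are in $F\text{-}D^{\mr{b}}_{\leq w}$ and $F\text{-}D^{\mr{b}}_{\leq w'}$ respectively, (iv) places $\DD_Y\E$, $\DD_Y\E'$ in $F\text{-}D^{\mr{b}}_{\geq -w}$ and $F\text{-}D^{\mr{b}}_{\geq -w'}$; Proposition \ref{weightotimes} then puts their twisted tensor product in $F\text{-}D^{\mr{b}}_{\geq -(w+w')}$; applying (iv) a second time yields the desired bound $\leq w+w'$. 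Since every step preserves $\iota$-mixedness (Theorem \ref{3.3.1WeilII}(iii) guarantees that the target of each functor lands in the mixed category, so the weight inequalities suffice to conclude), this finishes the proof.

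Because each ingredient is already in hand, there is no genuine obstacle; the only point to watch is that Theorem \ref{3.3.1WeilII} already incorporates $\iota$-mixedness into the defining data of $F\text{-}D^{\mr{b}}_{\geq w}$ (cf.\ Definition \ref{mixedness-ovhol} and Proposition \ref{defi-mixXY}), so the bookkeeping in the duality chain above automatically delivers mixedness together with the weight bound.
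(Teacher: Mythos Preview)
Your proposal is correct and matches the paper's approach: the paper in fact gives no separate proof of this statement, presenting it under the heading ``Let us sum up the results'' as an immediate consequence of Theorem~\ref{3.3.1WeilII}, Proposition~\ref{pullbackstability}, and Proposition~\ref{weightotimes}. Your write-up simply makes explicit the formal duality bookkeeping (via $f_!=\DD_Y f_+\DD_X$, $f^+=\DD_X f^!\DD_Y$, and the definition of $\otimes_Y$) that the paper leaves to the reader.
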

\end{empt}

\subsection{Purity results}
In this subsection, we prove the purity stability for intermediate
extension. With this purity, we complete the theory of weights in
$p$-adic cohomology theory. For the proof of Theorem
\ref{thm-purity-fourier}, we follow the idea of Kiehl-Weissauer
\cite{KW-Weilconjecture}.

\begin{lem}
\label{lem-purity-fourier}
 Let $X$ be a realizable variety, and $\bigl\{X_t\bigr\}_{t\in I}$ be a
 set of divisors of $X$ such that $X_t\cap X_{t'}=\emptyset$ for any
 $t\neq t'$ in $I$. Let $i_t\colon X_t\hookrightarrow X$ be the closed
 immersion. Then for any overholonomic $F$-module $\E$ over $X$, the $F$-complex
 $i^!_t\E\,[1]$ is in fact a $F$-module (or equivalently $\HH^0i^!_t\E=0$)
 for any but finitely many $t\in I$.
\end{lem}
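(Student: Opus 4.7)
The plan is to reformulate the vanishing $\HH^0 i_t^!\E = 0$ as the statement that $\E$ has no nonzero sub-$F$-module supported on $X_t$, and then use the disjointness of the $X_t$ together with the noetherianity of $F\text{-}\mr{Ovhol}(X/K)$ to conclude that only finitely many such supports can occur.

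First, I would note that by Berthelot-Kashiwara (Proposition \ref{propofpullbacks}(iii)) the functor $i_{t+}$ is t-exact and fully faithful, so the condition $\HH^0 i_t^!\E = 0$ is equivalent to the vanishing of
\begin{equation*}
\E_t := \HH^0 \R\underline{\Gamma}^\dag_{X_t}\E = i_{t+}\HH^0 i_t^!\E.
\end{equation*}
To exhibit $\E_t$ as a sub-$F$-object of $\E$, I would consider the localization triangle
\begin{equation*}
\R\underline{\Gamma}^\dag_{X_t}\E \to \E \to (\hdag X_t)\E \xrightarrow{+1},
\end{equation*}
and observe that $(\hdag X_t) = j_+ \circ j^!$, where $j\colon (X\setminus X_t,X)\to(X,X)$, is left t-exact: the functor $j^!$ is t-exact by Proposition \ref{propofpullbacks}(i), since an open immersion is c-smooth of relative dimension $0$, and $j_+$ is left t-exact by Proposition \ref{exactnessforsomemorph}(ii), since an open immersion is c-quasi-finite. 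Hence $\HH^{-1}((\hdag X_t)\E) = 0$, which forces the natural map $\E_t\hookrightarrow\E$ to be a monomorphism.

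The sub-object $\E_t$ has support contained in $X_t$, so for $t \neq t'$ the intersection $\E_t \cap \E_{t'}$ taken inside $\E$ has support contained in $X_t \cap X_{t'} = \emptyset$ and therefore vanishes. Consequently, for every finite subset $J \subset I$ the sum $\sum_{t\in J} \E_t \subset \E$ is actually a direct sum $\bigoplus_{t\in J}\E_t$; as $J$ ranges over finite subsets of $I$, these form an increasing family of sub-$F$-objects of $\E$. Invoking the noetherianity of the category $F\text{-}\mr{Ovhol}(X/K)$ recalled in paragraph \ref{Frobformtstr}, this family must stabilize, and hence $\E_t = 0$ for all but finitely many $t\in I$. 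The only real content is the direct-sum observation enabled by the disjointness hypothesis; once that is in place, the conclusion is an essentially formal application of noetherianity, and I do not expect any serious obstacle beyond keeping track of Frobenius compatibilities, which are automatic since all functors involved come equipped with natural Frobenius structures.
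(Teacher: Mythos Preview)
Your proof is correct and follows essentially the same approach as the paper: identify $i_{t+}\HH^0 i_t^!\E$ as a sub-$F$-object of $\E$ supported on $X_t$, use the disjointness of the $X_t$ to see that distinct nonzero such subobjects are independent, and conclude by finiteness (the paper phrases this as ``finitely many constituents'' rather than noetherianity, but the content is the same). Your treatment is slightly more detailed in justifying the injection $\E_t\hookrightarrow\E$ via left t-exactness of $(\hdag X_t)$, whereas the paper simply asserts it; the paper, on the other hand, explicitly remarks at the end that the divisor hypothesis ensures $i_t^!\E$ lives in degrees $[0,1]$, so that $\HH^0 i_t^!\E=0$ really does force $i_t^!\E[1]$ to be a module.
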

\begin{proof}
 We have an injection $i_{t+}\HH^0(i^!_t\E)\hookrightarrow \E$. Since
 $i_{t+}\HH^0(i^!_t\E)$ is supported on $X_t$, different $t\in I$ give
 different $F$-submodules of $\E$. Since there are finitely many
 constituents of an overholonomic $F$-module, we conclude that
 $\HH^0(i^!_t\E)=0$ except for finitely many $t$ in $I$. In this case,
 since $X_t$ is assumed to be a divisor, $i^!_t\E$ is concentrated at
 degree $1$, and the lemma follows.
\end{proof}

\begin{thm}
 \label{thm-purity-fourier}
 Let $X$ be a realizable variety, and $i\colon Z\hookrightarrow X$ be a
 closed subvariety whose complement is denoted by $U$. Let $\E$
 be an overholonomic $F$-module on $X$ which is $\iota$-mixed  and such that
 $\HH^0 i^!\E=0$. If $\E\Vert_U$ is $\iota$-mixed of weight $\geq w$, so
 is $\E$.
\end{thm}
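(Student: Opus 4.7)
The plan is to adapt Kiehl--Weissauer's Fourier-theoretic proof of Gabber's purity theorem \cite[Ch.~III]{KW-Weilconjecture} to the present $p$-adic framework, using Noot-Huyghe's Fourier transform (paragraph \ref{Fourier}) together with the curve estimate Theorem \ref{thm-f+j!+pure}. The argument proceeds by induction on $\dim Z$; the base case $Z=\emptyset$ is the hypothesis.

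For the inductive step, I first reduce to the case where $X$ is smooth affine and $\E$ is irreducible. Locality on $X$ (Proposition \ref{lem-2.2.8}.\ref{lem-2.2.8(3)}) allows shrinking to an affine smooth neighborhood, and a composition-series d\'{e}vissage is harmless because, via Proposition \ref{BorelVII.10.5}, the hypothesis $\HH^0 i^!\E=0$ forbids irreducible constituents of $\E$ supported on $Z$. Theorem \ref{intemedstructhem} then presents $\E$ as the intermediate extension $u_{!+}(\E')$ of an irreducible overconvergent $F$-isocrystal $\E'$ on an open dense smooth subvariety of $X$. Combining Noether normalization with the stability of weights under $f_+$ from Theorem \ref{sumupWeilII}, I further reduce to $X=\mathbb{A}^n$.

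Next, I pick a linear projection $p\colon\mathbb{A}^n\to\mathbb{A}^{n-1}$ and invoke Lemma \ref{lem-purity-fourier} applied to the pencil of fibers of $p$: after a generic linear change of coordinates, all but finitely many fibers $X_t$ satisfy $\HH^0 i_{X_t}^!\E=0$ and $\dim(Z\cap X_t)<\dim Z$. Form the relative Fourier transform $\mathscr{F}_\pi(\E)$ along the fibers of $p$ (paragraph \ref{Fourier}); by the involutivity \ref{Fourier2}, the weight $\geq w$ property for $\E$ is equivalent (up to twist and shift) to an analogous bound on $\mathscr{F}_\pi(\E)$. At each closed point of the dual base, the stalk of $\mathscr{F}_\pi(\E)$ computes the rigid cohomology of the fiber $X_t$ with coefficients in $\E|_{X_t}$ twisted by an Artin--Schreier isocrystal (of weight $0$ by paragraph \ref{Fourier}). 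The induction hypothesis applied on $X_t$ supplies the weight bound for $\E|_{X_t}$, and Theorem \ref{thm-f+j!+pure} applied fiberwise then promotes this to the required weight bound for the fiber of $\mathscr{F}_\pi(\E)$, hence for $\E$ itself by involutivity.

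The main obstacle is the compatibility between the no-bad-subobject hypothesis $\HH^0 i^!\E=0$ and restriction to fibers $X_t$: one must arrange that $\E|_{X_t}$, viewed as a module on $X_t$ with bad locus $Z\cap X_t$, itself satisfies the inductive assumption, so that the hypothesis can be inherited to strictly lower-dimensional strata. Lemma \ref{lem-purity-fourier} is precisely the genericity tool that makes this possible. A second delicate point is verifying that the Fourier involution carries a fiberwise weight $\geq w$ estimate back to a weight $\geq w$ estimate on $\E$ at every closed point of $X$ (including those in $Z$) without incurring weight drops, which relies on the uniformity of the curve-case bound across the dual base and on the fact that $\mathscr{L}_\pi[1]$ has weight $0$.
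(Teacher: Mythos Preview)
There is a genuine gap. Your relative Fourier argument only controls the weight of $\mathscr{F}_\pi(\E)$ at points $(t,\xi)$ lying over \emph{generic} $t\in\mathbb{A}^{n-1}$: you need the induction hypothesis on the fiber $X_t$, which requires both $\dim(Z\cap X_t)<\dim Z$ and $\HH^0 i_{Z\cap X_t}^!(\E|_{X_t})=0$, and neither holds for all $t$. Over the bad $t$'s you have no weight control, so Fourier involutivity does not yield weight $\geq w$ at \emph{every} closed point. You are effectively assuming that a weight bound on a dense open propagates to the whole space---which is precisely the theorem you are proving. Two further issues compound this: the fibers of $p\colon\mathbb{A}^n\to\mathbb{A}^{n-1}$ are lines, not divisors, so Lemma~\ref{lem-purity-fourier} does not apply to them as stated; and the d\'evissage to irreducible $\E$ is unjustified, since $\HH^0 i^!\E=0$ is not inherited by subquotients.

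The paper avoids all this by separating the argument into two phases with a different induction (on $n$, not on $\dim Z$). First it slices by \emph{hyperplanes} via a projection $\mathbb{A}^N\to\mathbb{A}^1$; now the fibers are divisors, Lemma~\ref{lem-purity-fourier} applies (twice: once to $\E$, once to $\HH^1 i^!\E$), and induction on $n$ reduces to $\dim Z=0$. Only then does it invoke the \emph{absolute} Fourier transform on $\mathbb{A}^N$, where the key point---absent from your sketch---is a degree count: with $\dim Z=0$, the $Z$-supported term in the localization sequence for $H^m(\mathbb{A}_x^N,\E_x\widetilde{\otimes}\alpha_x^!\mathcal{J}_\pi)$ lives in degrees $\geq 1$, while the ambient cohomology (computing $\iota_x^! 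T_\pi(\E)$) lives in degrees $\leq 0$. This forces an injection into the cohomology of the open complement at \emph{every} closed point $x$ of the dual, not just generic ones, and Theorem~\ref{sumupWeilII} then gives the bound.
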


\begin{proof}
 Since the verification is local, we may assume that $X$ is affine. Take
 a closed embedding $X\hookrightarrow\mb{A} _k^n$ for some $n$. The
 assumption being stable under closed push-forward, we assume $X$ to be
 $\mb{A} _k^n$ from now on. Now, we will show the theorem using the
 induction on $n$. Assume that the theorem holds for $n<N$. 

 Let us, first, reduce to the case $\dim(Z)=0$. Choose a linear
 projection $X=\mb{A}_k^N\rightarrow T:=\mb{A} _k^1$. For $t\in|T|$, we
 denote $i_t\colon X_t:=X\times_T\{t\}\hookrightarrow X$,
 the induced closed immersion. If there is no risk of confusion, we
 denote by $i\colon Z_t:=Z\times_T\{t\}\hookrightarrow X _t$ and 
 $i_t\colon Z_t:=Z\times_T\{t\}\hookrightarrow Z$ the induced closed
 immersions. By Lemma \ref{lem-purity-fourier}, there exists a
 finite set $B\subset |T|$ of closed points  such that, for any $t \not
 \in B$, we have $\HH^0 i^!_t\E=0$. By using Lemma
 \ref{lem-purity-fourier} once again, by enlarging $B$ if necessary, we
 have $\HH^0i^!_t\,(\HH^{1}i^!\E)=0$ for any $t \not \in B$. Since the
 assumption shows that $\HH^{0}i^!\E=0$, we get $\HH^{1}(i^!_t\,i^!\E)=0$
 for any $t \not \in B$. Since $i^!_t\,i^! \riso i^!\,i^! _t$, this
 implies that $\HH^0 i^! \HH^1 i^!_t\E=0$ for any $t \not \in B$. 
 Since
 $\E\Vert_U$ is $\iota$-mixed of weight $\geq w$, then $\HH^1
 i_t^!(\E)\Vert_{U _t}$
 is $\iota$-mixed of weight $\geq w+1$, where $U _t
 :=U\times_T\{t\}$. Hence, by induction, we get that $\HH^1 i^!_t\E$  is
 $\iota$-mixed of weight $\geq w+1$, for any $t \not \in B$. By applying
 the same argument for the other projection $\mb{A}
 _k^N\rightarrow\mb{A} _k^1$, we can check there exists a closed
 subscheme $Z'\subset Z$ of dimension $0$ such that $\E$ is
 $\iota$-mixed of weight $\geq w$ on $X\setminus Z'$. By the
 transitivity and left exactness of $i^!$, we may replace $Z$ by $Z'$.

 Now, let us finish the proof by treating the case $\dim(Z)=0$. Since
 the weight do not change by extension of $K$ by Lemma
 \ref{finiteextmixok}, we may assume $\pi\in K$ to use geometric Fourier
 transform.
 We put $T _{\pi} (\E):= \ms{F}_\pi(\E) [2]$ and $\mc{J} _\pi := \mc{K}
 _\pi[2]$, which makes the description slightly simpler because the
 weight of $\mc{J}_\pi$ is $0$. Let $x\in|\mb{A} _k^N|$, and $k (x)$ be
 its residue field, $\iota_x\colon\Spec k(x)\hookrightarrow\mb{A}^N$,
 $\alpha _x\colon
 \mb{A} ^N_{x}:=\mb{A}^{N}\times_{\Spec k}\Spec
 k(x)\hookrightarrow\mb{A}^{2N}$, $i _x \colon Z _x :=Z \times_{\Spec
 k}\Spec k(x)\hookrightarrow  \mb{A} ^N_{x}$ be the closed immersions,
 and $\E _{x}:= \alpha _x ^{!} \circ p _{1} ^{!} (\E)$
 and $\rho _{x}\colon \Spec k(x) \to \Spec k$ the induced morphism.
 Since $p _{1}  \circ \alpha _x $ is finite and \'{e}tale, 
 $\HH^0 i _x^!\E _x=0$ and $\E _x$ is $\iota$-mixed of weight $\geq
 w$ outside $Z _x$. For a realizable variety $f\colon
 Y\rightarrow\mr{Spec}(k)$ and an overholonomic $F$-complex $\FF$ over
 $Y$, we denote $\HH^m(f_+(\FF))$ by $H^m(Y,\FF)$.
 By \ref{otimes-comm-u!} and the base change,
 we get:
 \begin{align}
  \label{thm-purity-fourier-iso1}
  \notag
   H^m\bigl(\mb{A} _{x}^N , \E _x \widetilde{\otimes}
   \alpha _x ^! \mc{J}_{\pi}\bigr) 
   &\riso 
   H^m\bigl(\mb{A} _{x}^N , \alpha _x ^! ( p _{1} ^{!}(\E)
   \widetilde{\otimes} \mc{J}_{\pi}) \bigr)\\
   &\riso
   \HH^m\bigl(\rho _{x,+} \circ \iota _x ^! \circ  p _{2+}( p _{1}
  ^{!}(\E) \widetilde{\otimes} \mc{J}_{\pi})\bigr)
  \cong
   \HH^m\bigl(\rho _{x,+}\circ \iota _x ^! (T _{\pi} (\E))\bigr).
 \end{align}
 Since $T _{\pi} (\E)[-N] $ is an $F$-module by
 \ref{Fourier}.\ref{Fourier1}, we get $ H^m\bigl(\mb{A} _{x}^N , \E _x
 \widetilde{\otimes} \alpha _x ^! \mc{J}_{\pi}) =0$ for any integer $m
 \not \in [-N,0]$ by \ref{thm-purity-fourier-iso1}. Moreover since $i
 ^{!} _{x} (\E _x \widetilde{\otimes} \alpha _x ^! \mc{J}_{\pi}) \riso i
 ^{!} _{x} (\E _x )\widetilde{\otimes} i ^{!} _{x}  \circ \alpha _x ^!
 \mc{J}_{\pi}$, $i ^{!} _{x}  \circ \alpha _x ^! \mc{J}_{\pi}$ is a
 $F$-module, and $\HH^0 i _x^!\E _x=0$, we get that $H^m\bigl(Z _x,i
 _x^!(\E _x\widetilde{\otimes} \alpha_x^! \mc{J}_{\pi})\bigr)=0$ for any
 integer $m \not \in [1,N]$. Now, the exact sequence
 \begin{equation*}
  \rightarrow H^m\bigl(Z _x,i _x^!(\E _x\widetilde{\otimes} \alpha_x^!
   \mc{J}_{\pi})\bigr)
   \rightarrow
   H^m\bigl(\mb{A} _{x}^N,\E_x \widetilde{\otimes} \alpha_x^!
   \mc{J}_{\pi}\bigr)
   \rightarrow
   H^m\bigl(\mb{A} _{x}^N\setminus Z _x,\E _x \widetilde{\otimes}
   \alpha_x^! \mc{J}_{\pi}\bigr)
   \rightarrow
 \end{equation*}
 implies that
 \begin{equation*}
  H^m\bigl(\mb{A} _{x}^N,\E _x\widetilde{\otimes} \alpha _x
   ^!\mc{J}_{\pi}\bigr)
   \begin{cases}
    \cong H^m\bigl(\mb{A} _{x}^N\setminus Z _x,\E_x\widetilde{\otimes}
    \alpha _x^! \mc{J}_{\pi}\bigr)
    &\mbox{for $-N\leq m<0$}\\
    \hookrightarrow H^{0}\bigl(\mb{A} _{x} ^N \setminus Z
    _x,\E_x\widetilde{\otimes}
    \alpha _x ^!\mc{J}_{\pi}\bigr)
    &\mbox{for $m=0$}\\
   = 0&\mbox{otherwise}.
   \end{cases}
 \end{equation*}
 By Theorem \ref{sumupWeilII}, $H^m\bigl(\mb{A} _{x}^N\setminus Z _x,\E
 _x\widetilde{\otimes}
 \alpha _x^!\mc{J}_{\pi}\bigr)$ is $\iota$-mixed of weight $\geq w+m$ (recall
 that $\mc{J}_\pi$ is $\iota$-pure of weight $0$), and thus from
 \ref{thm-purity-fourier-iso1}, $\iota _x ^! (T _{\pi} (\E))$ is of
 weight $\geq w$.
 Hence, $T _{\pi} (\E)$ is $\iota$-mixed of weight $\geq w$. By Theorem
 \ref{sumupWeilII} and \ref{Fourier}.\ref{Fourier2}, we note that $\FF\in
 D^{\mr{b}}_{\geq w}(\mb{A}^N)$ if and only if $T _{\pi} (\FF)\in
 D^{\mr{b}}_{\geq w}(\mb{A}^N)$, and the theorem follows.
\end{proof}

\begin{thm}
 \label{sqpropcompl}
 Let $X$ be a realizable variety, $\E \in F\text{-}D
 ^{\mathrm{b}}_{\mathrm{ovhol}}(X/K)$, and
 $\star\in\bigl\{\emptyset,\leq w,\geq w\bigr\}$. If $\E$ is
 $\iota$-mixed of weight $\star$, then $\E$ satisfies the condition
 $\SQ{\star}$.
\end{thm}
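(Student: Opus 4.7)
The case $\star = \emptyset$ is Theorem \ref{3.3.1WeilII}(iii). For $\star = \leq w$, the t-exact functor $\DD_X$ (Proposition \ref{exactnessofdual}) exchanges weights $\geq w$ and $\leq -w$ by Theorem \ref{sumupWeilII}(iv), intertwines $\HH^i$ with $\DD_X \circ \HH^{-i}$, and interchanges sub- and quotient modules, reducing that case to $\star = \geq w$. So I focus on: for $\E \in F\text{-}D^{\mathrm{b}}_{\geq w}(X/K)$, every subquotient of $\HH^i(\E)$ is $\iota$-mixed of weight $\geq w + i$. Theorem \ref{3.3.1WeilII}(iii) already yields $\iota$-mixedness, so only the weight bound remains.

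\textbf{Step 1: Each cohomology $\HH^i(\E)$ is of weight $\geq w + i$.} Pick a smooth stratification $\{Y_j\}$ of $X$ (with $d_j := \dim Y_j$) witnessing the $\iota$-mixedness of $\E$; then $\HH^q(\E)\Vert_{Y_j} \cong \HH^q(\E\Vert_{Y_j})$ is an overconvergent $F$-isocrystal on $Y_j$, since restriction along a c-open immersion is t-exact by Proposition \ref{propofpullbacks}(i). For a closed point $x \in Y_j$, the isomorphism \ref{!et*-isoc} applied on $Y_j$ and the transitivity of $i_x^!$ force $i_x^!\HH^q(\E)$ to be concentrated in cohomological degree $d_j$. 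The hypercohomology spectral sequence associated to the Postnikov tower of $\E$,
\begin{equation*}
E_2^{p,q} = \HH^p\bigl(i_x^!\HH^q(\E)\bigr) \Longrightarrow \HH^{p+q}(i_x^!\E),
\end{equation*}
is therefore supported on the single row $p = d_j$ and degenerates, giving $\HH^{d_j}\bigl(i_x^!\HH^q(\E)\bigr) \cong \HH^{q+d_j}(i_x^!\E)$. The right-hand side is of weight $\geq w + q + d_j$ by hypothesis, which is exactly the condition for $\HH^q(\E)$ to be of module weight $\geq w + q$ at $x$.

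\textbf{Step 2: Subquotients of a module of weight $\geq w$ remain of weight $\geq w$.} I induct on $\dim X$, the base $\dim X = 0$ being immediate. Let $\FF \in F\text{-}\mathrm{Ovhol}(X/K)$ be of weight $\geq w$ and $\FF' \hookrightarrow \FF$ a submodule. Choose a smooth dense open $U \subset \mathrm{supp}(\FF)$ over which both $\FF|_U$ and $\FF'|_U$ are overconvergent $F$-isocrystals, and let $i\colon Z \hookrightarrow X$ denote the closed complement of $U$ in $\overline{\mathrm{supp}(\FF)}$, a realizable variety of strictly smaller dimension than $X$. Let $\FF_0^{\prime\prime} \subset \FF'$ be the maximal submodule supported on $Z$; via Berthelot-Kashiwara it corresponds to a submodule of $\HH^0(i^!\FF)$ on $Z$. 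By Theorem \ref{sumupWeilII}(ii), $i^!\FF$ is of weight $\geq w$ on $Z$, so the induction hypothesis applied to $Z$ gives that $\FF_0^{\prime\prime}$ is of weight $\geq w$. By construction, $\HH^0 i^!(\FF'/\FF_0^{\prime\prime}) = 0$, while $(\FF'/\FF_0^{\prime\prime})|_U \cong \FF'|_U \hookrightarrow \FF|_U$ is of weight $\geq w$ by Corollary \ref{sub-quot-mixed} (after accounting for the shift between isocrystal and module weight conventions via \ref{!et*-isoc}), so Theorem \ref{thm-purity-fourier} yields that $\FF'/\FF_0^{\prime\prime}$ is of weight $\geq w$. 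The exact sequence $0 \to \FF_0^{\prime\prime} \to \FF' \to \FF'/\FF_0^{\prime\prime} \to 0$ together with Corollary \ref{sub-quot-mixed}(ii) then gives $\FF'$ of weight $\geq w$. For a quotient $\FF \twoheadrightarrow \FF''$, the kernel is a submodule of $\FF$, hence of weight $\geq w$ by the case just treated, and the long exact sequence in $i_x^!$-cohomology yields $\FF''$ of weight $\geq w$.

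Combining Steps 1 and 2 proves the theorem in the $\star = \geq w$ case, and thus in all cases. The main technical subtlety is the d\'{e}vissage in Step 2: one must verify that $\FF_0^{\prime\prime}$ truly exhausts the $Z$-supported part of $\FF'$ so that $\HH^0 i^!(\FF'/\FF_0^{\prime\prime}) = 0$, enabling the invocation of Theorem \ref{thm-purity-fourier}, and carefully track the dimension-shift relating isocrystal weights (Definition \ref{def-mixt}) to module weights (Definition \ref{mixedness-ovhol}) on the open stratum $U$.
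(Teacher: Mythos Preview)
Your Step 2 is sound and is a clean alternative to the paper's $j_{!+}$-based argument. Step 1, however, has a real gap. You assert $\HH^q(\E)\Vert_{Y_j}\cong\HH^q(\E\Vert_{Y_j})$ on the grounds that ``restriction along a c-open immersion is t-exact'', but the inclusion $Y_j\hookrightarrow X$ is an open immersion only for the top stratum $j=1$; for $j>1$ it is merely locally closed, and $(\cdot)\Vert_{Y_j}$ is only left t-exact (Proposition \ref{propofpullbacks}(ii)). Hence for a closed point $x$ lying in a deeper stratum there is no reason for $i_x^!\HH^q(\E)$ to be concentrated in a single degree independent of $q$, and your spectral sequence need not collapse. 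Without degeneration, the hypothesis on $\HH^{p+q}(i_x^!\E)$ controls only $E_\infty^{p,q}$, a subquotient of $E_2^{p,q}$, and says nothing about $E_2^{p,q}=\HH^p\bigl(i_x^!\HH^q(\E)\bigr)$ itself.

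The gap is repairable. Run Steps 1 and 2 as a simultaneous induction on $\dim X$, proving Step 2 first in each dimension: your argument for Step 2 uses only the full induction hypothesis on the lower-dimensional $Z$ and the elementary isocrystal subquotient lemma on the open stratum $U$, never Step 1 on $X$. Then deduce Step 1 on $X$ from the localization triangle $i_+i^!\E\to\E\to j_+j^!\E\xrightarrow{+}$: its long exact sequence exhibits $\HH^q(\E)$ as an extension of a submodule of $j_+\HH^q(j^!\E)$ by a quotient of $i_+\HH^q(i^!\E)$; the latter is of weight $\geq w+q$ by induction on $Z$, the former because your degeneration argument \emph{is} valid on the open $U$ and $j_+$ preserves $\geq w$ (Lemma \ref{wj+}), and Step 2 on $X$ then handles the sub and the quotient. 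The paper avoids Step 1 altogether: its d\'evissage reduces directly to modules $j_+\E$ with $\E$ an irreducible pure isocrystal on $U$, and analyses those via the exact sequence $0\to j_{!+}\E\to j_+\E\to\FF\to 0$, Theorem \ref{thm-purity-fourier}, and the induction hypothesis applied to $\FF$ (supported on $Z$).
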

\begin{proof}
 By Remark \ref{rem-m-geqw} (i) and duality, it is sufficient to
 consider the case where $\star$ is $\geq w$.
 We prove the theorem by induction on the dimension of $X$. The case
 where $\dim(X) =0$ is obvious. If we have a distinguished triangle
 $\E'\rightarrow\E\rightarrow\E''\xrightarrow{+}$ in $F\text{-}D
 ^{\mathrm{b}}_{\mathrm{ovhol}}(X/K)$ such that $\SQ{\geq w}$ holds for
 $\E'$ and $\E''$, then $\E$ satisfies $\SQ{\geq w}$ as well. 
 Hence, by d\'{e}vissage, it suffices to show that for any smooth
 irreducible open subscheme $j\colon U\hookrightarrow X$ such that $j$
 is affine and for any irreducible overconvergent $F$-isocrystal $\E$ on
 $U$ which is $\iota$-pure of weight $w$, the $F$-module $j_+\E$
 satisfies $\SQ{\geq w}$.
 For this, let $\FF$ be an overholonomic $F$-module on $X$ defined by
 the exact sequence $0\rightarrow j_{!+}\E\rightarrow
 j_{+}\E\rightarrow\mc{F}\rightarrow0$. Let $Z:=X\setminus U$.
 By Theorem \ref{sumupWeilII}, the $F$-complex $\R \underline{\Gamma} ^\dag
 _{Z} (j _{!} (\E))$ is $\iota$-mixed.
Since 
$\R \underline{\Gamma} ^\dag _{Z} (j _{+} (\E))\bigr)=0$, one gets
 $\FF \riso \HH ^{1} \bigl(\R \underline{\Gamma} ^\dag _{Z} (j
 _{!} (\E))\bigr)$, and the induction hypothesis shows that this
 satisfies the $\SQ{}$ property and {\it a fortiori} is $\iota$-mixed.
 Hence  $ j_{!+}\E$ is $\iota$-mixed. From Theorem \ref{thm-purity-fourier}, 
 By Corollary \ref{i!+bidual} and Proposition \ref{BorelVII.10.5}, we
 get that $j_{!+}\E$ is irreducible and $\iota$-pure of weight $w$, and
 hence satisfies $\SQ{\geq w}$.
 Since  $\FF \riso \HH ^{1}\bigl(\R \underline{\Gamma} ^\dag
 _{Z} (j _{!+} (\E))\bigr)$, $\FF$ is $\iota$-mixed of weight $\geq
 w+1$. Thus, by induction hypothesis, $\FF$ satisfies $\SQ{\geq w+1}$
 and {\it a fortiori} $\SQ{\geq w}$. Thus the theorem follows.
\end{proof}

We see easily that the theorem leads to:

\begin{cor}
 \label{purityinterme}
 Let $j \colon U \hookrightarrow X$ be an open immersion of realizable
 varieties. The intermediate extension $j_{!+}$ sends an $\iota$-pure
 $F$-module on $U$ of weight $w$ to an $\iota$-pure $F$-module on $X$ of
 weight $w$.
\end{cor}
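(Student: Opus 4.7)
The plan is to deduce this directly from the two previous results, Theorem \ref{sumupWeilII} and Theorem \ref{sqpropcompl}. Let $\E$ be an $\iota$-pure $F$-module on $U$ of weight $w$, considered as an object of $F\text{-}\mathrm{Ovhol}(U/K)$ sitting in degree zero. By definition
\begin{equation*}
 j_{!+}(\E) = \mathrm{Im}\bigl(j_{!}^{0}(\E)\to j_{+}^{0}(\E)\bigr)
 = \mathrm{Im}\bigl(\HH^{0}j_{!}(\E)\to \HH^{0}j_{+}(\E)\bigr),
\end{equation*}
so it is simultaneously a quotient of $\HH^{0}j_{!}(\E)$ and a subobject of $\HH^{0}j_{+}(\E)$.

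First I would invoke Theorem \ref{sumupWeilII} to get that $j_{+}(\E)\in F\text{-}D^{\mathrm{b}}_{\geq w}(X/K)$ and that $j_{!}(\E)\in F\text{-}D^{\mathrm{b}}_{\leq w}(X/K)$ (using purity of $\E$ on both sides). Next I would apply Theorem \ref{sqpropcompl}: the $\SQ{\geq w}$ property for $j_{+}(\E)$ says that every subquotient of $\HH^{0}j_{+}(\E)$ is $\iota$-mixed of weight $\geq w$, and dually $\SQ{\leq w}$ for $j_{!}(\E)$ says that every subquotient of $\HH^{0}j_{!}(\E)$ is $\iota$-mixed of weight $\leq w$.

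Applying both observations to $j_{!+}(\E)$ — once viewing it as a subobject of $\HH^{0}j_{+}(\E)$ to bound the weights from below, and once as a quotient of $\HH^{0}j_{!}(\E)$ to bound them from above — we conclude that $j_{!+}(\E)$ is simultaneously $\iota$-mixed of weight $\geq w$ and of weight $\leq w$, that is, $\iota$-pure of weight $w$ as required.

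Since the entire argument is just a combination of already proven statements, there is no substantive obstacle; the only slight subtlety to double-check is that the weight-bound half of Theorem \ref{sqpropcompl} (the ``$\leq w$'' case), which is stated in the excerpt as obtained ``by Remark \ref{rem-m-geqw}(i) and duality'' from the $\geq w$ statement, applies in the required form, but this is immediate because $\DD_{X}$ exchanges mixed $F$-modules of weight $\leq w$ and $\geq -w$ (Theorem \ref{sumupWeilII}) and commutes with taking subquotients on the abelian category $F\text{-}\mathrm{Ovhol}(X/K)$.
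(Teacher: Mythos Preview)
Your argument is correct and is exactly the intended one: the paper does not spell out any proof beyond ``We see easily that the theorem leads to'', and your unpacking via $j_{!+}(\E)\subset \HH^{0}j_{+}(\E)$ together with $j_{!+}(\E)\twoheadleftarrow \HH^{0}j_{!}(\E)$, then applying $\SQ{\geq w}$ and $\SQ{\leq w}$ from Theorem~\ref{sqpropcompl} (using Theorem~\ref{sumupWeilII} for the weight bounds on $j_{+}(\E)$ and $j_{!}(\E)$), is precisely how the corollary follows.
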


\subsection{Applications}
This subsection is devoted to exhibit a few applications of the
theory of weights. Let $*$ be either $\D^\dag_{X,\Q}$ or
$F\text{-}\D^\dag_{X,\Q}$. We denote by $\mr{Hom}_*$ and
$\mr{Ext}^1_{*}$ the Hom group and Yoneda's Ext group of the abelian
category of overholonomic $*$-modules respectively.

\begin{thm}[Semi-simplicity of pure $F$-modules]
 Let $X$ be a realizable variety, and $\E$ be an $\iota$-pure $F$-module
 in $F\text{-}\mr{Ovhol}(X/K)$. Then $\E$ is semi-simple in
 $\mr{Ovhol}(X/K)$ (not in $F\text{-}\mr{Ovhol}(X/K)$).
\end{thm}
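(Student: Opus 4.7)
The plan is to proceed by induction on the length $n$ of $\E$ in the abelian category $\mathrm{Ovhol}(X/K)$, the case $n=1$ being trivial. For the inductive step, let $S := \mathrm{soc}_{\mathrm{Ovhol}(X/K)}(\E)$ denote the largest semi-simple sub-$\D^\dag_{X,\Q}$-module of $\E$. I first show that $S$ is naturally an $F$-submodule of $\E$: the Frobenius pullback $F^*$ is an exact endofunctor of $\mathrm{Ovhol}(X/K)$ preserving simple objects and lengths (by Frobenius descent for $\D^\dag$-modules), so $F^*S$ is semi-simple of the same length as $S$, and using the isomorphism $\phi_\E\colon F^*\E \xrightarrow{\sim} \E$, one checks that $\tau(S) := \phi_\E(F^*S)$ is a semi-simple sub-$\D$-module of $\E$ of the same length as $S$, hence by the maximality of $S$ must equal $S$. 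Thus $\tau(S) = S$ and $S$ inherits an $F$-structure.

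If $S = \E$ the proof is complete; otherwise $\E/S$ is a non-zero $F$-module, still $\iota$-pure of weight $w$ by Theorem \ref{sqpropcompl}, of smaller $\D$-length than $\E$, and hence semi-simple as $\D$-module by the inductive hypothesis. It suffices to show that the exact sequence $0 \to S \to \E \to \E/S \to 0$ of $F$-modules splits after forgetting the Frobenius structure: for then $\E \cong S \oplus (\E/S)$ would be semi-simple as $\D$-module, contradicting the maximality of $S$ as socle unless $S = \E$. The $F$-equivariant extension class in $\mathrm{Ext}^1_{F\text{-}\D^\dag_{X,\Q}}(\E/S,S)$ maps under the forgetful functor to a class in $\mathrm{Ext}^1_{\D^\dag_{X,\Q}}(\E/S,S)$, and by the standard long exact sequence comparing $F$-equivariant and ordinary Ext groups, this image lies in the Frobenius-invariants $\mathrm{Ext}^1_{\D^\dag_{X,\Q}}(\E/S,S)^{\Phi=1}$.

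The key estimate is the vanishing of this invariant subspace via a weight argument. Writing $R\mathrm{Hom}_{\D^\dag_{X,\Q}}(\E/S,S)$ (up to a cohomological shift) as $a_+\bigl(\DD_X(\E/S) \widetilde{\otimes}_X S\bigr)$ where $a\colon X \to \mathrm{Spec}(k)$ is the structural morphism, Theorem \ref{sumupWeilII} implies $\DD_X(\E/S)$ is $\iota$-pure of weight $-w$, so $\DD_X(\E/S) \widetilde{\otimes}_X S$ is $\iota$-pure of weight $0$ on $X$; then $a_+$ of this is $\iota$-mixed of weight $\geq 0$ on the point. By Theorem \ref{sqpropcompl} the $\mathcal{H}^1$ of this pushforward, which identifies with $\mathrm{Ext}^1_{\D^\dag_{X,\Q}}(\E/S,S)$, has all Frobenius eigenvalues of $\iota$-weight $\geq 1$; in particular, no eigenvalue equals $1$, so $\mathrm{Ext}^1_{\D^\dag_{X,\Q}}(\E/S,S)^{\Phi=1} = 0$, the $\D$-extension splits, and the induction closes. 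The principal technical obstacle will be establishing the precise identification of $R\mathrm{Hom}$ with the prescribed six-functor composition and carefully tracking shifts and Tate twists so that the weight bound on $\mathcal{H}^1$ comes out strictly positive; the $F$-stability of the socle, while essentially formal, also requires verifying that $F^*$ preserves lengths of overholonomic $\D$-modules.
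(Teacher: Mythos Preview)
Your proof is correct and follows essentially the same approach as the paper's: take the socle $S=\FF$ of $\E$ in $\mr{Ovhol}(X/K)$, observe it is $F$-stable, and use Proposition~\ref{AppendixB3} together with the weight estimates of Theorems~\ref{sumupWeilII} and~\ref{sqpropcompl} to show that $\mr{Ext}^1_{\D^\dag_{X,\Q}}(\E/S,S)^{F}=0$, forcing the extension to split as $\D$-modules and contradicting maximality of $S$. Your induction on length is harmless but unnecessary---once the extension splits, any nonzero $\E/S$ would enlarge the socle regardless of whether $\E/S$ is already known to be semi-simple---and the paper's proof proceeds without it.
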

\begin{proof}
 Let $\FF\subset \E$ be the maximal semi-simple submodule
 in $\mr{Ovhol}(X/K)$. Then we see that $\FF$ is stable under the
 Frobenius structure of $\E$, and the inclusion $\FF\subset\E$ is in
 fact in $F\text{-}\mr{Ovhol}(X/K)$. By Theorem \ref{sqpropcompl}, $\FF$
 and $\E/\FF$ are $\iota$-pure as well. This gives us
 $[\E]\in\mr{Ext}^1_{F\text{-}\DdagQ{X}}(\E/\FF,\FF)$. Now, by using
 Proposition \ref{AppendixB3}, Theorem \ref{sumupWeilII},
 and Remark \ref{Extiscoh}, we have
 $\mr{Ext}^1_{\DdagQ{X}}(\E/\FF,\FF)^F=0$. Since the canonical
 homomorphism
 $\alpha\colon\mr{Ext}^1_{F\text{-}\DdagQ{X}}(\E/\FF,\FF)
 \rightarrow\mr{Ext}^1_{\DdagQ{X}}(\E/\FF,\FF)$ factors through
 $\mr{Ext}^1_{\DdagQ{X}}(\E/\FF,\FF)^F\rightarrow
 \mr{Ext}^1_{\DdagQ{X}}(\E/\FF,\FF)$, we obtain $\alpha([\E])=0$, which
 concludes the proof.
\end{proof}

\begin{lem}
 \label{basiclemonExt}
 Let $X$ be a realizable variety, and $\E,\FF$ be objects in
 $F\text{-}\mr{Ovhol}(X/K)$. Then, we have the following short exact
 sequence:
 \begin{equation*}
  0\rightarrow\mr{Hom}_{\DdagQ{X}}(\E,\FF)_F\rightarrow
   \mr{Ext}^1_{F\text{-}\DdagQ{X}}(\E,\FF)\rightarrow
   \mr{Ext}^1_{\DdagQ{X}}(\E,\FF)^F\rightarrow0.
 \end{equation*}
\end{lem}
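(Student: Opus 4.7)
The plan is to read the sequence as the Hochschild--Serre-type calculation associated with the ``action of Frobenius'' on Ext-groups. For any pair of $F$-objects $\E,\FF$, the Frobenius pull-back functor $F^*$ together with the structure isomorphisms $\phi_\E,\phi_\FF$ endows the $K$-vector space $\mr{Ext}^i_{\DdagQ{X}}(\E,\FF)$ with an endomorphism $F\colon[\xi]\mapsto \phi_\FF\circ F^*[\xi]\circ\phi_\E^{-1}$; for $i=0$ one has tautologically $\mr{Hom}_{F\text{-}\DdagQ{X}}(\E,\FF)=\mr{Hom}_{\DdagQ{X}}(\E,\FF)^F$, and the claim is that for $i=1$ the analogous equality becomes a short exact sequence whose defect is measured by the coinvariants of the Hom-group.

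I would first construct the two maps. The right-hand arrow is simply the forgetful functor from $F$-extensions to plain extensions of $\DdagQ{X}$-modules; it lands in the $F$-fixed part because the Frobenius structure on the middle term of an $F$-extension is by definition an isomorphism of plain extensions between it and its Frobenius pull-back. For the left-hand arrow, given $h\in\mr{Hom}_{\DdagQ{X}}(\E,\FF)$ I would equip the split plain extension $\G_h:=\FF\oplus\E$ with the Frobenius structure
\[
\phi_h:=\begin{pmatrix}\phi_\FF & h\circ\phi_\E\\ 0 & \phi_\E\end{pmatrix}\colon F^*\G_h\riso\G_h,
\]
and take the class of this $F$-extension. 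A direct block-matrix calculation of conjugation by $\left(\begin{smallmatrix}1 & g\\ 0 & 1\end{smallmatrix}\right)$ for $g\in\mr{Hom}_{\DdagQ{X}}(\E,\FF)$ shows that the resulting Frobenius structure is $\phi_{h+(1-F)(g)}$; in particular the assignment $h\mapsto[\G_h,\phi_h]$ descends to the coinvariants $\mr{Hom}_{\DdagQ{X}}(\E,\FF)_F$.

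With both maps constructed, I would verify exactness at each spot. Surjectivity of the right map: an $F$-invariant plain extension admits, by definition of $F$-invariance, an isomorphism of plain extensions to its Frobenius pull-back, which is precisely a compatible Frobenius structure on the middle term; hence it lifts to an $F$-extension. Vanishing of the composition is clear because the underlying plain extension of $[\G_h,\phi_h]$ is split. For the inclusion $\ker\supseteq\mr{im}$, any $F$-extension with split underlying plain extension decomposes as $\FF\oplus\E$ with some Frobenius structure, and compatibility with the inclusion of $\FF$ and the projection to $\E$ forces that structure to be block-triangular of the form $\phi_h$ for some $h$. Finally, injectivity of the left map on the coinvariants is exactly the content of the conjugation computation above.

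The main (though essentially formal) obstacle will be keeping the signs and the direction of the $F$-action straight in the torsor computation, namely verifying that two Frobenius structures $\phi_h,\phi_{h'}$ on $\FF\oplus\E$ give isomorphic $F$-extensions precisely when $h-h'\in(1-F)\mr{Hom}_{\DdagQ{X}}(\E,\FF)$; everything else is a matter of unwinding the definitions of the four groups in the sequence.
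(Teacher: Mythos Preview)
Your proposal is correct and follows essentially the same approach as the paper: the forgetful map to $F$-fixed plain extensions is shown surjective by reading the equality $F^*[\G]=[\G]$ as the existence of a compatible Frobenius on $\G$, and the kernel is identified with the coinvariants via the block-triangular torsor computation on the split extension $\FF\oplus\E$. The paper's version is terser—it writes out only the surjectivity argument and then declares the kernel computation ``standard,'' referring to the proof of \cite[5.1.2]{BBD}—but the content is the same as what you have spelled out.
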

\begin{proof}
 Let $0\rightarrow\FF\xrightarrow{\alpha}\mc{G}
 \xrightarrow{\beta}\E\rightarrow0$ be an
 extension of $\DdagQ{X}$-modules, and denote by
 $[\mc{G}]\in\mr{Ext}^1_{\DdagQ{X}}(\E,\FF)$ the class defined by
 the extension. Let $\phi_{\FF}\colon\FF\rightarrow\FF$ be the Frobenius
 structure of $\FF$, and the same for $\phi_{\E}$. Then the class
 $F^*[\mc{G}]$ can be written as
 \begin{equation*}
  0\rightarrow\FF\xrightarrow{\alpha\circ \phi_{\FF}}
   \mc{G}\xrightarrow{\phi_{\E}^{-1}\circ\beta}\E
   \rightarrow0.
 \end{equation*}
 The condition $F^*[\mc{G}]=[\mc{G}]$ means that there exists a dotted
 arrow making the diagram commutative:
 \begin{equation*}
  \xymatrix{
   0\ar[r]&\FF\ar[d]^\sim_{\phi_{\FF}}\ar[r]&\mc{G}\ar[r]\ar@{.>}[d]&
   \E\ar[r]\ar[d]^{\phi_\E}_\sim&0\\
   0\ar[r]&\FF\ar[r]&\mc{G}\ar[r]&\E\ar[r]&0.
   }
 \end{equation*}
 Thus the surjectivity of the homomorphism
 $\mr{Ext}^1_{F\text{-}\DdagQ{X}}(\E,\FF)\rightarrow
   \mr{Ext}^1_{\DdagQ{X}}(\E,\FF)^F$ follows. Now, assume that
 the image of $[(\mc{G},\phi_{\mc{G}})]$ is $0$. This means that
 $\mc{G}$ is split. Then the computation of the kernel is
 standard. (See \cite[proof of 5.1.2]{BBD} for example.)
\end{proof}

\begin{lem}
 \label{basicvanExt}
 Let $X$ be a realizable variety, and $\E,\FF$ be objects in
 $F\text{-}\mr{Ovhol}(X/K)$. Assume that $\E$, $\FF$ are irreducible
 $\iota$-pure $F$-modules such that $\mr{wt}(\FF)>\mr{wt}(\E)$. Then
 $\mr{Ext}^1_{F\text{-}\DdagQ{X}}(\E,\FF)=0$.
\end{lem}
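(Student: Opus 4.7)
The plan is to invoke Lemma \ref{basiclemonExt}, which produces the short exact sequence
\[
 0\to\mr{Hom}_{\DdagQ{X}}(\E,\FF)_F\to
 \mr{Ext}^1_{F\text{-}\DdagQ{X}}(\E,\FF)\to
 \mr{Ext}^1_{\DdagQ{X}}(\E,\FF)^F\to0,
\]
and thereby reduce the desired vanishing to the two separate statements
$\mr{Hom}_{\DdagQ{X}}(\E,\FF)_F = 0$ and $\mr{Ext}^1_{\DdagQ{X}}(\E,\FF)^F = 0$. For any finite-dimensional $K$-vector space $V$ equipped with an endomorphism $\varphi$, both $V^\varphi$ and $V_\varphi$ vanish provided $1$ is not an eigenvalue of $\varphi$, since then $\varphi-1$ is invertible. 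So it suffices to show that, on both the $\mr{Hom}$ and the $\mr{Ext}^1$, Frobenius has no eigenvalue of $\iota$-absolute value equal to~$1$.

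The strategy is to express these groups as cohomology of a direct image. Writing $f\colon X\to\Spec k$ for the structural morphism, the six-operation formalism developed in \S\ref{firstsection} furnishes a Frobenius-equivariant isomorphism
\[
 \R\mr{Hom}_{\DdagQ{X}}(\E,\FF)\;\cong\;f_+\bigl(\DD_X(\E)\,
 \widetilde\otimes\,\FF\bigr)[c]
\]
for a suitable integer shift $c$; this is the content of the kind of result alluded to as Proposition \ref{AppendixB3} in the previous lemma. Now Theorem \ref{sumupWeilII} asserts that $\DD_X$ exchanges $\iota$-pure weight $w_\E := \mr{wt}(\E)$ with weight $-w_\E$, that $\widetilde\otimes$ adds weights on pure objects, and that $f_+$ preserves the property of being $\iota$-mixed of weight $\geq w$. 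Consequently $f_+(\DD_X(\E)\widetilde\otimes\FF)$ is $\iota$-mixed of weight $\geq w_\FF-w_\E$, where $w_\FF:=\mr{wt}(\FF)$, and by the very definition of weights on complexes (Definition \ref{mixedness-ovhol}) each cohomology $\HH^i$ of this complex is $\iota$-mixed of weight $\geq w_\FF-w_\E+i$.

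Specialising this to the two cohomological degrees computing $\mr{Hom}$ and $\mr{Ext}^1$, the hypothesis $w_\FF>w_\E$ ensures that Frobenius acts on both finite-dimensional $K$-vector spaces with all eigenvalues of $\iota$-absolute value strictly greater than~$1$. In particular $1$ is not an eigenvalue of Frobenius on either space, so $\varphi-1$ is invertible there, and both the $F$-invariants and $F$-coinvariants vanish. Combining this with the first paragraph finishes the proof. The only non-routine point is the existence and Frobenius-equivariance of the identification $\R\mr{Hom}_{\DdagQ{X}}(\E,\FF)\cong f_+(\DD_X(\E)\widetilde\otimes\FF)[c]$; this however is standard, following from the adjoint pair $(f^+,f_+)$ of Lemma \ref{adju+u+couple} together with the basic compatibilities between $\DD$ and $\widetilde\otimes$ recalled throughout \S\ref{firstsection}.
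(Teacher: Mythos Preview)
Your proposal is correct and follows essentially the same approach as the paper: reduce via Lemma~\ref{basiclemonExt} to showing that Frobenius acting on $\mr{Hom}_{\DdagQ{X}}(\E,\FF)$ and $\mr{Ext}^1_{\DdagQ{X}}(\E,\FF)$ has no eigenvalue of $\iota$-absolute value $1$, then invoke Proposition~\ref{AppendixB3} and Theorem~\ref{sumupWeilII}. Two minor remarks: the shift $[c]$ is in fact zero (Proposition~\ref{AppendixB3} gives $a_+\shom_{\mb{Y}}(\E,\FF)\cong\R\mr{Hom}(\E,\FF)$ with no shift, and Remark~\ref{Extiscoh} identifies $\HH^0,\HH^1$ with $\mr{Hom},\mr{Ext}^1$), and your closing paragraph is unnecessary since Proposition~\ref{AppendixB3} is precisely the identification you need---it is not derived from the adjunction $(f^+,f_+)$ but from a direct computation at finite level.
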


\begin{proof}
 It suffices to show that the two outer modules appearing in the short
 exact sequence of Lemma \ref{basiclemonExt} vanish. By Proposition
 \ref{AppendixB3} and Theorem \ref{sumupWeilII}, we get the claim.
\end{proof}

\begin{thm}[Weight filtration]
 Let $X$  be a realizable variety, and $\E$ be an object of
 $F\text{-}\mr{Ovhol}(X/K)$ which is $\iota$-mixed. Then there exists a
 unique increasing filtration $W$ on $\E$ such that $\gr_i^W(\E)$ is
 purely of $\iota$-weight $i$. Any homomorphism of 
 $\iota$-mixed $F$-modules
 on $X$ is strictly compatible with the filtration.
\end{thm}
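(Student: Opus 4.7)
The plan is to construct $W$ explicitly and then verify all the required properties formally. Define $W_i\subset \E$ to be the maximal $F$-submodule of $\E$ that is $\iota$-mixed of weight $\leq i$. Such a maximal submodule exists because $F\text{-}\mr{Ovhol}(X/K)$ is noetherian and artinian (paragraph \ref{Frobformtstr}) and because the sum of two $F$-submodules mixed of weight $\leq i$ is again mixed of weight $\leq i$: the sum $\FF_1+\FF_2$ is a quotient of $\FF_1\oplus\FF_2$, whose constituents are among those of $\FF_1$ and $\FF_2$. Since $\E$ has finite length, $W$ has only finitely many distinct terms, is exhaustive, and is eventually zero.

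The heart of the argument is to show that $\gr^W_i(\E)=W_i/W_{i-1}$ is $\iota$-pure of weight $i$; it is automatically mixed of weight $\leq i$ as a quotient of $W_i$. First, any simple subobject $\mc{K}\subset\gr^W_i(\E)$ must be pure of weight $i$: otherwise, pulling back $\mc{K}$ to $W_i$ yields an extension of $\mc{K}$ by $W_{i-1}$ that is mixed of weight $\leq i-1$, contradicting the maximality of $W_{i-1}$. Hence $\gr^W_i(\E)$ has no nonzero subobject of weight $<i$. To pass from this to purity of $\gr^W_i(\E)$ itself, I will establish an extended $\mr{Ext}^1$-vanishing: for $\iota$-pure $F$-modules $\mc{A},\mc{B}$ on $X$ with $\mr{wt}(\mc{B})>\mr{wt}(\mc{A})$,
\begin{equation*}
\mr{Ext}^1_{F\text{-}\DdagQ{X}}(\mc{A},\mc{B})=0.
\end{equation*}
This follows by double induction on the lengths of $\mc{A}$ and $\mc{B}$, applying the long exact $\mr{Ext}$-sequence to a short exact sequence picking off a simple submodule, with Lemma \ref{basicvanExt} as the base case. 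Given this, in a composition series $0=N_0\subset\cdots\subset N_r=\gr^W_i(\E)$ let $k$ be the smallest index with $N_k/N_{k-1}$ pure of weight $<i$; then $N_{k-1}$ is an iterated extension of simples pure of weight $i$ and is itself pure of weight $i$, so the extended vanishing forces $0\to N_{k-1}\to N_k\to N_k/N_{k-1}\to 0$ to split, producing a nonzero subobject of $\gr^W_i(\E)$ of weight $<i$, contradicting what we just proved. Thus $\gr^W_i(\E)$ is pure of weight $i$.

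For uniqueness, if $W'$ is any other weight filtration then $W'_i$ is an iterated extension of pure $\gr^{W'}_j$ with $j\leq i$, hence mixed of weight $\leq i$, so $W'_i\subset W_i$ by maximality; conversely the composition $W_i\hookrightarrow\E\twoheadrightarrow\E/W'_i$ vanishes, because $\E/W'_i$ is mixed of weight $>i$ and any morphism from an object of weight $\leq i$ to one of weight $>i$ has image with constituents in the empty intersection. For strict compatibility of a morphism $\phi\colon\E\to\FF$, the relation $\phi(W_i\E)=W_i\FF\cap\phi(\E)$ reduces to the identity $W_i(\phi(\E))=\phi(W_i\E)$: the inclusion $W_i\FF\cap\phi(\E)\subset W_i(\phi(\E))$ holds by maximality applied inside $\phi(\E)$, while $W_i(\phi(\E))\subset\phi(W_i\E)$ follows because $W_i(\phi(\E))/\phi(W_i\E)$ embeds in $\phi(\E)/\phi(W_i\E)$, itself a quotient of $\E/W_i\E$ and hence mixed of weight $>i$, whereas $W_i(\phi(\E))/\phi(W_i\E)$ is of weight $\leq i$. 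The main obstacle is the extended $\mr{Ext}^1$-vanishing; the rest of the proof is formal manipulation with the maximality property of $W_i$.
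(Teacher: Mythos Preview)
Your proof is correct. The paper defers to \cite[5.3.6]{BBD}, whose argument runs by induction on the length of $\E$: one peels off a simple constituent and uses the $\mr{Ext}^1$-vanishing of Lemma \ref{basicvanExt} to splice the inductively given weight filtration on the shorter object with the simple piece. Your route instead defines $W_i$ at once as the maximal submodule of weight $\leq i$ and uses the same $\mr{Ext}^1$-vanishing (extended by a length induction to non-simple pure objects) to force purity of the graded pieces. The two arguments are equivalent in substance, both resting entirely on Lemma \ref{basicvanExt}; yours is slightly more direct in that the filtration is given by a closed description rather than built inductively, while the BBD version has the minor advantage that the extension of the $\mr{Ext}^1$-vanishing beyond simple objects is absorbed into the main induction and need not be isolated as a separate step.
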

\begin{proof}
Using Lemma \ref{basicvanExt}, we can follow the proof of
 \cite[5.3.6]{BBD}.
\end{proof}

\begin{empt}
 Let $X$ be a realizable variety, and $\E, \FF \in F\text{-}D
 ^{\mathrm{b}}_{\mathrm{ovhol}}(X/K)$. By definition, we have the
 equality:
 \begin{equation*}
  \mathrm{Hom} _{F\text{-}D
   ^{\mathrm{b}}_{\mathrm{ovhol}}(X/K)} (\E, \FF)=
   \mathrm{Hom} _{D ^{\mathrm{b}}_{\mathrm{ovhol}}(X/K)} (\E,
   \FF) ^{F}.
 \end{equation*}
 By Theorem \ref{sumupWeilII} and Proposition \ref{AppendixB3}, we get
 that if $\E\in F\text{-}D ^{\mathrm{b}}_{\leq w }(X/K)$ and $\FF \in
 F\text{-}D ^{\mathrm{b}}_{> w }(X/K)$ then 
 \begin{equation}
  \label{AppendixB4}
   \mathrm{Hom} _{F\text{-}D
   ^{\mathrm{b}}_{\mathrm{ovhol}}(X/K)} (\E, \FF) =0.
 \end{equation}
\end{empt}

\begin{thm}[Semi-simplicity]
 \label{ssforcompl}
 Let $X$ be a realizable variety, and $\E$ be an $\iota$-pure overholonomic $F$-complex
 on $X$. Then $\E$ is isomorphic, in $D
 ^{\mathrm{b}}_{\mathrm{ovhol}}(X/K)$, to
 $\bigoplus_{n\in\mb{Z}}\HH^n(\E)[n]$.
\end{thm}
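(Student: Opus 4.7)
The plan is to prove the decomposition in the underlying triangulated category $D^{\mr{b}}_{\mr{ovhol}}(X/K)$ by induction on the amplitude $a(\E) := \#\{i\in\Z : \HH^i(\E)\neq 0\}$. Note crucially that the assertion lives in $D^{\mr{b}}_{\mr{ovhol}}(X/K)$, which is genuinely triangulated, and \emph{not} in $F\text{-}D^{\mr{b}}_{\mr{ovhol}}(X/K)$, which is only pre-triangulated (paragraph \ref{Frobformstr}). Consequently, no $F$-equivariant splitting will be needed. The base case $a(\E)\leq 1$ is trivial, as then $\E\cong\HH^n(\E)[-n]$ for the unique $n$ with $\HH^n(\E)\neq 0$.

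For the inductive step, set $n := \max\{i : \HH^i(\E)\neq 0\}$. Since $F^*$ is $t$-exact (paragraph \ref{Frobformtstr}), the truncation triangle
$$\tau_{\leq n-1}\E \longrightarrow \E \longrightarrow \HH^n(\E)[-n] \xrightarrow{\,d\,} \tau_{\leq n-1}\E[1]$$
lifts to a triangle in $F\text{-}D^{\mr{b}}_{\mr{ovhol}}(X/K)$, so that $d$ is an $F$-equivariant morphism. First I would check that both $\HH^n(\E)[-n]$ and $\tau_{\leq n-1}\E$ are $\iota$-pure of weight $w$. For $\HH^n(\E)[-n]$, Theorem \ref{sqpropcompl} forces $\HH^n(\E)$ to be a module pure of weight $w+n$, so that $\HH^n(\E)[-n]$ is pure of weight $w$ as a complex. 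For $\tau_{\leq n-1}\E$, I would proceed by descending d\'evissage along the triangles $\tau_{\leq k-1}\E \to \tau_{\leq k}\E \to \HH^k(\E)[-k]\xrightarrow{+1}$, using the stability of both $F\text{-}D^{\mr{b}}_{\leq w}$ and $F\text{-}D^{\mr{b}}_{\geq w}$ under extensions (Theorem \ref{sumupWeilII}). Hence $\tau_{\leq n-1}\E[1]$ is $\iota$-pure of weight $w+1$.

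By the vanishing \eqref{AppendixB4} applied to the strict inequality of weights $w < w+1$,
$$\mr{Hom}_{F\text{-}D^{\mr{b}}_{\mr{ovhol}}(X/K)}\!\bigl(\HH^n(\E)[-n],\,\tau_{\leq n-1}\E[1]\bigr)=0,$$
so $d$ vanishes as an $F$-morphism. Since the forgetful functor $\varrho\colon F\text{-}D^{\mr{b}}_{\mr{ovhol}}\to D^{\mr{b}}_{\mr{ovhol}}$ of paragraph \ref{Frobformstr} is faithful, we also have $d=0$ in $D^{\mr{b}}_{\mr{ovhol}}(X/K)$. The standard splitting criterion for a triangle with zero connecting map in a triangulated category then produces a (non-canonical) isomorphism $\E\cong\tau_{\leq n-1}\E\oplus\HH^n(\E)[-n]$ in $D^{\mr{b}}_{\mr{ovhol}}(X/K)$. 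Applying the induction hypothesis to the pure complex $\tau_{\leq n-1}\E$, whose amplitude is strictly smaller, yields $\tau_{\leq n-1}\E\cong\bigoplus_{i<n}\HH^i(\E)[-i]$ in $D^{\mr{b}}_{\mr{ovhol}}(X/K)$, completing the decomposition.

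The principal technical point is the verification that the intermediate truncation $\tau_{\leq n-1}\E$ is itself pure of weight $w$: this is where one must upgrade the pointwise-on-closed-points definition of purity from Definition \ref{mixedness-ovhol} to a property stable under extensions of complexes, which is precisely the content of the weight-stability theorem \ref{sumupWeilII}. Once purity of $\tau_{\leq n-1}\E$ is established, the rest of the proof is a formal application of \eqref{AppendixB4} and the elementary splitting lemma in triangulated categories; in particular, one avoids entirely the more delicate problem---irrelevant to the present statement---of promoting the splitting to an $F$-equivariant one.
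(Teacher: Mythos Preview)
Your proof is correct and follows exactly the induction-on-amplitude argument of \cite[5.4.5, 5.4.6]{BBD} that the paper cites, with the key vanishing supplied by \eqref{AppendixB4}. One minor correction: the stability of $F\text{-}D^{\mathrm{b}}_{\leq w}$ and $F\text{-}D^{\mathrm{b}}_{\geq w}$ under extensions that you invoke for the purity of $\tau_{\leq n-1}\E$ is Proposition~\ref{lem-2.2.8}.\ref{lem-2.2.3(4)}, not Theorem~\ref{sumupWeilII}.
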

\begin{proof}
Using \ref{AppendixB4}, the proof is essentially the same as that of
 \cite[5.4.5, 5.4.6]{BBD}.
\end{proof}

\begin{rem}
 For a scheme $X$ over a field, denote by $D^{\mr{b}}_{\mr{c}}(X)$ the
 bounded derived category of constructible
 $\overline{\mb{Q}}_{\ell}$-complexes where $\ell$ is a prime number
 different from $p$. Let $X$ be a variety over the
 finite field $k$. As pointed out in Remark \ref{prop-nota-Dsurhol}, the
 category $(F\text{-})D^{\mr{b}}_{\mr{ovhol}}(X/K)$ is an analogue of the
 category of Weil complexes
 $(F\text{-})D^{\mr{b}}_{\mr{c}}(X\otimes_k\overline{k})$, and should not
 be regarded as an analogue of the derived category
 $D^{\mr{b}}_{c}(X)$.
 In the $\ell$-adic setting \cite[5.4.5]{BBD}, the corresponding theorem
 to Theorem \ref{ssforcompl} is stated for complexes in
 $D^{\mr{b}}_{\mr{c}}(X)$, so there are slight differences in the
 formulation arising from the lack of the category
 corresponding to $D^{\mr{b}}_{\mr{c}}(X)$ in our theory. However, since
 there exists the factorization
 \begin{equation*}
  D^{\mr{b}}_{\mr{c}}(X)\rightarrow
   F\text{-}D^{\mr{b}}_{\mr{c}}(X\otimes_k\overline{k})\rightarrow
   D^{\mr{b}}_{\mr{c}}(X\otimes_k\overline{k}),
 \end{equation*}
 if the true derived category corresponding to $D^{\mr{b}}_{\mr{c}}(X)$
 in the $p$-adic cohomology theory is constructed, Theorem
 \ref{ssforcompl} should not lose any information.
\end{rem}

\newcommand{\cc}[1]{[\![#1]\!]}

\begin{empt}
 We conclude this paper by showing an application to $\ell$-independence
 results of $L$-functions. To do this, we need to introduce, for a
 realizable variety $X$ over a finite field $k$, the category
 $F\text{-}D^{\mr{b}}_{\mr{ovhol}}(X/\overline{\mb{Q}}_p)$. The
 construction is essentially the same as \cite[7.3]{AM}. Let $L$ be a
 finite extension of the maximal unramified extension $K^{\mr{ur}}$. An
 object of $F\text{-}D^{\mr{b}}_{\mr{ovhol}}(X/L)$ is a pair
 $(\E,\rho)$ where $\E\in
 F\text{-}D^{\mr{b}}_{\mr{ovhol}}(X/K^{\mr{ur}})$ and $\rho\colon
 L\rightarrow\mr{End}_{F\text{-}D^{\mr{b}}_{\mr{ovhol}}
 (X/K^{\mr{ur}})}(\E)$
 is a homomorphism of $K^{\mr{ur}}$-algebras. By taking the cohomology
 functor $\HH^*$, $\rho$ induces a homomorphism
 $\HH^*(\rho)\colon L\rightarrow\mr{End}_{F\text{-}\mr{Ovhol}
 (X/K^{\mr{ur}})}(\HH^*(\E))$. The ``heart'' is denoted by
 $F\text{-}\mr{Ovhol}(X/L)$.

 For finite extensions $L'/L/K^{\mr{ur}}$, there exists a scalar
 extension functor $F\text{-}D^{\mr{b}}_{\mr{ovhol}}(X/L)\rightarrow
 F\text{-}D^{\mr{b}}_{\mr{ovhol}}(X/L')$. This is ``t-exact'' in the
 obvious sense, and induces a functor
 $F\text{-}\mr{Ovhol}(X/L)\rightarrow F\text{-}\mr{Ovhol}(X/L')$. By
 taking the limit over finite extensions $L$ of $K^{\mr{ur}}$, we have
 the categories
 $F\text{-}D^{\mr{b}}_{\mr{ovhol}}(X/\overline{\mb{Q}}_p)$
 and
 $F\text{-}\mr{Ovhol}(X/\overline{\mb{Q}}_p)$. We notice that
 $F\text{-}\mr{Ovhol}(\mr{Spec}(k)/\overline{\mb{Q}}_p)$ is nothing but
 the category of pairs $(V,\phi)$ where $V$ is a finite dimensional
 $\overline{\mb{Q}}_p$-vector space, and $\phi$ is an automorphism of
 $V$.

 Since $\iota$-weights remain to be the same after scalar extension,
 Theorem \ref{sumupWeilII}, Theorem \ref{sqpropcompl}, Corollary
 \ref{purityinterme} remain to be true even after replacing $K$ by
 $\overline{\mb{Q}}_p$.
\end{empt}

\begin{rem*}
 Just to state Theorem \ref{FujGabbind}, we do not need to introduce
 this generalized category. However, in the proof, we need to extend the
 scalar to $\overline{\mb{Q}}_p$, in order to produce sufficiently many
 sheaves $\mc{L}_{\rho}$ using the notation of \cite[\S3]{Fujiwara-Gabber}.
\end{rem*}

\begin{empt}
 Let $X$ be a realizable variety over $k$. Let $\E$ be an object in
 $F\text{-}\mr{Ovhol}(X/\overline{\mb{Q}}_p)$. For a closed point $x$ of
 $X$, let $\rho_x\colon\mr{Spec}(k(x))\rightarrow\mr{Spec}(k)$ be
 the structural morphism. We define the {\em local $L$-factor at $x$} to
 be
 \begin{equation*}
  L_x(\E,t):=\det_{\overline{\mb{Q}}_p}\bigl
   (1-t^{\deg(x)}\cdot F;\, \rho_{x+}\circ
   i_x^+(\E)\bigr)^{-1/\deg(x)},
 \end{equation*}
 and the {\em global $L$-function}\footnote{
 There are several ways of normalizations of global $L$-function. Ours
 is the same as that of \cite[7.2.3]{AM}.} to be
 \begin{equation*}
  L(X,\E,t):=\prod_{x\in |X|}L_x(\E,t).
 \end{equation*}
 Let $K(X,\overline{\mb{Q}}_p)$ be the Grothendieck category of
 $F\text{-}\mr{Ovhol}(X/\overline{\mb{Q}}_p)$. By additivity, the
 definition of local $L$-factor extends to a homomorphism
 \begin{equation*}
  K(X,\overline{\mb{Q}}_p)\rightarrow\prod_{x\in|X|}
   \bigl(1+t\cdot\overline{\mb{Q}}_p\cc{t}\bigr)^{\times}.
 \end{equation*}
\end{empt}

\begin{dfn}
 Let $\ell$ be a prime number different from $p$, and $E$ be a field of
 characteristic $0$. We fix embeddings
 $\overline{\mb{Q}}_p\hookleftarrow E\hookrightarrow
 \overline{\mb{Q}}_\ell$, where the first one is denoted by $\iota_p$
 and the second by $\iota_\ell$. A pair
 $(\E,\mathscr{F})\in K(X,\overline{\mb{Q}}_p)\times
 K(X,\overline{\mb{Q}}_\ell)$ is said to be an {\em $E$-compatible
 system} if for any closed point $x$ of $X$, we have
 \begin{equation*}
  \iota_p\bigl(L_x(\E,t)\bigr)=
   \iota_{\ell}\bigl(L_x(\mathscr{F},t)\bigr)\in E\cc{t}.
 \end{equation*}
\end{dfn}

\begin{thm}[Fujiwara-Gabber's $\ell$-independence]
 \label{FujGabbind}
 Compatible systems are stable under $f_+$, $f^!$, $\DD$,
 $\otimes$. Moreover, let $j\colon U\hookrightarrow X$ be an immersion
 of realizable varieties, and $(\E,\mathscr{F})$ be an $E$-compatible
 system such that $\E$ (and hence $\mathscr{F}$) is $\iota_p$-pure. Then
 $(j_{!+}(\E),j_{!*}(\mathscr{F}))$ is also an $E$-compatible system.
\end{thm}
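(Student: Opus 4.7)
The plan is to prove the two assertions separately, with the first reducing to the Grothendieck--Lefschetz trace formula and known functoriality of local $L$-factors, and the second using induction on $\dim(X)$ together with the purity theorem (Corollary \ref{purityinterme}) and the characterization of $j_{!+}$ as the unique extension without subquotients supported on $Z := X\setminus U$.

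For stability under $f_+$, $f^!$, $\DD$, $\otimes$: the starting point is the identity $L(X,\E,t)=L(Y,f_+\E,t)$ for $f\colon X\to Y$, which follows from the $p$-adic Grothendieck--Lefschetz trace formula (using Proposition \ref{relativedual} to identify $f_+$ and $f_!$ on proper parts). Applying this with $Y = \mr{Spec}(k(y))$ via the base change Proposition \ref{basechangeisom} yields the Euler-product identity $L_y(f_+\E,t)=\prod_{x\in f^{-1}(y)}L_x(\E,t)^{[k(x):k(y)]}$, whose $\ell$-adic counterpart is classical; compatibility under $f_+$ follows. For $f^!$ (hence $f^+$ by duality), the local factor at a closed point $x\in X$ is computed directly from the fibre $i_x^!f^!\E\cong(fi_x)^!\E$, and again the formula is identical in the two theories. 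For $\DD$: at a closed point $x$, duality replaces the Frobenius eigenvalues by their inverses with a Tate twist, uniformly in $\ell$. For $\otimes$: local factors are multiplicative at closed points. Adjunction then gives the remaining cases.

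For the intermediate extension part, I argue by induction on $\dim(X)$; the case $\dim(X)=0$ is vacuous. Fix $i\colon Z\hookrightarrow X$. From the first part applied to $j_+$, $j_!$, and from Corollary \ref{purityinterme} together with its $\ell$-adic analogue (Deligne--Gabber), the pairs $(j_+\E,j_*\mathscr{F})$, $(j_!\E,j_!\mathscr{F})$ are compatible, and moreover $j_{!+}\E$ and $j_{!*}\mathscr{F}$ are $\iota_p$- and $\iota_\ell$-pure, respectively, of the same weight $w$ (matching on $U$ by hypothesis). Consider the exact sequences
\begin{equation*}
 0\to K\to j_!^0\E\to j_{!+}\E\to 0,\qquad 0\to j_{!+}\E\to j_+^0\E\to Q\to 0,
\end{equation*}
with $K,Q$ supported on $Z$, and analogous sequences $0\to K'\to j_!\mathscr{F}\to j_{!*}\mathscr{F}\to 0$, $0\to j_{!*}\mathscr{F}\to j_*\mathscr{F}\to Q'\to 0$ on the $\ell$-adic side. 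By multiplicativity of $L$-functions in short exact sequences, the compatibility of $(j_{!+}\E,j_{!*}\mathscr{F})$ will follow once we know that $(K,K')$ and $(Q,Q')$ are compatible. Since $K,Q,K',Q'$ are supported on $Z$, and $\dim(Z)<\dim(X)$, this will be handled by the induction hypothesis --- provided we can identify $i^!Q$ (and similarly $i^+K$, $i^!Q'$, $i^+K'$) as intermediate extensions themselves or as images under operations already covered. For this, one uses Theorem \ref{intemedstructhem}: shrinking to a dense smooth open stratum $V\subset Z$, the pullback $K|_V$ (resp.\ $Q|_V$) is the intermediate extension of an overconvergent $F$-isocrystal that arises from the monodromy of $\E$ along $Z$, and the matching statement on the $\ell$-adic side follows from the same compatibility of vanishing/nearby cycles.

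The main obstacle will be the last step: matching $(K,K')$ and $(Q,Q')$ as compatible systems. Although $K$ and $Q$ are defined intrinsically in terms of the t-structure, their identification with pushforwards from strata of $Z$ requires both the $p$-adic monodromy filtration machinery developed in Section~3 (especially the isomorphisms \ref{u+mixedstable-iso1} relating $\H^{\dag k}_{Z_1}j_!\E$ to pushforwards of $\ker/\coker$ of nilpotent residues) and a parallel monodromy analysis in the $\ell$-adic theory, so that the identifications are canonical enough to transport compatibility across the two frameworks. Once this matching is established on a dense smooth open of $Z$, one propagates to all of $Z$ by a Chebotarev-type density argument, using that compatibility of $L$-factors at a Zariski-dense set of closed points of $Z$ (together with the already-known compatibility of $j_+\E$) suffices to pin down the boundary contribution.
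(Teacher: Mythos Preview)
Your outline for the first assertion (stability under $f_+$, $f^!$, $\DD$, $\otimes$) is in the right spirit and matches the paper's approach: everything reduces to the Lefschetz trace formula, and the paper's only addition is that this formula must be re-verified over $\overline{\mb{Q}}_p$-coefficients (by d\'evissage to the isocrystal case as in \cite{caro_devissge_surcoh} and \cite{E-LS}).

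For the intermediate-extension claim your approach diverges from the paper, and the step you yourself flag as the ``main obstacle'' is a genuine gap. The paper follows \cite{Fujiwara-Gabber} verbatim, and the Remark immediately preceding the theorem statement spells out the key ingredient you are missing: scalars are extended to $\overline{\mb{Q}}_p$ precisely so that one can produce sufficiently many rank-one twists $\mc{L}_\rho$ (in the notation of \cite[\S3]{Fujiwara-Gabber}), which is what drives Gabber's argument. Your substitute --- identifying $K|_V$ and $K'|_V$ through the monodromy-filtration isomorphisms \ref{u+mixedstable-iso1} on the $p$-adic side and nearby-cycle theory on the $\ell$-adic side --- presupposes that the $p$-adic and $\ell$-adic monodromy data along $Z$ already form a compatible system on each stratum. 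Nothing in this paper establishes such a $p$-versus-$\ell$ comparison of nearby cycles; it is at least as hard as the statement you are proving. The Section~3 machinery, moreover, only applies after a semistable-reduction alteration, so even writing down the identification of $K|_V$ you want forces a base change whose effect on compatibility must itself be controlled. Finally, the closing Chebotarev step does not rescue the argument: agreement of local factors on a dense open of $Z$ does not determine them at the remaining points, and your induction hypothesis cannot be invoked until $(K,K')$ has been exhibited as arising from a compatible system on $Z$ via operations already known to preserve compatibility --- which is exactly what is left unproved.
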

\begin{proof}
 The proof is exactly the same as \cite{Fujiwara-Gabber}. In the proof,
 we need to use Lefschetz fixed point theorem.
 Since we are using ``the extended scalar'', we need to check the
 theorem in this situation. For the verification, it is reduced to the
 overconvergent $F$-isocrystal case by the same argument as
 \cite{caro_devissge_surcoh}. We repeat the d\'{e}vissage argument of
 \cite[6.2]{E-LS} to reduce to the case where the fixed point is empty,
 and in this case we repeat \cite[5.4]{E-LS}. We leave the detailed
 verifications to the reader.
\end{proof}

\begin{empt}
 Let $\mc{K}$ be a function field over $k$ (i.e. a field of transcendental degree $1$ over $k$). We define
 \begin{equation*}
  F\text{-}\mr{Isoc}^{\dag}(\mc{K}/K):=
   \indlim_{U}\,F\text{-}\mr{Isoc}^{\dag}(U/K)
 \end{equation*}
 where the inductive limit runs over smooth curves over $k$ whose
 function field is $\mc{K}$.

 Now, take $E$ in $F\text{-}\mr{Isoc}^\dag(\mc{K}/K)$. By definition,
 there exists a smooth curve $U$ whose function field is $\mc{K}$, and
 $\widetilde{E}$ be an overconvergent $F$-isocrystal on $U$. Let
 $j\colon U\hookrightarrow X$ be the smooth compactification. We define the
 {\em Hasse-Weil $L$-function} by
 \begin{equation*}
  L_{\mr{HW}}(\mc{K},E,t):=L\bigl(X,
   j_{!+}(\mr{sp}_+(\widetilde{E})),t\bigr)
   \in E\cc{t}.
 \end{equation*}
From \ref{transitivityi!+},
we check that this Hasse-Weil $L$-function does not depend on the choice of
 $U$ and of the smooth compactification $X$ of $U$. 
 By Theorem \ref{compcrewloc} (iii), we may write the $L$-function
 as
 \begin{equation*}
  L_{\mr{HW}}(\mc{K},E,t)
   =\prod_{x\in|X|}\det_K\bigl(1-\varphi_x\cdot
   t^{\deg(x)};\Psi_x(\widetilde{E})^{N=0,I=1}\bigr)^{-1/\deg(x)},
 \end{equation*}
 where we used the notation of paragraph \ref{crewfini}.
 In particular, our Hasse-Weil $L$-function is nothing but
 $L_{\mr{WD}}(X,\widetilde{E}(1),t)$ in \cite[4.3.10]{Marmora-fact-eps}
 by Theorem \ref{compcrewloc} (i).

 On the other hand, by Lefschetz trace formula and self-duality
 \ref{i!+bidual}, we get the functional equation
 \begin{equation*}
  L_{\mr{HW}}(\mc{K},E,t)=\varepsilon(\mc{K},E)\cdot t^{g(\mc{K})-2}
   \cdot L_{\mr{HW}}(\mc{K},E^{\vee},q\cdot t^{-1})
 \end{equation*}
 where $g(\mc{K})$ denotes the genus of the function field $\mc{K}$, and
 $\varepsilon(\mc{K},E)$ is some number in $\overline{\mb{Q}}_p$ (cf.\
 \cite[7.2.3]{AM}). This is a generalization of
 \cite[4.3.11]{Marmora-fact-eps} to $F$-isocrystals of arbitrary ranks.

 Now, let $X$ be a realizable variety over $\mc{K}$. Then there exists a
 model $f\colon\X\rightarrow U$ where $U$ is a smooth curve whose function
 field is $\mc{K}$, $\X$ is a realizable variety, and the generic fiber
 of $f$ is $X$. We denote by $p\colon\X\rightarrow\mr{Spec}(k)$ the
 structural morphism. We define the cohomology $\H^m(X/K)$ in
 $F\text{-}\mr{Isoc}^{\dag}(\mc{K}/K)$ to be the one represented by
 $\HH^m\bigl(f_+p^+(K)\bigr)$.
 We can check easily
 that the construction does not depend on auxiliary choices.
 We end this paper with the following $p$-adic interpretation of
 Hasse-Weil $L$-function over function fields, which follows directly
 from Theorem \ref{FujGabbind}:
 \begin{cor}
  \label{anstoTri}
  Let $f\colon X\rightarrow\mr{Spec}(\mc{K})$ be a realizable proper
  smooth scheme (e.g.\ projective smooth variety). Then we
  have
  \begin{equation*}
   L_{\mr{HW}}(\mc{K},H^m(\overline{X}),t)=
    L_{\mr{HW}}(\mc{K},\H^m(X/K),t),
  \end{equation*}
  where the left hand side is the Hasse-Weil $L$-function associated to
  the $\ell$-adic representation
  $H^m(\overline{X}):=H^m(X\otimes_{\mc{K}}\mc{K}^{\mr{sep}},\mb{Q}_\ell)$
  of $\mr{Gal}(\mc{K}^{\mr{sep}}/\mc{K})$.
 \end{cor}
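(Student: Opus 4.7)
\emph{Proof plan.} My plan is to realize both Hasse--Weil $L$-functions as the global Euler product attached to the intermediate extension of a common compatible system on a model curve, and then to invoke Theorem \ref{FujGabbind}.

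First, I would fix a proper smooth model $f\colon\X\to U$ of $X/\mc{K}$, and let $j\colon U\hookrightarrow X'$ be the smooth compactification of $U$. Possibly shrinking $U$, one may assume that $\widetilde{E}_p:=\HH^m(f_+p^+(K))|_U$ lies in $F\text{-}\mr{Isoc}^{\dag\dag}(U,X'/K)$, i.e.\ comes from a smooth overconvergent $F$-isocrystal on $U$; this follows from the generic smoothness of the higher direct image for proper smooth morphisms together with the six-functor formalism of Section~1. Write $\widetilde{E}_\ell:=R^mf_*\overline{\mb{Q}}_\ell|_U$ for the corresponding smooth $\ell$-adic sheaf. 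By construction (see the paragraph preceding the corollary), the two $L$-functions in question are the Euler products over the closed points of $X'$ of the local $L$-factors attached respectively to $j_{!+}(\mr{sp}_+\widetilde{E}_p)$ and to $j_{!*}\widetilde{E}_\ell$.

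Next, I would check that $(\mr{sp}_+\widetilde{E}_p,\widetilde{E}_\ell)$ forms a $\mb{Q}$-compatible system on $U$ of $\iota$-pure objects of weight $m$, so that the second assertion of Theorem \ref{FujGabbind} applies. By proper base change (paragraph \ref{summpropre}), the fiber at a closed point $u\in U$ of $\mr{sp}_+\widetilde{E}_p$ (resp.\ of $\widetilde{E}_\ell$) is the rigid (resp.\ $\ell$-adic) $H^m$ of the proper smooth $k(u)$-variety $\X_u$. Katz--Messing \cite{KatzMessing} then shows that the characteristic polynomials of geometric Frobenius on these two cohomology groups coincide, providing the equality of local $L$-factors over $U$; and the Weil conjectures for proper smooth fibers (Deligne on the $\ell$-adic side, and Kedlaya \cite{kedlaya-weilII}---or equivalently Katz--Messing combined with Deligne---on the $p$-adic side) imply that both members of the pair are $\iota$-pure of weight $m$.

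Applying Theorem \ref{FujGabbind} to the open immersion $j$ yields that $(j_{!+}\mr{sp}_+\widetilde{E}_p,\,j_{!*}\widetilde{E}_\ell)$ is still a compatible system on $X'$, and taking the Euler product over all closed points of $X'$ gives the desired identity of global $L$-functions. The only substantive ingredient beyond routine proper base change and Katz--Messing is the stability of compatibility under $j_{!+}$ guaranteed by Theorem \ref{FujGabbind}, which itself rests on the preservation of purity under intermediate extensions (Corollary \ref{purityinterme})---one of the main results of this paper; in particular the obstacle lies at the boundary points $X'\setminus U$, where the compatibility over $U$ must be promoted across the bad fibers. A minor technicality is matching the definition of $\H^m(X/K)$ (as an object in the colimit $F\text{-}\mr{Isoc}^\dag(\mc{K}/K)$) with its representative $\HH^m(f_+p^+K)|_U$ on a chosen model $U$, but this is built into the definitions recalled just before the corollary.
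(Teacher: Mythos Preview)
Your proposal is correct and follows exactly the approach the paper intends: the paper simply states that the corollary ``follows directly from Theorem \ref{FujGabbind},'' and you have correctly unpacked this by forming the compatible system on the good-reduction locus via proper base change and Katz--Messing, checking $\iota$-purity of weight $m$, and then invoking the intermediate-extension clause of Theorem \ref{FujGabbind} to propagate compatibility across the boundary.
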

\end{empt}

\numberwithin{prop}{section}
\appendix

\section{Internal homomorphism}
In this appendix, we consider situation (A) in Notation and convention.

\begin{dfn}
 Let $\mathbb{Y}$ be a couple (\ref{defi-(d)plongprop}).
 Let $\E, \FF,\G \in F\text{-}D
 ^{\mathrm{b}}_{\mathrm{ovhol}}(\mathbb{Y}/K)$. We put
 \begin{equation*}
  \label{dfnHom}
   \shom_{\mb{Y}}( \E, \FF) := \DD_{\mb{Y}} ( \E )
   \widetilde{\otimes}_{\mb{Y}}\FF.
 \end{equation*}
\end{dfn}
 From the biduality isomorphism, we get the isomorphisms
 \begin{equation}
  \label{Homdual}
  \shom_{\mb{Y}} ( \E, \FF) \riso
  \shom_{\mb{Y}}\bigl( \DD_{\mb{Y}} ( \FF ), \DD_{\mb{Y}} ( \E
  )\bigr),\qquad
  \shom_{\mb{Y}}\bigl( \E, \shom_{\mb{Y}} ( \FF, \G)\bigr)\riso
  \shom_{\mb{Y}}( \E \otimes \FF,\G).
 \end{equation}
 For any morphism $u \colon \mathbb{Y}' \to \mathbb{Y}$ of couples, by
 the biduality isomorphism and \ref{otimes-comm-u!}, we have
 \begin{equation*}
  \label{Homu!u*}
   u ^{!}\,\shom_{\mb{Y}} ( \E, \FF) \riso
   \shom_{\mb{Y}'}\bigl( u ^{+}(\E),u ^{!}(\FF)\bigr).
 \end{equation*}

\begin{lem}
 \label{lem-B1}
 Let $f\colon P\to S$ be a smooth morphism of noetherian schemes such
 that $p$ is nilpotent in $\mc{O}_S$. For any  $\E ^{(m)} \in D
 ^{\mathrm{b}}_{\mathrm{coh}} (\smash{\D} _{P/S} ^{(m)})$ and $\FF
 ^{(m)} \in D ^{\mathrm{b}}  (\smash{\D} _{P/S} ^{(m)})$, we have the
 canonical isomorphism:
 \begin{equation*}
  f _{+} ^{(m)} \bigl( \DD ^{ (m)} _{P/S}(\E ^{(m)})\otimes ^{\L} _{\O
   _{P} }\FF ^{(m)}\bigr)\,[-d _{P}]
   \riso
   \R f _{*}\bigl(\R \mathcal{H}om _{\smash{\D} _{P/S} ^{(m)}} (\E
   ^{(m)}, \FF ^{(m)})\bigr).
 \end{equation*}
\end{lem}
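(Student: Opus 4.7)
The plan is to prove the isomorphism by moving $\FF$ out of the $\R\shom$ via the standard adjunction (valid since $\E^{(m)}$ is coherent), then identifying $\R\shom_{\smash{\D}^{(m)}_{P/S}}(\E^{(m)}, \smash{\D}^{(m)}_{P/S})$ with the dual $\DD^{(m)}_{P/S}(\E^{(m)})$ tensored with the appropriate relative orientation sheaf, and finally recognizing the resulting $\R f_*$ expression as $f_+^{(m)}$ applied to $\DD^{(m)}_{P/S}(\E^{(m)}) \otimes^{\L}_{\O_P} \FF^{(m)}$.

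More concretely: first, using that $\E^{(m)}$ is a coherent $\smash{\D}^{(m)}_{P/S}$-module (and that $\smash{\D}^{(m)}_{P/S}$ is a coherent ring with finite $\O_P$-tor-dimension in the smooth case, so that $\E^{(m)}$ is locally resolved by finite locally free $\smash{\D}^{(m)}_{P/S}$-modules), we get the standard isomorphism
\begin{equation*}
 \R\shom_{\smash{\D}^{(m)}_{P/S}}(\E^{(m)}, \FF^{(m)}) \riso
  \R\shom_{\smash{\D}^{(m)}_{P/S}}(\E^{(m)}, \smash{\D}^{(m)}_{P/S})
  \otimes^{\L}_{\smash{\D}^{(m)}_{P/S}} \FF^{(m)}.
\end{equation*}
Second, Virrion's formulation of the dual (see \cite{virrion}) gives
$\R\shom_{\smash{\D}^{(m)}_{P/S}}(\E^{(m)}, \smash{\D}^{(m)}_{P/S}) \riso
\omega_{P/S} \otimes_{\O_P} \DD^{(m)}_{P/S}(\E^{(m)})\,[-d_P]$,
where the right-hand side carries its right $\smash{\D}^{(m)}_{P/S}$-module structure through $\omega_{P/S}$ (and the diagonal action is used to convert the left structure on $\DD^{(m)}_{P/S}(\E^{(m)})$).

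Third, a routine check with the Spencer/Koszul resolution shows that, for any left $\smash{\D}^{(m)}_{P/S}$-modules $\mc{M}, \mc{N}$, there is a canonical isomorphism
\begin{equation*}
 (\omega_{P/S} \otimes_{\O_P} \mc{M})
  \otimes^{\L}_{\smash{\D}^{(m)}_{P/S}} \mc{N}
  \riso
  \omega_{P/S} \otimes^{\L}_{\smash{\D}^{(m)}_{P/S}}
  (\mc{M} \otimes^{\L}_{\O_P} \mc{N}),
\end{equation*}
which just reflects the fact that the two natural $\smash{\D}^{(m)}_{P/S}$-module structures on the $\O_P$-tensor product give rise to the same derived tensor product after quotienting by $\smash{\D}^{(m)}_{P/S}$. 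Combining the three steps, we find
\begin{equation*}
 \R\shom_{\smash{\D}^{(m)}_{P/S}}(\E^{(m)}, \FF^{(m)})
  \riso
  \omega_{P/S} \otimes^{\L}_{\smash{\D}^{(m)}_{P/S}}
  \bigl(\DD^{(m)}_{P/S}(\E^{(m)}) \otimes^{\L}_{\O_P} \FF^{(m)}\bigr)
  \,[-d_P].
\end{equation*}
Applying $\R f_*$ and recalling the standard definition
$f_+^{(m)}(-) = \R f_*\bigl(\omega_{P/S} \otimes^{\L}_{\smash{\D}^{(m)}_{P/S}} (-)\bigr)$
(equivalently $\R f_*(\smash{\D}^{(m)}_{S\leftarrow P} \otimes^{\L}_{\smash{\D}^{(m)}_{P/S}}(-))$ after absorbing the relevant $f^{-1}\omega_S^{-1}$ factor into the absolute vs.\ relative orientation sheaves), yields the desired isomorphism.

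The main obstacle I anticipate is bookkeeping: one has to keep careful track of left versus right $\smash{\D}^{(m)}_{P/S}$-module structures (particularly when the side-switching with $\omega_{P/S}$ interacts with the diagonal action used to define the $\otimes_{\O_P}^{\L}$-tensor product of left $\D$-modules) and of the various shifts by $[d_P]$ versus $[d_{P/S}]$. In the arithmetic setting of level $m$, one must also verify that the Spencer-type resolution of $\omega_{P/S}$ as a right $\smash{\D}^{(m)}_{P/S}$-module (which underlies Step 3 above) behaves just as in the classical case; this is true because $f$ is smooth and the relative $\D^{(m)}$-structure is constructed from the usual PD-envelopes of the diagonal.
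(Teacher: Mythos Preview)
Your proposal is correct and follows essentially the same approach as the paper. The paper runs the chain of isomorphisms from left to right, citing \cite[2.1.26]{caro_comparaison} (your Steps~1--2 combined: pull $\FF^{(m)}$ into the second slot of $\R\shom$ and identify $\DD^{(m)}_{P/S}$ with $\R\shom(-,\D^{(m)}\otimes\omega^{-1})[d_P]$) and \cite[2.1.19]{caro_comparaison} (your Step~3: the compatibility of $\omega_{P/S}\otimes^{\L}_{\D^{(m)}}-$ with $\R\shom$, equivalent to your associativity formula after one side-switch). Your Step~1 is where the coherence hypothesis on $\E^{(m)}$ is used, exactly as in the paper; the only visible differences are the direction of the computation and that the paper defers the bookkeeping to the cited lemmas rather than invoking the Spencer resolution directly.
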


\begin{proof}
This is the composition of the following isomorphisms:
\begin{gather}
\notag
f _{+} ^{(m)} ( \DD ^{ (m)} _{P/S} (\E ^{(m)})
\otimes ^{\L} _{\O _{P} }
\FF ^{(m)})[-d _{P}]
\xrightarrow[\star]{\sim}
f _{+} ^{(m)} ( 
\R \mathcal{H}om _{\smash{\D} _{P/S} ^{(m)}} (\E ^{(m)}, (\smash{\D} _{P/S} ^{(m)} \otimes _{\O _P} \omega ^{-1} _{P/S})
\otimes ^{\L} _{\O _{P} }
\FF ^{(m)})[-d _{P}])
\\
\notag
=
\R f _{*} (\omega _{P/S} \otimes _{\smash{\D} _{P/S} ^{(m)}} ^{\L}
\R \mathcal{H}om _{\smash{\D} _{P/S} ^{(m)}} (\E ^{(m)}, (\smash{\D} _{P/S} ^{(m)} \otimes _{\O _P} \omega ^{-1} _{P/S})
\otimes ^{\L} _{\O _{P} }
\FF ^{(m)})
)
\\
\notag
\xrightarrow[\star\star]{\sim}
\R f _{*} (\R \mathcal{H}om _{\smash{\D} _{P/S} ^{(m)}} (\E ^{(m)}, 
\omega _{P/S} \otimes _{\smash{\D} _{P/S} ^{(m)}} ^{\L} (\smash{\D} _{P/S} ^{(m)} \otimes _{\O _P} \omega ^{-1} _{P/S})
\otimes ^{\L} _{\O _{P} }
\FF ^{(m)})
)
\\
\notag
\riso
\R f _{*} (\R \mathcal{H}om _{\smash{\D} _{P/S} ^{(m)}} (\E ^{(m)}, \FF ^{(m)})).
\end{gather}
 where $\star$ follows by \cite[2.1.26]{caro_comparaison}, and
 $\star\star$ follows by \cite[2.1.19]{caro_comparaison}.
\end{proof}

\begin{dfn}
\label{Rhom-Ext1-dfn}
 Let $\mathbb{Y}$ be a couple. Choose an l.p.\ frame $(Y,X,\PP,\QQ)$ of
 $\mb{Y}$. Let $\E, \FF \in
 D^{\mathrm{b}}_{\mathrm{ovhol}}(\mathbb{Y}/K)$ and $\E _{\PP}, \FF
 _{\PP}$ be the corresponding objects in $D ^{\mathrm{b}}
 _{\mathrm{ovhol}}(Y, \PP/K)$. Then, using Lemma
 \ref{t-gen-coh-PXTindtPsurhol}, we notice that the complex of abelian
 groups $\R \mathrm{Hom} _{D   (\D ^\dag_{\PP,\Q})} (\E, \FF)$
 does not depend on the choice of the l.p.\ frame $(Y,X,\PP,\QQ)$ of
 $\mb{Y}$. We put
 \begin{equation*}
 \R \mathrm{Hom}_{D ^{\mathrm{b}}_{\mathrm{ovhol}}(\mathbb{Y}/K)} (\E,   \FF)
  :=\R \mathrm{Hom} _{D   (\D ^\dag_{\PP,\Q})} (\E _{\PP}, \FF_{\PP}).
 \end{equation*}
\end{dfn}

\begin{rem}
 \label{Extiscoh}
 Let $\E$, $\FF$ be objects of $\mathrm{Ovhol}(\mathbb{Y}/K)$.
 Then we can check easily that
 \begin{equation*}
  \mathcal{H} ^{1}  \R \mathrm{Hom}_{D ^{\mathrm{b}}_{\mathrm{ovhol}}
 (\mathbb{Y}/K)} (\E,\FF)=\mathrm{Ext} ^1 _{ \D ^{\dag}
 _{\mathbb{Y}/K}} (\E,   \FF),
 \end{equation*}
 where $\mathrm{Ext} ^1 _{ \D ^{\dag}_{\mathbb{Y}/K}} (\E,   \FF)$
 denotes the Yoneda's Ext group in the abelian category
 $\mathrm{Ovhol}(\mathbb{Y}/K)$.
\end{rem}

\begin{prop}
 \label{AppendixB3}
 Let $\mathbb{Y}$ be a couple and
 $a\colon \mathbb{Y} \to \Spec(k)$ the structural morphism which we
 assume to be complete. For any $\E, \FF \in
 F\text{-}D^{\mathrm{b}}_{\mathrm{ovhol}}(\mathbb{Y}/K)$, we have the
 isomorphism
 \begin{equation*}
  a _{+}\, \shom_{\mb{Y}} ( \E, \FF)\riso 
   \R \mathrm{Hom}_{D ^{\mathrm{b}}_{\mathrm{ovhol}}(\mathbb{Y}/K)} (\E,
   \FF).
 \end{equation*}
\end{prop}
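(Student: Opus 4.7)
Choose an l.p.\ frame $(Y,X,\PP,\QQ)$ of $\mb{Y}$, and let $f\colon\PP\to\Spf\V$ be the structural morphism, so that by paragraph \ref{operation-cohomoYXsurhol}.(ii) the functor $a_+$ is computed on $D^{\mr{b}}_{\mr{ovhol}}(Y,\PP/K)$ as $f_+$. Put $d_P:=\dim\PP$. Unwinding the definitions gives
\begin{equation*}
 \shom_{\mb{Y}}(\E,\FF)
 \;=\;\bigl(\R\underline{\Gamma}^\dag_Y\DD_{\PP}(\E)\bigr)
      \overset{\L}{\otimes}^{\dag}_{\O_{\PP,\Q}}\FF\,[-d_P].
\end{equation*}
Since $\FF$ is an object of $D^{\mr{b}}_{\mr{ovhol}}(Y,\PP/K)$, the canonical map $\R\underline{\Gamma}^\dag_Y\FF\to\FF$ is an isomorphism, so by using \ref{otimes-comm-u!} (applied to the c-open immersion $(Y,X)\hookrightarrow(X,X)$) together with the fact that $\R\underline{\Gamma}^\dag_Y$ commutes with $\overset{\L}{\otimes}^{\dag}_{\O_{\PP,\Q}}$, we may drop the $\R\underline{\Gamma}^\dag_Y$ and write
\begin{equation*}
 \shom_{\mb{Y}}(\E,\FF)
 \;\cong\;\DD_{\PP}(\E)\overset{\L}{\otimes}^{\dag}_{\O_{\PP,\Q}}\FF\,[-d_P].
\end{equation*}

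The heart of the argument is then to establish the $\D^\dag$-level analogue of Lemma \ref{lem-B1}, namely a canonical isomorphism
\begin{equation*}
 f_+\bigl(\DD_{\PP}(\E)\overset{\L}{\otimes}^{\dag}_{\O_{\PP,\Q}}\FF\bigr)\,[-d_P]
 \;\riso\;
 \R f_*\,\R\shom_{\D^\dag_{\PP,\Q}}(\E,\FF).
\end{equation*}
I would obtain this by applying Lemma \ref{lem-B1} at each level $m$, to coherent $\widehat{\D}^{(m)}_{\PP_n/S_n}$-modules representing $\E,\FF$ modulo $\pi^{n+1}$, then passing to the projective limit in $n$ (using the perfectness properties of $\E$ to control the lim), then to the inductive $2$-limit in $m$, and finally tensoring with $\Q$; this is the same type of passage used in \cite{virrion} and \cite{caro_comparaison} to promote finite level statements to $\D^\dag_{\Q}$, and is compatible with Frobenius since all constructions are.

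Once this isomorphism is in hand, applying the fact that $\Spf\V$ is affine with one point so that $\R f_*$ and $\R\Gamma(\PP,-)$ coincide (when viewed as landing in complexes over $\Spec k$), the right-hand side becomes $\R\Gamma(\PP,\R\shom_{\D^\dag_{\PP,\Q}}(\E,\FF))=\R\Hom_{D(\D^\dag_{\PP,\Q})}(\E,\FF)$, which by Definition \ref{Rhom-Ext1-dfn} is precisely $\R\Hom_{D^{\mr{b}}_{\mr{ovhol}}(\mb{Y}/K)}(\E,\FF)$. Combining the displays yields the desired isomorphism, and its independence of the choice of l.p.\ frame follows from Lemma \ref{t-gen-coh-PXTindtPsurhol} exactly as in Definition \ref{Rhom-Ext1-dfn}.

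The main obstacle is the level passage: Lemma \ref{lem-B1} is stated over a noetherian base with $p$ nilpotent and for $\D^{(m)}$-modules, while the target statement lives over the formal scheme $\PP$ with the sheaf $\D^\dag_{\PP,\Q}$. The technical care required is (i) to identify the dualizing object $\smash{\D}^{(m)}_{P/S}\otimes_{\O_P}\omega^{-1}_{P/S}$ with its $\D^\dag_{\PP,\Q}$-analogue after completion and $\varinjlim_m$ (so that $\DD_{\PP}$ appears on the left-hand side), and (ii) to commute $\R f_*$, $\overset{\L}{\otimes}$, and $\R\shom$ with the operations $\varprojlim_n$, $\varinjlim_m$, and $\otimes\Q$, which uses the coherence of $\E$ and $\FF$ over each $\widehat{\D}^{(m)}_{\PP,\Q}$ and the finite cohomological dimension of these rings established in \cite{virrion}.
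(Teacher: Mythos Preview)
Your proposal is correct and follows essentially the same route as the paper's proof: choose an l.p.\ frame, strip off the $(\hdag Z)$ (equivalently $\R\underline{\Gamma}^\dag_Y$) from $\DD_{Y,\PP}(\E)$ using that $\FF$ is already localized along $Z$, and then promote Lemma \ref{lem-B1} from finite level to $\D^\dag_{\PP,\Q}$ by taking projective limits in $n$, inductive limits in $m$, and tensoring with $\Q$. The only cosmetic difference is that the paper phrases the limit passage in the formalism of $\smash{\underrightarrow{LD}}^{\mathrm b}_{\Q,\mathrm{coh}}(\widehat{\D}^{(\bullet)}_{\PP})$, introducing an auxiliary increasing function $\lambda\colon\N\to\N$ so that $\E^{(m)}$ is coherent over $\widehat{\D}^{(\lambda(m))}_{\PP}$ with compatible transition isomorphisms; this is exactly the bookkeeping your items (i) and (ii) allude to.
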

\begin{proof}
 Choose an l.p.\ frame $(Y,X,\PP,\QQ)$ of $\mb{Y}$.
 Let $\E, \FF \in F\text{-}D ^{\mathrm{b}}_{\mathrm{ovhol}}(Y, \PP/K)$. 
 Let $f$ be the structural morphism of $\PP$ and  $Z := X \setminus Y$. 
 By definition, we get:
 \begin{gather*}
  a _{+} \shom_{\mb{Y}} ( \E, \FF) \cong
  f _+ \bigl( (\hdag Z) \circ \DD _{\PP} (\E)
  \smash{\overset{\L}{\otimes}}   ^{\dag}
  _{\O  _{\PP,\Q}} \FF\bigr)\,[-d _{P}]\riso 
  f _+  \bigl(\DD _{\PP} (\E)  \smash{\overset{\L}{\otimes}}   ^{\dag}
  _{\O  _{\PP,\Q}} \FF\bigr)\,[-d _{P}],\\
  \R \mathrm{Hom} _{D ^{\mathrm{b}}_{\mathrm{ovhol}}(\mathbb{Y}/K)} (\E,
  \FF)\cong\R \mathrm{Hom} _{D   (\D ^\dag_{\PP,\Q})} (\E, \FF).
 \end{gather*}
 Thus, we are reduced to the case where $Y=X=P$. Let $\E ^{(\bullet)},
 \FF ^{(\bullet)}\in \smash{\underrightarrow{LD}} ^{\mathrm{b}}
 _{\Q,\mathrm{coh}} ( \smash{\widehat{\D}} _{\PP} ^{(\bullet)})$ such
 that $\underrightarrow{\lim} ~ \E ^{(\bullet)} \riso \E$
 and
 $\underrightarrow{\lim} ~ \FF ^{(\bullet)} \riso \FF$.
 We can suppose that there exists an increasing map $\lambda \colon \N
 \to \N$  such that for any integers $m' \geq m$ we have $\lambda (m )
 \geq m$, $\E ^{(m)} \in D ^{\mathrm{b}} _{\mathrm{coh}}
 (\smash{\widehat{\D}} _{\PP} ^{(\lambda (m))})$ and the canonical
 morphism
 \begin{equation*}
  \smash{\widehat{\D}} _{\PP} ^{(\lambda (m'))} \otimes ^\L
   _{\smash{\widehat{\D}} _{\PP} ^{(\lambda (m))}}
   \E ^{(m)} \to \E ^{(m')}
 \end{equation*}
 is an isomorphism. By taking projective limits, we deduce from Lemma
 \ref{lem-B1} the isomorphism
 \begin{equation}
  \tag{($\star$)}
  \label{levmscrHomHom}
   f _{+} ^{(\lambda (m))} ( \DD ^{\lambda (m)} (\E ^{(m)})
   \widehat{\otimes} ^{\L} _{\O _{\PP} }
   \FF ^{(\lambda (m))})[-d _{P}]
   \riso 
   \R \mathrm{Hom} _{D   (\smash{\widehat{\D}} _{\PP} ^{(\lambda (m))})} (\E ^{(m)}, \FF^{(\lambda (m))}) .
 \end{equation}
 Since $\E ^{(m)} \in D ^{\mathrm{b}} _{\mathrm{coh}}
 (\smash{\widehat{\D}} _{\PP} ^{(\lambda (m))})$, we get the
 isomorphisms $\smash{\widehat{\D}} _{\PP} ^{(\lambda (m'))} \otimes ^\L
 _{\smash{\widehat{\D}} _{\PP} ^{(\lambda (m))}}\DD ^{\lambda (m)} (\E
 ^{(m)})\to \DD ^{\lambda (m')} (\E ^{(m')})$ and $\smash{\widehat{\D}}
 _{\PP} ^{(\lambda (m'))} \otimes ^\L _{\smash{\widehat{\D}} _{\PP}
 ^{(\lambda (m))}}\R \mathrm{Hom} _{D   (\smash{\widehat{\D}} _{\PP}
 ^{(\lambda (m))})} (\E ^{(m)}, \FF^{(\lambda (m'))}) \riso \R
 \mathrm{Hom} _{D   (\smash{\widehat{\D}} _{\PP} ^{(\lambda (m'))})} (\E
 ^{(m')}, \FF^{(\lambda (m'))})$. By taking the direct limit to
 \ref{levmscrHomHom} and tensoring by $\Q$, we get the desired
 isomorphism.
\end{proof}

\begin{prop}[Projection formulas]
 Let $u \colon \mathbb{Y} \to \mathbb{Y}'$ a complete morphism of
 couples. For any $\E \in F\text{-}D
 ^{\mathrm{b}}_{\mathrm{ovhol}}(\mathbb{Y}/K)$, $\E '\in F\text{-}D
 ^{\mathrm{b}}_{\mathrm{ovhol}}(\mathbb{Y} '/K)$,
 \begin{equation}
  \label{Kunneth1}
   u _+ \bigl(  \E \widetilde{\otimes} u ^{!}(\E ')\bigr)\riso
   u _{+} ( \E ) \widetilde{\otimes} \E ',\qquad
   u _! \bigl(  \E \otimes u ^{+}(\E ')\bigr)
   \riso
   u _{!} ( \E ) \otimes \E '.
 \end{equation}
\end{prop}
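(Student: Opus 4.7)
The two projection formulas in \ref{Kunneth1} are equivalent via duality, so it suffices to prove one. I would first reduce the first formula to the second, and then establish the second by a Yoneda argument based on the adjunctions of Lemma \ref{adju+u+couple}, the commutation \ref{otimes-comm-u!}, and the internal Hom compatibility \ref{Homu!u*}.

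\textbf{Step 1 (duality reduction).} Using $\DD_{\mathbb{Y}'} u_! \cong u_+ \DD_{\mathbb{Y}}$ and $\DD_{\mathbb{Y}} u^+ \cong u^! \DD_{\mathbb{Y}'}$ from the definitions in paragraph \ref{operation-cohomoYXsurhol}, together with $\DD(\E \otimes \FF) \cong \DD\E \widetilde{\otimes} \DD\FF$ (a direct consequence of the definition $\otimes := \DD(\DD(-)\widetilde{\otimes}\DD(-))$ and biduality), applying $\DD_{\mathbb{Y}'}$ to the second formula and substituting $\FF := \DD_{\mathbb{Y}}\E$, $\FF' := \DD_{\mathbb{Y}'}\E'$ recovers the first. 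I therefore focus on the second formula.

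\textbf{Step 2 (Yoneda).} I would first record two consequences of \ref{otimes-comm-u!}: (a) $u^+$ commutes with $\otimes$, obtained by dualizing \ref{otimes-comm-u!} via the identities of Step 1; and (b) the isomorphism $u^!\shom_{\mathbb{Y}'}(\E',\G) \cong \shom_{\mathbb{Y}}(u^+\E', u^!\G)$ of \ref{Homu!u*}, which follows from $\shom = \DD(-)\widetilde{\otimes}(-)$, \ref{otimes-comm-u!}, and $\DD u^+ = u^!\DD$. Then, for any $\G \in F\text{-}D^{\mathrm{b}}_{\mathrm{ovhol}}(\mathbb{Y}'/K)$, successively applying the adjunction $(u_!,u^!)$ of Lemma \ref{adju+u+couple}, the Hom-$\shom$ adjunction derived from \ref{Homdual}, fact (b), the adjunction again, and the Hom-$\shom$ adjunction once more yields a natural chain
\begin{equation*}
\mathbb{R}\mathrm{Hom}(u_!(\E \otimes u^+\E'), \G) \cong \mathbb{R}\mathrm{Hom}(\E, u^!\shom(\E', \G)) \cong \mathbb{R}\mathrm{Hom}(u_!\E \otimes \E', \G),
\end{equation*}
and Yoneda in the triangulated $F$-category produces the canonical isomorphism $u_!(\E \otimes u^+\E') \cong u_!\E \otimes \E'$. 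Frobenius-equivariance is automatic since every functor and adjunction in the chain is Frobenius-equivariant.

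\textbf{Main obstacle.} The step requiring most care will be the Hom-$\shom$ adjunction $\mathbb{R}\mathrm{Hom}(\E \otimes \FF, \G) \cong \mathbb{R}\mathrm{Hom}(\E, \shom(\FF, \G))$ at the level of overholonomic $F$-complexes on a couple: although its internal version \ref{Homdual} is free and $\mathbb{R}\mathrm{Hom}$ is computed at the ambient level by \ref{Rhom-Ext1-dfn}, one must verify compatibility of the ambient $\shom^{\dag}$--$\smash{\overset{\L}{\otimes}}^{\dag}$ adjunction with the $[-\dim\PP]$ shifts built into $\widetilde{\otimes}$ and $\shom$. Should this bookkeeping become unwieldy, an alternative route is to factor $u$ via paragraph \ref{constproptononprop} as a c-open immersion followed by a c-proper morphism and prove each case separately: for the c-open immersion step the projection formula reduces to \ref{otimes-comm-u!} applied to the local cohomology functor, while the c-proper step reduces to the known projection formula for a morphism of smooth formal schemes.
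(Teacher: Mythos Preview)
Your Yoneda approach is sound and would go through, but it is not how the paper proceeds. The paper's argument is considerably more direct: having chosen a morphism of l.p.\ frames $(\star,\star,g,\star)$ for $u$, it simply unwinds the definitions of $u_+$, $u^!$, and $\widetilde{\otimes}$ in terms of the ambient formal scheme $\PP$, reducing the first isomorphism to the projection formula $g_+(\E \smash{\overset{\L}{\otimes}}^{\dag}_{\O_{\PP,\Q}} g^!\E')[-d_P] \cong g_+(\E) \smash{\overset{\L}{\otimes}}^{\dag}_{\O_{\PP',\Q}} \E'[-d_{P'}]$, which is already established in \cite[2.1.4]{caro_surcoherent}. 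The second isomorphism is then obtained from the first by biduality (so the paper's duality reduction goes in the direction opposite to yours). No adjunction argument, no $\mathbb{R}\mathrm{Hom}$--$\shom$ bookkeeping, and no factorization of $u$ is needed. Your ``alternative route'' via factorization into a c-open immersion and a c-proper morphism is closer in spirit, but even that is unnecessary: the ambient projection formula applies uniformly to any $g$ between smooth formal schemes. The advantage of your Yoneda argument is that it is purely formal and portable to other six-functor contexts; the advantage of the paper's approach is that it avoids the shift-bookkeeping you correctly flagged as the main obstacle, and it does not rely on Proposition~\ref{AppendixB3}.
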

\begin{proof}
 Let $u = (\star,\star,g,\star)\colon  (Y,X,\PP,\QQ)\to
 (Y',X',\PP',\QQ')$ be a morphism of frames for $u$. We put $Z := X
 \setminus Y$ and $Z' := X '\setminus Y'$.
 Let $\E \in F\text{-}D ^{\mathrm{b}}_{\mathrm{ovhol}}(Y, \PP/K)$
 and $\E '\in F\text{-}D ^{\mathrm{b}}_{\mathrm{ovhol}}(Y', \PP'/K)$.
 Then we have
 \begin{equation*}
  u _+\bigl(\E \widetilde{\otimes} u ^{!}(\E ')\bigr)
   \riso
   g _+  \bigl( \E   \smash{\overset{\L}{\otimes}}^{\dag}
   _{\O  _{\PP,\Q}} g ^{!} (\E ')\bigr)\,[-d _{P}],\qquad
   u _{+} ( \E ) \widetilde{\otimes} \E '
   \riso 
   g _+  ( \E   \smash{\overset{\L}{\otimes}}   ^{\dag}
   _{\O  _{\PP',\Q}} \E ')[-d _{P'}].
 \end{equation*}
 Hence, from the projection formula \cite[2.1.4]{caro_surcoherent}, we
 get the first one. By using the biduality isomorphism, this implies
 the second one.
\end{proof}

\begin{prop}[Adjointness Properties]
 \label{adjnessProp}
 Let $u \colon \mathbb{Y} \to \mathbb{Y}'$ a morphism of couples. For
 any $\E \in F\text{-}D ^{\mathrm{b}}_{\mathrm{ovhol}}(\mathbb{Y}/K)$,
 $\E '\in F\text{-}D ^{\mathrm{b}}_{\mathrm{ovhol}}(\mathbb{Y} '/K)$, we
 get the functorial isomorphism:
 \begin{equation*}
  u _{+} \shom_{\mb{Y}}\bigl( \E, u ^{!} (\E ')\bigr)
   \riso
   \shom_{\mb{Y}'} \bigl( u _{!}(\E),\E' \bigr).
 \end{equation*}
\end{prop}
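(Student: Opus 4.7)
The plan is to unfold the definition of $\shom$ and reduce everything to the projection formula \ref{Kunneth1}. Assuming $u$ is complete (otherwise $u_+$ and $u_!$ are not defined), the left-hand side unravels as
\begin{equation*}
u_+\, \shom_{\mb{Y}}\bigl(\E, u^!(\E')\bigr)
= u_+\bigl(\DD_{\mb{Y}}(\E)\,\widetilde{\otimes}_{\mb{Y}}\,u^!(\E')\bigr).
\end{equation*}
Applying the projection formula \ref{Kunneth1} with $\DD_{\mb{Y}}(\E)$ in place of $\E$, we obtain a canonical isomorphism
\begin{equation*}
u_+\bigl(\DD_{\mb{Y}}(\E)\,\widetilde{\otimes}_{\mb{Y}}\,u^!(\E')\bigr)
\riso u_+\bigl(\DD_{\mb{Y}}(\E)\bigr)\,\widetilde{\otimes}_{\mb{Y}'}\,\E'.
\end{equation*}

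Next, by the very definition of the ordinary direct image, $u_! := \DD_{\mb{Y}'} \circ u_+ \circ \DD_{\mb{Y}}$. Applying $\DD_{\mb{Y}'}$ on both sides and using the biduality isomorphisms $\DD_{\mb{Y}} \circ \DD_{\mb{Y}} \riso \mr{id}$ and $\DD_{\mb{Y}'} \circ \DD_{\mb{Y}'} \riso \mr{id}$, we get a canonical isomorphism
\begin{equation*}
\DD_{\mb{Y}'}\bigl(u_!(\E)\bigr) \riso u_+\bigl(\DD_{\mb{Y}}(\E)\bigr).
\end{equation*}
Substituting this into the right-hand side above yields
\begin{equation*}
\DD_{\mb{Y}'}\bigl(u_!(\E)\bigr)\,\widetilde{\otimes}_{\mb{Y}'}\,\E'
= \shom_{\mb{Y}'}\bigl(u_!(\E),\E'\bigr),
\end{equation*}
which is the desired isomorphism.

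In short, the proof is a two-line calculation: the projection formula together with the formal identity $u_+ \circ \DD_{\mb{Y}} \cong \DD_{\mb{Y}'} \circ u_!$ coming from the defining equality of $u_!$. There is no real obstacle here; the only point to verify is that the isomorphisms produced are functorial in $\E$ and $\E'$, which is automatic since both the projection formula and the biduality isomorphism are themselves functorial. (One may dualize to deduce the companion statement $u_!\,\shom_{\mb{Y}}(u^+(\E'),\E) \riso \shom_{\mb{Y}'}(\E', u_+(\E))$ by applying $\DD_{\mb{Y}'}$ and using \ref{Homdual}, although this is not formally requested in the statement.)
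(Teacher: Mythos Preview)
Your proof is correct and follows exactly the same approach as the paper: unfold the definition of $\shom$, apply the projection formula \ref{Kunneth1}, and then use the defining identity $u_+\circ\DD_{\mb{Y}}\cong\DD_{\mb{Y}'}\circ u_!$. Your remark that $u$ should be complete is also apt, since $u_+$ and $u_!$ are only defined in that case.
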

\begin{proof}
 We have
 \begin{equation*}
  u _{+} \shom_{\mb{Y}}\bigl( \E,u ^{!} (\E ')\bigr)
   =
   u _+ \bigl( \DD_{\mb{Y}} ( \E ) \widetilde{\otimes} u ^{!}(\E
   ')\bigr)
   \xrightarrow[\star]{\sim}
   (u _{+} \circ  \DD_{\mb{Y}}) ( \E ) \widetilde{\otimes} \E '
   \riso \shom_{\mb{Y}'}\bigl( u _{!}(\E),\E' \bigr).
 \end{equation*}
 where $\star$ follows by \ref{Kunneth1}.
\end{proof}

\begin{prop}
Let $u \colon \mathbb{Y} \to \mathbb{Y}'$ a morphism of couples. 
For any $\E \in F\text{-}D ^{\mathrm{b}}_{\mathrm{ovhol}}(\mathbb{Y}/K)$,
$\E '\in F\text{-}D ^{\mathrm{b}}_{\mathrm{ovhol}}(\mathbb{Y} '/K)$, 
we get the functorial isomorphism:
 \begin{equation*}
  u _{+} \shom_{\mb{Y}} \bigl(u ^{+} (\E '), \E\bigr)
   \riso
   \shom_{\mb{Y}'} \bigl(\E',  u _{+}(\E) \bigr).
 \end{equation*}
\end{prop}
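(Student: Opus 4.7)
The plan is to derive this as a formal consequence of the projection formula \ref{Kunneth1} and the duality interchange between $u^+$ and $u^!$. First I would record the identity
\[
\DD_{\mb{Y}}\circ u^{+}\riso u^{!}\circ\DD_{\mb{Y}'},
\]
which follows immediately from the definition $u^{+}:=\DD_{\mb{Y}}\circ u^{!}\circ\DD_{\mb{Y}'}$ in paragraph \ref{operation-cohomoYXsurhol} together with the biduality isomorphism $\DD_{\mb{Y}}\circ\DD_{\mb{Y}}\riso\mr{id}$. Applied to $\E'$, this gives a canonical isomorphism $\DD_{\mb{Y}}(u^{+}(\E'))\riso u^{!}(\DD_{\mb{Y}'}(\E'))$.

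Next I would unwind the definition of $\shom$ and use the symmetry of $\widetilde{\otimes}_{\mb{Y}}$ (immediate from its definition as $\smash{\overset{\L}{\otimes}}^{\dag}_{\O_{\PP,\Q}}[-\dim\PP]$) to compute
\[
\shom_{\mb{Y}}\bigl(u^{+}(\E'),\E\bigr)
=\DD_{\mb{Y}}\bigl(u^{+}(\E')\bigr)\widetilde{\otimes}_{\mb{Y}}\E
\riso\E\,\widetilde{\otimes}_{\mb{Y}}\,u^{!}\bigl(\DD_{\mb{Y}'}(\E')\bigr).
\]
Applying $u_{+}$ and invoking the projection formula \ref{Kunneth1} (with the second object of loc.\ cit.\ being $\DD_{\mb{Y}'}(\E')$), I obtain
\[
u_{+}\shom_{\mb{Y}}\bigl(u^{+}(\E'),\E\bigr)
\riso u_{+}(\E)\,\widetilde{\otimes}_{\mb{Y}'}\,\DD_{\mb{Y}'}(\E')
\riso\DD_{\mb{Y}'}(\E')\,\widetilde{\otimes}_{\mb{Y}'}\,u_{+}(\E)
=\shom_{\mb{Y}'}\bigl(\E',u_{+}(\E)\bigr),
\]
using the symmetry of $\widetilde{\otimes}_{\mb{Y}'}$ again for the second isomorphism. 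Functoriality in $\E$ and $\E'$ is inherited from that of each step.

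There is no real obstacle, since the statement is essentially the dual of Proposition \ref{adjnessProp} and every input (the duality swap $\DD\circ u^{+}\cong u^{!}\circ\DD$, the symmetry of $\widetilde{\otimes}$, and the projection formula \ref{Kunneth1}) is already available in the preceding text. The only point that deserves a brief verification is the compatibility of the projection formula isomorphism with Frobenius structures and with the biduality used to swap $u^{+}$ and $u^{!}$, but this is automatic because all the cited isomorphisms are functorial in the $F$-category $F\text{-}D^{\mathrm{b}}_{\mathrm{ovhol}}$.
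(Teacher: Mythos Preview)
Your proposal is correct and essentially parallels the paper's proof. The paper derives the statement from \ref{Homdual} together with Proposition \ref{adjnessProp}; since Proposition \ref{adjnessProp} is itself proved by one application of the projection formula \ref{Kunneth1}, your argument simply unpacks that citation, using the symmetry of $\widetilde{\otimes}$ in place of \ref{Homdual} and the identity $\DD_{\mb{Y}}\circ u^{+}\riso u^{!}\circ\DD_{\mb{Y}'}$ in place of the definition of $u_!$.
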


\begin{proof}
This is a consequence of \ref{Homdual} and Proposition
 \ref{adjnessProp}.
\end{proof}

\bibliographystyle{alpha}

\newcommand{\etalchar}[1]{$^{#1}$}
\providecommand{\bysame}{\leavevmode ---\ }
\providecommand{\og}{``}
\providecommand{\fg}{''}
\providecommand{\smfandname}{et}
\providecommand{\smfedsname}{\'eds.}
\providecommand{\smfedname}{\'ed.}
\providecommand{\smfmastersthesisname}{M\'emoire}
\providecommand{\smfphdthesisname}{Th\`ese}

Tomoyuki Abe:\\
Kavli Institute for the Physics and Mathematics of the Universe\\
The University of Tokyo\\
5-1-5 Kashiwanoha, Kashiwa, Chiba, 277-8583, Japan \\
e-mail: {\tt tomoyuki.abe@ipmu.jp}

\bigskip\noindent
Daniel Caro:\\
Laboratoire de Math\'{e}matiques Nicolas Oresme (LMNO)\\
Universit\'e de Caen, Campus 2\\
14032 Caen Cedex, France\\
e-mail: {\tt  daniel.caro@unicaen.fr}

\end{document}